\documentclass[11pt]{amsart}
\usepackage[a4paper,margin=28mm]{geometry}
\usepackage{amsmath,amssymb,amsthm,mathrsfs,mathtools,booktabs,enumitem,slashed,tikz,array}
\usepackage[colorlinks=true,linkcolor=blue!50!black,citecolor=green!40!black,urlcolor=blue!50!black]{hyperref}
\makeatletter
\def\paragraph{\@startsection{paragraph}{4}{\z@}{.5\linespacing\@plus.2\linespacing}{-.5em}{\normalfont\itshape}}
\makeatother
\usetikzlibrary{arrows.meta,calc,decorations.pathreplacing,positioning}
\numberwithin{equation}{section}
\newtheorem{theorem}{Theorem}[section]
\newtheorem{lemma}[theorem]{Lemma}
\newtheorem{proposition}[theorem]{Proposition}
\newtheorem{corollary}[theorem]{Corollary}
\newtheorem{maintheorem}{Theorem}

\theoremstyle{definition}
\newtheorem{definition}[theorem]{Definition}

\theoremstyle{remark}
\newtheorem{remark}[theorem]{Remark}

\newcommand{\gd}{\slashed g}
\newcommand{\gdc}{\slashed g_c}
\newcommand{\gdcH}{\slashed g_{c,H}}
\newcommand{\Nd}{\slashed\nabla}
\newcommand{\Div}{\operatorname{div}}
\newcommand{\Curl}{\operatorname{curl}}
\newcommand{\tr}{\operatorname{tr}}
\newcommand{\chih}{\hat\chi}
\newcommand{\chib}{\underline\chi}
\newcommand{\chibh}{\hat{\underline\chi}}
\newcommand{\etab}{\underline\eta}
\newcommand{\omb}{\underline\omega}
\newcommand{\ab}{\underline\alpha}
\newcommand{\bb}{\underline\beta}
\newcommand{\Lb}{\underline L}
\newcommand{\Db}{\underline D}
\newcommand{\ohat}{\mathbin{\widehat\otimes}}
\newcommand{\gac}{\mathring\gamma}
\newcommand{\av}[1]{\langle #1\rangle}
\newcommand{\Ecal}{\mathcal E}
\newcommand{\Bcal}{\mathcal B}
\newcommand{\Ccal}{\mathcal C}
\newcommand{\Pcal}{\mathcal P}
\newcommand{\Fcal}{\mathcal F}
\newcommand{\Kcal}{\mathcal K}
\newcommand{\Jcal}{\mathcal J}
\newcommand{\Hcal}{\mathcal H}
\newcommand{\CK}{\mathrm{CK}}
\newcommand{\R}{\mathbb R}
\newcommand{\C}{\mathbb C}
\newcommand{\Z}{\mathbb Z}
\newcommand{\bg}{\mathbf g}
\newcommand{\bR}{\mathbf R}
\newcommand{\sym}{\operatorname{sym}}
\newcommand{\skw}{\operatorname{skew}}

\newcommand{\Ric}{\operatorname{Ric}}
\newcommand{\BH}{\mathcal{BH}}
\newcommand{\scri}{\mathcal I}
\newcommand{\loc}{\mathrm{loc}}
\renewcommand{\Re}{\operatorname{Re}}
\renewcommand{\Im}{\operatorname{Im}}

\begin{document}

\title[Characteristic gluing to extremal Kerr and the third law]{Characteristic gluing to extremal Kerr horizons\\ and the third law of black hole thermodynamics \\ in vacuum}

\author{Stefan Czimek}
\address{Mathematisches Institut, Universit\"at Leipzig, Augustusplatz 10, 04109 Leipzig, Germany}
\email{stefan.czimek@uni-leipzig.de}

\begin{abstract} 
We prove a non-perturbative, non-axisymmetric characteristic gluing theorem for the Einstein vacuum equations: every Boyer–Lindquist sphere in the domain of outer communication of a Kerr black hole, in particular every symmetry sphere of a Schwarzschild exterior, can be glued along a single outgoing null hypersurface to a horizon sphere of an extremal Kerr black hole of sufficiently large mass. The gluing is exact at both ends and matches the full sphere data, including the transversal curvature. This proves a conjecture of Kehle and Unger in the extremal case. We first raise the angular momentum to the extremal value by rotating shear pulses of non-axisymmetric gravitational radiation, and then damp the incoming data exponentially along a long non-expanding neck. At the degenerate target horizon we invert the linearised problem at all angular frequencies, using a factorisation of a fourth-order angular operator into an angular Teukolsky–Starobinsky product.

As an application, we construct one-ended asymptotically flat vacuum initial data whose maximal development contains an exactly Schwarzschild apparent horizon and, at a later advanced time, an exactly extremal Kerr event horizon. This is the first rigorous counterexample to the third law of black hole thermodynamics in vacuum, and proves a second conjecture of Kehle and Unger.
\end{abstract}

\maketitle
\tableofcontents

\section{Introduction}\label{ch:intro}

\subsection{Overview}\label{sec:overview}
This paper is about characteristic gluing for the vacuum Einstein equations far from the perturbative regime, and its application to
the third law of black hole thermodynamics.

Our main result is a characteristic gluing theorem (Theorem~\ref{thm:A}): every Boyer--Lindquist sphere outside a Kerr black hole, in
particular every symmetry sphere of a Schwarzschild exterior, can be glued along a single outgoing null hypersurface to a horizon
sphere of an extremal Kerr black hole of sufficiently large mass, with the full sphere data matched exactly at both ends. Its main
features are the following.
\begin{itemize}
\item \emph{It is nonperturbative.} The data are large, and no step is a perturbation of Minkowski space or of another small
solution. The perturbative arguments are made around two explicit large structures: an exact partial bridge which reaches the
intrinsic horizon data with zero defect, and the extremal horizon itself.
\item \emph{It is non-axisymmetric.} In vacuum, angular momentum can only be carried by non-axisymmetric gravitational radiation,
and here it has to be raised all the way to the extremal value $J=M_f^2$. This is done by rotating pulses of shear whose
contribution to the angular momentum is computed exactly.
\item \emph{The target is degenerate.} At the extremal horizon the linearised matching problem for the transversal data is an
isomorphism at all angular frequencies, by a factorisation of the relevant fourth-order angular operator through an angular
Teukolsky--Starobinsky product. Whether the terminal sphere is an actual horizon section is decided by the sign of a boost, which
is computed explicitly from an exact trace identity.
\item \emph{The transversal data are matched.} The data agree at the target up to the transversal curvature $\ab$, so the realised
spacetime is $C^2$ across the gluing interfaces. This is what allows the construction to be continued by an exactly extremal Kerr
exterior in Theorem~\ref{thm:B}.
\end{itemize}

As an application we construct one-ended asymptotically flat vacuum initial data whose maximal development first contains an
exactly Schwarzschild apparent horizon and later an exactly extremal Kerr event horizon (Theorem~\ref{thm:B}). This disproves the
third law of black hole thermodynamics in vacuum, as conjectured by Kehle and Unger \cite[Conjecture~2]{KU}. The global part of this
deduction, namely the collapse, the trapped surfaces and the identification of the event horizon, follows Kehle and Unger
\cite{KU22,KU}; the new input is Theorem~\ref{thm:A}. The strategy of the proof is outlined in Section~\ref{sec:strategy}.

\subsection{Characteristic gluing}\label{sec:chargluing}
Let $(\mathcal M,\bg)$ be a solution of the vacuum Einstein equations $\Ric(\bg)=0$, and let $\Hcal$ be an outgoing null
hypersurface foliated by spacelike spheres $S_v$. The Einstein equations restricted to $\Hcal$ are constraint equations;
they are ordinary differential equations along the null generators of $\Hcal$, in which angular derivatives appear only as
coefficients. Conversely, constraint-satisfying data on two transversally intersecting null hypersurfaces determine a vacuum
solution in a neighbourhood \cite{Rendall,Luk}. The relevant notion of data on a sphere $S$ is that of \emph{sphere data}
\cite[Definition~2.4]{ACR},
\begin{equation}\label{eq:x}
 x=(\Omega,\gd,\Omega\tr\chi,\chih,\Omega\tr\chib,\chibh,\eta,\omega,D\omega,\omb,\Db\omb,\alpha,\ab),
\end{equation}
consisting of the metric, the two null second fundamental forms, the torsion, the lapse and its derivatives, and the two
extreme null curvature components (see Section~\ref{ch:null} for definitions). Sphere data encode the metric two-jet at $S$
(see also Section~\ref{sec:twojet}); two spacetimes inducing the same sphere data on $S$ can therefore be pasted together in a $C^2$ way.
\emph{Characteristic gluing} asks whether, for given sphere data $x_1$ on $S_1$ and $x_2$ on $S_2$, there is a solution of the null
constraint equations along a null hypersurface from $S_1$ to $S_2$ inducing $x_1$ and $x_2$.\\

\emph{Perturbative gluing.} Characteristic gluing for the Einstein equations was initiated by Aretakis, Rodnianski and the author
\cite{ACRann,ACR,ACRkerr}, who proved a perturbative gluing theorem for the vacuum Einstein equations near Minkowski space, modulo a
ten-dimensional space of conserved charges arising from the linearised null constraint equations at Minkowski. These linearised charges
at Minkowski correspond to energy, linear momentum, angular momentum and centre of mass, and by relating them to the ADM quantities,
asymptotically flat data can be characteristically glued to the data of a suitably chosen Kerr spacetime \cite{ACRkerr}. Subsequently,
Rodnianski and the author \cite{CR} developed an obstruction-free gluing method, which removes these obstructions by exploiting quadratic
terms of the null constraint equations through a high-frequency ansatz. Characteristic gluing has also been extended to nonzero
cosmological constant, higher dimensions and higher differentiability. Chru\'sciel and Cong \cite{CC} and Cong, Chru\'sciel and Gray
\cite{CCG2} treat the linearised problem near static vacuum backgrounds with cosmological constant, Chru\'sciel, Cong and Gray
\cite{CCG3} prove a $C^k$-gluing theorem for every finite $k$ near Birmingham--Kottler backgrounds, and Chru\'sciel and Cong
\cite{CCvar} study gluing, embedding and extension of characteristic data in all dimensions. Sansom \cite{Sansom} solves the
$C^3$-gluing problem near Minkowski space up to a twenty-dimensional space of obstructions, ten of which are new at third order.\\

\emph{Nonperturbative gluing and extremal black holes.} In spherical symmetry, a complementary, nonperturbative ansatz was introduced by
Kehle and Unger \cite{KU22} for characteristic gluing of the Einstein--Maxwell--charged scalar field system, where it leads to the
formation of extremal Reissner--Nordstr\"om black holes in finite time: they glue a Schwarzschild sphere to a horizon sphere of extremal
Reissner--Nordstr\"om, and obtain gravitational collapse spacetimes, arising from regular one-ended asymptotically flat initial data,
which first contain an exactly Schwarzschild apparent horizon and later, after a finite advanced time, an exactly extremal
Reissner--Nordstr\"om event horizon. Marin \cite{Marin} extends this construction to nonzero cosmological constant. Gadioux, Reall and
Santos \cite{GRS} implement the gluing numerically for several ans\"atze for the scalar field at the horizon, and find that it succeeds
only if the final black hole is sufficiently massive, with a threshold that depends on the ansatz. Schneider \cite{Schneider} shows that
such collapse to an extremal Reissner--Nordstr\"om black hole of radius $r_+$ requires the charge $\mathfrak e$ of the scalar field to
satisfy $|\mathfrak e|r_+>\frac13$, and that a Schwarzschild black hole can become extremal if $|\mathfrak e|r_+>\sqrt{(3+\sqrt{33})/3}$.

In vacuum, Kehle and Unger \cite{KU} combined the obstruction-free null gluing of \cite{CR} with a short pulse closely related to Christodoulou
\cite{Chr} to glue Minkowski space to any round symmetry sphere of Schwarzschild and further to the event horizon of very slowly rotating Kerr,
with applications to gravitational collapse. They conjectured that a Schwarzschild symmetry sphere can be characteristically glued to a
sphere in Kerr of large final mass $M_f$ and arbitrary $|a_f|\le M_f$ \cite[Conjecture~1]{KU}. Numerically, the formation of extremal rotating black holes in five-dimensional vacuum gravity was recently shown by Crump, Gadioux, Reall and Santos \cite{CGRS}.\\

\emph{The third law of black hole thermodynamics.} A Kerr black hole with mass $M$ and angular momentum $J=aM$ is \emph{extremal} if
$|a|=M$, that is, if its surface gravity vanishes. The third law of black hole mechanics \cite{BCH}, in the precise formulation of
Israel \cite{Israel}, asserts that a non-extremal black hole cannot become extremal in finite advanced time. The constructions of Kehle-Unger
\cite{KU22} show that it is false for the Einstein--Maxwell--charged scalar field system; Israel's argument does not apply because it
implicitly assumes that the outermost apparent horizon is connected \cite[Section~1.4.3]{KU22}. Kehle and Unger conjectured that the
third law is also false in vacuum (see Conjecture~2 in \cite{KU}), and explained that Conjecture~1 of \cite{KU}, for target spheres on the event
horizon of extremal Kerr, implies this by the arguments of \cite{KU22} and \cite[Corollary~1]{KU}. 

For other directions, we refer to \cite{KUcrit}, where extremal black holes arise at the threshold of gravitational collapse for the Einstein--Maxwell--Vlasov system; to
the third laws of Reall \cite{Reall}, McSharry and Reall \cite{McSharryReall}, and McSharry, Reall and Santos \cite{MRS} for
supersymmetric black holes and for the extremal BTZ black hole, under energy conditions on the matter.\\

\emph{The present problem.} In the present paper we prove Conjecture 1 of \cite{KU} for target spheres on the event horizon of extremal
Kerr, for arbitrary Boyer--Lindquist spheres outside a Kerr black hole as initial spheres, and deduce that the third law fails in vacuum.
Target spheres in sub-extremal Kerr, $|a_f|<M_f$, are not treated here: the matching at the target uses the degeneracy of the
extremal horizon, which leaves the incoming second fundamental form of a horizon section undetermined by its intrinsic geometry
(Remark~\ref{rem:nonextremal}).
This gives the first rigorous counterexample to the third law in vacuum gravity. The problem combines the difficulties of the settings
above: the data are large, the target is a degenerate horizon, and there is no symmetry that reduces the equations to ordinary
differential equations. In spherical symmetry the charge can be increased by charged matter. In vacuum the only way to reach extremal
Kerr is to spin the black hole up to $J=M_f^2$, and angular momentum can only be carried by \emph{non-axisymmetric} gravitational
radiation: in axisymmetry the Komar angular momentum is conserved (Corollary~\ref{cor:komar} is the null-hypersurface version of this).
The construction is therefore genuinely non-symmetric, a problem for partial differential equations on the sphere.

\subsection{Main results}
Throughout, $\mathcal R_\pi:\varphi\mapsto\varphi+\pi$ denotes the half-turn about the axis.

\begin{maintheorem}[Gluing a Boyer--Lindquist sphere to an extremal Kerr horizon sphere]\label{thm:A}
Let $|a_1|\le M_1$, and let $S_1=\{t=t_1,\ r=r_1\}$, $r_1>r_+(M_1,a_1)$, be a Boyer--Lindquist sphere in the exterior of the
Kerr solution with parameters $(M_1,a_1)$. Let $L_1$ be an axisymmetric future directed outgoing null normal of $S_1$, and let $x_1$
be the sphere data of $S_1$ generated by $L_1$ as in \cite[Lemma~A.1]{KU}, that is, in the local double null foliation near $S_1$ with
$\Omega^2=1$ on the bifurcate null hypersurface through $S_1$. Fix an integer $s\ge14$. There is $M_0=M_0(S_1,s)$ such that for every $M_f\ge M_0$ there is a
solution $v\mapsto x(v)$, $v\in[v_1,v_3]$, of the null structure and null Bianchi equations along an outgoing null hypersurface
with the following properties.
\begin{enumerate}[label=\textup{(\roman*)}]
\item \emph{Start.} $x(v_1)=x_1$; indeed the data agree with those of the outgoing Kerr cone through $S_1$ near $v_1$, in this
foliation.
\item \emph{End.} Let $\mathcal H_f$ be the future event horizon of the extremal Kerr solution with mass $M_f$ and $a_f=M_f$,
in ingoing Kerr coordinates $(v_K,r,\vartheta,\varphi)$, and let $S_3=\{v_K=v_3,\ r=M_f\}\subset\mathcal H_f$.
Near $v_3$ the data agree with the extremal Kerr horizon data, with generator field $L=\xi=\partial_{v_K}+\frac1{2M_f}\partial_\varphi$
and cuts $\{v_K=v\}$. In particular
\[
 x(v_3)=x_{S_3},
\]
the sphere data of $S_3$ in the local double null foliation near $S_3$ determined by the null normal $\xi$ and the transversal gauge
of Section~\ref{sec:gauge}.
\item \emph{Symmetry.} All data are invariant under $\mathcal R_\pi$.
\item \emph{Regularity.} The angular regularity is $H^s$ for $\gd$, $\Omega\tr\chi$, $\chih$, $H^{s-1}$ for the torsion,
$H^{s-2}$ for $\Omega\tr\chib$, $\chibh$, and $H^{s-4}$ for $\ab$, uniformly in $v$, and the data are smooth in $v$.
\end{enumerate}
Moreover, the data are realised by a vacuum spacetime in a neighbourhood of the null hypersurface, which is one-sided $C^4$ up
to the hypersurface on each side and $C^2$ across it.
\end{maintheorem}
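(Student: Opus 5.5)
The construction splits the null hypersurface into three segments, $[v_1,v_2]$, $[v_2,v_2']$ and $[v_2',v_3]$. Each is designed so that one conserved or monotone quantity of the null constraint equations is adjusted there, and consecutive segments are joined by exact matching of full sphere data. Throughout we work in the double null gauge of \cite[Lemma~A.1]{KU} near $S_1$, and in the gauge of Section~\ref{sec:gauge} near $S_3$. Only $\mathcal R_\pi$-invariant shear profiles are ever prescribed, and $\mathcal R_\pi$ commutes with the whole constraint system, so property (iii) is automatic.

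\textbf{First segment: a large spinning pulse.} Near $v_1$ we take the outgoing Kerr cone through $S_1$, which gives (i). We then prescribe the free datum $\chih$, the conformal shear of $\gd$, as a sum of rotating pulses. These are non-axisymmetric, $\mathcal R_\pi$-invariant profiles of angular frequency $m=\pm2$ with a phase drift in $v$. Along the way we solve the Raychaudhuri equation for $\tr\chi$, the transport equation for $\eta$ and the remaining null structure equations as ODEs in $v$. The amplitude is tuned so that the Hawking mass grows from $M_1$ to $M_f$ and the area radius reaches $2M_f^{1/2}$-scale horizon size. The null-hypersurface angular momentum (the flux identity behind Corollary~\ref{cor:komar}) then increases by an amount computed exactly from the quadratic shear flux. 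We choose the pulse parameters so that it ends at exactly $J=M_f^2$. Because $M_f\ge M_0$ is large, the Kerr data near $S_1$ are a small perturbation relative to the pulse. The problem is then effectively a scaled problem around an explicit large solution, which gives an exact partial bridge: at $v_2$ the intrinsic data $(\gd,\Omega\tr\chi,\chih)$ coincide exactly with those of an extremal Kerr horizon cut, with $\Omega\tr\chi=0$ and $\chih=0$.

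\textbf{Second segment: the non-expanding neck.} On $[v_2,v_2']$ we set $\chih=0$ and $\tr\chi=0$, so that $\gd$ is frozen at the extremal Kerr horizon metric. The transversal data $\tr\chib$, $\chibh$, $\omb$ and $\ab$ then obey linear transport equations along the generators. The extremal gauge has $\omega=0$, and we exploit the freedom in $\omega$ and in the lapse: choosing it suitably makes the deviation of the incoming data from the extremal Kerr values decay like $e^{-cv}$. Taking the neck long leaves an arbitrarily small defect in the transversal data at $v_2'$.

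\textbf{Third segment: matching at the degenerate horizon (main obstacle).} We must remove the small residual transversal defect exactly while keeping the intrinsic data fixed. Linearising the constraint equations at the extremal Kerr horizon, with intrinsic data held fixed, the map from the available perturbations (lapse, the $\eta$ correction, and a small additional shear pulse) to the transversal data $(\tr\chib,\chibh,\omb,\Db\omb,\ab)$ reduces to a fourth-order elliptic angular operator on the horizon cut. The key step is to factor this operator as an angular Teukolsky--Starobinsky product $\mathcal D^\dagger\mathcal D$, and then check mode by mode that it has trivial kernel on $\mathcal R_\pi$-invariant $H^{s}$ fields at every angular frequency. This uses the extremality $a_f=M_f$: there is no surface gravity term, and the horizon leaves $\chib$ undetermined by the intrinsic geometry (cf.\ Remark~\ref{rem:nonextremal}). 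Given the isomorphism, the implicit function theorem in the spaces of (iv) closes the gluing, with derivative losses $s\to s-2\to s-4$ as listed; $s\ge14$ leaves room for the tame estimates. Finally, the explicit trace identity fixes the sign of the residual boost, and we check that it is positive, so that $x(v_3)=x_{S_3}$ with generator $\xi$ and not a rescaled cut. This is the step I expect to be hardest. Realisation by a vacuum spacetime, one-sided $C^4$ and $C^2$ across the hypersurface, then follows from \cite{Rendall,Luk} applied on each side, since the full sphere data agree up to $\ab$.
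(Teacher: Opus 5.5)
Your outline has the right overall architecture (spin pulses, a long non-expanding neck with $\omega>0$, a fourth-order operator at the degenerate horizon, a boost whose sign must be settled). However, the central linear step is set up wrongly and would not close.

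\textbf{The linear problem at the target.} The entries $\omb$, $\Db\omb$ are not matched by any linearised operator. They are gauge, fixed afterwards by the parametrisation of the ingoing Kerr hypersurface through the terminal sphere (Section~\ref{sec:gauge}). The genuine targets are $\Omega^{-1}\chib$ and $\Omega^{-2}\ab$. Once the terminal metric, expansion and torsion are prescribed, an ``$\eta$ correction'' is not an independent control; the usable freedoms are the collar seed, the shape of the terminal section and a constant boost. To first order a seed acts only through two radial moments: $-EH_1$ on $\Omega^{-1}\chib$ and $\frac1{2\omega}\Pcal(H_2-H_1)$ on $\Omega^{-2}\ab$ (Theorem~\ref{thm:moments}).
\begin{itemize}
\item $E$ goes from the rank-two bundle of tracefree tensors to the rank-three bundle of symmetric tensors, so the section term $\Ccal G$ is indispensable. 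Even $E+\Ccal$ has the one-dimensional cokernel detected by $\Lambda(N)=\int w\tr N\,d\mu_H$ (Corollary~\ref{cor:range}), which only the boost can fill.
\item The neck damps the arriving incoming data to $O(e^{-2\omega T})$, while the Kerr target in the boosted frame is $\lambda\chib_H+\Ccal(\lambda F)$. So the boost is necessarily tiny, and you must linearise at the degenerate corner $\lambda=0$ in the chart $G=\lambda F$ (where the cut itself runs off to infinity), not at an actual Kerr cut.
\item $\Pcal=\Bcal E$ is not of the form $\mathcal D^\dagger\mathcal D$: its principal symbol $(p,q)\mapsto(-\frac p2,\frac q2)$ is indefinite. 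In helicity components it is a block $\mathcal D_m$ with purely imaginary diagonal $-3im$ and off-diagonal Teukolsky--Starobinsky entries $-\frac12T_{\mp m}$. Injectivity for $m\ne0$ comes from $\Im\langle\mathcal D_m\psi,\psi\rangle=-3m\|\psi\|^2$, using the adjoint identity $T_m^*=T_{-m}$ on the pole domains; $m=0$ needs a separate argument. A positivity argument of the kind you describe has nothing to act on.
\end{itemize}

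\textbf{The sign of the boost and the terminal sphere.} After the fixed-point argument, $\lambda$ is only known to be $O(e^{-2\omega T})$, with no sign; \emph{we check that it is positive} needs a mechanism. In the paper this is a moment-free seed direction $\varepsilon k_*$, invisible to the linearisation. The trace identity (Proposition~\ref{prop:trace}) expresses $\lambda$ as a weighted time integral of $\Fcal$, which is critical at the horizon, so $\lambda=c_*\varepsilon^2+O(\varepsilon^3+e^{-2\omega T})$. Then $c_*>0$ comes from an explicitly negative second variation for a specific profile, with $\varepsilon$ fixed before $T$. Even then, the terminal data are those of the section $\{v_K=G/\lambda\}$ in the frame $(\xi/\lambda,\lambda n_F)$. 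Reaching $S_3$ with generator $\xi$ requires a further continuation along $\mathcal H_f$ that absorbs the (large) cut function and the boost (Proposition~\ref{prop:horizonsphere}).

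\textbf{The early bridge and the missing controller.} Tuning amplitudes in a forward integration does not yield an exact partial bridge. Each generator focuses at its own rate, axisymmetric shear vanishes at the poles, and you must hit the full torsion one-form $\eta_H$, not only $J=M_f^2$. (Note also that the target has area radius $\sqrt2M_f$ and Hawking mass $M_f/\sqrt2$.) The paper does four things here:
\begin{itemize}
\item it builds the focusing tail backwards from the exact terminal data;
\item it matches the whole axial torsion profile by the positive inverse of Lemma~\ref{lem:Finv};
\item it cancels all non-axisymmetric memories on an affine-area window;
\item it removes the exact remainder by reparametrising the generators.
\end{itemize}
Most importantly, the intrinsic data cannot be held fixed while a shear seed acts: the seed lowers $\phi$ and changes $\etab$ at second order. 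So the terminal intrinsic data are imposed exactly and integrated backwards. The resulting targets at the start of the neck have to be hit by a controller making the early endpoint map locally surjective onto triples $(\phi,D\phi,\etab)$ (Theorem~\ref{thm:ctrl}). This includes the two charges $C_E$, $C_B$ that survive $\mathcal R_\pi$ and require the twisted phase pulse. Your proposal has no such controllability step, and without it the nonlinear matching cannot close.
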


In particular, every symmetry sphere in a Schwarzschild exterior can be glued along an outgoing null hypersurface to a horizon
sphere of an extremal Kerr black hole of sufficiently large mass. Most of the paper proves a version in which the target is a
section of the horizon determined by the construction.

\begin{theorem}[Gluing to a horizon section]\label{thm:cutversion}
Under the hypotheses of Theorem~\ref{thm:A}, for every $M_f\ge M_0$ there is a solution $v\mapsto x(v)$, $v\in[v_1,v_2]$,
satisfying \textup{(i)}, \textup{(iii)}, \textup{(iv)} of Theorem~\ref{thm:A}, and
\begin{enumerate}[label=\textup{(\roman*$'$)},start=2]
\item there are a spacelike section $S_2$ of $\mathcal H_f$, a constant $\lambda>0$ and a choice of transversal gauge near $S_2$
such that $x(v_2)=\mathfrak b_\lambda(x_{S_2})$, and near $v_2$ the data agree with the extremal Kerr horizon data.
\end{enumerate}
\end{theorem}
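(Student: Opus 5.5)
The plan follows the strategy outlined in the introduction and splits $[v_1,v_2]$ into consecutive stages, concatenated at interfaces where the full sphere data agree. The regularity (iv) and the $\mathcal R_\pi$-symmetry (iii) are propagated through each stage by working with $\mathcal R_\pi$-invariant seeds; $\mathcal R_\pi$ commutes with the null constraint ODEs, and the only losses of angular derivatives come from the transport equations for $\chib$, $\eta$ and $\ab$.

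\emph{Stage 1 (start).} Near $v_1$ I would take the data equal to those of the outgoing Kerr cone through $S_1$, in the foliation of \cite[Lemma~A.1]{KU}. This gives (i) directly.

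\emph{Stage 2 (spin-up).} I would prescribe the conformal shear $\chih$ as a sum of short, rotating, non-axisymmetric pulses, of the form $\Re(e^{im(\varphi-\omega v)})$ with $m$ even so that $\mathcal R_\pi$-invariance is kept. The Raychaudhuri equation then controls $\Omega\tr\chi$, and the quadratic flux $|\chih|^2$ raises the Hawking mass towards $M_f$. The torsion equation, through the term $\Div\chih$ coupled to $\chih\cdot\eta$, raises the Komar-type angular momentum of Corollary~\ref{cor:komar}. The pulse parameters are tuned so that at the end of this stage the mass and angular momentum equal $M_f$ and $J=M_f^2$ exactly. For this I would use a degree or implicit-function argument in two scalar amplitudes, whose exact contributions are computed explicitly. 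Taking $M_f\ge M_0$ large keeps the sphere far from trapped, so the construction stays in the regime where $\Omega\tr\chi>0$.

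\emph{Stage 3 (neck).} Next I would bring the intrinsic data $(\gd,\Omega\tr\chi,\chih,\eta)$ exactly onto those of some horizon section of extremal Kerr. The plan is to use a non-expanding segment with $\tr\chi=\chih=0$, on which $\gd$ is frozen. Along it the incoming quantities $\Omega\tr\chib,\chibh,\ab$ obey linear transport equations driven by $\omega$, and choosing $\omega>0$ damps them exponentially over a long neck. This produces the ``exact partial bridge'': the intrinsic horizon data are reached with zero defect, and only a small error remains in the transversal data.

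\emph{Stage 4 (matching the transversal data).} This is the step I expect to be the main obstacle. The free parameters are the choice of section $S_2\subset\mathcal H_f$, which is a graph function on the sphere, the transversal gauge, and the boost $\lambda$ in $\mathfrak b_\lambda$. I would linearise the map from these parameters to the transversal data $(\Omega\tr\chib,\chibh,\omb,\Db\omb,\ab)$ at the extremal horizon. The claim to prove is that the resulting fourth-order angular operator factors as an angular Teukolsky--Starobinsky product and is therefore invertible at every angular frequency. Degeneracy is what makes this work: at the extremal horizon $\omega=0$, so the transversal form is not determined by the intrinsic geometry (Remark~\ref{rem:nonextremal}). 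A contraction argument in $H^{s-4}$ then closes the matching against the small residual left by Stage 3.

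Finally, I would check that $\lambda>0$, so that $S_2$ is a genuine future-directed section, using the exact trace identity for $\Omega\tr\chib$. The data are then continued past $v_2$ by the extremal Kerr horizon data, which gives (ii$'$). The spacetime realisation, $C^4$ on each side and $C^2$ across, follows from \cite{Rendall,Luk}, because sphere data encode the metric two-jet.
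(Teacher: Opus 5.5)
Your Stages 2--3 do not reach the intrinsic horizon data. Tuning two amplitudes so that the mass and $J$ come out as $M_f$ and $M_f^2$ falls far short of what a non-expanding segment needs at its first cut. There one needs $\Omega\tr\chi=0$ and $\chih=0$ at every point, $\gd$ isometric to $M_f^2\gd_H$, and $-\etab=\eta_H$ as a one-form up to an exact form, which Proposition~\ref{prop:reparam} can remove. The neck cannot produce these: on it $D\gd=0$ by \eqref{eq:H1} and, for angularly constant $\omega$, $D\etab=0$ by \eqref{eq:H3}. It only preserves horizon data while damping the incoming fields; that part of your plan is right.

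Forward evolution cannot produce them either. Each generator focuses at its own parameter value, and the lapse recovery \eqref{eq:lapse} degenerates as $D\phi\to0$. Also, ``staying far from trapped'' is the opposite of what is needed, because the endpoint is marginally trapped. The paper gets there in four steps:
\begin{itemize}
\item a focusing tail solved \emph{backwards} from the exact terminal data along an explicit path of conformal metrics (Lemma~\ref{lem:tail}); non-axisymmetric polar pulses are required, since any axisymmetric path has $e=0$ at the poles;
\item matching the whole axial profile $Z(\partial_\varphi)(z)$, not just its integral (Lemma~\ref{lem:Finv});
\item cancelling every non-axisymmetric memory exactly, which uses the affine area of the window and an axisymmetric area and lapse (otherwise the gradient terms of \eqref{eq:flux} also move the charge);
\item the controller of Theorem~\ref{thm:ctrl}, needed because the data required at the start of the neck depend on the late seed.
\end{itemize}

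Your Stage 4 has too few unknowns. The fields to be matched are $\Omega^{-1}\chib$ (three functions) and $\Omega^{-2}\ab$ (two functions). The transversal gauge only fixes $\Omega$, $\omb$, $\Db\omb$ and does not act on them. The cut and boost only sweep out the family $(\mathcal N(\lambda,G),\mathcal A(\lambda,G))$ of Definition~\ref{def:target}: one function and one number. Its $\ab$-component even has vanishing differential at the corner $(\lambda,G)=(0,0)$. That corner is exactly where the matching happens, since the incoming data arriving after a long neck are $O(e^{-2\omega T})$. So there is no fourth-order operator acting on $(G,\lambda)$ to factorise.

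In the paper $\Pcal=\Bcal E$ acts on the tracefree tensor $H_2-H_1$. This is the difference of the two radial moments of a small shear seed $k$ placed on a collar \emph{after} the neck. There the optics and torsion are integrated backwards from the exact terminal values \eqref{eq:terminal}, so the seed does not disturb them (Theorem~\ref{thm:moments}). The operators $E$ and $\Ccal$ together fill $\ker\Lambda$ (Corollary~\ref{cor:range}), and $\lambda$ absorbs the one-dimensional cokernel.

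The sign of $\lambda$ is then not automatic. Without a seed, $\Fcal$ vanishes on horizon data, and \eqref{eq:lambdaexact} only gives $\lambda=O(e^{-2\omega T})$ with a sign set by the uncontrolled early residual. Positivity needs an additional moment-free direction $\varepsilon k_*$, invisible to the linearisation, whose second variation of $\Fcal$ is computed and made negative. This gives $\lambda=c_*\varepsilon^2+O(\varepsilon^3+e^{-2\omega T})$ with $c_*>0$, with $\varepsilon$ chosen before $T$.
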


Here $\mathfrak b_\lambda$ is a boost, rescaling the null normals by
$L\mapsto\lambda^{-1}L$, $\Lb\mapsto\lambda\Lb$. The section $S_2$ and the boost $\lambda$ are part of the solution. Once the
data agree with extremal Kerr horizon data near a section $S_2$, the null hypersurface can be continued along the extremal
horizon itself, and the parametrisation of the generators and the foliation can be chosen to interpolate to the Killing
parametrisation with the coordinate spheres; this absorbs the shape of $S_2$ and the boost (Section~\ref{sec:horizonsphere}).
The positivity of $\lambda$ is essential: for $\lambda\le0$ the formulas at $S_2$ still make sense, but they are not the data of
a horizon section with future-directed normals, and there is no horizon along which to continue.

\begin{maintheorem}[The third law fails in vacuum]\label{thm:B}
There exist one-ended asymptotically flat vacuum initial data $(\Sigma,g_0,k_0)$, $\Sigma\cong\R^3$,
$(g_0,k_0)\in H^{7/2-}_{\loc}\times H^{5/2-}_{\loc}$, whose maximal globally hyperbolic development first contains an exactly
Schwarzschild apparent horizon and later an exactly extremal Kerr event horizon. The initial hypersurface lies in the causal past
of future null infinity, $\Sigma\subset J^-(\scri^+)$; in particular, $\Sigma$ is disjoint from the event horizon and contains no
trapped surfaces. More precisely, the black hole region contains
an exact Schwarzschild double null rectangle with an edge on $\{r=2M_1\}$, whose sections are marginally trapped and are
approached from the inside by trapped spheres, and, after a sufficiently late Cauchy level, the event horizon and the whole
escaping region coincide with those of an extremal Kerr solution. In particular, \cite[Conjecture~2]{KU} holds.
\end{maintheorem}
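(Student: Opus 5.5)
The plan is to deduce Theorem~\ref{thm:B} from Theorem~\ref{thm:A}, following the global scheme of Kehle--Unger \cite{KU22} and \cite[Corollary~1]{KU}; the only new input is the gluing to an extremal horizon sphere. Start on the Minkowski side. By \cite{KU} (obstruction-free gluing \cite{CR} combined with a Christodoulou-type short pulse \cite{Chr}), Minkowski sphere data can be glued along an outgoing null cone to the sphere data of a round symmetry sphere $S_1=\{t=t_1,r=r_1\}$ of a Schwarzschild exterior of mass $M_1$, with $r_1>2M_1$. Then apply Theorem~\ref{thm:A} with $a_1=0$: from $S_1$ we glue along a second outgoing null segment to a horizon sphere $S_3\subset\mathcal H_f$ of extremal Kerr with $M_f\ge M_0(S_1,s)$. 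Choosing $s$ large gives the stated regularity.

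Concatenating the two segments gives a single outgoing null cone $C$ emanating from a point of Minkowski space, carrying constraint-satisfying characteristic data. The data agree with Minkowski data near the vertex, with Schwarzschild data near $S_1$, and with extremal Kerr horizon data near and beyond $S_3$. Pair $C$ with an ingoing null hypersurface $\underline C$ through $S_1$, chosen so that it agrees with the exact Schwarzschild ingoing cone. Along $\underline C$ we place exact Schwarzschild data crossing $\{r=2M_1\}$, and beyond that exact data which match, toward the late part, an ingoing cone of extremal Kerr.

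Local existence for the characteristic initial value problem \cite{Rendall,Luk}, together with domain of dependence, then gives a vacuum development. This development contains exact Schwarzschild rectangles, bounded by $S_1$ and its Schwarzschild continuation, with an edge on $r=2M_1$; their spheres are marginally trapped and are approached from the inside by trapped spheres. By uniqueness it also contains an exact extremal Kerr region to the future of the portion of $C$ beyond $S_3$. Since the data on $C$ are exactly extremal Kerr horizon data from $S_3$ on, we continue $C$ as $\mathcal H_f$ and fill its exterior with the extremal Kerr domain of outer communication. Its $J^-(\scri^+)$ boundary is $\mathcal H_f$, which identifies the late event horizon and the escaping region. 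The gluing interfaces are $C^2$ across by Theorem~\ref{thm:A}, so the resulting metric is a genuine vacuum solution in the stated Sobolev class.

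It remains to produce Cauchy data. Take a spacelike hypersurface $\Sigma\cong\R^3$ that lies in the Minkowski and early-time parts. Near the vertex region it is a Minkowski slice; it crosses $C$ before the Schwarzschild region is trapped and is asymptotically flat, ending in the extremal Kerr exterior near spatial infinity. As in \cite{KU22}, $\Sigma$ can be placed entirely in $J^-(\scri^+)$, so it meets no trapped surfaces. The maximal development of $(g_0,k_0)$ then contains the constructed region.

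The expected main obstacle is the global causal bookkeeping. One must verify that the event horizon of the maximal development coincides with $\mathcal H_f$ after a late Cauchy level. One must also check that the trapped Schwarzschild region lies in the black hole region even though its apparent horizon is not part of the event horizon. This is where one has to invoke the argument of \cite[Section~1.4.3]{KU22} carefully, in its non-spherical vacuum form, using that extremal Kerr is exact in the whole escaping region.
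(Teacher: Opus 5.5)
Your placement of the rectangle is roughly right: it sits on the interior side of the final horizon cone. The way you produce it, however, has a genuine gap.

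You \emph{prescribe} exact Schwarzschild data on an ingoing cone $\underline C$ through $S_1$ crossing $\{r=2M_1\}$, and develop the characteristic data on $C\cup\underline C$. This gives an exact Schwarzschild region to the future of $S_1$, but not one contained in the maximal development of your $\Sigma$. Once $\Sigma$ lies in the early part and crosses $C$, the data on $\underline C$ are no longer free. They are induced by the vacuum region inside $C$ and to the past of $\underline C$, which in your single-stage picture evolves from a Minkowski cap and the Minkowski-to-Schwarzschild gluing data. You never construct that region. Nothing forces it to induce exact Schwarzschild data on $\underline C$, let alone a marginally trapped section on $\{r=2M_1\}$. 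Producing such data is itself a black hole formation problem.

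The missing idea is the two-stage assembly of \cite[Section~5.3]{KU22}, which the paper uses.
First run the vacuum collapse of \cite[Corollary~1]{KU} with $a=0$. Its gluing cone $C_0$ becomes the Schwarzschild horizon $\{r=2M_1\}$, and its exterior beyond a stationary ingoing attachment $N_1$ is exactly Schwarzschild.
Then choose a second outgoing cone $C_B$ slightly outside $C_0$, and keep the exact Schwarzschild rectangle $\mathfrak R$ between $C_B$ and $C_0$, with edge $\mathcal A\subset C_0$.
Only then apply Theorem~\ref{thm:A} from a later Schwarzschild sphere $S_1\subset C_B$.
The trapped spheres just inside $\mathcal A$ lie in the actual old interior, which has the same two-jet on $\mathcal A$, so $\Db(\Omega\tr\chi)=2\Omega^2\rho<0$ there. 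Moreover $\mathfrak R\subset\BH$ because it lies on the interior side of $C_B$, which becomes the event horizon. The reference to \cite[Section~1.4.3]{KU22} in your last paragraph explains why Israel's argument fails; it does not supply this construction.

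The two-stage structure changes the remaining steps, which your proposal also leaves open.
\begin{itemize}
\item The interior strip along $C_B$ is solved forward from an ingoing segment inside the retained old region, with the old data rather than prescribed ones.
\item The bottom sphere of $C_B$ bounds a piece of the first collapse spacetime, not Minkowski space. Closing the bottom therefore needs a cutoff argument around a non-flat background, with a rate $o(\epsilon)$ for the displacement of the light cones (Appendix~\ref{app:cutoff}).
\item ``Filling the exterior with the extremal Kerr domain of outer communication'' only covers the region beyond an ingoing null hypersurface $N_2$ of Kerr through a late horizon cut. The exterior of the bridge itself is a thin strip, obtained by a backward characteristic problem with data on $C_B$ and $N_2$.
\item At $a=M$ the level sets of $v_K$ are not null, so a genuinely null $N_2$ is needed, for instance a Pretorius--Israel cone \cite{PI}.
\item The class $H^{7/2-}\times H^{5/2-}$ does not come from taking $s$ large. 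It is limited by the merely $C^2$ matching across the interfaces, and follows from the one-sided $C^4$ reserve by a fractional difference-quotient argument.
\end{itemize}
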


Theorem~\ref{thm:B} is the first rigorous counterexample to the third law in vacuum gravity. The precise statement is Theorem~\ref{thm:thirdlaw}. The metric is $C^2$ across the characteristic gluing interfaces, but need
not be $C^2$ at the central crease of the construction, as in \cite[Remark~4.1]{KU}. The deduction of Theorem~\ref{thm:B} from
Theorem~\ref{thm:A} follows the two-stage assembly of Kehle and Unger \cite[Section~5.3]{KU22}, \cite[Section~4]{KU}; we do not
apply their small-rotation result at $a=M$, but rerun the argument and verify every place where small rotation or spherical
symmetry was used (Section~\ref{ch:thirdlaw}).

\subsection{The difficulties}
\emph{Large data.} We start at a Kerr sphere of mass $M_1$ and end at an extremal Kerr horizon of mass $M_f\gg M_1$. Nothing is
close to Minkowski space, so there is no implicit function theorem around a trivial solution, and the ten-charge picture of
\cite{ACR} does not apply directly.

\emph{The target is a degenerate horizon.} An extremal horizon section has vanishing outgoing expansion and shear, and the
Killing generator is affinely parametrised. The transversal data $\Omega\tr\chib$, $\chibh$, $\ab$ at the target are
infinitely many conditions, which must all be matched, together with the angular momentum.

\emph{A sign.} The terminal section and the boost $\lambda$ are free parameters used to absorb two components of the endpoint
map. The natural parametrisation of the target extends smoothly through $\lambda=0$ to negative values, where it no longer
describes a horizon section, so the construction must decide the sign of $\lambda$.

\subsection{Strategy of the proof}\label{sec:strategy}

\begin{figure}[ht]
\centering
\begin{tikzpicture}[scale=0.9]
 \draw[thick] (0,0) -- (14,0);
 \foreach \x in {0,5.2,6.4,11.4,14}{\draw (\x,-0.15)--(\x,0.15);}
 \node[left] at (0,0) {$S_1$};
 \node[right] at (14,0) {$S_2$};
 \draw[decorate,decoration={brace,amplitude=6pt}] (0,0.3) -- (5.2,0.3) node[midway,above=6pt] {\small early bridge};
 \draw[decorate,decoration={brace,amplitude=6pt}] (5.2,0.3) -- (6.4,0.3) node[midway,above=6pt] {\small tail};
 \draw[decorate,decoration={brace,amplitude=6pt}] (6.4,0.3) -- (11.4,0.3) node[midway,above=6pt] {\small neck, length $T$};
 \draw[decorate,decoration={brace,amplitude=6pt}] (11.4,0.3) -- (14,0.3) node[midway,above=6pt] {\small collar};
 \node[below,align=center] at (2.6,-0.25) {\scriptsize Kerr germ; area raised;\\ \scriptsize spin pulses; controller};
 \node[below,align=center] at (5.8,-0.25) {\scriptsize focusing\\ \scriptsize to $\tr\chi=0$};
 \node[below,align=center] at (8.9,-0.25) {\scriptsize $\gdc=\gdcH$, $\omega$ constant,\\ \scriptsize incoming data decay};
 \node[below,align=center] at (12.7,-0.25) {\scriptsize small seed $k$\\ \scriptsize supported in $(0,1)$};
\end{tikzpicture}
\caption{Layout of the outgoing null hypersurface in the parameter $v$. The data are exactly Kerr near $S_1$. On the neck and
near $S_2$ the intrinsic data are exactly those of the extremal horizon.}
\label{fig:bridge}
\end{figure}

The proof of Theorem~\ref{thm:cutversion} has six steps (Figure~\ref{fig:bridge}).
\begin{enumerate}[label=\textup{Step~\arabic*.},leftmargin=4.2em]
\item \emph{A large partial bridge} (Sections~\ref{ch:tail} and~\ref{ch:spin}). Starting from $S_1$ we construct a solution of
the null constraint equations whose terminal sphere carries exactly the \emph{intrinsic} data of an extremal horizon section:
the metric, vanishing outgoing second fundamental form, and the horizon torsion. This includes the angular momentum $J=M_f^2$,
which is pumped in by non-axisymmetric rotating pulses of shear. The bridge ends with a \emph{focusing tail}: along an explicit
path of conformal metrics from the round one to that of the horizon, Raychaudhuri's equation is solved backwards from the exact
terminal data, so that the expansion vanishes at exactly the right area and at every angle simultaneously, while it stays positive
at the start of the tail. The transversal data at the end are not controlled.
\item \emph{Local controllability} (Section~\ref{ch:controller}). Additional pulses in the control window before the focusing tail make the map from seed perturbations to the
terminal area radius, expansion and torsion locally surjective with a smooth right inverse.
\item \emph{A long neck} (Section~\ref{ch:neck}). The hypersurface is continued along an exactly non-expanding segment of
length $T$ on which the generator has constant non-affinity $2\omega>0$. Along such a segment the incoming data decay like
$e^{-2\omega v}$ and $e^{-4\omega v}$, so the neck damps the uncontrolled transversal data of Steps~1 and~2 by
$O(e^{-2\omega T})$.
\item \emph{A late collar and a linear isomorphism} (Section~\ref{ch:linear}). On a final unit interval we insert a small
seed. To first order its effect on the incoming data is given by two radial moments, through angular operators $E$ and
$\Pcal=\Bcal E$. Together with the shape of the terminal section and the boost this gives an isomorphism at all angular
frequencies; the key is a factorisation of $\Pcal$ through an angular Teukolsky--Starobinsky product.
\item \emph{The nonlinear endpoint equation} (Section~\ref{ch:fixedpoint}). A contraction argument with constants independent
of $T$ solves the full matching problem, with error $O(e^{-2\omega T}+\varepsilon^2)$, where $\varepsilon$ is the amplitude of a
fixed seed direction $k_*$ invisible to the linearisation.
\item \emph{The sign of the boost} (Section~\ref{ch:sign}). An exact trace identity expresses $\lambda$ through a weighted
time integral of a scalar functional $\Fcal$ of the metric and the torsion. The functional is critical at the horizon, and its
second variation along $k_*$ can be computed explicitly and made negative. Hence
$\lambda=c_*\varepsilon^2+O(\varepsilon^3+e^{-2\omega T})$ with $c_*>0$.
\end{enumerate}
Finally (Section~\ref{ch:realisation}) the matched geometric quantities are completed to full sphere data by a choice of
transversal gauge, the hypersurface is continued along the extremal horizon from $S_2$ to a horizon sphere $S_3$, and the data
are realised in a vacuum spacetime by Luk's theorem \cite{Luk}.

\emph{Explicit structures at both ends.} Although the result concerns large data, no step is perturbative around Minkowski space or
around any other small solution. The perturbative arguments are carried out near two explicit structures, one at each end of the
hypersurface, and the neck connects them.

At the beginning, the partial bridge of Step~1 is a large but \emph{exact} solution: it reaches the intrinsic horizon data with zero
defect. It cannot be axisymmetric, since in axisymmetry the angular momentum is conserved (Corollary~\ref{cor:komar}), but it is
designed so that every quantity entering the equations nonlinearly is axisymmetric. The shear energy sees only the speed of the
distortion, not its phase, so pulses of a single helicity with disjoint supports have axisymmetric energy. Then the area radius and the
lapse depend only on $(v,z)$, and Raychaudhuri's equation reduces to ordinary differential equations. By the triangular structure of
the hierarchy, the non-axisymmetric information enters only the torsion, through a linear equation with explicit sources. There the
angular momentum is pumped in by closed rotating pulses, whose skew memory is computed exactly in the hyperbolic plane of unimodular
$2\times2$ matrices; the axial torsion profile is matched by an explicitly invertible ordinary differential equation; and all remaining
non-axisymmetric memories cancel exactly, because the area is, by design, exactly affine on the control window. The discrete symmetry $\mathcal R_\pi$
keeps only the modes $e^{\pm2i\varphi}$ and reduces the obstructions for the controller of Step~2 from six to two. Since the reference
bridge has zero defect, the controller is an inverse function theorem around a large exact solution, applied to the small changes of
the targets caused by the late seed.

At the other end, the extremal Kerr horizon is explicit because of its own algebraic structure: the near-horizon equation, exact
formulas for all sections and boosts (from the Killing structure and the Petrov type~D of Kerr), weight identities, and an exact
helicity calculus in which $\Pcal$ factorises into Teukolsky--Starobinsky operators. This makes the linearisation of Step~4 an
isomorphism at all angular frequencies and the sign in Step~6 computable. Finally, the neck makes the interaction between the two ends
small: the incoming data at the end of the early bridge are of size one and uncontrolled, but they reach the collar only through a
defect of size $O(e^{-2\omega T})$. This dictates the order of choices: first the large bridge, then $\varepsilon$, then $T$.

\subsection{Two mechanisms}\label{sec:toy}
Two ideas of the proof can be seen in finite dimensions.

\emph{(i) A damped equation forgets its initial data.} For the scalar equation $y'=-2\omega y+f$ on $[-T,0]$, $\omega>0$,
\[
 y(0)=e^{-2\omega T}y(-T)+\int_{-T}^0e^{2\omega v'}f(v')\,dv'.
\]
The initial value enters with the factor $e^{-2\omega T}$, and the kernel $e^{2\omega v'}$ has $L^1$-norm at most
$(2\omega)^{-1}$ uniformly in $T$, so the solution map $f\mapsto y(0)$ is bounded independently of $T$, without a growing Gronwall
factor. On the neck the incoming second fundamental form satisfies exactly such an equation,
$D(\Omega^{-1}\chib)=-2\omega\,\Omega^{-1}\chib+(\text{source})$, and the source vanishes on the horizon by the
near-horizon equation (Proposition~\ref{prop:NH}).

\emph{(ii) The sign of a parameter is decided by a second variation.} Let $\mathcal A:X\to Y$ be a bounded isomorphism onto a
closed subspace of codimension one, $\Lambda\in Y^*$ with $\ker\Lambda=\operatorname{ran}\mathcal A$, $y_0\in Y$ with
$\Lambda(y_0)\ne0$, and consider $\mathcal Ax+\lambda y_0+\mathcal N(x,\varepsilon)=0$ with a smooth nonlinearity satisfying
$\mathcal N(0,0)=0$, $D\mathcal N(0,0)=0$. Applying $\Lambda$ gives
$\lambda=-\Lambda(\mathcal N(x,\varepsilon))/\Lambda(y_0)$. If $x=O(\varepsilon^2)$ and
$\mathcal N(0,\varepsilon)=\varepsilon^2q+O(\varepsilon^3)$, then
$\lambda=-\varepsilon^2\Lambda(q)/\Lambda(y_0)+O(\varepsilon^3)$, so the sign of $\lambda$ is determined by the single number
$\Lambda(q)$. In the proof, $\mathcal A$ is the linearised endpoint map, $\Lambda$ is a weighted trace
(Corollary~\ref{cor:range}), $y_0$ is the incoming second fundamental form of the horizon, applying $\Lambda$ is the exact trace
identity of Proposition~\ref{prop:trace}, and $\Lambda(q)$ is the second variation of $\Fcal$.

\subsection{Outline}
Section~\ref{ch:null} derives the null constraint hierarchy along one null hypersurface, and Section~\ref{ch:optical} rewrites
its first two levels in terms of the area radius, the conformal metric and the torsion. Section~\ref{ch:kerr} computes the
data of the extremal Kerr horizon and of all its sections. Sections~\ref{ch:tail}--\ref{ch:sign} carry out Steps~1--6, and
Section~\ref{ch:realisation} completes the proofs of Theorems~\ref{thm:A} and~\ref{thm:cutversion}. Section~\ref{ch:thirdlaw}
proves Theorem~\ref{thm:B}. Appendix~\ref{app:sphere} collects the facts about tensors on the sphere and their smoothness at
the poles that are used in the construction of the seeds, and Appendix~\ref{app:cutoff} proves a cutoff lemma around a non-flat background used in Section~\ref{ch:thirdlaw}.

\subsection{Use of AI tools}\label{sec:aiusage}

The author used the large language models ChatGPT (OpenAI) and Claude (Anthropic) for exploratory calculations and numerical simulations. The models also contributed technical observations that informed the development of the proofs. They were also used to assist with literature searches and exposition. The author takes full responsibility for the mathematical results of this paper.

\section{Null hypersurfaces and the constraint hierarchy}\label{ch:null}

In this section we fix conventions and state the equations that hold along one outgoing null hypersurface of a
vacuum spacetime, in the renormalised form used throughout the paper. All statements in the proof of the gluing theorem
concern solutions of these equations. They are the null structure and null Bianchi equations of \cite{ACR,Chr}, to which we
refer for their derivation.

\subsection{Conventions}\label{sec:conv}
\emph{Signature and curvature.} The spacetime $(\mathcal M,\bg)$ is four-dimensional with
signature $(-,+,+,+)$ and Levi-Civita connection $\nabla$. The curvature endomorphism is
$R(X,Y)Z=\nabla_X\nabla_YZ-\nabla_Y\nabla_XZ-\nabla_{[X,Y]}Z$ and, following Christodoulou
\cite[Chapter~1]{Chr}, the curvature tensor with four arguments is
\begin{equation}\label{eq:Rconv}
 \bR(W,Z,X,Y):=\bg\big(W,R(X,Y)Z\big).
\end{equation}
It is antisymmetric in $(W,Z)$ and in $(X,Y)$, satisfies the pair symmetry
$\bR(W,Z,X,Y)=\bR(X,Y,W,Z)$ and the first Bianchi identity $R(X,Y)Z+R(Y,Z)X+R(Z,X)Y=0$. The Ricci
tensor is $\Ric(Y,Z)=\tr\big(X\mapsto R(X,Y)Z\big)$, so that the round unit sphere has $\Ric=\gac$ and
Gauss curvature $K=1$. In a frame $(e_\mu)$ with dual frame $(e^\mu)$ (that is,
$\bg(e^\mu,e_\nu)=\delta^\mu_\nu$, with $e^\mu$ a vector),
\begin{equation}\label{eq:Ricframe}
 \Ric(Y,Z)=\sum_\mu\bg\big(R(e_\mu,Y)Z,\,e^\mu\big).
\end{equation}
The vacuum Einstein equations are $\Ric(\bg)=0$.

\paragraph{Two-dimensional tensor operations.} Let $(S,\gd)$ be a Riemannian two-sphere with area
form $\slashed\epsilon$ and Levi-Civita connection $\Nd$. Indices $A,B,\ldots$ are raised and lowered
with $\gd$. For one-forms $\xi,\zeta$ and symmetric two-tensors $\theta,\theta'$ we write
\begin{align*}
 &(\xi,\zeta)=\gd^{AB}\xi_A\zeta_B,\quad |\xi|^2=(\xi,\xi),\quad (\theta,\theta')=\gd^{AC}\gd^{BD}\theta_{AB}\theta'_{CD},\quad
 (\theta\cdot\xi)_A=\theta_A{}^B\xi_B,\\
 &\Div\xi=\Nd^A\xi_A,\quad \Curl\xi=\slashed\epsilon^{AB}\Nd_A\xi_B,\quad (\Div\theta)_A=\Nd^B\theta_{AB},\\
 &{}^*\xi_A=\slashed\epsilon_A{}^B\xi_B,\qquad ({}^*\theta)_{AB}=\slashed\epsilon_A{}^C\theta_{CB}\ \ (\text{the left dual}),\\
 &(\xi\ohat\zeta)_{AB}=\xi_A\zeta_B+\xi_B\zeta_A-(\xi,\zeta)\gd_{AB},\qquad
 (\Nd\ohat\xi)_{AB}=\Nd_A\xi_B+\Nd_B\xi_A-(\Div\xi)\gd_{AB},\\
 &\theta\wedge\theta'=\slashed\epsilon^{AB}\theta_A{}^C\theta'_{CB}.
\end{align*}
The trace of $\theta$ is $\tr\theta=\gd^{AB}\theta_{AB}$ and its tracefree part is
$\hat\theta=\theta-\frac12(\tr\theta)\gd$.

\subsection{Null hypersurfaces and optical functions}\label{sec:nullhyp}

A hypersurface $\Hcal\subset\mathcal M$ is null if its normal vector $N$ is null; then $N$ is also
tangent to $\Hcal$, and its integral curves are the generators of $\Hcal$. A function $u$ is an
\emph{optical function} if $\bg^{-1}(du,du)=0$; its level sets are null hypersurfaces.

\begin{lemma}\label{lem:geodesic}
If $u$ is an optical function, the vector field $N:=-\nabla u$ (that is, $\bg(N,X)=-du(X)$) is
a null geodesic vector field: $\nabla_NN=0$.
\end{lemma}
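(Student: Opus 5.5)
The plan is the standard two-line computation using the symmetry of the Hessian of $u$, together with the eikonal equation. First, $N$ is null: by definition $\bg(N,N)=\bg^{-1}(du,du)=0$, since $N$ is the metric dual of $-du$. So only the geodesic property needs proof.

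For an arbitrary vector field $X$, I compute $\bg(\nabla_NN,X)$. Since $\nabla$ is metric and $N^\flat=-du$, we have $\bg(\nabla_NN,X)=-(\nabla_N du)(X)=-\nabla^2u(N,X)$, where $\nabla^2u(Y,Z)=(\nabla_Y du)(Z)$ is the Hessian. The key input is that the Hessian is symmetric, because $\nabla$ is torsion-free: $\nabla^2u(Y,Z)-\nabla^2u(Z,Y)=-du([Y,Z])+du(\nabla_YZ-\nabla_ZY)=0$. Hence
\[
 \bg(\nabla_NN,X)=-\nabla^2u(X,N)=-(\nabla_X du)(N)=\bg(\nabla_XN,N)=\tfrac12 X\big(\bg(N,N)\big).
\]

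Since $\bg(N,N)=\bg^{-1}(du,du)$ vanishes identically on the domain of $u$, its derivative in every direction $X$ vanishes. Therefore $\bg(\nabla_NN,X)=0$ for all $X$, and nondegeneracy of $\bg$ gives $\nabla_NN=0$.

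There is no real obstacle. The only point requiring care is that the eikonal equation must hold on an open set, not merely on one level set, so that it can be differentiated in directions transversal to $\Hcal$; this is exactly what the definition of an optical function provides. The signs coming from the convention $N=-\nabla u$ cancel, since $N$ appears quadratically.
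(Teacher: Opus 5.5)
Your proof is correct and is essentially the paper's argument: symmetry of the Hessian of $u$ turns $\bg(\nabla_NN,X)$ into $\tfrac12X\bigl(\bg(N,N)\bigr)$, which vanishes because $\bg^{-1}(du,du)=0$ holds identically. One small slip that does not affect the conclusion: the torsion-free check should read $\nabla^2u(Y,Z)-\nabla^2u(Z,Y)=du([Y,Z])-du(\nabla_YZ-\nabla_ZY)$, the opposite of the overall sign you wrote, and this is zero either way.
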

\begin{proof}
In index notation $N_a=-\nabla_au$, and $\nabla_bN_a=-\nabla_b\nabla_au$ is symmetric in $a,b$. Hence
$N^b\nabla_bN_a=N^b\nabla_aN_b=\tfrac12\nabla_a(N^bN_b)=0$, since $N^bN_b=\bg^{-1}(du,du)=0$
everywhere.
\end{proof}
Thus null hypersurfaces are ruled by null geodesics, and the constraint equations along $\Hcal$
become ordinary differential equations along the generators.

\subsection{Double null gauge}\label{sec:dn}

We use the double null gauge of \cite[\S2.1]{ACR} (see also \cite[Appendix~A]{KU}). There are two
optical functions $u,v$, whose level sets $\{u=\text{const}\}$ (outgoing) and $\{v=\text{const}\}$
(incoming) intersect in spacelike two-spheres $S_{u,v}$. Angular coordinates $\theta^A$ are
transported along the incoming direction, and
\begin{equation}\label{eq:metric}
 \bg=-4\Omega^2du\,dv+\gd_{AB}(d\theta^A-b^Adv)(d\theta^B-b^Bdv),
\end{equation}
with lapse $\Omega>0$, shift $b=b^A\partial_A$ and induced metric $\gd=\gd_{AB}d\theta^Ad\theta^B$
on $S_{u,v}$. The frame dual to the coframe $(du,dv,d\theta^A-b^Adv)$ is
$(\partial_u,\ \partial_v+b^A\partial_A,\ \partial_A)$, in which \eqref{eq:metric} has
$\bg(\partial_u,\partial_v+b)=-2\Omega^2$, $\bg(\partial_A,\partial_B)=\gd_{AB}$, and all other products
vanish. Hence
\[
 \bg^{-1}=-\frac1{2\Omega^2}\big(\partial_u\otimes(\partial_v+b)+(\partial_v+b)\otimes\partial_u\big)+\gd^{AB}\partial_A\otimes\partial_B ,
\]
so $\bg^{-1}(du,du)=\bg^{-1}(dv,dv)=0$, and $\nabla u=-\frac1{2\Omega^2}(\partial_v+b)$,
$\nabla v=-\frac1{2\Omega^2}\partial_u$. Following \cite[(2.1)--(2.4)]{ACR}, we define
\begin{equation}\label{eq:frames}
 L:=\partial_v+b,\quad \Lb:=\partial_u,\qquad L':=\Omega^{-2}L=-2\nabla u,\quad \Lb':=\Omega^{-2}\Lb=-2\nabla v,
\end{equation}
\begin{equation}\label{eq:e34}
 e_4:=\Omega L'=\Omega^{-1}L,\qquad e_3:=\Omega\Lb'=\Omega^{-1}\Lb .
\end{equation}
By Lemma~\ref{lem:geodesic}, $L'$ and $\Lb'$ are geodesic, and
$\bg(L,\Lb)=-2\Omega^2$, $\bg(L',\Lb')=-2\Omega^{-2}$, $\bg(e_4,e_3)=-2$.

\begin{remark}[A sign in the shift]
In \cite[(2.3)]{ACR} the coframe is written as $d\theta^A+b^Adv$, together with $L=\partial_v+b$
\cite[(2.4)]{ACR}. With the coframe $d\theta^A+b^Adv$ the null normal tangent to $\{u=\text{const}\}$ is
$\partial_v-b$, so $L=\partial_v+b$ requires the coframe $d\theta^A-b^Adv$ of \eqref{eq:metric}. This plays
no role in the present paper, since we work on one outgoing null hypersurface with $b=0$.
\end{remark}

\subsection{One null hypersurface and the frame \texorpdfstring{$(L,\Lb')$}{(L,L')}}\label{sec:onehyp}

From now on we work on one outgoing null hypersurface $\Hcal=\{u=u_0\}$, foliated by the spheres
$S_v=S_{u_0,v}$. We choose the angular coordinates on $\Hcal$ constant along the generators (label
each generator by the point where it meets a fixed sphere); equivalently $b=0$ on $\Hcal$. Then on
$\Hcal$, $L=\partial_v$ is tangent to the generators, $[L,\partial_A]=0$, and the coordinate fields
$\partial_A$ are tangent to the spheres $S_v$. For an $S$-tensor $\theta$ (a tensor field tangent to the
spheres), the projected Lie derivative $D\theta:=\slashed{\mathcal L}_L\theta$ has components
$(D\theta)_{AB}=\partial_v(\theta_{AB})$, since $[\partial_v,\partial_A]=0$ and the flow of $\partial_v$
maps $S_v$ to $S_{v'}$. Thus $D$ is $\partial_v$ on components, and all equations below are ODEs along
the generators.

\paragraph{The generator-normalised frame.} Instead of the frame $(e_4,e_3)$ we use the pair
\begin{equation}\label{eq:Lpair}
 L=\partial_v=\Omega e_4,\qquad \Lb'=\Omega^{-1}e_3,\qquad \bg(L,\Lb')=-2 .
\end{equation}
Here $L$ is the generator field along which we integrate, and $\Lb'$ is the unique null normal to the
cuts $S_v$ normalised against it (the normal space of $S_v$ is a Lorentzian plane containing exactly
two null lines, one of them spanned by $L$). For $X,Y$ tangent to $S_v$ we set
\begin{equation}\label{eq:chiprime}
 \chi_L(X,Y):=\bg(\nabla_XL,Y),\quad \chib'(X,Y):=\bg(\nabla_X\Lb',Y),\quad
 \tau(X):=\tfrac12\bg(\nabla_XL,\Lb'),\quad \nabla_LL=:\kappa L .
\end{equation}
The null second fundamental forms $\chi_L$, $\chib'$ of $S_v$ are symmetric: for instance
$\chi_L(X,Y)-\chi_L(Y,X)=-\bg(L,[X,Y])=0$, as $[X,Y]$ is tangent to $S_v$. The one-form $\tau$ is the
torsion of the pair $(L,\Lb')$. Finally $\nabla_LL$ is proportional to
$L$ because $L=\Omega^2L'$ with $L'$ geodesic; the factor $\kappa$ is the non-affinity of $L$.

\begin{lemma}[Frame derivatives]\label{lem:framederiv}
On $\Hcal$, for $X$ tangent to $S_v$,
\begin{align}
 \nabla_XL&=\chi_L(X)^\sharp-\tau(X)\,L, &\nabla_X\Lb'&=\chib'(X)^\sharp+\tau(X)\,\Lb',\label{eq:fd1}\\
 \nabla_LL&=\kappa\,L, &\nabla_L\Lb'&=-\kappa\,\Lb'-2\tau^\sharp,\label{eq:fd2}
\end{align}
where $\chi_L(X)^\sharp$ is the vector tangent to $S_v$ with $\gd(\chi_L(X)^\sharp,Y)=\chi_L(X,Y)$, and
similarly for $\chib'(X)^\sharp$ and $\tau^\sharp$. Moreover $\nabla_L\partial_A=\nabla_{\partial_A}L$.
\end{lemma}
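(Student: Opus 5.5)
The plan is to decompose each covariant derivative in the frame $(L,\Lb',\partial_A)$ adapted to $S_v$. Any vector $W$ at a point of $\Hcal$ can be written as
\[
W=W^\top-\tfrac12\bg(W,\Lb')L-\tfrac12\bg(W,L)\Lb',
\]
where $W^\top$ is tangent to $S_v$. This holds because $\bg(L,\Lb')=-2$, $L$ and $\Lb'$ are null, and both are orthogonal to $TS_v$. For each of the four derivatives I will compute the three pairings with $Y\in TS_v$, with $L$ and with $\Lb'$, using the definitions \eqref{eq:chiprime}, the null conditions, and metric compatibility.

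For $\nabla_XL$ the three pairings are as follows.
\begin{itemize}
\item The tangential part is $\chi_L(X)^\sharp$ by definition.
\item Since $\bg(L,L)=0$, we get $\bg(\nabla_XL,L)=\tfrac12X\bg(L,L)=0$, so there is no $\Lb'$-component.
\item $\bg(\nabla_XL,\Lb')=2\tau(X)$, so the $L$-coefficient is $-\tfrac12\cdot2\tau(X)=-\tau(X)$.
\end{itemize}
For $\nabla_X\Lb'$ the computation is analogous.
\begin{itemize}
\item The tangential part is $\chib'(X)^\sharp$.
\item $\bg(\nabla_X\Lb',\Lb')=0$, so there is no $L$-component.
\item $\bg(\nabla_X\Lb',L)=X\bg(\Lb',L)-\bg(\Lb',\nabla_XL)=0-2\tau(X)$, so the $\Lb'$-coefficient is $-\tfrac12(-2\tau(X))=\tau(X)$.
\end{itemize}
This gives \eqref{eq:fd1}. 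The identity $\nabla_LL=\kappa L$ is just the definition of $\kappa$, which the text already justified: $L=\Omega^2L'$ with $L'$ geodesic by Lemma~\ref{lem:geodesic}, so $\nabla_LL=L(\Omega^2)\Omega^{-2}L$.

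For $\nabla_L\Lb'$ I will use the same decomposition.
\begin{itemize}
\item $\bg(\nabla_L\Lb',\Lb')=0$, so there is no $L$-component.
\item $\bg(\nabla_L\Lb',L)=L(-2)-\bg(\Lb',\kappa L)=2\kappa$, so the $\Lb'$-coefficient is $-\kappa$.
\item For the tangential part, $\bg(\nabla_L\Lb',Y)=-\bg(\Lb',\nabla_LY)$ since $\bg(\Lb',Y)=0$ along $\Hcal$. It suffices to take $Y=\partial_A$, and then $\nabla_L\partial_A=\nabla_{\partial_A}L$ because $[L,\partial_A]=0$ (torsion-freeness). This gives $-\bg(\Lb',\nabla_{\partial_A}L)=-2\tau(\partial_A)$, so the tangential part is $-2\tau^\sharp$.
\end{itemize}
The last bullet also proves the final claim $\nabla_L\partial_A=\nabla_{\partial_A}L$, so I will state that first and use it. Since everything extends $C^\infty(\Hcal)$-linearly, the identities hold for all $X$ tangent to $S_v$.

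The only point needing care is that $\bg(\Lb',Y)=0$ holds along $\Hcal$, so differentiating it in the direction $L$, which is tangent to $\Hcal$, is legitimate. Here $Y=\partial_A$ must be an $S_v$-tangent field defined on $\Hcal$, which it is because $b=0$. The remaining risk is sign bookkeeping: with $\bg(L,\Lb')=-2$, each component coefficient carries a factor $-\tfrac12$, and I expect this to be the main place where errors could enter.
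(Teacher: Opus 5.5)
Your proof is correct and follows the paper's argument essentially line by line. You use the same decomposition $V=V^A\partial_A-\tfrac12\bg(V,\Lb')L-\tfrac12\bg(V,L)\Lb'$, the same three pairings for each derivative, and $[L,\partial_A]=0$ to obtain the tangential part $-2\tau^\sharp$ of $\nabla_L\Lb'$. One minor correction: $\partial_A$ is tangent to $S_v$ regardless of the shift, and what $b=0$ on $\Hcal$ actually provides is $[L,\partial_A]=0$ there.
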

\begin{proof}
Every vector $V$ at a point of $\Hcal$ can be written as $V=V^A\partial_A+aL+c\Lb'$, and pairing with
$\Lb'$, $L$, $\partial_B$ gives
$a=-\tfrac12\bg(V,\Lb')$, $c=-\tfrac12\bg(V,L)$, $V_B=\bg(V,\partial_B)$.
For $V=\nabla_XL$: $\bg(V,\partial_B)=\chi_L(X,\partial_B)$, $\bg(V,\Lb')=2\tau(X)$, and
$\bg(V,L)=\frac12X\bg(L,L)=0$. For $V=\nabla_X\Lb'$: $\bg(V,\partial_B)=\chib'(X,\partial_B)$,
$\bg(V,\Lb')=0$, and $\bg(V,L)=X\bg(\Lb',L)-\bg(\Lb',\nabla_XL)=-2\tau(X)$. For $V=\nabla_L\Lb'$:
$\bg(V,\Lb')=0$; $\bg(V,L)=-\bg(\Lb',\nabla_LL)=2\kappa$; and
$\bg(V,\partial_B)=-\bg(\Lb',\nabla_L\partial_B)=-\bg(\Lb',\nabla_{\partial_B}L)=-2\tau_B$, using
$\nabla_L\partial_B-\nabla_{\partial_B}L=[L,\partial_B]=0$. Inserting these pairings into the formulas
for $a$, $c$, $V_B$ gives \eqref{eq:fd1}--\eqref{eq:fd2}.
\end{proof}

\subsection{The Ricci coefficients of \texorpdfstring{\cite{ACR}}{ACR} and a dictionary}\label{sec:dict}

The Ricci coefficients of \cite[(2.8)]{ACR}, defined with the frame $(e_4,e_3)$, are
\[
 \chi(X,Y)=\bg(\nabla_Xe_4,Y),\quad \chib(X,Y)=\bg(\nabla_Xe_3,Y),\quad
 \zeta(X)=\tfrac12\bg(\nabla_Xe_4,e_3),
\]
\[
 \eta=\zeta+\Nd\log\Omega,\qquad \etab=-\zeta+\Nd\log\Omega,\qquad
 \omega=D\log\Omega,\qquad \omb=\Db\log\Omega,
\]
and the null curvature components are \cite[(2.10)]{ACR}, \cite[(1.53), (1.58), (1.75), (1.77), (1.159)]{Chr}
\begin{align*}
 \alpha(X,Y)&=\bR(X,e_4,Y,e_4), & \ab(X,Y)&=\bR(X,e_3,Y,e_3),\\
 \beta(X)&=\tfrac12\bR(X,e_4,e_3,e_4), & \bb(X)&=\tfrac12\bR(X,e_3,e_3,e_4),\\
 \rho&=\tfrac14\bR(e_3,e_4,e_3,e_4), & \sigma\,\slashed\epsilon(X,Y)&=\tfrac12\bR(X,Y,e_3,e_4).
\end{align*}
We decompose $\chi=\frac12\tr\chi\,\gd+\chih$ and $\chib=\frac12\tr\chib\,\gd+\chibh$.

\begin{lemma}[Dictionary]\label{lem:dict}
On $\Hcal$, for $X,Y$ tangent to $S_v$,
\[
 \chi_L=\Omega\chi,\qquad \chib'=\Omega^{-1}\chib,\qquad \tau=-\etab,\qquad \kappa=2\omega,
\]
\[
 \bR(X,\Lb',Y,\Lb')=\Omega^{-2}\ab(X,Y),\qquad \tfrac12\bR(X,\Lb',\Lb',L)=\Omega^{-1}\bb(X),\qquad
 \tfrac14\bR(\Lb',L,\Lb',L)=\rho .
\]
\end{lemma}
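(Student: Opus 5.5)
The dictionary follows by substituting the relations \eqref{eq:Lpair}, $L=\Omega e_4$ and $\Lb'=\Omega^{-1}e_3$, into the definitions \eqref{eq:chiprime} and into the definitions of the Ricci coefficients of \cite{ACR}. Throughout I use tensoriality of $\bR$ in each slot, and the fact that $\bg(e_4,Y)=\bg(e_3,Y)=0$ for $Y$ tangent to $S_v$. I carry out the steps in the order: second fundamental forms, curvature components, non-affinity, torsion, since the last two need a little care with derivatives of $\Omega$.

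\emph{Second fundamental forms and curvature.} For $X,Y$ tangent to $S_v$,
\[
 \chi_L(X,Y)=\bg(\nabla_X(\Omega e_4),Y)=X(\Omega)\,\bg(e_4,Y)+\Omega\,\bg(\nabla_Xe_4,Y)=\Omega\chi(X,Y),
\]
because the first term vanishes. The same computation gives $\chib'=\Omega^{-1}\chib$. The curvature identities are pure multilinearity. Two factors of $\Lb'$ give $\bR(X,\Lb',Y,\Lb')=\Omega^{-2}\ab(X,Y)$. In $\tfrac12\bR(X,\Lb',\Lb',L)$ the factors are $\Omega^{-1}\cdot\Omega^{-1}\cdot\Omega=\Omega^{-1}$, so it equals $\Omega^{-1}\bb(X)$. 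In $\tfrac14\bR(\Lb',L,\Lb',L)$ the factors cancel, and it equals $\rho$.

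\emph{Non-affinity.} By Lemma~\ref{lem:framederiv}, or directly from $L=\Omega^2L'$ with $\nabla_{L'}L'=0$ (Lemma~\ref{lem:geodesic}),
\[
 \nabla_LL=\Omega^2\nabla_{L'}(\Omega^2L')=\Omega^2L'(\Omega^2)\,L'=L(\Omega^2)\,\Omega^{-2}L=2\,D\log\Omega\; L .
\]
Hence $\kappa=2\omega$, using $\omega=D\log\Omega$ and that $D$ acts as $L=\partial_v$ on functions (Section~\ref{sec:onehyp}).

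\emph{Torsion, the main step.} Here the derivative of the lapse does not drop out, so the signs need checking. Expand
\[
 \tau(X)=\tfrac12\bg\big(\nabla_X(\Omega e_4),\Omega^{-1}e_3\big)=\tfrac12\Omega^{-1}X(\Omega)\,\bg(e_4,e_3)+\tfrac12\bg(\nabla_Xe_4,e_3).
\]
Since $\bg(e_4,e_3)=-2$, the first term is $-X\log\Omega$, and the second is $\zeta(X)$ by definition. Thus $\tau=\zeta-\Nd\log\Omega$. Comparing with $\etab=-\zeta+\Nd\log\Omega$ gives $\tau=-\etab$.

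The one point to verify is the sign convention in $\etab$ as defined in Section~\ref{sec:dict} (following \cite{ACR}). The computation above confirms that it is consistent with the definition of $\tau$ in \eqref{eq:chiprime}, so no further case distinction is needed.
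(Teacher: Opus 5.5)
Your proposal is correct and follows the paper's proof essentially line by line. You substitute $L=\Omega e_4$, $\Lb'=\Omega^{-1}e_3$, use $\bg(e_4,Y)=0$ and $\bg(e_4,e_3)=-2$ for the second fundamental forms and the torsion, use $L=\Omega^2L'$ with $L'$ geodesic for $\kappa=2\omega$, and use multilinearity for the curvature components. One small correction: the value of $\kappa$ comes only from your direct computation, since Lemma~\ref{lem:framederiv} merely defines $\kappa$.
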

\begin{proof}
$\chi_L(X,Y)=\bg(\nabla_X(\Omega e_4),Y)=\Omega\chi(X,Y)+X(\Omega)\bg(e_4,Y)=\Omega\chi(X,Y)$, and
likewise for $\chib'$. For the torsion,
\[
 \tau(X)=\tfrac12\bg\big(\nabla_X(\Omega e_4),\Omega^{-1}e_3\big)
 =\tfrac12\bg(\nabla_Xe_4,e_3)+\tfrac12X(\log\Omega)\bg(e_4,e_3)=-\etab(X),
\]
since $\zeta-\Nd\log\Omega=-\etab$.
Since $L=\Omega^2L'$ with $\nabla_{L'}L'=0$,
$\nabla_LL=\Omega^2L'(\Omega^2)L'=L(\log\Omega^2)\,L=2\omega\,L$. The curvature identities follow from
multilinearity, e.g.\ $\bR(X,\Lb',Y,\Lb')=\Omega^{-2}\bR(X,e_3,Y,e_3)$.
\end{proof}

Thus $2\omega$ is the non-affinity of the generator field $L$, and $-\etab$ is the torsion of the pair
$(L,\Lb')$. In the frame $(L,\Lb')$ the natural quantities are $\Omega\chi$, $\Omega^{-1}\chib$, $\etab$,
$\omega$, $\Omega^{-1}\bb$, $\Omega^{-2}\ab$, that is, the quantities of \cite{ACR} multiplied by $\Omega^k$,
where $k$ is the number of $e_4$'s minus the number of $e_3$'s in their definition. We call them
\emph{renormalised}. Their transport equations for the incoming second fundamental form and the
incoming curvature contain no explicit $\Nd\log\Omega$. The torsion equation contains $2\Nd\omega$,
$\omega=D\log\Omega$, so a suitable angular regularity of the lapse is needed; within that regularity
class the lapse remains freely prescribable, and on the neck and the late collar
(Section~\ref{ch:neck}) we choose $\omega$ independent of the angle, so that this term vanishes.

\subsection{Two identities on a null hypersurface}\label{sec:outgoing}

We record two consequences of Lemma~\ref{lem:framederiv}, which are used in Sections~\ref{ch:kerr} and~\ref{ch:spin} to
identify vanishing curvature components on horizons.

\begin{lemma}[Transport and Codazzi identities]\label{lem:frameid}
On $\Hcal$,
\begin{equation}\label{eq:chiLtransport}
 D(\chi_L)_{AB}=\kappa\,(\chi_L)_{AB}+(\chi_L)_A{}^C(\chi_L)_{CB}+\bg\big(R(L,\partial_A)L,\partial_B\big),
\end{equation}
and for $X,Y,Z$ tangent to $S_v$
\begin{equation}\label{eq:Codout}
 \bg\big(R(Z,X)L,Y\big)=(\Nd_Z\chi_L)(X,Y)-(\Nd_X\chi_L)(Z,Y)-\tau(X)\chi_L(Z,Y)+\tau(Z)\chi_L(X,Y).
\end{equation}
If moreover $\Ric(X,L)=0$ for $X$ tangent to $S_v$, then
\begin{equation}\label{eq:RicXL}
 \tfrac12\bg\big(R(L,X)L,\Lb'\big)=(\Div\chi_L)(X)-X(\tr\chi_L)-\tau(X)\tr\chi_L+\chi_L(X,\tau^\sharp).
\end{equation}
\end{lemma}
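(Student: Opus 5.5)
The plan is to derive all three identities directly from Lemma~\ref{lem:framederiv}, by expanding covariant derivatives of $L$ in the frame $(\partial_A,L,\Lb')$ and commuting derivatives through the curvature endomorphism.

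\emph{Transport identity.} Since $D$ acts as $\partial_v$ on components,
\[
 D(\chi_L)_{AB}=L\,\bg(\nabla_{\partial_A}L,\partial_B)=\bg(\nabla_L\nabla_{\partial_A}L,\partial_B)+\bg(\nabla_{\partial_A}L,\nabla_L\partial_B).
\]
\begin{itemize}
\item In the first term, use $[L,\partial_A]=0$ to write $\nabla_L\nabla_{\partial_A}L=R(L,\partial_A)L+\nabla_{\partial_A}\nabla_LL$. Then $\nabla_{\partial_A}(\kappa L)=(\partial_A\kappa)L+\kappa\nabla_{\partial_A}L$, and pairing with $\partial_B$ kills the $L$ component. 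This leaves $\kappa(\chi_L)_{AB}+\bg(R(L,\partial_A)L,\partial_B)$.
\item In the second term, use $\nabla_L\partial_B=\nabla_{\partial_B}L=\chi_L(\partial_B)^\sharp-\tau_BL$ from Lemma~\ref{lem:framederiv}. Since $\bg(\nabla_{\partial_A}L,L)=0$, it equals $\gd(\chi_L(\partial_A)^\sharp,\chi_L(\partial_B)^\sharp)=(\chi_L)_A{}^C(\chi_L)_{CB}$.
\end{itemize}
This gives \eqref{eq:chiLtransport}.

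\emph{Codazzi identity.} Extend $X,Y,Z$ as $S_v$-tangent fields and write
\[
 \bg(R(Z,X)L,Y)=\bg(\nabla_Z\nabla_XL,Y)-\bg(\nabla_X\nabla_ZL,Y)-\bg(\nabla_{[Z,X]}L,Y).
\]
Insert $\nabla_XL=\chi_L(X)^\sharp-\tau(X)L$. Then
\[
 \bg(\nabla_Z(\chi_L(X)^\sharp),Y)=Z(\chi_L(X,Y))-\bg(\chi_L(X)^\sharp,\nabla_ZY),
\]
and the tangential part of $\nabla_ZY$ is $\Nd_ZY$, while its normal part is orthogonal to $\chi_L(X)^\sharp$. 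So this term is $(\Nd_Z\chi_L)(X,Y)+\chi_L(\Nd_ZX,Y)$. The term $-\tau(X)\nabla_ZL$ contributes $-\tau(X)\chi_L(Z,Y)$. Antisymmetrising in $Z,X$, the $\chi_L(\Nd_ZX-\Nd_XZ,Y)$ pieces cancel against the $[Z,X]$ term, because $\Nd$ is torsion free and $[Z,X]$ is tangent to $S_v$. This gives \eqref{eq:Codout}.

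\emph{The identity \eqref{eq:RicXL}.} This is the only step needing care. Expand $\Ric(X,L)$ via \eqref{eq:Ricframe} in the frame $(e_C,L,\Lb')$, with $(e_C)$ $\gd$-orthonormal on $S_v$. The dual frame has $L^\flat$-dual $-\tfrac12\Lb'$ and $\Lb'$-dual $-\tfrac12L$, so
\[
 \Ric(X,L)=\sum_C\bg(R(e_C,X)L,e_C)-\tfrac12\bg(R(L,X)L,\Lb')-\tfrac12\bg(R(\Lb',X)L,L).
\]
The last term vanishes by antisymmetry of $\bR$ in its first pair. Setting $\Ric(X,L)=0$ gives
\[
 \tfrac12\bg(R(L,X)L,\Lb')=\sum_C\bg(R(e_C,X)L,e_C).
\]
Now apply \eqref{eq:Codout} with $Z=Y=e_C$ and sum over $C$. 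The four terms become $(\Div\chi_L)(X)$, $X(\tr\chi_L)$, $\tau(X)\tr\chi_L$ and $\chi_L(X,\tau^\sharp)$, which produces \eqref{eq:RicXL}.

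The main obstacle is purely sign bookkeeping: the convention \eqref{eq:Rconv}, the sign of the dual-frame vectors, and the identification of the $e_C$ sum with $\Div$ and the trace. I would check these on a concrete case, for instance a light cone in Minkowski space, where both sides vanish, and by matching against the known Codazzi equation for $\beta$ through Lemma~\ref{lem:dict}.
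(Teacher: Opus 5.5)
Your proposal is correct and follows the paper's proof essentially step by step. The three steps are the commutation $\nabla_L\nabla_{\partial_A}L=\nabla_{\partial_A}(\kappa L)+R(L,\partial_A)L$ for the transport identity, antisymmetrising $\bg(\nabla_Z\nabla_XL,Y)$ via \eqref{eq:fd1} for the Codazzi identity, and the Ricci trace in the frame $(e_C,L,\Lb')$ with dual vectors $-\tfrac12\Lb'$, $-\tfrac12L$ followed by a contraction of \eqref{eq:Codout}. The only cosmetic difference is that you use a $\gd$-orthonormal frame on $S_v$, whereas the paper contracts with $\gd^{BC}$ in coordinates.
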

\begin{proof}
Let $X=\partial_A$, $Y=\partial_B$, so that $[L,X]=0$ and $\nabla_LX=\nabla_XL$. Then
$\nabla_L\nabla_XL=\nabla_X\nabla_LL+R(L,X)L=X(\kappa)L+\kappa\nabla_XL+R(L,X)L$, and since $\bg(L,Y)=0$,
$L\big(\chi_L(X,Y)\big)=\kappa\,\chi_L(X,Y)+\bg(R(L,X)L,Y)+\bg(\nabla_XL,\nabla_YL)$. By \eqref{eq:fd1},
$\bg(\nabla_XL,\nabla_YL)=(\chi_L\cdot\chi_L)(X,Y)$, which gives \eqref{eq:chiLtransport}. For \eqref{eq:Codout},
\eqref{eq:fd1} gives $\bg(\nabla_Z\nabla_XL,Y)=(\Nd_Z\chi_L)(X,Y)+\chi_L(\Nd_ZX,Y)-\tau(X)\chi_L(Z,Y)$; antisymmetrising in
$Z,X$ and subtracting $\nabla_{[Z,X]}L$ gives \eqref{eq:Codout}, since $\Nd$ is torsion free. Finally, \eqref{eq:Ricframe} in the
frame $(\partial_B,L,\Lb')$, with dual frame $(\partial^B,-\frac12\Lb',-\frac12L)$, gives
$\Ric(X,L)=\gd^{BC}\bg(R(\partial_B,X)L,\partial_C)-\frac12\bg(R(L,X)L,\Lb')$, the term $\bg(R(\Lb',X)L,L)$ vanishing by
antisymmetry; contracting \eqref{eq:Codout} in $Z$ and $Y$ yields \eqref{eq:RicXL}.
\end{proof}

\subsection{The renormalised hierarchy}\label{sec:hierarchy}

The following system is solved in the rest of the paper.

\begin{proposition}[Renormalised null constraint hierarchy]\label{prop:hier}
Along $\Hcal$ (with $b=0$, $L=\partial_v$) the vacuum equations imply
\begin{align}
 D\gd&=\Omega\tr\chi\,\gd+2\Omega\chih,\label{eq:H1}\\
 D(\Omega\tr\chi)&=2\omega\,\Omega\tr\chi-\tfrac12(\Omega\tr\chi)^2-|\Omega\chih|^2,\label{eq:H2}\\
 D\etab&=-\Div(\Omega\chih)+\tfrac12\Nd(\Omega\tr\chi)-\Omega\tr\chi\,\etab+2\Nd\omega,\label{eq:H3}\\
 D(\Omega^{-1}\tr\chib)&=-(2\omega+\Omega\tr\chi)\,\Omega^{-1}\tr\chib
   +2\big(\Div\etab+|\etab|^2-K\big),\label{eq:H4tr}\\
 D(\Omega^{-1}\chib)&=-2\omega\,\Omega^{-1}\chib+\tfrac12\Omega\tr\chi\,\Omega^{-1}\chibh
   +(\Omega\chih,\Omega^{-1}\chibh)\gd\notag\\
 &\qquad+\tfrac12\Omega^{-1}\tr\chib\,\Omega\chih+\Ecal(\gd,\etab),\label{eq:H4}\\
 \widehat D(\Omega^{-2}\ab)&=\big(\tfrac12\Omega\tr\chi-4\omega\big)\Omega^{-2}\ab
   -\Nd\ohat(\Omega^{-1}\bb)-5\,\etab\ohat(\Omega^{-1}\bb)\notag\\
 &\qquad-3\rho\,\Omega^{-1}\chibh+3\sigma\,{}^*(\Omega^{-1}\chibh),\label{eq:H5}
\end{align}
where
\begin{align}
 \Omega^{-1}\bb&=\Div(\Omega^{-1}\chibh)-\tfrac12\Nd(\Omega^{-1}\tr\chib)
   +(\Omega^{-1}\chibh)\cdot\etab-\tfrac12\Omega^{-1}\tr\chib\,\etab,\label{eq:Cod}\\
 \rho&=-K-\tfrac14\Omega\tr\chi\,\Omega^{-1}\tr\chib+\tfrac12(\Omega\chih,\Omega^{-1}\chibh),\label{eq:Gauss}\\
 \sigma&=-\Curl\etab+\tfrac12(\Omega\chih)\wedge(\Omega^{-1}\chibh).\label{eq:curl}
\end{align}
No angular derivative of $\Omega$ appears, except through the term $2\Nd\omega$ in \eqref{eq:H3}. The tensor
\begin{equation}\label{eq:Ecal}
 \Ecal(\gd,\etab)_{AB}:=\Nd_A\etab_B+\Nd_B\etab_A+2\etab_A\etab_B-K\gd_{AB}
\end{equation}
is called the \emph{near-horizon tensor}; its vanishing is the near-horizon equation of extremal
horizons (Section~\ref{ch:kerr}).
\end{proposition}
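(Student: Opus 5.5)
The plan is to derive each equation directly in the frame $(L,\Lb')$ from Lemmas~\ref{lem:framederiv} and~\ref{lem:frameid}, and then translate with the dictionary of Lemma~\ref{lem:dict}. Alternatively, one can take the standard null structure and Bianchi equations of \cite{ACR,Chr} in the frame $(e_4,e_3)$, multiply by the appropriate powers of $\Omega$, and absorb the $\Omega$-derivatives using $\omega=D\log\Omega$. I would use the first route for the outgoing equations and the second as a check for the incoming ones.

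\emph{Outgoing level, \eqref{eq:H1}--\eqref{eq:H3}.} Equation \eqref{eq:H1} is immediate: $D\gd_{AB}=\partial_v\bg(\partial_A,\partial_B)=2\chi_L(\partial_A,\partial_B)$, by $\nabla_L\partial_A=\nabla_{\partial_A}L$ and \eqref{eq:fd1}. For \eqref{eq:H2}, take the trace of \eqref{eq:chiLtransport}. Here one must remember $D\gd^{AB}=-2\chi_L^{AB}$, which turns $+|\chi_L|^2$ into $-|\chi_L|^2=-\frac12(\tr\chi_L)^2-|\hat\chi_L|^2$. The curvature term $\gd^{AB}\bg(R(L,\partial_A)L,\partial_B)$ equals $\pm\Ric(L,L)$, which vanishes; and $\kappa=2\omega$. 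For \eqref{eq:H3}, differentiate $\tau(X)=\frac12\bg(\nabla_XL,\Lb')$ along $L$. Commuting $\nabla_L\nabla_X$ gives $X(\kappa)$, a curvature term $\frac12\bg(R(L,X)L,\Lb')$, and terms from $\nabla_L\Lb'$ in \eqref{eq:fd2}, which combine to $-\chi_L(X,\tau^\sharp)$. Then substitute \eqref{eq:RicXL}. The $\chi_L(X,\tau^\sharp)$ terms cancel, and one is left with
\[
D\tau=\Div\chi_L-\Nd\tr\chi_L-\tau\tr\chi_L+\Nd\kappa .
\]
Writing $\Div\chi_L=\Div\hat\chi_L+\frac12\Nd\tr\chi_L$ and $\tau=-\etab$ gives \eqref{eq:H3}.

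\emph{Incoming level, \eqref{eq:H4tr}, \eqref{eq:H4} and the constraints \eqref{eq:Cod}--\eqref{eq:curl}.} I would compute $D\chib'(X,Y)=L\bg(\nabla_X\Lb',Y)$ in the same way. The ingredients are the commutator $\nabla_L\nabla_X\Lb'=\nabla_X\nabla_L\Lb'+R(L,X)\Lb'$, the formula $\nabla_L\Lb'=-\kappa\Lb'-2\tau^\sharp$, and \eqref{eq:fd1} for $\nabla_YL$. This produces the terms $-\kappa\chib'$, $-2\Nd\tau$ (symmetrised via $\Nd_A\tau_B$ together with the $\tau\otimes\tau$ terms), the product $\chi_L\cdot\chib'$, and the curvature $\bg(R(L,X)\Lb',Y)$. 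The mixed curvature is then decomposed with the vacuum condition. Its trace part is $\rho$ plus a Ricci term, and its antisymmetric part is $\sigma$. Using the Gauss equation \eqref{eq:Gauss} to replace $\rho$ by $-K$ plus quadratic terms yields \eqref{eq:H4tr} and \eqref{eq:H4}, with $\Ecal(\gd,\etab)$ collecting $\Nd\etab$, $\etab\otimes\etab$ and $-K\gd$.

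The identities \eqref{eq:Gauss} and \eqref{eq:curl} themselves follow from the Gauss equation of $S_v\subset\mathcal M$ and from the antisymmetrised derivative of $\tau$. The Codazzi identity \eqref{eq:Cod} is the analogue of \eqref{eq:Codout} with $\Lb'$ in place of $L$, contracted, using $\Ric(X,\Lb')=0$.

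\emph{Bianchi equation \eqref{eq:H5}.} Here I would not re-derive from scratch. Instead I would take the $\nabla_4\ab$ null Bianchi equation of \cite[(1.6)]{Chr}/\cite{ACR}, written in terms of $\chi,\omega,\bb,\etab,\rho,\sigma,\chibh$, and substitute $\ab=\Omega^2(\Omega^{-2}\ab)$, $\bb=\Omega(\Omega^{-1}\bb)$, $\nabla_4=\Omega^{-1}D$. The $D\log\Omega$ terms shift the coefficient of $\omega$ to $-4\omega$. The terms involving $\Nd\log\Omega$ cancel against the $\zeta$ versus $\etab$ difference in the source, since $\eta+\etab=2\Nd\log\Omega$, which is why only $\etab$ appears. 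The final remark, that no $\Nd\Omega$ appears except in $2\Nd\omega$, follows by inspection.

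\emph{Main obstacle.} I expect the main difficulty to be the bookkeeping of signs and numerical coefficients in \eqref{eq:H4} and \eqref{eq:H5}. Examples are the coefficient $5$ in $5\etab\ohat\Omega^{-1}\bb$, the precise grouping into $\Ecal$, and the trace correction $(\Omega\chih,\Omega^{-1}\chibh)\gd$ coming from $D\gd$ when passing between $D\chib'$ and $D\hat\chib'$. I would fix these by checking against an explicit solution, namely the Kerr horizon or Minkowski cones, and by verifying tracefreeness consistency between \eqref{eq:H4} and \eqref{eq:H4tr}.
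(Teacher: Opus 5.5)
Your plan is sound, but it takes a different route from the paper's proof. The paper's proof is a translation: it takes the null structure and Bianchi equations of \cite{ACR,Chr} in the frame $(e_4,e_3)$, attaches the powers of $\Omega$ from Lemma~\ref{lem:dict}, and tracks the cancellation of the $\Nd\log\Omega$ terms. Only \eqref{eq:H2} and \eqref{eq:H3} are written out, via Raychaudhuri's equation and via $D\etab=-\Omega(\chi\cdot\etab-\beta)+2\Nd\omega$ combined with the Codazzi equation for $\Omega\beta$. The incoming equation comes from Christodoulou's $\widehat D$ form via $D\theta=\widehat D\theta+(\Omega\chih,\theta)\gd$.

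You instead compute \eqref{eq:H1}--\eqref{eq:H4} and \eqref{eq:Cod}--\eqref{eq:curl} directly in the generator-normalised frame from Lemmas~\ref{lem:framederiv} and~\ref{lem:frameid}, and import only \eqref{eq:H5}. What this buys:
\begin{itemize}
\item everything except the Bianchi equation becomes self-contained;
\item the absence of $\Nd\log\Omega$ is automatic, since the frame $(L,\Lb')$ sees the lapse only through $\kappa=2\omega$;
\item it is transparent which Ricci components are used: $\Ric(L,L)$ and $\Ric(L,\partial_A)$ for \eqref{eq:H1}--\eqref{eq:H3}, and in addition $\Ric(X,Y)=0$, $\Ric(L,\Lb')=0$ to reduce the symmetric part of $\bg(R(L,X)\Lb',Y)$ to $\rho\,\gd(X,Y)$.
\end{itemize}
The paper's route is shorter and treats all six equations uniformly, at the price of relying on external conventions.

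There are two points to fix in execution.

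\emph{Sign in \eqref{eq:H3}.} In $L(\tau(X))$ the commutator contributes $\tfrac12X(\kappa)\bg(L,\Lb')=-X(\kappa)$. So the intermediate formula is $D\tau=\Div\chi_L-\Nd\tr\chi_L-\tau\tr\chi_L-\Nd\kappa$. With your $+\Nd\kappa$ and $\tau=-\etab$ you would get $-2\Nd\omega$ in \eqref{eq:H3} instead of $+2\Nd\omega$.

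\emph{Origin of the trace term in \eqref{eq:H4}.} The symmetric part of the incoming computation is
\[
 D\chib'_{AB}=-\kappa\chib'_{AB}+\Nd_A\etab_B+\Nd_B\etab_A+2\etab_A\etab_B+\sym(\chib'\cdot\chi_L)_{AB}+\rho\,\gd_{AB}.
\]
The term $(\Omega\chih,\Omega^{-1}\chibh)\gd$ therefore comes from the two-dimensional identity $\sym(\chibh'\cdot\hat\chi_L)=\tfrac12(\hat\chi_L,\chibh')\gd$ together with the Gauss equation. It is not a $D\gd$ correction; that correction only arises if one starts from the $\widehat D$ form, as the paper does. Since the direct computation fixes all coefficients, your checks against Kerr or Minkowski are sanity checks, not part of the argument.
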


\begin{proof}
These are the null structure equations and the null Bianchi equation along an outgoing null hypersurface, see
\cite[Section~2]{ACR} and \cite[Chapter~1]{Chr}, written for the renormalised quantities of Lemma~\ref{lem:dict}; here $\sigma$ is
defined as in Section~\ref{sec:dict} and ${}^*$ is the left dual. The conversion consists in attaching the powers of $\Omega$. Equation
\eqref{eq:H1} is the first variation formula $D\gd=2\chi_L$. Raychaudhuri's equation
$D\tr\chi=-\frac\Omega2(\tr\chi)^2+\omega\tr\chi-\Omega|\chih|^2$ and $D(\Omega\tr\chi)=\omega\Omega\tr\chi+\Omega D\tr\chi$ give
\eqref{eq:H2}; the torsion equation
$D\etab=-\Omega(\chi\cdot\etab-\beta)+2\Nd\omega$ together with the Codazzi equation for $\Omega\beta$ and the identities
$\Omega\Div\chih=\Div(\Omega\chih)-\Omega\chih\cdot\Nd\log\Omega$, $\Omega\Nd\tr\chi=\Nd(\Omega\tr\chi)-\Omega\tr\chi\,\Nd\log\Omega$,
$\zeta-\Nd\log\Omega=-\etab$ gives \eqref{eq:H3}; and in the incoming equations the terms containing $\Nd\log\Omega$ cancel after
renormalisation. The equation for $\Omega^{-1}\tr\chib$ is the trace of \eqref{eq:H4}, and
$D\theta=\widehat D\theta+(\Omega\chih,\theta)\gd$ for tracefree $\theta$. The optical and torsion equations
\eqref{eq:H1}--\eqref{eq:H3} use the vacuum equations only through the constraint components $\Ric(L,L)=0$ and $\Ric(L,\partial_A)=0$.
\end{proof}

\begin{remark}[Triangular structure]\label{rem:triangular}
Proposition~\ref{prop:hier} has a triangular structure:
\begin{itemize}
\item the optical equations \eqref{eq:H1}--\eqref{eq:H2} involve only $\gd$, $\Omega\chi$ and $\omega$;
\item the torsion equation \eqref{eq:H3} is linear in $\etab$, with coefficients from the optical level;
\item the equation \eqref{eq:H4} for the incoming second fundamental form is linear in $\Omega^{-1}\chib$,
with a source $\Ecal(\gd,\etab)$ from the previous two levels;
\item the equation \eqref{eq:H5} is linear in $\Omega^{-2}\ab$, with sources from the previous three levels.
\end{itemize}
Each level costs angular derivatives: one for $\etab$ (through $\Div(\Omega\chih)$ and
$\Nd\Omega\tr\chi$), one more for $\Omega^{-1}\chib$ (through $\Nd\etab$ and $K$), and two more for
$\Omega^{-2}\ab$ (through $\Nd\ohat\Div\chibh$ in $\Nd\ohat(\Omega^{-1}\bb)$). This gives the regularity hierarchy
\[
 \gd,\ \Omega\chi\in H^s,\qquad \etab\in H^{s-1},\qquad \Omega^{-1}\chib\in H^{s-2},\qquad \Omega^{-2}\ab\in H^{s-4},
\]
as in the norm of \cite[Definition~2.5]{ACR}. In each equation the coefficient of the unknown is
algebraic. Hence each level is a linear ODE in $v$ with values in a fixed Sobolev space, which may be
integrated forwards or backwards, and no derivative loss occurs within a level.
\end{remark}

\subsection{Free data}\label{sec:freedata}

The hierarchy also shows what can be prescribed along $\Hcal$. Suppose we are given the sphere data
at the initial sphere $S_{v_1}$; the lapse $\Omega(v,x)$ along $\Hcal$ (equivalently $\omega=D\log\Omega$ and
the initial lapse); and the conformal class of $\gd(v)$, i.e.\ a family $\gdc(v)$ of metrics with fixed
area form (Section~\ref{ch:optical}). Then Raychaudhuri's equation \eqref{eq:H2} becomes a linear
second-order ODE for the area radius (Section~\ref{ch:optical}), \eqref{eq:H3} determines $\etab$,
\eqref{eq:H4} determines $\Omega^{-1}\chib$, and \eqref{eq:H5} determines $\Omega^{-2}\ab$. These are
Christodoulou's characteristic free data \cite[Chapter~2]{Chr}. In the proof we shall often reverse
the roles: we prescribe the area and the conformal class and solve for the lapse. Both are
legitimate, since \eqref{eq:H2} is a single scalar relation between the three.

\section{The optical level and the torsion}\label{ch:optical}

The first two levels of the hierarchy, the optical equations \eqref{eq:H1}--\eqref{eq:H2} and the
torsion equation \eqref{eq:H3}, carry the intrinsic geometry of the null hypersurface: area, shear,
expansion and angular momentum. In this section we rewrite them in the variables used throughout
the proof: the area radius $\phi$, the conformal metric $\gdc$, and $Z=\phi^2\eta$.

\subsection{The conformal decomposition}\label{sec:confdec}

We fix on the sphere of generators the unit round metric $\gac=d\vartheta^2+\sin^2\!\vartheta\,d\varphi^2$
in transported coordinates $(\vartheta,\varphi)$; it is a reference structure, not a geometric property of
the cuts. Following \cite[(2.6)]{ACR}, we define
\begin{equation}\label{eq:confdec}
 \phi^2:=\frac{\sqrt{\det\gd}}{\sqrt{\det\gac}},\qquad \gdc:=\phi^{-2}\gd,\qquad\text{so that}\qquad
 \gd=\phi^2\gdc,\quad \det\gdc=\det\gac .
\end{equation}
The ratio of determinants is a function, so $\phi$ is well defined. Thus $d\mu_{\gd}=\phi^2\,d\mu_{\gac}$, and
we call $\phi$ the \emph{area radius} ($\phi=r$ for a round sphere of radius $r$), while $\gdc$ has the
same area form as $\gac$ and carries the shape of $\gd$. The \emph{shear energy} is
\begin{equation}\label{eq:e}
 e:=\tfrac18\,\gdc^{AB}\gdc^{CD}\,D\slashed g_{c,AC}\,D\slashed g_{c,BD}=\tfrac18\tr\big((\gdc^{-1}D\gdc)^2\big).
\end{equation}

\begin{proposition}[Optics in conformal variables]\label{prop:opt}
\begin{equation}\label{eq:opt}
 \Omega\tr\chi=\frac{2D\phi}{\phi},\qquad \Omega\chih=\tfrac12\phi^2D\gdc,\qquad
 |\Omega\chih|^2_{\gd}=2e .
\end{equation}
\end{proposition}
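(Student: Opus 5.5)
The plan is to start from the first variation formula \eqref{eq:H1}, $D\gd=\Omega\tr\chi\,\gd+2\Omega\chih$, and insert the conformal decomposition $\gd=\phi^2\gdc$ of \eqref{eq:confdec}. By Section~\ref{sec:onehyp}, $D$ acts as $\partial_v$ on components, so the product rule gives
\[
 D\gd=2\phi\,D\phi\,\gdc+\phi^2D\gdc=\frac{2D\phi}{\phi}\,\gd+\phi^2D\gdc .
\]
The whole proof is then to show that $\phi^2D\gdc$ is tracefree with respect to $\gd$. Once that is known, comparing the pure-trace and tracefree parts of the two expressions for $D\gd$ gives $\Omega\tr\chi=2D\phi/\phi$ and $2\Omega\chih=\phi^2D\gdc$ immediately.

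For the tracefree property I use that $\det\gdc=\det\gac$, and that $\gac$ is fixed in the transported coordinates, so it is independent of $v$. Jacobi's formula then gives
\[
 0=\partial_v\log\det\gdc=\gdc^{AB}D\slashed g_{c,AB}.
\]
Since $\gd^{AB}=\phi^{-2}\gdc^{AB}$, this says $\tr_{\gd}(\phi^2D\gdc)=\gdc^{AB}D\slashed g_{c,AB}=0$. The consistency check $\tr_{\gd}D\gd=2D\log\sqrt{\det\gd}=4D\phi/\phi$ agrees with the trace of \eqref{eq:H1}, as it should.

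For the shear energy, I write $\Omega\chih=\tfrac12\phi^2D\gdc$ and raise indices with $\gd^{-1}=\phi^{-2}\gdc^{-1}$:
\[
 |\Omega\chih|^2_{\gd}=\phi^{-4}\gdc^{AC}\gdc^{BD}\cdot\tfrac14\phi^4\,D\slashed g_{c,AB}D\slashed g_{c,CD}=\tfrac14\tr\big((\gdc^{-1}D\gdc)^2\big)=2e,
\]
by \eqref{eq:e}. The index placement in \eqref{eq:e} matches this contraction because $D\gdc$ is symmetric.

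None of these steps is a real obstacle; the computation is routine. The one point that needs care is the tracefree property, and specifically that $\det\gac$ is $v$-independent in the coordinates transported along the generators. It is exactly this $v$-independence that lets the $\gd$-trace of $D\gd$ be carried entirely by $\phi$.
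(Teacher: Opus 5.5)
Your proposal is correct and takes essentially the same route as the paper. You differentiate $\gd=\phi^2\gdc$, use Jacobi's formula and the $v$-independence of $\det\gdc=\det\gac$ to see that $\phi^2D\gdc$ is $\gd$-tracefree, and compare trace and tracefree parts with \eqref{eq:H1}; you then contract $\tfrac12\phi^2D\gdc$ with $\gd^{-1}=\phi^{-2}\gdc^{-1}$ to obtain $2e$.
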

\begin{proof}
Differentiating $\gd=\phi^2\gdc$ gives $D\gd=2\phi D\phi\,\gdc+\phi^2D\gdc$, whose $\gd$-trace is
\[
 \gd^{AB}D\gd_{AB}=\frac{4D\phi}\phi+\gdc^{AB}D\slashed g_{c,AB}=\frac{4D\phi}\phi+D\log\det\gdc=\frac{4D\phi}\phi,
\]
by Jacobi's formula and since $\det\gdc=\det\gac$ does not depend on $v$. By \eqref{eq:H1},
$\gd^{AB}D\gd_{AB}=2\Omega\tr\chi$, which gives the first formula. The tracefree part of $D\gd$ is then
$\phi^2D\gdc$, and by \eqref{eq:H1} it equals $2\Omega\chih$. Finally, since $\gd^{AB}=\phi^{-2}\gdc^{AB}$,
\[
 |\Omega\chih|^2_{\gd}=\tfrac14\phi^4\,\gd^{AC}\gd^{BD}D\slashed g_{c,AB}D\slashed g_{c,CD}
 =\tfrac14\,\gdc^{AC}\gdc^{BD}D\slashed g_{c,AB}D\slashed g_{c,CD}=2e. \qedhere
\]
\end{proof}

Thus the expansion is the logarithmic derivative of the area radius and the shear is the derivative
of the shape; $e$ depends at each point only on the curve $v\mapsto\gdc(v)$, not on $\phi$ or $\Omega$.

\subsection{Unimodular \texorpdfstring{$2\times2$}{2x2} matrices}\label{sec:2x2}

We describe the curve $\gdc(v)$ in the $\gac$-orthonormal frame
\begin{equation}\label{eq:roundframe}
 e_1=\partial_\vartheta,\quad e_2=\frac1{\sin\vartheta}\partial_\varphi;\qquad
 \theta^1=d\vartheta,\quad \theta^2=\sin\vartheta\,d\varphi ,
\end{equation}
and record $\gdc$ by its frame matrix $G_{ab}(v,x):=\gdc(e_a,e_b)$; since the frame is $\gac$-orthonormal, $G$ is also the matrix of
$\gac^{-1}\gdc$. Then $G$ is symmetric, positive definite, and $\det G=1$. The frame does not depend on $v$, so $(D\gdc)(e_a,e_b)=\partial_vG_{ab}$, and the
endomorphism $(\gdc^{-1}D\gdc)^A{}_B:=\gdc^{AC}(D\gdc)_{CB}$ has frame matrix $G^{-1}\partial_vG$; hence
$e=\frac18\tr\big((G^{-1}\partial_vG)^2\big)$. A rotation $\varphi\mapsto\varphi+\alpha$ maps $e_1,e_2$ to
themselves, so rotating a tensor field about the axis shifts the argument $\varphi$ of its frame
components. The optical level is thus $2\times2$ matrix algebra, pointwise on the sphere.

\begin{definition}\label{def:SR}
For $\xi,\alpha\in\R$ let
\[
 S(\xi):=\begin{pmatrix}\cos\xi&\sin\xi\\\sin\xi&-\cos\xi\end{pmatrix},\qquad
 S'(\xi):=\frac{d}{d\xi}S(\xi)=\begin{pmatrix}-\sin\xi&\cos\xi\\\cos\xi&\sin\xi\end{pmatrix},
\]
\[
 R_\alpha:=\begin{pmatrix}\cos\alpha&-\sin\alpha\\\sin\alpha&\cos\alpha\end{pmatrix},\qquad
 K_0:=\begin{pmatrix}0&1\\-1&0\end{pmatrix}=R_{-\pi/2}.
\]
\end{definition}

\begin{lemma}[Parametrisation of unimodular metrics]\leavevmode\label{lem:param}
\begin{enumerate}[label=(\alph*)]
\item $S(\xi)$ is symmetric and tracefree, $S(\xi)^2=I$ and $\det S(\xi)=-1$. So $S(\xi)$ is \emph{not} a
rotation matrix; it is the \emph{reflection} in the line through the origin at angle $\xi/2$:
\[
 S(\xi)=R_{\xi/2}\,\mathrm{diag}(1,-1)\,R_{\xi/2}^{-1}.
\]
Moreover
\[
 S(\xi)S(\xi')=R_{\xi-\xi'},\qquad S'(\xi)=S(\xi+\tfrac\pi2),\qquad S(\xi+\pi)=-S(\xi),
\]
\[
 S(\xi)S'(\xi)=K_0,\qquad S'(\xi)S(\xi)=-K_0 .
\]
The tracefree symmetric $2\times2$ matrices are exactly the matrices $\rho S(\xi)$, $\rho\ge0$.
\item $\exp(\rho S(\xi))=\cosh\rho\,I+\sinh\rho\,S(\xi)=R_{\xi/2}\,\mathrm{diag}(e^{\rho},e^{-\rho})\,R_{\xi/2}^{-1}$.
\item Every symmetric positive definite $G$ with $\det G=1$ is $G=\exp(\rho S(\xi))$ for a unique
$\rho\ge0$, and a unique $\xi\in\R/2\pi\mathbb Z$ if $\rho>0$.
\item If $G(t)=\exp(\rho(t)S(\xi(t)))$, then
\begin{equation}\label{eq:Apulse}
 G^{-1}\partial_tG=\rho_t\,S(\xi)+\xi_t\sinh\rho\cosh\rho\,S'(\xi)-\xi_t\sinh^2\!\rho\,K_0 ,
\end{equation}
\begin{equation}\label{eq:e2x2}
 e=\tfrac18\tr\big((G^{-1}\partial_tG)^2\big)=\tfrac14\big(\rho_t^2+\sinh^2\!\rho\;\xi_t^2\big).
\end{equation}
More generally, for two derivations $X,Y$ (e.g.\ $\partial_t$ and $\partial_\varphi$) acting on the
parameters,
\begin{equation}\label{eq:hypcurrent}
 \tr\big(G^{-1}XG\;G^{-1}YG\big)=2\,X\rho\,Y\rho+2\sinh^2\!\rho\;X\xi\,Y\xi .
\end{equation}
\end{enumerate}
\end{lemma}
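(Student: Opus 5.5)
The plan is to prove the four parts in order, each from direct $2\times2$ matrix algebra, using the multiplication rules of (a) throughout. For (a), symmetry and tracelessness of $S(\xi)$ are read off, and $S(\xi)^2=(\cos^2\xi+\sin^2\xi)I=I$, $\det S(\xi)=-\cos^2\xi-\sin^2\xi=-1$. The conjugation formula follows by computing $R_{\xi/2}\operatorname{diag}(1,-1)R_{-\xi/2}$ with the double-angle identities. The product rule $S(\xi)S(\xi')=R_{\xi-\xi'}$ is a direct multiplication:
\[
S(\xi)S(\xi')=\begin{pmatrix}\cos\xi\cos\xi'+\sin\xi\sin\xi' & \cos\xi\sin\xi'-\sin\xi\cos\xi'\\ \sin\xi\cos\xi'-\cos\xi\sin\xi' & \sin\xi\sin\xi'+\cos\xi\cos\xi'\end{pmatrix},
\]
which is exactly $R_{\xi-\xi'}$. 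Next, $S'(\xi)=S(\xi+\pi/2)$ and $S(\xi+\pi)=-S(\xi)$ are immediate from the entries. Combining these, $SS'=S(\xi)S(\xi+\pi/2)=R_{-\pi/2}=K_0$ and $S'S=R_{\pi/2}=-K_0$. Finally, a tracefree symmetric matrix has the form $\begin{pmatrix}a&b\\b&-a\end{pmatrix}$, and writing $(a,b)=\rho(\cos\xi,\sin\xi)$ in polar coordinates shows it equals $\rho S(\xi)$ with $\rho\ge0$.

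For (b), I will use $(\rho S)^{2k}=\rho^{2k}I$ and $(\rho S)^{2k+1}=\rho^{2k+1}S$, both from $S^2=I$. Summing the exponential series splits it into even and odd parts, giving $\cosh\rho\,I+\sinh\rho\,S$. The diagonal form then follows by conjugating with $R_{\xi/2}$ as in (a). For (c), take $G$ symmetric positive definite with $\det G=1$; its eigenvalues are $e^{\pm\rho}$ with $\rho\ge0$, and the corresponding eigenvectors are orthogonal. Choose $\xi/2$ as the angle of the $e^{\rho}$-eigenline, so that $G=R_{\xi/2}\operatorname{diag}(e^\rho,e^{-\rho})R_{\xi/2}^{-1}$. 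Uniqueness: $\rho$ is fixed by the eigenvalues. If $\rho>0$, the eigenline is determined modulo $\pi$, hence $\xi$ is determined modulo $2\pi$. Equivalently, $\log G=\rho S(\xi)$ is the unique symmetric logarithm.

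For (d), I differentiate $G=\cosh\rho\,I+\sinh\rho\,S$ to get
\[
\partial_tG=\rho_t(\sinh\rho\,I+\cosh\rho\,S)+\xi_t\sinh\rho\,S',
\]
and use $G^{-1}=\cosh\rho\,I-\sinh\rho\,S$. Multiplying out, the $\rho_t$-terms collapse to $\rho_t S$, using $\cosh^2-\sinh^2=1$ and $S^2=I$. The $\xi_t$-terms give $\xi_t\sinh\rho(\cosh\rho\,S'-\sinh\rho\,SS')$, and substituting $SS'=K_0$ yields \eqref{eq:Apulse}. For the traces I need the pairings of $I,S,S',K_0$. From (a), $\tr(S^2)=\tr(S'^2)=2$, $K_0^2=-I$ so $\tr K_0^2=-2$, and $\tr(SS')=\tr K_0=0$. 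The remaining cross terms are $SK_0=SSS'=S'$ and $S'K_0=S'SS'=-K_0S'$; since $S'K_0=S'SS'=-K_0 S'$ and, more directly, $S'K_0 = S'SS' $ is a reflection composed appropriately, it is tracefree; so $\tr(SK_0)=\tr(S'K_0)=0$. Thus the $\xi_t$ part contributes
\[
2\xi_t^2\sinh^2\rho\,(\cosh^2\rho-\sinh^2\rho)=2\xi_t^2\sinh^2\rho,
\]
and dividing the total by $8$ gives \eqref{eq:e2x2}. The polarised identity \eqref{eq:hypcurrent} follows from the same computation with $X$ and $Y$ in place of the two copies of $\partial_t$, since only the derivation property was used.

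The only step needing care is the trace bookkeeping in (d), namely the signs of $K_0^2=-I$ and of the cross terms. The point is that the skew component $-\xi_t\sinh^2\rho\,K_0$ contributes negatively and partially cancels the $\cosh^2\rho$ term. This cancellation is what produces the hyperbolic-plane metric $d\rho^2+\sinh^2\rho\,d\xi^2$.
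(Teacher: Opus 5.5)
Your proposal is correct and follows the paper's proof essentially step for step; the only difference is that the paper checks the product rules in (a) through the identification $S(\xi)w=e^{i\xi}\overline w$ on $\C$ rather than by multiplying entries. The one unclear step is your justification of $\tr(S'K_0)=0$. The anticommutation $S'K_0=-K_0S'$ that you wrote down already forces the trace to vanish by cyclicity, or, as in the paper, $\tr(S'SS')=\tr(SS'^2)=\tr S=0$.
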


\begin{proof}
(a) Identify $(x,y)\in\R^2$ with $w=x+iy$. Then $R_\alpha$ is multiplication by $e^{i\alpha}$ and $S(\xi)w=e^{i\xi}\overline w$. Hence
$S(\xi)S(\xi')w=e^{i(\xi-\xi')}w$, i.e.\ $S(\xi)S(\xi')=R_{\xi-\xi'}$ (in particular $S^2=I$), and
$R_{\xi/2}\mathrm{diag}(1,-1)R_{\xi/2}^{-1}w=e^{i\xi/2}\overline{e^{-i\xi/2}w}=e^{i\xi}\overline w$. The identities $S'(\xi)=S(\xi+\frac\pi2)$ and
$S(\xi+\pi)=-S(\xi)$ are read off from the entries, and give $SS'=R_{-\pi/2}=K_0$ and $S'S=R_{\pi/2}=-K_0$; a tracefree symmetric matrix
$\begin{pmatrix}a&b\\b&-a\end{pmatrix}$ equals $\rho S(\xi)$ with $a+ib=\rho e^{i\xi}$. Part (b) follows from $S^2=I$,
which splits the exponential series into $\cosh\rho\,I+\sinh\rho\,S$; the diagonal form follows from (a), since $\exp$ commutes with
conjugation. For (c), the spectral theorem gives
$G=R_\alpha\mathrm{diag}(\lambda,\lambda^{-1})R_\alpha^{-1}$ with $\lambda\ge1$, which is (b) with $\rho=\log\lambda$, $\xi=2\alpha$;
for $\rho>0$ the eigenline of $\lambda$ is unique, so $\xi$ is unique modulo $2\pi$.

(d) Write $c=\cosh\rho$, $s=\sinh\rho$. By (b), $G=cI+sS$ and
$G^{-1}=cI-sS$, and $\partial_tG=s\rho_tI+c\rho_tS+s\xi_tS'$. Using $S^2=I$ and $SS'=K_0$,
\[
 G^{-1}\partial_tG=cs\rho_tI+c^2\rho_tS+cs\xi_tS'-s^2\rho_tS-sc\rho_tI-s^2\xi_tK_0
 =(c^2-s^2)\rho_tS+cs\,\xi_tS'-s^2\xi_tK_0,
\]
which is \eqref{eq:Apulse}. Next, $\tr(S^2)=\tr(S'^2)=2$, $\tr(SS')=\tr K_0=0$, $\tr(SK_0)=\tr(SSS')=\tr S'=0$,
$\tr(S'K_0)=\tr(S'SS')=\tr S=0$ and $\tr(K_0^2)=-2$. Writing $A_X=X\rho\,S+cs\,X\xi\,S'-s^2X\xi\,K_0$ and
similarly $A_Y$,
\[
 \tr(A_XA_Y)=2X\rho\, Y\rho+2c^2s^2X\xi\, Y\xi-2s^4X\xi\, Y\xi=2X\rho\, Y\rho+2s^2X\xi\, Y\xi,
\]
which is \eqref{eq:hypcurrent}; with $X=Y=\partial_t$ it gives \eqref{eq:e2x2}.
\end{proof}

By (b), $\gdc$ stretches lengths in the direction at angle $\xi/2$ from $\partial_\vartheta$ by the factor
$e^{\rho/2}$ relative to $\gac$ and shrinks them in the orthogonal direction by $e^{-\rho/2}$: $\rho$ is the
strength of the distortion and $\xi/2$ the direction of its axis. Since an axis is only defined modulo
$\pi$, the doubled angle $\xi$ is the natural $2\pi$-periodic parameter, and rotating the tensor by an
angle $\beta$ maps $\xi$ to $\xi+2\beta$; this is the statement that tracefree symmetric tensors have spin
$2$, and it produces the phases in Lemma~\ref{lem:poles} and the helicities $\pm2$ in
Section~\ref{ch:linear}. The unimodular positive matrices form the hyperbolic plane
$\mathrm{SL}(2,\R)/\mathrm{SO}(2)$, with metric $\frac12\tr\big((G^{-1}dG)^2\big)=d\rho^2+\sinh^2\!\rho\,d\xi^2$ by
\eqref{eq:hypcurrent}, so that $(\rho,\xi)$ are geodesic polar coordinates centred at $G=I$. By
\eqref{eq:e2x2}, $e$ is a quarter of the squared hyperbolic speed of $t\mapsto G(t,x)$. A change of $\rho$
alone is a radial motion (the shape changes, the axes stay fixed); a change of $\xi$ alone is a circular
motion (the axes rotate). The skew part $-\xi_t\sinh^2\!\rho\,K_0$ of \eqref{eq:Apulse} is present only for
circular motion, and it carries the angular momentum (Section~\ref{ch:spin}).

In complex notation, the tracefree matrix $\log G$ has the complex entry
$(\log G)_{11}+i(\log G)_{12}=\rho e^{i\xi}$, the \emph{complex shear}. We write $\gdc=\gac\exp(A)$ if $A$ is a
$\gac$-symmetric tracefree endomorphism field and $\gdc(X,Y)=\gac(X,\exp(A)Y)$; the frame matrix of
$\gdc$ is then the exponential of the frame matrix of $A$.

The frame \eqref{eq:roundframe} is singular at the poles; identities between frame-independent continuous quantities, such as $e$
and $\rho$, extend there by continuity, and which frame matrices define smooth tensors is decided by Lemma~\ref{lem:poles}.

Lemma~\ref{lem:param}(c) shows that every unimodular $\gdc$ is of the form $\gac\exp(A)$, but only
pointwise and away from the poles. For the constructions (specifically, the connector of Section~\ref{ch:tail}) we
need the logarithm as a smooth field on the whole sphere, which the following frame-free formula
provides. Here $\mathcal R_\pi:\varphi\mapsto\varphi+\pi$ denotes the half-turn about the axis.

\begin{lemma}[The logarithm of a unimodular metric]\label{lem:log}
Let $\gdc$ be a metric on an open set of $S^2$ with $\det\gdc=\det\gac$, and let $H:=\gac^{-1}\gdc$ be the endomorphism obtained by raising the first
index with $\gac$ ($\gac$-self-adjoint, positive, $\det H=1$). Then there is exactly one $\gac$-symmetric tracefree endomorphism field $A$ with
$\gdc=\gac\exp(A)$, namely
\begin{equation}\label{eq:logformula}
 A=\log H:=\int_0^\infty\Big(\frac1{1+s}\,I-(s\,I+H)^{-1}\Big)\,ds .
\end{equation}
Moreover:
\begin{enumerate}[label=(\alph*)]
\item $A$ is as regular as $\gdc$, at every point including the poles, and depends smoothly on any parameter (such as $v$) on which $\gdc$ does;
\item if an isometry $\psi$ of $\gac$ preserves $\gdc$, it preserves $A$. In particular $A$ is axisymmetric if $\gdc$ is, and $\mathcal R_\pi$-invariant if
$\gdc$ is.
\end{enumerate}
\end{lemma}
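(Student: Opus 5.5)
The plan is to work pointwise with the $\gac$-self-adjoint positive endomorphism $H$. I would diagonalise it, show that \eqref{eq:logformula} reproduces the spectral logarithm, and then obtain uniqueness, regularity and equivariance from the integral representation.

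\emph{Step 1: the formula and its properties.} Fix a point $x$. Since $H$ is $\gac$-self-adjoint and positive, the spectral theorem in a $\gac$-orthonormal frame gives $H=R\,\mathrm{diag}(\lambda,\lambda^{-1})R^{-1}$ with $\lambda>0$, using $\det H=1$. On each eigenvalue $\mu>0$ the integrand of \eqref{eq:logformula} acts as $\frac1{1+s}-\frac1{s+\mu}$. This is $O(s^{-2})$ at infinity and bounded near $s=0$, and it integrates to $\log\mu$. Hence $\log H=R\,\mathrm{diag}(\log\lambda,-\log\lambda)R^{-1}$, which has the following properties.
\begin{itemize}
\item It is $\gac$-self-adjoint, as a function of a self-adjoint operator.
\item It is tracefree, since $\log\lambda+\log\lambda^{-1}=0$.
\item It satisfies $\exp(\log H)=H$.
\end{itemize}
The last property says exactly that $\gdc(X,Y)=\gac(X,\exp(A)Y)$.

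\emph{Step 2: uniqueness.} If $A'$ is another $\gac$-symmetric tracefree field with $\exp A'=H$, then $A'$ is diagonalisable with real eigenvalues $a,-a$ in some $\gac$-orthonormal frame. Its exponential has the same eigenvectors, with eigenvalues $e^{\pm a}$. Since $\mu\mapsto e^\mu$ is injective on $\R$, the eigenspaces of $A'$ are those of $H$, with $a=\log\lambda$. If $\lambda=1$ then both sides equal $0$. So $A'=\log H$ pointwise. Alternatively one can use Lemma~\ref{lem:param}(c) away from the poles, but the spectral argument is frame-free and works everywhere.

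\emph{Step 3: regularity, part (a).} This is the only step needing some care, because eigenvectors are not smooth where $\lambda=1$ (isotropic points) or at the poles of the frame \eqref{eq:roundframe}. The integral formula avoids eigenvectors altogether. The map $H\mapsto(sI+H)^{-1}$ is real-analytic on positive operators. On a compact set of $(x,v)$ the eigenvalues of $H$ lie in some $[c,C]$ with $c>0$, so $\|(sI+H)^{-1}\|\le (s+c)^{-1}$. Each derivative, in $x$ or in the parameter, is a sum of products of the form $(sI+H)^{-1}\,\partial^{k}H\,(sI+H)^{-1}\cdots$. With at least two resolvent factors these are $O((s+c)^{-2})$, hence integrable on $[0,\infty)$ uniformly. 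Differentiation under the integral sign is therefore justified to any order. Thus $A$ is $C^k$, or smooth, wherever $H$ is, in any smooth chart including charts around the poles, since the formula is chart-independent. For Sobolev regularity $H^s$ one uses the same product expansion together with the algebra property of $H^s$ for large $s$, or composes with the analytic function $\log$ on the spectral range. Smooth dependence on a parameter follows identically.

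\emph{Step 4: equivariance, part (b).} Let $\psi$ be a $\gac$-isometry with $\psi^*\gdc=\gdc$. Then $\psi^*H=H$, in the sense $d\psi^{-1}\,H_{\psi(x)}\,d\psi=H_x$, because both $\gac$ and $\gdc$ are preserved. Conjugation commutes with resolvents and with the integral, so $\psi^*A=A$. Alternatively this follows from uniqueness in Step 2, since $\psi^*A$ is another admissible logarithm. Applying this to rotations about the axis gives axisymmetry, and applying it to $\mathcal R_\pi$ gives $\mathcal R_\pi$-invariance.
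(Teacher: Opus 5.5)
Your proposal is correct and follows the paper's proof essentially step by step: you use pointwise spectral diagonalisation to evaluate the integral, an eigenspace argument for uniqueness, differentiation under the integral sign with resolvent bounds for (a), and the fact that conjugation by $d\psi$ commutes with resolvents and with the integral for (b). Your alternative derivation of (b) from uniqueness is also valid, since $\psi^*A$ is again $\gac$-symmetric and tracefree and satisfies $\exp(\psi^*A)=\psi^*H=H$.
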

\begin{proof}
At each point $H$ is $\gac$-self-adjoint and positive, so $sI+H$ is invertible for $s\ge0$, and the integrand is
$(1+s)^{-1}(sI+H)^{-1}(H-I)=O(s^{-2})$ as $s\to\infty$; the integral converges. In a $\gac$-orthonormal eigenbasis of $H$, with
eigenvalues $h_1,h_2>0$, the integral is diagonal with entries $\int_0^\infty\big(\frac1{1+s}-\frac1{s+h_i}\big)ds=\log h_i$. Hence $A$ is
$\gac$-self-adjoint, $\exp(A)=H$, and $\tr A=\log\det H=0$. If $A'$ is $\gac$-self-adjoint with $\exp(A')=H$, then $H$ is diagonal in an
eigenbasis of $A'$ with entries $e^{a'_i}$, so $A'$ has the eigenvalues $\log h_i$ on the corresponding eigenspaces of $H$, and $A'=A$.

For (a): on a compact set of positive self-adjoint $H$, all derivatives of the integrand with respect to $H$ are $O((1+s)^{-2})$, since
$\partial_H(sI+H)^{-1}=-(sI+H)^{-1}\,\cdot\,(sI+H)^{-1}$; so $H\mapsto\log H$ is smooth, and differentiation under the integral sign is
justified. The formula involves no frame, so $A$ is as regular as $H$, hence as $\gdc$, at every point including the poles, and in any
parameter. For (b): an isometry $\psi$ of $\gac$ acts on endomorphisms by conjugation with its differential, which commutes with
inversion and with the integral; if $\psi$ preserves $\gdc$, it preserves $H$, hence $A$.
\end{proof}

\subsection{Raychaudhuri's equation as a linear ODE}\label{sec:Ray}

\begin{proposition}\label{prop:Raylin}
In the variables \eqref{eq:confdec}, Raychaudhuri's equation \eqref{eq:H2} is equivalent to
\begin{equation}\label{eq:Ray}
 D^2\phi=2\omega\,D\phi-e\,\phi .
\end{equation}
\end{proposition}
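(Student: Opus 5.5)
The plan is to substitute the conformal expressions of Proposition~\ref{prop:opt} directly into \eqref{eq:H2}. Write $\Omega\tr\chi=2D\phi/\phi$ and $|\Omega\chih|^2_{\gd}=2e$. The left-hand side of \eqref{eq:H2} becomes
\[
 D\Big(\frac{2D\phi}{\phi}\Big)=\frac{2D^2\phi}{\phi}-\frac{2(D\phi)^2}{\phi^2}.
\]
The right-hand side becomes
\[
 2\omega\cdot\frac{2D\phi}{\phi}-\frac12\cdot\frac{4(D\phi)^2}{\phi^2}-2e
 =\frac{4\omega D\phi}{\phi}-\frac{2(D\phi)^2}{\phi^2}-2e .
\]
The quadratic terms $-2(D\phi)^2/\phi^2$ cancel on both sides, so this is precisely the Riccati-to-linear substitution. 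Multiplying by $\phi/2>0$ gives $D^2\phi=2\omega D\phi-e\phi$, which is \eqref{eq:Ray}.

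For the equivalence, note two points. First, $\phi>0$ everywhere, since it is a ratio of positive densities. Second, the identities in \eqref{eq:opt} hold for any curve $v\mapsto\gd(v)$ satisfying \eqref{eq:H1}, and \eqref{eq:H1} is the definition of $\chi_L$ through the first variation. So given \eqref{eq:H1}, each step above is reversible, and \eqref{eq:H2} holds if and only if \eqref{eq:Ray} holds.

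One subtlety needs checking: $|\Omega\chih|^2$ must be computed with $\gd$, not $\gdc$. This is exactly the third identity of \eqref{eq:opt}, and it is what makes $e$ independent of $\phi$.

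There is no real obstacle here. The only step needing care is confirming that $D$ acts on the scalar $\phi$ as $\partial_v$ along generators, which holds with $b=0$ as in Section~\ref{sec:onehyp}. It is also worth recording the interpretation: with $\omega$ and $e$ given, \eqref{eq:Ray} is a linear second-order ODE for $\phi$ at each angle, as announced in Section~\ref{sec:freedata}.
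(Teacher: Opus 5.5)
Your proof is correct and is the same as the paper's: substitute $\Omega\tr\chi=2D\phi/\phi$ and $|\Omega\chih|^2=2e$ from Proposition~\ref{prop:opt} into \eqref{eq:H2}, note that the $(D\phi)^2/\phi^2$ terms cancel, and multiply by $\phi/2$. Your remark that each step is reversible, given \eqref{eq:H1} and $\phi>0$, spells out the equivalence that the paper leaves implicit.
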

\begin{proof}
By \eqref{eq:opt}, $\Omega\tr\chi=2D\phi/\phi$, so
$D(\Omega\tr\chi)=2D^2\phi/\phi-\frac12(\Omega\tr\chi)^2$. Inserting this and $|\Omega\chih|^2=2e$ into
\eqref{eq:H2}, the squares cancel and we obtain $2D^2\phi/\phi=4\omega D\phi/\phi-2e$.
\end{proof}

For $\Omega\equiv1$ this is \cite[(2.11)]{KU}, $\partial_v^2\phi+e\phi=0$. Equation \eqref{eq:Ray} is a
linear second-order ODE for $\phi$ at each angle, and it is one scalar relation between three
quantities: the area radius $\phi$, the lapse (through $\omega=D\log\Omega$) and the shape $\gdc$ (through
$e$). Any two of them can be prescribed and the third solved for. Prescribing $\gdc$ and $\Omega$, one
solves the linear ODE for $\phi$ (Christodoulou's method; used on the focusing tail and the neck,
Sections~\ref{ch:tail} and~\ref{ch:neck}). Prescribing $\gdc$ and $\phi$ with $D\phi>0$, one solves for the
lapse algebraically (used on the early bridge and the control window,
Sections~\ref{ch:tail}--\ref{ch:controller}):
\begin{equation}\label{eq:lapse}
 \omega=D\log\Omega=\frac{D^2\phi+e\phi}{2D\phi}.
\end{equation}
Prescribing $\phi$ and $\Omega$ determines $e$, which restricts the speed of $\gdc$.

\begin{lemma}[Monotonicity and focusing]\label{lem:focus}
Let $\omega\equiv0$ on an interval. Then $\phi$ is concave there wherever $\phi>0$: $D^2\phi=-e\phi\le0$.
In particular, if $D\phi(v_0)=0$ and $\phi>0$ on $[v,v_0]$, then $D\phi\ge0$ on $[v,v_0]$.
\end{lemma}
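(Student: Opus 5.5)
The plan is to read both claims off Proposition~\ref{prop:Raylin}, which with $\omega\equiv0$ reduces Raychaudhuri's equation \eqref{eq:Ray} to
\[
 D^2\phi=-e\,\phi .
\]
By definition \eqref{eq:e}, $e=\tfrac18\tr\big((\gdc^{-1}D\gdc)^2\big)$. The endomorphism $\gdc^{-1}D\gdc$ is $\gdc$-self-adjoint, because $D\gdc$ is symmetric. So in a $\gdc$-orthonormal frame it is a real symmetric matrix, and the trace of its square is a sum of squares, giving $e\ge0$. Equivalently, Proposition~\ref{prop:opt} gives $e=\tfrac12|\Omega\chih|^2_{\gd}\ge0$ directly. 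Hence wherever $\phi>0$ we get $D^2\phi=-e\phi\le0$. At each fixed angle, $v\mapsto\phi(v)$ is therefore concave on any subinterval where it stays positive.

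For the second claim, fix an angle and suppose $D\phi(v_0)=0$ and $\phi>0$ on $[v,v_0]$. Then $D^2\phi\le0$ on $[v,v_0]$, so $D\phi$ is non-increasing there. For $v'\in[v,v_0]$ this gives
\[
 D\phi(v')=D\phi(v_0)-\int_{v'}^{v_0}D^2\phi\,dv''=\int_{v'}^{v_0}e\,\phi\,dv''\ge0 ,
\]
which is the claim. The integral form is preferable because it also records where $D\phi$ is strictly positive, namely wherever $e\not\equiv0$ on $(v',v_0)$. That is the form useful for keeping the expansion $\Omega\tr\chi=2D\phi/\phi$ positive at the start of the focusing tail.

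There is no real obstacle. The only points to check are the following.
\begin{itemize}
\item Equation \eqref{eq:Ray} must hold pointwise in the angle, since it is a consequence of \eqref{eq:H2}.
\item The data must be smooth enough in $v$ that $D^2\phi$ exists. This holds because the data are smooth in $v$.
\item The sign $e\ge0$ holds at the poles as well. This follows by continuity, or frame-free from $e=\tfrac12|\Omega\chih|^2$.
\end{itemize}
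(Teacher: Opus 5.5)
Your proposal is correct and follows the same argument as the paper. With $\omega\equiv0$, \eqref{eq:Ray} becomes $D^2\phi=-e\phi$ with $e\ge0$, and integrating from $v'$ to $v_0$ gives $D\phi(v')=D\phi(v_0)+\int_{v'}^{v_0}e\phi\,dv''\ge0$, which is exactly the one-line proof in the paper.
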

\begin{proof}
$D\phi(v')=D\phi(v_0)+\int_{v'}^{v_0}e\phi\,dv''\ge0$ for $v'\le v_0$.
\end{proof}
Thus shear always decreases the expansion, and a non-expanding cut ($D\phi=0$) at the end of an
interval with $\omega=0$ forces the expansion to be non-negative before it. This is how the focusing
tail of Section~\ref{ch:tail} reaches $\tr\chi=0$.

\subsection{The torsion in terms of \texorpdfstring{$Z=\phi^2\eta$}{Z}}\label{sec:Z}

In the early part of the construction we use $\eta=-\etab+2\Nd\log\Omega$, in the late part $\etab$. Both
satisfy simple equations after multiplication by $\phi^2$.

\begin{lemma}[Conformal covariance of the divergence]\label{lem:confdiv}
Let $\gd=\phi^2\gdc$ on a two-dimensional manifold, and let $\theta$ be a symmetric two-tensor that is
tracefree. Then $\phi^2\Div_{\gd}\theta=\Div_{\gdc}\theta$.
\end{lemma}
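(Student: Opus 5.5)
The plan is a direct computation in coordinates, using the transformation law of the Levi-Civita connection under a conformal change. Write $\Nd$ for the connection of $\gd$ and $\Nd^c$ for that of $\gdc$. With $\gd=\phi^2\gdc=e^{2f}\gdc$, $f=\log\phi$, the Christoffel symbols differ by
\[
 \Gamma^C_{AB}-\Gamma^{c,C}_{AB}=\delta^C_A\partial_Bf+\delta^C_B\partial_Af-\gdc_{AB}\,\gdc^{CD}\partial_Df .
\]
Then $(\Div_{\gd}\theta)_A=\gd^{BC}\Nd_C\theta_{AB}=\phi^{-2}\gdc^{BC}\Nd_C\theta_{AB}$. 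It therefore suffices to show that $\gdc^{BC}\Nd_C\theta_{AB}=\gdc^{BC}\Nd^c_C\theta_{AB}$, i.e.\ that the difference terms vanish after contraction.

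Expanding $\Nd_C\theta_{AB}-\Nd^c_C\theta_{AB}=-(\Gamma-\Gamma^c)^D_{CA}\theta_{DB}-(\Gamma-\Gamma^c)^D_{CB}\theta_{AD}$ and contracting with $\gdc^{BC}$, I would collect the terms as follows, using $\tr_{\gdc}\theta=0$ and the symmetry of $\theta$. The first difference term contributes
\[
 -\big(\partial_Af\,\gdc^{BC}\theta_{CB}+\theta_{AB}\gdc^{BD}\partial_Df-\theta_{AD}\gdc^{DE}\partial_Ef\big)=-\partial_Af\,\tr_{\gdc}\theta=0 .
\]
The second contributes
\[
 -\big(\gdc^{BC}\partial_Bf\,\theta_{AC}+2\,\gdc^{BC}\partial_Cf\,\theta_{AB}-2\,\gdc^{DE}\partial_Ef\,\theta_{AD}\big)\cdot(\ldots)
\]
In the second term the dimension enters through $\delta^D_D$-type contractions: $\gdc^{BC}\delta^D_C\delta_B{}^{\ldots}$ produces a factor $n=2$. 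The general-dimension result is $(n-2)\,\theta_{A}{}^{D}\partial_Df$, which vanishes for $n=2$. The only real work, and the only place an error can arise, is this bookkeeping of signs and the factor $n$. I would double-check it in the model case $f$ linear in a flat chart with $\gdc$ Euclidean, where everything is explicit.

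Since $\tr_{\gd}\theta=\phi^{-2}\tr_{\gdc}\theta$, tracefreeness is conformally invariant, so no hypothesis issue arises. Combining, $\phi^2\Div_{\gd}\theta=\Div_{\gdc}\theta$ as one-forms.
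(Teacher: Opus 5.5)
Your proposal follows the paper's proof: the same Christoffel difference for $\gd=\phi^2\gdc$, with the contraction against $(\Gamma-\Gamma^c)^D_{CA}$ vanishing by tracefreeness and symmetry of $\theta$, and with $\gdc^{BC}(\Gamma-\Gamma^c)^D_{CB}=(1+1-n)\,\gdc^{DE}\partial_Ef$ vanishing because $n=2$. The one flaw is your displayed line for the second term, which is garbled (its coefficients are wrong and it ends in a placeholder); it should read $\gdc^{BC}(\Gamma-\Gamma^c)^D_{CB}=\gdc^{BD}\partial_Bf+\gdc^{DC}\partial_Cf-2\,\gdc^{DE}\partial_Ef=0$, which is exactly the $(n-2)$ count you state.
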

\begin{proof}
The Christoffel symbols differ by
\[
 C^D_{AB}:=\Gamma[\gd]^D_{AB}-\Gamma[\gdc]^D_{AB}=\delta^D_A\partial_B\log\phi+\delta^D_B\partial_A\log\phi
 -\slashed g_{c,AB}\,\gdc^{DE}\partial_E\log\phi ,
\]
so that
$(\Div_{\gd}\theta)_A=\phi^{-2}\gdc^{BC}\big(\nabla^{\gdc}_B\theta_{CA}-C^D_{BC}\theta_{DA}-C^D_{BA}\theta_{CD}\big)$.
Now $\gdc^{BC}C^D_{BC}=(1+1-2)\partial^D\log\phi=0$, using $\gdc^{BC}\slashed g_{c,BC}=2$ (here the dimension
enters), and
$\gdc^{BC}C^D_{BA}\theta_{CD}=(\tr_{\gdc}\theta)\partial_A\log\phi+\partial^C\log\phi\,\theta_{CA}-\partial^D\log\phi\,\theta_{AD}=0$
for tracefree symmetric $\theta$.
\end{proof}

\begin{lemma}\label{lem:Z}
$Z:=\phi^2\eta$ and $\underline Z:=\phi^2\etab$ satisfy
\begin{align}
 DZ&=\Div_{\gdc}\big(\tfrac12\phi^2D\gdc\big)+D\phi\,\Nd\phi-\phi\,\Nd D\phi+4\phi D\phi\,\Nd\log\Omega,\label{eq:Z}\\
 D\underline Z&=-\Div_{\gdc}\big(\tfrac12\phi^2D\gdc\big)+\phi\,\Nd D\phi-D\phi\,\Nd\phi+2\phi^2\Nd\omega ,\label{eq:Zb}
\end{align}
where $\Nd\phi$, $\Nd D\phi$, $\Nd\log\Omega$, $\Nd\omega$ are differentials of functions on $S^2$.
\end{lemma}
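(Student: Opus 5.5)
The plan is to derive \eqref{eq:Zb} directly from the torsion equation \eqref{eq:H3}, and then to obtain \eqref{eq:Z} from it using the relation $\eta=-\etab+2\Nd\log\Omega$ of Section~\ref{sec:dict}. Throughout I use that $D$ acts as $\partial_v$ on components (Section~\ref{sec:onehyp}). In particular $D$ commutes with the angular differential of functions, $D(\Nd f)=\Nd(Df)$, because $[L,\partial_A]=0$.

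\emph{Step 1: the equation for $\underline Z$.} By the Leibniz rule, $D\underline Z=2\phi D\phi\,\etab+\phi^2D\etab$, and I insert \eqref{eq:H3} into the second term. Each of the four resulting terms is then rewritten with Proposition~\ref{prop:opt}.
\begin{itemize}
\item The damping term is $-\phi^2\,\Omega\tr\chi\,\etab=-2\phi D\phi\,\etab$, since $\Omega\tr\chi=2D\phi/\phi$. It cancels exactly against $2\phi D\phi\,\etab$; this cancellation is the reason for multiplying by $\phi^2$.
\item The expansion term is $\tfrac12\phi^2\Nd(2D\phi/\phi)=\phi\,\Nd D\phi-D\phi\,\Nd\phi$.
\item The term $2\phi^2\Nd\omega$ is carried over unchanged.
\item For the shear term, recall $\Omega\chih=\tfrac12\phi^2D\gdc$. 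This tensor is symmetric and tracefree with respect to $\gd$: indeed $\gd^{AB}(\Omega\chih)_{AB}=\tfrac12\gdc^{AB}D\slashed g_{c,AB}=\tfrac12 D\log\det\gdc=0$ by Jacobi's formula, because $\det\gdc=\det\gac$ is independent of $v$. Lemma~\ref{lem:confdiv} therefore applies and gives $-\phi^2\Div_{\gd}(\Omega\chih)=-\Div_{\gdc}\big(\tfrac12\phi^2D\gdc\big)$.
\end{itemize}
Collecting these four terms gives \eqref{eq:Zb}.

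\emph{Step 2: the equation for $Z$.} From $\eta=-\etab+2\Nd\log\Omega$ we get $Z=-\underline Z+2\phi^2\Nd\log\Omega$. Differentiating the second term, and commuting $D$ with $\Nd$, gives
\[
 D\big(2\phi^2\Nd\log\Omega\big)=4\phi D\phi\,\Nd\log\Omega+2\phi^2\Nd\omega ,
\]
since $\omega=D\log\Omega$. Subtracting \eqref{eq:Zb} from this, the two terms $2\phi^2\Nd\omega$ cancel, and the signs of the divergence and expansion terms flip. The result is \eqref{eq:Z}.

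The computation is short, and I expect only one step to need care: the use of Lemma~\ref{lem:confdiv}. That lemma requires $\Omega\chih$ to be tracefree with respect to $\gd$, and this rests on the unimodularity $\det\gdc=\det\gac$ built into the decomposition \eqref{eq:confdec}. The remaining identities are pointwise algebra. They also hold at the poles, since all quantities involved are frame-free tensors.
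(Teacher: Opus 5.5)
Your proposal is correct and follows essentially the same route as the paper: it obtains \eqref{eq:Zb} from \eqref{eq:H3} via the Leibniz rule, the cancellation of the $\etab$ terms, Proposition~\ref{prop:opt} and Lemma~\ref{lem:confdiv}, and then deduces \eqref{eq:Z} from $Z=-\underline Z+2\phi^2\Nd\log\Omega$, using that $D$ commutes with $\Nd$ on functions. Your tracefreeness check for $\Omega\chih$ is harmless but not needed, since $\chih$ is tracefree by definition.
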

\begin{proof}
\eqref{eq:Zb}: $D(\phi^2\etab)=2\phi D\phi\,\etab+\phi^2D\etab$, and $2\phi D\phi=\phi^2\Omega\tr\chi$ by
\eqref{eq:opt}. By \eqref{eq:H3},
\[
 D(\phi^2\etab)=\phi^2\Omega\tr\chi\,\etab+\phi^2\big(-\Div_{\gd}(\Omega\chih)+\tfrac12\Nd(\Omega\tr\chi)-\Omega\tr\chi\,\etab+2\Nd\omega\big),
\]
and the $\etab$ terms cancel. By Lemma~\ref{lem:confdiv} and \eqref{eq:opt},
$\phi^2\Div_{\gd}(\Omega\chih)=\Div_{\gdc}(\frac12\phi^2D\gdc)$, and
$\frac12\phi^2\Nd(2D\phi/\phi)=\phi\Nd D\phi-D\phi\Nd\phi$.

\eqref{eq:Z}: Since $\eta=-\etab+2\Nd\log\Omega$, we have $Z=-\underline Z+2\phi^2\Nd\log\Omega$, and
$D(\phi^2\Nd\log\Omega)=2\phi D\phi\Nd\log\Omega+\phi^2\Nd\omega$ because $D$ commutes with $\Nd$ on functions.
Hence $DZ=-D\underline Z+4\phi D\phi\Nd\log\Omega+2\phi^2\Nd\omega$, and the $\Nd\omega$ terms cancel.
\end{proof}

The right sides of \eqref{eq:Z} and \eqref{eq:Zb} do not contain $Z$, $\underline Z$: once the optics
is known, the torsion is obtained by a single integration in $v$.

\subsection{The angular momentum flux and the Komar law}\label{sec:komar}

\begin{proposition}[Flux identity]\label{prop:flux}
For a fixed ($v$-independent) vector field $Y$ on $S^2$, the charge $C_Y:=\int_{S^2}Z(Y)\,d\mu_{\gac}$ obeys
\begin{equation}\label{eq:flux}
 D C_Y=\int_{S^2}\Big[D\phi\,Y\phi-\phi\,Y(D\phi)+4\phi D\phi\,Y\log\Omega
 -\frac{\phi^2}4\tr\big(\gdc^{-1}D\gdc\,\gdc^{-1}\mathcal L_Y\gdc\big)\Big]d\mu_{\gac}.
\end{equation}
\end{proposition}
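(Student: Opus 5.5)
The plan is to differentiate $C_Y$ under the integral and insert \eqref{eq:Z} from Lemma~\ref{lem:Z}. Since $Y$ and $d\mu_{\gac}$ do not depend on $v$, we have $DC_Y=\int_{S^2}(DZ)(Y)\,d\mu_{\gac}$. Evaluating \eqref{eq:Z} on $Y$ reproduces the first three terms of \eqref{eq:flux} directly, namely $D\phi\,Y\phi-\phi\,Y(D\phi)+4\phi D\phi\,Y\log\Omega$. What remains is to show
\[
 \int_{S^2}\big(\Div_{\gdc}\theta\big)(Y)\,d\mu_{\gac}=-\frac{\phi^2}{4}\text{-weighted term}, \qquad \theta:=\tfrac12\phi^2D\gdc,
\]
that is, that this integral equals $-\frac14\int\phi^2\tr(\gdc^{-1}D\gdc\,\gdc^{-1}\mathcal L_Y\gdc)\,d\mu_{\gac}$.

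For the divergence term, first note that $\theta$ is $\gdc$-tracefree. Indeed, $\gdc^{AB}D\slashed g_{c,AB}=D\log\det\gdc=0$ by Jacobi's formula, since $\det\gdc=\det\gac$. Moreover $d\mu_{\gdc}=d\mu_{\gac}$, because the determinants agree. Integrating by parts on the closed sphere with respect to $\gdc$ then gives
\[
 \int(\Div_{\gdc}\theta)(Y)\,d\mu_{\gdc}=-\int\theta^{AB}\nabla^{\gdc}_AY_B\,d\mu_{\gdc},
\]
with indices raised by $\gdc$ and $Y_B=\slashed g_{c,BC}Y^C$. Because $\theta$ is symmetric, only the symmetric part of $\nabla^{\gdc}Y$ contributes, and that part is $\frac12(\mathcal L_Y\gdc)_{AB}$. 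Hence the integral equals $-\frac12\int\theta^{AB}(\mathcal L_Y\gdc)_{AB}\,d\mu_{\gac}$. Substituting $\theta=\frac12\phi^2D\gdc$, this becomes
\[
 -\frac{\phi^2}{4}\,\gdc^{AC}\gdc^{BD}D\slashed g_{c,CD}\,(\mathcal L_Y\gdc)_{AB}=-\frac{\phi^2}4\tr\big(\gdc^{-1}D\gdc\,\gdc^{-1}\mathcal L_Y\gdc\big),
\]
which is exactly the last term of \eqref{eq:flux}.

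The only point needing care is the regularity and global validity of the integration by parts. We must confirm that the integrand in $C_Y$ is globally defined, as $Z$ is a one-form and $Y$ a smooth vector field. We also need enough angular regularity ($\gdc\in H^s$ with $s$ large) for the divergence theorem to hold on $S^2$; no pole issues arise, since everything is frame-free. Finally, the identity $\nabla^{\gdc}_{(A}Y_{B)}=\frac12\mathcal L_Y\gdc$ should be checked with the lowering taken with respect to $\gdc$ and not $\gac$. This step is where a sign or factor error is most likely, so I would verify the constant $\frac14$ against the Kerr case or against the trivial case $Y$ Killing for $\gdc$, where the term vanishes.
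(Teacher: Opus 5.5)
Your proof is correct and is essentially the paper's argument: evaluate \eqref{eq:Z} on $Y$ to get the three gradient terms, then integrate the divergence term by parts with respect to $\gdc$, using $d\mu_{\gdc}=d\mu_{\gac}$ and $\nabla^{\gdc}_{(A}Y_{B)}=\tfrac12(\mathcal L_Y\gdc)_{AB}$, which gives the constant $-\tfrac12\cdot\tfrac12\phi^2=-\tfrac14\phi^2$. Your observation that $\theta$ is $\gdc$-tracefree is true but not needed for this step.
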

\begin{proof}
Pair \eqref{eq:Z} with $Y$ and integrate. The gradient terms give the first three terms. For the
divergence term, let $\theta=\frac12\phi^2D\gdc$. Integrating by parts with respect to $\gdc$, and using
$(\mathcal L_Y\gdc)_{AB}=\nabla^{\gdc}_AY_B+\nabla^{\gdc}_BY_A$ and the symmetry of $\theta$,
\[
 \int(\Div_{\gdc}\theta)_AY^A\,d\mu_{\gdc}=-\int\theta_{AB}\,\nabla^{\gdc,B}Y^A\,d\mu_{\gdc}
 =-\tfrac12\int\theta^{AB}(\mathcal L_Y\gdc)_{AB}\,d\mu_{\gdc},
\]
with indices of $\theta$ raised by $\gdc$. Finally
$\theta^{AB}(\mathcal L_Y\gdc)_{AB}=\frac12\phi^2\tr(\gdc^{-1}D\gdc\,\gdc^{-1}\mathcal L_Y\gdc)$, and
$d\mu_{\gdc}=d\mu_{\gac}$.
\end{proof}

Since $Z(Y)\,d\mu_{\gac}=\eta(Y)\,d\mu_{\gd}$, for the axial Killing field $Y=\partial_\varphi$ on the extremal
Kerr horizon the charge is $C_{\partial_\varphi}=8\pi J$, where $J=\frac1{8\pi}\int\eta(\partial_\varphi)\,d\mu_{\gd}$ is
the Komar angular momentum (Section~\ref{ch:kerr}). For a general transported field $Y$, $C_Y$ is a
field-dependent charge, and its identification with a physical angular momentum requires these
symmetry hypotheses.

The gradient terms in \eqref{eq:flux} do not vanish in general; for instance, on a non-spherical cone $\phi=a+bv$ in Minkowski space
($e=\omega=0$, $\gdc=\gac$, $\Omega=1$) the charge $C_{\partial_\varphi}$ changes, although no angular momentum is radiated, since the
transported field need not be the ambient axial Killing field. When the area radius $\phi$ and the lapse are axisymmetric, the gradient
contributions vanish and the charge transfer comes from the non-axisymmetric shear term alone; its endpoint values can then be identified
with the Komar charges in the stationary regions.

\begin{corollary}[Komar conservation in axisymmetry]\label{cor:komar}
Suppose $\gdc$ and $\Omega$ are axisymmetric ($\partial_\varphi$ is a Killing field of $\gdc$ and $\partial_\varphi\Omega=0$),
and the initial values of $\phi$, $D\phi$ are axisymmetric. Then $\phi$ is axisymmetric for all $v$, and
$C_{\partial_\varphi}$ is constant.
\end{corollary}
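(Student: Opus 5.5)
The plan has two parts. First, axisymmetry of $\phi$ follows from uniqueness for the linear ODE \eqref{eq:Ray}. Second, constancy of $C_{\partial_\varphi}$ follows from the flux identity of Proposition~\ref{prop:flux} with $Y=\partial_\varphi$, once we check that every term in the integrand vanishes or integrates to zero.

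\emph{Axisymmetry of $\phi$.} Since $\gdc$ is axisymmetric for every $v$ and $\partial_\varphi$ commutes with $D=\partial_v$, $D\gdc$ is also $\partial_\varphi$-invariant. The shear energy $e=\frac18\tr((\gdc^{-1}D\gdc)^2)$ is built naturally from $\gdc$ and $D\gdc$, so it is axisymmetric: $\partial_\varphi e=0$. Likewise $\omega=D\log\Omega$ satisfies $\partial_\varphi\omega=0$.

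At each angle, \eqref{eq:Ray} is then a linear second-order ODE $D^2\phi=2\omega D\phi-e\phi$ whose coefficients do not depend on $\varphi$. Let $\phi^\alpha(v,\vartheta,\varphi):=\phi(v,\vartheta,\varphi+\alpha)$ be the rotated solution. It satisfies the same ODE at each generator, since the coefficients are invariant, and it has the same initial values $\phi,D\phi$ by hypothesis. Uniqueness for linear ODEs, with coefficients continuous in $v$, gives $\phi^\alpha=\phi$. Hence $\partial_\varphi\phi=0$, and also $\partial_\varphi D\phi=0$, for all $v$ in the interval of existence.

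\emph{Constancy of the charge.} Insert $Y=\partial_\varphi$ into \eqref{eq:flux}. The first three terms contain $Y\phi$, $Y(D\phi)$ and $Y\log\Omega$, and all of these vanish by the previous step and the hypothesis on $\Omega$. The last term contains $\mathcal L_{\partial_\varphi}\gdc$, which vanishes because $\partial_\varphi$ is a Killing field of $\gdc$. Thus the whole integrand vanishes pointwise, not merely after integration, so $DC_{\partial_\varphi}=0$ and $C_{\partial_\varphi}$ is constant.

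\emph{Main point to check.} The only subtlety is that the statement is coordinate-level: $\partial_\varphi$ is the transported field, a fixed vector field on $S^2$. Proposition~\ref{prop:flux} applies because $Y$ is $v$-independent. At the poles the vanishing of $\partial_\varphi$-derivatives extends by continuity, since $\partial_\varphi$ is smooth there. No regularity beyond that needed for Proposition~\ref{prop:flux} is used.
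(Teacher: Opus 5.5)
Your proposal is correct and follows essentially the same route as the paper. The paper applies $\partial_\varphi$ to \eqref{eq:Ray} and uses that $y=\partial_\varphi\phi$ solves the same linear ODE with zero initial data, where you instead compare $\phi$ with its rotated copies $\phi^\alpha$ via ODE uniqueness, which amounts to the same thing. The vanishing of all four terms of \eqref{eq:flux} with $Y=\partial_\varphi$ is then exactly as in the paper.
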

\begin{proof}
$e$ and $\omega$ are axisymmetric. Applying $\partial_\varphi$ to \eqref{eq:Ray}, the function $y=\partial_\varphi\phi$ solves the
linear equation $D^2y=2\omega Dy-e\,y$ with zero initial data, so $y\equiv0$. Then in \eqref{eq:flux} with
$Y=\partial_\varphi$ the first three terms vanish, and the last vanishes because $\mathcal L_{\partial_\varphi}\gdc=0$.
\end{proof}

This is the null-hypersurface version of the Komar conservation law that forbids spin-up in
axisymmetry: angular momentum has to be supplied by
non-axisymmetric shear.

\begin{remark}[The profile versus the total]\label{rem:profile}
Corollary~\ref{cor:komar} conserves the total charge $C_{\partial_\varphi}$. The pointwise axial profile
$Z(\partial_\varphi)(z)$ can still change under axisymmetric evolution if $\gdc$ has a mixed component
$\gdc(\partial_\vartheta,\partial_\varphi)\ne0$: then $\Div_{\gdc}(\frac12\phi^2D\gdc)(\partial_\varphi)$ is a nonzero function of
$z$ with zero integral. If $\gdc$ is round, $D\gdc=0$ and the profile is frozen. This distinction is used
in Section~\ref{ch:tail}.
\end{remark}

\section{The extremal Kerr horizon}\label{ch:kerr}

The target of the gluing is a sphere on the event horizon of extremal Kerr. In this section we collect the
properties of that horizon used in the proof. The intrinsic horizon data and the near-horizon equation are classical
\cite{Hajicek,LP,KL}; we record them in the normalisation used here, together with the incoming second fundamental form. The new
ingredients are the exact formulas for all sections of the horizon and all constant boosts of the normals
(Section~\ref{sec:cuts}), and the identities for the weight function $w=1+z^2$ which determine the sign of the boost in
Section~\ref{ch:sign}.

\subsection{Kerr in ingoing coordinates}\label{sec:kerrcoords}

In ingoing Kerr coordinates $(v,r,\vartheta,\varphi)$ the Kerr metric with mass $M$ and rotation parameter
$a$ is \cite[(2.15)]{KU}
\begin{multline}\label{eq:kerr}
 \bg=-\Big(1-\frac{2Mr}\Sigma\Big)dv^2+2\,dv\,dr-\frac{4Mar\sin^2\!\vartheta}\Sigma\,dv\,d\varphi
 -2a\sin^2\!\vartheta\,dr\,d\varphi\\
 +\Sigma\,d\vartheta^2+\Big(r^2+a^2+\frac{2Ma^2r\sin^2\!\vartheta}\Sigma\Big)\sin^2\!\vartheta\,d\varphi^2,
\end{multline}
with $\Sigma=r^2+a^2\cos^2\!\vartheta$ and $\Delta=r^2-2Mr+a^2$; the coefficient of $d\varphi^2$ equals
$((r^2+a^2)^2-a^2\Delta\sin^2\vartheta)\sin^2\vartheta/\Sigma$. These coordinates are regular across the
future event horizon $\{r=r_+\}$, $r_+=M+\sqrt{M^2-a^2}$.

\paragraph{Units.} Unless $M$ appears explicitly, we take $M=a=1$ (extremal, $r_+=1$). The
extremal case of mass $M$ follows by the scaling $\bg\mapsto M^2\bg$, $(v,r)\mapsto(Mv,Mr)$, under which the
Christoffel symbols in scaled coordinates are unchanged. Under this scaling the Killing generator
introduced below becomes $\xi/M$ and the normal $n/M$, so $\tau(X)=-\etab(X)=\frac12\bg(\nabla_XL,\Lb')$ picks
up the factor $M^2\cdot M^{-1}\cdot M^{-1}=1$, the second fundamental form $\chib'$ picks up $M^2\cdot M^{-1}=M$,
and $d\mu$ picks up $M^2$. Consequently, for the extremal horizon of mass $M$, in the frame $L=\xi_M$ (the
Killing generator) and $\Lb'$ with $\bg(L,\Lb')=-2$,
\begin{equation}\label{eq:scaleM}
 \gd=M^2\gd_H,\qquad \etab=\etab_H,\qquad \Omega^{-1}\chib=M\chib_H,\qquad J=M^2,
\end{equation}
where $(\gd_H,\etab_H,\chib_H)$ are the unit-mass data of Proposition~\ref{prop:kerrdata} and $J$ is
the angular momentum defined in Section~\ref{sec:cutdata}.

\paragraph{Abbreviations.} With $z=\cos\vartheta$ we write
\[
 u_z:=1+z^2,\qquad s_z:=1-z^2=\sin^2\!\vartheta,\qquad w:=1+z^2 .
\]
The function $w$ coincides with $u_z$; it receives its own name because it plays the role of a weight.
Note $u_z+s_z=2$ and $d\vartheta^2=dz^2/s_z$.

\paragraph{Co-rotating coordinates.} The angular velocity of the horizon is
$\Omega_H=\frac a{r_+^2+a^2}=\frac12$. Put $\psi:=\varphi-\tfrac12v$, i.e.\ $d\varphi=d\psi+\tfrac12dv$. In the
coordinates $(v,r,z,\psi)$ the vector field
\[
 \xi:=\partial_v\big|_{(r,z,\psi)}=\partial_v\big|_{(r,z,\varphi)}+\tfrac12\partial_\varphi
\]
is a Killing field, being a constant-coefficient combination of the Killing fields $\partial_v$ and
$\partial_\varphi$ of \eqref{eq:kerr}. It is the \emph{horizon generator}.

Substituting $d\varphi=d\psi+\frac12dv$ in \eqref{eq:kerr} with $M=a=1$, $\Sigma=r^2+z^2$, a direct computation gives, on the
horizon $\{r=1\}$ (where $\Sigma=u_z$),
\begin{equation}\label{eq:kerrr1}
\begin{gathered}
 g_{vv}=0,\quad g_{vr}=\frac{u_z}2,\quad g_{vz}=g_{v\psi}=0,\quad g_{rr}=g_{rz}=0,\\
 g_{r\psi}=-s_z,\quad g_{zz}=\frac{u_z}{s_z},\quad g_{z\psi}=0,\quad g_{\psi\psi}=\frac{4s_z}{u_z},
\end{gathered}
\end{equation}
and, for the first derivatives at $r=1$ that we need,
\begin{equation}\label{eq:kerrdr}
\begin{gathered}
 \partial_rg_{vv}=\partial_zg_{vv}=0,\quad \partial_rg_{v\psi}=\frac{2s_z}{u_z},\quad \partial_zg_{vr}=z,\\
 \partial_rg_{zz}=\frac2{s_z},\quad \partial_rg_{\psi\psi}=\frac{8z^2s_z}{u_z^2},\quad
 \partial_zg_{\psi\psi}=-\frac{16z}{u_z^2}.
\end{gathered}
\end{equation}
Moreover $g_{vz}$, $g_{rz}$, $g_{z\psi}$ vanish identically, and $g_{v\psi}$ vanishes identically on $\{r=1\}$.

\subsection{The Killing generator: null, normal, affine}\label{sec:killinggen}

By \eqref{eq:kerrr1}, on $\{r=1\}$ we have $\bg(\xi,\xi)=g_{vv}=0$, $\bg(\xi,\partial_z)=g_{vz}=0$ and
$\bg(\xi,\partial_\psi)=g_{v\psi}=0$. Since $\xi,\partial_z,\partial_\psi$ are tangent to $\{r=1\}$, $\xi$ is a null
normal of the horizon, and its integral curves are the generators.

\begin{lemma}[Extremality]\label{lem:affine}
$\nabla_\xi\xi=0$ on the horizon.
\end{lemma}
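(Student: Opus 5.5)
Since $\xi$ is Killing, $\nabla\xi$ is antisymmetric: $\bg(\nabla_X\xi,Y)=-\bg(\nabla_Y\xi,X)$. I will compute the one-form $\bg(\nabla_\xi\xi,\cdot)$ on the horizon $\{r=1\}$ against the coordinate basis $(\partial_v,\partial_r,\partial_z,\partial_\psi)$ of the co-rotating chart. Because $\xi=\partial_v$ there is a coordinate field, its components are constant and
\[
 \bg(\nabla_\xi\xi,Y)=\bg(\nabla_{\partial_v}\partial_v,Y)=\Gamma_{vv,Y},
\]
the Christoffel symbol of the first kind. The metric coefficients in the chart $(v,r,z,\psi)$ are independent of $v$ and $\psi$, since $\partial_v$ and $\partial_\psi$ are Killing fields. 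Hence
\[
 \Gamma_{vv,Y}=\partial_vg_{vY}-\tfrac12\partial_Yg_{vv}=-\tfrac12\partial_Yg_{vv}.
\]

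This quantity vanishes for each basis direction:
\begin{itemize}
\item for $Y=\partial_v$ and $Y=\partial_\psi$ because $g_{vv}$ is independent of $v$ and $\psi$;
\item for $Y=\partial_z$ because $\partial_zg_{vv}=0$ on $\{r=1\}$ by \eqref{eq:kerrdr};
\item for $Y=\partial_r$ because $\partial_rg_{vv}=0$ at $r=1$ by \eqref{eq:kerrdr}.
\end{itemize}
So $\bg(\nabla_\xi\xi,\cdot)=0$ on the horizon. Since $\bg$ is nondegenerate (the coordinate basis spans the tangent space there), this gives $\nabla_\xi\xi=0$.

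The only substantive input is $\partial_rg_{vv}|_{r=1}=0$, which is where extremality enters. I would double-check it directly. Substituting $d\varphi=d\psi+\frac12dv$ in \eqref{eq:kerr} gives
\[
 g_{vv}=-\Big(1-\frac{2r}{\Sigma}\Big)-\frac{2r s_z}{\Sigma}+\frac14 g_{\varphi\varphi},
 \qquad g_{\varphi\varphi}=\frac{\big((r^2+1)^2-\Delta s_z\big)s_z}{\Sigma},
\]
with $\Delta=(r-1)^2$. The strategy is to factor $g_{vv}$ as $(r-1)^2$ times a function smooth near $r=1$.

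The $\Delta$-term is manifestly $O((r-1)^2)$. In the remaining terms, the numerator of $\Sigma\,g_{vv}$ is
\[
 -\Sigma+2r-2rs_z+\tfrac14(r^2+1)^2s_z,
\]
and using $\Sigma=r^2+1-s_z$ it equals
\[
 -(r-1)^2+s_z\Big(1-2r+\tfrac14(r^2+1)^2\Big)=-(r-1)^2+\tfrac{s_z}4\,(r-1)^2(r+1)^2-\ldots
\]
up to terms that I would verify also carry the factor $(r-1)^2$. The key is the double root at $r=1$ of
\[
 \tfrac14(r^2+1)^2-2r+1=\tfrac14\big((r^2+1)^2-8r+4\big),
\]
whose value and first derivative both vanish at $r=1$. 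That double zero is precisely the vanishing surface gravity.

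This verification is the main (mild) obstacle. The rest is the formal Killing argument above.
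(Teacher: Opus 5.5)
Your proof is correct and is essentially the paper's argument: both reduce $\bg(\nabla_\xi\xi,\cdot)$ on the horizon to $-\tfrac12\,d(g_{vv})$ and then invoke \eqref{eq:kerrdr}. The paper gets there from the Killing antisymmetry $\xi^\mu\nabla_\mu\xi_\nu=-\tfrac12\partial_\nu(\xi^\mu\xi_\mu)$, and you get there from the Christoffel symbol together with the $v$-independence of the metric. In your optional check, the exact factorisation is $(r^2+1)^2-8r+4=(r-1)^2(r^2+2r+5)$, so your display with $(r+1)^2$ is off by a term $+s_z(r-1)^2$; this term still carries $(r-1)^2$, so your double-root conclusion stands.
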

\begin{proof}
For a Killing field $\nabla_\mu\xi_\nu=-\nabla_\nu\xi_\mu$, so
$(\nabla_\xi\xi)_\nu=\xi^\mu\nabla_\mu\xi_\nu=-\xi^\mu\nabla_\nu\xi_\mu=-\frac12\partial_\nu(\xi^\mu\xi_\mu)=-\frac12\partial_\nu g_{vv}$. At
$r=1$ all first derivatives of $g_{vv}$ vanish: $\partial_v,\partial_\psi$ trivially, $\partial_r$ and $\partial_z$ by
\eqref{eq:kerrdr}.
\end{proof}
For a general Killing horizon $\nabla_\xi\xi=\kappa\xi$ with $\kappa$ the surface gravity; $\kappa=0$ is
extremality. In the language of Section~\ref{ch:null}: with $L=\xi$, the horizon has $\omega=0$.

\subsection{The data on a horizon cut}\label{sec:cutdata}

Let $S_H=\{v=0,\ r=1\}$. Its future null normal $n$ with $\bg(\xi,n)=-2$ is determined by the linear conditions
$\bg(n,\partial_z)=\bg(n,\partial_\psi)=0$, $\bg(\xi,n)=-2$, $\bg(n,n)=0$; by \eqref{eq:kerrr1},
\begin{equation}\label{eq:nH}
 n=\Big(-\frac{s_z}{u_z},\,-\frac4{u_z},\,0,\,-1\Big)\quad\text{in }(v,r,z,\psi).
\end{equation}

We describe the cut with the frame $L=\xi$, $\Lb'=n$, i.e.\ Lemma~\ref{lem:dict} with $\Omega\equiv1$ and
$\omega=0$. Then $\eta=\zeta=-\etab$ on the cut, and we record the torsion as $\eta_H:=-\etab_H$, the form in
which the literature on near-horizon geometries writes it.

\begin{proposition}[Extremal Kerr horizon data, $M=1$]\label{prop:kerrdata}
\begin{align}
 \gd_H&=\frac{u_z}{s_z}\,dz^2+\frac{4s_z}{u_z}\,d\psi^2
 \qquad(\det\gd_H=4\ \text{in }(z,\psi),\ \text{area }8\pi),\label{eq:gH}\\
 \eta_H&=-\etab_H=-\frac{z}{u_z}\,dz+\frac{2s_z}{u_z^2}\,d\psi,\label{eq:etaH}\\
 K_H&=\frac{2(1-3z^2)}{u_z^3},\qquad
 \sigma_H=-\Curl\etab_H=-\frac{2z(3-z^2)}{u_z^3},\qquad K_H+i\sigma_H=\frac2{(1+iz)^3},\label{eq:KH}\\
 \chib_H&=-\frac{4}{s_zu_z}\,dz^2-\frac{8zs_z}{u_z^2}\,dz\,d\psi-\frac{16z^2s_z}{u_z^3}\,d\psi^2,
 \qquad \tr\chib_H=-\frac4{u_z},\label{eq:chibH}
\end{align}
and $\Omega\chi=0$, $\alpha=0$, $\beta=0$. Here $dz\,d\psi$ is the symmetric product, so the off-diagonal
components of $\chib_H$ are $-4zs_z/u_z^2$. The cut is oriented by $dz\wedge d\psi$,
$\slashed\epsilon_{z\psi}=+\sqrt{\det\gd_H}=2$; the sign of $\sigma_H$ refers to this orientation.
\end{proposition}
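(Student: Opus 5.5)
The plan is a direct computation from the restriction of the metric to $\{r=1\}$ recorded in \eqref{eq:kerrr1}--\eqref{eq:kerrdr}, in the frame $L=\xi$, $\Lb'=n$ of \eqref{eq:nH}. The steps are: the induced metric, then the outgoing form, then the torsion, then the incoming form, and finally the curvature. Throughout, the vectors $\partial_z,\partial_\psi$ are tangent to the cuts $\{v=0,r=1\}$, and $\xi=\partial_v$ in the co-rotating coordinates.

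\emph{Step 1: the induced metric.} The metric \eqref{eq:gH} is read off from $g_{zz}$, $g_{z\psi}$ and $g_{\psi\psi}$ in \eqref{eq:kerrr1}. Its determinant in $(z,\psi)$ is $\frac{u_z}{s_z}\cdot\frac{4s_z}{u_z}=4$, so the area is $\int 2\,dz\,d\psi=8\pi$.

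\emph{Step 2: the outgoing second fundamental form and the outgoing curvature.} Since $\xi$ is Killing and $[\xi,\partial_A]=0$, we have $\chi_L(\partial_A,\partial_B)=\frac12(\mathcal L_\xi\bg)_{AB}$ restricted to the cut, and this vanishes. Hence $\Omega\chi=0$. By Lemma~\ref{lem:affine}, $\kappa=0$. Then \eqref{eq:chiLtransport} gives $\bg(R(L,\partial_A)L,\partial_B)=0$, that is $\alpha=0$. Further, \eqref{eq:RicXL} with $\chi_L=0$ gives $\beta=0$.

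\emph{Step 3: the torsion.} We use $\tau(X)=\frac12\bg(\nabla_X\xi,n)$, together with the Killing identity $\bg(\nabla_X\xi,n)=\frac12\big(X^\mu n^\nu-n^\mu X^\nu\big)(\partial_\mu\xi_\nu-\partial_\nu\xi_\mu)$, where $\xi_\nu=g_{v\nu}$. This reduces the torsion to first derivatives of $g_{vv}$, $g_{vr}$, $g_{vz}$ and $g_{v\psi}$, all of which are listed in \eqref{eq:kerrdr}.
\begin{itemize}
\item For $X=\partial_z$ the only contribution comes from $\partial_zg_{vr}=z$ paired with $n^r=-4/u_z$. Keeping track of the antisymmetrisation sign, this yields $\tau_z=z/u_z$.
\item For $X=\partial_\psi$ the contribution is $\partial_rg_{v\psi}=2s_z/u_z$ paired with $n^r$. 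This gives $\tau_\psi=-2s_z/u_z^2$.
\end{itemize}
Since $\eta_H=\zeta=\tau$ up to the sign convention $\eta_H=-\etab_H=\tau$, this produces \eqref{eq:etaH}. I will fix the overall sign by checking the Komar integral $\frac1{8\pi}\int\eta(\partial_\psi)\,d\mu=\frac1{8\pi}\int_{-1}^1\frac{2s_z}{u_z^2}\cdot 2\cdot2\pi\,dz$. Using $\int_{-1}^1\frac{1-z^2}{(1+z^2)^2}\,dz=1$, this equals $1=J$.

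\emph{Step 4: the incoming second fundamental form.} This is the main computational obstacle, since $\chib'(X,Y)=\bg(\nabla_Xn,Y)$ requires extending $n$ off the cut and involves Christoffel symbols with $r$-derivatives. I would extend $n$ by the same formula \eqref{eq:nH} to the family of spheres $\{v=0,r=\text{const}\}$ near $r=1$, and write $\chib'_{AB}=\frac12(\mathcal L_n\bg)_{AB}$ restricted to the cut. This is legitimate because $\chib'$ is symmetric and only the tangential projection enters. Then
\[
(\mathcal L_n\bg)_{AB}=n^\mu\partial_\mu g_{AB}+g_{\mu B}\partial_An^\mu+g_{A\mu}\partial_Bn^\mu .
\]
The term $n^r\partial_rg_{AB}$ gives the principal contributions, using $\partial_rg_{zz}=2/s_z$ and $\partial_rg_{\psi\psi}=8z^2s_z/u_z^2$. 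Evaluated at $r=1$ these are $-\frac{4}{u_z s_z}\,dz^2$ and $-\frac{16z^2s_z}{u_z^3}\,d\psi^2$. The mixed term arises from $g_{\psi r}\,\partial_zn^r$, using $g_{r\psi}=-s_z$ and $\partial_z(-4/u_z)=8z/u_z^2$. This gives $(\chib')_{z\psi}=-4zs_z/u_z^2$. One must also check that $\partial_zn^v$ and $\partial_zn^\psi$ pair only with vanishing components, which they do because $g_{vz}=g_{z\psi}=0$ on $r=1$. Finally, the trace is $\tr\chib_H=\gd^{zz}\chib_{zz}+\gd^{\psi\psi}\chib_{\psi\psi}=-\frac{4}{u_z^2}-\frac{4z^2}{u_z^2}=-\frac4{u_z}$.

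\emph{Step 5: Gauss curvature and the curl of the torsion.} For $K_H$ I will use the formula for a metric $f\,dz^2+h\,d\psi^2$ with $fh=4$, namely $K=-\frac12\partial_z^2h$ (computed as $-\frac1{2\sqrt{fh}}\partial_z(\partial_zh/\sqrt{fh})\cdot\sqrt{fh}$ simplified). With $h=4s_z/u_z$ this gives $K=2(1-3z^2)/u_z^3$. For the other component, $\sigma_H=-\Curl\etab_H=\slashed\epsilon^{z\psi}\partial_z(\eta_H)_\psi=\frac12\partial_z\big(2s_z/u_z^2\big)=-2z(3-z^2)/u_z^3$. Combining the two, $K_H+i\sigma_H=2(1-3z^2-iz(3-z^2))/(1+z^2)^3=2(1-iz)^3/(1+z^2)^3=2/(1+iz)^3$.
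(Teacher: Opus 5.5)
Your route is the paper's: a direct computation from \eqref{eq:kerrr1}--\eqref{eq:kerrdr} in the frame $(\xi,n)$. Steps~1, 2 and 4 are fine. Computing $\chib_H$ as $\frac12(\mathcal L_n\bg)_{AB}$ instead of $-n^\mu\Gamma_{\mu AB}$ is a correct variant; for the $z\psi$ entry the relevant facts are $g_{v\psi}=0$ on $\{r=1\}$ and $\partial_zn^\psi=0$, not $g_{vz}=g_{z\psi}=0$.

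The genuine gap is Step~3. The values you report, $\tau_z=z/u_z$ and $\tau_\psi=-2s_z/u_z^2$, are the components of $\etab_H=-\eta_H$. Together with your correct identification $\eta_H=\tau$, they contradict \eqref{eq:etaH} rather than prove it. The Komar repair does not close this, for two reasons.
\begin{itemize}
\item In the paper, $J=1$ is a consequence of \eqref{eq:etaH}. Using it as input would require an independent identification of the normalisation $\frac1{8\pi}\int\eta(\partial_\psi)\,d\mu$, with $\zeta(X)=\frac12\bg(\nabla_Xe_4,e_3)$, with $+aM$. That is a sign computation of exactly the same kind.
\item $J$ sees only the non-gradient part of the torsion. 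The $dz$-component $-\frac z{u_z}dz=-\frac12d\log u_z$ is exact and pairs to zero with $\partial_\psi$, so your check does not touch its sign. That sign is not a convention. The near-horizon equation \eqref{eq:NH} holds for the true torsion by Killing invariance, independently of any explicit formula. It is invariant under $\eta_\psi\mapsto-\eta_\psi$, the orientation sign that $J$ detects. It is not invariant under $\eta_z\mapsto-\eta_z$, since $\Nd_z\eta_z$ changes sign while $\eta_z^2-\frac12K\gd_{zz}$ does not.
\end{itemize}

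The fix is to compute with the correct Killing identity, $\bg(\nabla_X\xi,n)=X^\mu n^\nu\nabla_\mu\xi_\nu$ with $\nabla_\mu\xi_\nu=\frac12(\partial_\mu\xi_\nu-\partial_\nu\xi_\mu)$. This gives $\eta_H(\partial_A)=\frac14n^\nu(\partial_Ag_{v\nu}-\partial_\nu g_{vA})$. Hence $\eta_z=\frac14n^r\partial_zg_{vr}=-\frac z{u_z}$ and $\eta_\psi=-\frac14n^r\partial_rg_{v\psi}=\frac{2s_z}{u_z^2}$, with no sign left to fix. The identity you display contains both the factor $X^\mu n^\nu-n^\mu X^\nu$ and the antisymmetrised $\partial_\mu\xi_\nu-\partial_\nu\xi_\mu$, so it equals $2\bg(\nabla_X\xi,n)$. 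Taken literally, it would also give the wrong magnitude.

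There are three smaller issues in Step~5.
\begin{itemize}
\item For $f\,dz^2+h\,d\psi^2$ with $fh=4$, the Gauss curvature is $K=-\frac18\partial_z^2h$, not $-\frac12\partial_z^2h$. With $h=4s_z/u_z=8/u_z-4$, your formula gives $8(1-3z^2)/u_z^3$, not the stated value.
\item To identify the spacetime curvature component $\sigma$ of the cut with $-\Curl\etab_H$, you need \eqref{eq:curl} together with $\Omega\chih=0$.
\item The sign of $\sigma_H$ is inherited from that of $(\eta_H)_\psi$, so it is established only once Step~3 is corrected.
\end{itemize}
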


\begin{proof}
The induced metric is read off from \eqref{eq:kerrr1}; its determinant in $(z,\psi)$ is $4$, so $d\mu_H=2\,dz\,d\psi$ and the area is $8\pi$. For
the torsion, $\nabla_\mu\xi_\nu=\frac12(\partial_\mu\xi_\nu-\partial_\nu\xi_\mu)$ with $\xi_\nu=g_{v\nu}$ since $\xi$ is Killing, so
$\eta_H(\partial_A)=\frac12\bg(\nabla_A\xi,n)=\frac14n^\nu(\partial_Ag_{v\nu}-\partial_\nu g_{vA})$; by \eqref{eq:kerrdr} and \eqref{eq:nH} this gives
$\eta_z=\frac14n^r\partial_zg_{vr}=-\frac z{u_z}$ and $\eta_\psi=-\frac14n^r\partial_rg_{v\psi}=\frac{2s_z}{u_z^2}$. For a metric $P\,dz^2+Q\,d\psi^2$ with $PQ=4$
one has $K=-\frac18\partial_z^2Q$, which gives $K_H$ for $Q=\frac8{u_z}-4$. Since $\slashed\epsilon^{z\psi}=\frac12$,
$\Curl\eta_H=\frac12\partial_z\eta_\psi=-\frac{2z(3-z^2)}{u_z^3}$, and since $\chih=0$ on the horizon, \eqref{eq:curl} gives $\sigma_H=\Curl\eta_H$; with
$(1+iz)^3=(1-3z^2)+i(3z-z^3)$ this gives the complex form in \eqref{eq:KH}. Finally, since $\bg(n,\partial_B)=0$ along the cut,
$\chib_H(\partial_A,\partial_B)=-\bg(n,\nabla_A\partial_B)=-n^\mu\Gamma_{\mu AB}$. At $r=1$, $\Gamma_{vAB}=0$, and by \eqref{eq:kerrr1}, \eqref{eq:kerrdr}
the only nonzero Christoffel symbols of the first kind entering are $\Gamma_{rzz}=-\frac1{s_z}$, $\Gamma_{rz\psi}=z$,
$\Gamma_{r\psi\psi}=-\frac{4z^2s_z}{u_z^2}$, $\Gamma_{\psi z\psi}=-\frac{8z}{u_z^2}$. With $n^r=-\frac4{u_z}$, $n^\psi=-1$ this gives \eqref{eq:chibH}.

\emph{Outgoing quantities.} $\Omega\chi(X,Y)=\bg(\nabla_X\xi,Y)$ is symmetric (Section~\ref{sec:onehyp}) and
antisymmetric (Killing), hence zero. Then \eqref{eq:chiLtransport} with $\chi_L=0$, $\kappa=0$ gives
$\bg(R(\xi,X)\xi,Y)=0$, i.e.\ $\alpha=0$. And \eqref{eq:RicXL} with $\chi_L=0$ gives $\frac12\bg(R(\xi,X)\xi,n)=0$, which is $\beta=0$ by Lemma~\ref{lem:dict}.
\end{proof}

\paragraph{Area and angular momentum.} The area is $8\pi$ ($8\pi M^2$ in general). The angular momentum,
normalised as $J:=\frac1{8\pi}\int\eta(\partial_\psi)\,d\mu$, is
\[
 J=\frac1{8\pi}\int_0^{2\pi}\!\!\int_{-1}^1\frac{2s_z}{u_z^2}\cdot2\,dz\,d\psi=\int_{-1}^1\frac{1-z^2}{(1+z^2)^2}\,dz
 =\Big[\frac z{1+z^2}\Big]_{-1}^1=1 .
\]
So $J=M^2$ in general, by \eqref{eq:scaleM}; this is the Kerr angular momentum $aM$ with $a=M$. Since
$\partial_\psi$ is divergence free, exact one-forms have zero pairing with it, and $J$ only sees the part of the
torsion that is not a gradient.

\paragraph{The area coordinate.} Since $\det\gd_H=4$ in $(z,\psi)$, the area form is $2\,dz\wedge d\psi$, which is
$2$ times the round area form $dz\wedge d\varphi$ under the identification $\psi\leftrightarrow\varphi$. Thus Kerr's $z$ is
an area coordinate on the horizon; this makes the area radius of the horizon constant in the round
normalisation (Section~\ref{sec:hconf}).

\subsection{The near-horizon equation}\label{sec:NH}

\begin{proposition}[Near-horizon equation]\label{prop:NH}
The horizon data satisfy
\begin{equation}\label{eq:NH}
 \Ecal(\gd_H,\etab_H)=0,\qquad\text{i.e.}\qquad
 \Nd_{(A}\eta_{H,B)}=\eta_{H,A}\eta_{H,B}-\tfrac12K_H\gd_{H,AB}.
\end{equation}
\end{proposition}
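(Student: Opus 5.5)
Two routes are available, and I would present the first and use the second as a cross-check. The first is structural and uses only the hierarchy. On the horizon with $L=\xi$ we have $\Omega\equiv1$, $\omega=0$ (Lemma~\ref{lem:affine}), $\Omega\chi=0$ (Proposition~\ref{prop:kerrdata}), and $\xi$ is Killing. Hence the incoming second fundamental form is invariant along the generators: the flow of $\xi$ is an isometry mapping the cut $\{v=v_0\}$ to $\{v=v_0+t\}$ and preserving the normal pair $(\xi,n)$, so $D(\Omega^{-1}\chib)=0$. (Equivalently, $\mathcal L_\xi n=0$, since $n$ is defined by linear conditions invariant under the flow.) I would then insert $\omega=0$, $\Omega\tr\chi=0$ and $\Omega\chih=0$ into \eqref{eq:H4}. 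All terms on the right except $\Ecal(\gd,\etab)$ vanish, so $0=\Ecal(\gd_H,\etab_H)$.

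The only care needed here is that \eqref{eq:H4} was derived in the gauge $b=0$, with cuts $S_v$ and $L=\partial_v$. On the horizon in the coordinates $(v,r,z,\psi)$, the generators are the curves of constant $(z,\psi)$ and the cuts are $\{v=\text{const}\}$, so this gauge is exactly the one in force. The vacuum equations hold, so Proposition~\ref{prop:hier} applies. I expect this consistency check to be the main point: that $\Lb'=n$ of \eqref{eq:nH} is the null normal normalised against $L$ on every cut, and that $n$ is $\xi$-invariant. Both hold because the metric coefficients \eqref{eq:kerrr1} are independent of $v$.

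The second form of \eqref{eq:NH} then follows by rewriting. $\Ecal=0$ reads $2\Nd_{(A}\etab_{B)}+2\etab_A\etab_B-K\gd_{AB}=0$. Substituting $\etab_H=-\eta_H$ gives $-2\Nd_{(A}\eta_{B)}+2\eta_A\eta_B-K\gd_{AB}=0$, and dividing by $2$ gives the claim.

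As an independent check, I would verify \eqref{eq:NH} directly from \eqref{eq:gH}, \eqref{eq:etaH} and \eqref{eq:KH}. This means computing the Christoffel symbols of $\gd_H=\frac{u_z}{s_z}dz^2+\frac{4s_z}{u_z}d\psi^2$, which are only $\Gamma^z_{zz}$, $\Gamma^z_{\psi\psi}$ and $\Gamma^\psi_{z\psi}$, and then checking the three components $zz$, $z\psi$, $\psi\psi$ as rational identities in $z$. The $z\psi$ component, for example, reduces to $\tfrac12\partial_z\eta_\psi-\Gamma^\psi_{z\psi}\eta_\psi=\eta_z\eta_\psi$. This is routine algebra, and it guards against sign errors in the orientation and normalisation conventions.
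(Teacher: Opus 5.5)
Your proposal is correct and follows the paper's own structural proof: foliate the horizon by the cuts $\{v=\mathrm{const}\}$ with $L=\xi$, use $\xi$-invariance to get $D(\Omega^{-1}\chib)=0$, insert $\Omega\chi=0$ and $\omega=0$ into \eqref{eq:H4} to conclude $\Ecal(\gd_H,\etab_H)=0$, and rewrite using $\etab_H=-\eta_H$. The direct check via the Christoffel symbols \eqref{eq:ChrH} is also the verification the paper mentions, and your $z\psi$ component does reduce correctly to $-2zs_z/u_z^3$ on both sides.
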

\begin{proof}[Structural proof]
Foliate the horizon by the cuts $\{v=c\}$ and take $L=\xi$, $\Lb'$ the corresponding normal. Every quantity is
invariant under the flow of the Killing field $\xi$, so $D(\Omega^{-1}\chib)=0$. On the horizon $\Omega\chi=0$ and
$\omega=0$ (Lemma~\ref{lem:affine}), so \eqref{eq:H4} reduces to $0=\Ecal(\gd_H,\etab_H)$. Since $\etab_H=-\eta_H$,
$\Ecal=-2\Nd_{(A}\eta_{B)}+2\eta_A\eta_B-K\gd_{AB}$.
\end{proof}

The equation can also be verified directly from \eqref{eq:gH}, \eqref{eq:etaH} and \eqref{eq:KH}, using the non-vanishing Christoffel
symbols of $\gd_H=P\,dz^2+Q\,d\psi^2$, $P=\frac{u_z}{s_z}$, $Q=\frac{4s_z}{u_z}$,
\begin{equation}\label{eq:ChrH}
 \Gamma^z_{zz}=\frac{2z}{s_zu_z},\qquad
 \Gamma^z_{\psi\psi}=\frac{8zs_z}{u_z^3},\qquad
 \Gamma^\psi_{z\psi}=-\frac{2z}{s_zu_z},
\end{equation}
which we also use below.

Equation \eqref{eq:NH} is the extremal near-horizon geometry equation of H\'aj\'i\v{c}ek \cite{Hajicek},
Lewandowski--Pawlowski \cite{LP} and Kunduri--Lucietti \cite{KL}. It is the reason the extremal case is
special. On a non-expanding segment with $\Omega\chi=0$, horizon data $(\gd_H,\etab_H)$ and \emph{constant}
$\omega\ne0$, \eqref{eq:H4} reduces to $D(\Omega^{-1}\chib)=-2\omega\,\Omega^{-1}\chib$, a pure exponential decay.
This is the mechanism described in Section~\ref{sec:toy}, and the neck mechanism of Section~\ref{ch:neck}.

\begin{remark}[Non-extremal Killing horizons]\label{rem:nonextremal}
For a non-extremal Killing horizon, $\nabla_\xi\xi=\kappa\xi$ with $\kappa\ne0$, i.e.\ $\omega=\kappa/2$, and the same
argument gives $\Ecal(\gd,\etab)=\kappa\,\chib'$: the incoming second fundamental form is determined by the
intrinsic data. On an extremal horizon $\chib'$ is not determined by $(\gd,\etab)$; in fact the whole family
$\chib_H+\Ccal F$ of Section~\ref{sec:cuts} occurs.
\end{remark}

\subsection{The horizon in conformal variables}\label{sec:hconf}

We identify the sphere of generators with the horizon cut by $(z,\varphi)\leftrightarrow(z,\psi)$: the round coordinate
$z=\cos\vartheta$ is Kerr's $z$, and the transported azimuth is the co-rotating azimuth. This is a choice of
diffeomorphism, which we are free to make because the target is geometric data on a sphere.

\begin{proposition}\label{prop:hH}
For the horizon of mass $M$:
\begin{enumerate}[label=(\alph*)]
\item the area radius is constant, $\phi\equiv\sqrt2M$;
\item in the round orthonormal frame \eqref{eq:roundframe}, $\gdcH:=\gd_H/(2M^2)$ has frame matrix
\[
 h_H=\mathrm{diag}(e^{q},e^{-q}),\qquad q(z)=\log\frac{1+z^2}2\in[-\log2,0];
\]
\item $q(\pm1)=0$ and $q$ is $s_z$ times a smooth function; in the notation of Lemma~\ref{lem:param},
$h_H=\exp(|q|S(\pi))$, a fixed axis, with strength $|q|\le\log2$, maximal at the equator.
\end{enumerate}
\end{proposition}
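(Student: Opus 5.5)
This is a direct computation from Proposition~\ref{prop:kerrdata}, the scaling \eqref{eq:scaleM}, and the identification $(z,\varphi)\leftrightarrow(z,\psi)$. I will prove (a), then (b), then read off (c).

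For (a), by \eqref{eq:scaleM} the mass-$M$ horizon metric is $M^2\gd_H$. By \eqref{eq:gH}, $\det\gd_H=4$ in the coordinates $(z,\psi)$. The round metric $\gac=dz^2/s_z+s_z\,d\varphi^2$ has determinant $1$ in $(z,\varphi)$. Hence, by \eqref{eq:confdec},
\[
\phi^2=\frac{\sqrt{\det(M^2\gd_H)}}{\sqrt{\det\gac}}=\frac{M^2\cdot 2}{1}=2M^2,
\]
using that the ratio of determinants is coordinate independent once both metrics are written in the same chart. So $\phi\equiv\sqrt2M$, and $\gdcH=\phi^{-2}M^2\gd_H=\gd_H/2$. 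The formula $\gd_H/(2M^2)$ in the statement is this quantity for the mass-$M$ metric $M^2\gd_H$, so it is consistent.

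For (b), I will evaluate $\gdcH=\tfrac12\gd_H$ on the frame \eqref{eq:roundframe}. In the $z$ coordinate, $e_1=\partial_\vartheta=-\sqrt{s_z}\,\partial_z$ and $e_2=s_z^{-1/2}\partial_\varphi$. Then
\[
\gdcH(e_1,e_1)=\tfrac12\cdot\frac{u_z}{s_z}\cdot s_z=\frac{u_z}{2},\qquad
\gdcH(e_2,e_2)=\tfrac12\cdot\frac{4s_z}{u_z}\cdot\frac1{s_z}=\frac{2}{u_z}.
\]
The off-diagonal entry vanishes because $\gd_H$ is diagonal. This gives $h_H=\mathrm{diag}(e^q,e^{-q})$ with $q=\log\frac{u_z}{2}$. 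Since $u_z\in[1,2]$, we get $q\in[-\log2,0]$.

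For (c), $q(\pm1)=\log1=0$. We can write $q=\log\big(1-\tfrac{s_z}{2}\big)$ with $s_z=1-z^2$, and $\log(1-t)=t\,f(t)$ with $f$ analytic for $t\le\tfrac12$, so $q=s_z\cdot(\text{smooth in }z)$. Since $q\le0$, $\mathrm{diag}(e^q,e^{-q})=\exp(|q|\,\mathrm{diag}(-1,1))$. By Definition~\ref{def:SR}, $S(\pi)=\mathrm{diag}(-1,1)$, and by Lemma~\ref{lem:param}(b) this gives $h_H=\exp(|q|S(\pi))$ with a fixed axis. The strength $|q|=\log(2/u_z)$ is maximal where $u_z=1$, i.e.\ at the equator $z=0$, where it equals $\log2$.

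The only delicate points are the sign and orientation conventions: that $e_1$ corresponds to the $dz^2/s_z$ direction, which is where the factor $s_z$ cancels, and that $S(\pi)$ rather than $S(0)$ is the correct representative. Both are direct checks.
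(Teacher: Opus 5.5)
Your proposal is correct and follows essentially the same route as the paper: the determinant ratio $\phi^4=4M^4$ for (a), evaluation of $\tfrac12\gd_H$ on $e_1=-\sqrt{s_z}\,\partial_z$, $e_2=s_z^{-1/2}\partial_\psi$ for (b), and $q=\log(1-\tfrac12 s_z)$ together with $\mathrm{diag}(e^q,e^{-q})=\exp(|q|\,\mathrm{diag}(-1,1))=\exp(|q|S(\pi))$ for (c). Your extra checks, namely the range $u_z\in[1,2]$, the maximum of $|q|$ at the equator, and the reading of $\gd_H/(2M^2)$ as applied to the mass-$M$ metric, are all accurate.
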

\begin{proof}
(a) In $(z,\psi)$, $\det\gd_H=4M^4$, while $\det\gac=1$ in $(z,\varphi)$ ($\gac=dz^2/s_z+s_zd\varphi^2$). So
$\phi^4=4M^4$.
(b) With $e_1=\partial_\vartheta=-\sqrt{s_z}\,\partial_z$ and $e_2=s_z^{-1/2}\partial_\psi$, the frame matrix of $\gd_H$ (unit mass) is
$\mathrm{diag}(s_z\frac{u_z}{s_z},\frac1{s_z}\frac{4s_z}{u_z})=\mathrm{diag}(u_z,\frac4{u_z})$. Dividing by $2$ gives
$\mathrm{diag}(\frac{u_z}2,\frac2{u_z})=\mathrm{diag}(e^q,e^{-q})$.
(c) $q=\log(1-\frac12s_z)$, and $\log(1-x)/x$ is smooth near $x=0$. Since $q\le0$,
$\mathrm{diag}(e^q,e^{-q})=\exp(|q|\,\mathrm{diag}(-1,1))=\exp(|q|S(\pi))$.
\end{proof}
Since $q\le0$, $h_H$ is oblate: meridians are shortened and circles of latitude are lengthened, as
expected for a rapidly rotating horizon.

\subsection{The weight $w=1+z^2$ and the functional $\Fcal$}\label{subsec:weight}

The following identities drive the scalar part of the argument (Sections~\ref{ch:linear} and~\ref{ch:sign}).
\begin{proposition}[Weight identities]\label{prop:weight}
\begin{align}
 \big(2w\eta_H+dw\big)^\sharp&=\partial_\psi,\label{eq:W1}\\
 w\,\eta_H\otimes\eta_H+\tfrac12(dw\otimes\eta_H+\eta_H\otimes dw)+\tfrac12\Nd^2w
 &=\frac{1-3z^2}{u_z^2}\,\gd_H,\label{eq:W2}\\
 \Delta_{h_H}w-\frac{8s_z}{u_z^2}&=-4,\qquad h_H:=\tfrac12\gd_H .\label{eq:W3}
\end{align}
\end{proposition}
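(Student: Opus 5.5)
These three identities are direct computations in the coordinates $(z,\psi)$, using the explicit horizon data of Proposition~\ref{prop:kerrdata} and the Christoffel symbols \eqref{eq:ChrH}. I would verify them in the order \eqref{eq:W1}, \eqref{eq:W3}, \eqref{eq:W2}, since the first fixes the one-form $2w\eta_H+dw$ that organises the other two. Throughout, $\gd_H=P\,dz^2+Q\,d\psi^2$ with $P=u_z/s_z$, $Q=4s_z/u_z$, $w=u_z$ and $dw=2z\,dz$.

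For \eqref{eq:W1}, insert \eqref{eq:etaH}. This gives
\[
2w\eta_H+dw=-2z\,dz+\tfrac{4s_z}{u_z}\,d\psi+2z\,dz=Q\,d\psi .
\]
Raising the index with the diagonal metric gives $Q^{-1}Q\,\partial_\psi=\partial_\psi$.

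For \eqref{eq:W3}, note that $h_H=\frac12\gd_H$ has $\sqrt{\det h_H}=1$ in $(z,\psi)$. Hence $\Delta_{h_H}w=\partial_z\big(h^{zz}\partial_zw\big)=\partial_z\big(\tfrac{2s_z}{u_z}\cdot2z\big)$. Differentiating $4z(1-z^2)/(1+z^2)$ gives $(4-12z^2)/u_z-8z^2s_z/u_z^2$. Subtracting $8s_z/u_z^2$ and putting everything over $u_z^2$, the numerator should collapse to $-4u_z^2$; this is a short polynomial check.

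For \eqref{eq:W2}, the cleanest route is to avoid computing $\Nd^2w$ by hand as far as possible. From \eqref{eq:W1}, $dw=\partial_\psi^\flat-2w\eta_H$, and $\partial_\psi$ is Killing, so $\Nd_{(A}\partial_{\psi B)}^\flat=0$. Therefore
\[
\tfrac12\Nd^2w=-\Nd_{(A}(w\eta_H)_{B)}=-\tfrac12(dw\otimes\eta_H+\eta_H\otimes dw)-w\Nd_{(A}\eta_{H,B)} .
\]
The near-horizon equation \eqref{eq:NH} then replaces $\Nd_{(A}\eta_{H,B)}$ by $\eta_H\otimes\eta_H-\frac12K_H\gd_H$. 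After these substitutions the left side of \eqref{eq:W2} reduces to $\frac12wK_H\gd_H$. Since $wK_H/2=(1-3z^2)/u_z^2$ by \eqref{eq:KH}, this is exactly the right-hand side.

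The only real obstacle is bookkeeping. The symmetrisation convention in \eqref{eq:NH}, with $\Nd_{(A}\eta_{B)}$ carrying weight $\frac12$, has to match the factor $\frac12$ in \eqref{eq:W2}. If that convention caused trouble, I would fall back on checking the $zz$, $z\psi$ and $\psi\psi$ components directly with \eqref{eq:ChrH}.
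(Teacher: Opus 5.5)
Your proposal is correct. You handle \eqref{eq:W1} and \eqref{eq:W3} exactly as the paper does, and the polynomial check works: $(4-12z^2)u_z-8z^2s_z-8s_z=u_z(4-12z^2-8s_z)=-4u_z^2$.

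For \eqref{eq:W2} you take a genuinely different route. The paper computes $(\Nd^2w)_{zz}$, $(\Nd^2w)_{z\psi}$ and $(\Nd^2w)_{\psi\psi}$ from \eqref{eq:ChrH}, and then compares the three components of the left side with $\frac{1-3z^2}{u_z^2}\gd_H$. You instead take the symmetrised derivative of $dw=\partial_\psi^\flat-2w\eta_H$, remove the $\partial_\psi^\flat$ term with the Killing equation, and substitute \eqref{eq:NH}. Your symmetrisation bookkeeping matches the paper: $\Ecal=-2\Nd_{(A}\eta_{B)}+2\eta_A\eta_B-K\gd_{AB}$ uses the weight-$\frac12$ convention. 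There is also no circularity, because Proposition~\ref{prop:NH} is proved earlier, structurally from the Killing invariance and \eqref{eq:H4}, without any weight identity.

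Your argument buys an explanation. Since $w>0$, and given \eqref{eq:W1}, the Killing property of $\partial_\psi$ and \eqref{eq:KH}, identity \eqref{eq:W2} is equivalent to the near-horizon equation. So the only special feature of the weight is that $2w\eta_H+dw$ is dual to the axial Killing field. This also makes the criticality of $\Fcal$ under tracefree metric variations in Lemma~\ref{lem:crit} transparent.

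The paper's component computation is instead self-contained. It checks \eqref{eq:W2} independently of the null structure equations behind \eqref{eq:NH}, using only the explicit horizon metric and torsion.
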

\begin{proof}
\eqref{eq:W1}: $2w\eta_H+dw=2u_z\big(-\frac z{u_z}dz+\frac{2s_z}{u_z^2}d\psi\big)+2z\,dz=\frac{4s_z}{u_z}d\psi$. Raising with
$\gd_H^{\psi\psi}=\frac{u_z}{4s_z}$ gives $\partial_\psi$.

\eqref{eq:W2}: By \eqref{eq:ChrH}, $(\Nd^2w)_{zz}=w''-\Gamma^z_{zz}w'=2-\frac{4z^2}{s_zu_z}$,
$(\Nd^2w)_{\psi\psi}=-\Gamma^z_{\psi\psi}w'=-\frac{16z^2s_z}{u_z^3}$ and $(\Nd^2w)_{z\psi}=0$. Hence the left side of
\eqref{eq:W2} has components
\begin{gather*}
 zz:\ \frac{z^2}{u_z}-\frac{2z^2}{u_z}+1-\frac{2z^2}{s_zu_z}=\frac{1-3z^2}{s_zu_z},\qquad
 \psi\psi:\ \frac{4s_z^2}{u_z^3}-\frac{8z^2s_z}{u_z^3}=\frac{4s_z(1-3z^2)}{u_z^3},\\
 z\psi:\ -\frac{2zs_z}{u_z^2}+\frac{2zs_z}{u_z^2}=0 .
\end{gather*}
For $zz$, the sum equals $\frac{-z^2s_z+s_zu_z-2z^2}{s_zu_z}$, and with $s_zu_z=1-z^4$ the numerator is
$-z^2+z^4+1-z^4-2z^2=1-3z^2$; for $\psi\psi$ use $s_z-2z^2=1-3z^2$. Since $\gd_{H,zz}=\frac{u_z}{s_z}$ and
$\gd_{H,\psi\psi}=\frac{4s_z}{u_z}$, these are the components of $\frac{1-3z^2}{u_z^2}\gd_H$.

\eqref{eq:W3}: $h_H$ has $\det h_H=1$ in $(z,\psi)$ and $h_H^{zz}=\frac{2s_z}{u_z}$, so
\[
 \Delta_{h_H}w=\partial_z\Big(\frac{2s_z}{u_z}\cdot2z\Big)=\frac{(4-12z^2)u_z-(4z-4z^3)2z}{u_z^2}=\frac{4(1-4z^2-z^4)}{u_z^2}.
\]
Subtracting $\frac{8(1-z^2)}{u_z^2}$ gives $-\frac{4(1+2z^2+z^4)}{u_z^2}=-4$.
\end{proof}

\begin{definition}[The weighted functional]\label{def:F}
For a metric $\gd$ on $S^2$ and a one-form $\etab$, set
\begin{equation}\label{eq:F}
 \Fcal(\gd,\etab):=\frac1{4\pi}\int_{S^2}w\big(|\etab|^2+\Div\etab-K\big)\,d\mu_{\gd}
 =\frac1{4\pi}\int_{S^2}\big(w|\etab|^2-dw\cdot\etab-wK\big)\,d\mu_{\gd}.
\end{equation}
\end{definition}
The second form follows by integrating $w\Div\etab$ by parts. By \eqref{eq:Ecal}, the integrand of the
first form is $\frac w2\tr\Ecal(\gd,\etab)$; in particular $\Fcal(\gd_H,\etab_H)=0$ by \eqref{eq:NH}.

\begin{lemma}[First variation of the Gauss curvature]\label{lem:dotK}
For a variation $\dot\gd=k$ of a metric on a surface, $\dot K=\frac12\big(\Div\Div k-\Delta\tr k-K\tr k\big)$. In
particular $\dot K=\frac12\Div\Div k$ if $k$ is tracefree.
\end{lemma}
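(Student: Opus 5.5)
We compute the variation in local coordinates, using the standard formula for the first variation of the scalar curvature. On a surface the scalar curvature is $R=2K$, so it suffices to vary $R$. Let $\gd_t$ be a family with $\dot\gd=k$. The classical variation formula for the scalar curvature of a Riemannian manifold reads
\[
 \dot R=-\Delta\tr k+\Div\Div k-(\Ric,k),
\]
where $(\Ric,k)=\gd^{AC}\gd^{BD}\Ric_{AB}k_{CD}$ and $\tr k=\gd^{AB}k_{AB}$. I would derive this from the variation of the Christoffel symbols,
\[
 \dot\Gamma^C_{AB}=\tfrac12\gd^{CD}\big(\Nd_Ak_{BD}+\Nd_Bk_{AD}-\Nd_Dk_{AB}\big).
\]
It gives the Palatini identity $\dot\Ric_{AB}=\Nd_C\dot\Gamma^C_{AB}-\Nd_B\dot\Gamma^C_{AC}$. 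We then use $\dot R=\gd^{AB}\dot\Ric_{AB}-k^{AB}\Ric_{AB}$; the term $-(\Ric,k)$ comes from differentiating the inverse metric. Contracting the Palatini identity gives $\Nd^A\Nd^Bk_{AB}-\Delta\tr k$.

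In dimension two, $\Ric=K\gd$. Hence $(\Ric,k)=K\tr k$ and $\dot R=\Div\Div k-\Delta\tr k-K\tr k$. The right-hand side is exactly twice the claimed formula for $\dot K$. We also need $\dot K=\tfrac12\dot R$, which holds because $R=2K$ pointwise for every $t$. This proves the first statement. For tracefree $k$ we have $\tr k=0$, so the last two terms vanish and $\dot K=\frac12\Div\Div k$.

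The sign conventions must match the paper's, which has $\Ric=\gac$ and $K=1$ for the round sphere. As a check, take the conformal variation $k=2f\gd$. Then $\tr k=4f$ and $\Div\Div k=2\Delta f$, so the formula gives $\dot K=\frac12(2\Delta f-4\Delta f-4Kf)=-\Delta f-2Kf$. This agrees with the known formula $K_{e^{2f}\gd}=e^{-2f}(K-\Delta f)$. With this consistency check the sign of $\Delta$ (the analyst's Laplacian, $\Delta=\Div\Nd$) is confirmed. No step is a real obstacle; the only care needed is in index contractions and signs in the Palatini step.
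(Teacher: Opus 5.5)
Your proof is correct and follows the paper's argument essentially step for step. You vary the Christoffel symbols, apply the Palatini identity together with $\dot R=\gd^{AB}\dot\Ric_{AB}-k^{AB}\Ric_{AB}$, contract to obtain $\Div\Div k-\Delta\tr k$, and then specialise to $\Ric=K\gd$, $R=2K$ in two dimensions. Your conformal check with $k=2f\gd$ is a useful addition that the paper does not include: it confirms that the sign conventions, namely positive curvature for the round sphere and $\Delta=\Div\Nd$, are consistent with the formula.
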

\begin{proof}
The variation of the Christoffel symbols is the tensor
$\dot\Gamma^C_{AB}=\frac12(\Nd_Ak_B{}^C+\Nd_Bk_A{}^C-\Nd^Ck_{AB})$ (compute in normal coordinates). The Ricci tensor
varies by $\dot R_{AB}=\Nd_C\dot\Gamma^C_{AB}-\Nd_B\dot\Gamma^C_{CA}$, and the scalar curvature $R=\gd^{AB}R_{AB}$ by
$\dot R=-k^{AB}R_{AB}+\gd^{AB}\dot R_{AB}$. Now $\gd^{AB}\dot\Gamma^C_{AB}=(\Div k)^C-\frac12\Nd^C\tr k$ and
$\dot\Gamma^C_{CA}=\frac12\Nd_A\tr k$, so $\gd^{AB}\dot R_{AB}=\Div\Div k-\Delta\tr k$. In two dimensions $R_{AB}=K\gd_{AB}$
and $R=2K$.
\end{proof}

\begin{lemma}[Criticality]\label{lem:crit}
At $(\gd_H,\etab_H)$, $\Fcal$ is critical with respect to
\begin{enumerate}[label=(\alph*)]
\item tracefree variations of the metric;
\item variations of $\etab$ that are exact one-forms;
\item more generally, variations $\dot\etab$ with $\int\dot\etab(\partial_\psi)\,d\mu_{\gd_H}=0$.
\end{enumerate}
For a variation of $\etab$ alone, $\dot\Fcal=-\frac1{4\pi}\int_{S^2}\dot\etab(\partial_\psi)\,d\mu_{\gd_H}$.
\end{lemma}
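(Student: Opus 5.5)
We differentiate the second form of $\Fcal$ in \eqref{eq:F},
\[
 4\pi\Fcal(\gd,\etab)=\int_{S^2}\big(w|\etab|^2-dw\cdot\etab-wK\big)\,d\mu_{\gd}.
\]
This form contains no derivative of $\etab$ and no Christoffel symbols acting on $\etab$. The weight $w=1+z^2$ is a fixed function on the sphere of generators and is not varied. We then evaluate at $(\gd_H,\etab_H)=(\gd_H,-\eta_H)$ and reduce everything to the weight identities of Proposition~\ref{prop:weight}.

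\emph{Variations of $\etab$ alone.} With $\gd=\gd_H$ fixed, the first variation is
\[
 4\pi\dot\Fcal=\int\big(2w(\etab_H,\dot\etab)-(dw,\dot\etab)\big)\,d\mu_{\gd_H}
 =-\int\big(2w\eta_H+dw,\dot\etab\big)\,d\mu_{\gd_H}.
\]
By \eqref{eq:W1} the vector $(2w\eta_H+dw)^\sharp$ equals $\partial_\psi$, so $4\pi\dot\Fcal=-\int\dot\etab(\partial_\psi)\,d\mu_{\gd_H}$. This is the stated formula, and it gives (c) at once. For (b), let $\dot\etab=df$. Then $\int df(\partial_\psi)\,d\mu_{\gd_H}=-\int f\,\Div_{\gd_H}\partial_\psi\,d\mu_{\gd_H}=0$, since $\partial_\psi$ is a Killing field of $\gd_H$ and hence divergence free. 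So exact variations satisfy the hypothesis of (c).

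\emph{Tracefree metric variations.} Let $\dot\gd=k$ with $\tr k=0$ and $\etab=\etab_H$ fixed. We compute the variation of each term.
\begin{itemize}
\item The area form is unchanged, since $\dot{d\mu}=\frac12\tr k\,d\mu=0$.
\item Since $\dot{\gd^{AB}}=-k^{AB}$, the first term varies by $-w\,k^{AB}\eta_{H,A}\eta_{H,B}$.
\item The second term is $-dw\cdot\etab=\gd^{AB}\partial_Aw\,\eta_{H,B}$. It varies by $-k^{AB}\partial_Aw\,\eta_{H,B}$, which equals $-\frac12k^{AB}(dw\otimes\eta_H+\eta_H\otimes dw)_{AB}$ because $k$ is symmetric.
\item By Lemma~\ref{lem:dotK}, $\dot K=\frac12\Div\Div k$. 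Integrating by parts twice on the closed sphere gives $-\int w\dot K\,d\mu=-\frac12\int k^{AB}(\Nd^2w)_{AB}\,d\mu$.
\end{itemize}
Adding these contributions, we obtain
\[
 4\pi\dot\Fcal=-\int k^{AB}\Big(w\,\eta_H\otimes\eta_H+\tfrac12(dw\otimes\eta_H+\eta_H\otimes dw)+\tfrac12\Nd^2w\Big)_{AB}d\mu_{\gd_H}.
\]
By \eqref{eq:W2} the bracket equals $\frac{1-3z^2}{u_z^2}\gd_H$. Its contraction with the tracefree tensor $k$ vanishes pointwise, which proves (a).

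The only delicate point is the bookkeeping of signs: $\etab_H=-\eta_H$, and the cross term $-dw\cdot\etab$ flips sign. Both are exactly what is needed to produce the combinations appearing in \eqref{eq:W1} and \eqref{eq:W2}. Beyond that, the integrations by parts are justified on the closed sphere because $w$ and the horizon data are smooth.
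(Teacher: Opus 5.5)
Your proposal is correct and follows the paper's proof essentially step for step. You differentiate the second form of $\Fcal$, use \eqref{eq:W1} for the torsion variation (exact forms drop out because $\Div\partial_\psi=0$), and for tracefree $k$ combine $\dot{d\mu}=0$, $\dot{\gd^{AB}}=-k^{AB}$ and Lemma~\ref{lem:dotK}, with two integrations by parts, to reach the pure-trace bracket of \eqref{eq:W2}, handling the signs from $\etab_H=-\eta_H$ exactly as the paper does.
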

\begin{proof}
For a variation $\dot\etab$, the second form of \eqref{eq:F} and \eqref{eq:W1} give
\[
 \dot\Fcal=\frac1{4\pi}\int(2w\etab_H-dw)\cdot\dot\etab\,d\mu
 =-\frac1{4\pi}\int(2w\eta_H+dw)\cdot\dot\etab\,d\mu=-\frac1{4\pi}\int\dot\etab(\partial_\psi)\,d\mu .
\]
An exact form $df$ gives $\int\partial_\psi f\,d\mu=0$, because $\partial_\psi$ is divergence free.

For a tracefree $\dot\gd=k$, the area form does not change to first order ($\dot{d\mu}=\frac12\tr k\,d\mu=0$), and
$\dot{(\gd^{AB})}=-k^{AB}$, so $\dot{(|\etab|^2)}=-k^{AB}\etab_A\etab_B$ and $\dot{(dw\cdot\etab)}=-k^{AB}w_A\etab_B$. By
Lemma~\ref{lem:dotK} and two integrations by parts, $\int w\dot K\,d\mu=\frac12\int k^{AB}\Nd_A\Nd_Bw\,d\mu$. Since
$-k^{AB}\etab_A\etab_B=-k^{AB}\eta_A\eta_B$ and $-\dot{(dw\cdot\etab)}=k^{AB}w_A\etab_B=-k^{AB}w_A\eta_B$, we obtain
\[
 \dot\Fcal=-\frac1{4\pi}\int k^{AB}\Big[w\eta_{H,A}\eta_{H,B}+w_{(A}\eta_{H,B)}+\tfrac12\Nd^2_{AB}w\Big]d\mu .
\]
By \eqref{eq:W2} the bracket is a multiple of $\gd_H$, and $k^{AB}\gd_{H,AB}=0$.
\end{proof}

\subsection{Cuts and boosts: the target chart}\label{sec:cuts}

The construction does not fix the target cut in advance. We now describe all cuts of the horizon and all
constant rescalings of the normals, with exact formulas (no smallness assumption).

Let $F$ be a smooth function on $S^2$ and $S_F=\{v=F,\ r=1\}$ the corresponding cut. Its tangent vectors are
$E_A=\partial_A+F_A\,\xi$, where $F_A=\partial_AF$. Write $\Nd F:=f^C\partial_C$, $f^C=\gd_H^{CD}F_D$, for the $\gd_H$-gradient on the
reference cut; because $\partial_C$ and $n$ are $\xi$-invariant, $\Nd F$ and $n$ are also defined at the points of
$S_F$ (by transport along $\xi$).

\begin{lemma}\label{lem:nF}
The vector field
\begin{equation}\label{eq:nF}
 n_F:=n+2\Nd F+|\Nd F|^2\,\xi
\end{equation}
satisfies $\bg(n_F,n_F)=0$, $\bg(n_F,E_A)=0$ and $\bg(\xi,n_F)=-2$. Hence it is the null normal of $S_F$ normalised
against $\xi$.
\end{lemma}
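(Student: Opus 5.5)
This is a direct verification using only the inner products of $\xi$, $n$ and the coordinate fields along the horizon. I would first collect them from \eqref{eq:kerrr1} and the definition of $n$ in Section~\ref{sec:cutdata}. On $\{r=1\}$ we have $\bg(\xi,\xi)=g_{vv}=0$ and $\bg(\xi,\partial_C)=g_{vC}=0$ for $C\in\{z,\psi\}$. The normal satisfies $\bg(n,\partial_C)=0$, $\bg(n,n)=0$ and $\bg(\xi,n)=-2$. By $\xi$-invariance, i.e.\ since $\xi$ is Killing and commutes with $\partial_C$ and $n$ (the latter has constant-in-$v$ components), these relations hold at every point of the horizon, in particular along $S_F$. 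Also $\bg(\partial_C,\partial_D)=\gd_{H,CD}$ there. So $\Nd F=f^C\partial_C$ is tangent to the reference cuts and satisfies $\bg(\Nd F,\partial_A)=F_A$ and $\bg(\Nd F,\Nd F)=|\Nd F|^2$, which is the $\gd_H$-norm.

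Next I would expand the three products bilinearly, discarding all terms containing $\bg(\xi,\xi)$, $\bg(\xi,\partial_C)$ or $\bg(n,\partial_C)$. For the norm,
\[
 \bg(n_F,n_F)=\bg(n,n)+4\bg(n,\Nd F)+2|\Nd F|^2\bg(n,\xi)+4|\Nd F|^2+4|\Nd F|^2\bg(\Nd F,\xi)+|\Nd F|^4\bg(\xi,\xi)
 =0+0-4|\Nd F|^2+4|\Nd F|^2=0 .
\]
For the normalisation, $\bg(\xi,n_F)=\bg(\xi,n)+2\bg(\xi,\Nd F)+|\Nd F|^2\bg(\xi,\xi)=-2$. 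For the tangency, with $E_A=\partial_A+F_A\xi$,
\[
 \bg(n_F,E_A)=\bg(n,\partial_A)+F_A\bg(n,\xi)+2\bg(\Nd F,\partial_A)+2F_A\bg(\Nd F,\xi)+|\Nd F|^2\big(\bg(\xi,\partial_A)+F_A\bg(\xi,\xi)\big)=-2F_A+2F_A=0 .
\]

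For the final assertion, the normal space of the spacelike cut $S_F$ is a Lorentzian plane containing exactly two null lines. The vector $\xi$ is normal to $S_F$, because $\bg(\xi,E_A)=0$, and it spans one of these lines. Now $n_F$ is null, normal to $S_F$, and not proportional to $\xi$, since $\bg(\xi,n_F)\neq0$. Hence $n_F$ spans the other null line, and the condition $\bg(\xi,n_F)=-2$ fixes its scale. This is the argument already used after \eqref{eq:Lpair}.

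The only point needing care, rather than a genuine obstacle, is justifying that the inner products computed at $r=1$ on the reference cut $v=0$ hold at the points of $S_F$. This holds because the components in \eqref{eq:kerrr1} are independent of $v$ (and of $\psi$) in the co-rotating coordinates. Consequently $\partial_C$, $n$ and $\Nd F$, transported along $\xi$, keep the same inner products everywhere on $\{r=1\}$.
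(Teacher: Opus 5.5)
Your proof is correct and is the paper's own argument: the paper verifies the three identities by the same bilinear expansion, using $\bg(n,\xi)=-2$, $\bg(\xi,\xi)=\bg(\xi,\partial_A)=\bg(n,\partial_A)=\bg(n,n)=0$ and $\bg(\Nd F,\partial_A)=F_A$. Your two extra remarks make explicit points the paper leaves to the surrounding text. The first is the $\xi$-invariance argument, which transports these relations from the reference cut to the points of $S_F$. The second is the observation that the normal plane of $S_F$ contains exactly two null lines.
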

\begin{proof}
Use $\bg(n,\xi)=-2$, $\bg(\xi,\xi)=\bg(\xi,\partial_A)=\bg(n,\partial_A)=\bg(n,n)=0$ and $\bg(\Nd F,\partial_A)=F_A$. Then
$\bg(n_F,n_F)=4|\Nd F|^2+2|\Nd F|^2\bg(n,\xi)=0$, $\bg(n_F,E_A)=F_A\bg(n,\xi)+2F_A=0$ and $\bg(\xi,n_F)=\bg(\xi,n)=-2$.
\end{proof}

The incoming curvature of the cuts is computed from the Petrov type of Kerr. We use the following algebraic lemma.

\begin{lemma}[Null rotations about a principal null direction]\label{lem:nullrot}
Let $p$ be a point of a vacuum spacetime and $L,K\in T_p\mathcal M$ null with $\bg(L,K)=-2$. Let $P:=\{L,K\}^\perp$, with an oriented
orthonormal basis $(e_1,e_2)$, and assume that every component of $\bR$ in the frame $(L,K,e_1,e_2)$ of nonzero boost weight vanishes;
the boost weight of a component is the number of entries $L$ minus the number of entries $K$. Set $\rho:=\frac14\bR(K,L,K,L)$ and
$\sigma:=\frac12\bR(e_1,e_2,K,L)$, and for $Y\in P$ put
\[
 N_Y:=K+2Y+|Y|^2L .
\]
\begin{enumerate}[label=(\alph*)]
\item $N_Y$ is null with $\bg(L,N_Y)=-2$, and $P_Y:=\{L,N_Y\}^\perp=\{e+\bg(Y,e)L:\ e\in P\}$; the projection $\pi_Y:P_Y\to P$
along $L$ is an isometry.
\item For $X\in P_Y$, $N_Y+2X+|X|^2L=N_{Y+\pi_YX}$.
\item $\frac14\bR(N_Y,L,N_Y,L)=\rho$ and $\frac12\bR(X_1,X_2,N_Y,L)=\sigma$, where $X_a:=\pi_Y^{-1}e_a$.
\item Transported to $P$ by $\pi_Y$, the symmetric form $\ab_Y(X,X'):=\bR(X,N_Y,X',N_Y)$ on $P_Y$ is
\begin{equation}\label{eq:nullrot}
\begin{gathered}
 \ab_Y=6\big(\rho\,Y\ohat Y-\sigma\,Y\ohat{}^*Y\big),\qquad\text{equivalently}\\
 \tfrac12\big(\ab_Y(X_1,X_1)-i\,\ab_Y(X_1,X_2)\big)=3(\rho-i\sigma)(Y_1-iY_2)^2,
\end{gathered}
\end{equation}
where $Y_a=\bg(Y,e_a)$, $({}^*Y)_a=\slashed\epsilon_{ab}Y_b$, and $a\ohat b=a\otimes b+b\otimes a-\bg(a,b)\,\bg|_P$.
\end{enumerate}
\end{lemma}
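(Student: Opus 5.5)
The plan is to treat (a) and (b) as linear algebra, and then derive (c) and (d) from the boost-weight hypothesis by multilinear expansion in the arguments $N_Y=K+2Y+|Y|^2L$.

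\emph{Parts (a) and (b).} Since $L\perp Y$, $K\perp Y$, $\bg(L,L)=\bg(K,K)=0$ and $\bg(L,K)=-2$, we get $\bg(N_Y,N_Y)=4|Y|^2+2|Y|^2\bg(K,L)=0$ and $\bg(L,N_Y)=-2$, exactly as in Lemma~\ref{lem:nF}. For $e\in P$ one has $\bg(e+\bg(Y,e)L,L)=0$ and $\bg(e+\bg(Y,e)L,N_Y)=2\bg(Y,e)-2\bg(Y,e)=0$. A dimension count therefore identifies $P_Y$. The map $\pi_Y$ subtracts a multiple of the null vector $L$, which is orthogonal to everything in question, so it is an isometry. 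For (b), write $X=e+\bg(Y,e)L$ with $e=\pi_YX$. Expanding $N_Y+2X+|X|^2L$ gives $K+2(Y+e)+\big(|Y|^2+2\bg(Y,e)+|e|^2\big)L=N_{Y+e}$, using $|X|^2=|e|^2$.

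\emph{Parts (c) and (d).} Substitute $N_Y=K+2Y+|Y|^2L$ and $X_a=e_a+Y_aL$ into each component and expand multilinearly. Only terms of boost weight zero survive. Note that $L$ carries weight $+1$, $K$ weight $-1$, and $Y,e_a$ weight $0$.

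For $\frac14\bR(N_Y,L,N_Y,L)$, each slot holding $L$ forces the $N_Y$ slots to supply $K$. Any term with $Y$ in an $N_Y$ slot has weight $\ge1$, and any term with $L$ in an $N_Y$ slot vanishes by antisymmetry. Hence only $\rho$ survives.

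For $\frac12\bR(X_1,X_2,N_Y,L)$, weight zero requires exactly one $K$ overall. The candidate terms are $\bR(e_1,e_2,K,L)$ together with terms in which $L$ from $X_a$ pairs with $K$ while $Y$ sits in the $N_Y$ slot. An example is $Y_1\bR(L,e_2,2Y,L)$. This has weight $+1$, so it vanishes. Hence only $\sigma$ survives.

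For (d), $\bR(X,N_Y,X',N_Y)$ has no explicit $L$ slots, so weight zero requires the $L$'s and $K$'s to balance. The surviving terms are quadratic in $Y$ and come from three patterns: $\bR(e,2Y,e',2Y)$; $\bR(e,K,e',|Y|^2L)$ and its symmetric partner; and the cross terms $\bR(\bg(Y,e)L,K,e',2Y)$ and similar. The pure term $\bR(e,K,e',K)$ has weight $-2$ and vanishes.

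The boost-weight-zero components of a vacuum Weyl tensor in the plane $P$ are all expressed through $\rho$ and $\sigma$. Concretely, $\bR(e_a,e_b,e_c,e_d)$, $\bR(e_a,K,e_b,L)$ and $\bR(e_a,e_b,K,L)$ are given by $\rho\delta$ and $\sigma\epsilon$ combinations; these follow from $\Ric=0$ and the first Bianchi identity, as in Christodoulou's decomposition. Inserting these and collecting terms yields a tracefree symmetric quadratic form in $Y$. Every such form is $A\,Y\ohat Y+B\,Y\ohat{}^*Y$. The cleanest way to fix $A$ and $B$ is in complex notation $Y_1-iY_2$: the form has spin weight two, so it must be $c(\rho-i\sigma)(Y_1-iY_2)^2$ for a universal constant $c$. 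I would determine $c=3$ by evaluating at $Y=e_1$ with $\sigma=0$.

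\emph{Main obstacle.} The real difficulty is sign bookkeeping: the convention \eqref{eq:Rconv}, the left dual, and the orientation all enter. I expect the constant $6$ and the relative sign of $\sigma$ to be the error-prone points, so I would check them against a Schwarzschild-type frame and an explicit rotated frame.
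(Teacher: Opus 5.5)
Your parts (a)--(c) follow the paper's argument, as does your reduction of (d) to the weight-zero components \eqref{eq:wt0}. The gap is in how you fix the constants in (d).

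Rotating $(e_1,e_2)$ leaves $\rho,\sigma$ invariant. Equivariance therefore only gives
$\tfrac12\big(\ab_Y(X_1,X_1)-i\,\ab_Y(X_1,X_2)\big)=(c_1\rho+c_2\sigma)(Y_1-iY_2)^2$ with universal constants $c_1,c_2\in\C$; reversing the orientation adds only $c_1\in\R$, $c_2\in i\R$. Nothing forces $c_2=-ic_1$, so the combination $\rho-i\sigma$ is an assumption, not a consequence of spin weight.

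Your single evaluation at $Y=e_1$, $\sigma=0$ fixes $c_1$ alone. The $\sigma$-coefficient is therefore never determined. This is exactly the sign you flag as error-prone, and it is the one that feeds $\rho_H-i\sigma_H$ into \eqref{eq:Acutexplicit}. A Schwarzschild-type check cannot see it, because $\sigma=0$ there.

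You need an independent computation of the $\sigma$-terms. One way is the paper's: with $\bR(L,K,e_b,Y)=-2\sigma({}^*Y)_b$ and $\bR(L,Y,e_b,K)=\rho Y_b-\sigma({}^*Y)_b$, the cross terms give $-6\sigma\big(Y_a({}^*Y)_b+({}^*Y)_aY_b\big)$, while the $\sigma$-parts of $\bR(e_a,K,e_b,L)+\bR(e_a,L,e_b,K)$ cancel. Another is a second evaluation at $Y=e_1$, $\rho=0$, which gives $\ab_Y(X_1,X_2)=6\sigma$.

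Second, your list of surviving terms in (d) is incomplete, and this corrupts the one evaluation you do rely on. Besides your three patterns there is a fourth, in which both $X$-slots supply $L$ and both $N_Y$-slots supply $K$: $Y_aY_b\,\bR(L,K,L,K)=4\rho\,Y_aY_b$.

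Without this term the $\rho$-part sums to $8\rho Y_aY_b-6\rho|Y|^2\delta_{ab}$. Its trace is $-4\rho|Y|^2$, whereas the trace of $\ab_Y$ equals $\Ric(N_Y,N_Y)=0$. Your evaluation at $Y=e_1$ would then return $c=1$ instead of $3$. With the fourth pattern included, the $\rho$-terms sum to $12\rho\big(Y_aY_b-\tfrac12|Y|^2\delta_{ab}\big)$, as required.
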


\begin{proof}
(a) $\bg(N_Y,N_Y)=4|Y|^2+2|Y|^2\bg(K,L)=0$ and $\bg(L,N_Y)=\bg(L,K)=-2$. For $e\in P$, $\bg(e+cL,N_Y)=2\bg(e,Y)-2c$, and
$|e+cL|=|e|$ since $L$ is null and orthogonal to $P$.
(b) With $X=e+\bg(Y,e)L$: $N_Y+2X+|X|^2L=K+2(Y+e)+\big(|Y|^2+2\bg(Y,e)+|e|^2\big)L=N_{Y+e}$.

For (c) and (d) we expand by multilinearity, with $N_Y=K+2Y+|Y|^2L$ and $X_a=e_a+Y_aL$. By assumption only terms of boost weight
zero survive. In (c) every term other than $\bR(K,L,K,L)$, resp.\ $\bR(e_1,e_2,K,L)$, has more entries $L$ than $K$; this proves (c).
The components of boost weight zero are determined by $\rho$ and $\sigma$ through the algebraic symmetries of $\bR$ and $\mathrm{Ric}=0$
(see \cite[Chapter~1]{Chr}): besides $\bR(K,L,K,L)=4\rho$ and $\bR(e_a,e_b,K,L)=2\sigma\slashed\epsilon_{ab}$,
\begin{equation}\label{eq:wt0}
 \bR(e_a,L,e_b,K)=-\rho\,\delta_{ab}-\sigma\,\slashed\epsilon_{ab},\qquad \bR(e_a,e_b,e_c,e_d)=-\rho\,\slashed\epsilon_{ab}\slashed\epsilon_{cd}.
\end{equation}
In $\bR(X_a,N_Y,X_b,N_Y)$ the terms of boost weight zero are, by \eqref{eq:wt0} and the symmetries of $\bR$,
\begin{align*}
 |Y|^2\big(\bR(e_a,K,e_b,L)+\bR(e_a,L,e_b,K)\big)+4\bR(e_a,Y,e_b,Y)&=-2\rho|Y|^2\delta_{ab}-4\rho\,({}^*Y)_a({}^*Y)_b,\\
 2Y_a\big(\bR(L,K,e_b,Y)+\bR(L,Y,e_b,K)\big)&=2Y_a\big(\rho Y_b-3\sigma({}^*Y)_b\big),\\
 2Y_b\big(\bR(e_a,K,L,Y)+\bR(e_a,Y,L,K)\big)&=2Y_b\big(\rho Y_a-3\sigma({}^*Y)_a\big),\\
 Y_aY_b\,\bR(L,K,L,K)&=4\rho\,Y_aY_b .
\end{align*}
For instance, $\bR(L,K,e_b,Y)=-Y_d\bR(e_b,e_d,K,L)=-2\sigma({}^*Y)_b$ and $\bR(L,Y,e_b,K)=-Y_c\bR(e_c,L,e_b,K)=\rho Y_b-\sigma({}^*Y)_b$.
Since $({}^*Y)\otimes({}^*Y)=|Y|^2\delta-Y\otimes Y$ in two dimensions, the sum is
$12\rho\big(Y_aY_b-\frac12|Y|^2\delta_{ab}\big)-6\sigma\big(Y_a({}^*Y)_b+({}^*Y)_aY_b\big)$, which is the first form of \eqref{eq:nullrot}. For the
second, $(Y\ohat Y)_{11}-i(Y\ohat Y)_{12}=(Y_1-iY_2)^2$ and, as ${}^*Y=(Y_2,-Y_1)$, $(Y\ohat{}^*Y)_{11}-i(Y\ohat{}^*Y)_{12}=i(Y_1-iY_2)^2$.
\end{proof}

For a constant $\lambda>0$ consider the boosted frame
\[
 L=\lambda^{-1}\xi,\qquad \Lb'=\lambda\,n_F .
\]

\begin{proposition}[Exact cut formulas]\label{prop:cut}
For every $F$, without any smallness assumption:
\begin{enumerate}[label=(\alph*)]
\item the induced metric of $S_F$ is $\gd_H$, and $\Omega\chi=0$, $\alpha=\beta=0$;
\item the torsion of $(L,\Lb')$ is $\tau=\eta_H=-\etab_H$, independently of $F$ and $\lambda$; equivalently, its
coefficient $\etab$ is $\etab_H$;
\item the incoming second fundamental form is
\begin{equation}\label{eq:Ncut}
 \Omega^{-1}\chib=\lambda\,\chib_H+\Ccal G,\qquad
 \Ccal G:=2\Nd^2G+2(\etab_H\otimes dG+dG\otimes\etab_H),\qquad G:=\lambda F;
\end{equation}
\item the incoming curvature is a polynomial in $(\lambda,dG)$: with the vector field $Z_\lambda:=\Nd G-\frac\lambda2\partial_\psi$ on the
reference cut (vectors and one-forms are identified by $\gd_H$),
\begin{equation}\label{eq:Acutinv}
 \Omega^{-2}\ab=6\big(\rho_H\,Z_\lambda\ohat Z_\lambda-\sigma_H\,Z_\lambda\ohat{}^*Z_\lambda\big),\qquad \rho_H:=-K_H .
\end{equation}
Thus
\begin{equation}\label{eq:Acut}
 \Omega^{-2}\ab=\lambda^2\ab_H+\lambda\,\mathcal B_1(dG)+\mathcal B_2(dG,dG),
\end{equation}
with fixed tensor-valued linear and quadratic forms $\mathcal B_1,\mathcal B_2$, the parts of \eqref{eq:Acutinv} of degree one and two in
$dG$. Explicitly, in the helicity components of Definition~\ref{def:helicity} (with $g_\pm:=(dG)_\pm$),
\begin{equation}\label{eq:Acutexplicit}
 (\Omega^{-2}\ab)_+=\frac{6i\,s_z}{u_z(z-i)^3}\,\big(\lambda-2i\,g_+\big)^2,\qquad
 (\Omega^{-2}\ab)_-=-\frac{6i\,s_z}{u_z(z+i)^3}\,\big(\lambda+2i\,g_-\big)^2 .
\end{equation}
\end{enumerate}
\end{proposition}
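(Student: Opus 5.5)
The plan is to work entirely with the Killing structure of $\xi$ and the explicit normal $n_F$ of Lemma~\ref{lem:nF}. All fields are transported along $\xi$, so every tensor on $S_F$ can be pulled back to the reference cut $S_H$ by the flow of $\xi$; this identification is an isometry because $\xi$ is Killing.

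\emph{Part (a).} The tangent vectors are $E_A=\partial_A+F_A\xi$. Since $\xi$ is null and orthogonal to the $\partial_A$, we get $\bg(E_A,E_B)=\bg(\partial_A,\partial_B)$, so the induced metric is $\gd_H$. For the outgoing quantities, $\chi_L(E_A,E_B)=\lambda^{-1}\bg(\nabla_{E_A}\xi,E_B)$. Expanding, every term involving $\xi$ in either slot is either $\bg(\nabla_\xi\xi,\cdot)=0$ (Lemma~\ref{lem:affine}) or $\bg(\nabla_{\partial_A}\xi,\xi)=\frac12\partial_A\bg(\xi,\xi)=0$. What remains is $\bg(\nabla_{\partial_A}\xi,\partial_B)$, which vanishes by the argument of Proposition~\ref{prop:kerrdata}. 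The identities $\alpha=\beta=0$ then follow exactly as there, from \eqref{eq:chiLtransport} and \eqref{eq:RicXL} with $\chi_L=0$ and $\kappa=0$.

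\emph{Part (b).} Compute $\tau(E_A)=\frac12\bg(\nabla_{E_A}\xi,n_F)$; the factors $\lambda^{\mp1}$ cancel. The contributions are as follows.
\begin{itemize}
\item The term $F_A\nabla_\xi\xi$ vanishes.
\item Pairing with $|\Nd F|^2\xi$ gives zero.
\item Pairing with $2\Nd F$ gives $\bg(\nabla_{\partial_A}\xi,\Nd F)=\chi_\xi(\partial_A,\Nd F)=0$.
\end{itemize}
Hence $\tau(E_A)=\eta_H(\partial_A)$.

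\emph{Part (c).} Use $\chib'(E_A,E_B)=-\bg(\Lb',\nabla_{E_A}E_B)$ with $\Lb'=\lambda n_F$. Expand $\nabla_{E_A}E_B=\nabla_A\partial_B+F_A\nabla_\xi\partial_B+F_B\nabla_A\xi+F_AF_B\nabla_\xi\xi+(\partial_AF_B)\xi$. The pairings are then computed term by term.
\begin{itemize}
\item Pairing $\nabla_A\partial_B$ with $n$ gives $\chib_H$. Pairing it with $2\Nd F$ gives the Christoffel term $-2\slashed\Gamma^C_{AB}F_C$. Pairing it with $|\Nd F|^2\xi$ gives zero, since $\bg(\xi,\nabla_A\partial_B)=-\chi_\xi(A,B)=0$.
\item The terms $\nabla_\xi\partial_B=\nabla_B\xi$ paired with $n_F$ give $2\tau$ contributions, namely $-2F_A\eta_B-2F_B\eta_A$ up to sign.
\item The term $(\partial_AF_B)\xi$ paired with $n_F$ gives $-2\partial_A\partial_BF$.
\end{itemize}
Multiplying by $-\lambda$ and collecting terms yields $\lambda\chib_H+2\lambda\Nd^2F+2\lambda(\etab_H\otimes dF+dF\otimes\etab_H)$, which is \eqref{eq:Ncut}. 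Careful sign bookkeeping, using $\etab_H=-\eta_H$, is the only delicate point here.

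\emph{Part (d).} Apply Lemma~\ref{lem:nullrot}. Kerr is of Petrov type D with $\xi$ proportional to a principal null direction on the horizon, and by Proposition~\ref{prop:kerrdata} we have $\alpha=\beta=0$. In addition, $\ab_H$ is known explicitly and is nonzero, so the frame $(\xi,n)$ is not itself doubly principal. The main obstacle is therefore to find a vector $Y_0=-\frac12\partial_\psi$ such that $n+2Y_0+|Y_0|^2\xi$ is the second principal direction. I would verify this by checking \eqref{eq:nullrot} at $\lambda=1$, $F=0$ against the explicit $\ab_H$, computed from \eqref{eq:H5} or directly. The natural route is to use \eqref{eq:W1}: $\partial_\psi=(2w\eta_H+dw)^\sharp$ suggests that the principal frame is built from this combination.

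Granting this, let $K_0:=N_{Y_0}$. In the frame $(\xi,K_0)$ all components of nonzero boost weight vanish, with $\rho=\rho_H=-K_H$ and $\sigma=\sigma_H$. Boost invariance of $\rho$ and $\sigma$ follows from Lemma~\ref{lem:nullrot}(c), and the formula \eqref{eq:Gauss} with $\chi=0$ gives $\rho=-K_H$. Now write $\Lb'=\lambda n_F$ and rescale to $\lambda^{-1}\xi$ and $\lambda K_0$. Using Lemma~\ref{lem:nullrot}(b), $\lambda n_F$ is a null rotation of $\lambda K_0$ about $\lambda^{-1}\xi$ by $Y=\lambda(\Nd F-Y_0)\cdot$ in the rescaled normalisation. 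The rescaling $Y\mapsto\lambda Y$ comes from $\bg(\lambda^{-1}\xi,\lambda K_0)=-2$, and gives $Y=\Nd G-\frac\lambda2\partial_\psi=Z_\lambda$. Lemma~\ref{lem:nullrot}(d) then gives \eqref{eq:Acutinv}, with tangent vectors of $S_F$ identified with $P$ via $\pi_Y$. Expanding in $\lambda$ and $dG$ yields \eqref{eq:Acut}, and taking helicity components with $K_H+i\sigma_H=2(1+iz)^{-3}$ gives \eqref{eq:Acutexplicit}.
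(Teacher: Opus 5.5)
Parts (a)--(c) are fine and essentially the paper's argument. In (c) you pair $\nabla_{E_A}E_B$ with $\Lb'$ instead of differentiating $n_F$, which is equivalent; the signs in your bullet list are mixed, but the total is right.

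The gap is in (d). Everything there rests on knowing which null vector normalised against $\xi$ is the second repeated principal direction. You leave this as the ``main obstacle'', to be settled by comparing \eqref{eq:nullrot} at $\lambda=1$, $F=0$ with an explicit $\ab_H$. That comparison cannot settle it, for two reasons.

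First, $\ab_H$ is not available. In the paper it is an output of this very proposition (the $\lambda^2$-term of \eqref{eq:Acut}). Nor can \eqref{eq:H5} supply it: on the stationary horizon it reduces to $\Bcal\chib_H=0$ (Lemma~\ref{lem:Bstat}), and $\ab_H$ drops out. Computing it directly requires second derivatives of the Kerr metric, which you do not do.

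Second, even with $\ab_H$ in hand, \eqref{eq:nullrot} is even in $Y$. So $Y_0$ and $-Y_0$ pass the test equally, while only one of the two corresponding null rotations of $n$ is principal. Your choice lands on the wrong one. With $Y_0=-\frac12\partial_\psi$ you get $n+2Y_0+|Y_0|^2\xi=n-\partial_\psi+\frac{s_z}{u_z}\xi$, whereas by \eqref{eq:nH} the principal vector is $K=-\frac4{u_z}\partial_r=n+\partial_\psi+\frac{s_z}{u_z}\xi$. The same sign appears at the end. With your definition of $K_0$, $n_F$ is the null rotation of $K_0$ by $\Nd F-Y_0$, and $\lambda(\Nd F-Y_0)=\Nd G+\frac\lambda2\partial_\psi$, not $Z_\lambda$. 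So the final identification does not follow from your own formulas.

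What is missing is an independent identification of the principal frame. The paper takes it from the Kerr geometry. In ingoing Kerr coordinates the ingoing principal null geodesics are the $r$-lines, so the second principal direction is spanned by $K=-\frac4{u_z}\partial_r$, which is null with $\bg(\xi,K)=-2$ by \eqref{eq:kerrr1}. Then \eqref{eq:nH} says $n=K+2Y_0+|Y_0|^2\xi$ with $Y_0=-\frac12\pi\partial_\psi$. That is, $n$ is the null rotation \emph{of} $K$; equivalently, $K$ is the rotation of $n$ by $+\frac12\partial_\psi$. Lemma~\ref{lem:nullrot}(b) then gives $n_F=N_Y$ with $\lambda Y=Z_\lambda$.

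If you prefer to stay inside the paper's formulas, use the boost-weight $-1$ component instead of $\ab$. The component $\bb_H$ is explicit from the Codazzi equation \eqref{eq:Cod} with the known $\chib_H$, $\etab_H$. Since $\alpha=\beta=0$, a null rotation of $n$ about $\xi$ by $X$ changes $\bb$ by $-3(\rho_H X-\sigma_H\,{}^*X)$. This is linear and invertible in $X$, because $K_H+i\sigma_H=2/(1+iz)^3\neq0$. So exactly one rotation of $n$ kills $\bb$. Type~D, with $\xi$ repeated, then forces the $\ab$-component to vanish in that frame as well.

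One of these arguments has to be supplied before Lemma~\ref{lem:nullrot}(d) can be applied.
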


\begin{proof}
We use four facts: $\xi$ is Killing and affine; $\bg(\nabla_X\xi,Y)=0$ for $X,Y$ tangent to the horizon (for
$X,Y$ tangent to a cut this is $\Omega\chi=0$, Proposition~\ref{prop:kerrdata}; if $X$ or $Y$ equals $\xi$ it holds
because $\bg(\nabla_X\xi,\xi)=\frac12X\bg(\xi,\xi)=0$ and $\nabla_\xi\xi=0$); $n$ and the coordinate fields $\partial_A$
are $\xi$-invariant, $[\xi,n]=[\xi,\partial_A]=0$; and $\frac12\bg(\nabla_A\xi,n)=\eta_H(\partial_A)$.

(a) Since $\xi$ is null and orthogonal to the horizon, $\bg(E_A,E_B)=\bg(\partial_A,\partial_B)$. The null second fundamental
form and $\alpha,\beta$ are properties of the horizon and its generator, not of the cut.

(b) $\frac12\bg(\nabla_{E_A}(\xi/\lambda),\lambda n_F)=\frac12\bg(\nabla_A\xi+F_A\nabla_\xi\xi,\,n+2\Nd F+|\Nd F|^2\xi)$. Here
$\nabla_\xi\xi=0$, $\bg(\nabla_A\xi,\Nd F)=0$ (both arguments tangent) and $\bg(\nabla_A\xi,\xi)=0$. What remains is
$\frac12\bg(\nabla_A\xi,n)=\eta_H(\partial_A)$.

(c) First take $\lambda=1$ and compute $\bg(\nabla_{E_A}n_F,E_B)$. There are three contributions.

\emph{From $n$.} By bilinearity,
$\bg(\nabla_{E_A}n,E_B)=\chib_H(\partial_A,\partial_B)+F_A\bg(\nabla_\xi n,\partial_B)+F_B\bg(\nabla_An,\xi)+F_AF_B\bg(\nabla_\xi n,\xi)$. Now
$\bg(\nabla_\xi n,\xi)=-\bg(n,\nabla_\xi\xi)=0$ and $\bg(\nabla_An,\xi)=-\bg(n,\nabla_A\xi)=-2\eta_{H,A}$. Moreover
$\nabla_\xi n=\nabla_n\xi$ since $[\xi,n]=0$, and the Killing equation gives
$\bg(\nabla_n\xi,\partial_B)=-\bg(\nabla_B\xi,n)=-2\eta_{H,B}$. Altogether: $\chib_{H,AB}-2F_A\eta_{H,B}-2F_B\eta_{H,A}$.

\emph{From $2\Nd F=2f^C\partial_C$.} We have
\[
 \bg(\nabla_{E_A}(f^C\partial_C),E_B)=\bg(\nabla_A(f^C\partial_C),\partial_B)+F_B\bg(\nabla_A(f^C\partial_C),\xi)
 +F_A\bg(\nabla_\xi(f^C\partial_C),E_B).
\]
The first term is the Hessian $\Nd^2_{AB}F$ (tangential projection). In the second,
$\bg(\nabla_A(f^C\partial_C),\xi)=-f^C\bg(\partial_C,\nabla_A\xi)=0$. In the third, $f^C$ is $\xi$-invariant and
$\nabla_\xi\partial_C=\nabla_C\xi$, whose pairing with tangent vectors vanishes. Contribution: $2\Nd^2_{AB}F$.

\emph{From $|\Nd F|^2\xi$.} $E_A(|\Nd F|^2)\bg(\xi,E_B)+|\Nd F|^2\bg(\nabla_{E_A}\xi,E_B)=0$.

Hence $\chib(F)=\chib_H+2\Nd^2F-2(dF\otimes\eta_H+\eta_H\otimes dF)$, with no quadratic terms. Since
$\etab_H=-\eta_H$, this is \eqref{eq:Ncut} with $\lambda=1$. In the boosted frame, $\Lb'=\lambda n_F$ multiplies $\chib$ by
$\lambda$, and $\lambda\chib_H+\lambda\Ccal F=\lambda\chib_H+\Ccal(\lambda F)$ since $\Ccal$ is linear.

(d) \emph{Step 1: the principal null directions.} Let $K:=-\frac4{u_z}\partial_r$. By \eqref{eq:kerrr1} it is null with
$\bg(\xi,K)=-2$, and comparison with \eqref{eq:nH} gives
\begin{equation}\label{eq:Kn}
 K=n+\partial_\psi+\frac{s_z}{u_z}\,\xi .
\end{equation}
Kerr is of Petrov type D, and on the horizon its two principal null directions are those of $\xi$ and of $\partial_r$: in ingoing
coordinates the ingoing principal null geodesics are the curves along which only $r$ varies (the Kinnersley vector $n$ is a multiple of
$\partial_r$), and the outgoing principal null direction is tangent to the horizon generators \cite{Chandra}. Hence at every point of the
horizon all components of $\bR$ of nonzero boost weight in a frame $(\xi,K,e_1,e_2)$ vanish, and Lemma~\ref{lem:nullrot} applies with $L=\xi$.

\emph{Step 2: $\rho$ and $\sigma$.} Let $\pi$ denote the projection along $\xi$ onto $P:=\{\xi,K\}^\perp$; then $\pi\partial_\psi=\partial_\psi+\frac{2s_z}{u_z}\xi$
and $|\pi\partial_\psi|^2=\gd_{H,\psi\psi}=\frac{4s_z}{u_z}$. With $Y_0:=-\frac12\pi\partial_\psi$, \eqref{eq:Kn} says exactly $n=N_{Y_0}$. By
Lemma~\ref{lem:nullrot}(c), $\rho$ and $\sigma$ may be computed in the frame $(\xi,n)$, where $\Omega\chi=0$; by the Gauss and curl equations
\eqref{eq:Gauss}, \eqref{eq:curl} they are $\rho=-K_H=\rho_H$ and $\sigma=-\Curl\etab_H=\sigma_H$.

\emph{Step 3: the cut.} By Lemma~\ref{lem:nullrot}(b) applied to $n=N_{Y_0}$ and $X=\Nd F\in P_{Y_0}$,
\[
 n_F=n+2\Nd F+|\Nd F|^2\xi=N_Y,\qquad Y=\pi\big(\Nd F-\tfrac12\partial_\psi\big).
\]
The tangent vectors $E_A=\partial_A+F_A\xi$ of $S_F$ lie in $P_Y$ (Lemma~\ref{lem:nF}) and project to $\pi\partial_A$. We identify $P$ with the
tangent space of the reference cut by $\pi$; this is an isometry for $\gd_H$, and it preserves orientation. Since $\rho,\sigma,\gd_H$ and
$\partial_\psi$ are invariant under the flow of the Killing field $\xi$, their values at the point $(F(x),x)$ of $S_F$ are those at the point
$(0,x)$ of the reference cut. Lemma~\ref{lem:nullrot}(d) therefore gives
\[
 \Omega^{-2}\ab(E_A,E_B)=\lambda^2\bR(E_A,n_F,E_B,n_F)=6\lambda^2\big(\rho_H\,Y\ohat Y-\sigma_H\,Y\ohat{}^*Y\big)(\partial_A,\partial_B),
\]
and $\lambda Y=\Nd G-\frac\lambda2\partial_\psi=Z_\lambda$, since $G=\lambda F$. This is \eqref{eq:Acutinv}, and \eqref{eq:Acut} follows by
sorting by degree in $dG$.

\emph{Step 4: helicity form.} In the oriented $\gd_H$-orthonormal frame $f_1=\sqrt{s_z/u_z}\,\partial_z$, $f_2=\frac12\sqrt{u_z/s_z}\,\partial_\psi$,
the second form of \eqref{eq:nullrot} reads $(\Omega^{-2}\ab)_+=3(\rho_H-i\sigma_H)\big(Z_\lambda(f_1)-iZ_\lambda(f_2)\big)^2$. By
Definition~\ref{def:helicity}, $\zeta(f_1)-i\zeta(f_2)=2\sqrt{s_z/u_z}\,\zeta_+$ for every one-form $\zeta$. The one-form
$\gd_H(\partial_\psi,\cdot)=\frac{4s_z}{u_z}d\psi$ has helicity components $\mp i$, so $(Z_\lambda)_+=g_++\frac{i\lambda}2=\frac i2(\lambda-2ig_+)$.
Finally $\rho_H-i\sigma_H=-(K_H+i\sigma_H)=-\frac2{(1+iz)^3}=-\frac{2i}{(z-i)^3}$ by \eqref{eq:KH}. Altogether
\[
 (\Omega^{-2}\ab)_+=3\cdot\Big(-\frac{2i}{(z-i)^3}\Big)\cdot\frac{4s_z}{u_z}\cdot\Big(-\frac14\Big)(\lambda-2ig_+)^2
 =\frac{6i\,s_z}{u_z(z-i)^3}(\lambda-2ig_+)^2 .
\]
The negative helicity component is the complex conjugate, since $\ab$ and $dG$ are real (for a real field the components $(\cdot)_\pm$
are complex conjugate). In particular $\ab_H$, $\mathcal B_1$, $\mathcal B_2$ are the terms of order $\lambda^2$, $\lambda$, $1$; the absence of
negative powers of $\lambda$ reflects the fact that $\xi$ is a principal null direction.
\end{proof}

\begin{definition}[Target chart]\label{def:target}
For $(\lambda,G)\in\R\times H^r(S^2)$ define the \emph{target incoming data}
\[
 \mathcal N(\lambda,G):=\lambda\chib_H+\Ccal G,\qquad
 \mathcal A(\lambda,G):=\lambda^2\ab_H+\lambda\mathcal B_1(dG)+\mathcal B_2(dG,dG),
\]
given explicitly by \eqref{eq:Acutexplicit}.
For $\lambda>0$ these are the actual data of the Kerr cut $F=G/\lambda$ in the frame $(\xi/\lambda,\lambda n_F)$. The formulas
make sense, and are smooth (polynomial), for all $\lambda\in\R$, including $\lambda\le0$, where they are not
Kerr data. At $(\lambda,G)=(0,0)$ both vanish, and the differential there is
$(\dot\lambda,\dot G)\mapsto(\dot\lambda\chib_H+\Ccal\dot G,\,0)$: the $\ab$-component has zero differential because
\eqref{eq:Acutexplicit} is a perfect square in $(\lambda,dG)$. The absence of negative powers of $\lambda$ in \eqref{eq:Acut}
(Proposition~\ref{prop:cut}(d)) is what makes $\mathcal A$ smooth through $\lambda=0$.
\end{definition}

We solve the matching problem near $(\lambda,G)=(0,0)$, where all formulas are smooth, and only at the end show
that the solution has $\lambda>0$, so that it corresponds to an actual cut of the Kerr horizon. The substitution
$G=\lambda F$ is what makes the chart smooth through $\lambda=0$; in terms of $F$, the cut would run off to infinity
as $\lambda\to0$.

\begin{lemma}[Weighted trace of the target]\label{lem:targettrace}
$\av{w\tr_{\gd_H}\chib_H}=-4$ and $\av{w\tr_{\gd_H}\Ccal G}=0$ for all $G$, where $\av f=\frac1{4\pi}\int f\,d\mu_{\gac}$.
Hence
\begin{equation}\label{eq:targetmean}
 \av{w\tr\mathcal N(\lambda,G)}=-4\lambda .
\end{equation}
\end{lemma}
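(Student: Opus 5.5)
Both identities are integrals against the round measure, and on the horizon cut $d\mu_{\gd_H}=2\,d\mu_{\gac}$ under $(z,\varphi)\leftrightarrow(z,\psi)$. So $\av f=\frac1{8\pi}\int f\,d\mu_{\gd_H}$, and I will work throughout with $\gd_H$.

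\emph{First identity.} This is a direct computation from \eqref{eq:chibH}. Since $\tr\chib_H=-4/u_z$ and $w=u_z$, the integrand $w\tr\chib_H\equiv-4$ is constant, and $\av{-4}=-4$ because $\av1=1$.

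\emph{Second identity.} Taking the $\gd_H$-trace of \eqref{eq:Ncut} gives
\[
 \tr\Ccal G=2\Delta G+4(\etab_H,dG)=2\Delta G-4(\eta_H,dG).
\]
I multiply by $w$, integrate over the cut, and integrate $w\Delta G$ by parts twice, obtaining $\int G\,\Delta w$. It is cleaner to integrate by parts once: $\int w\Delta G=-\int(dw,dG)$. The weighted integral then becomes
\[
 \int w\tr\Ccal G\,d\mu_{\gd_H}=-2\int\big(dw+2w\eta_H,\,dG\big)\,d\mu_{\gd_H}.
\]
By the weight identity \eqref{eq:W1}, $(2w\eta_H+dw)^\sharp=\partial_\psi$. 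The integrand is therefore $-2\,\partial_\psi G$, and
\[
 \int\partial_\psi G\,d\mu_{\gd_H}=-\int G\,\Div\partial_\psi\,d\mu_{\gd_H}=0,
\]
because $\partial_\psi$ is Killing, hence divergence free; equivalently, $\det\gd_H$ is $\psi$-independent and $G$ is periodic in $\psi$. This is exactly the mechanism already used in Lemma~\ref{lem:crit}(b).

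\emph{Conclusion.} Equation \eqref{eq:targetmean} follows by linearity of $\mathcal N(\lambda,G)=\lambda\chib_H+\Ccal G$ in $(\lambda,G)$.

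The only delicate points are the sign convention $\etab_H=-\eta_H$, which is what turns the combination into $dw+2w\eta_H$ as required by \eqref{eq:W1}, and the regularity at the poles needed to justify the integration by parts. For $G\in H^r$ with $r\ge2$, $\Ccal G\in L^2$, and $w$, $\eta_H$ are smooth on $S^2$ by Proposition~\ref{prop:kerrdata}, so no boundary terms arise. I expect no real obstacle.
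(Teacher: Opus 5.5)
Your proof is correct and follows the paper's argument: the integrand $w\tr\chib_H\equiv-4$ is constant, you take the trace of $\Ccal G$, integrate by parts, and use \eqref{eq:W1} to reduce everything to the divergence-free field $\partial_\psi$. The only difference is cosmetic: you integrate by parts once and use $dG(\partial_\psi)=\partial_\psi G$, whereas the paper moves all derivatives onto $w$ and applies $\Div\partial_\psi=0$ directly.
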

\begin{proof}
$\av{w\tr\chib_H}=\frac1{4\pi}\int_0^{2\pi}\int_{-1}^1(1+z^2)\frac{-4}{1+z^2}\,dz\,d\varphi=-4$. Next
$\tr\Ccal G=2\Delta G+4\etab_H\cdot dG$, so, integrating by parts on $(S^2,\gd_H)$ and using \eqref{eq:W1},
\begin{align*}
 \int w\tr\Ccal G\,d\mu_H&=2\int G\big(\Delta w-2\Div(w\etab_H)\big)d\mu_H\\
 &=2\int G\,\Div(dw+2w\eta_H)\,d\mu_H=2\int G\,\Div\partial_\psi\,d\mu_H=0 .
\end{align*}
Since $d\mu_H=2\,d\mu_{\gac}$, the horizon and round averages differ only by a constant factor.
\end{proof}

\subsection{The incoming Bianchi operator on the horizon}\label{sec:Bop}

On the horizon background ($\Omega\chi=0$, horizon metric and torsion), \eqref{eq:H5} shows that the source of
$\Omega^{-2}\ab$ is a linear operator applied to $\Omega^{-1}\chib$.

\begin{definition}\label{def:B}
For a symmetric two-tensor $N$ on $S^2$ with trace $B$ and tracefree part $W$, set
\begin{align*}
 \beta_N&:=\Div W-\tfrac12dB+W\cdot\etab_H-\tfrac12B\,\etab_H,\\
 \Bcal N&:=-\Nd\ohat\beta_N-5\,\etab_H\ohat\beta_N+3K_HW+3\sigma_H\,{}^*W,
\end{align*}
all with respect to $\gd_H$ and with the left dual.
\end{definition}
On any segment where $(\gd,\etab)=(\gd_H,\etab_H)$ and
$\Omega\chi=0$, \eqref{eq:H5} reads
\begin{equation}\label{eq:H5horizon}
 \widehat D(\Omega^{-2}\ab)=-4\omega\,\Omega^{-2}\ab+\Bcal(\Omega^{-1}\chib).
\end{equation}
Indeed, by the Gauss equation \eqref{eq:Gauss} $\rho=-K_H$ (all products with $\Omega\chi$ vanish), by the curl
equation \eqref{eq:curl} $\sigma=-\Curl\etab_H=\sigma_H$, and by the Codazzi equation \eqref{eq:Cod}
$\Omega^{-1}\bb=\beta_{\Omega^{-1}\chib}$.

\begin{lemma}\label{lem:Bstat}
$\Bcal\chib_H=0$ and $\Bcal(\Ccal F)=0$ for every $F$.
\end{lemma}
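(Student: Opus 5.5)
The cleanest route avoids computing $\Bcal$ directly and reads the identities off the exact Kerr formulas via the horizon Bianchi equation \eqref{eq:H5horizon}. The key observation is that on the extremal horizon, with $L=\lambda^{-1}\xi$ and cuts $\{v=F+c\}$ (translates of $S_F$ along the Killing flow), all the data are $\xi$-invariant.

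For the first identity, take $F=0$, $\lambda=1$, and the foliation by $\{v=c\}$. Then:
\begin{itemize}
\item $\omega=0$ by Lemma~\ref{lem:affine}, and $\Omega\chi=0$ by Proposition~\ref{prop:cut}(a).
\item The metric and torsion are $\gd_H$ and $\etab_H$ on every cut, by Proposition~\ref{prop:cut}(a),(b).
\end{itemize}
So \eqref{eq:H5horizon} holds. Killing invariance gives $\widehat D(\Omega^{-2}\ab)=0$ and $\Omega^{-1}\chib=\chib_H$ on every cut. Hence $0=\Bcal\chib_H$.

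For the second identity, fix $F$ and $\lambda>0$ and foliate the horizon by the Killing translates $S_{F+c}$. By Proposition~\ref{prop:cut} the induced metric, torsion, $\Omega\chi=0$ and $\omega=0$ are again those of the horizon, independent of $c$. Moreover \eqref{eq:Ncut} and \eqref{eq:Acutinv} depend only on $dF$, which is unchanged by adding a constant. So $\Omega^{-1}\chib=\lambda\chib_H+\Ccal(\lambda F)$ and $\Omega^{-2}\ab$ are $c$-independent. One caveat must be checked: the generators are parametrised so that $L=\lambda^{-1}\xi$ is $\partial_v$ with transported angular coordinates, which holds because the cuts are Killing translates. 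Then \eqref{eq:H5horizon} gives
\[
 0=\Bcal\big(\lambda\chib_H+\lambda\,\Ccal F\big),
\]
and linearity of $\Bcal$ together with the first identity yields $\Bcal(\Ccal F)=0$ for all smooth $F$, after dividing by $\lambda$.

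Density and continuity of $\Bcal\circ\Ccal$ extend this to Sobolev $F$. It is also worth noting that Proposition~\ref{prop:hier} is derived for general lapse, and only the angular derivative of $\omega$ enters. Here $\omega=0$ identically, so \eqref{eq:H5} applies with $\Omega\equiv\lambda^{-1}$ constant. The renormalised quantities are then exactly those of the frame $(\lambda^{-1}\xi,\lambda n_F)$.

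The main obstacle is justifying that \eqref{eq:H5} applies to the foliation by tilted cuts $S_{F+c}$. The derivation is in double null gauge, so one needs an incoming null foliation transversal to these cuts. I would take the incoming null hypersurfaces emanating from $S_{F+c}$ with null generator $n_F$, which exist locally in Kerr. The Bianchi equation only involves data on $\Hcal$ and the chosen $\Lb'$ normal to the cuts, so this is legitimate.

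As a fallback, a direct check is possible. Note that $\Ccal F$ is, up to normalisation, the linearisation in $F$ of the incoming second fundamental form. The identity $\Bcal\Ccal F=0$ is then the linearisation of the nonlinear statement, which could be verified by computing $\beta_{\Ccal F}$ with the near-horizon equation \eqref{eq:NH}, but that computation is laborious. The structural argument is preferable.
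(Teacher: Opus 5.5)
Your argument is correct and is essentially the paper's proof: foliate the horizon by the Killing translates $\{v=F+c\}$, use $\xi$-invariance to get $\widehat D(\Omega^{-2}\ab)=0$ in \eqref{eq:H5horizon}, read off $\Bcal(\chib_H+\Ccal F)=0$ from Proposition~\ref{prop:cut}(c), and conclude by linearity, with $F=0$ as a special case. The boost $\lambda$ is superfluous, since the paper simply takes $L=\xi$. Your remarks on the transversal null foliation and on density are harmless additions.
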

\begin{proof}
Foliate the horizon by the cuts $\{v=F+c\}$, $c\in\R$, with $L=\xi$ (so $\omega=0$, $\Omega=1$). The generators are
the integral curves of $\xi$, and the angular coordinates $(z,\psi)$ are constant along them, so the hierarchy
applies. All data are $\xi$-invariant, so $\widehat D(\Omega^{-2}\ab)=0$, and \eqref{eq:H5horizon} together with
Proposition~\ref{prop:cut}(c) gives $\Bcal(\chib_H+\Ccal F)=0$. This holds for every $F$, including $F=0$; since $\Bcal$
is linear, both claims follow.
\end{proof}

\section{The partial bridge I: architecture and the focusing tail}\label{ch:tail}

This section and the next carry out Step~1 of the strategy (Section~\ref{sec:strategy}). We do not try to
match the incoming data. We only construct, starting from $S_1$, a solution of the characteristic hierarchy
whose terminal sphere carries the \emph{intrinsic} data of an extremal horizon cut: metric, vanishing outgoing
second fundamental form, and horizon torsion. In particular the angular momentum must come out exactly $M^2$;
since angular momentum can only be carried by non-axisymmetric shear (Corollary~\ref{cor:komar}), this requires
an explicit non-axisymmetric construction.

\subsection{The statement}

\begin{proposition}[Partial bridge]\label{prop:partial}
For every sufficiently large $M$ there is a smooth, $\mathcal R_\pi$-invariant solution of the hierarchy of
Proposition~\ref{prop:hier} on a compact interval $[v_1,v_*]$ with the following properties. It starts with the
actual outgoing Kerr germ through $S_1$ (including the transversal data at $S_1$). After a fixed
angle-dependent normalisation of the generator field at $v_*$ (Proposition~\ref{prop:reparam}), its terminal
data are
\[
 \gd=M^2\gd_H,\qquad \Omega\chi=0,\qquad \alpha=\beta=0,\qquad -\etab=\eta_H,
\]
the data of the extremal horizon of mass $M$ (Proposition~\ref{prop:kerrdata}, scaled). Its area is $8\pi M^2$
and its angular momentum $\frac1{8\pi}\int\eta_H(\partial_\psi)\,d\mu$ is $M^2$. The terminal incoming fields
$\Omega^{-1}\chib$, $\Omega^{-2}\ab$ are smooth but not prescribed.
\end{proposition}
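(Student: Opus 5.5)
The bridge will be built by concatenating explicit segments along $[v_1,v_*]$, following the layout of Figure~\ref{fig:bridge}. At each segment we prescribe the optical free data and solve the hierarchy level by level, using its triangular structure (Remark~\ref{rem:triangular}).

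\emph{Start: the Kerr germ.} Near $v_1$ we take the actual outgoing Kerr cone through $S_1$. This fixes all data, including $\Omega^{-1}\chib$ and $\Omega^{-2}\ab$, on a short initial interval. A short connector segment then deforms $\gdc$, via the logarithm of Lemma~\ref{lem:log}, to the round metric. On this segment we prescribe an increasing $\phi$ and solve for the lapse by \eqref{eq:lapse}, which needs $D\phi>0$. The torsion is obtained by integrating \eqref{eq:Z}, and the incoming levels by integrating \eqref{eq:H4}, \eqref{eq:H5} linearly. This is harmless, since the terminal incoming fields are not prescribed.

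\emph{Area increase and spin pulses.} Next we raise the area radius from $\phi\approx r_1$ to values of order $M$, keeping $\gdc$ round, so that the torsion profile is frozen (Remark~\ref{rem:profile}). On a control window where $\phi$ is exactly affine in $v$ and axisymmetric, we insert rotating shear pulses $G=\exp(\rho S(\xi))$. Their phase is $\xi=\xi_0(t)+2\varphi$, of a single helicity and $\mathcal R_\pi$-invariant, and their supports are disjoint. By \eqref{eq:e2x2} the energy $e$ is axisymmetric, so $\phi$ and $\Omega$ from \eqref{eq:lapse} depend only on $(v,z)$. The flux identity \eqref{eq:flux} then reduces the change of $C_{\partial_\varphi}$ to the shear term. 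Using \eqref{eq:Apulse}, this term is the skew part $-\xi_t\sinh^2\!\rho\,K_0$, integrated over closed loops in the hyperbolic plane. It yields an explicit signed multiple of the enclosed hyperbolic area, with no net memory in $\gdc$. We choose the amplitudes so that the total charge reaches $8\pi M^2$. The remaining non-axisymmetric memories in $Z$ are gradient terms of \eqref{eq:Z}; they cancel because $D^2\phi=0$ on the window. The axial profile $Z(\partial_\varphi)(z)$ is then matched to $\phi^2\eta_H(\partial_\psi)$ by an additional axisymmetric segment with a mixed component $\gdc(\partial_\vartheta,\partial_\varphi)\neq 0$. By Remark~\ref{rem:profile}, this moves the profile by a function of zero mean, and the choice reduces to an explicitly invertible ODE in $z$. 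The exact gradient part of $\eta_H$ is produced by the $4\phi D\phi\,\Nd\log\Omega$ term.

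\emph{Focusing tail.} Finally, on $[v_\dagger,v_*]$ we take an explicit axisymmetric path of conformal metrics from $\gac$ to $\gdcH=\exp(|q|S(\pi))$ (Proposition~\ref{prop:hH}), and set $\omega=0$. We solve \eqref{eq:Ray} backwards from the terminal data $\phi=\sqrt2M$, $D\phi=0$. Lemma~\ref{lem:focus} gives $D\phi\ge 0$, and by taking the tail short relative to the shear we get $D\phi>0$ at $v_\dagger$. The area before the tail is then matched by choosing $M$ large, and the values of $\phi$, $D\phi$ are joined to the previous segment by adjusting its affine profile. The torsion must again be tracked through the tail, and the terminal normalisation of the generator is absorbed by the reparametrisation of Proposition~\ref{prop:reparam}. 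At $v_*$ we then have $\Omega\chi=0$. This gives $\alpha=\beta=0$ as in the proof of Proposition~\ref{prop:kerrdata}, via \eqref{eq:chiLtransport} and \eqref{eq:RicXL} once the transport vanishes; alternatively one can make $\gdc$ constant near $v_*$.

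\emph{Main obstacle.} The hardest step is making the torsion come out \emph{exactly} equal to $\eta_H$ at every angle, not merely with the right total $J$. This requires simultaneously exact cancellation of all $e^{\pm2i\varphi}$ memories, the axial profile matching, and consistency with the tail's own contribution to $Z$. I would first compute the tail's torsion contribution explicitly, since it is axisymmetric with $\Omega$ given. I would then solve the profile ODE for the remaining target.
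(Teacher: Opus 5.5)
\textbf{Gap 1: the focusing tail.} Your tail cannot be purely axisymmetric. For any axisymmetric path of conformal metrics the distortion vanishes at both poles: an axisymmetric tracefree tensor is rotation invariant at a pole, hence zero, and indeed $q(\pm1)=0$. So $e(t,\pm1)\equiv0$ on the whole tail. With $\omega=0$ and terminal data $\phi=\sqrt2M$, $D\phi=0$, the backward solution of \eqref{eq:tail} at the poles is therefore $\phi\equiv\sqrt2M$, and $D\phi(v_\dagger,\pm1)=0$ however you choose the length of the tail or the size of the shear elsewhere. Consequently the tail cannot be joined to a window on which the lapse is recovered by \eqref{eq:lapse}, which divides by $D\phi$. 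Nor is there a positive-slope junction (as in Lemma~\ref{lem:slope}) with the preceding segment at the poles.

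The missing idea is that focusing at the poles needs a \emph{non-axisymmetric} seed on the tail. By Lemma~\ref{lem:poles}, only helicity $-1$ is nonzero at the north pole and only helicity $+1$ at the south pole. So one inserts one nonrotating pulse of each kind on disjoint time intervals, with amplitudes $\delta(1+z)^2$ and $\delta(1-z)^2$ respectively. Because $\phi$ is not constant on the tail (it focuses), these pulses leave non-axisymmetric torsion memories $C_\sigma=m_\sigma(z)S(2\sigma\varphi)$ with $m_\sigma\neq0$. Your plan to compute the tail's torsion ``since it is axisymmetric'' must therefore be replaced: these memories have to be cancelled upstream, on the affine window, by reserved nonrotating pulses with the opposite weighted memory.

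\textbf{Gap 2: cancelling the pulse memories.} You have misidentified the mechanism that removes the non-axisymmetric memories of your rotating pulses. The gradient terms of \eqref{eq:Z} are axisymmetric exact forms, because $\phi$ and $\Omega$ are axisymmetric. The non-axisymmetric contribution comes from the divergence term: it is $\tfrac12\Div_{\gac}\int\phi^2\,\sym(\gdc^{-1}D\gdc)\,dt$ (Lemma~\ref{lem:divid}). For a closed loop this vanishes only if $\phi^2$ is constant across the pulse. On a window with affine area (the relevant condition is $D^2\phi^2=0$, not $D^2\phi=0$), Lemma~\ref{lem:closedmem}(c) gives the nonzero value
$-\varpi\bigl[S(\chi_0)\int\rho\,dt+\sinh a\cosh a\int(S(\chi)-S(\chi_0))\,dt\bigr]$.
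Affineness only makes this memory independent of where the pulse sits. That is what lets a second pulse with phase shifted by $\pi$, or a nonrotating canceller of the same helicity, remove it. Some such device must be added, or the terminal torsion keeps $e^{\pm2i\varphi}$ components.

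\textbf{What does work.} Your profile correction is sound, and it differs from the paper. Take an axisymmetric nonrotating pulse $\gac\exp(b(v)T)$ on the affine window, with frame matrix $m(z)S(\frac\pi2)$. It changes $Z(\partial_\varphi)$ by $\tfrac12\bigl(\int b\bigr)\partial_z(s_z\varpi m)$. This is linear in $m$, and it is invertible on zero-mean profiles divisible by $s_z$, with $m=s_z\cdot(\text{smooth})$ as Lemma~\ref{lem:poles} requires. So once the total charge is right, it can replace the two-helicity inversion of Lemma~\ref{lem:Finv}.
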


Here $\mathcal R_\pi:\varphi\mapsto\varphi+\pi$ is the half-turn. ``Sufficiently large'' is relative to $S_1$: the
construction needs the target area to be much larger than the area of $S_1$; we do not claim that the
area inequality $8\pi M^2>|S_1|$ alone suffices. The incoming fields $\Omega^{-1}\chib$ and $\Omega^{-2}\ab$ are
determined by transport along the hypersurface (Remark~\ref{rem:triangular}) and cannot be prescribed at both
ends; matching them is the task of the neck and the collar (Sections~\ref{ch:neck}--\ref{ch:sign}).

The proof of Proposition~\ref{prop:partial} occupies this section and Section~\ref{ch:spin}.

\subsection{Architecture of the partial bridge}\label{sec:architecture}

The interval $[v_1,v_*]$ is divided into four parts (Figure~\ref{fig:bridgearea}).
\begin{enumerate}[label=(\alph*)]
\item \emph{The initial part} $[v_1,v_w]$: the Kerr germ, an axisymmetric \emph{connector} that makes the
conformal metric round, and an \emph{area raising} stretch with round conformal metric, which brings the
area to a fixed small multiple $cM^2$ of the final area scale (Section~\ref{sec:initialpart}).
\item \emph{The control window} $[v_w,v_{\rm tail}]$, with prescribed area which is affine in $v$ in its
middle part,
\begin{equation}\label{eq:affinearea}
 \phi^2=B(z)+\varpi(z)\,v,\qquad \varpi>0 .
\end{equation}
The pulses that pump in angular momentum live here (Section~\ref{ch:spin}), and later also the controller
pulses (Section~\ref{ch:controller}).
\item \emph{The focusing tail} $[v_{\rm tail},v_*]$, along which the expansion is brought to zero at exactly the
right area (Section~\ref{sec:tail}).
\item \emph{The terminal sphere}, where the torsion is normalised by an angle-dependent reparametrisation of
the generators (Section~\ref{sec:terminal}).
\end{enumerate}

\begin{figure}[ht]
\centering
\begin{tikzpicture}[x=0.95cm,y=0.85cm]
 \draw[->] (0,0) -- (14.6,0) node[right] {$v$};
 \draw[->] (0,0) -- (0,5.8) node[above] {$u=\phi^2$};
 \foreach \y/\l in {1.25/cM^2,2.22/\frac89M^2,5/2M^2}
   {\draw[densely dotted,gray] (0,\y) -- (14,\y); \node[left,font=\small] at (0,\y) {$\l$};}
 \foreach \x in {4,10} \draw[densely dashed,gray] (\x,0) -- (\x,5.4);
 \foreach \x in {4.6,9.4} \draw[densely dotted,gray] (\x,0) -- (\x,5.4);
 \draw[very thick] (0,0.45) .. controls (1,0.5) and (1.5,0.55) .. (2,0.6)
   .. controls (2.8,0.7) and (3.4,1.07) .. (4,1.25)
   .. controls (4.2,1.31) and (4.4,1.382) .. (4.6,1.47);
 \draw[very thick,blue] (4.6,1.47) -- (9.4,3.58);
 \draw[very thick] (9.4,3.58) .. controls (9.6,3.668) and (9.8,3.68) .. (10,3.8)
   .. controls (11,4.4) and (12.5,5) .. (13.5,5) -- (14,5);
 \fill (4,1.25) circle (1.5pt) node[above left,font=\small] {$A(z)$};
 \fill (10,3.8) circle (1.5pt) node[above left,font=\small] {$\phi_0^2$};
 \node[blue,font=\small,rotate=24] at (7.0,2.95) {$u=B+\varpi v$};
 \foreach \x/\l in {0/v_1,4/v_w,10/v_{\rm tail},14/v_*} \node[below,font=\small] at (\x,0) {$\l$};
 \node[font=\small,align=center] at (2,2.4) {initial part\\(germ, connector,\\area raising)};
 \node[font=\small,align=center,blue] at (7,4.3) {window\\(pulses)};
 \node[font=\small,align=center] at (12,2.4) {focusing\\tail};
\end{tikzpicture}
\caption{The area $u=\phi^2$ along one generator of the partial bridge (qualitative). The area grows from
the area of $S_1$ to $A(z)=cM^2$ in the initial part. On the window it is interpolated in two short collars
(between the dashed and dotted lines) and affine in the middle, where all pulses are placed. The focusing
tail starts at $\phi_0^2\ge\frac89M^2$ and brings the expansion to zero exactly at $\phi^2=2M^2$. The only
quantitative input for the window is $A(z)<\phi_0(z)^2$.}
\label{fig:bridgearea}
\end{figure}

\paragraph{The lapse is recovered, not prescribed.} On (a) and (b) the area $\phi$ and the conformal metric
$\gdc$ are prescribed, and Raychaudhuri's equation \eqref{eq:Ray} determines the lapse by \eqref{eq:lapse},
\[
 \omega=D\log\Omega=\frac{D^2\phi+e\phi}{2D\phi},\qquad \log\Omega(v)=\log\Omega(v_1)+\int_{v_1}^v\omega .
\]
This is well defined because $D\phi$ will have a positive minimum on the compact region. The lapse need not
be small, and it is in general angle dependent; this is why $\Nd\log\Omega$ is kept in \eqref{eq:Z}. On the
tail (c), by contrast, the lapse is prescribed ($\omega=0$) and $\phi$ is solved for.

\subsection{The initial part}\label{sec:initialpart}

The initial part is built from one elementary tool, used three times.

\begin{lemma}[Positive slope interpolation]\label{lem:slope}
Let $[v_a,v_b]$ be an interval of length $\ell$, and let $u_a(v,z)$, $u_b(v,z)$ be smooth positive functions defined
for $v$ near $v_a$, resp.\ near $v_b$, with positive slopes $s_a:=\partial_vu_a>0$, $s_b:=\partial_vu_b>0$, all depending
smoothly on $z\in[-1,1]$ (or on $x\in S^2$). Suppose
\begin{equation}\label{eq:slopecond}
 u_a(v_a,z)<u_b(v_b,z)\qquad\text{for all }z .
\end{equation}
Then for every sufficiently small $\varepsilon>0$ there is a smooth $u$ on $[v_a,v_b]\times[-1,1]$ with
\begin{enumerate}[label=(\alph*)]
\item $u=u_a$ on $[v_a,v_a+\frac\varepsilon2]$ and $u=u_b$ on $[v_b-\frac\varepsilon2,v_b]$;
\item $\partial_vu>0$ everywhere, with a positive minimum;
\item $u=B(z)+\varpi(z)v$ with $\varpi>0$ on $[v_a+\varepsilon,v_b-\varepsilon]$.
\end{enumerate}
If $u_a,u_b$ are axisymmetric (resp.\ $\mathcal R_\pi$-invariant), so is $u$.
\end{lemma}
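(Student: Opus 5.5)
The construction is explicit and gluing is done by cutoffs. Fix a smooth nondecreasing cutoff $\chi:\R\to[0,1]$ with $\chi=0$ on $(-\infty,0]$ and $\chi=1$ on $[1,\infty)$. Near $v_a$ we will use the first-order Taylor germ of $u_a$; near $v_b$, that of $u_b$. In the middle we insert an affine function with positive slope. Then we blend the three pieces by convex combinations whose weights depend only on $v$. Convex combination in $v$ preserves positivity of $\partial_vu$ provided the pieces, and the differences between them, are controlled on the short blending intervals. It also preserves axisymmetry and $\mathcal R_\pi$-invariance, since all weights depend only on $v$.

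Concretely, set $a(z):=u_a(v_a,z)$ and $b(z):=u_b(v_b,z)$, and define the middle piece to be the affine interpolant between them,
\[
 m(v,z):=a(z)+\frac{b(z)-a(z)}{\ell}\,(v-v_a),\qquad \varpi(z):=\frac{b(z)-a(z)}{\ell}>0
\]
by \eqref{eq:slopecond}, and $\varpi$ has a positive minimum by compactness. Near $v_a$ put
\[
 u=(1-\chi_1)\,u_a+\chi_1\,m,\qquad \chi_1(v):=\chi\big((v-v_a-\tfrac\varepsilon2)/(\tfrac\varepsilon2)\big).
\]
Symmetrically, near $v_b$ put $u=(1-\chi_2)\,m+\chi_2\,u_b$, with $\chi_2$ rising on $[v_b-\varepsilon,v_b-\frac\varepsilon2]$. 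Then properties (a) and (c) hold by construction, with $B=a-\varpi v_a$, and $u$ is smooth.

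The only real point is (b) on the blending intervals. On the first blending interval
\[
 \partial_vu=(1-\chi_1)\,s_a+\chi_1\,\varpi+\chi_1'\,(m-u_a).
\]
The first two terms are bounded below by $\min(\min s_a,\min\varpi)>0$. For the third, note that $m$ and $u_a$ agree at $v=v_a$. On an interval of length $\varepsilon$ we therefore have $|m-u_a|\le C\varepsilon$, with $C=\sup|\partial_vu_a|+\sup\varpi$. Meanwhile $|\chi_1'|\le 2\sup|\chi'|/\varepsilon$, so the error term is only $O(1)$, not small.

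To fix this I would refine the middle piece. Choose the middle piece near $v_a$ to be the tangent line of $u_a$ at $v_a$, namely $a+s_a(z)(v-v_a)$. Then $|m-u_a|=O(\varepsilon^2)$ on the blending interval, and the error term becomes $O(\varepsilon)$. Similarly, near $v_b$ use the tangent line $b+s_b(z)(v-v_b)$. It then remains to connect the two tangent lines on $[v_a+\varepsilon,v_b-\varepsilon]$ while keeping property (c).

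The obstacle is now that (c) requires a single affine function there, while the two tangent lines have different slopes. The resolution is a slope-blending step. Blend the slopes rather than the functions: on a short interval after $v_a+\frac\varepsilon2$, let the slope run monotonically (as a convex combination) from $s_a$ to $\varpi$, and likewise from $\varpi$ to $s_b$ near $v_b$. Define $u$ by integrating this slope from $u_a(v_a)$. The slope is then a convex combination of positive functions, so it is positive with a positive minimum, which gives (b).

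The integrated function must end exactly at the germ $u_b$. Its total increase is
\[
 \int_{v_a}^{v_b}\partial_vu\,dv=\varpi\,\ell+O(\varepsilon),
\]
which must equal $b-a$. Replacing $\varpi$ by $\varpi_\varepsilon:=\big(b-a-O(\varepsilon)\big)/(\ell-O(\varepsilon))$ achieves this exactly, and $\varpi_\varepsilon$ stays positive, uniformly in $z$, for $\varepsilon$ small. This is precisely where the strict inequality \eqref{eq:slopecond} and the smallness of $\varepsilon$ are used. Since $\varpi_\varepsilon$ depends smoothly on $z$, $u$ is smooth.

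Finally, all ingredients are built pointwise in $z$ (or $x$) from $u_a$, $u_b$ and $v$-only cutoffs. Hence axisymmetry and $\mathcal R_\pi$-invariance are inherited.
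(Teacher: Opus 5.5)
Your final slope-blending construction is correct and is essentially the paper's proof. The paper likewise prescribes $\partial_vu=\chi_as_a+\chi_m\varpi+\chi_bs_b$ with a $v$-only partition of unity and integrates from $u_a(v_a,z)$. It then fixes $\varpi$ by the single linear endpoint condition $u(v_b)=u_b(v_b)$, using \eqref{eq:slopecond} and the smallness of $\varepsilon$ to get $\varpi>0$. Your earlier function-blending and tangent-line attempts can simply be dropped.
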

\begin{proof}
We construct the slope $s=\partial_vu$ rather than $u$, since positivity of a slope survives
convex combinations. Choose smooth cut-off functions of $v$ alone with $\chi_a+\chi_m+\chi_b=1$,
$0\le\chi_\bullet\le1$, $\chi_a=1$ on $[v_a,v_a+\frac\varepsilon2]$, $\chi_a=0$ beyond $v_a+\varepsilon$, $\chi_b=1$ on
$[v_b-\frac\varepsilon2,v_b]$, $\chi_b=0$ before $v_b-\varepsilon$. For a function $\varpi(z)$ to be fixed, set
\[
 s:=\chi_a\,s_a+\chi_m\,d+\chi_b\,s_b,\qquad u(v,z):=u_a(v_a,z)+\int_{v_a}^vs(v',z)\,dv' .
\]
\emph{Choice of $\varpi$.} The condition $u(v_b)=u_b(v_b)$ is one linear condition on $\varpi$:
\[
 \varpi(z)=\frac{u_b(v_b,z)-u_a(v_a,z)-I_a(z)-I_b(z)}{\int\chi_m\,dv},\qquad
 I_a:=\int\chi_as_a\,dv,\quad I_b:=\int\chi_bs_b\,dv .
\]
The collar integrals satisfy $0<I_a,I_b\le\varepsilon\max s_{a,b}$, with the maxima over compact sets, so
$I_a+I_b\to0$ uniformly in $z$ as $\varepsilon\to0$. By \eqref{eq:slopecond} and compactness,
$m:=\min_z(u_b(v_b)-u_a(v_a))>0$. Since $\int\chi_m\le\ell$, for $\varepsilon$ small we get $\varpi\ge m/(2\ell)>0$.

\emph{(b).} At each point $s$ is a convex combination of the positive numbers $s_a,\varpi,s_b$, so $s>0$, with a
positive minimum by compactness.

\emph{(a).} On $[v_a,v_a+\frac\varepsilon2]$, $s=s_a$ and $u(v_a)=u_a(v_a)$, so $u=u_a$. On $[v_b-\frac\varepsilon2,v_b]$,
$s=s_b$ and $u(v_b)=u_b(v_b)$ by the choice of $\varpi$, so $u=u_b$.

\emph{(c).} There $\chi_m=1$, so $s=\varpi(z)$. The symmetry statement is clear from the construction.
\end{proof}

\begin{lemma}[The initial part]\label{lem:initialpart}
Fix $c\in(0,\frac89)$. For every sufficiently large $M$ there is a smooth axisymmetric, $\mathcal R_\pi$-invariant
solution of the hierarchy on $[v_1,v_w]$ such that:
\begin{enumerate}[label=(\roman*)]
\item near $v_1$ it coincides with the outgoing Kerr cone through $S_1$, with all its fields;
\item near $v_w$: $\gdc=\gac$ and $\phi^2=M^2\hat u(v,z)$, where $\hat u$ is a fixed smooth function of $(v,z)$ for $v$ near
$v_w$, independent of $M$, with $\partial_v\hat u>0$ and $\hat u(v_w,z)=c$; in particular $A(z):=\phi^2(v_w,z)=cM^2$;
\item $D\phi>0$ throughout, and the lapse is the one recovered from \eqref{eq:lapse};
\item the axial torsion profile at $v_w$ differs from that at $v_1$ by a smooth function of $z$ bounded
independently of $M$.
\end{enumerate}
\end{lemma}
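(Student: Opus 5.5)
The plan is to build the initial part as three consecutive segments $[v_1,v_a]\cup[v_a,v_b]\cup[v_b,v_w]$. On each we prescribe the area radius $\phi$ and the conformal metric $\gdc$, both axisymmetric and $\mathcal R_\pi$-invariant, and recover the lapse from \eqref{eq:lapse}. The torsion is then obtained from \eqref{eq:Z}, and the incoming fields from the linear transport equations \eqref{eq:H4}, \eqref{eq:H5}. By the triangular structure (Remark~\ref{rem:triangular}), every prescription with $D\phi>0$ and the correct germ at $v_1$ defines a smooth solution of the hierarchy. The only genuine requirements are therefore (i)--(iv), smoothness at the poles, and positivity of $D\phi$.

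\emph{Step 1: germ and connector.} On $[v_1,v_1+\delta]$ take the actual outgoing Kerr cone through $S_1$ in the foliation of \cite[Lemma~A.1]{KU}. All its fields are axisymmetric and $\mathcal R_\pi$-invariant, and $D\phi>0$ there since $r_1>r_+$ ($S_1$ is untrapped). Its conformal metric $\gdc^{\rm K}(v)$ is axisymmetric. By Lemma~\ref{lem:log} we write $\gdc^{\rm K}=\gac\exp(A^{\rm K}(v))$ with $A^{\rm K}$ smooth on the whole sphere, axisymmetric and $\mathcal R_\pi$-invariant. On the connector set $\gdc=\gac\exp(\chi(v)A^{\rm K}(v))$, where $\chi$ is a cutoff equal to $1$ near $v_1$ and to $0$ after $v_a$. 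This is a smooth unimodular metric, round after $v_a$, and it preserves both symmetries. Simultaneously we prescribe $\phi$ equal to the Kerr area radius near $v_1$ and continue it with positive slope. Since the shear energy $e$ is bounded, \eqref{eq:lapse} gives a well-defined smooth $\omega$ as long as $D\phi$ has a positive minimum. This is where (iii) is secured.

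\emph{Step 2: area raising.} For $v\ge v_a$ we have $\gdc=\gac$, so $e=0$. We apply Lemma~\ref{lem:slope} to $u=\phi^2$ on $[v_a,v_w]$, with $u_a$ the area from Step~1 near $v_a$ and $u_b:=M^2\hat u$. Here $\hat u$ is a fixed function, e.g.\ $\hat u(v,z)=c+(v-v_w)$, so that $\hat u(v_w,z)=c$ and $\partial_v\hat u>0$. Condition \eqref{eq:slopecond} reads $u_a(v_a,z)<cM^2$, which holds for $M$ large; this is the only place largeness of $M$ enters. The lemma yields a $\phi^2$ with positive slope that equals $u_b$ near $v_w$, which gives (ii), and the lapse again follows from \eqref{eq:lapse}.

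\emph{Step 3: the torsion bound (iv).} This is the main point to check. We pair \eqref{eq:Z} with $\partial_\varphi$. Since $\phi$ and $\Omega$ are axisymmetric, the three gradient terms drop out pointwise, so
\[
 D\big(Z(\partial_\varphi)\big)=\Div_{\gdc}\big(\tfrac12\phi^2D\gdc\big)(\partial_\varphi).
\]
The right side vanishes where $\gdc$ is round, i.e.\ on all of $[v_a,v_w]$ (Remark~\ref{rem:profile}). The profile therefore changes only on the connector $[v_1,v_a]$. On that segment $\phi$ and $\gdc$ can be chosen independently of $M$, because the $M$-dependence is confined to Step~2. Hence the change is a fixed smooth function of $z$, independent of $M$.

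The expected obstacle is the bookkeeping in Step~1. We must make sure that the interpolated $\phi$ on the connector is independent of $M$, with $D\phi>0$ and $\phi$ smooth at the poles. We must also make sure the recovered lapse $\Omega$ and the torsion are smooth at the poles. For the latter, the frame-free logarithm of Lemma~\ref{lem:log} handles $\gdc$, while $\phi$ and $\Omega$ are smooth axisymmetric functions of $(v,x)$, so no pole issue arises.
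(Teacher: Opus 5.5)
Your proposal is correct and follows the paper's proof essentially step by step. It keeps the Kerr germ, builds the connector $\gdc=\gac\exp(\chi A_K)$ from the frame-free logarithm of Lemma~\ref{lem:log} at $M$-independent area, raises the area with round $\gdc$ via Lemma~\ref{lem:slope}, and observes that the axial profile changes only through the divergence term on the fixed connector.

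The paper is more explicit in two small places. It obtains the connector's area from a first application of Lemma~\ref{lem:slope}, going from $\phi_K^2$ to an affine $u_1$; this also supplies the positive-slope germ past $v_a$ that your Step~2 needs. It also notes that \eqref{eq:lapse} returns Kerr's $\omega$ near $v_1$, because Kerr satisfies Raychaudhuri's equation.
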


\begin{proof}
\emph{The Kerr germ.} Near $S_1$ we keep the actual Kerr data on a short interval $[v_1,v_1+\delta_0]$. All fields
are axisymmetric (and $\mathcal R_\pi$-invariant, since the double null foliation of Theorem~\ref{thm:A} is axisymmetric), and $D\phi>0$ there
because the outgoing expansion of $S_1$ is positive.

\emph{The connector.} Let $\gdc^K(v)$ be the conformal metric \eqref{eq:confdec} of the cuts of the outgoing Kerr
cone, in the transported coordinates, and let
\[
 A_K(v):=\log\big(\gac^{-1}\gdc^K(v)\big)
\]
be its logarithm in the sense of Lemma~\ref{lem:log}, so that $\gdc^K(v)=\gac\exp(A_K(v))$. By that lemma, $A_K$ is
a $\gac$-symmetric tracefree endomorphism field, smooth in $(v,x)$ (also at the poles, where the frame
\eqref{eq:roundframe} breaks down), and axisymmetric and $\mathcal R_\pi$-invariant because $\gdc^K$ is. Let $\chi_K(v)$
be a smooth cut-off equal to $1$ on $[v_1,v_1+\delta_0]$ and to $0$ after $v_1+2\delta_0$, and put
\[
 \gdc(v):=\gac\exp\big(\chi_K(v)A_K(v)\big)\qquad\text{on }[v_1,v_1+3\delta_0].
\]
It coincides with Kerr's near $v_1$, is round after $v_1+2\delta_0$, and is axisymmetric. For the area, apply
Lemma~\ref{lem:slope} on $[v_1+\frac12\delta_0,v_1+3\delta_0]$ with $u_a:=\phi_K^2$, the Kerr area, and $u_b:=u_1$, where
$u_1$ is any affine function of $v$ with positive slope and $u_1(v_1+3\delta_0)>\max\phi_K^2(v_1+\frac12\delta_0)$. This
gives a smooth axisymmetric $\phi$ with $D\phi>0$, equal to Kerr's near $v_1$. Define the lapse by \eqref{eq:lapse}
with $\Omega(v_1)$ the lapse of this foliation. Near $v_1$, where $\phi$ and $\gdc$ are Kerr's, \eqref{eq:lapse} returns Kerr's
$\omega$ (Raychaudhuri's equation holds for Kerr), so the lapse is Kerr's there too. Everything on this
stretch is independent of $M$.

\emph{The area raising.} On a further interval $[v_1+3\delta_0,v_w]$ we keep $\gdc=\gac$ and apply
Lemma~\ref{lem:slope} once more, now with $u_a:=u_1$, the affine function with which the connector ends, and
$u_b:=M^2\hat u$, for a fixed smooth $\hat u$ with $\partial_v\hat u>0$ and $\hat u(v_w,z)=c$. Condition
\eqref{eq:slopecond} reads $u_1(v_1+3\delta_0)<cM^2$, which holds for $M$ large because the left side is
independent of $M$. The lapse is again recovered by \eqref{eq:lapse}. This proves (i)--(iii), with $A(z)=cM^2$.

\emph{The axial profile.} By \eqref{eq:Z},
$DZ=\Div_{\gdc}(\frac12\phi^2D\gdc)+D\phi\Nd\phi-\phi\Nd D\phi+4\phi D\phi\Nd\log\Omega$. Since $\phi$ and $\Omega$ are
axisymmetric, the last three terms are multiples of $dz$, with zero $\partial_\varphi$-component. On the area raising
stretch $D\gdc=0$, so the profile $Z(\partial_\varphi)(z)$ does not change there, however much the area grows. On the
connector stretch the divergence term can have a nonzero $\partial_\varphi$-component (Remark~\ref{rem:profile}) if
$\gdc(\partial_\vartheta,\partial_\varphi)\ne0$; but this stretch is fixed and independent of $M$, so its contribution to
the profile is a fixed smooth function of $z$.
\end{proof}

\begin{remark}[Order of connector and area raising]\label{rem:connectorfirst}
If the conformal metric were made round \emph{after} raising the area, the connector's contribution to the
profile would be proportional to $\phi^2\sim M^2$, of the same size as the target profile $Z_H(\partial_\varphi)\sim M^2$,
and could spoil the positivity needed in Section~\ref{sec:pumping}. Doing the connector at bounded area keeps its
contribution bounded. Note that axisymmetric changes of the seed conserve the total angular momentum
(Corollary~\ref{cor:komar}), but in general not the axial profile (Remark~\ref{rem:profile}).
\end{remark}

\subsection{The focusing tail}\label{sec:tail}

The tail is the last segment $[v_{\rm tail},v_*]$ of the bridge. It brings the expansion to zero exactly, at
exactly the area and conformal geometry of the extremal horizon, at every angle simultaneously. It has to
satisfy three requirements.
\begin{enumerate}[label=(T\arabic*)]
\item \emph{Terminal data.} On a neighbourhood of $v_*$: $\gd=M^2\gd_H$ and $\Omega\chi=0$, i.e.\ $D\phi=0$ and
$D\gdc=0$.
\item \emph{Initial data that the window can reach.} At $v_{\rm tail}$ the conformal metric is round, the lapse
does not depend on $v$ ($\omega=0$), and $\phi>0$, $D\phi>0$ at every point of $S^2$, with $\phi$
axisymmetric. Positivity of $D\phi$ is essential: on the window the lapse is recovered from \eqref{eq:lapse},
which divides by $D\phi$.
\item \emph{No uncontrolled torsion.} The tail must not change the angular momentum, and the non-axisymmetric
torsion it produces must be explicit, so that it can be cancelled (Section~\ref{sec:cancel}).
\end{enumerate}
Forward integration is not suitable for (T1): Raychaudhuri's equation focuses each generator at its own rate,
so $D\phi$ would vanish at different parameter values at different angles, and the lapse formula degenerates
as $D\phi\to0$. The tail is therefore constructed \emph{backwards} from the exact terminal data: we choose an
explicit path of conformal metrics from $\gac$ to the horizon's conformal metric, solve Raychaudhuri's equation
for $\phi$ backwards, and verify (T2) at the initial cut.

In this subsection $\sigma=\pm1$ denotes the \emph{helicity} of a pulse, not the curvature component.

\subsubsection{Parameter, lapse and scaling}
We take the tail of fixed length $1$, independent of $M$, with parameter $t:=v-v_{\rm tail}\in[0,1]$, so that
$t=1$ is the terminal cut. On the tail we prescribe $\omega=0$, i.e.\ $\Omega(t,x)=\Omega_{\rm tail}(x)$. The function
$\Omega_{\rm tail}$ is not chosen here; it is the value at $v_{\rm tail}$ of the lapse recovered on the window, an
axisymmetric positive function (Section~\ref{sec:window}), and the tail only uses $\omega=0$. Then $t$ is an affine
parameter of the generators ($\nabla_LL=2\omega L=0$), and \eqref{eq:Ray} becomes
\begin{equation}\label{eq:tailRay}
 \partial_t^2\phi=-e\,\phi .
\end{equation}
This is the equation used on the window, read the other way round: on the window $\phi$ and $\gdc$ are
prescribed and $\Omega$ is recovered; on the tail $\omega$ and $\gdc$ are prescribed and $\phi$ is recovered.

\emph{Scaling.} The shear energy $e$ depends only on the path $\gdc(t)$, which will not depend on $M$, and
\eqref{eq:tailRay} is linear in $\phi$. With the terminal value $\phi(1)=\sqrt2M$ we get $\phi=M\hat\phi$, where $\hat\phi$
is the solution with $\hat\phi(1)=\sqrt2$. So the tail is independent of $M$ up to the factor $M$ on $\phi$, and
all estimates below are uniform in $M$.

\subsubsection{Regularity at the poles}
We describe $\gdc$ by its frame matrix $G$ in the round orthonormal frame \eqref{eq:roundframe}. This frame is
singular at the poles; which frame matrices give smooth tensors is decided by Lemma~\ref{lem:poles} in
Appendix~\ref{app:sphere}: for $a$ smooth on $[-1,1]$ and $\sigma\in\{-1,0,1\}$, the tracefree symmetric tensor field
with frame matrix $a(z)S(2\sigma\varphi)$ extends smoothly across the north pole $z=1$ if and only if
$a=(1-z)^{|\sigma+1|}b$, and across the south pole $z=-1$ if and only if $a=(1+z)^{|\sigma-1|}b$, with $b$ smooth.
Thus helicity $-1$ is unrestricted at the north pole and needs a factor $(1+z)^2$ at the south pole, helicity
$+1$ is the mirror image, and helicity $0$ needs a factor $1-z$, resp.\ $1+z$, at either pole. The same holds
for $\gdc=\gac\exp(T)$, since $\exp$ is an equivariant diffeomorphism from tracefree symmetric matrices onto
unimodular positive ones. Two consequences are used below.
\begin{enumerate}[label=(\roman*)]
\item \emph{An axisymmetric distortion vanishes at both poles.} If $\gdc$ is axisymmetric, at a pole it is invariant
under all rotations about the axis, hence a multiple of $\gac$ there, hence equal to $\gac$ (determinant one). So
for every axisymmetric path, $G(t,\pm1)=I$ and $e(t,\pm1)=0$ for all $t$.
\item \emph{At the north pole only helicity $-1$ survives, and at the south pole only helicity $+1$.} At the north pole
a helicity $-1$ field is a constant quadrupole: a tensor with Cartesian frame matrix $\mathrm{diag}(1,-1)$
has polar frame matrix $R_\varphi^{-1}\mathrm{diag}(1,-1)R_\varphi=S(-2\varphi)$.
\end{enumerate}

\subsubsection{The target and the path of conformal metrics}
By Proposition~\ref{prop:hH}, with the identification $(z,\varphi)\leftrightarrow(z,\psi)$, the target has constant area
radius $\phi=\sqrt2M$ and conformal frame matrix $h_H=\mathrm{diag}(e^{q},e^{-q})$, $q=\log\frac{1+z^2}2$. So the terminal
data of the backward problem, $\phi(1)=\sqrt2M$ and $\partial_t\phi(1)=0$, are the same at every angle.

The path is defined piecewise on $[0,1]$. In frame matrices:
\begin{equation}\label{eq:tailseed}
 G(t)=\begin{cases}
 \exp\big(a_-(z)R_-(t)\,S(-2\varphi)\big), & t\in\operatorname{supp}R_-,\\[2pt]
 \exp\big(a_+(z)R_+(t)\,S(+2\varphi)\big), & t\in\operatorname{supp}R_+,\\[2pt]
 I, & \text{other }t\in[0,\tfrac13],\\[2pt]
 \mathrm{diag}\big(e^{C(t)q(z)},e^{-C(t)q(z)}\big), & t\in[\tfrac13,1],
 \end{cases}
 \qquad a_-=\delta(1+z)^2,\quad a_+=\delta(1-z)^2 .
\end{equation}
Here:
\begin{itemize}
\item $R_+,R_-\in C^\infty_c\big((0,\frac13)\big)$ are two fixed, nonzero, nonnegative bump functions with
\emph{disjoint} supports, flat at the ends of their supports;
\item $\delta>0$ is a fixed small amplitude, independent of $M$, chosen so that the hypothesis of
Lemma~\ref{lem:tail} holds;
\item $\sigma=\pm1$ is the helicity of the pulse $\exp(a_\sigma R_\sigma S(2\sigma\varphi))$; its complex shear is
$\rho e^{i\xi}=a_\sigma R_\sigma e^{2i\sigma\varphi}$, a pure $e^{\pm2i\varphi}$ mode;
\item $C$ is a smooth step from $0$ to $1$, constant near $t=\frac13$ and near $t=1$, with $\int C'^2\,dt<2$. (The
linear ramp on an interval of length $\frac23$ has $C'=\frac32$ and $\int C'^2=\frac32$; smoothing its corners keeps
$\int C'^2<2$.)
\end{itemize}
There is no sum over $\sigma$: at a given time at most one pulse is active. Because the supports are
disjoint, one may equivalently write $\exp\{\sum_\sigma a_\sigma R_\sigma S(2\sigma\varphi)\}$ on $[0,\frac13]$. Near $t=0$ and
near $t=1$ the path is constant.

\emph{Smoothness.} Each piece is smooth on $S^2$ by Lemma~\ref{lem:poles}: $a_-=\delta(1+z)^2$ has helicity $-1$, which
is unrestricted at the north pole and needs $(1+z)^2$ at the south pole; $a_+$ is the mirror image; the connector
is $\exp(Cq\,\mathrm{diag}(1,-1))$, a helicity-$0$ field whose amplitude $Cq$ is divisible by $s_z$, hence by $1-z$
and by $1+z$. In $t$, the bumps are flat at the ends of their supports and $C$ is constant near $\frac13$ and $1$,
so the pieces join to all orders and the path is $C^\infty$ in $(t,x)$. The path is $\mathcal R_\pi$-invariant, since
$S(\pm2(\varphi+\pi))=S(\pm2\varphi)$.

\emph{The shear energy.} By Lemma~\ref{lem:param}(d) with $\xi_t=0$ on the pulses, and since
$G^{-1}\partial_tG=C'q\,\mathrm{diag}(1,-1)$ on the connector (diagonal matrices commute),
\begin{equation}\label{eq:taile}
 e(t,z)=\begin{cases}
 \frac14\,a_\sigma(z)^2R_\sigma'(t)^2, & t\in\operatorname{supp}R_\sigma,\\[2pt]
 0, & \text{other }t\in[0,\tfrac13],\\[2pt]
 \frac14\,C'(t)^2q(z)^2, & t\in[\tfrac13,1].
 \end{cases}
\end{equation}
Thus $e$ is axisymmetric, although the pulses are not: the energy sees only the speeds $\rho_t,\xi_t$, not the
phase $\xi=2\sigma\varphi$ itself.

\paragraph{Nonrotating pulses.} In a pulse of \eqref{eq:tailseed} the phase $\xi=2\sigma\varphi$ does not depend on $t$.
At each point of $S^2$ the axis of the distortion stays fixed; only its strength $\rho=a_\sigma(z)R_\sigma(t)$ grows from
$0$ to a maximum and returns to $0$. In the hyperbolic picture, $G(t,x)$ makes a radial excursion and comes back,
and $G^{-1}\partial_tG=\rho_tS(2\sigma\varphi)$ is symmetric. The \emph{rotating} pulses of Section~\ref{sec:closedpulses} have
phase $2\sigma(\varphi-\pi C(t))+\delta_j$: the frame matrix at time $t$ is the initial pattern rigidly rotated about the
polar axis by $\pi C(t)$, $G(t,x)$ moves on a circle in the hyperbolic plane, and $G^{-1}\partial_tG$ acquires the skew
part $-\xi_t\sinh^2\!\rho\,K_0$ of \eqref{eq:Apulse}, which feeds the angular momentum.

\paragraph{Why the pulses are needed.} Without them, $e(t,\pm1)=\frac14C'^2q(\pm1)^2=0$ for all $t$, so at
the poles $\phi(t,\pm1)\equiv\sqrt2M$ and $\partial_t\phi(0,\pm1)=0$: the expansion would vanish at the poles at the start
of the tail, violating (T2). By consequence (i) above this happens for every axisymmetric path. Focusing at the
poles therefore needs a non-axisymmetric seed, and by consequence (ii) the only distortions that are nonzero at
the north pole have helicity $-1$, and at the south pole $+1$. Hence one pulse of each helicity, with the
simplest amplitudes $a_\mp=\delta(1\pm z)^2$ that are nonzero at the respective pole ($a_-(1)=4\delta$) and smooth at
the other. Since $a_\mp$ vanish only at $z=\mp1$, at every latitude at least one pulse is nonzero.

\subsubsection{Solving backwards}
With the path \eqref{eq:tailseed} fixed, we solve \eqref{eq:tailRay} backwards from the terminal data, separately at
each point of $S^2$:
\begin{equation}\label{eq:tail}
 \partial_t^2\phi=-e\phi,\qquad \phi(1)=\sqrt2M,\qquad \partial_t\phi(1)=0 .
\end{equation}
Since $e=e(t,z)$, the solution $\phi=\phi(t,z)$ is axisymmetric.

\begin{lemma}[Tail]\label{lem:tail}
If $\sup_z\int_0^1t\,e(t,z)\,dt<\frac13$, then $\phi\ge\frac23\sqrt2M$, $\partial_t\phi\ge0$, and $\partial_t\phi(0,z)>0$ for
every $z$. Near $t=1$: $\phi\equiv\sqrt2M$, $\partial_t\phi\equiv0$, $\gdc\equiv h_H$. The hypothesis holds for $\delta$ small.
\end{lemma}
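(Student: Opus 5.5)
The plan is to convert the backward problem \eqref{eq:tail} into a Volterra integral equation at each fixed $z$ and bound it by a bootstrap. Since $\partial_t\phi(1)=0$ and $\phi(1)=\sqrt2M$, integrating twice from $t=1$ gives
\[
 \partial_t\phi(t)=\int_t^1e\,\phi\,dt',\qquad \phi(t)=\sqrt2M-\int_t^1(t'-t)\,e(t')\phi(t')\,dt'.
\]
Because $e\ge0$, the first identity gives $\partial_t\phi\ge0$ wherever $\phi\ge0$ on $[t,1]$, hence $\phi\le\sqrt2M$ there. The bound $\phi\ge\frac23\sqrt2M$ will come from a continuity argument.

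\emph{Lower bound.} Let $t_0$ be the infimum of the times $t$ with $\phi>0$ on $[t,1]$. On $(t_0,1]$ we have $0<\phi\le\sqrt2M$. Since $t'-t\le t'$ for $t\ge0$, the second identity gives
\[
 \phi(t)\ge\sqrt2M-\sqrt2M\int_0^1t'e(t')\,dt'>\sqrt2M\big(1-\tfrac13\big)=\tfrac23\sqrt2M.
\]
This stays bounded away from $0$, so by continuity $t_0<0$ and the estimate holds on all of $[0,1]$.

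\emph{Strict positivity at $t=0$.} We have $\partial_t\phi(0,z)=\int_0^1e\phi\ge\frac23\sqrt2M\int_0^1e(t,z)\,dt$. By \eqref{eq:taile}, $\int_0^1e\,dt$ dominates $\frac14a_\sigma(z)^2\int R_\sigma'^2$ for each $\sigma$. Since $R_\pm$ are nonzero, compactly supported bumps, $R_\pm'\not\equiv0$. Since $a_-$ and $a_+$ have no common zero, at every $z$ at least one term is positive. This is the step that uses the non-axisymmetric pulses; without them the poles would fail.

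\emph{Behaviour near $t=1$.} $C$ is constant near $1$, so $e=0$ there and the ODE with terminal data forces $\phi\equiv\sqrt2M$ and $\partial_t\phi\equiv0$. Also $C=1$ gives $G=\mathrm{diag}(e^q,e^{-q})=h_H$ by Proposition~\ref{prop:hH}.

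\emph{The hypothesis for small $\delta$.} The connector contribution is
\[
 \int_{1/3}^1t\,\tfrac14C'^2q^2\le\tfrac14(\log2)^2\int C'^2<\tfrac12(\log2)^2\approx0.24<\tfrac13,
\]
using $|q|\le\log2$ and $\int C'^2<2$. The pulse contribution is $O(\delta^2)$ uniformly in $z$, since $|a_\sigma|\le4\delta$. Hence $\delta$ small suffices.

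The main obstacle is this numerical margin in the connector term. It is exactly why the constraint $\int C'^2<2$ was imposed, and I would check it carefully, including the $\tfrac14$ factor from \eqref{eq:taile}.
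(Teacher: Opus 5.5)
Your proof is correct and follows the paper's argument essentially step by step: the twice-integrated form from $t=1$, the continuity bootstrap giving $\partial_t\phi\ge0$, $\phi\le\sqrt2M$ and then $\phi>\frac23\sqrt2M$, strict focusing at $t=0$ because $a_-$ and $a_+$ have no common zero, and the bound $\frac14(\log2)^2\int C'^2<\frac12(\log2)^2$ on the connector term with an $O(\delta^2)$ pulse term. One cosmetic point: on the domain $[0,1]$ the continuity argument should conclude that $t_0=0$ and $\phi>0$ at $t=0$, rather than $t_0<0$, unless you first extend $e$ by zero to $t<0$.
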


\begin{proof}
\emph{Integral form.} Integrating \eqref{eq:tail} once and twice from $t=1$, and using Fubini,
\[
 \partial_t\phi(t)=\int_t^1e\phi\,ds,\qquad
 \phi(t)=\sqrt2M-\int_t^1(s-t)\,e(s)\phi(s)\,ds .
\]

\emph{Positivity by continuity.} Let $t_0\in[0,1)$ be such that $\phi>0$ on $[t_0,1]$. Since $e\ge0$, the first formula
gives $\partial_t\phi\ge0$ on $[t_0,1]$, hence $\phi\le\sqrt2M$ there. Inserting this in the second formula,
\[
 \phi(t)\ge\sqrt2M\Big(1-\int_t^1(s-t)e\,ds\Big)\ge\sqrt2M\Big(1-\int_0^1se\,ds\Big)>\tfrac23\sqrt2M
 \qquad(t\in[t_0,1]).
\]
So the set of $t_0$ with $\phi>0$ on $[t_0,1]$ is open and closed in $[0,1]$ and contains a neighbourhood of $1$; hence
it is all of $[0,1]$, and the bounds hold everywhere.

\emph{Strict focusing at $t=0$.} $\partial_t\phi(0,z)=\int_0^1e\phi\,dt\ge\frac23\sqrt2M\int_0^1e\,dt>0$, because at every latitude
at least one of the two polar pulses is nonzero, so $e(\cdot,z)\not\equiv0$.

\emph{The hypothesis.} By \eqref{eq:taile}, on the connector $e\le\frac14(\log2)^2C'^2$ (since $|q|\le\log2$), so its
contribution to $\int te$ is at most $\frac14(\log2)^2\int C'^2<\frac12(\log2)^2=0.2402\ldots$. The pulses live in
$[0,\frac13]$ and $|a_\sigma|\le4\delta$, so they contribute at most $\frac13\cdot\frac14\cdot16\delta^2\int(R_-'^2+R_+'^2)\,dt$,
which is less than $\frac13-0.2403$ for $\delta$ small.

\emph{Near $t=1$} the seed is constant, so $e=0$, and uniqueness for \eqref{eq:tail} gives $\phi\equiv\sqrt2M$ there.
\end{proof}

\paragraph{What the tail hands to the window.} At $t=0$ we have $\phi(0,z)\in[\frac23\sqrt2M,\sqrt2M]$ and
$\partial_t\phi(0,z)>0$, both axisymmetric, together with $\omega=0$ and $\gdc=\gac$. Near $t=0$ the seed is constant ($R_\pm$
vanish there), so $e=0$ and $\phi$ is affine in $t$ near $t=0$:
\[
 \phi=\phi_0(z)+p_0(z)\,t,\qquad \phi_0\ge\tfrac23\sqrt2M,\quad p_0>0 .
\]
This is the germ to which the window is joined. In particular $\phi_0^2\ge\frac89M^2$, which explains the
constant $\frac89$ in Lemma~\ref{lem:initialpart}. Note also that the tail reaches $\tr\chi=0$ exactly without
dividing by $D\phi$, and that no lapse is recovered on it.

\subsubsection{What the tail leaves in the torsion}

We need two tools that will also be used in Section~\ref{ch:spin}.

\begin{lemma}[Divergence identity]\label{lem:divid}
Let $\gdc$ be a metric on $S^2$ with $\det\gdc=\det\gac$, and $\theta$ a symmetric two-tensor. Then
\begin{equation}\label{eq:divid}
 \big[\Div_{\gdc}\theta\big]_j=\nabla^{\gac}_i\big((\gdc^{-1}\theta)^i{}_j\big)
 -\tfrac12\tr\big(\gdc^{-1}\theta\;\gdc^{-1}\nabla^{\gac}_j\gdc\big).
\end{equation}
In particular, for $\theta=\frac12\phi^2D\gdc$,
\[
 \big[\Div_{\gdc}(\tfrac12\phi^2D\gdc)\big]_j=\tfrac12\nabla^{\gac}_i\big(\phi^2(\gdc^{-1}D\gdc)^i{}_j\big)
 -\tfrac14\phi^2\tr\big(\gdc^{-1}\nabla^{\gac}_j\gdc\;\gdc^{-1}D\gdc\big).
\]
\end{lemma}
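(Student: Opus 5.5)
The plan is to compare the Levi-Civita connection of $\gdc$ with that of $\gac$ through their difference tensor, exactly as in the proof of Lemma~\ref{lem:confdiv}. Set
\[
 C^k_{ij}:=\Gamma[\gdc]^k_{ij}-\Gamma[\gac]^k_{ij}=\tfrac12\gdc^{kl}\big(\nabla^{\gac}_i\slashed g_{c,jl}+\nabla^{\gac}_j\slashed g_{c,il}-\nabla^{\gac}_l\slashed g_{c,ij}\big),
\]
a tensor, which is symmetric in $i,j$. The first step is to note that the trace $C^i_{ik}=\tfrac12\gdc^{il}\nabla^{\gac}_k\slashed g_{c,il}=\tfrac12\nabla^{\gac}_k\log(\det\gdc/\det\gac)$ vanishes, by Jacobi's formula and the hypothesis $\det\gdc=\det\gac$. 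This is the only place where the unimodularity is used.

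Next I expand the divergence. By definition $[\Div_{\gdc}\theta]_j=\gdc^{ik}\nabla^{\gdc}_i\theta_{kj}$. Raising the index first, I write this as $\nabla^{\gdc}_i\big((\gdc^{-1}\theta)^i{}_j\big)$, which is legitimate since $\nabla^{\gdc}\gdc^{-1}=0$. For the mixed tensor $T^i{}_j=(\gdc^{-1}\theta)^i{}_j$,
\[
 \nabla^{\gdc}_iT^i{}_j=\nabla^{\gac}_iT^i{}_j+C^i_{il}T^l{}_j-C^l_{ij}T^i{}_l .
\]
The middle term drops out by the first step. It then remains to show that
\[
 C^l_{ij}T^i{}_l=\tfrac12\tr\big(\gdc^{-1}\theta\,\gdc^{-1}\nabla^{\gac}_j\gdc\big).
\]
To see this, I lower the index and write $C^l_{ij}T^i{}_l=C_{mij}\theta^{im}$, with $\theta^{im}$ raised by $\gdc$ and $C_{mij}=\slashed g_{c,ml}C^l_{ij}$. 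Since $\theta^{im}$ is symmetric, the two terms $\nabla_i\slashed g_{c,jm}$ and $\nabla_m\slashed g_{c,ij}$ cancel under the contraction. What survives is $\tfrac12\theta^{im}\nabla^{\gac}_j\slashed g_{c,im}$, which is the claimed trace.

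Finally, I substitute $\theta=\tfrac12\phi^2D\gdc$, pull the scalar factors out of the trace, and use its cyclicity. The only point requiring care, and the main obstacle, is the bookkeeping of signs and index positions in $C$. I would also check the result on the round case $\gdc=\gac$, where both correction terms vanish and the formula reduces to the identity.
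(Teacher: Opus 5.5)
Your argument is correct and is essentially the paper's: compare the two connections through the difference tensor $C$, use Jacobi's formula with $\det\gdc=\det\gac$ to discard $\gdc^{ik}\nabla^{\gac}_m\slashed g_{c,ik}$, and use the symmetry of $\theta^{im}$ to cancel two of the three terms of $C_{mij}\theta^{im}$, leaving $\tfrac12\theta^{im}\nabla^{\gac}_j\slashed g_{c,im}$. The only difference is bookkeeping: you raise the index first (using $\nabla^{\gdc}\gdc^{-1}=0$), so you never produce the $\nabla^{\gac}\gdc^{-1}$ term that the paper cancels against part of $\gdc^{ik}C^l_{ik}$, and unimodularity enters for you through the trace $C^i_{il}$ instead.
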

\begin{proof}
Write $\nabla=\nabla^{\gac}$, $g=\gdc$, and let $C^k_{ij}=\frac12g^{kl}(\nabla_ig_{jl}+\nabla_jg_{il}-\nabla_lg_{ij})$ be the difference of
the Levi-Civita connections of $g$ and $\gac$. Then
$(\Div_g\theta)_j=g^{ik}\big(\nabla_i\theta_{kj}-C^l_{ik}\theta_{lj}-C^l_{ij}\theta_{kl}\big)$, while
$\nabla_i(g^{ik}\theta_{kj})=g^{ik}\nabla_i\theta_{kj}-g^{ia}g^{bk}(\nabla_ig_{ab})\theta_{kj}$. Hence
\[
 (\Div_g\theta)_j-\nabla_i(g^{ik}\theta_{kj})=g^{ia}g^{bk}(\nabla_ig_{ab})\theta_{kj}-g^{ik}C^l_{ik}\theta_{lj}-g^{ik}C^l_{ij}\theta_{kl}.
\]
For the second term,
\[
 g^{ik}C^l_{ik}=g^{ik}g^{lm}\nabla_ig_{km}-\tfrac12g^{lm}g^{ik}\nabla_mg_{ik},\qquad
 g^{ik}\nabla_mg_{ik}=\partial_m\log(\det g/\det\gac)=0;
\]
so the second term is $-g^{ik}g^{lm}(\nabla_ig_{km})\theta_{lj}$, which
cancels the first. With $\theta^{im}:=g^{ik}g^{ml}\theta_{kl}$ (symmetric), the third term is
$-\frac12\theta^{im}(\nabla_ig_{jm}+\nabla_jg_{im}-\nabla_mg_{ij})$; the first and last pieces cancel by the symmetry of
$\theta^{im}$, leaving $-\frac12\theta^{im}\nabla_jg_{im}=-\frac12\tr(g^{-1}\theta\,g^{-1}\nabla_jg)$.
\end{proof}

\begin{lemma}[The round frame connection]\label{lem:roundconn}
For a symmetric two-tensor $T$ with frame matrix (also denoted $T$) in the frame \eqref{eq:roundframe},
\[
 \nabla^{\gac}_{\partial_\vartheta}T=\partial_\vartheta T,\qquad \nabla^{\gac}_{\partial_\varphi}T=\partial_\varphi T+z\,[T,K_0]
\]
as frame matrices, where $z=\cos\vartheta$ and $K_0$ is as in Definition~\ref{def:SR}. Moreover
$[K_0,S(\xi)]=-2S'(\xi)$.
\end{lemma}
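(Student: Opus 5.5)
The plan is a direct computation of the Christoffel symbols of the round metric in the orthonormal frame \eqref{eq:roundframe}, followed by the transformation rule for covariant derivatives of a $(0,2)$-tensor in a non-coordinate frame. First I compute the connection one-forms of $\gac=d\vartheta^2+\sin^2\!\vartheta\,d\varphi^2$ in the frame $e_1=\partial_\vartheta$, $e_2=\frac1{\sin\vartheta}\partial_\varphi$. From $d\theta^1=0$ and $d\theta^2=\cos\vartheta\,d\vartheta\wedge d\varphi=\cot\vartheta\,\theta^1\wedge\theta^2$, Cartan's first structure equation gives a single connection form $\omega^2{}_1=\cos\vartheta\,d\varphi=z\,d\varphi$. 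Equivalently, $\nabla_{\partial_\vartheta}e_a=0$, while $\nabla_{\partial_\varphi}e_1=z\,e_2$ and $\nabla_{\partial_\varphi}e_2=-z\,e_1$. In matrix form this reads $\nabla_{\partial_\varphi}e_b=\sum_a e_a\,\Gamma_{ab}$ with $\Gamma=z\begin{pmatrix}0&-1\\1&0\end{pmatrix}=-zK_0$. (The sign $K_0=R_{-\pi/2}$ follows Definition~\ref{def:SR} and has to be checked.)

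Next I insert this into $(\nabla_XT)(e_a,e_b)=X(T_{ab})-T(\nabla_Xe_a,e_b)-T(e_a,\nabla_Xe_b)$. For $X=\partial_\vartheta$ the frame is parallel, which gives $\nabla_{\partial_\vartheta}T=\partial_\vartheta T$ at once. For $X=\partial_\varphi$ the correction terms are $-(\Gamma^{\mathsf T}T+T\Gamma)$. Since $\Gamma^{\mathsf T}=-\Gamma=zK_0$, this equals $-(zK_0T-zTK_0)=z[T,K_0]$, as claimed. The only real obstacle is bookkeeping of the signs and the index convention (rows versus columns in $\nabla e_b=e_a\Gamma_{ab}$). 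I would confirm the result on a test case: take $T=\gac$, with frame matrix $I$. It must be parallel, and indeed $[I,K_0]=0$. A second check is that the answer is consistent with the rotation behaviour noted after Lemma~\ref{lem:param}, namely that rotating by $\beta$ shifts $\xi$ by $2\beta$.

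Finally, the commutator identity is pure matrix algebra using Lemma~\ref{lem:param}(a). From $S(\xi)S'(\xi)=K_0$ and $S^2=I$ we get $K_0S=SS'S=-S\,K_0$; by the same lemma, $S'S=-K_0$. Then $K_0S=SS'S$, and $S'S=-K_0$ gives $SS'S=S(-K_0)=-SK_0$. Hence $[K_0,S]=K_0S-SK_0=2K_0S$. It remains to evaluate $K_0S$, and I would do this in complex notation: $K_0$ is multiplication by $e^{-i\pi/2}=-i$, and $Sw=e^{i\xi}\bar w$. So $K_0Sw=-ie^{i\xi}\bar w=e^{i(\xi-\pi/2)}\bar w=S(\xi-\tfrac\pi2)w=-S(\xi+\tfrac\pi2)w=-S'(\xi)w$, using $S(\xi+\pi)=-S(\xi)$ and $S'(\xi)=S(\xi+\frac\pi2)$. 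Therefore $[K_0,S(\xi)]=-2S'(\xi)$.
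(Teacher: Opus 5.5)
Your proposal is correct and matches the paper's proof essentially step for step: the same frame connection $\Gamma=-zK_0$, the same correction term $-(\Gamma^{\mathsf T}T+T\Gamma)=z[T,K_0]$, and the same anticommutation $K_0S=-SK_0$. The differences are cosmetic. You obtain the connection from Cartan's structure equation rather than from the Christoffel symbols, and you evaluate $K_0S=-S'$ in the complex model, whereas the paper uses $SK_0=S^2S'=S'$.
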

\begin{proof}
The round frame has $\nabla_{\partial_\vartheta}e_a=0$ and $\nabla_{\partial_\varphi}e_1=\cos\vartheta\,e_2$, $\nabla_{\partial_\varphi}e_2=-\cos\vartheta\,e_1$ (from the
Christoffel symbols $\Gamma^\varphi_{\vartheta\varphi}=\cot\vartheta$, $\Gamma^\vartheta_{\varphi\varphi}=-\sin\vartheta\cos\vartheta$). So
$\nabla_{\partial_\varphi}e_b=\sum_a(\Gamma_\varphi)_{ab}e_a$ with $\Gamma_\varphi=z\begin{pmatrix}0&-1\\1&0\end{pmatrix}=-zK_0$. Then
$(\nabla_{\partial_\varphi}T)(e_a,e_b)=\partial_\varphi T_{ab}-T(\nabla_{\partial_\varphi}e_a,e_b)-T(e_a,\nabla_{\partial_\varphi}e_b)$, i.e.\
$\nabla_{\partial_\varphi}T=\partial_\varphi T-\Gamma_\varphi^tT-T\Gamma_\varphi=\partial_\varphi T-zK_0T+zTK_0$, since $\Gamma_\varphi^t=zK_0$. For the
commutator, $K_0=SS'$ gives $K_0S=SS'S=-SK_0$, so $[K_0,S]=-2SK_0=-2SSS'=-2S'$.
\end{proof}

We now compute the torsion produced by the tail. On the tail $\phi$ and $\Omega=\Omega_{\rm tail}(z)$ are axisymmetric,
so in \eqref{eq:Z} the gradient terms $D\phi\Nd\phi-\phi\Nd D\phi+4\phi D\phi\Nd\log\Omega$ are multiples of $dz$ with
axisymmetric coefficients. Such one-forms are exact ($f(z)dz=d\int f$) and do not pair with $\partial_\varphi$. It
remains to integrate the divergence term over the tail.

\paragraph{The connector.} For $G=\mathrm{diag}(e^{Cq},e^{-Cq})$ and axisymmetric $\phi$, $G^{-1}\partial_tG=C'q\,E$ with
$E=\mathrm{diag}(1,-1)$. In \eqref{eq:divid}, the first term is $\frac12\nabla_i(fE)^i{}_j$ with $f=\phi^2C'q$ axisymmetric. As a
$(1,1)$-tensor $E=\partial_\vartheta\otimes d\vartheta-\partial_\varphi\otimes d\varphi$, and the $\varphi$-component of its divergence vanishes:
$(\partial_if)E^i{}_\varphi=-\partial_\varphi f=0$, and $\nabla_iE^i{}_\varphi=\frac1{\sin\vartheta}\partial_\varphi(\sin\vartheta E^\varphi{}_\varphi)-\Gamma^k_{i\varphi}E^i{}_k=0$
(only $\Gamma^\varphi_{\vartheta\varphi}$ and $\Gamma^\vartheta_{\varphi\varphi}$ are nonzero, and they meet the vanishing off-diagonal entries of
$E$). In the trace term, by Lemma~\ref{lem:roundconn}, $\nabla_{\partial_\varphi}G=z[G,K_0]$ (as $G$ does not depend on $\varphi$), and
\[
 \tr\big(G^{-1}z[G,K_0]\,E\big)=z\big(\tr(K_0E)-\tr(K_0GEG^{-1})\big)=0,
\]
because $G$ and $E$ commute. So the $\partial_\varphi$-component of $\Div_{\gdc}(\frac12\phi^2D\gdc)$ vanishes identically, and the
$dz$-component depends on $(t,z)$ only. The connector changes $Z$ by an exact axisymmetric one-form, which is
removed at the terminal sphere by the angular normalisation of the generator (Proposition~\ref{prop:reparam}).

\paragraph{The pulses.} On a pulse $\gdc^{-1}D\gdc=\rho_tS(2\sigma\varphi)$ with $\rho=a_\sigma(z)R_\sigma(t)$. For the trace term of
\eqref{eq:divid} we use
\begin{equation}\label{eq:fixedpol}
 \tr\big(\gdc^{-1}\nabla^{\gac}_j\gdc\;\gdc^{-1}D\gdc\big)=2\rho_t\,\partial_j\rho\qquad\text{for $G=\exp(\rho S(\xi))$ with $\xi_t=0$.}
\end{equation}
To prove \eqref{eq:fixedpol}, write $\nabla_jG=\partial_jG+z[G,K_0]$ for $j=\varphi$ and $\nabla_jG=\partial_jG$ for $j=\vartheta$
(Lemma~\ref{lem:roundconn}). The $\partial_jG$ part gives, by \eqref{eq:hypcurrent} with $X=\partial_j$, $Y=\partial_t$,
$2\partial_j\rho\,\rho_t+2\sinh^2\!\rho\,\partial_j\xi\,\xi_t=2\rho_t\partial_j\rho$. The connection part gives
$z\rho_t\tr\big(G^{-1}[G,K_0]S\big)=z\rho_t\big(\tr(K_0S)-\tr(K_0GSG^{-1})\big)=0$, since $G=\cosh\rho\, I+\sinh\rho\, S$ commutes
with $S$.

So the trace term is $-\frac14\phi^2\cdot2\rho_t\partial_j\rho$, again an axisymmetric multiple of $dz$, hence exact. The first
term of \eqref{eq:divid} is $\frac12\nabla^{\gac}_i\big(\phi^2\rho_tS^i{}_j\big)$, which is linear in $\phi^2\rho_tS$. Since
$\nabla^{\gac}$ and $S(2\sigma\varphi)$ do not depend on $t$, integration over the pulse gives
\[
 \Delta Z=\tfrac12\Div_{\gac}C_\sigma+(\text{exact}),\qquad
 C_\sigma:=\Big(\int\phi^2\rho_t\,dt\Big)\,S(2\sigma\varphi)=m_\sigma(z)\,S(2\sigma\varphi),
\]
\[
 m_\sigma(z)=a_\sigma(z)\int\phi^2R_\sigma'\,dt=-a_\sigma(z)\int\partial_t(\phi^2)\,R_\sigma\,dt ,
\]
the last by integration by parts ($R_\sigma$ has compact support). Since $R_\sigma\ge0$ and $\partial_t\phi\ge0$, we get
$m_\sigma\le0$, with strict inequality wherever $a_\sigma\ne0$: if $R_\sigma(t)>0$, then $R_\sigma$ decreases to $0$ after $t$,
so $e=\frac14a_\sigma^2R_\sigma'^2\not\equiv0$ on $[t,1]$ and $\partial_t\phi(t)=\int_t^1e\phi>0$. The memory would
vanish if $\phi$ were constant during the pulse; it does not, because the tail focuses.

Thus the only non-axisymmetric memory of the tail is the pair of tracefree tensors $C_\pm$, each of a single
helicity, i.e.\ a pure $e^{\pm2i\varphi}$ mode. They do not contribute to the $\varphi$-average of $Z(\partial_\varphi)$, but they
change the non-axisymmetric part of the torsion. They are cancelled on the window by two reserved nonrotating
pulses with the opposite memory (Section~\ref{sec:cancel}); this works because the first term of \eqref{eq:divid}
is linear in the memory for fixed-polarisation pulses.

\section{The partial bridge II: the window, the spin, the terminal sphere}\label{ch:spin}

The initial part (Lemma~\ref{lem:initialpart}) ends at $v_w$ with round conformal metric and area $cM^2$.
The focusing tail (Section~\ref{sec:tail}) starts at $v_{\rm tail}$ with round conformal metric and area
$\phi_0^2\ge\frac89M^2$. In this section we fill the gap with the control window, insert the pulses that pump in
the angular momentum, cancel all non-axisymmetric torsion, and normalise the torsion at the terminal sphere.
This completes the proof of Proposition~\ref{prop:partial}.

\subsection{The control window}\label{sec:window}

On the window $[v_w,v_{\rm tail}]$ we require:
\begin{enumerate}[label=(W\arabic*)]
\item \emph{Smooth junctions.} The data agree with the initial part near $v_w$ and with the tail near
$v_{\rm tail}$, to all orders.
\item \emph{Expansion.} $D\phi>0$ everywhere, with a positive minimum, so that the lapse formula
\eqref{eq:lapse} can be used.
\item \emph{Affine area in the middle.} On a middle interval $[v_w+\varepsilon,v_{\rm tail}-\varepsilon]$, which will contain all
pulses, $u:=\phi^2=B(z)+\varpi(z)\,v$ with $\varpi(z)>0$.
\item \emph{Axisymmetry} of $\phi$, and hence of $e$ and $\Omega$ as long as the energy is axisymmetric.
\end{enumerate}
Requirement (W3) yields the exact cancellations below. It is used three times:
\begin{itemize}
\item the weighted memory $\int\phi^2(\text{symmetric part})\,dt$ of a closed pulse depends only on its shape and
duration, not on its centre (Lemma~\ref{lem:closedmem}), so pulses with phases differing by $\pi$ cancel;
\item $\int\phi^2C'\,dt=\phi^2(t_j,z)$ for $C'$ even about $t_j$;
\item $\int\phi^2b'\,dt=-\varpi\int b$ for the reserved nonrotating pulses.
\end{itemize}
All three use that $\partial_v\phi^2=\varpi(z)$ does not depend on $v$. Requirement (W3) cannot hold on the whole window,
because neither end is affine: the tail germ has $u=(\phi_0+p_0t)^2$, quadratic in $t$, and the initial germ is
whatever the area raising produced. So the affine middle is joined to both ends by short transition
intervals, the \emph{collars}, which contain no pulses (Figure~\ref{fig:bridgearea}).

\paragraph{The lapse is recovered on the window.} Raychaudhuri's equation $D^2\phi=2\omega D\phi-e\phi$ is one scalar
equation at each point for the three unknowns $\phi,\omega,\gdc$. On the tail we prescribed $\omega=0$ and $\gdc$ and
solved for $\phi$. On the window this is not possible, because (W3) prescribes $\phi$: with $\omega=0$, an affine $u$
with $\partial_vu=\varpi$ would force
\[
 e=-\phi^{-1}D^2\phi=\tfrac14\varpi^2u^{-2},
\]
since $\phi=\sqrt u$ gives $D^2\phi=-\frac14\varpi^2u^{-3/2}$; this is not the energy of the pulses we want. So on the window
we prescribe $\phi$ and $\gdc$ and solve Raychaudhuri's equation for the lapse, by \eqref{eq:lapse}:
\[
 \omega=\frac{D^2\phi+e\phi}{2D\phi},\qquad \log\Omega(v)=\log\Omega(v_w)+\int_{v_w}^v\omega .
\]
As long as $D\phi>0$, every choice of $(\phi,\gdc)$ is admissible. The optical level on the window therefore reduces to
the choice of one positive function $u(v,z)$ with $\partial_vu>0$. The pulses only change $e$, and hence $\omega$; they do
not change $\phi$. This is why they can be inserted afterwards.

\begin{lemma}[Junction]\label{lem:join}
Let $u_a:=M^2\hat u$ be the area of the initial part near $v_w$ (Lemma~\ref{lem:initialpart}(ii)) and
$u_b=(\phi_0(z)+p_0(z)(v-v_{\rm tail}))^2$ the area of the tail near $v_{\rm tail}$, both extended a little into the window
with positive slopes. Since
\begin{equation}\label{eq:joincond}
 A(z)=u_a(v_w,z)=cM^2<\tfrac89M^2\le\phi_0(z)^2\qquad\text{for all }z,
\end{equation}
for every window length $\ell=v_{\rm tail}-v_w>0$ and every sufficiently small collar width $\varepsilon>0$ there is a smooth
axisymmetric $u$ on the window with: $u=u_a$ near $v_w$ and $u=u_b$ near $v_{\rm tail}$; $\partial_vu>0$
with a positive minimum; and $u=B(z)+\varpi(z)v$, $\varpi>0$, on $[v_w+\varepsilon,v_{\rm tail}-\varepsilon]$.
\end{lemma}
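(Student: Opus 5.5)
This is a direct application of Lemma~\ref{lem:slope} on the interval $[v_a,v_b]=[v_w,v_{\rm tail}]$, of length $\ell$. I would take $u_a=M^2\hat u$ and $u_b=(\phi_0+p_0(v-v_{\rm tail}))^2$ and check its hypotheses one at a time.

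First, both germs are smooth positive functions of $(v,z)$ defined near the respective endpoints, with positive slopes. For $u_a$ this is Lemma~\ref{lem:initialpart}(ii): $\partial_v u_a=M^2\partial_v\hat u>0$, and $\hat u$ is fixed. For $u_b$ we have
\[
\partial_v u_b=2p_0(\phi_0+p_0(v-v_{\rm tail})),
\]
which is positive near $v_{\rm tail}$ because $p_0>0$ and $\phi_0\ge\frac23\sqrt2M>0$. These come from Lemma~\ref{lem:tail} and the paragraph after it. Both germs are axisymmetric. Smoothness of $\phi_0$ and $p_0$ in $z$ follows from smooth dependence of the backward ODE \eqref{eq:tail} on the smooth coefficient $e(t,z)$. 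Smoothness on $S^2$ at the poles is not an issue, since only axisymmetric functions of $z$ that are smooth as functions on the sphere are involved, and these are exactly what Lemma~\ref{lem:slope} accepts (``or on $x\in S^2$'').

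To extend the germs a little into the window, I would simply use the explicit formulas. $u_b$ is a quadratic polynomial in $v$, and its slope stays positive on a small interval to the left of $v_{\rm tail}$. $\hat u$ is defined for $v$ near $v_w$ with positive slope, so it can be used on a small interval to the right of $v_w$. Lemma~\ref{lem:slope} only uses the germs on intervals of length $\varepsilon/2$, so these short extensions suffice once $\varepsilon$ is small.

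The only substantive hypothesis is the ordering \eqref{eq:slopecond}, $u_a(v_w,z)<u_b(v_{\rm tail},z)$. By Lemma~\ref{lem:initialpart}(ii) the left side is $cM^2$, and the right side is $\phi_0(z)^2\ge\frac89M^2$. Since $c<\frac89$ was fixed in Lemma~\ref{lem:initialpart}, this is exactly \eqref{eq:joincond}, uniformly in $z$. Lemma~\ref{lem:slope} then gives, for every sufficiently small $\varepsilon$, a smooth axisymmetric $u$ with three properties:
\begin{enumerate}[label=(\alph*)]
\item $u=u_a$ near $v_w$ and $u=u_b$ near $v_{\rm tail}$;
\item $\partial_vu>0$, with a positive minimum;
\item $u$ is affine, $u=B+\varpi v$ with $\varpi>0$, on $[v_w+\varepsilon,v_{\rm tail}-\varepsilon]$.
\end{enumerate}
Axisymmetry is inherited because the cut-offs depend on $v$ only.

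I expect no real obstacle here. The only points needing care are that the ordering $cM^2<\frac89M^2\le\phi_0^2$ holds for all $z$ simultaneously, which it does with the margin $(\frac89-c)M^2$, and that $\ell$ is arbitrary. The latter holds because Lemma~\ref{lem:slope} only needs $\varepsilon\ll\ell$. The resulting $\varpi$ is bounded below by roughly $(\frac89-c)M^2/(2\ell)$.
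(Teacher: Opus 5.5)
Your proposal is correct and is exactly the paper's argument: the lemma is Lemma~\ref{lem:slope} applied on $[v_w,v_{\rm tail}]$, with \eqref{eq:joincond} serving as the ordering hypothesis \eqref{eq:slopecond}. One minor point: Lemma~\ref{lem:slope} uses the slopes $s_a,s_b$ on the full cutoff supports of width $\varepsilon$, not only on intervals of width $\varepsilon/2$, but this is harmless once $\varepsilon$ is small.
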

\begin{proof}
This is Lemma~\ref{lem:slope} with $[v_a,v_b]=[v_w,v_{\rm tail}]$; condition \eqref{eq:slopecond} is \eqref{eq:joincond}.
\end{proof}

\paragraph{Consequences.} Set $\phi:=\sqrt u$ on the window and $\gdc:=\gac$ (until pulses are inserted).
\begin{enumerate}[label=(\roman*)]
\item \emph{Expansion.} $D\phi=\partial_vu/(2\phi)>0$, with a positive minimum. This is (W2).
\item \emph{The lapse at the junctions.} In the collars there are no pulses, so $e=0$ and $\omega=D^2\phi/(2D\phi)$. Near $v_w$,
$\phi$ coincides with the area raising stretch, whose lapse was recovered from the same formula, so $\omega$
continues the earlier lapse to all orders. Near $v_{\rm tail}$, $\phi$ is affine, so $D^2\phi=0$ and $\omega=0$, which is
the tail's $\omega$. Integrating, $\Omega$ is smooth across both junctions, and on the tail it is the
$v$-independent function
\[
 \Omega_{\rm tail}(z)=\Omega(v_w,z)\exp\Big(\int_{v_w}^{v_{\rm tail}}\omega\,dv\Big),
\]
which is axisymmetric and positive. Inserting pulses in the middle changes $\omega$ only there, and hence changes
$\Omega_{\rm tail}$ by a positive factor; nothing on the tail depends on its value.
\item \emph{All other fields are smooth.} The seed $\gdc$, the area $\phi$ and the lapse $\Omega$ are $C^\infty$
on the whole bridge, because near each junction they coincide with the data on the other side. The remaining
fields ($\etab$, $\Omega^{-1}\chib$, $\Omega^{-2}\ab$) are not prescribed: they are obtained by integrating the linear
transport equations \eqref{eq:H3}--\eqref{eq:H5} forward from $S_1$, with smooth coefficients. So they are smooth
too, and nothing has to be matched by hand.
\end{enumerate}

\subsection{Closed rotating pulses}\label{sec:closedpulses}

All pulses of this section are placed in the affine middle of the window. The conformal metric is round on the
whole window except during the pulses, each of which starts and ends at $\gdc=\gac$ and is disjoint from the others.

We use four disjoint \emph{closed rotating} pulses, two of each helicity $\sigma=\pm1$:
\begin{equation}\label{eq:rotpulse}
 \gdc=\gac\exp\big\{a_\sigma(z)R(t)\,S\big(2\sigma\varphi-2\pi\sigma C(t)+\delta_j\big)\big\}.
\end{equation}
Here $t$ is the window parameter, $R$ is a plateau bump, the rotation $C$ takes place on the plateau, $C'$ is even
about the pulse centre $t_j$, and the two pulses of the same helicity have the same shape and phases $\delta_j$
differing by $\pi$. With $\rho=a_\sigma R$ and $\chi=2\sigma\varphi-2\pi\sigma C+\delta_j$, \eqref{eq:Apulse} gives
\begin{equation}\label{eq:Apulse2}
 \gdc^{-1}\partial_t\gdc=\rho_tS(\chi)+\chi_t\sinh\rho\cosh\rho\,S'(\chi)-\chi_t\sinh^2\!\rho\,K_0,\qquad
 e=\tfrac14\big(\rho_t^2+\sinh^2\!\rho\,\chi_t^2\big).
\end{equation}

\paragraph{Closed pulses.} A closed rotating pulse has the following time structure on an interval
$[t_0,t_3]$, $t_0<t_1<t_2<t_3$:
\begin{enumerate}[label=(C\arabic*)]
\item \emph{rise:} on $[t_0,t_1]$, $R$ increases from $0$ to $1$ and $C\equiv0$;
\item \emph{rotation:} on $[t_1,t_2]$, $R\equiv1$ and $C$ increases from $0$ to $1$;
\item \emph{fall:} on $[t_2,t_3]$, $R$ decreases from $1$ to $0$ and $C\equiv1$.
\end{enumerate}
At a fixed point of $S^2$ the distortion is switched on at a fixed axis, turned at fixed strength, and switched off.
The phase $\chi$ changes by $-2\pi\sigma$, one full period of $S$; equivalently, the spin-two pattern is rigidly rotated
about the polar axis by $\pi$, after which it coincides with itself. In the hyperbolic picture, $G(t,x)$
runs out radially to distance $a_\sigma$, once around the circle of radius $a_\sigma$, and back along the same ray.

Three properties of \eqref{eq:Apulse2} are used.
\begin{enumerate}[label=(\roman*)]
\item The energy $e$ is axisymmetric. The area $\phi^2=B(z)+\varpi(z)v$ is prescribed axisymmetric on the window, so the
recovered lapse is axisymmetric, and the terms $D\phi\Nd\phi-\phi\Nd D\phi+4\phi D\phi\Nd\log\Omega$ in \eqref{eq:Z} are gradients of
axisymmetric functions: they carry no angular momentum.
\item The symmetric part of $\gdc^{-1}\partial_t\gdc$ integrates to zero over a closed pulse, and its weighted memory
cancels within each pair (Lemma~\ref{lem:closedmem}).
\item The skew part $-\chi_t\sinh^2\rho\,K_0$ is phase-independent and gives the angular momentum (Lemma~\ref{lem:Lsigma}).
\end{enumerate}

\begin{lemma}[Memory of a closed rotating pulse]\label{lem:closedmem}
Let $G(t)=\exp\big(\rho(t)S(\chi(t))\big)$ be a closed rotating pulse at a fixed point of $S^2$, i.e.\ $\rho=aR$ and
$\chi=\chi_0-2\pi\sigma C$ with (C1)--(C3), where $\chi_0=2\sigma\varphi+\delta_j$. Write $\mathcal A=G^{-1}\partial_tG$. Then:
\begin{enumerate}[label=(\alph*)]
\item $\displaystyle\int_{t_0}^{t_3}\sym\mathcal A\,dt=0$;
\item $\displaystyle\int_{t_0}^{t_3}\skw\mathcal A\,dt=2\pi\sigma\sinh^2\!a\;K_0\neq0$;
\item if $u=u_0+\varpi\,t$ is affine, then
\[
 \int_{t_0}^{t_3}u\,\sym\mathcal A\,dt
 =-\varpi\Big[S(\chi_0)\int_{t_0}^{t_3}\rho\,dt+\sinh a\cosh a\int_{t_1}^{t_2}\big(S(\chi)-S(\chi_0)\big)dt\Big].
\]
\end{enumerate}
The right side of (c) does not contain $u_0$: translating the pulse in time changes only $u_0$, so the memory does not
depend on the position of the pulse. Replacing $\delta_j$ by $\delta_j+\pi$ replaces $S$ by $-S$ everywhere
($S(\xi+\pi)=-S(\xi)$) and so reverses the sign of the memory. Being an identity between functions of $(z,\varphi)$, this
holds with all angular derivatives.
\end{lemma}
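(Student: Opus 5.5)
The plan is to work pointwise on $S^2$ with the explicit decomposition \eqref{eq:Apulse2} of $\mathcal A=G^{-1}\partial_tG$:
\[
\mathcal A=\rho_tS(\chi)+\chi_t\sinh\rho\cosh\rho\,S'(\chi)-\chi_t\sinh^2\!\rho\,K_0 .
\]
By Lemma~\ref{lem:param}(a), $S$ and $S'$ are symmetric and $K_0$ is skew. So $\sym\mathcal A$ consists of the first two terms and $\skw\mathcal A=-\chi_t\sinh^2\!\rho\,K_0$. I will split $[t_0,t_3]$ according to (C1)--(C3). On the rise and the fall $\chi_t=0$, and on the plateau $\rho_t=0$ and $\rho\equiv a$. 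Two facts are used repeatedly. First, $S'(\chi)\chi_t=\partial_tS(\chi)$. Second, $\chi(t_2)=\chi_0-2\pi\sigma$, so $S(\chi)=S(\chi_0)$ on the whole fall by $2\pi$-periodicity of $S$.

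For (a), the rise contributes $\int_{t_0}^{t_1}\rho_t\,dt\,S(\chi_0)=aS(\chi_0)$ and the fall contributes $-aS(\chi_0)$, by the periodicity just noted. The plateau contributes $\sinh a\cosh a\,[S(\chi)]_{t_1}^{t_2}=0$. For (b), only the plateau contributes: $-\sinh^2\!a\,K_0\int_{t_1}^{t_2}\chi_t\,dt=-\sinh^2\!a\,K_0\cdot(-2\pi\sigma)=2\pi\sigma\sinh^2\!a\,K_0$. This is nonzero because $a\ne0$ (at points where $a_\sigma=0$ the statement is vacuous).

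For (c), I integrate by parts in each of the two symmetric terms, using $\partial_tu=\varpi$. The first term is $S(\chi_0)\int_{t_0}^{t_3}u\rho_t\,dt$; this holds because $\rho_t=0$ on the plateau and $S(\chi)=S(\chi_0)$ on the rise and the fall. Since $\rho(t_0)=\rho(t_3)=0$, this equals $-\varpi S(\chi_0)\int_{t_0}^{t_3}\rho\,dt$. The second term is $\sinh a\cosh a\int_{t_1}^{t_2}u\,\partial_tS(\chi)\,dt$, which integrates by parts to
\[
\sinh a\cosh a\Big([uS(\chi)]_{t_1}^{t_2}-\varpi\int_{t_1}^{t_2}S(\chi)\,dt\Big).
\]
Because $S(\chi(t_1))=S(\chi(t_2))=S(\chi_0)$, the boundary term is $(u(t_2)-u(t_1))S(\chi_0)=\varpi\int_{t_1}^{t_2}S(\chi_0)\,dt$. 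Combining the two pieces gives exactly the stated formula, and $u_0$ drops out. The supplementary remarks then follow by inspection. A time translation only shifts $u_0$. The shift $\delta_j\mapsto\delta_j+\pi$ maps $S(\chi)\mapsto-S(\chi)$ and $S(\chi_0)\mapsto-S(\chi_0)$, which flips the sign of the right side of (c); $\skw\mathcal A$ is unchanged. All steps are identities of smooth functions of $(t,z,\varphi)$, so they commute with angular derivatives.

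I expect no real obstacle. The only point needing care is the bookkeeping in (c): one must check that the plateau boundary term exactly produces the $-S(\chi_0)$ subtraction, and that the periodicity $\chi(t_2)\equiv\chi_0 \bmod 2\pi$ is what makes the rise and fall contributions combine with the common factor $S(\chi_0)$.
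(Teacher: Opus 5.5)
Your proof is correct and follows the paper's argument. It uses the same splitting of $\mathcal A$ via \eqref{eq:Apulse2} into $\sym\mathcal A$ and $\skw\mathcal A$, the same phase-by-phase evaluation for (a) and (b), and integration by parts against $u_t=\varpi$ for (c). The only cosmetic difference is in (c): you integrate $u\rho_t$ over all of $[t_0,t_3]$ at once, since the plateau contributes nothing when $\rho_t=0$, which avoids the paper's intermediate boundary term $a\big(u(t_1)-u(t_2)\big)=-\varpi\int_{t_1}^{t_2}\rho\,dt$.
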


\begin{proof}
By \eqref{eq:Apulse2}, $\sym\mathcal A=\rho_tS(\chi)+\chi_t\sinh\rho\cosh\rho\,S'(\chi)$ and $\skw\mathcal A=-\chi_t\sinh^2\!\rho\,K_0$.

\emph{(a)} On the rise and on the fall $\chi_t=0$ and $\chi$ is constant: $\chi_0$ on the rise, $\chi_0-2\pi\sigma$ on the
fall, and $S(\chi_0-2\pi\sigma)=S(\chi_0)$. So these phases contribute
$S(\chi_0)\big(\int_{t_0}^{t_1}\rho_t+\int_{t_2}^{t_3}\rho_t\big)=S(\chi_0)\big((a-0)+(0-a)\big)=0$. On the rotation $\rho\equiv a$,
so $\rho_t=0$, and by the chain rule $\chi_t\sinh\rho\cosh\rho\,S'(\chi)=\sinh a\cosh a\,\partial_t[S(\chi)]$. This phase contributes
$\sinh a\cosh a\,\big(S(\chi_0-2\pi\sigma)-S(\chi_0)\big)=0$.

\emph{(b)} Only the rotation contributes, where $\sinh^2\rho=\sinh^2a$ and $\int\chi_t\,dt=-2\pi\sigma$.

\emph{(c)} On the rise and the fall, $\sym\mathcal A=\rho_tS(\chi_0)$. Integrating by parts, using $u_t=\varpi$, $\rho(t_0)=\rho(t_3)=0$ and
$\rho(t_1)=\rho(t_2)=a$,
\[
 \Big(\int_{t_0}^{t_1}+\int_{t_2}^{t_3}\Big)u\rho_t\,dt=a\big(u(t_1)-u(t_2)\big)-\varpi\Big(\int_{t_0}^{t_1}+\int_{t_2}^{t_3}\Big)\rho\,dt .
\]
Since $u(t_1)-u(t_2)=-\varpi(t_2-t_1)$ and $\rho\equiv a$ on the plateau, the boundary term is $-\varpi\int_{t_1}^{t_2}\rho\,dt$. So rise and
fall together contribute $-\varpi\,S(\chi_0)\int_{t_0}^{t_3}\rho\,dt$. On the rotation, $\sym\mathcal A=\sinh a\cosh a\,\partial_t[S(\chi)-S(\chi_0)]$,
and the bracket vanishes at $t_1$ and $t_2$; integrating by parts gives $-\varpi\sinh a\cosh a\int_{t_1}^{t_2}(S(\chi)-S(\chi_0))dt$.
\end{proof}

Part (a) depends on the full turn: the rotation phase contributes $\sinh a\cosh a\,(S(\chi(t_2))-S(\chi(t_1)))$, which
vanishes only if $\chi$ changes by a multiple of $2\pi$.

\subsection{The axial transfer operator}\label{sec:axial}

\begin{lemma}[Axial transfer]\label{lem:Lsigma}
Let $\rho=\rho(t,z)$ and $u=\phi^2(t,z)$ be axisymmetric and $\chi=2\sigma\varphi+c(t)+\delta(z)$ with $\sigma=\pm1$, and let
$\gdc$ have frame matrix $G=\exp(\rho S(\chi))$. Then for each $t$, the $\varphi$-average of
$\big[\Div_{\gdc}(\frac12uD\gdc)\big](\partial_\varphi)$ equals $L_\sigma\big(u\chi_t\sinh^2\!\rho\big)$, where
\begin{equation}\label{eq:Lsigma}
 L_\sigma f:=\tfrac12(1-z^2)f'(z)-(\sigma+z)f(z).
\end{equation}
More precisely, $\big[\Div_{\gdc}(\frac12uD\gdc)\big](\partial_\varphi)=L_\sigma(u\chi_t\sinh^2\rho)+\frac12\big[\nabla^{\gac}_i\big(u\,(\sym\mathcal A)^i{}_\varphi\big)\big]$,
and the last term has zero $\varphi$-average.
\end{lemma}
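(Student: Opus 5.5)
The plan is to start from the divergence identity of Lemma~\ref{lem:divid} with $\theta=\frac12u\,D\gdc$. We write $\mathcal A:=G^{-1}\partial_tG$, with $\partial_t=D$, and use $\gdc^{-1}D\gdc=\mathcal A$ as frame matrices. This gives
\[
 \big[\Div_{\gdc}(\tfrac12uD\gdc)\big](\partial_\varphi)=\tfrac12\nabla^{\gac}_i\big(u\,\mathcal A^i{}_\varphi\big)-\tfrac14u\,\tr\big(G^{-1}\nabla^{\gac}_{\partial_\varphi}G\;\mathcal A\big).
\]
By \eqref{eq:Apulse}, $\mathcal A$ splits as $\sym\mathcal A=\rho_tS(\chi)+\chi_t\sinh\rho\cosh\rho\,S'(\chi)$ and $\skw\mathcal A=-\chi_t\sinh^2\!\rho\,K_0$. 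I would treat three pieces separately: the divergence of the skew part, the divergence of the symmetric part, and the trace term.

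\emph{Divergence of the skew part.} Here $\skw\mathcal A=-fu^{-1}K_0$, with $f:=u\chi_t\sinh^2\!\rho$ axisymmetric. As a $(1,1)$-tensor, $K_0$ is (minus) the complex structure of $\gac$, so it is $\gac$-parallel. Hence $\nabla_i(-fK_0{}^i{}_\varphi)=-(\partial_if)K_0{}^i{}_\varphi$, and only $i=\vartheta$ contributes. Converting the frame entry through $e_2=\sin^{-1}\!\vartheta\,\partial_\varphi$ gives $K_0{}^\vartheta{}_\varphi=\pm\sin\vartheta$. With $\sin\vartheta\,\partial_\vartheta=-(1-z^2)\partial_z$, this term is $\frac12(1-z^2)f'$. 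The one delicate point is the sign, which I would fix by carefully tracking the frame-to-coordinate conversion.

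\emph{Trace term.} By Lemma~\ref{lem:roundconn}, $\nabla_{\partial_\varphi}G=\partial_\varphi G+z[G,K_0]$. For the $\partial_\varphi G$ part, \eqref{eq:hypcurrent} with $\partial_\varphi\rho=0$ and $\partial_\varphi\chi=2\sigma$ gives $4\sigma\chi_t\sinh^2\!\rho$. The connection part is $z\big(\tr(K_0\mathcal A)-\tr(G^{-1}K_0G\,\mathcal A)\big)$. Write $c=\cosh\rho$, $s=\sinh\rho$ and $G=cI+sS$. Using $SK_0S=-K_0$ and $[K_0,S]=-2S'$ (Lemma~\ref{lem:param}, Lemma~\ref{lem:roundconn}), we get $G^{-1}K_0G=(c^2+s^2)K_0-2cs\,S'$. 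The trace table in the proof of Lemma~\ref{lem:param}(d) then gives:
\begin{itemize}
\item $\tr(K_0\mathcal A)=2\chi_ts^2$;
\item $\tr(G^{-1}K_0G\,\mathcal A)=2\chi_ts^2(c^2+s^2)-4c^2s^2\chi_t=-2\chi_ts^2$.
\end{itemize}
Hence the trace equals $4(\sigma+z)\chi_t\sinh^2\!\rho$. Multiplied by $-\frac14u$, it yields $-(\sigma+z)f$. Together with the skew divergence, this gives exactly $L_\sigma f$, with no averaging needed for these two pieces.

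\emph{Divergence of the symmetric part.} What remains is $\frac12\nabla^{\gac}_i\big(u(\sym\mathcal A)^i{}_\varphi\big)$. Its frame entries are built from $\cos\chi$ and $\sin\chi$, with $\chi=2\sigma\varphi+c(t)+\delta(z)$, times axisymmetric coefficients. So every component is a combination of $e^{\pm2i\varphi}$ with coefficients depending on $(t,z)$, and this persists after converting to coordinate components. Since $\nabla^{\gac}$ commutes with rotations about the axis, and the Christoffel symbols do not depend on $\varphi$, the result still contains only the modes $e^{\pm2i\varphi}$, hence has zero $\varphi$-average. This proves the refined formula and the averaged statement. I expect the main obstacle to be the sign and normalisation bookkeeping in the skew divergence and in the connection term, rather than any conceptual issue.
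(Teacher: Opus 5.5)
Your proposal is correct and follows the paper's proof. It uses the same divergence identity, the same three-way split, the same parallel-$K_0$ argument for the skew part, and the same rotation-invariance argument for the symmetric part. The sign you left open is $+$: since $K_0e_2=e_1$, one has $K_0\partial_\varphi=\sin\vartheta\,\partial_\vartheta$.

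The only variation is in the connection part of the trace term. You conjugate $K_0$ by $G$ explicitly. The paper instead notes $[G,K_0]=2\sinh\rho\,S'(\chi)=2\partial_\chi G$, so that $\nabla_{\partial_\varphi}G=(\partial_\varphi+2z\partial_\chi)G$, and a single application of \eqref{eq:hypcurrent} with $X\chi=2\sigma+2z$ gives $4(\sigma+z)\chi_t\sinh^2\!\rho$ directly.
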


\begin{proof}
By Lemma~\ref{lem:divid}, with $\nabla=\nabla^{\gac}$,
\[
 \big[\Div_{\gdc}(\tfrac12uD\gdc)\big]_\varphi=\tfrac12\nabla_i\big(u\mathcal A^i{}_\varphi\big)-\tfrac14u\tr\big(G^{-1}\nabla_\varphi G\,\mathcal A\big),
 \qquad \mathcal A=G^{-1}\partial_tG .
\]
Split $u\mathcal A=u\,\sym\mathcal A-f\,K_0$, $f:=u\chi_t\sinh^2\rho$; note $f=f(t,z)$.

\emph{The skew part of the first term.} As a $(1,1)$-tensor, $K_0$ has frame matrix $\begin{pmatrix}0&1\\-1&0\end{pmatrix}$, i.e.\
$K_0e_1=-e_2$, $K_0e_2=e_1$: it is the rotation by $-90^\circ$, which is parallel for $\gac$. Hence
$\nabla_i(-fK_0)^i{}_\varphi=-(\partial_if)(K_0)^i{}_\varphi$. Since $K_0\partial_\varphi=\sin\vartheta\,K_0e_2=\sin\vartheta\,\partial_\vartheta$, we have
$(K_0)^\vartheta{}_\varphi=\sin\vartheta$ and $(K_0)^\varphi{}_\varphi=0$. So this term is $-\frac12\sin\vartheta\,\partial_\vartheta f=\frac12(1-z^2)\partial_zf$, using
$\partial_\vartheta=-\sin\vartheta\,\partial_z$.

\emph{The trace term.} By Lemma~\ref{lem:roundconn}, $\nabla_\varphi G=\partial_\varphi G+z[G,K_0]$. Since $G=\cosh\rho\,I+\sinh\rho\,S(\chi)$ and
$[S,K_0]=2S'$, we have $[G,K_0]=2\sinh\rho\,S'(\chi)=2\partial_\chi G$. So
\[
 \nabla_\varphi G=(\partial_\varphi+2z\,\partial_\chi)G ,
\]
i.e.\ the frame connection acts as an additional rotation of the phase. By \eqref{eq:hypcurrent} with $X=\partial_\varphi+2z\partial_\chi$
(which acts on the parameters by $X\rho=0$, $X\chi=2\sigma+2z$) and $Y=\partial_t$,
\[
 \tr\big(G^{-1}\nabla_\varphi G\,\mathcal A\big)=2\sinh^2\!\rho\,(2\sigma+2z)\chi_t=4(\sigma+z)\chi_t\sinh^2\!\rho ,
\]
independently of $\varphi$. With the prefactor $-\frac14u$ this is $-(\sigma+z)f$.

\emph{The symmetric part of the first term.} $\sym\mathcal A=\rho_tS(\chi)+\chi_t\sinh\rho\cosh\rho\,S'(\chi)$ has a frame matrix whose
entries are $\cos\chi,\sin\chi$ times functions of $(t,z)$, with $\chi=2\sigma\varphi+\ldots$, $\sigma\ne0$. Rotations about the axis are
isometries of $\gac$ that preserve the frame and $\partial_\varphi$, so the $\varphi$-average of
$\frac12[\nabla_i(u\sym\mathcal A)^i{}_\varphi]$ is the same expression for the $\varphi$-averaged tensor field, whose frame matrix is
$0$.

Adding up gives $\frac12(1-z^2)f'-(\sigma+z)f=L_\sigma f$, plus a term of zero average.
\end{proof}

Thus the term $-zf$ in $L_\sigma f$ comes from the connection of the round frame (a spin-two tensor rotates by twice the
frame angle), and $-\sigma f$ from the explicit $\varphi$-dependence of the phase.

\paragraph{The transfer of one pair.} Integrate over the two pulses of helicity $\sigma$. The symmetric parts give
$\frac12\Div_{\gac}$ of the total weighted memory, which vanishes by Lemma~\ref{lem:closedmem}(c) (the two pulses have opposite
memories). The trace terms of the $\vartheta$-component are axisymmetric multiples of $dz$, hence exact. The axial component
changes by
\begin{equation}\label{eq:pairtransfer}
 \Delta Z(\partial_\varphi)=L_\sigma W,\qquad W:=\int u\chi_t\sinh^2\!\rho\,dt .
\end{equation}
For a closed pulse, $\rho=a_\sigma(z)$ and $\chi_t=-2\pi\sigma C'(t)$ on the plateau, and $\int C'=1$, so
$W=-2\pi\sigma\sinh^2a_\sigma\int uC'\,dt$. Since $u$ is affine and $C'$ is even about the centre $t_j$, $\int uC'=u(t_j,z)$. For the
two pulses of helicity $\sigma$:
\begin{equation}\label{eq:Wpair}
 W=-2\pi\sigma\sinh^2\!a_\sigma(z)\sum_j\phi^2(t_j,z).
\end{equation}

\subsection{Pumping in the angular momentum}\label{sec:pumping}

\paragraph{The target profile.} At the terminal sphere $\phi^2=2M^2$ and the torsion should be $\eta_H$, so the target axial
profile is
\[
 Z_H(\partial_\varphi)=2M^2\eta_H(\partial_\psi)=\frac{4M^2(1-z^2)}{(1+z^2)^2}.
\]
The profile at the start of the window is the Kerr profile at $S_1$ plus the
connector's contribution (Lemma~\ref{lem:initialpart}(iv)); we denote it by $Z_{\rm in}(\partial_\varphi)(z)$. It is a smooth function
of $z$ bounded independently of $M$. The area raising, the collars, the tail and all nonrotating pulses do not change the
axial profile (Lemma~\ref{lem:initialpart}, Section~\ref{sec:tail}, Section~\ref{sec:cancel}). The four rotating pulses must
therefore produce
\[
 H(z):=Z_H(\partial_\varphi)-Z_{\rm in}(\partial_\varphi)=(1-z^2)k(z).
\]
$H$ is divisible by $1-z^2$: the $\partial_\varphi$-component of a smooth axisymmetric one-form vanishes to second order at the
poles (at a pole the form is a rotation-invariant covector, hence zero, and $\partial_\varphi=x\partial_y-y\partial_x$ vanishes linearly),
i.e.\ it vanishes at $z=\pm1$ to first order in $z$. Moreover $k$ is smooth and, for $M$ large, positive:
$k=\frac{4M^2}{(1+z^2)^2}-\frac{Z_{\rm in}(\partial_\varphi)}{1-z^2}$, and the second term is bounded independently of $M$.

By \eqref{eq:pairtransfer} and \eqref{eq:Wpair}, the north pair ($\sigma=-1$) and the south pair ($\sigma=+1$) produce
$L_{-1}F_N-L_{+1}F_S$, where
\[
 F_N:=2\pi\sinh^2\!a_-\sum_{j\in N}\phi^2(t_j,z)\ge0,\qquad F_S:=2\pi\sinh^2\!a_+\sum_{j\in S}\phi^2(t_j,z)\ge0 .
\]
So we must solve $L_{-1}F_N-L_{+1}F_S=(1-z^2)k$ with nonnegative $F_N,F_S$ of the right pole behaviour.

\begin{lemma}[Positive inverse]\label{lem:Finv}
Let $k>0$ be smooth on $[-1,1]$, and
\begin{equation}\label{eq:Finverse}
 F(z)=\frac1{4(1+z^2)^{3/2}}\int_0^1t^{1/2}\sqrt{1+z^2t^2}\,k(zt)\,dt,\qquad
 F_N=(1+z)^4F,\quad F_S=(1-z)^4F .
\end{equation}
Then $F$ is smooth and positive on $[-1,1]$, $F(0)=k(0)/6$, and
\[
 4z(1+z^2)F'+6(1+3z^2)F=k,\qquad L_{-1}F_N-L_{+1}F_S=(1-z^2)k .
\]
\end{lemma}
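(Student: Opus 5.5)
The plan is to split the lemma into an algebraic reduction and a one-dimensional ODE verification. First I compute $L_{-1}F_N-L_{+1}F_S$ for $F_N=(1+z)^4F$, $F_S=(1-z)^4F$ directly from \eqref{eq:Lsigma}, which reduces the second identity to the first. Then I check that the explicit integral \eqref{eq:Finverse} solves the first-order equation $4z(1+z^2)F'+6(1+3z^2)F=k$ and is smooth and positive.

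\emph{Step 1: reduction.} With $L_{-1}f=\frac12(1-z^2)f'+(1-z)f$ and $F_N'=4(1+z)^3F+(1+z)^4F'$, collecting terms gives
\[
 L_{-1}F_N=(1-z)(1+z)^4\big(3F+\tfrac12(1+z)F'\big).
\]
Symmetrically, with $L_{+1}f=\frac12(1-z^2)f'-(1+z)f$,
\[
 L_{+1}F_S=(1+z)(1-z)^4\big(-3F+\tfrac12(1-z)F'\big).
\]
Subtracting and factoring out $1-z^2$ leaves
\[
 3F\big((1+z)^3+(1-z)^3\big)+\tfrac12F'\big((1+z)^4-(1-z)^4\big).
\]
Since $(1+z)^3+(1-z)^3=2+6z^2$ and $(1+z)^4-(1-z)^4=8z(1+z^2)$, this equals $6(1+3z^2)F+4z(1+z^2)F'$. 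Hence the second identity follows from the first.

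\emph{Step 2: the ODE.} Put $h(s):=\sqrt{1+s^2}\,k(s)$, which is smooth on $[-1,1]$, and $I(z):=\int_0^1t^{1/2}h(zt)\,dt$, so that $F=I/\big(4(1+z^2)^{3/2}\big)$. The integrand is smooth in $z$ with all $z$-derivatives bounded by $Ct^{1/2}$. By differentiation under the integral sign, $I$ and hence $F$ are smooth on $[-1,1]$, including at $z=0$ and for $z<0$.

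The key identity is $z\,\partial_z[h(zt)]=t\,\partial_t[h(zt)]$. Integrating by parts in $t$,
\[
 zI'(z)=\int_0^1t^{3/2}\partial_t[h(zt)]\,dt=h(z)-\tfrac32I(z),
\]
with no boundary term at $t=0$ because of the factor $t^{3/2}$. This avoids any branch of $z^{3/2}$ and works uniformly for all signs of $z$.

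Now compute from $F=\frac14(1+z^2)^{-3/2}I$:
\[
 4zF'=(1+z^2)^{-3/2}\Big(zI'-\tfrac{3z^2}{1+z^2}I\Big).
\]
Insert $zI'=h-\frac32I$ and multiply by $1+z^2$. The $I$-terms give
\[
 (1+z^2)^{-3/2}I\Big(-\tfrac32(1+z^2)-3z^2\Big)=-6(1+3z^2)F.
\]
The $h$-term gives $(1+z^2)^{-1/2}h=k$. So $4z(1+z^2)F'+6(1+3z^2)F=k$.

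(As a heuristic check, the integrating factor $z^{3/2}(1+z^2)^{3/2}$ of the homogeneous equation, followed by the substitution $s=zt$, is how one arrives at \eqref{eq:Finverse}. The $t$-form verification above is the rigorous route.)

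\emph{Step 3: positivity and the value at $0$.} Since $k>0$ on $[-1,1]$ and $zt\in[-1,1]$, the integrand is positive, so $F>0$. At $z=0$ the formula gives $F(0)=\frac14k(0)\int_0^1t^{1/2}\,dt=k(0)/6$.

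The only delicate point is the behaviour at $z=0$, where the ODE is singular: the homogeneous solutions $\propto z^{-3/2}(1+z^2)^{-3/2}$ blow up there. The integral form selects the unique bounded solution, and the identity $z\partial_z=t\partial_t$ handles smoothness and the ODE in one stroke. I expect this to be the main step to get right, with the rest being routine algebra.
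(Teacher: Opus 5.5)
Your proposal is correct and follows the paper's proof almost verbatim: the same auxiliary integral $I(z)=\int_0^1t^{1/2}h(zt)\,dt$ with $h=\sqrt{1+s^2}\,k$, the same identity $z\,\partial_z[h(zt)]=t\,\partial_t[h(zt)]$ integrated by parts to give $zI'=h-\tfrac32I$, and the same direct expansion of $L_{-1}F_N$ and $L_{+1}F_S$ to reduce the second identity to the first. The only difference is that you do the reduction first, which is purely a matter of ordering.
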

\begin{proof}
\emph{Smoothness and positivity.} The integrand is smooth in $(t,z)\in[0,1]\times[-1,1]$ apart from the factor $t^{1/2}$,
which does not depend on $z$, so we may differentiate under the integral sign to any order. Positivity is clear, and
$F(0)=\frac14k(0)\int_0^1t^{1/2}dt=k(0)/6$.

\emph{The first identity.} Write $F=I/(4w^{3/2})$ with $w=1+z^2$ and $I(z)=\int_0^1t^{1/2}g(zt)\,dt$, $g(x):=\sqrt{1+x^2}\,k(x)$.
Then $F'=\frac{I'}{4w^{3/2}}-\frac{3zI}{4w^{5/2}}$, and
\[
 4z(1+z^2)F'+6(1+3z^2)F=\frac{zI'}{w^{1/2}}-\frac{3z^2I}{w^{3/2}}+\frac{3(1+3z^2)I}{2w^{3/2}}
 =w^{-1/2}\Big(zI'+\tfrac32I\Big),
\]
because $-3z^2+\frac32(1+3z^2)=\frac32(1+z^2)$. Now $z\,\partial_z[g(zt)]=t\,\partial_t[g(zt)]$, so integrating by parts in $t$,
\[
 zI'=\int_0^1t^{3/2}\partial_t[g(zt)]\,dt=g(z)-\tfrac32I ,
\]
the boundary term at $t=0$ vanishing because of $t^{3/2}$. Hence $zI'+\frac32I=\sqrt{1+z^2}\,k(z)$, which is the first identity, for
either sign of $z$.

\emph{The second identity.} With $L_\sigma W=\frac12(1-z^2)W'-(\sigma+z)W$,
\begin{align*}
 L_{-1}F_N&=(1+z)^4\big[3(1-z)F+\tfrac12(1-z^2)F'\big],\\
 L_{+1}F_S&=(1-z)^4\big[-3(1+z)F+\tfrac12(1-z^2)F'\big].
\end{align*}
(For the first: $\frac12(1-z^2)\cdot4(1+z)^3F=2(1-z)(1+z)^4F$ and $-(z-1)(1+z)^4F=(1-z)(1+z)^4F$; the second is analogous.)
Subtracting, and using $(1+z)^4(1-z)+(1-z)^4(1+z)=(1-z^2)\big((1+z)^3+(1-z)^3\big)=(1-z^2)(2+6z^2)$ and
$(1+z)^4-(1-z)^4=8z(1+z^2)$,
\[
 L_{-1}F_N-L_{+1}F_S=(1-z^2)\big[6(1+3z^2)F+4z(1+z^2)F'\big]=(1-z^2)k .\qedhere
\]
\end{proof}

\paragraph{The amplitudes.} We choose
\[
 a_-=\operatorname{arsinh}\sqrt{\frac{F_N}{2\pi\sum_{j\in N}\phi^2(t_j,z)}},\qquad
 a_+=\operatorname{arsinh}\sqrt{\frac{F_S}{2\pi\sum_{j\in S}\phi^2(t_j,z)}} .
\]
Since $\sqrt{F_N}=(1+z)^2\sqrt F$ with $\sqrt F$ smooth and positive, and $\operatorname{arsinh}y=y\,h(y^2)$ with $h$ smooth, $a_-$ is
$(1+z)^2$ times a smooth function, which is what Lemma~\ref{lem:poles} requires for helicity $-1$ at the south pole (there is no
condition at the north pole). Likewise $a_+=(1-z)^2\cdot(\text{smooth})$. So the rotating pulses are smooth and
$\mathcal R_\pi$-invariant (only the modes $e^{\pm2i\varphi}$ occur), and they match the whole axial profile $Z(\partial_\varphi)$, not
just the total angular momentum.

\subsection{Cancelling the remaining non-axisymmetric memory}\label{sec:cancel}

The polar pulses of the tail leave the tracefree memories $C_\sigma=m_\sigma(z)S(2\sigma\varphi)$ of single helicity
(Section~\ref{sec:tail}). We reserve two disjoint nonrotating pulses $\gdc=\gac\exp(b(t)T_\sigma)$ in the affine middle of the
window, with a fixed bump $b\ge0$ and $T_\sigma$ independent of $t$. Then $\gdc^{-1}\partial_t\gdc=b'T_\sigma$ exactly (commuting
matrices), and
\[
 \int\phi^2\,b'T_\sigma\,dt=-\varpi\Big(\int b\Big)T_\sigma
\]
by integration by parts, since $\partial_t\phi^2=\varpi$. Choosing $T_\sigma=C_\sigma/(\varpi\int b)$ makes the weighted memory equal to $-C_\sigma$,
which cancels the tail's memory in the first term of \eqref{eq:divid}. These pulses have no other effect:
\begin{itemize}
\item their energy $e=\frac18b'^2\tr(T_\sigma^2)=\frac14b'^2m_\sigma^2/(\varpi\int b)^2$ is axisymmetric, so $\phi$ and the lapse stay
axisymmetric;
\item their trace term in \eqref{eq:divid} is an axisymmetric gradient, by \eqref{eq:fixedpol};
\item their axial torque vanishes: the first term has zero $\varphi$-average (single helicity), and the trace term has no
$\partial_\varphi$-component.
\end{itemize}
As to pole regularity, $T_\sigma=m_\sigma S(2\sigma\varphi)/(\varpi\int b)$ with $m_\sigma=a_\sigma\cdot(\text{smooth})$ and
$a_\sigma=\delta(1\mp z)^2$; so $T_\sigma$ has the pole behaviour of the tail pulse it cancels, which is smooth by
Lemma~\ref{lem:poles}.

\subsection{The terminal sphere}\label{sec:terminal}

At $v_*$ we have $\gd=M^2\gd_H$ and $\Omega\chi=0$ on a neighbourhood (the seed is constant and $D\phi=0$). By
\eqref{eq:chiLtransport} and \eqref{eq:RicXL} (as in the proof of Proposition~\ref{prop:kerrdata}),
$\alpha=\beta=0$ there.

\paragraph{The torsion is correct up to an exact form.} At $v_*$, $Z=\phi^2\eta$ is axisymmetric: every non-axisymmetric
contribution (the memories of the rotating pulses and of the tail) has been cancelled, and everything else was
axisymmetric. Its axial component is $Z_H(\partial_\varphi)$. An axisymmetric one-form with zero $\partial_\varphi$-component is
$f(z)dz$, which is exact. Since $\phi^2=2M^2$ and $\eta=-\etab+2d\log\Omega$ with $\Omega=\Omega_{\rm tail}(z)$,
\[
 -\etab=\eta_H-db\qquad\text{for an axisymmetric function }b(z).
\]
By the next proposition, replacing $L$ by $aL$ with $a=e^{-b}$ at the terminal sphere changes $-\etab$ by $-d\log a=db$,
which gives exactly $\eta_H$.

\begin{proposition}[Angle-dependent reparametrisation of the generators]\label{prop:reparam}
Let $\Hcal$ be swept out by cuts $S_v$, $v\in[v_0,v_*]$, and let $a(x)>0$ be smooth on $S^2$. There is a smooth reparametrisation
$v=F(V,x)$ of the generators which is the identity near $v_0$, fixes both end cuts, and equals $v=v_*+a(x)(V-v_*)$ near $v_*$. At
the terminal cut its effect is: $\gd,\phi,\gdc$ are unchanged, the new generator field is $\tilde L=aL$, the new incoming normal is
$a^{-1}\Lb'$, and
\begin{gather*}
 \widetilde{D\phi}=a\,D\phi,\qquad \tilde\omega=a\,\omega,\qquad \widetilde{D\omega}=a^2D\omega,\\
 \tilde\etab=\etab+d\log a,\qquad
 \widetilde{\Omega^{-1}\chib}=a^{-1}\Omega^{-1}\chib,\qquad \widetilde{\Omega^{-2}\ab}=a^{-2}\Omega^{-2}\ab .
\end{gather*}
If the incoming optical coordinate $u$ is held fixed, then $\tilde\Omega^2=a\Omega^2$ and $\tilde\eta=\eta$ at the terminal cut.
\end{proposition}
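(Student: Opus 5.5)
The plan has two parts: first construct the reparametrisation explicitly, then read off the terminal transformation laws from the invariance of the frame $(L,\Lb')$ under $L\mapsto aL$, $\Lb'\mapsto a^{-1}\Lb'$.

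\emph{Construction.} Pick a cutoff $\chi(V)$ equal to $0$ near $v_0$ and to $1$ near $v_*$. Set
\[
 F(V,x)=v_*+\big(1+\chi(V)(a(x)-1)\big)(V-v_*).
\]
Then $F$ is the identity near $v_0$ and equals $v_*+a(V-v_*)$ near $v_*$. It fixes $V=v_*$, and it fixes $V=v_0$ because $\chi(v_0)=0$. The map is a diffeomorphism of each generator provided $\partial_VF>0$. We have
\[
 \partial_VF=1+\chi(a-1)+\chi'(a-1)(V-v_*).
\]
Positivity can fail if $\chi'$ is large where $a<1$. To avoid this, I would instead interpolate the derivative: set $\partial_VF=(1-\chi)\cdot1+\chi\cdot\tilde a(V,x)$ and integrate backwards from $v_*$. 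This is the convex-combination trick of Lemma~\ref{lem:slope}, and it keeps $\partial_VF>0$. Fixing the initial cut then imposes one normalisation condition per generator, which an intermediate bump absorbs, exactly as the slope $\varpi$ was chosen there.

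\emph{Terminal transformation laws.} The new cuts $\tilde S_V=\{v=F(V,x)\}$ coincide with the old $S_V$ near $v_*$, since $F(v_*,x)=v_*$. Hence $\gd$, $\phi$ and $\gdc$ at the terminal cut are unchanged. The new generator field is $\tilde L=\partial_V=a\,\partial_v=aL$. Its normalised partner is forced by $\bg(\tilde L,\tilde\Lb')=-2$, with the same null normal line to the same cut, so $\tilde\Lb'=a^{-1}\Lb'$. Substituting into the definitions \eqref{eq:chiprime} gives the remaining laws.

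\begin{itemize}
\item $\tilde\chi_L=a\chi_L$, so $\widetilde{D\phi}=aD\phi$ follows from Proposition~\ref{prop:opt}. The factor $a$ is constant along generators, so $\tilde D=aD$ near $v_*$.
\item $\nabla_{aL}(aL)=a^2\kappa L=(a\kappa)(aL)$, so $\tilde\kappa=a\kappa$. With Lemma~\ref{lem:dict} this gives $\tilde\omega=a\omega$, and then $\widetilde{D\omega}=a^2D\omega$.
\item $\tilde\tau(X)=\frac12\bg(\nabla_X(aL),a^{-1}\Lb')=\tau(X)+\frac12X(\log a)\bg(L,\Lb')=\tau(X)-X\log a$. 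Since $\tau=-\etab$, this is $\tilde\etab=\etab+d\log a$.
\item $\widetilde{\chib'}=a^{-1}\chib'$ and $\bR(X,\tilde\Lb',Y,\tilde\Lb')=a^{-2}\bR(X,\Lb',Y,\Lb')$, which are the last two laws by Lemma~\ref{lem:dict}.
\end{itemize}

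\emph{Lapse with $u$ held fixed.} Here I use \eqref{eq:frames}. The vector $L'=-2\nabla u$ is unchanged, and $\tilde L=\tilde\Omega^2L'$. Comparing with $\tilde L=aL=a\Omega^2L'$ gives $\tilde\Omega^2=a\Omega^2$. For the torsion, $\tilde\eta=-\tilde\etab+2\Nd\log\tilde\Omega=-\etab-d\log a+2\Nd\log\Omega+d\log a=\eta$.

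I expect the main obstacle to be keeping $\partial_VF>0$ while fixing both ends, that is, making the convex-slope interpolation rigorous on all of $S^2$. A secondary point needs checking: the hierarchy is stated in a gauge with $L=\partial_v$ along the generators. This holds here because the angular coordinates stay constant along generators, so $b=0$ persists and $\tilde D=\partial_V$ acts on components.
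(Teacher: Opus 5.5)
Your proposal follows the paper's proof: the paper also interpolates the slope by a convex combination and adds an interior bump to fix the total increment, and it derives the terminal laws by the same frame computations. It works with the inverse map, $\partial_vV=1+(a^{-1}-1)\varrho_e+(c-1)\varrho_i$ with $c=1-(a^{-1}-1)I_e/I_i$, and makes $c>0$ by taking the end collar short; your sketch leaves this positivity condition implicit.

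Two small corrections. First, only the terminal cut is an old cut: near $v_*$ the new cuts are the tilted graphs $v=v_*+a(x)(V-v_*)$, whose tangent fields $\partial_A+(V-v_*)\partial_Aa\,L$ agree with the old ones only at $V=v_*$, and that is all you use. Second, in your first attempt the danger for $\partial_VF>0$ is where $a>1$, not where $a<1$.
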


\begin{proof}
\emph{Construction.} Choose nonnegative bumps $\varrho_e,\varrho_i\le1$ of $v$ with disjoint supports, $\varrho_e=1$ near $v_*$ and $\varrho_i$
supported in the interior, with integrals $I_e,I_i>0$. Define the inverse map $V=V(v,x)$ by
\[
 \partial_vV=1+(a^{-1}-1)\varrho_e(v)+(c-1)\varrho_i(v),\qquad V(v_0,x)=v_0,\qquad c(x):=1-(a^{-1}(x)-1)\frac{I_e}{I_i}.
\]
If the collar where $\varrho_e\ne0$ is short enough ($I_e/I_i$ small), then $c>0$, and on each support $\partial_vV$ is a convex combination of
positive numbers, so $V$ is strictly increasing along each generator. Its total increment is
$(v_*-v_0)+(a^{-1}-1)I_e+(c-1)I_i=v_*-v_0$, so the end cut is fixed, and near $v_*$, $\partial_vV=a^{-1}$, i.e.\ $v=v_*+a(V-v_*)$.

\emph{Endpoint formulas.} The new generator field is $\tilde L=\partial_V|_x=(\partial_VF)\,\partial_v=a\,L$ near $v_*$, with $a$ constant along
generators. The tangent fields of the new cuts are $\partial_A|_V=\partial_A+(\partial_AF)L$, and $\partial_AF=(V-v_*)\partial_Aa$ vanishes at the terminal
cut, so the terminal cut and its tangent spaces are unchanged; hence $\gd,\phi,\gdc$ are unchanged, and the normal normalised by
$\bg(\tilde L,\tilde\Lb')=-2$ is $\tilde\Lb'=a^{-1}\Lb'$. Then:
\begin{itemize}
\item $\widetilde{D\phi}=\tilde L\phi=aD\phi$;
\item $\nabla_{\tilde L}\tilde L=a\nabla_L(aL)=a\big(L(a)+2a\omega\big)L$, and this equals $2\tilde\omega\tilde L=2\tilde\omega aL$; with $L(a)=0$,
$\tilde\omega=a\omega$, and $\widetilde{D\omega}=\tilde L(a\omega)=a^2D\omega$;
\item $\tilde\tau(X)=\frac12\bg(\nabla_X(aL),a^{-1}\Lb')=\frac12X(\log a)\bg(L,\Lb')+\tau(X)=-X(\log a)+\tau(X)$, so with $\tau=-\etab$,
$\tilde\etab=\etab+d\log a$;
\item $\bg(\nabla_X(a^{-1}\Lb'),Y)=a^{-1}\bg(\nabla_X\Lb',Y)$ for $X,Y$ tangent, so $\widetilde{\Omega^{-1}\chib}=a^{-1}\Omega^{-1}\chib$; and
$\bR(X,a^{-1}\Lb',Y,a^{-1}\Lb')=a^{-2}\bR(X,\Lb',Y,\Lb')$;
\item if $u$ is fixed, $\Lb=\partial_u$ is unchanged and $\Lb'=\Omega^{-2}\Lb$, so $\tilde\Lb'=a^{-1}\Lb'$ means $\tilde\Omega^2=a\Omega^2$, and
$\tilde\eta=-\tilde\etab+2d\log\tilde\Omega=-\etab-d\log a+2d\log\Omega+d\log a=\eta$.\qedhere
\end{itemize}
\end{proof}

The reparametrisation $F$ is fixed once and for all, before any control parameter is varied, so its effect on the endpoint data is
multiplication by fixed smooth positive functions and addition of a fixed smooth one-form, a smooth diffeomorphism of the endpoint spaces
used later. (On intermediate cuts the transformation is not merely multiplicative, and it differs from the constant boost $\mathfrak b_\lambda$.)

\begin{remark}[The non-expanding continuation]\label{rem:isolated}
Continuing beyond $v_*$ with constant seed, $\phi\equiv\sqrt2M$ and $\omega=0$, gives a non-expanding segment. By
\eqref{eq:H4} and \eqref{eq:NH}, $D(\Omega^{-1}\chib)=\Ecal(\gd_H,\etab_H)=0$ on it, so the induced connection is invariant:
it is an extremal isolated horizon in the sense of Ashtekar--Beetle--Lewandowski \cite{ABL}. However, $\Omega^{-1}\chib$ on
it is not Kerr's; matching it is the task of Sections~\ref{ch:neck}--\ref{ch:sign}.
\end{remark}

This completes the proof of Proposition~\ref{prop:partial}.

\section{Making the endpoint controllable}\label{ch:controller}

The partial bridge reaches the intrinsic horizon data exactly. The backward integration from the final sphere in
Section~\ref{ch:neck} will, however, require slightly different data at the end of the early bridge, depending on the late
seed. We therefore show that every nearby terminal triple (area radius, expansion, torsion) is attained by a nearby seed, with a
smooth inverse. This is Step~2 of the strategy of Section~\ref{sec:strategy}.

The construction is local: it works around any smooth reference solution on an interval $[0,T_0]$ on which $\phi>0$ and
$D\phi>0$, and whose seed is round, $\gdc=\gac$, on a subinterval $U$. No smallness of the reference is assumed outside $U$.

\subsection{The statement}

We insert into $U$ two small but nonzero pulses on disjoint intervals $I_+$, $I_-$, followed by a round shear-free interval
$J\Subset U$:
\begin{equation}\label{eq:ctrlpulses}
 G=\exp\big(\varepsilon_0b_+(v)\,\mathbf B\big)\ \text{on }I_+,\qquad
 G=\exp\big(\varepsilon_0b_-(v)\,\mathbf B_-\big)\ \text{on }I_-,
\end{equation}
where $b_\pm\ge0$ are fixed bumps and, in the round orthonormal frame,
\[
 \mathbf B_-=(1-z)^2S(2\varphi),\qquad \mathbf B=(1+z)^2S(z-2\varphi).
\]
We call $\mathbf B$ the \emph{polar twist}. Writing $\mathsf E=S(0)=\mathrm{diag}(1,-1)$, $\mathsf F=S(\frac\pi2)$ and $\mathsf J$ for the
rotation of polarisation by $90^\circ$ ($\mathsf JS(\xi)=S(\xi+\frac\pi2)$, so $\mathsf J\mathsf E=\mathsf F$, $\mathsf J\mathsf F=-\mathsf E$), we have
$\mathbf B=\cos z\,\mathbf B_++\sin z\,\mathsf J\mathbf B_+$ with $\mathbf B_+=(1+z)^2S(-2\varphi)=(1+z)^2(\cos2\varphi\,\mathsf E-\sin2\varphi\,\mathsf F)$,
since $\cos z\,S(\xi)+\sin z\,S(\xi+\frac\pi2)=S(\xi+z)$. By Lemma~\ref{lem:poles}, $\mathbf B$ (helicity $-1$, with the smooth extra
phase $z$) and $\mathbf B_-$ (helicity $+1$) are smooth; $\mathbf B$ vanishes only at the south pole and $\mathbf B_-$ only at the north
pole. Both are $\mathcal R_\pi$-invariant.

\paragraph{Pairs of pulses.} In the application (Section~\ref{sec:insert}(i)) each of the two pulses is followed by a copy with the opposite
tensor sign and the same radial profile, so that their memories cancel and the reference bridge is not changed. Everything below
applies verbatim to this configuration. The energy $e$ does not depend on the sign, and the rank criterion of
Section~\ref{sec:lapsecontrol} only uses where $e$ is positive and where it vanishes. The phase controls of Section~\ref{sec:phasecontrol} act on the first pulse
of the twisted pair only, which we call $I_+$; its partner on the following interval is kept fixed. Thus $I_+$, $I_-$ denote the
intervals of the first pulse of each pair, and the lapse control interval $I$ contains both pairs and the gaps between them.

\begin{theorem}[Exact local optical--torsion control]\label{thm:ctrl}
Let $s\ge6$. There is a neighbourhood of the reference endpoint $(\phi_0,D\phi_0,Z_0)(T_0)$ in $H^s\times H^s\times H^{s-1}$ such that every
$\mathcal R_\pi$-invariant triple in it is attained exactly by a positive seed $\gdc$ and a positive lapse $\Omega$ with the same initial data,
modifications supported in $U$, $\phi,D\phi>0$, and $\mathcal R_\pi$-invariance. The correction depends smoothly on the target, with norm
bounded by the distance to the reference endpoint. Equivalently, one may prescribe $-\etab$ instead of $Z$.
\end{theorem}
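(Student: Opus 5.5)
The plan is to set up the endpoint map from control parameters to the terminal triple $(\phi,D\phi,Z)(T_0)$, show that its differential at the reference is surjective onto the $\mathcal R_\pi$-invariant subspace with a bounded right inverse, and conclude by the inverse function theorem for a map onto a Banach space. The control parameters come in two groups. The first is a lapse control: a perturbation $\delta\omega$ supported in the control interval $I$, with $\mathcal R_\pi$-invariant angular dependence in $H^s$. The second is a finite set of phase controls acting on the twisted pulse $I_+$, namely small changes of the phase $z-2\varphi$ and of the amplitude of $\mathbf B$. The scalar endpoint data $(\phi,D\phi)$ are governed pointwise in $x$ by the linear ODE \eqref{eq:Ray}, $D^2\phi=2\omega D\phi-e\phi$, with $e$ fixed by the seed. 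The torsion is then obtained by integrating \eqref{eq:Z} once, and on the round interval $J$ the divergence term vanishes. By the triangular structure (Remark~\ref{rem:triangular}), the optical part can be treated first and the torsion afterwards.

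\emph{Optical level.} Linearising \eqref{eq:Ray} in $\omega$ gives, at each point $x$, the map $\delta\omega\mapsto(\delta\phi,\delta D\phi)(T_0)=\int_I \Phi(T_0,v)\,(0,2D\phi\,\delta\omega)\,dv$, where $\Phi$ is the fundamental matrix. This map is surjective onto $\R^2$ provided the second column of $\Phi(T_0,v)$ takes two linearly independent values for $v\in I$. That column is constant only where $e=0$, so the rank criterion is that at every $x$ the interval $I$ contains a stretch with $e(\cdot,x)>0$, between stretches where $e=0$. The two pulses provide such a stretch everywhere, because $\mathbf B$ vanishes only at the south pole and $\mathbf B_-$ only at the north pole, so every point sees at least one pulse. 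I would write down an explicit right inverse: choose $\delta\omega(v,x)=\alpha(x)\beta_1(v)+\gamma(x)\beta_2(v)$ with fixed bumps $\beta_1,\beta_2$ on either side of a pulse, and solve a $2\times2$ system for $(\alpha,\gamma)$ pointwise in $x$. This system has smooth, nondegenerate coefficients, so the solution is bounded $H^s\to H^s$, and it is $\mathcal R_\pi$-invariant because the coefficients are.

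\emph{Torsion level.} Any remaining change of $Z(T_0)$ must be produced with $(\phi,D\phi)(T_0)$ held fixed. Decompose a one-form in $H^{s-1}$ into exact and coexact parts. The term $4\phi D\phi\,\Nd\log\Omega$ in \eqref{eq:Z} lets the lapse control generate, through $\Nd\log\Omega$ on $J$, arbitrary exact parts $df$. To do this without changing $\phi$ or $D\phi$ at $T_0$, I would use a lapse modification supported after the pulses, which is allowed since $\log\Omega$ itself appears in \eqref{eq:Z} and not only $\omega$. The coexact part is the main difficulty. By Proposition~\ref{prop:flux} and Corollary~\ref{cor:komar} it is not reachable by axisymmetric lapse changes: with axisymmetric $\phi$ and $\Omega$, the $\varphi$-component of $Z$ is moved only by shear. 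Here non-axisymmetric lapse perturbations break the symmetry of $\phi$ through \eqref{eq:Ray} and so feed the gradient terms $D\phi\,\Nd\phi-\phi\,\Nd D\phi$. Together with the memory term $\frac12\Div_{\gac}(\int\phi^2\rho_t S\,dt)$ from varying the phase and amplitude of the pulses (Lemma~\ref{lem:closedmem}, Lemma~\ref{lem:Lsigma}), this should produce general coexact one-forms. I expect the obstruction to reduce to finitely many modes, namely the $\mathcal R_\pi$-invariant kernel directions. These are the two obstructions mentioned in Section~\ref{sec:strategy}, coming from $e^{\pm2i\varphi}$ paired against the rotation fields. The phase controls of the polar twist, whose phase $z$ makes $\partial\mathbf B/\partial(\text{phase})$ linearly independent of $\mathbf B_\pm$, would be shown to span these two directions by an explicit computation of two pairings.

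\emph{Regularity and conclusion.} The loss is one derivative, from $\Div$ and $\Nd$ in \eqref{eq:Z}, which is exactly the $H^s\times H^s\times H^{s-1}$ scale. The endpoint map is smooth between these spaces: it is an ODE flow with $H^s$ coefficients, and $s\ge6$ makes $H^{s-1}$ an algebra. A smooth right inverse of the differential then gives, by the implicit function theorem, a smooth local right inverse of the map itself. Its norm is bounded by the distance to the reference endpoint, and positivity of $\Omega$, $\phi$ and $D\phi$ is preserved for small corrections. Prescribing $-\etab$ instead of $Z$ is equivalent: $-\etab=\phi^{-2}Z-2\Nd\log\Omega$, and $\Omega(T_0)$ is part of the controlled output, so this is a smooth diffeomorphism of target spaces. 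The step I expect to be hardest is the finite-dimensional rank computation for the coexact torsion obstructions.
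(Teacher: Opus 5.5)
There is a genuine gap at the torsion level. Your controls are lapse changes plus finitely many phase/amplitude parameters of the twisted pulse. You do not show that these produce the infinite-dimensional torsion component at the regularity the inverse function theorem needs, and the mechanism you describe cannot. On the window (as in the application) the reference $\phi$, $D\phi$, $\Omega$ are axisymmetric. So the part of the linearisation of \eqref{eq:Z} that you invoke is a $v$-integral of terms $c\,df+g\,dz$, with axisymmetric background coefficients $c=c(v,z)$ and scalars $f,g$ determined by $a$ pointwise in $x$. This part consists of the gradient terms and $4\phi D\phi\,\Nd\log\Omega$, fed by non-axisymmetric $\delta\phi$, $\delta D\phi$ and $a$. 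Two things fail.

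\begin{itemize}
\item $(c\,df+g\,dz)(\partial_\varphi)=c\,\partial_\varphi f$ has zero $\varphi$-average. So this mechanism does not move the $\varphi$-averaged axial profile of $Z(\partial_\varphi)$ at all, and that profile is a whole function of $z$, not two numbers.
\item For nonzero azimuthal modes the principal symbol of $f\mapsto c\,df$ is parallel to the frequency. The coexact part arises only from $\Curl(c\,df+g\,dz)=\pm\partial_\varphi(\partial_zc\,f-g)$. Since $\partial_\varphi^{-1}$ gains no derivatives, a coexact target in $H^{s-1}$ (curl in $H^{s-2}$) forces $f,g$, hence $a$, down to $H^{s-2}$. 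That is two derivatives below the class in which the endpoint map is smooth into $H^s\times H^s\times H^{s-1}$.
\end{itemize}

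Hence the differential has no bounded right inverse in your setting, and ``the obstruction reduces to finitely many modes'' has no operator behind it. The only first-order route from lapse changes to coexact torsion is the coupling $\Div_{\gdc}(\phi\,\delta\phi\,D\gdc)$ through the pulses. You do not analyse it, and its ellipticity, in particular near the poles, would be a separate problem.

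The missing idea is to use the round interval $J$ for \emph{shear} controls rather than lapse controls: $\gdc=\gac\exp(\gac^{-1}H)$ with $H$ tracefree and supported in $J$. Since $e$ is quadratic in $D\gdc$ and $D\gdc=0$ on $J$, we have $\delta e=0$, so these controls have zero first-order optical response. Then \eqref{eq:Z} gives $\delta Z(T_0)=-\Div_{\gac}\int_J\phi_0D\phi_0H\,dv$. With $H_\zeta=-b_J(\phi_0D\phi_0)^{-1}\mathcal D_2^*\zeta$ this equals $\mathcal D_2\mathcal D_2^*\zeta$, an isomorphism $H^{s+1}\cap\CK^\perp\to H^{s-1}\cap\CK^\perp$ (Lemma~\ref{lem:D2}). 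Its two-derivative gain offsets the one-derivative loss of the torsion level. The residual obstructions are then exactly $\ker\mathcal D_2^*=\CK$. In the $\mathcal R_\pi$-sector this is spanned by the axisymmetric fields $\Nd z$ and $\partial_\varphi$, i.e.\ the pairings $C_E,C_B$, not $e^{\pm2i\varphi}$ modes. Only these need the twisted-pulse phase controls and the charge Gramian (Lemma~\ref{lem:Gram}), and the differential becomes triangular, with the lapse acting on $(\phi,D\phi)$ alone.

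A smaller point: your optical rank criterion is misstated, since $\partial_v[\mathsf U(T_0,v)(0,1)^t]=-\mathsf U(T_0,v)(1,2\omega)^t$ never vanishes. With unconstrained $\delta\omega$ surjectivity is automatic, but $\log\Omega(T_0)$ then shifts by $\int_I\delta\omega$. This moves $\Omega_{\rm tail}$ and makes the passage $Z\leftrightarrow-\etab$ depend on the solution, because $\Omega(T_0)$ is not a prescribed output. The paper instead takes $a=\delta\log\Omega$ compactly supported in $I$. The correct criterion is then that $\nu=e\phi^2/(D\phi)^2$ is not constant on $I$ (Lemma~\ref{lem:rank}), and this is where the pulses and the gaps are needed.
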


The proof uses three families of controls, one for each component of the target (Table~\ref{tab:controls}), and the inverse
function theorem.
\begin{table}[ht]
\centering
\begin{tabular}{@{}lll@{}}
\toprule
target component & control & support\\\midrule
$(\phi,D\phi)$ (optics) & lapse, $\log\Omega\mapsto\log\Omega+a$ & interval $I$ containing both pairs\\
torsion, except two pairings & tensor, $\gdc\mapsto\gac\exp(\gac^{-1}H)$ & shear-free interval $J$\\
the two pairings $C_E,C_B$ & phase of the twisted pulse & $I_+$\\\bottomrule
\end{tabular}
\caption{The three families of controls.}\label{tab:controls}
\end{table}

\subsection{The optical components: lapse controls}\label{sec:lapsecontrol}

We vary the lapse, $\log\Omega\mapsto\log\Omega+a$, with $a$ supported in a control interval $I$ and the seed fixed. Then
$\omega\mapsto\omega+Da$, and $e$ does not change. Linearising \eqref{eq:Ray} at fixed angle, with $y=(\delta\phi,\delta D\phi)^t$, gives
\[
 D^2\delta\phi=2\,Da\,D\phi+2\omega D\delta\phi-e\,\delta\phi,
\]
i.e.
\[
 Dy=\mathsf Ay+\mathsf B\,Da,\qquad
 \mathsf A=\begin{pmatrix}0&1\\-e&2\omega\end{pmatrix},\qquad \mathsf B=\begin{pmatrix}0\\2D\phi\end{pmatrix}.
\]
To remove the derivative of the control we set $\tilde y=y-\mathsf Ba$. Since $a$ vanishes at the ends of $I$, $\tilde y$ and $y$ have the
same endpoint value, and
\[
 D\tilde y=\mathsf Ay+\mathsf BDa-(D\mathsf B)a-\mathsf BDa=\mathsf A(\tilde y+\mathsf Ba)-(D\mathsf B)a.
\]
As
$\mathsf A\mathsf B=(2D\phi,\,4\omega D\phi)^t$ and, by \eqref{eq:Ray}, $D\mathsf B=(0,\,2D^2\phi)^t=(0,\,4\omega D\phi-2e\phi)^t$, we obtain
\begin{equation}\label{eq:optlin}
 D\tilde y=\mathsf A\tilde y+\mathsf C a,\qquad \mathsf C=\mathsf A\mathsf B-D\mathsf B=\begin{pmatrix}2D\phi\\2e\phi\end{pmatrix}.
\end{equation}
Let $\mathsf U(T_0,v)$ be the fundamental matrix of $\mathsf A$ (from $v$ to $T_0$). Then
\[
 \tilde y(T_0)=\int_I\mathsf U(T_0,v)\mathsf C(v)a(v)\,dv,
\]
a linear map from controls to $\R^2$ at each point of $S^2$.

\begin{lemma}[Optical rank criterion]\label{lem:rank}
On a generator with $\phi,D\phi>0$, the endpoint map $a\mapsto(\delta\phi,\delta D\phi)(T_0)$ has rank two if and only if
$\nu:=e\phi^2/(D\phi)^2$ is not constant on $I$.
\end{lemma}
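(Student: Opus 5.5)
The approach is to transform the linear system \eqref{eq:optlin} into one in which the control enters through a single scalar kernel. Then rank two becomes a statement about linear independence of two functions on $I$.

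First I would change variables so that $\mathsf A$ becomes triangular. Raychaudhuri's equation \eqref{eq:Ray} says that $\phi$ solves the homogeneous equation $D^2y=2\omega Dy-ey$, so $(\phi,D\phi)^t$ is a solution of $Dy=\mathsf Ay$. A second solution comes from reduction of order, $\phi\int\Omega^2\phi^{-2}$; the Wronskian of $D^2y=2\omega Dy-ey$ is a multiple of $\Omega^2$. The map $\mathsf U(T_0,v)$ is invertible, so the range of $a\mapsto\int_I\mathsf U(T_0,v)\mathsf C a\,dv$ has dimension two if and only if the vector function $\mathsf U(T_0,v)\mathsf C(v)$ is not confined to a fixed line as $v$ ranges over $I$. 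Pulling back by the fundamental matrix, this is equivalent to asking that the components of $\mathsf C$ along the two solutions are not proportional on $I$.

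To compute these components, write $\mathsf C=c_1Y_1+c_2Y_2$, with $Y_1=(\phi,D\phi)^t$ and $Y_2$ the second solution. Cramer's rule with the Wronskian gives $c_1=\det(\mathsf C,Y_2)/W$ and $c_2=\det(Y_1,\mathsf C)/W$. The second is $c_2=(2e\phi^2-2(D\phi)^2)/W$. Then $\mathsf U(T_0,v)\mathsf C(v)=c_1(v)Y_1(T_0)+c_2(v)Y_2(T_0)$. Since $Y_1(T_0),Y_2(T_0)$ are independent, rank two holds if and only if $c_1$ and $c_2$ are linearly independent functions on $I$.

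Next I would test proportionality through a ratio. $c_2$ is not identically zero unless $\nu\equiv1$, and in that case $\nu$ is constant anyway. A more convenient test uses $\mathsf C$ directly against $Y_1$: the scalar $\det(Y_1,\mathsf C)=2(e\phi^2-(D\phi)^2)=2(D\phi)^2(\nu-1)$ is one coordinate. The other coordinate is obtained by pairing $\mathsf C$ with $Y_2=\phi h\,(\ldots)$, where $h=\int\Omega^2\phi^{-2}$. Writing $Y_2=hY_1+\phi^{-1}\Omega^2(0,1)^t$, I get $\det(\mathsf C,Y_2)=h\det(\mathsf C,Y_1)+2\Omega^2D\phi/\phi$. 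Hence
\[
 c_1=-h\,c_2+\frac{2\Omega^2D\phi}{\phi W},
\]
and proportionality $c_1=\lambda c_2$ means $(\lambda+h)\,c_2=2\Omega^2D\phi/(\phi W)$.

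This is where I expect the main obstacle: normalising so that the relation becomes exactly ``$\nu$ constant'', keeping track of the factors $W\propto\Omega^2$, with $h$ strictly increasing because $Dh=\Omega^2\phi^{-2}>0$. Substituting $c_2\propto(D\phi)^2(\nu-1)/\Omega^2$ turns the condition into $(\lambda+h)(\nu-1)=\kappa\,\phi\,\Omega^2/D\phi$ for constants $\lambda,\kappa$. I would then differentiate this relation. I would use \eqref{eq:lapse} in the form $D(\Omega^{-2}(D\phi)^2)=-\Omega^{-2}e\phi D\phi\cdot 2$, which gives $D(\Omega^{-2}\phi^{-1}D\phi\cdot\ldots)$, to reduce it to $D\nu=0$.

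I would check both directions this way. If $\nu$ is constant, the two functions $c_1,c_2$ satisfy an explicit linear relation, and the rank is at most one. Conversely, differentiating a linear relation forces $D\nu\equiv0$ on $I$. One can check the bookkeeping on the example $\omega=0$, $e$ constant. If the algebra resists, my fallback is to reduce to the scalar equation $D^2\delta\phi-2\omega D\delta\phi+e\delta\phi=2DaD\phi$ and test its adjoint solutions directly.
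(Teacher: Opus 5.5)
Your reduction is sound and takes a genuinely different route from the paper. Expanding $\mathsf C=c_1Y_1+c_2Y_2$ in a basis of solutions of $Dy=\mathsf Ay$ turns rank two into linear independence of $c_1,c_2$ on $I$. Your formulas $Y_2=hY_1+(0,\Omega^2/\phi)^t$, $c_2=2(D\phi)^2(\nu-1)/W$ and $c_1=-hc_2+2\Omega^2D\phi/(\phi W)$ are right.

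The gap is exactly at the step you flagged: you assert it rather than prove it, and the relation you wrote down for it is wrong. Normalising $W=\Omega^2$, the condition $c_1=\lambda c_2$ reads
\[
 R_\lambda:=(\lambda+h)(\nu-1)-\frac{\Omega^2}{\phi\,D\phi}\equiv0\qquad\text{on }I,
\]
not $(\lambda+h)(\nu-1)=\kappa\,\phi\,\Omega^2/D\phi$. With your right-hand side the differentiation does not reduce to $D\nu=0$, since $D(\phi\,\Omega^2/D\phi)=\Omega^2(1+\nu)$. The identity you intend to use is also off. Equation \eqref{eq:Ray} gives $D(\Omega^{-2}D\phi)=-\Omega^{-2}e\phi$, whereas $D(\Omega^{-2}(D\phi)^2)=2\Omega^{-2}D\phi\,(\omega D\phi-e\phi)$.

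The missing computation is short. From $D\Omega^2=2\omega\Omega^2$ and $D(\phi D\phi)=(D\phi)^2+2\omega\phi D\phi-e\phi^2$,
\[
 D\Big(\frac{\Omega^2}{\phi\,D\phi}\Big)=\frac{\Omega^2}{\phi^2}(\nu-1)=Dh\,(\nu-1),\qquad\text{hence}\qquad DR_\lambda=(\lambda+h)\,D\nu .
\]
This gives both directions.
\begin{itemize}
\item If $\nu\equiv\nu_0$ on $I$ and $\nu_0=1$, then $c_2\equiv0$. If $\nu_0\neq1$, then $R_\lambda$ is constant on $I$ and affine in $\lambda$ with slope $\nu_0-1\neq0$. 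So some $\lambda$ makes $R_\lambda\equiv0$, i.e.\ $c_1=\lambda c_2$.
\item Conversely, a linear relation means either $c_2\equiv0$, so $\nu\equiv1$, or $R_\lambda\equiv0$ for some $\lambda$. In the second case $(\lambda+h)D\nu\equiv0$. Since $h$ is strictly increasing, $D\nu$ vanishes off at most one point, hence everywhere.
\end{itemize}

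The paper's proof is essentially your fallback. It takes an adjoint solution $p$ with $\mathsf C^tp=0$ on $I$ and notes $D(D\phi\,p_2)=0$, so $p$ is fixed up to a constant $c$. The remaining adjoint equation is then $c\,D\nu=0$. The two arguments are equivalent, because the covectors $x\mapsto\det(Y,x)/W$, for solutions $Y$, are exactly the adjoint solutions. The adjoint route simply avoids the second solution, the Wronskian and the choice of $\lambda$, and yields both directions at once.
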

\begin{proof}
The map fails to be onto if and only if there is a covector $p_T\ne0$ with $p_T\cdot\mathsf U(T_0,v)\mathsf C(v)=0$ for all $v\in I$. Put
$p(v):=\mathsf U(T_0,v)^tp_T$; it solves the adjoint equation $Dp=-\mathsf A^tp$, i.e.\
\[
 Dp_1=ep_2,\qquad Dp_2=-p_1-2\omega p_2,
\]
and the condition is $\mathsf C^tp=0$ on $I$, i.e.\ $D\phi\,p_1+e\phi\,p_2=0$. Under this condition,
\[
 D(D\phi\,p_2)=(2\omega D\phi-e\phi)p_2+D\phi(-p_1-2\omega p_2)=-(e\phi p_2+D\phi p_1)=0,
\]
so $p_2=c/D\phi$ and $p_1=-ce\phi/(D\phi)^2$ for a constant $c$. The remaining adjoint equation $Dp_1=ep_2$ becomes
$c\,[D(e\phi/(D\phi)^2)+e/D\phi]=0$. By the product rule,
\[
 D\nu=D\Big(\phi\cdot\frac{e\phi}{(D\phi)^2}\Big)=D\phi\,\frac{e\phi}{(D\phi)^2}+\phi\,D\Big(\frac{e\phi}{(D\phi)^2}\Big)
 =\phi\,\Big[\frac e{D\phi}+D\Big(\frac{e\phi}{(D\phi)^2}\Big)\Big],
\]
and $\phi>0$, so the condition is $c\,D\nu=0$. If $\nu$ is not constant on $I$, then $c=0$ and $p\equiv0$, so the map is onto. If
$\nu$ is constant, any $c\ne0$ gives a nonzero covector annihilating the image.
\end{proof}

On a pulse $e>0$, and in the gaps $e=0$. We take $I$ to contain both pairs of pulses together with the gaps between them. Every generator,
including both poles (where $\mathbf B$, resp.\ $\mathbf B_-$, is nonzero), meets a pulse where $e>0$ and a gap where $e=0$, so $\nu$ is
not constant. Let $K_o(v,x):=\mathsf U(T_0,v,x)\mathsf C(v,x)$ and let $\chi_I$ be a bump function positive in $I$. The \emph{Gramian}
\[
 \mathsf G_o(x)=\int_I\chi_I^2K_oK_o^t\,dv
\]
is positive definite at every point by Lemma~\ref{lem:rank} (if $p^t\mathsf G_op=\int\chi_I^2|K_o^tp|^2=0$ then $K_o^tp=0$ on
$I$), hence uniformly positive by compactness of $S^2$. An explicit right inverse is
\begin{equation}\label{eq:lapseinv}
 a_r(v,x)=\chi_I(v)^2K_o(v,x)^t\mathsf G_o(x)^{-1}r(x),
\end{equation}
since $\int_I K_o a_r\,dv=\mathsf G_o\mathsf G_o^{-1}r=r$. Its coefficients are smooth, so it is bounded $H^s(S^2)^2\to C^k_vH^s_x$ for every $k$. The
controls are compactly supported in $I$, so the endpoint lapse is not changed.

\subsection{The torsion: tensor controls on the shear-free interval}\label{sec:tensorcontrol}

On $J$ we have $\gdc=\gac$ and $D\gdc=0$. We vary $\gdc$ by a tracefree $H$ supported in $J$, with the lapse fixed:
$\gdc=\gac\exp(\gac^{-1}H)$. Since $e$ is quadratic in $D\gdc$ and $D\gdc=0$ on the background, $\delta e=0$. Hence
$\delta\phi=\delta D\phi=0$ everywhere: these controls do not affect the optics to first order. In the linearisation of \eqref{eq:Z} only the
divergence term varies, and at the round background $\delta\big[\Div_{\gdc}(\frac12\phi^2D\gdc)\big]=\Div_{\gac}(\frac12\phi_0^2DH)$ (the variation
of the connection multiplies $D\gdc=0$). So $D\delta Z=\frac12\Div_{\gac}(\phi_0^2DH)$ on $J$ and $D\delta Z=0$ elsewhere. Integrating by parts
in $v$ ($H$ vanishes at the ends of $J$),
\begin{equation}\label{eq:torsiondiff}
 \delta Z(T_0)=\tfrac12\Div_{\gac}\int_J\phi_0^2DH\,dv=-\Div_{\gac}\int_J\phi_0\,D\phi_0\,H\,dv .
\end{equation}

In the notation of \cite[Appendix~D]{ACR} (after \cite{CK}), $\mathcal D_2^*\zeta:=-\frac12\Nd\ohat\zeta$ maps one-forms to
tracefree symmetric tensors, and $\mathcal D_2:=\Div$ maps tracefree symmetric tensors to one-forms; $\mathcal D_2^*$ is the $L^2$-adjoint of
$\mathcal D_2$. We choose $b_J\in C^\infty_c(J)$ with $\int b_J=1$ and set
\begin{equation}\label{eq:tensorcontrol}
 H_\zeta:=-\frac{b_J(v)}{\phi_0D\phi_0}\,\mathcal D_2^*\zeta .
\end{equation}
By \eqref{eq:torsiondiff}, $\delta Z(T_0)=\mathcal D_2\mathcal D_2^*\zeta$, and the optical response is zero.

\begin{lemma}[$\mathcal D_2\mathcal D_2^*$ on the round sphere]\label{lem:D2}
$\ker\mathcal D_2^*$ is the six-dimensional space $\CK$ of conformal Killing fields of $(S^2,\gac)$ (identified with one-forms), and
\begin{equation}\label{eq:D2D2}
 \mathcal D_2\mathcal D_2^*:\ H^{s+1}\cap\CK^\perp\to H^{s-1}\cap\CK^\perp\ \text{ is an isomorphism.}
\end{equation}
In the $\mathcal R_\pi$-invariant sector exactly two conformal Killing fields survive: $Y_E:=\Nd z=(1-z^2)\partial_z$ and
$Y_B:=\partial_\varphi$ (up to the identification of vectors and one-forms).
\end{lemma}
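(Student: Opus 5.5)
The plan is to prove the three assertions of Lemma~\ref{lem:D2} in turn: identify the kernel, obtain the isomorphism from a Bochner identity plus elliptic theory, and then read off the $\mathcal R_\pi$-invariant conformal Killing fields.

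\emph{Kernel.} By definition, $\mathcal D_2^*\zeta=-\frac12\Nd\ohat\zeta$ is minus one half of the tracefree part of the symmetrised covariant derivative of $\zeta$. Hence $\mathcal D_2^*\zeta=0$ is exactly the conformal Killing equation for the vector field $\zeta^\sharp$. On the round sphere the conformal Killing fields form the Lie algebra of the Möbius group, which is $\mathfrak{so}(3,1)$ and six-dimensional. Concretely, this space is spanned by the three rotations ${}^*dx^i$ and the three gradients $dx^i$ of the first spherical harmonics. That these six lie in the kernel follows from $\Nd^2x^i=-x^i\gac$. That there is nothing else I will get either from the standard classification, or from the fact that conformal Killing fields are determined by their $1$-jet at a point, which gives dimension at most six.

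\emph{Isomorphism.} I would use the identity obtained by commuting derivatives on a surface, as in \cite[Appendix~D]{ACR} or \cite{CK}:
\[
 \mathcal D_2\mathcal D_2^*\zeta=-\tfrac12\Delta\zeta-\tfrac12K\zeta ,
\]
with $\Delta$ the rough Laplacian and $K=1$. This is a second-order elliptic operator with smooth coefficients, and it is formally self-adjoint, since $\mathcal D_2^*$ is the $L^2$-adjoint of $\mathcal D_2$. Elliptic regularity together with the Fredholm alternative then gives the decomposition
\[
 H^{s-1}=\operatorname{ran}\oplus\ker ,
\]
with the range taken from $H^{s+1}$, and $\ker(\mathcal D_2\mathcal D_2^*)=\ker\mathcal D_2^*$ by the pairing
\[
 \langle\mathcal D_2\mathcal D_2^*\zeta,\zeta\rangle=\|\mathcal D_2^*\zeta\|^2 .
\]
So on $\CK^\perp$ the operator is injective with range exactly $\CK^\perp\cap H^{s-1}$. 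The elliptic estimate
\[
 \|\zeta\|_{H^{s+1}}\lesssim\|\mathcal D_2\mathcal D_2^*\zeta\|_{H^{s-1}}+\|\zeta\|_{L^2}
\]
combined with injectivity (via the usual compactness argument) gives a bounded inverse. I also need that the operator preserves $\CK^\perp$: if $\zeta\perp\CK$, then for $Y\in\CK$ we have $\langle\mathcal D_2\mathcal D_2^*\zeta,Y\rangle=\langle\mathcal D_2^*\zeta,\mathcal D_2^*Y\rangle=0$.

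\emph{$\mathcal R_\pi$-invariant sector.} The half-turn about the axis acts on $\CK$, and I decompose the six fields into its eigenvectors. Both $dz$ and the rotation $\partial_\varphi={}^*dz$ up to sign are invariant. The fields $dx$, $dy$ and the rotations about the $x$- and $y$-axes are sent to their negatives, since $(x,y)\mapsto(-x,-y)$. Hence the invariant subspace is exactly two-dimensional, spanned by $\Nd z=(1-z^2)\partial_z$ and $\partial_\varphi$. Since $\mathcal R_\pi$ is an isometry commuting with $\mathcal D_2\mathcal D_2^*$, the isomorphism restricts to invariant fields.

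The main obstacle is only bookkeeping in the Bochner identity, namely getting the sign and constant of the $K$ term right. For the purposes of the lemma, however, positivity suffices: $\|\mathcal D_2^*\zeta\|^2$ controls everything modulo the kernel, so the exact constant does not matter.
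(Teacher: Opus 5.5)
Your proposal is correct and follows essentially the paper's route. The paper also identifies the kernel by the standard classification of conformal Killing fields, uses ellipticity, formal self-adjointness and the pairing $\langle\zeta,\mathcal D_2\mathcal D_2^*\zeta\rangle=\|\mathcal D_2^*\zeta\|^2$, and reads off the invariant sector from $(x,y,z)\mapsto(-x,-y,z)$. Your Fredholm details and the check that $\CK^\perp$ is preserved fill in what the paper leaves implicit.

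However, one of your two justifications for $\dim\ker\mathcal D_2^*\le6$ is false and should be dropped. In dimension two, conformal Killing fields are \emph{not} determined by their $1$-jet, nor locally by any finite jet. (Finite-jet rigidity, with bound $(n+1)(n+2)/2$ and using second jets, is a fact for $n\ge3$.)
\begin{itemize}
\item In isothermal coordinates the conformal Killing equation is the Cauchy--Riemann system, so locally the solutions are all holomorphic vector fields.
\item The $1$-jet of such a field has only four real parameters: the value and a $\mathfrak{co}(2)$-part.
\item On $S^2$, in a stereographic coordinate $w$, the conformal Killing fields are $2\Re\big((\alpha+\beta w+\gamma w^2)\partial_w\big)$. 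The one with $\alpha=\beta=0$, $\gamma=1$ is nonzero but has vanishing $1$-jet at $w=0$.
\end{itemize}
So the bound six is a global fact. Either rely on the classification, as the paper does, or derive it from your own identity. Write $\zeta=df+{}^*dg$ (Hodge decomposition; $S^2$ has no harmonic one-forms) and use $\Delta df=d\Delta f+df$ and $\Delta{}^*dg={}^*d\Delta g+{}^*dg$. Then
\[
 \mathcal D_2\mathcal D_2^*(df+{}^*dg)=-\tfrac12\big(d(\Delta f+2f)+{}^*d(\Delta g+2g)\big).
\]
On the $\ell$-th harmonics the multiplier is therefore $\tfrac12\big(\ell(\ell+1)-2\big)$, which vanishes exactly for $\ell=1$. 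This gives at once the six-dimensional kernel, spanned by $dx^i$ and ${}^*dx^i$, and the explicit inverse on $\CK^\perp$ with its gain of two derivatives (compare Lemma~\ref{lem:E1}).
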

The first part is standard, see \cite{CK} and \cite[Appendix~D]{ACR}: $\mathcal D_2\mathcal D_2^*$ is elliptic and formally self-adjoint with
$\langle\zeta,\mathcal D_2\mathcal D_2^*\zeta\rangle=\|\mathcal D_2^*\zeta\|^2$, and $\CK$ is spanned by $\Nd x,\Nd y,\Nd z$ and ${}^*\Nd x,{}^*\Nd y,{}^*\Nd z$. Since
$\mathcal R_\pi$ acts by $(x,y,z)\mapsto(-x,-y,z)$, only $\Nd z$ and ${}^*\Nd z\propto\partial_\varphi$ are invariant.

Thus \eqref{eq:tensorcontrol} controls every invariant torsion component except the two pairings
\[
 C_i(Z)=\int_{S^2}Z(Y_i)\,d\mu_{\gac},\qquad i=E,B .
\]
$C_B$ is the angular momentum and $C_E$ a boost charge; they are the obstructions of \cite{ACR} that survive the symmetry.

\subsection{The two missing pairings: phase controls of the twisted pulse}\label{sec:phasecontrol}

On $I_+$ the pulse is $G=\exp(\rho S(\xi_0))$ with
\[
 \rho=\varepsilon_0b_+(v)(1+z)^2,\qquad \xi_0=z-2\varphi .
\]
By \eqref{eq:e2x2} and \eqref{eq:hypcurrent}, for any phase $\xi$,
\begin{equation}\label{eq:hyp}
\begin{gathered}
 e=\tfrac14\big((D\rho)^2+\sinh^2\!\rho\,(D\xi)^2\big),\\
 \tr\big(\gdc^{-1}D\gdc\,\gdc^{-1}\mathcal L_Y\gdc\big)=2D\rho\,Y\rho+2\sinh^2\!\rho\,D\xi\,Y\xi\quad(Y=Y_E,Y_B).
\end{gathered}
\end{equation}
For the second identity we compute the Lie derivatives. For $Y_B=\partial_\varphi$ the round frame is invariant, so the frame matrix of
$\mathcal L_Y\gdc$ is $\partial_\varphi G$, and \eqref{eq:hypcurrent} applies directly. For $Y_E=\Nd z=-\sin\vartheta\,\partial_\vartheta$ both
elements of the round coframe $(d\vartheta,\sin\vartheta\,d\varphi)$ are rescaled by the same factor:
\begin{gather*}
 \mathcal L_Yd\vartheta=d\big(Y(\vartheta)\big)=d(-\sin\vartheta)=-\cos\vartheta\,d\vartheta,\\
 \mathcal L_Y(\sin\vartheta\,d\varphi)=Y(\sin\vartheta)\,d\varphi=-\cos\vartheta\,\sin\vartheta\,d\varphi .
\end{gather*}
Since $G$ is the matrix of $\gdc$ in this coframe, the frame matrix of $\mathcal L_Y\gdc$ is $Y(G)-2\cos\vartheta\,G$. In
the trace in \eqref{eq:hyp} the extra term contributes $-2\cos\vartheta\,\tr(G^{-1}DG)=-2\cos\vartheta\,D\log\det G=0$, since $\det G=1$; the
remaining term is given by \eqref{eq:hypcurrent}.

We perturb the phase, $\xi=\xi_0+\tau\,k(v,x)$ with $\tau\in\R$ and $k$ supported in $I_+$. Since $D\xi_0=0$, the first identity in \eqref{eq:hyp} gives
$\delta e=0$ to first order, hence $\delta\phi=\delta D\phi=0$ on the whole interval: these controls, too, are tangent to fixed optics. With
\[
 \widetilde W:=\phi_0^2\sinh^2\!\rho,\qquad a_E:=Y_E\xi_0=1-z^2,\qquad a_B:=Y_B\xi_0=-2,
\]
the flux identity \eqref{eq:flux}, of which only the last term varies, gives
\begin{equation}\label{eq:chargevar}
 \delta C_i=-\tfrac14\int\!\!\int\phi_0^2\cdot2\sinh^2\!\rho\,(Dk)\,a_i
 =-\tfrac12\int_{I_+\times S^2}\widetilde W\,(Dk)\,a_i=\tfrac12\int_{I_+\times S^2}(D\widetilde W)\,k\,a_i ,
\end{equation}
by integration by parts in $v$ ($k$ vanishes at the ends of $I_+$). We take $k_j=(D\widetilde W)a_j$, $j=E,B$. The response matrix is the
\emph{charge Gramian}
\begin{equation}\label{eq:Gram}
 \mathsf G_{ij}=\tfrac12\int_{I_+\times S^2}(D\widetilde W)^2a_ia_j\,dv\,d\mu_{\gac}.
\end{equation}
\begin{lemma}\label{lem:Gram}
The matrix $\mathsf G$ is positive definite.
\end{lemma}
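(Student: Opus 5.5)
The matrix $\mathsf G$ in \eqref{eq:Gram} is a Gramian: for $c=(c_E,c_B)\in\R^2$,
\[
 c^t\mathsf G c=\tfrac12\int_{I_+\times S^2}(D\widetilde W)^2\,(c_Ea_E+c_Ba_B)^2\,dv\,d\mu_{\gac}\ \ge0 .
\]
So it suffices to show that this vanishes only for $c=0$. The plan is to show that the integrand cannot vanish identically unless $c=0$. Then positivity follows from continuity of the integrand and the fact that a nonnegative continuous function with zero integral vanishes identically.

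First I locate an open set where $D\widetilde W\neq0$. Here $\widetilde W=\phi_0^2\sinh^2\rho$ with $\rho=\varepsilon_0b_+(v)(1+z)^2$. Hence
\[
 D\widetilde W=2\phi_0D\phi_0\sinh^2\rho+2\phi_0^2\sinh\rho\cosh\rho\,\varepsilon_0b_+'(v)(1+z)^2 .
\]
Fix any $z_0\in(-1,1)$, where $(1+z)^2>0$. As a function of $v$ on $I_+$, $\widetilde W(\cdot,z_0)$ is smooth and nonnegative. It vanishes at the ends of $\operatorname{supp}b_+$ and is positive inside, since $b_+\not\equiv0$, $\varepsilon_0\ne0$ and $\phi_0>0$. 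Therefore its $v$-derivative is nonzero on some open $v$-interval. By continuity, $D\widetilde W\ne0$ on an open set $\mathcal O=I'\times\{|z-z_0|<\delta\}\times S^1_\varphi$ with $I'\subset I_+$. Note that $\widetilde W$ is axisymmetric, because $\phi_0$ is axisymmetric on the window and $\rho$ depends only on $(v,z)$; so the set can be taken to be a full band in $\varphi$. Letting $z_0$ vary, the same holds near every latitude in $(-1,1)$.

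Second, on $\mathcal O$ vanishing of the integrand forces $c_Ea_E+c_Ba_B=c_E(1-z^2)-2c_B=0$ for all $z$ in an interval. Since $1-z^2$ and the constant $1$ are linearly independent on any interval, this gives $c_E=c_B=0$. Thus $c^t\mathsf Gc>0$ for $c\ne0$, and $\mathsf G$ is positive definite.

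The only step needing care is the nonvanishing of $D\widetilde W$. One cannot use the pointwise formula naively, because the two terms could cancel. The clean way around this is the argument above: a smooth function of $v$ that is zero at both ends of an interval and positive somewhere must have nonzero derivative somewhere. This needs only $\widetilde W\ge0$ and $\widetilde W\not\equiv0$ on a generator. I would also remark that the lemma does not need the pole behaviour of $\mathbf B$; it suffices that the pulse is nonzero on an open latitude band.
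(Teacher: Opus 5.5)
Your proof is correct and is essentially the paper's argument. The paper likewise writes $\theta^t\mathsf G\theta=\frac12\int(D\widetilde W)^2\big[\theta_E(1-z^2)-2\theta_B\big]^2$ and uses that $\widetilde W$ vanishes near the ends of the pulse and is positive somewhere, so $D\widetilde W\neq0$ away from the south pole. From this, the polynomial in brackets must vanish on a $z$-interval, which forces $\theta=0$. Your axisymmetry remark about $\phi_0$ is not needed: continuity alone gives an open set of generators on which $D\widetilde W\neq0$, and that set already contains an interval of latitudes.
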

\begin{proof}
$\theta^t\mathsf G\theta=\frac12\int(D\widetilde W)^2\big[\theta_E(1-z^2)-2\theta_B\big]^2$. At every point except the south pole, $\widetilde W$ is positive somewhere in
the pulse and vanishes near its ends, so $\int_{I_+}(D\widetilde W)^2dv>0$. If $\theta^t\mathsf G\theta=0$, the polynomial in brackets vanishes for
$-1<z<1$, hence $\theta=0$.
\end{proof}
Without the polar twist we would have $a_E=Y_E(-2\varphi)=0$, and only the axial pairing could be controlled. The second pulse
$\mathbf B_-$ has a different role: it makes the optical rank criterion hold on the generator where $\mathbf B$ vanishes. The nonlinear
phase controls are realised by $\xi=\xi_0+\theta_Ek_E+\theta_Bk_B$, which only rotates polarisations.

\subsection{Assembly}\label{sec:ctrlassembly}

Let $\mathcal F(r,\zeta,\theta)=(\phi,D\phi,Z)(T_0)$ be the endpoint map with lapse control $a_r$, tensor control $H_\zeta$ and phase
controls $\theta=(\theta_E,\theta_B)$. Restricted to $\mathcal R_\pi$-invariant elements, it is a smooth map
\[
 \mathcal F:\ H^s\times H^s\times(H^{s+1}\cap\CK^\perp)\times\R^2\to H^s\times H^s\times H^{s-1}.
\]
Indeed, the matrix exponential and inversion are smooth on $H^s$ near positive metrics ($H^s$ is a Banach algebra for $s\ge2$),
\eqref{eq:Ray} is a linear ODE in $v$ with values in the algebra $H^s$, and \eqref{eq:Z} costs exactly one derivative. The differential at
$0$ has the triangular form
\[
 D\mathcal F(0)(r,\zeta,\theta)=\big(r,\ \mathcal Sr+\mathcal D_2\mathcal D_2^*\zeta+\theta_EZ_E+\theta_BZ_B\big),\qquad C_i(Z_j)=\mathsf G_{ij},
\]
where $\mathcal S:H^s\times H^s\to H^{s-1}$ is the (bounded) torsion response to lapse controls and $Z_j$ the torsion response to the phase
control $k_j$; the first component is $r$ because the tensor and phase controls have zero optical response. An explicit inverse is
given, for $(r,Z_*)$, by
\[
 \theta=\mathsf G^{-1}\big(C_E(Z_*-\mathcal Sr),\,C_B(Z_*-\mathcal Sr)\big)^t,\qquad
 \zeta=(\mathcal D_2\mathcal D_2^*)^{-1}(Z_*-\mathcal Sr-\theta_EZ_E-\theta_BZ_B).
\]
The argument of $(\mathcal D_2\mathcal D_2^*)^{-1}$ is orthogonal to the two invariant conformal Killing fields by the choice of $\theta$, and to
the other four by symmetry. Injectivity is also clear: $r=0$ forces $\theta=0$, and then $\zeta=0$. The inverse function theorem
gives Theorem~\ref{thm:ctrl}; note that the elliptic gain of two derivatives of $\mathcal D_2\mathcal D_2^*$
exactly matches the loss of one derivative at the torsion level, so no Nash--Moser scheme is needed.

Prescribing $-\etab$ instead of $Z$ is equivalent: the lapse controls are compactly supported, so the endpoint lapse
$f_*=\log\Omega(T_0)$ is fixed, and $\etab=2df_*-\phi^{-2}Z$ is a fixed smooth change of chart.

\subsection{Inserting the controller into the partial bridge}\label{sec:insert}

We insert the controller into the affine control window of the partial bridge. Four things must be checked.
\begin{enumerate}[label=(\roman*)]
\item \emph{The spin matching is not disturbed.} The controller pulses are inserted in pairs with identical radial profiles and opposite
tensor signs, $\exp(\pm\epsilon b(v)T)$, with $T$ independent of $v$. As in Section~\ref{sec:cancel}, each has weighted symmetric memory
$\mp\epsilon\,\varpi(\int b)T$, independent of its centre, so the pair cancels. The energy is axisymmetric, the trace term is
$\tr(\gdc^{-1}\partial_j\gdc\,\gdc^{-1}\partial_v\gdc)=2\rho_v\partial_j\rho$ (an axisymmetric gradient, by \eqref{eq:fixedpol}), and the axial
torque is zero. The lapse recovered by \eqref{eq:lapse} changes only axisymmetrically.
\item \emph{The hypotheses of Theorem~\ref{thm:ctrl} hold:} $D\phi>0$ on the window (affine area with $\varpi>0$), there is a round shear-free
interval $J$ after the pulses, the rank criterion holds (every generator meets a pulse and a zero-energy gap), and the Gramian is
positive.
\item \emph{Passing through the tail.} Theorem~\ref{thm:ctrl} is applied at the cut $T_0$ after the round interval $J$, before the
focusing tail. Beyond $T_0$ nothing depends on the controls except the data at $T_0$. The seed is the fixed reference seed (including the
polar pulses of the tail), and so is the lapse, because the lapse controls are supported in $I$; in particular $\Omega_{\rm tail}$ is fixed.
Hence, at each point of $S^2$:
\begin{itemize}
\item by \eqref{eq:Ray}, $(\phi,D\phi)$ solves a linear ODE with fixed coefficients $\omega$, $e$, so $(\phi,D\phi)(v)=\mathsf V(v)(\phi,D\phi)(T_0)$
with the invertible fundamental matrix $\mathsf V(v)$ of this ODE;
\item by \eqref{eq:Z}, $Z(v)=Z(T_0)+\mathcal Q_v(\phi,D\phi)$, where $\mathcal Q_v$ is a fixed smooth function of $(\phi,D\phi)(T_0)$ and its
first angular derivatives: the right side of \eqref{eq:Z} does not contain $Z$, and its terms are products of $\phi$, $D\phi$, their
first angular derivatives and fixed functions.
\end{itemize}
The same holds on the transition segment of Section~\ref{sec:necksetup}. Composing with the fixed angular normalisation of
Proposition~\ref{prop:reparam} (multiplication by fixed positive functions and addition of a fixed one-form) and with the fixed change of
chart $Z\mapsto\etab=2d\log\Omega-\phi^{-2}Z$ gives a map
\[
 \Pi:\ (\phi,D\phi,Z)(T_0)\longmapsto(\phi,D\phi,-\etab)\ \text{at the start of the neck},
\]
smooth from a neighbourhood of the reference in $H^s\times H^s\times H^{s-1}$ into the same space. Its inverse is obtained in the same way:
apply $\mathsf V^{-1}$ to the first two components, then subtract $\mathcal Q$. No division by $D\phi$ occurs, although $D\phi$ vanishes at
$v_*$. Thus Theorem~\ref{thm:ctrl} composed with $\Pi^{-1}$ is a local smooth controller for terminal triples $(\phi,D\phi,-\etab)$ near
$(\sqrt2,0,\eta_H)$ (unit mass) in $H^s\times H^s\times H^{s-1}$, with all modifications supported away from $S_1$.
\item \emph{There is room for everything.} One concrete layout, after rescaling the affine window to $(0,1)$, uses the ten disjoint closed
intervals $[\frac{2j-1}{24},\frac{2j}{24}]$, $j=1,\dots,10$, followed by the open round interval $J=(\frac{21}{24},\frac{22}{24})$, as in
Table~\ref{tab:layout}. All radial profiles vanish near the ends of their intervals, and there is room at both ends of the window for
the fixed joining germs. On the window the lapse is recovered from \eqref{eq:lapse} with $D\phi=\varpi/(2\sqrt{B+\varpi v})$, which has a positive
lower bound; every quantity in \eqref{eq:lapse} is smooth, also across pulse boundaries, so integrating $D\log\Omega=\omega$ from a
positive initial lapse gives a smooth positive lapse.
\end{enumerate}
\begin{table}[ht]
\centering
\begin{tabular}{@{}ll@{}}
\toprule
intervals & content\\\midrule
1, 2 & rotating north pair ($\sigma=-1$)\\
3, 4 & rotating south pair ($\sigma=+1$)\\
5, 6 & the two cancellers of the tail memories\\
7, 8 & the twisted controller pair (opposite signs; phase controls on 7)\\
9, 10 & the complementary-pole pair ($\mathbf B_-$, opposite signs)\\
$J$ & divergence (tensor) controls\\\bottomrule
\end{tabular}
\caption{Layout of the affine control window.}\label{tab:layout}
\end{table}

From now on we rescale to unit final mass and write $\Phi:=\sqrt2$. All constants below may depend on this fixed early bridge and
its controller.

\section{The neck and the late collar}\label{ch:neck}

After the partial bridge and its controller we have exact control over the intrinsic data at the start of the neck, but
no control over the incoming fields $\Omega^{-1}\chib$, $\Omega^{-2}\ab$. In this section we introduce the last two segments of the null
hypersurface: a long \emph{neck}, which forgets the incoming fields by the mechanism of Section~\ref{sec:toy}, and a short
\emph{collar}, which carries the seed that matches them (Section~\ref{ch:linear}).

\subsection{Set-up}\label{sec:necksetup}

We work at unit final mass and shift $v$ so that the remaining part of $\Hcal$ consists of
\[
 \text{the neck }[-T,0]\qquad\text{and the collar }[0,1].
\]
On both we choose the same angularly constant non-affinity $\omega\equiv\omega_0>0$, and we write simply $\omega$ for this constant.

\paragraph{The transition $\omega:0\to\omega_0$.} The terminal sphere of the partial bridge has $\omega=0$ (the tail is affine). A fixed
smooth transition from $\omega=0$ to $\omega_0$, through angularly constant values, is inserted after it and absorbed into the fixed early
flow; it prevents a jump in the lapse jets. Its background has $D\phi=0$ and constant seed, so Raychaudhuri's equation
$D^2\phi=2\omega D\phi$ keeps $D\phi=0$ and $\phi$ constant, the torsion equation $D\etab=2\Nd\omega=0$ keeps $\etab$, and the intrinsic
horizon data are unchanged.

\paragraph{The seeds.} On the neck the seed is the horizon seed, $\gdc=\gdcH:=\frac12\gd_H$. On the collar the seed is
\begin{equation}\label{eq:collarseed}
 \gdc(k)=\gdcH\exp\big(\gd_H^{-1}k\big),
\end{equation}
where $k$ is a $\gd_H$-tracefree symmetric tensor field supported in $(0,1)\times S^2$.
It is positive, has $\det\gdc=\det\gdcH$ (the exponent is tracefree), and at $\phi=\Phi$ the physical metric is
$\gd=\Phi^2\gdcH\exp(\gd_H^{-1}k)=\gd_H\exp(\gd_H^{-1}k)$, whose first variation is exactly $k$.

\paragraph{Terminal data.} We impose exactly
\begin{equation}\label{eq:terminal}
 \phi(1)=\Phi=\sqrt2,\qquad D\phi(1)=0,\qquad \etab(1)=\etab_H ,
\end{equation}
and integrate \eqref{eq:Ray} and \eqref{eq:H3} backwards from $v=1$. In this way the terminal sphere has exactly the horizon's metric,
expansion and torsion, whatever the seed does in between. The price is that the values at $v=-T$ become targets, which the early
controller of Section~\ref{ch:controller} must hit. Since $\gdc(v)$ is constant near $v=1$ and on the neck, the seed itself is continuous.
The incoming fields, on the other hand, are integrated forwards from $S_1$ through the neck and backwards from the target over the
collar; matching them at $v=0$ is the endpoint equation of Section~\ref{ch:linear}.

\subsection{The role of the collar seed}\label{sec:seedoverview}

The seed $k$ is a small burst of gravitational radiation which changes the shape, but not the area element, of the cross-sections; its
conformal seed is the horizon seed near both ends of the collar. It does not spoil the intrinsic levels, since these are integrated
backwards from \eqref{eq:terminal}; it only moves the targets at $v=-T$ (for instance, by \eqref{eq:Vphi}, $\phi(0)<\Phi$ wherever the shear
energy is nonzero). Its main role is to steer the incoming fields: to first order only the two moments $H_1=\int_0^1e^{2\omega v}k\,dv$ and
$H_2=\int_0^1e^{4\omega v}k\,dv$ enter (Theorem~\ref{thm:moments}), and together with the cut and the boost they produce every incoming datum
(Theorem~\ref{thm:iso}, whose core is the inversion of $\Pcal=\Bcal E$ in Theorem~\ref{thm:Pinv}). Finally, in Section~\ref{ch:fixedpoint} we take
$k=\varepsilon k_*+\mathcal R(H_1,H_2)$ with a moment-free direction $k_*$; it is invisible to first order, but its shear energy focuses the
generators at order $\varepsilon^2$ and, by the trace identity (Proposition~\ref{prop:trace}), gives
$\lambda=c_*\varepsilon^2+O(\varepsilon^3+e^{-2\omega T})$ for the boost, with $c_*>0$ for the choice of $k_*$ made in
Section~\ref{ch:sign}.

\subsection{Backward optics on the collar}\label{sec:Volterra}

\begin{lemma}\label{lem:Volterra}
With $e$ the shear energy of \eqref{eq:collarseed} (extended by zero to the neck),
\begin{align}
 D\phi(v)&=\int_v^1e^{-2\omega(v'-v)}e(v')\phi(v')\,dv',\label{eq:Vp}\\
 \phi(v)&=\Phi-\frac1{2\omega}\int_v^1\big(1-e^{-2\omega(v'-v)}\big)e(v')\phi(v')\,dv'.\label{eq:Vphi}
\end{align}
If $\sup_x\int_0^1e(v,x)\,dv<\omega$, then $\frac12\Phi\le\phi\le\Phi$ and $D\phi\ge0$ on $[-T,1]$, uniformly in $T$.
\end{lemma}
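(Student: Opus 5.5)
The proof is an integrating-factor computation for \eqref{eq:Ray}, followed by a continuity argument of the same kind as in Lemma~\ref{lem:tail}. Along each generator, with $\omega$ constant, \eqref{eq:Ray} reads $D(D\phi)=2\omega D\phi-e\phi$, that is, $D\big(e^{-2\omega v}D\phi\big)=-e^{-2\omega v}e\phi$. I integrate this from $v$ to $1$ and use $D\phi(1)=0$ from \eqref{eq:terminal}. This gives $e^{-2\omega v}D\phi(v)=\int_v^1e^{-2\omega v'}e\phi\,dv'$, which is \eqref{eq:Vp}.

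Next I integrate \eqref{eq:Vp} once more from $v$ to $1$ with $\phi(1)=\Phi$ and exchange the order of integration by Fubini. The inner integral is $\int_v^{v'}e^{-2\omega(v'-v'')}\,dv''=\frac1{2\omega}\big(1-e^{-2\omega(v'-v)}\big)$, and this yields \eqref{eq:Vphi}. Since $e$ vanishes on the neck, both formulas hold on all of $[-T,1]$, with integrals effectively over $[\max(v,0),1]$. On the neck, \eqref{eq:Vp} shows that $D\phi$ decays like $e^{2\omega v}$, but we only need its sign.

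For the bounds, let $t_0\in[-T,1)$ be such that $\phi>0$ on $[t_0,1]$.
\begin{itemize}
\item Since $e\ge0$, \eqref{eq:Vp} gives $D\phi\ge0$ on $[t_0,1]$, hence $\phi\le\Phi$ there.
\item In \eqref{eq:Vphi} the kernel satisfies $0\le\frac1{2\omega}(1-e^{-2\omega(v'-v)})\le\frac1{2\omega}$. Together with $\phi\le\Phi$ this gives
\[
 \phi(v)\ge\Phi\Big(1-\frac1{2\omega}\int_0^1e\,dv'\Big)>\Phi\Big(1-\frac12\Big)=\tfrac12\Phi
\]
under the hypothesis $\sup_x\int_0^1e\,dv<\omega$.
\end{itemize}
So the strict bound $\phi>\frac12\Phi$ propagates past $t_0$, and the set of admissible $t_0$ is open and closed in $[-T,1]$. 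It contains a neighbourhood of $1$ by continuity, so it is all of $[-T,1]$. None of the bounds involves $T$: the kernel bound $\frac1{2\omega}$ is uniform, in the spirit of Section~\ref{sec:toy}(i). The argument is pointwise in $x$, with $\sup_x$ taken at the end.

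The only delicate point is to avoid assuming positivity of $\phi$ a priori, which the continuity argument handles. Everything else is routine.
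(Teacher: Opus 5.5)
Your proposal is correct and follows the paper's proof essentially step by step. The integrating factor $e^{-2\omega v}$ in \eqref{eq:Ray} with $D\phi(1)=0$ gives \eqref{eq:Vp}, Fubini gives \eqref{eq:Vphi}, and the backward continuity argument modelled on Lemma~\ref{lem:tail} gives the bounds, uniformly in $T$. The only difference is cosmetic: you apply the two formulas directly on the neck, where $e=0$, instead of invoking the explicit neck formulas, which is equivalent.
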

\begin{proof}
With $\omega$ constant, \eqref{eq:Ray} reads $D(e^{-2\omega v}D\phi)=e^{-2\omega v}(D^2\phi-2\omega D\phi)=-e^{-2\omega v}e\phi$. Integrating from $v$
to $1$ with $D\phi(1)=0$ gives $-e^{-2\omega v}D\phi(v)=-\int_v^1e^{-2\omega v'}e\phi\,dv'$, which is \eqref{eq:Vp}. Then
$\phi(v)=\Phi-\int_v^1D\phi(v'')\,dv''$, and by Fubini
\begin{align*}
 \int_v^1\!\!\int_{v''}^1e^{-2\omega(v'-v'')}e\phi(v')\,dv'\,dv''&=\int_v^1e\phi(v')\Big(\int_v^{v'}e^{-2\omega(v'-v'')}dv''\Big)dv'\\
 &=\int_v^1e\phi(v')\,\frac{1-e^{-2\omega(v'-v)}}{2\omega}\,dv',
\end{align*}
which is \eqref{eq:Vphi}. For the bounds we argue as in Lemma~\ref{lem:tail}: before a first zero of $\phi$ (going backwards), $\phi>0$
and $e\ge0$ give $D\phi\ge0$, hence $\phi\le\Phi$; then \eqref{eq:Vphi} gives $\phi\ge\Phi\big(1-\frac1{2\omega}\int_0^1e\big)\ge\frac12\Phi$, which
excludes the zero. On the neck $e=0$, and the explicit formulas of Section~\ref{sec:neckformulas} apply.
\end{proof}
The hypothesis holds on a fixed small ball of seeds $k$ (in $C^1_vH^s_x$, say), which we fix once and for all and call the
\emph{seed ball}.

\subsection{Explicit neck formulas}\label{sec:neckformulas}

On the neck $e=0$ and $\Omega\chih=\frac12\phi^2D\gdcH=0$. Let $(\phi_0,D\phi_0,\etab_0)$ be the values at $v=0$ and put
\[
 a_1=\frac{D\phi_0}{2\omega},\qquad a_0=\phi_0-a_1 .
\]
Then $D^2\phi=2\omega D\phi$ gives
\begin{equation}\label{eq:neckopt}
 \phi(v)=a_0+a_1e^{2\omega v},\qquad D\phi(v)=2\omega a_1e^{2\omega v}.
\end{equation}
For the torsion we use \eqref{eq:Zb} with $D\gdc=0$ and $\Nd\omega=0$: $\underline Z:=\phi^2\etab$ satisfies $D\underline Z=\phi\,\Nd D\phi-D\phi\,\Nd\phi$.
By \eqref{eq:neckopt},
\[
 \phi\Nd D\phi-D\phi\Nd\phi=(a_0+a_1e^{2\omega v})2\omega e^{2\omega v}da_1-2\omega a_1e^{2\omega v}(da_0+e^{2\omega v}da_1)
 =2\omega e^{2\omega v}(a_0da_1-a_1da_0),
\]
and integrating from $0$,
\begin{equation}\label{eq:neckZ}
 \underline Z(v)=\underline Z_0+\big(e^{2\omega v}-1\big)\big(a_0\,da_1-a_1\,da_0\big).
\end{equation}
As $v\to-T\to-\infty$ these converge exponentially: $\phi\to a_0$, $D\phi\to0$, $\underline Z\to\underline Z_0-(a_0da_1-a_1da_0)$. Consequently:
\begin{itemize}
\item the maps (seed $k$) $\mapsto$ (data at $v=-T$) have derivatives up to order three bounded uniformly in $T\ge0$ on the seed
ball, because with $x=e^{2\omega v}\in[e^{-2\omega T},1]$ the expressions \eqref{eq:neckopt} and \eqref{eq:neckZ} are polynomials in $x\in[0,1]$
with coefficients depending smoothly on $(\phi_0,D\phi_0,\etab_0)$;
\item the values at $v=-T$ lie in a fixed small neighbourhood of $(\Phi,0,\etab_H)$, on which the early controller works;
\item $\int_{-T}^0\|\Omega\tr\chi\|_{H^{s-1}}\,dv\le C$, since $\Omega\tr\chi=2D\phi/\phi$ and $\int_{-T}^0e^{2\omega v}dv\le\frac1{2\omega}$, and $\phi$
stays bounded away from zero.
\end{itemize}

\subsection{The incoming fields on the neck}\label{sec:neckincoming}

We use the early controller to attain the data at $v=-T$ required by the backward integration, keeping $S_1$ exactly, and propagate
$\Omega^{-1}\chib$ and $\Omega^{-2}\ab$ forwards through the neck. Let
\[
 \Kcal_T(k):=\big(\Omega^{-1}\chib,\ \Omega^{-2}\ab\big)\big|_{v=0}\in H^{s-2}(\mathrm{Sym}^2)\times H^{s-4}(\mathrm{Sym}^2_0)
\]
be the incoming data that arrive at the start of the collar from the left.

\paragraph{The $\chib$ propagator.} On the neck $\gd=\phi^2\gdcH$, so the trace and tracefree projections $\Pi_{\rm tr},\Pi_{\rm tf}$ with
respect to $\gdcH$ (equivalently $\gd$) do not depend on $v$. With $\Omega\chih=0$, \eqref{eq:H4} becomes
\[
 D(\Omega^{-1}\chib)=-2\omega\,\Omega^{-1}\chib+\tfrac12\Omega\tr\chi\,\Pi_{\rm tf}(\Omega^{-1}\chib)+\Ecal(\gd,\etab).
\]
Applying $\Pi_{\rm tr}$ and $\Pi_{\rm tf}$, which commute with $D$ here, we see that the trace part decays like $e^{-2\omega v}$ and the
tracefree part like $e^{-2\omega v}\exp\int\frac12\Omega\tr\chi=e^{-2\omega v}\phi$, since $\frac12\Omega\tr\chi=D\log\phi$. So the propagator from
$v'$ to $v\ge v'$ is
\begin{equation}\label{eq:UN}
 U_N(v,v')=e^{-2\omega(v-v')}\Big(\Pi_{\rm tr}+\frac{\phi(v)}{\phi(v')}\Pi_{\rm tf}\Big).
\end{equation}

\paragraph{The $\ab$ propagator.} With $\Omega\chih=0$ we have $D=\widehat D$ on tracefree tensors, and by the Gauss equation
$\rho=-K-\frac14\Omega\tr\chi\,\Omega^{-1}\tr\chib$, by the curl equation $\sigma=-\Curl\etab$. So \eqref{eq:H5} becomes
\[
 D(\Omega^{-2}\ab)=\big(\tfrac12\Omega\tr\chi-4\omega\big)\Omega^{-2}\ab+\Bcal_{\gd,\etab}(\Omega^{-1}\chib)
 +\tfrac34\Omega\tr\chi\,\Omega^{-1}\tr\chib\,\Omega^{-1}\chibh,
\]
where $\Bcal_{\gd,\etab}$ is the operator of Definition~\ref{def:B} with $(\gd_H,\etab_H)$ replaced by the current $(\gd,\etab)$; the extra term
is $-3(-\frac14\Omega\tr\chi\,\Omega^{-1}\tr\chib)\Omega^{-1}\chibh$. The homogeneous propagator is $U_A(v,v')=e^{-4\omega(v-v')}\phi(v)/\phi(v')$.

\begin{lemma}[The neck forgets]\label{lem:neck}
On the seed ball:
\begin{enumerate}[label=(\alph*)]
\item $\|\Kcal_T(0)\|\le Ce^{-2\omega T}$;
\item $\Kcal_T$ is $C^3$ with bounds independent of $T$;
\item $D\Kcal_T(0)=0$ for every $T$.
\end{enumerate}
The norm is $H^{s-2}$ for the first and $H^{s-4}$ for the second component.
\end{lemma}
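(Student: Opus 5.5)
The proof is a Duhamel representation on the neck, using the explicit propagators \eqref{eq:UN} and $U_A$, combined with the structure of the sources. Write $\Kcal_T(k)$ as the free evolution of the data at $v=-T$ plus integrated sources. The data at $v=-T$ are the incoming fields $(\Omega^{-1}\chib,\Omega^{-2}\ab)(-T)$ of the early bridge, composed with the controller of Theorem~\ref{thm:ctrl} and $\Pi^{-1}$. They are evaluated at the targets $(\phi,D\phi,-\etab)(-T)$ obtained from the backward neck formulas \eqref{eq:neckopt}, \eqref{eq:neckZ}. These targets lie in a fixed neighbourhood, and the controller depends smoothly on them. Hence the incoming data at $-T$ are $C^3$ functions of $k$, with bounds independent of $T$, by the first bullet of Section~\ref{sec:neckformulas}. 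The propagators satisfy $\|U_N(0,-T)\|\le Ce^{-2\omega T}$ and $\|U_A(0,-T)\|\le Ce^{-4\omega T}$, since $\phi(0)/\phi(v')$ is bounded by Lemma~\ref{lem:Volterra}. The free part is therefore $O(e^{-2\omega T})$ in $C^3$.

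Next comes the source of the $\chib$ equation. This is $\Ecal(\gd,\etab)$ with $\gd=\phi^2\gdcH$ and $\etab=\phi^{-2}\underline Z$. I expand $\phi=a_0+a_1x$ and $\underline Z=\underline Z_0+(x-1)(a_0da_1-a_1da_0)$, where $x=e^{2\omega v}$. On the neck $\gd$ is conformal to $\gd_H$, so $\Ecal$ is a smooth function of $(a_0,a_1,\underline Z_0)$ and $x$. It vanishes at $k=0$ identically in $v$: then $a_1=0$, $a_0=\Phi$ and $\underline Z_0=\Phi^2\etab_H$, so $\Ecal=\Ecal(\gd_H,\etab_H)=0$ by Proposition~\ref{prop:NH}. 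The Duhamel integral $\int_{-T}^0U_N(0,v')\Ecal(v')\,dv'$ has kernel $e^{2\omega v'}$ of $L^1$ norm $\le(2\omega)^{-1}$, as in the toy model of Section~\ref{sec:toy}. It is therefore $C^3$ in $k$ uniformly in $T$, it vanishes at $k=0$, and the source term contributes nothing to (a). The same holds for the $\ab$ equation. Its sources are $\Bcal_{\gd,\etab}(\Omega^{-1}\chib)$ and the $\Omega\tr\chi$ term, both bilinear in quantities already controlled, and the kernel $e^{4\omega v'}$ is integrable uniformly in $T$. For (a) at $k=0$, the sources are linear in $\Omega^{-1}\chib(v')=O(e^{-2\omega(v'+T)})$. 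Integrated against $e^{4\omega v'}$, these give $O(e^{-2\omega T})$, although no better, which is consistent with the claimed rate. This proves (a) and (b).

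For (c) I differentiate at $k=0$. The derivative of the free part is $U(0,-T)$ applied to the derivative of the data at $-T$. The subtle point is the claim that this vanishes for every $T$, and not merely that it is $O(e^{-2\omega T})$. I need to read $D\Kcal_T(0)=0$ correctly. The seed $k$ is supported in the collar $(0,1)$ and does not enter the neck equations directly. It affects the neck only through $(\phi_0,D\phi_0,\etab_0)$ at $v=0$. These are quadratic in $k$: by \eqref{eq:Vp}, \eqref{eq:Vphi} and \eqref{eq:Zb}, $\delta e=0$ at $k=0$, since $e$ is quadratic in $Dk$. The torsion response $\delta\underline Z_0=-\frac12\Div\int\phi^2Dk$ is the one exception. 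So the key step is to check that the linear torsion response is absent, or at least killed. I would first try to show that $\int_0^1\Phi^2Dk\,dv=0$, because $k$ is compactly supported and $\phi\equiv\Phi$ at linear order. With this, all data at $v=0$, hence at $-T$ through the controller, have zero first variation, and (c) follows by the chain rule. The main obstacle is this verification, together with the uniform $C^3$ control of the composition of the controller with the neck maps at $x\to0$.
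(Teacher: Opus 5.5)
Your proposal is correct and follows the paper's proof. The ingredients are the same: explicit propagators \eqref{eq:UN} and $U_A$ with variation of constants and no Gronwall factor; the near-horizon equation removing the $\Ecal$ source at $k=0$; the early output $\mathcal T\circ\mathcal C\circ\Pi^{-1}$ composed with neck maps that are uniformly polynomial in $e^{2\omega v}$; and the chain rule for (c).

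The step you flag as the ``main obstacle'' is in fact immediate, and it is exactly the paper's argument. The divergence is taken with respect to the fixed metric $\gd_H$, so it commutes with $D$. Integrating the linearised torsion equation from $v=1$ then gives $\dot\etab(v)=-\frac12\Div k(v)$, which vanishes at $v=0$ because $k(0)=0$; equivalently, $\int_0^1Dk\,dv=k(1)-k(0)=0$. The uniform control as $e^{2\omega v}\to0$ is the polynomial observation of Section~\ref{sec:neckformulas} that you had already invoked.
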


\begin{proof}
(a) At $k=0$ the backward solution is the horizon itself: $\phi\equiv\Phi$, $D\phi=0$, $\etab\equiv\etab_H$ on $[-T,1]$. On the neck $\Ecal=0$ by
\eqref{eq:NH}, $\Omega\tr\chi=0$, and $\Bcal_{\gd,\etab}=\Bcal$. So $\Omega^{-1}\chib(v)=e^{-2\omega(v+T)}\Omega^{-1}\chib(-T)$, and by variation of
constants
\begin{align*}
 \Omega^{-2}\ab(0)&=e^{-4\omega T}\Omega^{-2}\ab(-T)+\int_{-T}^0e^{4\omega v}e^{-2\omega(v+T)}dv\;\Bcal(\Omega^{-1}\chib(-T))\\
 &=e^{-4\omega T}\Omega^{-2}\ab(-T)+\frac{e^{-2\omega T}-e^{-4\omega T}}{2\omega}\,\Bcal(\Omega^{-1}\chib(-T)).
\end{align*}
The data at $-T$ are the fixed outputs of the early bridge at its fixed endpoint, so they are bounded.

(b) There are three ingredients, none of which produces a factor growing with $T$.

\emph{The early output is $C^3$ on one fixed ball.} Let $U$ be a small fixed ball of targets $(\phi,D\phi,\etab)$ around $(\Phi,0,\etab_H)$ in
$H^s\times H^s\times H^{s-1}$, and let $I:U\to H^{s-2}\times H^{s-4}$ map a target to the incoming data at the end of the early bridge. By
Section~\ref{sec:insert}(iii),
\[
 I=\mathcal T\circ\mathcal C\circ\Pi^{-1}.
\]
Here $\Pi^{-1}$ pulls the target back through the fixed transition, normalisation and tail segments to the cut $T_0$ of the control window;
it is the explicit smooth map of Section~\ref{sec:insert}(iii), and it involves no division by $D\phi$. Next, $\mathcal C$ is the controller of
Theorem~\ref{thm:ctrl}, given by the inverse function theorem; it is smooth, with values in the controls, which have fixed smooth radial
profiles and $H^s$ angular coefficients. Finally, $\mathcal T$ solves the triangular linear transport equations \eqref{eq:H4}, \eqref{eq:H5}
for the incoming fields along the early bridge; their coefficients are products of the controlled fields and at most two of their angular
derivatives, so $\mathcal T$ is smooth in the controls. Hence $I$ is smooth, and continuity of $D^jI$ at the base point gives
$\sup_U\sum_{j\le3}\|D^jI\|<\infty$ after shrinking $U$ once. All these maps are fixed before $T$ is chosen.

\emph{The neck data are uniformly controlled.} With $x=e^{2\omega v}\in[e^{-2\omega T},1]$, \eqref{eq:neckopt} and \eqref{eq:neckZ} are
polynomials in $x\in[0,1]$ with coefficients depending smoothly on $(\phi_0,D\phi_0,\etab_0)$. So they and all their parameter derivatives
up to order three are bounded independently of $T$, and the values at $v=-T$ lie in $U$ once the seed ball is small (shrunk once,
independently of $T$).

\emph{Variation of constants without Gronwall.} The propagators \eqref{eq:UN} and $U_A$ are bounded by $Ce^{-2\omega(v-v')}$ and
$Ce^{-4\omega(v-v')}$, because $\frac12\le\phi/\Phi\le1$; their kernels have $L^1$ norms at most $\frac C{2\omega}$, $\frac C{4\omega}$, uniformly in $T$.
The sources are $\Ecal(\gd,\etab)\in H^{s-2}$ and $\Bcal_{\gd,\etab}(\Omega^{-1}\chib)+\frac34\Omega\tr\chi\,\Omega^{-1}\tr\chib\,\Omega^{-1}\chibh\in H^{s-4}$,
bounded uniformly in $v$ and $T$ (for instance $\Nd^2(\Omega^{-1}\chib)\in H^{s-4}$, and multiplication by the coefficients is bounded for
$s\ge10$). We first bound $\Omega^{-1}\chib$ by
\[
 \|\Omega^{-1}\chib(0)\|\le Ce^{-2\omega T}\|\Omega^{-1}\chib(-T)\|+C\int_{-T}^0e^{2\omega v}dv\,\sup\|\Ecal\|\le C',
\]
and then $\Omega^{-2}\ab$ in the same way. For a parameter derivative of order $j\le3$ we differentiate the integral formulas: every term
is a product of derivatives of propagators, coefficients and previously bounded quantities, of total order $\le j$, and the term
containing $D^j(\Omega^{-1}\chib)$ is bounded before $D^j(\Omega^{-2}\ab)$ is treated. Induction gives uniform bounds for all derivatives
up to order three, including mixed ones. Since the equations are linear in the incoming fields with triangular structure, no Gronwall
factor $e^{CT}$ appears.

(c) Let a dot denote a first variation in $k$ at $k=0$. Since $e$ is quadratic in $D\gdc$, which vanishes at $k=0$, $\dot e=0$, hence
$\dot\phi=\dot{D\phi}=0$ everywhere by \eqref{eq:Vp}, \eqref{eq:Vphi}. For the torsion we linearise \eqref{eq:H3} at the horizon background
($\Omega\tr\chi=0$, $\Nd\omega=0$, $\dot{(\Omega\tr\chi)}=0$): $D\dot\etab=-\Div\dot{(\Omega\chih)}$, and $\dot{(\Omega\chih)}=\frac12D\dot\gd=\frac12Dk$.
Since the divergence is taken with respect to the fixed metric $\gd_H$, it commutes with $D$, and with $\dot\etab(1)=0$, $k(1)=0$ we obtain
\begin{equation}\label{eq:etadot}
 \dot\etab(v)=-\tfrac12\Div k(v),\qquad\text{in particular }\dot\etab(0)=0,
\end{equation}
because $k(0)=0$. So all data at $v=0$, hence at $v=-T$, have zero first variation. The output of the controller, and with it $\Kcal_T$,
then also has zero first variation.
\end{proof}

Thus whatever the early bridge does to the incoming fields is damped by $e^{-2\omega T}$, and to first order the late seed does not
interact with the early bridge at all. In particular, no smallness of the incoming fields at the end of the early bridge is needed: they
may be large, for instance growing with the final mass, but they do not feed back into the optical and torsion levels
(Remark~\ref{rem:triangular}), and their size enters only the constant $C$ in Lemma~\ref{lem:neck}, which is fixed before $T$ is chosen.

\section{The linearised endpoint problem and its inverse}\label{ch:linear}

In this section we write the matching of the incoming data as a single equation, linearise it at a ``limiting corner'', and show that
the linearisation is an isomorphism at all angular frequencies. The key is an exact computation: in helicity components the relevant
fourth order operator $\Pcal$ becomes a $2\times2$ block whose diagonal is a purely imaginary constant and whose off-diagonal entries are
the angular Teukolsky--Starobinsky operators.

\subsection{The endpoint equation}\label{sec:endpointeq}

Let $\Jcal(k,\lambda,G)$ denote the incoming data at $v=0$ obtained by integrating \eqref{eq:H4}--\eqref{eq:H5} backwards over the
collar, with seed $k$, from the target values $(\mathcal N(\lambda,G),\mathcal A(\lambda,G))$ of Definition~\ref{def:target} at $v=1$.
The optical and torsion levels of the collar are already fixed by \eqref{eq:terminal}. The incoming fields arriving from the left are
$\Kcal_T(k)$ (Section~\ref{sec:neckincoming}). The matching problem is therefore the single equation
\begin{equation}\label{eq:endpoint}
 \Jcal(k,\lambda,G)-\Kcal_T(k)=0\qquad\text{in }H^{s-2}(\mathrm{Sym}^2)\times H^{s-4}(\mathrm{Sym}^2_0).
\end{equation}
Once \eqref{eq:endpoint} holds, all of $\gd,\Omega\chi,\etab,\Omega^{-1}\chib,\Omega^{-2}\ab$ are continuous at $v=0$. The concatenated
data then solve the hierarchy on the whole of $[v_1,1]$ (the equations are first order in $v$, so continuity at $v=0$ suffices, and
smoothness in $v$ follows from the equations), and at $v=1$ they agree with those of the cut $F=G/\lambda$ in the frame
$(\xi/\lambda,\lambda n_F)$, provided $\lambda>0$.

\begin{remark}[Bundle identification]
The incoming curvature tensors in \eqref{eq:endpoint} are tracefree for the metric $\gd(0;k)$, which depends on $k$. We identify this
bundle with the fixed $\gd_H$-tracefree bundle by the positive square-root conjugation $\theta\mapsto h^{1/2}\theta h^{1/2}$ with
$h=\gd_H^{-1}\gd(0;k)$, which is smooth near $k=0$. At the background this has no effect on the linearisation. Indeed, $k$ vanishes near
$v=0$, so $\gd(0;k)=\phi(0;k)^2\gdcH$ and $h=(\phi(0;k)^2/\Phi^2)I$. Thus the tracefree bundles coincide and the map is multiplication
by a positive scalar, whose first variation vanishes because $\phi(0;k)-\Phi=O(k^2)$.
\end{remark}

\subsection{Linearisation at the limiting corner}\label{sec:moments}

We linearise $\Jcal$ at the limiting corner $(k,\lambda,G)=(0,0,0)$. Here the collar background is the intrinsic horizon with
$\Omega^{-1}\chib\equiv0$ and $\Omega^{-2}\ab\equiv0$, because $\mathcal N(0,0)=\mathcal A(0,0)=0$. This background is not a horizon cut
of Kerr: it is the formal limit $\lambda\to0$, and working there is the purpose of the target chart.

\begin{definition}\label{def:EP}
For a $\gd_H$-tracefree symmetric $k$ set
\begin{equation}\label{eq:EP}
 Ek:=\frac{d}{d\epsilon}\Big|_{0}\Ecal\big(\gd_H+\epsilon k,\ \etab_H-\tfrac{\epsilon}2\Div k\big),\qquad \Pcal:=\Bcal\circ E .
\end{equation}
\end{definition}
The torsion variation $-\frac12\Div k$ is exactly \eqref{eq:etadot}.

\begin{lemma}[Covariant formula for $E$]\label{lem:Ecov}
With $\gamma:=\frac12\Div k$ and $\eta_H=-\etab_H$,
\[
 (Ek)_{AB}=-\Nd_A\gamma_B-\Nd_B\gamma_A+2\dot\Gamma^C_{AB}\eta_{H,C}+2(\eta_{H,A}\gamma_B+\gamma_A\eta_{H,B})-(\Div\gamma)\gd_{H,AB}-K_Hk_{AB},
\]
where $\dot\Gamma^C_{AB}=\frac12(\Nd_Ak_B{}^C+\Nd_Bk_A{}^C-\Nd^Ck_{AB})$.
\end{lemma}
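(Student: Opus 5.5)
The plan is to differentiate $\Ecal(\gd,\etab)_{AB}=\Nd_A\etab_B+\Nd_B\etab_A+2\etab_A\etab_B-K\gd_{AB}$ term by term along the curve $\gd=\gd_H+\epsilon k$, $\etab=\etab_H-\epsilon\gamma$, with $\gamma=\frac12\Div k$. Everything will be expressed through $\eta_H=-\etab_H$, and the variation of the Gauss curvature will be taken from Lemma~\ref{lem:dotK}.

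\emph{Derivative terms.} Write $\Nd_A\etab_B=\partial_A\etab_B-\Gamma^C_{AB}\etab_C$. Its first variation is $-\Nd_A\gamma_B-\dot\Gamma^C_{AB}\etab_{H,C}=-\Nd_A\gamma_B+\dot\Gamma^C_{AB}\eta_{H,C}$. Here $\dot\Gamma^C_{AB}=\frac12(\Nd_Ak_B{}^C+\Nd_Bk_A{}^C-\Nd^Ck_{AB})$ is the tensor already computed in the proof of Lemma~\ref{lem:dotK}, and it is symmetric in $AB$. Symmetrising gives $-\Nd_A\gamma_B-\Nd_B\gamma_A+2\dot\Gamma^C_{AB}\eta_{H,C}$. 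I will also record that one-forms are not raised in $\etab_A$, so no variation of the inverse metric enters this term.

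\emph{Quadratic term.} The variation of $2\etab_A\etab_B$ is $2(\dot\etab_A\etab_{H,B}+\etab_{H,A}\dot\etab_B)=2(\gamma_A\eta_{H,B}+\eta_{H,A}\gamma_B)$, since $\dot\etab=-\gamma$ and $\etab_H=-\eta_H$.

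\emph{Curvature term.} The variation of $-K\gd_{AB}$ is $-\dot K\gd_{H,AB}-K_Hk_{AB}$. Because $k$ is $\gd_H$-tracefree, Lemma~\ref{lem:dotK} gives $\dot K=\frac12\Div\Div k=\Div\gamma$. This produces $-(\Div\gamma)\gd_{H,AB}-K_Hk_{AB}$.

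Adding the three contributions gives exactly the stated formula. The only step that needs care, and the likeliest source of error, is the sign bookkeeping between $\etab_H$ and $\eta_H$ in the Christoffel term and the quadratic term. I will check this by confirming that each appearance of $\etab_H$ is converted with a single minus sign.
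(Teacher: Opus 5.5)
Your proposal is correct and follows the paper's proof essentially verbatim: vary the three pieces of $\Ecal$ along $\dot\gd=k$, $\dot\etab=-\gamma$, pick up $+\dot\Gamma^C_{AB}\eta_{H,C}$ from the connection variation and two sign changes in the quadratic term, and use Lemma~\ref{lem:dotK} for $\dot K=\frac12\Div\Div k=\Div\gamma$. Your sign bookkeeping is right as written.
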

\begin{proof}
Vary each term of $\Ecal=\Nd_A\etab_B+\Nd_B\etab_A+2\etab_A\etab_B-K\gd_{AB}$ with $\dot\gd=k$, $\dot\etab=-\gamma$:
$\dot{(\Nd_A\etab_B)}=\Nd_A\dot\etab_B-\dot\Gamma^C_{AB}\etab_C=-\Nd_A\gamma_B+\dot\Gamma^C_{AB}\eta_{H,C}$;
$\dot{(2\etab_A\etab_B)}=2(\gamma_A\eta_{H,B}+\eta_{H,A}\gamma_B)$ (two sign changes); and $\dot{(K\gd)}=\dot K\gd_H+K_Hk$ with
$\dot K=\frac12\Div\Div k=\Div\gamma$ (Lemma~\ref{lem:dotK}).
\end{proof}

\begin{theorem}[Two moments]\label{thm:moments}
For $k$ supported in $(0,1)$ and $(\lambda,G)=(0,0)$, set
\[
 H_j:=\int_0^1e^{2j\omega v}k(v)\,dv,\qquad j=1,2 .
\]
Then the first variation of $\Jcal$ in $k$ is
\[
 \delta(\Omega^{-1}\chib)(0)=-E H_1,\qquad \delta(\Omega^{-2}\ab)(0)=\frac1{2\omega}\,\Pcal(H_2-H_1).
\]
The first variation in $(\lambda,G)$ is $\big(e^{2\omega}(\dot\lambda\chib_H+\Ccal\dot G),\,0\big)$.
\end{theorem}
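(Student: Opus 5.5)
We linearise the backward transport over the collar $[0,1]$ at the limiting corner, where the background is the intrinsic horizon with $\phi\equiv\Phi$, $D\phi=0$, $\etab=\etab_H$, $\Omega^{-1}\chib\equiv0$, $\Omega^{-2}\ab\equiv0$ and $\omega$ constant. First we record the first variations of the intrinsic levels. By the proof of Lemma~\ref{lem:neck}(c), $\dot\phi=\dot{D\phi}=0$, $\dot{(\Omega\chih)}=\frac12Dk$ and $\dot\etab=-\frac12\Div k$ by \eqref{eq:etadot}, while $\dot\gd=k$ (the seed \eqref{eq:collarseed} at $\phi=\Phi$). Since the background incoming fields vanish, every product term in \eqref{eq:H4} involving $\Omega^{-1}\chib$ has zero first variation, as does every term in \eqref{eq:H5} involving $\Omega^{-2}\ab$ or $\Omega^{-1}\chib$ times a varied coefficient. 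Only the source terms survive. Linearising \eqref{eq:H4} therefore gives
\[
 D\delta(\Omega^{-1}\chib)=-2\omega\,\delta(\Omega^{-1}\chib)+Ek(v),
\]
with $E$ as in Definition~\ref{def:EP}, because $\delta\Ecal=Ek$ exactly by the choice of torsion variation. Linearising \eqref{eq:H5} gives
\[
 D\delta(\Omega^{-2}\ab)=-4\omega\,\delta(\Omega^{-2}\ab)+\Bcal\,\delta(\Omega^{-1}\chib),
\]
as in \eqref{eq:H5horizon}. One should check here that $\widehat D$ agrees with $D$ to first order, since $\Omega\chih\cdot\ab$ is quadratic. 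One should also check that the bundle identification has no effect, by the Remark preceding the theorem.

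For the variation in $k$, the terminal values at $v=1$ are fixed at $\mathcal N(0,0)=\mathcal A(0,0)=0$, so $\delta(\cdot)(1)=0$. We integrate backwards by variation of constants. For $\chib$, $e^{2\omega v}\delta\chib$ has derivative $e^{2\omega v}Ek$, so
\[
 \delta\chib(0)=-\int_0^1e^{2\omega v}Ek\,dv=-EH_1,
\]
using that $E$ is a fixed angular operator commuting with $\int dv$. More generally,
\[
 \delta\chib(v)=-e^{-2\omega v}\int_v^1e^{2\omega v'}Ek(v')\,dv'.
\]
For $\ab$ we obtain
\[
 \delta\ab(0)=-\int_0^1e^{4\omega v}\Bcal\delta\chib(v)\,dv=\int_0^1e^{2\omega v}\int_v^1e^{2\omega v'}\Pcal k(v')\,dv'\,dv .
\]
By Fubini this becomes
\[
 \int_0^1e^{2\omega v'}\frac{e^{2\omega v'}-1}{2\omega}\Pcal k(v')\,dv'=\frac1{2\omega}\Pcal(H_2-H_1).
\]

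For the variation in $(\lambda,G)$ at $k=0$, the collar coefficients are the horizon ones, so $\Ecal=0$ by \eqref{eq:NH}. The terminal data are varied by $(\dot\lambda\chib_H+\Ccal\dot G,0)$, because the differential of $\mathcal A$ at $(0,0)$ vanishes (Definition~\ref{def:target}). Backward propagation of $\chib$ multiplies by $e^{2\omega}$. The $\ab$ source is then $\Bcal(e^{2\omega(1-v)}(\dot\lambda\chib_H+\Ccal\dot G))$, which vanishes by Lemma~\ref{lem:Bstat}, so $\delta\ab(0)=0$.

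The main obstacle is to justify that the linearisation of \eqref{eq:H4}--\eqref{eq:H5} has exactly these sources, with no extra contributions. The varied trace/tracefree splitting and the variation of $\Bcal_{\gd,\etab}$ must multiply vanishing background fields, and the Gauss term $\frac34\Omega\tr\chi\,\tr\chib\,\chibh$ must be quadratic. The regularity bookkeeping ($E:H^s\to H^{s-2}$, $\Pcal:H^s\to H^{s-4}$) follows Remark~\ref{rem:triangular}.
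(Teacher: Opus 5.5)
Your proposal is correct and follows the paper's proof essentially line by line. You linearise \eqref{eq:H4}--\eqref{eq:H5} at the zero-incoming-data horizon background to get $D\dot N=-2\omega\dot N+Ek$ and $D\dot A=-4\omega\dot A+\Bcal\dot N$, integrate backwards from zero terminal data with an exchange of the order of integration, and handle the $(\lambda,G)$-variation via $D\mathcal A(0,0)=0$ and Lemma~\ref{lem:Bstat}. The extra checks you list ($\widehat D$ versus $D$, the bundle identification, the varied coefficients multiplying vanishing background fields) are exactly what the paper covers in one phrase: every product of a vanishing background quantity with a first variation drops out.
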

\begin{proof}
At the background $\Omega\chi=0$, $\Omega^{-1}\chib=0$ and $\Omega^{-2}\ab=0$. In the linearisations of \eqref{eq:H4}, \eqref{eq:H5}
every term that is a product of a vanishing background quantity with a first variation drops out, and what remains is
\[
 D\dot N=-2\omega\dot N+Ek,\qquad D\dot A=-4\omega\dot A+\Bcal\dot N,\qquad N=\Omega^{-1}\chib,\ A=\Omega^{-2}\ab .
\]
(The first variation of $\Ecal(\gd,\etab)$ along the collar is $E k(v)$, by \eqref{eq:etadot} and $\dot\phi=0$; in \eqref{eq:H5} at the
background the coefficients are those of $\Bcal$, Section~\ref{sec:Bop}.) With $\dot N(1)=0$ we get
$\dot N(v)=-e^{-2\omega v}\int_v^1e^{2\omega v'}Ek(v')\,dv'$, so $\dot N(0)=-EH_1$, since $E$ is angular and commutes with the
$v$-integral. Then, with $\dot A(1)=0$, inserting $\dot N$ and exchanging the order of integration over $0\le v\le v'\le1$,
\begin{align*}
 \dot A(0)&=-\int_0^1e^{4\omega v}\Bcal\dot N(v)\,dv
 =\int_0^1\Pcal k(v')e^{2\omega v'}\Big(\int_0^{v'}e^{2\omega v}dv\Big)dv'\\
 &=\frac1{2\omega}\int_0^1(e^{4\omega v'}-e^{2\omega v'})\Pcal k(v')\,dv'.
\end{align*}
For $(\lambda,G)$, the target variation $(\dot\lambda\chib_H+\Ccal\dot G,0)$ is transported back by $D\dot N=-2\omega\dot N$, which gives
the factor $e^{2\omega}$, and it produces no $\ab$, because $\Bcal\chib_H=\Bcal\Ccal\dot G=0$ (Lemma~\ref{lem:Bstat}).
\end{proof}

In particular, if $H_1=H_2=0$, the seed is invisible to first order at every angular frequency; this gives the free direction $k_*$ used
in Section~\ref{ch:sign}. Conversely, the two moments, the cut and the boost together must be able to produce every incoming datum. This
is proved in the remainder of this section.

\subsection{Ellipticity and index}\label{sec:symbols}

\paragraph{Principal symbols.} We compute principal symbols in the usual way: freeze the coefficients at a point, use a
$\gd_H$-orthonormal frame there, and replace each derivative $\partial_A$ by $i\zeta_A$ for a unit covector $\zeta$. All operators below
are built covariantly from $\gd_H$, so it suffices to take $\zeta=(1,0)$. A one-form is represented by its components $b=(b_1,b_2)$, a
symmetric tensor by its matrix $N$, and a tracefree symmetric $k$ by $(p,q)$, where
\[
 k=\begin{pmatrix}p&q\\q&-p\end{pmatrix}.
\]
For the first order operators that occur we then have, with $(\Nd\ohat b)_{AB}=\Nd_Ab_B+\Nd_Bb_A-(\Div b)\gd_{H,AB}$,
\begin{equation}\label{eq:symbols}
 \sigma(d)f=(if,0),\qquad \sigma(\Div)b=ib_1,\qquad \sigma(\Div)N=i(N_{11},N_{12}),\qquad
 \sigma(\Nd\ohat)b=i\begin{pmatrix}b_1&b_2\\b_2&-b_1\end{pmatrix},
\end{equation}
and $\sigma(\Nd^2)f=-\zeta\otimes\zeta\,f=\begin{pmatrix}-f&0\\0&0\end{pmatrix}$. For instance,
$\sigma(\Nd\ohat)b=i(\zeta\otimes b+b\otimes\zeta-(\zeta\cdot b)I)=i\begin{pmatrix}2b_1-b_1&b_2\\b_2&-b_1\end{pmatrix}$.

\emph{The operator $E$.} By Lemma~\ref{lem:Ecov}, the second order part of $Ek$ is $-(\Nd_A\gamma_B+\Nd_B\gamma_A)-(\Div\gamma)\gd_H$
with $\gamma=\frac12\Div k$; the terms $2\dot\Gamma^C_{AB}\eta_{H,C}$, $2(\eta_H\otimes\gamma+\gamma\otimes\eta_H)$ and $-K_Hk$ are of lower order.
By \eqref{eq:symbols}, $\gamma$ has symbol $\frac i2(p,q)$. The symbol of $\Nd_A\gamma_B+\Nd_B\gamma_A$ is
$i(\zeta\otimes\gamma+\gamma\otimes\zeta)=i\cdot\frac i2\begin{pmatrix}2p&q\\q&0\end{pmatrix}$, and that of $\Div\gamma$ is $i\cdot\frac i2p$. Hence
\[
 \sigma_2(E)k=\frac12\begin{pmatrix}2p&q\\q&0\end{pmatrix}+\frac12p\,I=\begin{pmatrix}\frac32p&\frac12q\\[2pt]\frac12q&\frac12p\end{pmatrix}.
\]

\emph{The operator $\Ccal$.} Since $\Ccal F=2\Nd^2F$ plus first order terms, $\sigma_2(\Ccal)F=\begin{pmatrix}-2F&0\\0&0\end{pmatrix}$.

\emph{The operator $\Bcal$.} By Definition~\ref{def:B}, the second order part of $\Bcal N$ is $-\Nd\ohat(\Div W-\frac12dB)$, where $B$ is the
trace and $W$ the tracefree part of $N$; the remaining terms of $\beta_N$ and of $\Bcal N$ are of lower order. By \eqref{eq:symbols},
$\Div W-\frac12dB$ has symbol $i\big(W_{11}-\frac12B,\,W_{12}\big)$, and therefore
\[
 \sigma_2(\Bcal)N=-i\cdot i\begin{pmatrix}W_{11}-\frac12B&W_{12}\\[2pt]W_{12}&-W_{11}+\frac12B\end{pmatrix}
 =\begin{pmatrix}W_{11}-\frac12B&W_{12}\\[2pt]W_{12}&\frac12B-W_{11}\end{pmatrix}.
\]

\emph{The operator $\Pcal=\Bcal E$.} For $N=\sigma_2(E)k$ we have $B=2p$ and $W=N-\frac B2I=\frac12\begin{pmatrix}p&q\\q&-p\end{pmatrix}$, so
$W_{11}-\frac12B=-\frac p2$ and
\[
 \sigma_4(\Pcal)k=\sigma_2(\Bcal)\sigma_2(E)k=\frac12\begin{pmatrix}-p&q\\q&p\end{pmatrix}.
\]

\emph{Ellipticity.} The operator $(k,F)\mapsto Ek+\Ccal F$ maps sections of a bundle of rank three (two tracefree components and one
scalar) to sections of a bundle of rank three (symmetric tensors). Its symbol is
\[
 (p,q,F)\mapsto\big(\tfrac32p-2F,\ \tfrac12q,\ \tfrac12p\big)\quad(\text{entries }11,12,22),\qquad
 \det\begin{pmatrix}\frac32&0&-2\\0&\frac12&0\\\frac12&0&0\end{pmatrix}=\frac12\ne0 .
\]
The operator $\Pcal$ maps tracefree tensors to tracefree tensors, with symbol $(p,q)\mapsto(-\frac p2,\frac q2)$, which is invertible. Both
operators are therefore elliptic between bundles of equal rank, hence Fredholm $H^r\to H^{r-2}$, resp.\ $H^r\to H^{r-4}$. To compute their
indices we compare with the round sphere.

\paragraph{Round sphere identities.} On the unit round sphere $\Nd\ohat$ is defined as above, with $\gac$ in place of $\gd_H$, and we
let $\Delta f=\nu_\ell f$, $\nu_\ell=-\ell(\ell+1)$.
\begin{lemma}\label{lem:E1}
For every one-form $\gamma$, $\Div(\Nd\ohat\gamma)=\Delta\gamma+\gamma$ (rough Laplacian). Consequently
\begin{equation}\label{eq:E1}
 \Div(\Nd\ohat df)=(\nu_\ell+2)\,df,\qquad \Div(\Nd\ohat{}^*df)=(\nu_\ell+2)\,{}^*df .
\end{equation}
\end{lemma}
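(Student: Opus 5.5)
The plan is to establish the first identity by a pointwise Weitzenböck computation on the unit round sphere, and then obtain \eqref{eq:E1} by specialising to exact and co-exact one-forms. Write $(\Nd\ohat\gamma)_{AB}=\Nd_A\gamma_B+\Nd_B\gamma_A-(\Div\gamma)\gac_{AB}$ and take the divergence in $B$:
\[
 \Div(\Nd\ohat\gamma)_A=\Nd^B\Nd_A\gamma_B+\Nd^B\Nd_B\gamma_A-\Nd_A\Div\gamma .
\]
The middle term is the rough Laplacian $\Delta\gamma_A$. To handle the first term we commute the derivatives: $\Nd^B\Nd_A\gamma_B=\Nd_A\Nd^B\gamma_B+\Ric_A{}^C\gamma_C$. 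On the unit sphere $\Ric=\gac$, so this term equals $\Nd_A\Div\gamma+\gamma_A$. The $\Nd_A\Div\gamma$ contributions cancel, and we are left with $\Delta\gamma+\gamma$. The only point needing care is the sign of the commutator in the curvature convention \eqref{eq:Rconv}; we check it against the requirement $\Ric=\gac$ with $K=1$.

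For the consequences, we take $\gamma=df$ with $\Delta f=\nu_\ell f$. The Bochner identity on one-forms gives $\Delta(df)=d\Delta f+\Ric(\Nd f)=d(\nu_\ell f)+df=(\nu_\ell+1)df$. This is obtained by the same commutation, $\Nd^B\Nd_B\Nd_Af=\Nd^B\Nd_A\Nd_Bf=\Nd_A\Delta f+\Ric_A{}^C\Nd_Cf$. Adding $\gamma=df$ then yields $(\nu_\ell+2)df$.

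For $\gamma={}^*df$ we use that the area form is parallel, so the Hodge dual commutes with $\Nd$ and hence with the rough Laplacian. This gives $\Delta({}^*df)={}^*\Delta df=(\nu_\ell+1){}^*df$, and adding ${}^*df$ again gives $(\nu_\ell+2){}^*df$.

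No step is a real obstacle. The only place where an error could enter is the sign bookkeeping in the Ricci commutation, which fixes the constant $+1$ and hence the shift $\nu_\ell+2$. As a consistency check, $\ell=1$ gives $\nu_1+2=0$. This matches the fact that $df$ and ${}^*df$ for first spherical harmonics are conformal Killing fields, so $\Nd\ohat$ annihilates them (Lemma~\ref{lem:D2}).
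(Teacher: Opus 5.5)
Your proposal is correct and follows the paper's proof essentially line by line. You expand the divergence, commute $\Nd^B\Nd_A\gamma_B$ by the Ricci identity with $\Ric=\gac$, apply the same commutation to get $\Delta df=d\Delta f+df$, and use that the area form is parallel so that ${}^*$ commutes with the rough Laplacian.
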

\begin{proof}
$\Nd^A(\Nd_A\gamma_B+\Nd_B\gamma_A-\Div\gamma\,\gac_{AB})=\Delta\gamma_B+\Nd^A\Nd_B\gamma_A-\Nd_B\Div\gamma$, and the Ricci identity gives
$\Nd^A\Nd_B\gamma_A=\Nd_B\Nd^A\gamma_A+\mathrm{Ric}_B{}^A\gamma_A=\Nd_B\Div\gamma+\gamma_B$ ($\mathrm{Ric}=\gac$). For $\gamma=df$: $\Delta df=d\Delta f+df$ by
the same Ricci identity, so $\Div(\Nd\ohat df)=(\nu_\ell+1)df+df$. For $\gamma={}^*df$: $\Nd\slashed\epsilon=0$, so
$\Delta{}^*df={}^*\Delta df=(\nu_\ell+1){}^*df$.
\end{proof}
For $\ell\ge2$ the tensors $\Nd\ohat df$ and $\Nd\ohat{}^*df$ are nonzero (their divergence is); these are the electric and magnetic
tensor harmonics. They span the tracefree symmetric tensors: a tracefree $k$ orthogonal to all of them has $\Div k$ orthogonal to all
$df$, ${}^*df$, hence $\Div k=0$ (Hodge decomposition of one-forms on $S^2$), and then the integrated Weitzenb\"ock identity of the
following lemma forces $k=0$.

\begin{lemma}[Weitzenb\"ock]\label{lem:weitz}
For a tracefree symmetric $k$ on a surface of constant curvature $K$, $\int|\Nd k|^2=2\int|\Div k|^2-2K\int|k|^2$. On the unit sphere,
$2\int|\Div k|^2=\int|\Nd k|^2+2\int|k|^2$.
\end{lemma}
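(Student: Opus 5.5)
The plan is a pointwise Bochner/Weitzenböck computation followed by integration over the closed surface. Write $\nabla$ for $\Nd$ and set $T_{CAB}:=\Nd_Ck_{AB}$. The starting point is to split $\Nd k$ into a part that is symmetric in the first two indices and a part that is antisymmetric in them. In two dimensions, for tracefree symmetric $k$, the antisymmetric part $\Nd_Ck_{AB}-\Nd_Ak_{CB}$ is determined by $\Div k$: for fixed $B$ it is a $2$-form in $(C,A)$, so it equals $\slashed\epsilon_{CA}$ times a one-form. Contracting with $\gd^{CB}$ and using $\tr k=0$ identifies that one-form with ${}^*\Div k$, up to sign. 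Squaring then gives the pointwise identity
\[
 |\Nd_Ck_{AB}-\Nd_Ak_{CB}|^2=2|\Div k|^2 .
\]
This is the first step. It is a finite-dimensional linear algebra check, and I would do it most cleanly in the helicity/complex notation, with $k\leftrightarrow k_{11}-ik_{12}$ and $\Nd$ split into $\eth,\bar\eth$.

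Next, expand the square as $2|\Nd k|^2-2\,\Nd_Ck_{AB}\Nd^Ak^{CB}$. The cross term must be integrated by parts, which converts it into
\[
 -\int k_{AB}\Nd_C\Nd^Ak^{CB} .
\]
Commuting the two covariant derivatives by the Ricci identity gives $\Nd^A\Nd_Ck^{CB}$ plus curvature terms. With $R_{ABCD}=K(\gd_{AC}\gd_{BD}-\gd_{AD}\gd_{BC})$, these curvature terms contract to $2Kk^{AB}$ (using $\tr k=0$). Integrating by parts once more turns $\int k\cdot\Nd\Div k$ into $-\int|\Div k|^2$. The outcome is
\[
 \int\Nd_Ck_{AB}\Nd^Ak^{CB}=\int|\Div k|^2-2K\int|k|^2 .
\]
Substituting this into the expanded square yields
\[
 2\int|\Div k|^2=2\int|\Nd k|^2-2\int|\Div k|^2+4K\int|k|^2,
\]
that is, $\int|\Nd k|^2=2\int|\Div k|^2-2K\int|k|^2$. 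For the unit sphere one sets $K=1$ and rearranges to get the second form of the statement.

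The main obstacle is bookkeeping in two places: the sign and the constant in the curvature term, and the factor $2$ in the pointwise two-dimensional identity. As a check I would test both on the unit sphere with $k=\Nd\ohat df$. By Lemma~\ref{lem:E1}, $\Div k=(\nu_\ell+2)df$, so $\int|\Div k|^2=(\nu_\ell+2)^2\int|df|^2$. The norm $\int|k|^2$ is obtained by integrating by parts, which gives $-(\nu_\ell+2)\int|df|^2$. This lets me confirm the constants before writing the final version. The case $\ell=2$ is useful here, since the identity must also be compatible with conformal Killing tensors being absent.
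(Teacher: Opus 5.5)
Your proposal is correct and is essentially the paper's proof: you use the same antisymmetrised tensor $\Nd_Ck_{AB}-\Nd_Ak_{CB}$ with pointwise norm $2|\Div k|^2$, the same expansion of its square, and the same integration by parts in which the Ricci identity contributes $2Kk$; the only difference is that the paper checks the pointwise identity by components in an orthonormal frame, whereas you use the $2$-form argument. There is one slip, in your optional sanity check rather than the proof: tracefreeness gives $\int|\Nd\ohat df|^2=2\int k^{AB}\Nd_Af_B=-2(\nu_\ell+2)\int|df|^2$, not $-(\nu_\ell+2)\int|df|^2$, so the check as written would not have confirmed the constants.
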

\begin{proof}
Set $T_{ABC}=\Nd_Ak_{BC}-\Nd_Bk_{AC}$. In an orthonormal frame with $k=\begin{pmatrix}a&b\\b&-a\end{pmatrix}$, the only components are
$T_{121}=-T_{211}=\Nd_1b-\Nd_2a$ and $T_{122}=-T_{212}=-\Nd_1a-\Nd_2b$, while $\Div k=(\Nd_1a+\Nd_2b,\ \Nd_1b-\Nd_2a)$. So
$|T|^2=2|\Div k|^2$ pointwise. Expanding $|T|^2=2|\Nd k|^2-2\Nd_Ak_{BC}\Nd^Bk^{AC}$ gives $|\Nd k|^2=|\Div k|^2+\Nd_Ak_{BC}\Nd^Bk^{AC}$.
Integrating by parts and using the Ricci identity $\Nd_A\Nd_Bk^{AC}-\Nd_B\Nd_Ak^{AC}=2Kk_B{}^C$ (for tracefree symmetric $k$ on a
surface the two curvature terms each give $Kk_B{}^C$, and the trace term vanishes), we obtain
$\int\Nd_Ak_{BC}\Nd^Bk^{AC}=-\int k_{BC}\Nd_A\Nd^Bk^{AC}=\int|\Div k|^2-2K\int|k|^2$.
\end{proof}

\paragraph{Round comparison operators.} The round operator with the same principal part as $(E,\Ccal)$ is
\[
 E_0(k,F)=-\sym\Nd\Div k-\tfrac12(\Div\Div k)\gac+2\Nd^2F
\]
($\sym$ including the factor $\frac12$); its symbol is $\sigma_2(E)k+\sigma_2(\Ccal)F$ by the computation above. We compute $E_0$ on
tensor harmonics, writing $\nu=\nu_\ell$.

\emph{Electric modes} $k=\Nd\ohat df$, $F=\kappa f$. By \eqref{eq:E1}, $\Div k=(\nu+2)df$, $\Div\Div k=(\nu+2)\nu f$, and
$\Nd^2f=\frac12k+\frac12\nu f\gac$. So
\begin{align*}
 E_0(k,\kappa f)&=-(\nu+2)\big(\tfrac12k+\tfrac12\nu f\gac\big)-\tfrac12(\nu+2)\nu f\gac+2\kappa\big(\tfrac12k+\tfrac12\nu f\gac\big)\\
 &=\big(\kappa-\tfrac{\nu+2}2\big)k+\big(\kappa\nu-\nu(\nu+2)\big)f\gac .
\end{align*}
Recording the output by (trace, tracefree coefficient) $=(2(\kappa\nu-\nu(\nu+2)),\ \kappa-\frac{\nu+2}2)$, the matrix acting on
$(1,\kappa)$ is
\[
 \begin{pmatrix}-2\nu(\nu+2)&2\nu\\-\frac12(\nu+2)&1\end{pmatrix},\qquad
 \det=-2\nu(\nu+2)+\nu(\nu+2)=-\nu(\nu+2)\neq0\quad(\ell\ge2).
\]
\emph{Magnetic modes} $k=\Nd\ohat{}^*df$: $\Div k=(\nu+2){}^*df$, $\Div\Div k=0$, and $\sym\Nd{}^*df=\frac12k$ (${}^*df$ is
divergence free); so $E_0k=-\frac12(\nu+2)k$.

\emph{$\ell=1$}: there are no tensor harmonics, and $2\Nd^2F=-2F\gac$ (first harmonics satisfy $\Nd^2F=-F\gac$) produces the pure-trace
output. \emph{$\ell=0$}: constants $F$ lie in the kernel, and the constant trace is the only output not attained.

Hence $\ker E_0=\{(0,\mathrm{const})\}$, $\operatorname{coker}E_0=\R\gac$, and $\operatorname{ind}E_0=0$.

The comparison operator for $\Pcal$ is $\Pcal_0=\frac12\Nd\ohat\Div\Nd\ohat\Div-\Nd\ohat d\Div\Div$. By \eqref{eq:symbols}, $\Nd\ohat\Div$ has symbol
$k\mapsto i\cdot i\,k=-k$, so the first term has symbol $\frac12k$; $\Div\Div$ has symbol $k\mapsto-p$, and $\Nd\ohat d$ has symbol
$f\mapsto-\mathrm{diag}(f,-f)$, so the second term has symbol $-\mathrm{diag}(p,-p)$. Altogether the symbol is
$\frac12k-\mathrm{diag}(p,-p)=\sigma_4(\Pcal)k$. On tensor harmonics, by \eqref{eq:E1}, $\Pcal_0$ acts by the multiplier
$\frac12(\nu+2)^2-\nu(\nu+2)=(\nu+2)(1-\frac\nu2)$ on electric modes and $\frac12(\nu+2)^2$ on magnetic modes;
both are nonzero for $\ell\ge2$. So $\Pcal_0$ is invertible $H^r\to H^{r-4}$ and has index zero.

\begin{lemma}[Index zero]\label{lem:index}
At the extremal horizon, $(k,F)\mapsto Ek+\Ccal F$ and $\Pcal$ have Fredholm index zero, on the full spaces and on the
$\mathcal R_\pi$-invariant subspaces.
\end{lemma}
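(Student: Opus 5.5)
The proof will rest on the homotopy invariance of the Fredholm index. The principal symbols of $(k,F)\mapsto Ek+\Ccal F$ and of $\Pcal$ have just been computed: they are determined by $\gd_H$ alone. The round comparison operators $E_0$ and $\Pcal_0$ have the same principal symbols, but with respect to $\gac$, and they have index zero, as shown above. My plan is to connect the horizon operators to the round ones through a continuous family of elliptic operators between fixed Sobolev spaces.

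The family I would use is the following. Take metrics $g_t$, $t\in[0,1]$, on $S^2$, from $\gac$ to $\gd_H$. A convenient choice is $g_t=(1-t)\gac+t\,\gd_H$ in the identification $(z,\varphi)\leftrightarrow(z,\psi)$, or the conformal path $\exp(t\log(\gac^{-1}\gd_H))$ of Lemma~\ref{lem:log}, rescaled. With respect to $g_t$, form the operators with the principal parts of $E$ and $\Ccal$, resp.\ of $\Pcal$, built covariantly from $g_t$, namely $-\sym\Nd\Div k-\frac12(\Div\Div k)g_t+2\Nd^2F$ and $\frac12\Nd\ohat\Div\Nd\ohat\Div-\Nd\ohat d\Div\Div$. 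Add the lower order terms of $E$, $\Ccal$, $\Bcal$ multiplied by $t$.

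Two facts are needed. First, the symbol computation of the preceding subsection used only an orthonormal frame of the metric, so at every $t$ it gives the same invertible symbol matrices, with determinant $\frac12$ and $\mathrm{diag}(-\frac12,\frac12)$ respectively. Hence each member of the family is elliptic. Second, the family depends continuously on $t$ in operator norm $H^r\to H^{r-2}$, resp.\ $H^r\to H^{r-4}$. Since the tracefree bundle depends on $g_t$, I would transport everything to the fixed $\gac$-tracefree bundle by the square-root conjugation $\theta\mapsto h_t^{1/2}\theta h_t^{1/2}$, as in the Bundle identification remark. Lower order perturbations are compact, so they do not change the index. Homotopy invariance then gives $\operatorname{ind}(E+\Ccal)=\operatorname{ind}E_0=0$ and $\operatorname{ind}\Pcal=\operatorname{ind}\Pcal_0=0$.

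For the $\mathcal R_\pi$-invariant subspaces I would use the fact that $\mathcal R_\pi$ is an isometry of every $g_t$ and preserves $\etab_H$ and $K_H$, so each member of the family commutes with $\mathcal R_\pi$. The operators therefore split into the $\pm1$ eigenspaces of $\mathcal R_\pi$, and each restriction is Fredholm, because the kernel and cokernel of the whole operator decompose. The index of each restriction is also homotopy invariant. In the round case it can be read off directly. For $E_0$, the kernel $\{(0,\mathrm{const})\}$ and the cokernel $\R\gac$ are both invariant, so the invariant index is zero. For $\Pcal_0$, the operator is invertible, so its restriction is invertible too.

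The main obstacle is to make the homotopy rigorous in the presence of the varying tracefree bundle and the coordinate singularities at the poles. I would handle this by working frame-free, using Lemma~\ref{lem:log} for smoothness of the conjugations, and by checking uniform ellipticity at the poles with the covariant symbol formula.
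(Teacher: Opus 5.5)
Your proposal is correct and follows essentially the paper's own argument. Both use the linear homotopy $g_t=(1-t)\gac+t\gd_H$ with the tracefree bundles identified by square-root conjugation, and both rely on principal symbols that are the same invertible matrices in every $g_t$-orthonormal frame. The lower-order terms are then switched on (you do this simultaneously via the factor $t$, the paper at $t=1$), and the index is constant along the continuous Fredholm path; the invariant case rests on $\mathcal R_\pi$ being an isometry of every $g_t$. Your additional check that the round kernel $\{(0,\mathrm{const})\}$ and cokernel $\R\gac$ of $E_0$ are $\mathcal R_\pi$-invariant supplies the base-point computation on the invariant subspace, which the paper leaves implicit.
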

\begin{proof}
Join $\gac$ to $\gd_H$ by $g_t=(1-t)\gac+t\gd_H$, identifying the tracefree bundles by the positive square root of $g_t^{-1}\gd_H$. The
operators $E_0,\Pcal_0$ built with $g_t$ have, in a $g_t$-orthonormal frame at a unit covector, the same principal symbols as above,
which are invertible for every $t$. At $t=1$ deform the lower order coefficients linearly to the actual horizon ones; this does not
change principal symbols. The whole path consists of elliptic operators between bundles of equal rank, depending continuously on the
parameter in operator norm, hence of Fredholm operators, and the index is constant along it. The
invariant subspaces are handled in the same way, since $\mathcal R_\pi$ is an isometry of every $g_t$.
\end{proof}
Thus $\Pcal$ is invertible as soon as it is injective. We prove injectivity at all frequencies by an exact computation.

\subsection{The helicity calculus on the horizon}\label{sec:helcalc}

Fix an azimuthal mode $e^{im\psi}$ and complexify; the Fourier factor is suppressed throughout, so $\partial_\psi$ acts as $im$. We
abbreviate
\[
 M_m:=\frac{mu_z}{2s_z},\qquad \mu:=M_m+\frac{4z}{s_zu_z}=\frac{mu_z^2+8z}{2s_zu_z},\qquad c:=\frac1{z+i},\qquad\bar c=\frac1{z-i},
\]
and note $(z+i)(z-i)=u_z$, $\frac{2z}{u_z}=c+\bar c$, $\frac1{s_zu_z}=\frac12\big(\frac1{s_z}+\frac1{u_z}\big)$ (since $u_z+s_z=2$),
and $\frac{s_z}{u_z}\bar c=\frac{s_z}{(z-i)^2(z+i)}$.

\begin{definition}[Helicity components]\label{def:helicity}
A one-form $\gamma$ and a tracefree symmetric tensor $W$ (with respect to $\gd_H$) of mode $m$ are written as
\begin{gather*}
 \gamma_z=\gamma_++\gamma_-,\quad \gamma_\psi=\frac{2is_z}{u_z}(\gamma_+-\gamma_-);\\
 W_{zz}=\frac{u_z}{s_z}(W_++W_-),\quad W_{z\psi}=2i(W_+-W_-),\quad W_{\psi\psi}=-\frac{4s_z}{u_z}(W_++W_-).
\end{gather*}
A symmetric tensor $N$ is described by its trace $B=\tr_{\gd_H}N$ and the helicity components of its tracefree part.
\end{definition}
In the $\gd_H$-orthonormal frame $f_1=\sqrt{s_z/u_z}\,\partial_z$, $f_2=\frac12\sqrt{u_z/s_z}\,\partial_\psi$ one has
$W(f_1,f_1)=W_++W_-$ and $W(f_1,f_2)=i(W_+-W_-)$, so $W_\pm=\frac12\big(W(f_1,f_1)\mp iW(f_1,f_2)\big)$ are the components of spin
weight $\pm2$. Similarly, $\gamma_\pm$ are $\frac12\sqrt{u_z/s_z}$ times the spin $\pm1$ components. Note that $f_1$ points along
$-\partial_\vartheta$; so for the frame $(e_1,e_2)$ of \eqref{eq:roundframe} the roles of $\pm$ are interchanged. With the present
convention, positive helicity and mode $m$ correspond to the round-frame pattern $S(m\varphi+\ldots)$, i.e.\ to helicity $\sigma=m/2$
in the sense of Lemma~\ref{lem:poles}.

\begin{lemma}[Building blocks]\label{lem:blocks}
For a function $f$, a one-form $\gamma=(\gamma_+,\gamma_-)$, a tracefree tensor $W=(W_+,W_-)$ of mode $m$, and $\etab_H=-\eta_H$:
\begin{align*}
 (df)_\pm&=\tfrac12\big(f'\pm M_mf\big), \qquad\qquad
 (\Div W)_\pm=\Big(\partial_z\mp M_m-\frac{4z}{s_zu_z}\Big)W_\pm,\\
 \Div\gamma&=\frac{s_z}{u_z}\Big[\Big(\partial_z-M_m-\frac{4z}{s_zu_z}\Big)\gamma_++\Big(\partial_z+M_m-\frac{4z}{s_zu_z}\Big)\gamma_-\Big],\\
 (\Nd\ohat\gamma)_\pm&=\frac{s_z}{u_z}\big(\partial_z\pm M_m\big)\gamma_\pm,\qquad\qquad
 (\etab_H)_+=\tfrac12\bar c,\quad(\etab_H)_-=\tfrac12c,\\
 (W\cdot\etab_H)_+&=cW_+,\quad(W\cdot\etab_H)_-=\bar cW_-,\qquad
 (\etab_H\ohat\gamma)_+=\frac{s_z}{u_z}\,\bar c\,\gamma_+,\quad(\etab_H\ohat\gamma)_-=\frac{s_z}{u_z}\,c\,\gamma_-,\\
 ({}^*W)_\pm&=\pm i\,W_\pm,\qquad\qquad \gamma\cdot\gamma'=\frac{2s_z}{u_z}(\gamma_+\gamma'_-+\gamma_-\gamma'_+).
\end{align*}
Here $'=\partial_z$, all operations are with respect to $\gd_H$, and ${}^*$ is the left dual. The divergence lowers the spin weight by
one, and $\Nd\ohat$ raises it by one; in particular $(\Div W)_+$ depends only on $W_+$, and $\Nd\ohat\gamma$ is tracefree.
\end{lemma}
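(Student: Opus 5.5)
The plan is a direct computation in coordinates $(z,\psi)$ with the explicit horizon data of Proposition~\ref{prop:kerrdata}, taking the conventions of Definition~\ref{def:helicity} literally. The working tools are the frame $f_1=\sqrt{s_z/u_z}\,\partial_z$, $f_2=\frac12\sqrt{u_z/s_z}\,\partial_\psi$, the Christoffel symbols \eqref{eq:ChrH}, and the substitution $\partial_\psi\mapsto im$.

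First, the algebraic identities, which contain no derivatives. For $(df)_\pm$, invert the definition: $\gamma_\pm=\frac12\big(\gamma_z\mp i\frac{u_z}{2s_z}\gamma_\psi\big)$. With $\gamma_\psi=imf$ this gives $\frac12(f'\pm M_mf)$. The same inversion applied to \eqref{eq:etaH} gives $(\etab_H)_z=z/u_z$ and $(\etab_H)_\psi=-2s_z/u_z^2$, hence $(\etab_H)_\pm=\frac12\big(\frac z{u_z}\pm\frac{i}{u_z}\big)$, which equals $\frac12\bar c$, resp.\ $\frac12c$.

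The dual ${}^*W_\pm=\pm iW_\pm$ follows because, in the orthonormal frame, the left dual rotates the spin-weight components by a phase; here it is essential to use $\slashed\epsilon_{z\psi}=+2$, that is, $\slashed\epsilon(f_1,f_2)=1$. The inner product $\gamma\cdot\gamma'=\gd_H^{zz}\gamma_z\gamma'_z+\gd_H^{\psi\psi}\gamma_\psi\gamma'_\psi$ expands directly into $\frac{2s_z}{u_z}(\gamma_+\gamma'_-+\gamma_-\gamma'_+)$. The products $W\cdot\etab_H$ and $\etab_H\ohat\gamma$ are products of spin-weighted quantities, so the spin weights add; I would verify them by computing the frame components and using $\frac z{u_z}\pm\frac i{u_z}$ together with the factor $\frac{s_z}{u_z}$ coming from the normalisation $\frac12\sqrt{u_z/s_z}$ of $\gamma_\pm$.

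Second, the differential identities for $\Div$ and $\Nd\ohat$. For $\Nd\ohat\gamma$, compute $(\Nd\ohat\gamma)_{zz}=2(\gamma_z'-\Gamma^z_{zz}\gamma_z)-\gd_{H,zz}\Div\gamma$ and $(\Nd\ohat\gamma)_{z\psi}=\partial_z\gamma_\psi+im\gamma_z-2\Gamma^\psi_{z\psi}\gamma_\psi$. Then extract the $\pm$ parts via $W_\pm=\frac12\big(\frac{s_z}{u_z}W_{zz}\mp\frac i2W_{z\psi}\big)$. The connection terms should cancel against the derivative of the weight $\frac{2is_z}{u_z}$ in $\gamma_\psi$, leaving $\frac{s_z}{u_z}(\partial_z\pm M_m)\gamma_\pm$.

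For $\Div\gamma=\frac1{\sqrt{\det}}\partial_A(\sqrt{\det}\,\gd_H^{AB}\gamma_B)$, use $\sqrt{\det}=2$ constant. This gives $\Div\gamma=\partial_z\big(\frac{s_z}{u_z}(\gamma_++\gamma_-)\big)+\frac{u_z}{4s_z}\,im\,\frac{2is_z}{u_z}(\gamma_+-\gamma_-)$. Together with $(s_z/u_z)'=-\frac{4z}{u_z^2}=\frac{s_z}{u_z}\cdot\big(-\frac{4z}{s_zu_z}\big)$, this is the stated formula.

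For $(\Div W)_\pm$, write $(\Div W)_z=\gd_H^{zz}\Nd_zW_{zz}+\gd_H^{\psi\psi}\Nd_\psi W_{\psi z}$ and $(\Div W)_\psi$ similarly, with the symbols \eqref{eq:ChrH}. Substitute the parametrisation of Definition~\ref{def:helicity} and project. The term $-\frac{4z}{s_zu_z}$ should come from differentiating the weight $\frac{u_z}{s_z}$ combined with $\Gamma^z_{zz}$ and $\Gamma^\psi_{z\psi}$.

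The main obstacle I expect is exactly this bookkeeping of connection terms in $\Div W$ and $\Nd\ohat\gamma$, together with the sign conventions for $\pm$ and for the dual. As a cross-check, I would verify the decoupling structurally: $\Div$ lowers spin weight and $\Nd\ohat$ raises it, a standard eth/eth-bar property. Consistency can then be tested on known objects, for instance $\Nd\ohat(dw+2w\eta_H)$, which must vanish by \eqref{eq:W1} since $\partial_\psi$ is Killing, and the identity \eqref{eq:W3}.
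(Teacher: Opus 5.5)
Your proposal is correct, but it follows a different route from the paper. You work directly in the coordinates $(z,\psi)$ with the Christoffel symbols \eqref{eq:ChrH}, inverting Definition~\ref{def:helicity} as $\gamma_\pm=\frac12\big(\gamma_z\mp i\frac{u_z}{2s_z}\gamma_\psi\big)$ and $W_\pm=\frac12\big(\frac{s_z}{u_z}W_{zz}\mp\frac i2W_{z\psi}\big)$. The paper instead passes to the complex frame $\mathfrak f=f_1-if_2$, $\bar{\mathfrak f}=f_1+if_2$. There $\gamma(\mathfrak f)=2\sqrt{s_z/u_z}\,\gamma_+$, $W(\mathfrak f,\mathfrak f)=4W_+$ and $W(\mathfrak f,\bar{\mathfrak f})=0$, and the whole connection is encoded in the single one-form $\Gamma=-\frac{4z}{u_z^2}\,d\psi$ through $\nabla_X\mathfrak f=i\Gamma(X)\mathfrak f$.

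In the paper's route the decoupling of the helicities is built in: covariant derivatives of $W(\mathfrak f,\mathfrak f)$ only pick up $-2i\Gamma$, and $(\nabla W)(\mathfrak f,\bar{\mathfrak f})=0$. Each formula is then a one-line evaluation, and the same calculus carries over to the later computations of $E$ and $\Bcal$. In your route the decoupling has to emerge from explicit cancellations, and I checked that it does. In $\Nd\ohat\gamma$, the term $-2\Gamma^\psi_{z\psi}\gamma_\psi$ cancels $\partial_z$ of the weight $\frac{2is_z}{u_z}$. In the $zz$-component, $-2\Gamma^z_{zz}\gamma_z$ cancels the $-\frac{4z}{s_zu_z}$ terms coming from $\Div\gamma$. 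Your formula for $\Div\gamma$ via the constant $\sqrt{\det\gd_H}=2$ is exactly right. Your approach is more elementary and easy to verify line by line; the paper's is shorter and makes the spin-weight structure transparent.

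One small correction to your bookkeeping forecast for $\Div W$. The derivative of the weight $\frac{u_z}{s_z}$ in $W_{zz}$ cancels against $2\Gamma^z_{zz}W_{zz}$, and $\Gamma^z_{zz}$ and $\Gamma^\psi_{z\psi}$ cancel each other in $\Nd_zW_{z\psi}$. The term $-\frac{4z}{s_zu_z}$ actually comes from $\Gamma^z_{\psi\psi}$ and $\Gamma^\psi_{\psi z}$ in $\Nd_\psi W_{\psi z}$, and from $\Gamma^z_{\psi\psi}$ in $\Nd_\psi W_{\psi\psi}$. Since you use all of \eqref{eq:ChrH}, the computation itself is unaffected.
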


\begin{proof}
We use the complex frame $\mathfrak f:=f_1-if_2$, $\bar{\mathfrak f}=f_1+if_2$. On functions of mode $m$,
\[
 \mathfrak f=\sqrt{s_z/u_z}\,(\partial_z+M_m),\qquad \bar{\mathfrak f}=\sqrt{s_z/u_z}\,(\partial_z-M_m),
\]
since $\partial_\psi$ acts as $im$ and $\frac m2\sqrt{u_z/s_z}=\sqrt{s_z/u_z}\,M_m$. By the remarks after Definition~\ref{def:helicity},
\begin{gather*}
 \gamma(\mathfrak f)=2\sqrt{s_z/u_z}\,\gamma_+,\quad \gamma(\bar{\mathfrak f})=2\sqrt{s_z/u_z}\,\gamma_-,\\
 W(\mathfrak f,\mathfrak f)=4W_+,\quad W(\bar{\mathfrak f},\bar{\mathfrak f})=4W_-,\quad W(\mathfrak f,\bar{\mathfrak f})=0,
\end{gather*}
and $\sum_af_a\otimes f_a=\frac12(\mathfrak f\otimes\bar{\mathfrak f}+\bar{\mathfrak f}\otimes\mathfrak f)$. By \eqref{eq:ChrH}, $\nabla_{\partial_z}f_1$ is a multiple of
$\partial_z$ and $\nabla_{\partial_\psi}f_1=\sqrt{s_z/u_z}\,\Gamma^\psi_{z\psi}\partial_\psi=-\frac{4z}{u_z^2}f_2$. Hence
\[
 \nabla_X\mathfrak f=i\Gamma(X)\,\mathfrak f,\qquad \Gamma=-\frac{4z}{u_z^2}\,d\psi,\qquad
 i\Gamma(\mathfrak f)=-i\Gamma(\bar{\mathfrak f})=-\sqrt{s_z/u_z}\,\frac{2z}{s_zu_z},
\]
so that $(\nabla_X\gamma)(\mathfrak f)=X(\gamma(\mathfrak f))-i\Gamma(X)\gamma(\mathfrak f)$, $(\nabla_XW)(\mathfrak f,\mathfrak f)=X(W(\mathfrak f,\mathfrak f))-2i\Gamma(X)W(\mathfrak f,\mathfrak f)$
and $(\nabla_XW)(\mathfrak f,\bar{\mathfrak f})=0$. We also use $\partial_z\sqrt{s_z/u_z}=-\frac{2z}{s_zu_z}\sqrt{s_z/u_z}$. We derive the $(+)$-formulas; the
$(-)$-formulas follow in the same way with $\mathfrak f$ and $\bar{\mathfrak f}$ exchanged.

\emph{$df$ and $\etab_H$.} $df(\mathfrak f)=\mathfrak f(f)=\sqrt{s_z/u_z}\,(f'+M_mf)$. With $\etab_{H,z}=\frac z{u_z}$ and $\etab_{H,\psi}=-\frac{2s_z}{u_z^2}$,
$\etab_H(\mathfrak f)=\sqrt{s_z/u_z}\,\frac z{u_z}-\frac i2\sqrt{u_z/s_z}\,\big(-\frac{2s_z}{u_z^2}\big)=\sqrt{s_z/u_z}\,\frac{z+i}{u_z}$, and $\frac{z+i}{u_z}=\bar c$.

\emph{$\Div W$.} In $(\Div W)(\mathfrak f)=\frac12\big[(\nabla_{\bar{\mathfrak f}}W)(\mathfrak f,\mathfrak f)+(\nabla_{\mathfrak f}W)(\bar{\mathfrak f},\mathfrak f)\big]$ the second term vanishes, and
\[
 \tfrac12(\nabla_{\bar{\mathfrak f}}W)(\mathfrak f,\mathfrak f)=2\big(\bar{\mathfrak f}-2i\Gamma(\bar{\mathfrak f})\big)W_+
 =2\sqrt{s_z/u_z}\,\Big(W_+'-M_mW_+-\frac{4z}{s_zu_z}W_+\Big).
\]

\emph{$\Div\gamma$ and $\Nd\ohat\gamma$.} We have $\Div\gamma=\frac12\big[(\nabla_{\bar{\mathfrak f}}\gamma)(\mathfrak f)+(\nabla_{\mathfrak f}\gamma)(\bar{\mathfrak f})\big]$ and
$(\Nd\ohat\gamma)(\mathfrak f,\mathfrak f)=2(\nabla_{\mathfrak f}\gamma)(\mathfrak f)$, since $\gd_H(\mathfrak f,\mathfrak f)=0$. Now
\[
 (\nabla_{\bar{\mathfrak f}}\gamma)(\mathfrak f)=\frac{2s_z}{u_z}\Big(\gamma_+'-M_m\gamma_+-\frac{4z}{s_zu_z}\gamma_+\Big),\qquad
 (\nabla_{\mathfrak f}\gamma)(\mathfrak f)=\frac{2s_z}{u_z}\big(\gamma_+'+M_m\gamma_+\big):
\]
the derivative of the factor $\sqrt{s_z/u_z}$ in $\gamma(\mathfrak f)$ contributes $-\frac{2z}{s_zu_z}\gamma_+$ to both, and the connection term contributes
$-\frac{2z}{s_zu_z}\gamma_+$ to the first and $+\frac{2z}{s_zu_z}\gamma_+$ to the second.

\emph{Algebraic terms.} $(W\cdot\etab_H)(\mathfrak f)=\frac12W(\mathfrak f,\mathfrak f)\,\etab_H(\bar{\mathfrak f})$,
$(\etab_H\ohat\gamma)(\mathfrak f,\mathfrak f)=2\etab_H(\mathfrak f)\gamma(\mathfrak f)$ and
$\gamma\cdot\gamma'=\frac12\big[\gamma(\mathfrak f)\gamma'(\bar{\mathfrak f})+\gamma(\bar{\mathfrak f})\gamma'(\mathfrak f)\big]$. For the dual, $({}^*W)(f_1,\cdot)=W(f_2,\cdot)$ and
$({}^*W)(f_2,\cdot)=-W(f_1,\cdot)$ in the positively oriented frame ($\slashed\epsilon_{12}=1$), so $({}^*W)(\mathfrak f,\cdot)=iW(\mathfrak f,\cdot)$.

Inserting the component relations above gives all formulas.
\end{proof}

\subsection{$E$ in helicity form}\label{sec:Ehel}

Take a positive helicity input $k=(a,0)$, i.e.\ $k_{zz}=\frac{u_z}{s_z}a$, $k_{z\psi}=2ia$, $k_{\psi\psi}=-\frac{4s_z}{u_z}a$, and put
\[
 D_a:=\Big(\partial_z-M_m-\frac{4z}{s_zu_z}\Big)a=a'-\mu a .
\]
\begin{proposition}\label{prop:Ehel}
\begin{align*}
 \tr(Ek)&=-\frac{2s_z}{u_z}\Big(\partial_z-M_m-\frac{4z}{s_zu_z}+2c\Big)D_a,\qquad (Ek)_-=0,\\
 (Ek)_+&=-\frac{s_z}{2u_z}\Big(\partial_z+M_m+\frac{2z}{u_z}\Big)D_a+\Big(\frac{(i-z)m}{2u_z}+\frac{2(z+i)^2}{u_z^3}\Big)a .
\end{align*}
\end{proposition}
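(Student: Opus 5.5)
We will compute $Ek$ term by term from the covariant formula of Lemma~\ref{lem:Ecov}, working throughout in the helicity components of Definition~\ref{def:helicity} and using only the building blocks of Lemma~\ref{lem:blocks}. First, $\gamma=\frac12\Div k$ has, by the formula for $(\Div W)_\pm$ with $W=(a,0)$, the components $\gamma_+=\frac12D_a$, $\gamma_-=0$. So $\gamma$ is a pure positive-helicity one-form, and every first-order quantity below is expressed through $D_a$ and $a$.

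Next we split $Ek$ into its trace and its tracefree helicity parts.

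\emph{The terms built from $\gamma$.} The combination $-(\Nd_A\gamma_B+\Nd_B\gamma_A)-(\Div\gamma)\gd_H$ has tracefree part $-\Nd\ohat\gamma$ and trace $-4\Div\gamma$. The block for $\Nd\ohat\gamma$ gives the $(+)$-part $-\frac{s_z}{u_z}(\partial_z+M_m)\gamma_+$ and $(-)$-part zero. The block for $\Div\gamma$ gives the trace contribution $-\frac{4s_z}{u_z}(\partial_z-M_m-\frac{4z}{s_zu_z})\gamma_+$.

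\emph{The algebraic term $2(\eta_H\otimes\gamma+\gamma\otimes\eta_H)$.} With $\eta_H=-\etab_H$, this term equals $-2\etab_H\ohat\gamma-2(\etab_H\cdot\gamma)\gd_H$. Its tracefree $(+)$-part is $-\frac{2s_z}{u_z}\bar c\gamma_+$, and its $(-)$-part vanishes because $\gamma_-=0$. Its trace is $-4\etab_H\cdot\gamma=-\frac{8s_z}{u_z}(\etab_H)_-\gamma_+=-\frac{4s_z}{u_z}c\,\gamma_+$.

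Adding these two groups already reproduces the claimed trace $-\frac{2s_z}{u_z}(\partial_z-M_m-\frac{4z}{s_zu_z}+2c)D_a$, provided the remaining terms are tracefree. They are: $-K_Hk$ is tracefree, and $\tr(\dot\Gamma\cdot\eta_H)=\eta_{H,C}(\Div k-\frac12 d\tr k)^C=\eta_H\cdot\Div k$. That last piece is a trace contribution we must also include. It equals $-2\etab_H\cdot\gamma$, and combined with the term above this yields the coefficient $c$ rather than $2c$. The bookkeeping of this coefficient is therefore a point to check, and we will redo it carefully.

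\emph{The term $2\dot\Gamma^C_{AB}\eta_{H,C}$.} This is the main obstacle. We will write $\dot\Gamma^C_{AB}\eta_C=\frac12(\Nd_Ak_{BC}+\Nd_Bk_{AC}-\Nd_Ck_{AB})\eta^C$ and then split it into two pieces.

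The first piece is the symmetrised contraction $\Nd_{(A}(k\cdot\eta_H)_{B)}-k_{C(B}\Nd_{A)}\eta^C$. It is handled by the blocks for $\Nd\ohat$ and $W\cdot\etab_H$, together with the near-horizon equation~\eqref{eq:NH} to replace $\Nd\eta_H$ by $\eta_H\otimes\eta_H-\frac12K_H\gd_H$.

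The second piece is the directional derivative $-\frac12\Nd_{\eta^\sharp}k$. For this we will compute $(\Nd_Xk)_\pm$ using the frame connection $\Gamma=-\frac{4z}{u_z^2}d\psi$ from the proof of Lemma~\ref{lem:blocks}.

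The derivative terms must combine with $-\frac{s_z}{u_z}(\partial_z+M_m)\gamma_+-\frac{2s_z}{u_z}\bar c\gamma_+$ into $-\frac{s_z}{2u_z}(\partial_z+M_m+\frac{2z}{u_z})D_a$, using $c+\bar c=\frac{2z}{u_z}$. Everything left over is zeroth order in $a$. That remainder, together with $-K_Ha$ and $K_H=\frac{2(1-3z^2)}{u_z^3}$, should simplify to $\big(\frac{(i-z)m}{2u_z}+\frac{2(z+i)^2}{u_z^3}\big)a$; here $\frac{2(z+i)^2}{u_z^3}=\frac{2}{u_z(z-i)^2}$ is the natural combination to aim for.

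For $(Ek)_-=0$, we observe that every term lowers the output $(-)$-part to something proportional to $\gamma_-=0$ or to $W_-=0$. The only exception is the $\etab_H$-contraction in $\dot\Gamma$, whose $(-)$-parts we must verify cancel by the spin-weight counting of Lemma~\ref{lem:blocks}. Multiplying $\eta_H$ (spin $\pm1$) into $k$ (spin $+2$) gives spin $+1$ or $+3$ after contraction, never $-2$. As a sanity check of all constants we will compare the principal part with $\sigma_2(E)$ from Section~\ref{sec:symbols}.
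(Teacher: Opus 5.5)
Your route is the paper's: evaluate the terms of Lemma~\ref{lem:Ecov} one by one with the blocks of Lemma~\ref{lem:blocks}. Your values for $\gamma_\pm$, and for the $(+)$-parts and traces of $-\Nd\ohat\gamma-2(\Div\gamma)\gd_H$ and of $2(\eta_H\otimes\gamma+\gamma\otimes\eta_H)$, are correct. The trace paragraph, however, contains two slips.

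First, since $\gamma_+=\frac12D_a$, these two groups give $-4\Div\gamma-4\etab_H\cdot\gamma=-\frac{2s_z}{u_z}\big(\partial_z-M_m-\frac{4z}{s_zu_z}+c\big)D_a$, so only one $c$. Second, the other $c$ comes from $Q:=2\dot\Gamma^C_{AB}\eta_{H,C}$, whose trace is $\eta_H^C\big(2(\Div k)_C-\Nd_C\tr k\big)=2\eta_H\cdot\Div k=-4\etab_H\cdot\gamma=-\frac{2s_z}{u_z}c\,D_a$. Your value $-2\etab_H\cdot\gamma$ omits the factor $2$ in front of $\dot\Gamma$ in Lemma~\ref{lem:Ecov}. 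With both slips corrected the stated trace follows, and there is no ``$c$ rather than $2c$'' to resolve.

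The genuine gap is in the tracefree part of $Q$. In your piece $-2k_{C(B}\Nd_{A)}\eta_H^C$ the symmetrisation is over $(A,B)$, so $\Nd_A\eta_{H,C}$ enters unsymmetrised in $(A,C)$. The near-horizon equation \eqref{eq:NH} determines only its symmetric part. The correct replacement is
\[
 \Nd_A\eta_{H,C}=\eta_{H,A}\eta_{H,C}-\tfrac12K_H\gd_{H,AC}+\tfrac12\sigma_H\slashed\epsilon_{AC},\qquad \sigma_H=\Curl\eta_H=-\frac{2z(3-z^2)}{u_z^3}.
\]
The curl part contributes $-\sigma_H\,{}^*k$ to $Q$, that is, $-i\sigma_Ha$ to $Q_+$. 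The $+K_Ha$ produced by \eqref{eq:NH} cancels the $-K_Ha$ from $-K_Hk$, so the curl term is the only curvature left in the zeroth order coefficient. If you replace $\Nd\eta_H$ by $\eta_H\otimes\eta_H-\frac12K_H\gd_H$ as proposed, that coefficient comes out as the stated one plus $i\sigma_H\neq0$, and the final simplification fails. Keeping the curl, your three pieces give
\[
 Q_+=\frac{is_z}{u_z^2}D_a+\Big(\frac{s_z}{u_z}(c-M_m)c-\frac{s_z}{u_z^2}+K_H-i\sigma_H\Big)a=\frac{is_z}{u_z^2}D_a-\frac{s_z\mu}{u_z}c\,a ,
\]
using $K_H-i\sigma_H=\frac2{(1-iz)^3}$, and the rest of your plan then goes through.

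The paper avoids $\Nd\eta_H$ altogether. It evaluates $Q(\mathfrak f,\mathfrak f)=\frac12\big[\eta_H(\bar{\mathfrak f})(\Nd_{\mathfrak f}k)(\mathfrak f,\mathfrak f)-\eta_H(\mathfrak f)(\Nd_{\bar{\mathfrak f}}k)(\mathfrak f,\mathfrak f)\big]$ directly in the complex frame, using $(\Nd_Xk)(\mathfrak f,\bar{\mathfrak f})=0$.

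Your spin-weight count for $(Ek)_-=0$ is right in substance, but it must include the derivative. The terms of $Q$ have spin weight $2\pm1\pm1\in\{0,2,4\}$, never $-2$.
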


\begin{proof}
We evaluate the five terms of Lemma~\ref{lem:Ecov}.

\emph{$\gamma=\frac12\Div k$.} By Lemma~\ref{lem:blocks}, $\gamma_+=\frac12D_a$, $\gamma_-=0$.

\emph{The terms $-\Nd_A\gamma_B-\Nd_B\gamma_A-(\Div\gamma)\gd$.} Since $\Nd_A\gamma_B+\Nd_B\gamma_A=(\Nd\ohat\gamma)_{AB}+\Div\gamma\,\gd_{AB}$, these equal
$-\Nd\ohat\gamma-2\Div\gamma\,\gd$. By Lemma~\ref{lem:blocks}, $(\Nd\ohat\gamma)_+=\frac{s_z}{2u_z}(\partial_z+M_m)D_a$, $(\Nd\ohat\gamma)_-=0$, and
$\Div\gamma=\frac{s_z}{2u_z}(\partial_z-M_m-\frac{4z}{s_zu_z})D_a$, contributing $-4\Div\gamma$ to the trace.

\emph{The term $2(\eta_H\otimes\gamma+\gamma\otimes\eta_H)=-2(\etab_H\ohat\gamma)-2(\etab_H\cdot\gamma)\gd$.} By Lemma~\ref{lem:blocks}, its trace is
$-4\etab_H\cdot\gamma=-4\cdot\frac{2s_z}{u_z}\gamma_+(\etab_H)_-=-\frac{2s_z}{u_z}c\,D_a$, and its $(+)$-component is
$-2\frac{s_z}{u_z}\bar c\,\gamma_+=-\frac{s_z}{u_z}\bar c\,D_a$.

\emph{The term $Q_{AB}:=2\dot\Gamma^C_{AB}\eta_{H,C}=\eta_H^C(\Nd_Ak_{BC}+\Nd_Bk_{AC}-\Nd_Ck_{AB})$.} Its trace is
$\eta_H^C\big(2(\Div k)_C-\Nd_C\tr k\big)=4\eta_H\cdot\gamma=-\frac{2s_z}{u_z}c\,D_a$, the same as for the previous term. For the helicity
components we use the complex frame of the proof of Lemma~\ref{lem:blocks}. Since
$\eta_H^\sharp=\frac12\big(\eta_H(\bar{\mathfrak f})\mathfrak f+\eta_H(\mathfrak f)\bar{\mathfrak f}\big)$ and $(\nabla_Xk)(\mathfrak f,\bar{\mathfrak f})=0$,
\[
 Q(\mathfrak f,\mathfrak f)=2(\nabla_{\mathfrak f}k)(\mathfrak f,\eta_H^\sharp)-(\nabla_{\eta_H^\sharp}k)(\mathfrak f,\mathfrak f)
 =\tfrac12\big[\eta_H(\bar{\mathfrak f})\,(\nabla_{\mathfrak f}k)(\mathfrak f,\mathfrak f)-\eta_H(\mathfrak f)\,(\nabla_{\bar{\mathfrak f}}k)(\mathfrak f,\mathfrak f)\big].
\]
Here $\eta_H(\mathfrak f)=-\sqrt{s_z/u_z}\,\bar c$ and $\eta_H(\bar{\mathfrak f})=-\sqrt{s_z/u_z}\,c$, and, with $k(\mathfrak f,\mathfrak f)=4a$ and the connection
of the proof of Lemma~\ref{lem:blocks},
\begin{gather*}
 (\nabla_{\bar{\mathfrak f}}k)(\mathfrak f,\mathfrak f)=4\sqrt{s_z/u_z}\,D_a,\\
 (\nabla_{\mathfrak f}k)(\mathfrak f,\mathfrak f)=4\sqrt{s_z/u_z}\Big(a'+M_ma+\frac{4z}{s_zu_z}a\Big)=4\sqrt{s_z/u_z}\,\big(D_a+2\mu a\big).
\end{gather*}
Hence, using $\bar c-c=\frac{2i}{u_z}$,
\[
 Q_+=\tfrac14Q(\mathfrak f,\mathfrak f)=\frac{s_z}{2u_z}\big[(\bar c-c)D_a-2\mu c\,a\big]=\frac{is_z}{u_z^2}D_a-\frac{s_z\mu}{u_z}c\,a ,
\]
and $Q_-=0$ by the same computation, since $k(\bar{\mathfrak f},\bar{\mathfrak f})=0$.

\emph{The term $-K_Hk$}: trace $0$, $(+)$-component $-K_Ha$.

\emph{Sum.} The trace is $-4\Div\gamma-\frac{4s_z}{u_z}c\,D_a=-\frac{2s_z}{u_z}(\partial_z-M_m-\frac{4z}{s_zu_z}+2c)D_a$. The $(-)$-component
vanishes. The $(+)$-component is
\[
 -\frac{s_z}{2u_z}(\partial_z+M_m)D_a-\frac{s_z}{u_z}\bar c\,D_a+\frac{is_z}{u_z^2}D_a-\frac{s_z\mu}{u_z}c\,a-K_Ha .
\]
Now $-\bar c+\frac i{u_z}=-\frac{z+i}{u_z}+\frac i{u_z}=-\frac z{u_z}$, so the $D_a$-terms are $-\frac{s_z}{2u_z}(\partial_z+M_m+\frac{2z}{u_z})D_a$.
For the $a$-terms use $\frac{s_z\mu}{u_z}=\frac{mu_z^2+8z}{2u_z^2}$, $c=\frac{z-i}{u_z}$ and $K_H=\frac{2(1-3z^2)}{u_z^3}$
(see \eqref{eq:KH}):
\begin{align*}
 -\frac{(mu_z^2+8z)(z-i)}{2u_z^3}-\frac{2(1-3z^2)}{u_z^3}
 &=\frac{(i-z)m}{2u_z}-\frac{8z(z-i)+4(1-3z^2)}{2u_z^3}\\
 &=\frac{(i-z)m}{2u_z}-\frac{2(1-z^2-2iz)}{u_z^3}
 =\frac{(i-z)m}{2u_z}+\frac{2(z+i)^2}{u_z^3},
\end{align*}
since $(z+i)^2=-(1-z^2-2iz)$.
\end{proof}

\subsection{$\Bcal$ in helicity form}\label{sec:Bhel}

\begin{proposition}\label{prop:Bhel}
For a symmetric $N$ with trace $B$ and tracefree helicity components $(n_+,n_-)$:
\begin{align*}
 (\beta_N)_+&=\Big(\partial_z-M_m-\frac{4z}{s_zu_z}+c\Big)n_+-\tfrac14\big(\partial_z+M_m+\bar c\big)B,\\
 (\beta_N)_-&=\Big(\partial_z+M_m-\frac{4z}{s_zu_z}+\bar c\Big)n_--\tfrac14\big(\partial_z-M_m+c\big)B,\\
 (\Bcal N)_+&=-\frac{s_z}{u_z}\big(\partial_z+M_m+5\bar c\big)(\beta_N)_++\frac{6}{(1+iz)^3}\,n_+,\\
 (\Bcal N)_-&=-\frac{s_z}{u_z}\big(\partial_z-M_m+5c\big)(\beta_N)_-+\frac{6}{(1-iz)^3}\,n_-,
\end{align*}
and $\tr\Bcal N=0$.
\end{proposition}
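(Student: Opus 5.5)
The plan is a direct evaluation of Definition~\ref{def:B} term by term, using the building blocks of Lemma~\ref{lem:blocks}, in the same way as Proposition~\ref{prop:Ehel}. Write $N=\frac12B\gd_H+W$ with $W=(n_+,n_-)$. We first compute $\beta_N=\Div W-\frac12dB+W\cdot\etab_H-\frac12B\etab_H$ in helicity components. By Lemma~\ref{lem:blocks}:
\begin{itemize}
\item $(\Div W)_+=(\partial_z-M_m-\frac{4z}{s_zu_z})n_+$;
\item $(W\cdot\etab_H)_+=c\,n_+$;
\item $-\frac12(dB)_+=-\frac14(B'+M_mB)$;
\item $-\frac12B(\etab_H)_+=-\frac14\bar cB$.
\end{itemize}
Summing these gives the stated $(\beta_N)_+$. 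The $(-)$ formula follows by exchanging $\mathfrak f\leftrightarrow\bar{\mathfrak f}$, i.e.\ $M_m\to-M_m$ and $c\leftrightarrow\bar c$.

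Next we compute $\Bcal N=-\Nd\ohat\beta_N-5\etab_H\ohat\beta_N+3K_HW+3\sigma_H{}^*W$. Lemma~\ref{lem:blocks} gives
\[
 (\Nd\ohat\beta)_+=\frac{s_z}{u_z}(\partial_z+M_m)\beta_+,\qquad (\etab_H\ohat\beta)_+=\frac{s_z}{u_z}\bar c\,\beta_+ .
\]
Together these produce $-\frac{s_z}{u_z}(\partial_z+M_m+5\bar c)(\beta_N)_+$. For the algebraic terms, $({}^*W)_+=in_+$ gives $3(K_H+i\sigma_H)n_+$. By \eqref{eq:KH} this equals $\frac6{(1+iz)^3}n_+$. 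The $(-)$ component again follows by conjugate symmetry, with $({}^*W)_-=-in_-$, which gives $3(K_H-i\sigma_H)=\frac6{(1-iz)^3}$.

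For the trace, observe that $\Nd\ohat\beta$ and $\etab_H\ohat\beta$ are tracefree by the definition of $\ohat$, and that $W$ and ${}^*W$ are tracefree. Hence $\tr\Bcal N=0$ identically, with no computation needed.

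The only delicate point is bookkeeping of conventions. One must check that the left dual acts by $+i$ on positive helicity in the chosen orientation, which is stated in Lemma~\ref{lem:blocks}, and that the sign of $\sigma_H$ matches \eqref{eq:KH}. One must also check that the helicity components of $\frac12B\gd_H$ do not contribute to the tracefree part; they do not, since $\gd_H(\mathfrak f,\mathfrak f)=0$. A consistency check would be $\Bcal\chib_H=0$ (Lemma~\ref{lem:Bstat}), which I would verify as a sanity test of the signs.
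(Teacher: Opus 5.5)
Your proposal is correct and follows the paper's proof essentially verbatim. You read off each term of $\beta_N$ and $\Bcal N$ from Lemma~\ref{lem:blocks}, combine the curvature terms into $3(K_H\pm i\sigma_H)n_\pm$ via \eqref{eq:KH}, and observe that every term of $\Bcal N$ is tracefree; the check $\Bcal\chib_H=0$ is optional and not needed for the argument.
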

\begin{proof}
This follows directly from Lemma~\ref{lem:blocks}. For $\beta_N=\Div W-\frac12dB+W\cdot\etab_H-\frac12B\etab_H$ we get
$(\beta_N)_+=(\partial_z-M_m-\frac{4z}{s_zu_z})n_+-\frac14(B'+M_mB)+cn_+-\frac14\bar cB$, and similarly for $(-)$. For
$\Bcal N=-\Nd\ohat\beta_N-5\etab_H\ohat\beta_N+3K_HW+3\sigma_H{}^*W$, the first two terms give
$-\frac{s_z}{u_z}(\partial_z+M_m)(\beta_N)_+-5\frac{s_z}{u_z}\bar c(\beta_N)_+$, and the last two give $3(K_H+i\sigma_H)n_+=\frac6{(1+iz)^3}n_+$
by \eqref{eq:KH}. All terms are tracefree.
\end{proof}
The near-horizon curvature thus enters only through the combination $K_H\pm i\sigma_H=2/(1\pm iz)^3$.

\subsection{$\Pcal=\Bcal E$: the opposite helicity is a product}\label{sec:Phel}

\begin{proposition}\label{prop:Phel}
For the positive helicity input $k=(a,0)$:
\begin{align}
 (\Pcal k)_+&=-\frac{3im}{(z-i)^4}\,a,\label{eq:same}\\
 (\Pcal k)_-&=-\frac{s_z}{2u_z}\big(\partial_z-M_m+5c\big)\big(\partial_z-M_m+c\big)\label{eq:opp}\\
 &\qquad\cdot\frac{s_z}{u_z}\Big(\partial_z-M_m-\frac{4z}{s_zu_z}+2c\Big)\Big(\partial_z-M_m-\frac{4z}{s_zu_z}\Big)a .\notag
\end{align}
\end{proposition}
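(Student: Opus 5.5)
The plan is to compose Proposition~\ref{prop:Ehel} with Proposition~\ref{prop:Bhel}. Set $N:=Ek$ for the input $k=(a,0)$. Proposition~\ref{prop:Ehel} gives the data of $N$ in the form Proposition~\ref{prop:Bhel} needs: trace
\[
 B=-\frac{2s_z}{u_z}\Big(\partial_z-M_m-\frac{4z}{s_zu_z}+2c\Big)D_a,
\]
tracefree components $n_-=0$, and $n_+=(Ek)_+$ as computed there. We then evaluate $(\beta_N)_\pm$ and $(\Bcal N)_\pm$ from Proposition~\ref{prop:Bhel}, treating the two helicities separately.

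\emph{Opposite helicity.} Since $n_-=0$, Proposition~\ref{prop:Bhel} gives
\[
 (\beta_N)_-=-\tfrac14(\partial_z-M_m+c)B,\qquad (\Bcal N)_-=-\frac{s_z}{u_z}(\partial_z-M_m+5c)(\beta_N)_- .
\]
The zeroth-order curvature term drops out because it multiplies $n_-=0$. Substituting $B$ gives
\[
 (\Bcal N)_-=\frac{s_z}{4u_z}(\partial_z-M_m+5c)(\partial_z-M_m+c)B,
\]
and $B=-\frac{2s_z}{u_z}(\cdots)D_a$ with $D_a=(\partial_z-M_m-\frac{4z}{s_zu_z})a$. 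Multiplying out $\frac14\cdot(-2)=-\frac12$ reproduces \eqref{eq:opp} with no further rearrangement: the factor $\frac{s_z}{u_z}$ is kept inside, to the left of the last two operators. This part is purely mechanical.

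\emph{Same helicity.} Here
\[
 (\beta_N)_+=\Big(\partial_z-M_m-\frac{4z}{s_zu_z}+c\Big)n_+-\tfrac14(\partial_z+M_m+\bar c)B,
\]
\[
 (\Pcal k)_+=-\frac{s_z}{u_z}(\partial_z+M_m+5\bar c)(\beta_N)_++\frac6{(1+iz)^3}\,n_+ .
\]
A priori this is a fourth-order operator in $a$. The claim is that every derivative cancels, leaving the multiplier $-3im/(z-i)^4$.

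Before the computation I would confirm this at the symbol level. With $\zeta=(1,0)$ and helicity $\pm$ corresponding to $p\mp iq$, the principal symbol $\sigma_4(\Pcal)k=\frac12(-p,q)$ sends $p-iq$ to $-\frac12(p+iq)$. This is a pure helicity flip, so the same-helicity block has vanishing fourth-order symbol. That confirms the top-order cancellation; the lower orders must then be checked by hand.

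To organise that check, write $n_+=-\frac{s_z}{2u_z}(\partial_z+M_m+\frac{2z}{u_z})D_a+V a$, with $V$ the potential from Proposition~\ref{prop:Ehel}. Express $B$ through the same first-order operators applied to $D_a$. Then expand $(\beta_N)_+$ as $X D_a+Y a$, with $X$ a first-order operator, and expect $X$ to vanish identically except for a zeroth-order remainder. The useful identities are $\frac{2z}{u_z}=c+\bar c$, $\bar c-c=\frac{2i}{u_z}$, $\frac1{s_zu_z}=\frac12(\frac1{s_z}+\frac1{u_z})$, and the commutator $[\partial_z,M_m]=M_m'$ with $M_m=\frac{mu_z}{2s_z}$. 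After collecting terms by powers of $m$, I expect $(\beta_N)_+$ to reduce to an explicit multiple of $a$, probably plus a term that the final $\partial_z+M_m+5\bar c$ annihilates. The result should be a rational function of $z$ linear in $m$.

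\emph{Expected obstacle.} The main difficulty is the same-helicity computation: verifying that the second-, third- and first-order terms in $a$ cancel exactly, and that the $m$-independent potential vanishes. The $m$-independent part should vanish because $m=0$ gives zero; I would test this against Lemma~\ref{lem:Bstat} and the fact that $\Pcal$ annihilates the image of suitable gauge directions. If the direct expansion becomes unwieldy, my fallback is to check the identity on a basis of rational test functions $a=(z-i)^j s_z^{|m|/2}\cdots$, since both sides are linear ODE operators with rational coefficients. Alternatively, I would note that the ratio of the two sides has constant coefficients after conjugating by $(z-i)^{-4}$.
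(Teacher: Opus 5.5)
\textbf{Opposite helicity.} Your treatment of the opposite helicity is correct and is exactly the paper's argument. Since $n_-=0$, the curvature term drops out, and \eqref{eq:opp} is just the composition of the four first-order factors.

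\textbf{Same helicity: the gap.} The gap is in \eqref{eq:same}, which is the real content of the proposition. You give no derivation, and the structure you anticipate is wrong. $(\beta_{Ek})_+$ cannot reduce to a multiple of $a$. The reason is that $n_+=(Ek)_+$ contains $-\frac{s_z}{2u_z}\partial_zD_a$. So the term $\frac{6}{(1+iz)^3}n_+=\frac{6i}{(z-i)^3}n_+$ in $(\Pcal k)_+$ is genuinely second order in $a$. It can only be cancelled by $-\frac{s_z}{u_z}(\partial_z+M_m+5\bar c)(\beta_{Ek})_+$, and matching the coefficients of $a''$ forces $(\beta_{Ek})_+=-\frac{3i}{(z-i)^3}a'+\dots$. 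Nor can a term annihilated by $\partial_z+M_m+5\bar c$ for every $a$ be anything but zero. Thus the curvature potential $3(K_H+i\sigma_H)n_+$ of $\Bcal$ is an essential participant in the cancellation, not a spectator.

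\textbf{What the paper does.} It proceeds in two steps.
\begin{enumerate}
\item Move $\frac{s_z}{2u_z}$ to the left using $\partial_z\frac{s_z}{u_z}=-\frac{4z}{s_zu_z}\frac{s_z}{u_z}$. This writes $(\beta_{Ek})_+$ as $\frac{s_z}{2u_z}\big[(\partial_z+p)(\partial_z+q)-(\partial_z+r)(\partial_z+t)\big]D_a+\big(\partial_z-M_m-\frac{4z}{s_zu_z}+c\big)(\kappa a)$. Because $p+q-r-t=c+\bar c-\frac{2z}{u_z}=0$, the $D_a''$ and $D_a'$ terms cancel. The $m$-terms also cancel, and one finds $(\beta_{Ek})_+=-\frac{3i}{(z-i)^3}\big(D_a-\frac{2i}{u_z}a\big)$, which is first order in $a$.
\item Use $(\partial_z+M_m+5\bar c)\big((z-i)^{-3}h\big)=(z-i)^{-3}(\partial_z+M_m+2\bar c)h$. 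Then the $\partial_zD_a$ terms coming from $\Nd\ohat\beta$ and from $\frac{6i}{(z-i)^3}n_+$ cancel. Only afterwards are the zeroth-order terms collected, and the $m$-free part vanishes by $4z+2is_z+2i(z+i)^2=0$.
\end{enumerate}

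\textbf{Your checks.} Your principal-symbol check is correct, but it only disposes of the fourth-order part. The other checks do not help:
\begin{itemize}
\item Lemma~\ref{lem:Bstat} concerns $\chib_H$ and $\Ccal F$, and $Ek$ is not of that form.
\item ``$m=0$ gives zero'' is what has to be proved, not a reason.
\item A ``ratio'' of two differential operators is not meaningful.
\end{itemize}
Testing on functions would be a legitimate brute-force verification: two operators of order at most four agree if they agree on five functions with nonvanishing Wronskian. As written, however, it is only a plan, not an argument.
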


\begin{proof}
\emph{Opposite helicity.} $N=Ek$ has $n_-=(Ek)_-=0$ and $B=\tr Ek$. By Proposition~\ref{prop:Bhel}, $(\beta_N)_-=-\frac14(\partial_z-M_m+c)B$
and $(\Bcal N)_-=-\frac{s_z}{u_z}(\partial_z-M_m+5c)(\beta_N)_-$. Inserting
$B=-\frac{2s_z}{u_z}(\partial_z-M_m-\frac{4z}{s_zu_z}+2c)(\partial_z-M_m-\frac{4z}{s_zu_z})a$ from Proposition~\ref{prop:Ehel}, the constants
combine to $-\frac{s_z}{u_z}\cdot(-\frac14)\cdot(-2)=-\frac{s_z}{2u_z}$, which is \eqref{eq:opp}. No cancellation is involved: the
opposite-helicity output is the composition of four first order operators.

\emph{Same helicity.} Here there is a cancellation, which is best seen in terms of $D_a$. By Proposition~\ref{prop:Ehel},
\[
 (Ek)_+=-\frac{s_z}{2u_z}(\partial_z+t)D_a+\kappa\,a,\qquad \tr Ek=-\frac{2s_z}{u_z}(\partial_z+q)D_a,
\]
with
\begin{gather*}
 t:=M_m+\frac{2z}{u_z},\qquad q:=-M_m-\frac{4z}{s_zu_z}+2c,\\
 \kappa:=\frac{(i-z)m}{2u_z}+\frac{2(z+i)^2}{u_z^3}=-\frac m{2(z+i)}+\frac2{(z-i)^3(z+i)} .
\end{gather*}

\emph{Step 1: $(\beta_{Ek})_+$.} Insert $n_+=(Ek)_+$ and $B=\tr Ek$ in Proposition~\ref{prop:Bhel}. Since
$\partial_z\frac{s_z}{u_z}=-\frac{4z}{s_zu_z}\frac{s_z}{u_z}$, moving the factors $\frac{s_z}{2u_z}$ to the left gives
\[
 (\beta_{Ek})_+=\frac{s_z}{2u_z}\big[(\partial_z+p)(\partial_z+q)-(\partial_z+r)(\partial_z+t)\big]D_a
 +\Big(\partial_z-M_m-\frac{4z}{s_zu_z}+c\Big)(\kappa a),
\]
with $p:=M_m-\frac{4z}{s_zu_z}+\bar c$ and $r:=-M_m-\frac{8z}{s_zu_z}+c$. The operator in brackets is
$(p+q-r-t)\partial_z+(q'+pq-t'-rt)$, and $p+q-r-t=c+\bar c-\frac{2z}{u_z}=0$; so the terms with $D_a''$ and $D_a'$ cancel. A direct
computation of the zeroth order coefficient gives
\[
 \frac{s_z}{2u_z}\big(q'+pq-t'-rt\big)=\frac m{2(z+i)}-\frac{i(3z+i)}{(z-i)^3(z+i)} .
\]
In the last term we write $a'=D_a+\mu a$; since $\mu-M_m-\frac{4z}{s_zu_z}=0$, it equals $\kappa D_a+(\kappa'+c\kappa)a$, and
$\kappa'+c\kappa=-\frac6{(z-i)^4(z+i)}$ (the terms with $m$ cancel). The terms with $m$ also cancel in the coefficient of $D_a$, and since
$2-i(3z+i)=3(1-iz)=-3i(z+i)$,
\[
 (\beta_{Ek})_+=-\frac{3i}{(z-i)^3}\,D_a-\frac6{(z-i)^4(z+i)}\,a=-\frac{3i}{(z-i)^3}\Big(D_a-\frac{2i}{u_z}\,a\Big).
\]

\emph{Step 2: $(\Pcal k)_+$.} We have $(\partial_z+M_m+5\bar c)\big((z-i)^{-3}h\big)=(z-i)^{-3}(\partial_z+M_m+2\bar c)h$, because
$\partial_z(z-i)^{-3}=-3\bar c(z-i)^{-3}$, and $\frac6{(1+iz)^3}=\frac{6i}{(z-i)^3}$. By Proposition~\ref{prop:Bhel},
\[
 (\Pcal k)_+=\frac{3is_z}{u_z(z-i)^3}\Big[(\partial_z+M_m+2\bar c)\Big(D_a-\frac{2i}{u_z}a\Big)-(\partial_z+t)D_a\Big]+\frac{6i}{(z-i)^3}\,\kappa\,a .
\]
In the bracket the terms with $D_a'$ cancel, the remaining multiple of $D_a$ is $(M_m+2\bar c-t)D_a-\frac{2i}{u_z}D_a=0$ (as
$2\bar c-\frac{2z}{u_z}=\frac{2i}{u_z}$ and $a'=D_a+\mu a$), and the multiple of $a$ is
$\frac{2i}{u_z}\big(-\mu-M_m-2\bar c+\frac{2z}{u_z}\big)=-\frac{2i}{u_z}\big(2M_m+\frac{4z}{s_zu_z}+\frac{2i}{u_z}\big)$. Hence
\[
 (\Pcal k)_+=\frac6{(z-i)^3}\Big[\frac{s_z}{u_z^2}\Big(2M_m+\frac{4z}{s_zu_z}+\frac{2i}{u_z}\Big)+i\kappa\Big]a .
\]
In the bracket the terms with $m$ are $\frac m{u_z}-\frac{(1+iz)m}{2u_z}=\frac{(1-iz)m}{2u_z}=-\frac{im}{2(z-i)}$, and the terms without $m$ are
$\frac{4z+2is_z+2i(z+i)^2}{u_z^3}=0$. Therefore $(\Pcal k)_+=-\frac{3im}{(z-i)^4}\,a$, which is \eqref{eq:same}.
\end{proof}

\subsection{Conjugation and the Teukolsky--Starobinsky factorisation}\label{sec:TS}

Substitute $a=U\psi$ with $U:=\frac{z-i}{z+i}$, and multiply the outputs by $V:=u_z(z-i)^2$, resp.\ $\overline V=u_z(z+i)^2$. These are
smooth, nowhere vanishing factors on $[-1,1]$, i.e.\ bundle isomorphisms. Put
\[
 A_m:=\partial_z-M_m-\frac{2z}{s_z},\qquad T_m:=s_z^2A_m^4 .
\]

\begin{proposition}\label{prop:TS}
$V(\Pcal k)_+=-3im\,\psi$ and $\overline V(\Pcal k)_-=-\frac12T_m\psi$.
\end{proposition}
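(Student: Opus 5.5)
The first identity is immediate from \eqref{eq:same}. With $a=U\psi$,
\[
 V(\Pcal k)_+=u_z(z-i)^2\cdot\Big(-\frac{3im}{(z-i)^4}\Big)\frac{z-i}{z+i}\,\psi=-3im\,\frac{u_z}{(z-i)(z+i)}\,\psi=-3im\,\psi,
\]
since $(z-i)(z+i)=u_z$. No further work is needed for this part.

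For the opposite helicity we start from the product form \eqref{eq:opp}. We rewrite each of the four first-order factors as a conjugate of $A_m$ by explicit functions of $z$. The basic tool is the rule
\[
 (\partial_z+f)\circ g=g\circ(\partial_z+f+g'/g).
\]
The inner two factors acting on $a=U\psi$ involve the logarithmic derivatives
\[
 U'/U=\bar c-c=\frac{2i}{u_z},\qquad \partial_z\log s_z=-\frac{2z}{s_z}.
\]
We then decompose $-\frac{4z}{s_zu_z}=-\frac{2z}{s_z}-\frac{2z}{u_z}$, using $\frac1{s_zu_z}=\frac12(\frac1{s_z}+\frac1{u_z})$, and $\frac{2z}{u_z}=c+\bar c$. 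After this decomposition, every zeroth-order coefficient reduces to $-M_m$ plus a multiple of $\frac{2z}{s_z}$ plus integer combinations of $c$ and $\bar c$. Each such combination is the logarithmic derivative of a monomial $(z+i)^\alpha(z-i)^\beta s_z^\gamma$.

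Concretely, we look for functions $g_0,\dots,g_4$, products of powers of $(z\pm i)$ and $s_z$, with
\[
 (\partial_z-M_m+f_j)\,g_{j-1}=g_j\,A_m\quad\text{on each factor}.
\]
The intermediate prefactors $\frac{s_z}{u_z}$ and $\frac{s_z}{2u_z}$ are absorbed into these $g_j$. The chain should then collapse to
\[
 (\Pcal k)_-=\mathrm{const}\cdot g\,A_m^4\psi
\]
for a single function $g$. The claim to verify is $\overline V g=\frac{-1}{2}s_z^2$ up to the constant. Note that $A_m$ differs from the naive factor $\partial_z-M_m-\frac{4z}{s_zu_z}$ precisely by the $\frac{2z}{u_z}$ part. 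That part must be generated by conjugation with $u_z^{\pm1}=(z+i)(z-i)$, while the shift by $c$ in the factors $+c,+2c,+5c$ is matched by powers of $(z+i)$. The factor $U$ is chosen exactly so that the innermost factor, acting on $U\psi$, produces $A_m\psi$ up to a multiplier.

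The main obstacle is the bookkeeping. We must check that, after passing all multipliers to the left, the accumulated shifts reproduce exactly $A_m$ in every slot, with no leftover zeroth-order terms. This requires that the integer exponents chosen at each stage are consistent, for example that the $5c$ in the outermost factor is accounted for by $(z+i)^{-4}$-type multipliers left over from the earlier steps.

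I would first carry out this computation with generic exponents, solving the linear conditions on the exponents of $(z+i)$, $(z-i)$ and $s_z$ at each step. Because $m$ enters only through $-M_m$, which is common to all factors, no $m$-dependent terms can appear, so the conditions are finite and $m$-independent. Finally, the overall constant $-\frac12$ comes from $-\frac{s_z}{2u_z}\cdot\frac{s_z}{u_z}$ combined with $\overline V=u_z(z+i)^2$. The powers of $u_z$ and $(z\pm i)$ cancel against the conjugation multipliers, leaving $s_z^2$ in front of $A_m^4$, which gives $T_m=s_z^2A_m^4$.
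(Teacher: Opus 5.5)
Your same-helicity computation is correct and coincides with the paper's. The opposite-helicity part has a genuine gap. Your scheme asks that, after the multipliers are passed through, each of the four first-order factors in \eqref{eq:opp} becomes exactly $A_m$, so that the chain collapses to $g\,A_m^4\psi$ by conjugation alone. But pushing a multiplication operator through a first-order factor is forced, $(\partial_z+f)\circ g=g\circ(\partial_z+f+g'/g)$. Doing this with the given $U=\frac{z-i}{z+i}$ (so $U'/U=\bar c-c$), and using $\frac{s_z}{u_z}U=s_zc^2$, yields uniquely
\[
 (\Pcal k)_-=-\frac{s_z}{2u_z}\,s_zc^2\,(A_m+3c)(A_m-c)\,A_m\,(A_m-2c)\,\psi .
\]
In particular, your claim that $U$ makes the innermost factor produce $A_m$ is false. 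One has $(\partial_z-M_m-\frac{4z}{s_zu_z})(U\psi)=U(A_m-2c)\psi$; it is the second factor that becomes exactly $A_m$.

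No choice of exponents removes the shifts $-2c,0,-c,+3c$. For factor $j$ to become $A_m$ you would need multipliers $g_0\propto u_z$, $g_1\propto U$, $g_2\propto s_z(z+i)^{-1}$, $g_3\propto s_z(z+i)^{-5}$. The chain, however, forces $g_0=g_1=U$ and $g_2=g_3=s_z(z+i)^{-2}$. So the linear system you propose to solve is inconsistent at three of the four factors. Each $A_m+jc$ is individually conjugate to $A_m$, namely $(z+i)^{-j}A_m(z+i)^{j}$. The resulting interleaved multipliers $(z+i)^4$, $(z+i)^{-1}$, $(z+i)^2$ do not telescope, though.

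The missing idea is a genuine operator identity, not bookkeeping. Since $c'=-c^2$, one has $[A_m,c]=-c^2$, hence $A_mc^j=c^jA_m-jc^{j+1}$. Normal-ordering the product then gives $(A_m+3c)(A_m-c)A_mA_m'$-type cancellations, namely
\[
 (A_m+3c)(A_m-c)A_m(A_m-2c)=A_m^4,
\]
with all lower-order terms cancelling. Equivalently, $(\partial_x+\frac3x)(\partial_x-\frac1x)\partial_x(\partial_x-\frac2x)=\partial_x^4$, which one checks on $x^n$. Only after this identity does your constant count go through: the prefactor $-\frac{s_z}{2u_z}s_zc^2$ times $\overline V=u_zc^{-2}$ gives $-\frac12s_z^2A_m^4\psi=-\frac12T_m\psi$.
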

\begin{proof}
\emph{Same helicity.} $V\cdot\big(-\frac{3im}{(z-i)^4}\big)\frac{z-i}{z+i}\psi=-3im\frac{u_z(z-i)^3}{(z-i)^4(z+i)}\psi=-3im\psi$.

\emph{Opposite helicity.} We push $U$ and the scalar factors through \eqref{eq:opp} from the right.
\begin{enumerate}[label=(\arabic*)]
\item $U'/U=\bar c-c=\frac{2i}{u_z}$ and $\frac{4z}{s_zu_z}=\frac{2z}{s_z}+\frac{2z}{u_z}=\frac{2z}{s_z}+c+\bar c$. Hence
$(\partial_z-M_m-\frac{4z}{s_zu_z})(U\psi)=U\big(\psi'+(\bar c-c)\psi-M_m\psi-\frac{2z}{s_z}\psi-(c+\bar c)\psi\big)=U(A_m-2c)\psi$.
\item Similarly $(\partial_z-M_m-\frac{4z}{s_zu_z}+2c)(Ug)=U A_mg$. So the two inner factors give $U\,A_m(A_m-2c)\psi=:U h$.
\item $\frac{s_z}{u_z}U=\frac{s_z}{(z+i)^2}=s_zc^2$. Since $\partial_z(s_zc^2)=s_zc^2(-\frac{2z}{s_z}-2c)$, we have
$(\partial_z-M_m+c)(s_zc^2h)=s_zc^2(A_m-c)h$, and then $(\partial_z-M_m+5c)(s_zc^2h_2)=s_zc^2(A_m+3c)h_2$.
\item Altogether $(\Pcal k)_-=-\frac{s_z}{2u_z}s_zc^2(A_m+3c)(A_m-c)A_m(A_m-2c)\psi$, and multiplying by $\overline V=u_z(z+i)^2=u_zc^{-2}$,
\[
 \overline V(\Pcal k)_-=-\tfrac12s_z^2\,(A_m+3c)(A_m-c)A_m(A_m-2c)\,\psi .
\]
\item Finally, $(A_m+3c)(A_m-c)A_m(A_m-2c)=A_m^4$. Indeed $[A_m,c]=c'=-c^2$, so $A_mc^j=c^jA_m-jc^{j+1}$, and
\begin{align*}
 A_m(A_m-2c)&=A_m^2-2cA_m+2c^2,\\
 (A_m-c)(A_m^2-2cA_m+2c^2)&=A_m^3-3cA_m^2+6c^2A_m-6c^3,\\
 (A_m+3c)(A_m^3-3cA_m^2+6c^2A_m-6c^3)&=A_m^4+(3-3)cA_m^3+(3+6-9)c^2A_m^2\\
 &\quad+(-12-6+18)c^3A_m+(18-18)c^4 .
\end{align*}
In the last line we used $-3A_mcA_m^2=-3cA_m^3+3c^2A_m^2$, $6A_mc^2A_m=6c^2A_m^2-12c^3A_m$, $-6A_mc^3=-6c^3A_m+18c^4$, and the
terms $3cA_m^3-9c^2A_m^2+18c^3A_m-18c^4$ from $3c\times(\ldots)$.\qedhere
\end{enumerate}
\end{proof}

\paragraph{Teukolsky--Starobinsky.} In the polar angle, $T_m$ is the angular Teukolsky--Starobinsky product at spheroidal parameter
$m/2$ \cite{TP,Chandra}:
\[
 T_m=\mathcal L^{(m)}_{-1}\mathcal L^{(m)}_0\mathcal L^{(m)}_1\mathcal L^{(m)}_2,\qquad
 \mathcal L^{(m)}_n=\partial_\vartheta+\frac m{\sin\vartheta}-\frac m2\sin\vartheta+n\cot\vartheta=-\sqrt{s_z}\Big(\partial_z-M_m-\frac{nz}{s_z}\Big).
\]
Indeed, with $z=\cos\vartheta$ we have $\partial_\vartheta=-\sqrt{s_z}\partial_z$, $\cot\vartheta=z/\sqrt{s_z}$, and
$\frac m{\sin\vartheta}-\frac m2\sin\vartheta=\frac{m(2-s_z)}{2\sqrt{s_z}}=\sqrt{s_z}\,M_m$. Applying $\mathcal L_n$ to $(-1)^ks_z^{k/2}g$ gives
\[
 (-1)^{k+1}s_z^{(k+1)/2}\Big(\partial_z-M_m-\frac{(k+n)z}{s_z}\Big)g,
\]
because $\partial_zs_z^{k/2}=-kzs_z^{k/2-1}$. In the product
$\mathcal L_{-1}\mathcal L_0\mathcal L_1\mathcal L_2$ the pairs $(k,n)$ are $(0,2)$, $(1,1)$, $(2,0)$, $(3,-1)$, so always $k+n=2$, and the
successive outputs are $-\sqrt{s_z}A_mg$, $s_zA_m^2g$, $-s_z^{3/2}A_m^3g$, $s_z^2A_m^4g$.

\paragraph{The negative helicity input.} The operator $\Pcal$ is real. Complex conjugation maps a positive helicity tensor of mode $m$
to a negative helicity tensor of mode $-m$, and conjugation of the above (with $\overline U$, $\overline V$, and $T_m$ real) gives, for
the input $k=(0,b)$ with $b=\overline U\psi$, $\overline V(\Pcal k)_-=-3im\psi$ and $V(\Pcal k)_+=-\frac12T_{-m}\psi$. In the $m$-sector,
$\Pcal$ therefore becomes the $2\times2$ block operator
\begin{equation}\label{eq:block}
 \mathcal D_m=\begin{pmatrix}-3im&-\frac12T_{-m}\\[2pt]-\frac12T_m&-3im\end{pmatrix}\quad\text{acting on }(\psi_+,\psi_-),
\end{equation}
with outputs $(V(\Pcal k)_+,\overline V(\Pcal k)_-)$.

\subsection{Pole domains and the adjoint identity}\label{sec:adjoint}

For the adjoint identity we need to know exactly which amplitudes correspond to smooth tensors. By Appendix~\ref{app:sphere}, a field
of spin weight $s$ with polar complex component $\tau$ is smooth at the north pole if and only if $\tau e^{is\varphi}$ is smooth there,
and at the south pole if and only if $\tau e^{-is\varphi}$ is (Theorem~\ref{A:thm:spins}); combined with the description of smooth
Fourier modes $f(r)e^{ik\varphi}$ in polar coordinates, which are smooth if and only if $f=r^{|k|}g(r^2)$ with $g$ smooth
(Theorem~\ref{A:thm:fourier}), this shows that $\tau=a(\vartheta)e^{im\varphi}$ is smooth if and only if
$a=\vartheta^{|m+s|}f_N(\vartheta^2)$ near $\vartheta=0$ and $a=(\pi-\vartheta)^{|m-s|}f_S((\pi-\vartheta)^2)$ near $\vartheta=\pi$, with
$f_N,f_S$ smooth (Corollary~\ref{A:cor:modes}). The horizon frame $(f_1,f_2)$ differs from the round frame by a smooth nonsingular
axisymmetric bundle map, and $f_1$ points along $-\partial_\vartheta$, so that the roles of $\pm$ are interchanged
(Section~\ref{sec:helcalc}). Consequently, a tensor of mode $m$ and positive helicity is smooth if and only if its amplitude satisfies
\begin{equation}\label{eq:poles}
 a(\vartheta)=\vartheta^{|m+2|}f_N(\vartheta^2)\ \text{near }\vartheta=0,\qquad a(\pi-t)=t^{|m-2|}f_S(t^2)\ \text{near }\vartheta=\pi,
\end{equation}
with $f_N,f_S$ smooth: the spin-two frame contributes azimuthal weight $\pm2$. For negative helicity the two exponents are interchanged.
More generally, a section of spin weight $n$ has exponents $|m+n|$ and $|m-n|$. Since $U,V$ are smooth and nowhere zero, the same
conditions describe the domains after the conjugation.

On these domains $\mathcal L^{(m)}_n$ maps spin weight $n$ smoothly to spin weight $n-1$. Near the north pole its singular part is
$\partial_\vartheta+(m+n)/\vartheta$. If $m+n\ge0$ it lowers the leading power by one, from $|m+n|$ to $|m+n-1|$ (for $m+n=0$: the
derivative of an even function vanishes to first order). If $m+n<0$ the leading terms cancel and the next power,
$|m+n|+1=|m+n-1|$, survives. The south pole is the same with $\partial_t=-\partial_\vartheta$, and the smooth term
$-\frac m2\sin\vartheta$ has the right parity and order.

\begin{lemma}[Adjoint identity]\label{lem:adjoint}
For smooth sections in the domains \eqref{eq:poles}, $\langle T_mf,g\rangle_{L^2(dz)}=\langle f,T_{-m}g\rangle_{L^2(dz)}$.
\end{lemma}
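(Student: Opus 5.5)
The plan is to factor $T_m$ into its four first-order pieces and integrate by parts one factor at a time. By the Teukolsky--Starobinsky computation above, $T_m=\mathcal L^{(m)}_{-1}\mathcal L^{(m)}_0\mathcal L^{(m)}_1\mathcal L^{(m)}_2$. In the variable $z$, with measure $dz=\sin\vartheta\,d\vartheta$, each factor is $\mathcal L^{(m)}_n=-\sqrt{s_z}\,(\partial_z-M_m-nz/s_z)$.

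The basic one-step identity comes from integrating by parts in $z$. For $\mathcal L^{(m)}_n$ we have
\[
 \int_{-1}^1(\mathcal L^{(m)}_nf)\,g\,dz=\int_{-1}^1 f\,\Big(\sqrt{s_z}\,\partial_z g+\partial_z(\sqrt{s_z})\,g+\sqrt{s_z}\big(M_m+\tfrac{nz}{s_z}\big)g\Big)dz+[\text{boundary}].
\]
Since $\partial_z\sqrt{s_z}=-z/\sqrt{s_z}$, the right side is $-\int f\,\mathcal L^{(-m)}_{1-n}g\,dz$. Indeed,
\[
 \mathcal L^{(-m)}_{1-n}g=-\sqrt{s_z}\Big(\partial_z+M_m-\frac{(1-n)z}{s_z}\Big)g,
\]
and the coefficient check is $-z+nz=-(1-n)z$. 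This is the classical relation $(\mathcal L^{(m)}_n)^\dagger=-\mathcal L^{(-m)}_{1-n}$; the sign of $m$ flips because $M_{-m}=-M_m$.

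Applying this four times to $\mathcal L_{-1}\mathcal L_0\mathcal L_1\mathcal L_2$ gives
\[
 T_m^\dagger=\mathcal L^{(-m)}_{-1}\mathcal L^{(-m)}_0\mathcal L^{(-m)}_1\mathcal L^{(-m)}_2=T_{-m}.
\]
The reversed order together with $n\mapsto 1-n$ maps the index sequence $(2,1,0,-1)$ to $(-1,0,1,2)$. The four signs cancel.

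The real work is the boundary terms. I would pair the factors so that at each stage the intermediate functions have known pole orders. The left function is $F_j=\mathcal L_{j}\cdots\mathcal L_2 f$, of spin weight $j-1$. The right function is $G_j=\mathcal L^{(-m)}_{\cdot}\cdots g$, of the complementary spin weight with respect to $-m$.

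At the step with factor $\mathcal L_n$, the boundary term is $\big[-\sqrt{s_z}\,F\,G\big]_{-1}^{1}$. Here $F$ has spin weight $n$ for mode $m$, and $G$ has spin weight $1-n$ for mode $-m$. By the mapping property stated before the lemma, both lie in the corresponding smooth domains. Near $\vartheta=0$, where $\sqrt{s_z}\sim\vartheta$, the product $\sqrt{s_z}FG$ vanishes to order
\[
 1+|m+n|+|-m+1-n|\ge1.
\]
The analogous bound holds at the south pole. Hence every boundary term vanishes, and all integrals converge absolutely, since all functions are bounded.

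The expected obstacle is the bookkeeping of domains. One must verify that the $g$-side chain really stays in the domains of the right spin weights for mode $-m$, that is, that $\mathcal L^{(-m)}_{1-n}$ maps $\mathcal D_{-m}$-spin $(1-n)$ to spin $-n$. This matches the pairing spin $n$ with spin $-n$ in the inner product after each step. The check uses only the pole-order statement given just above the lemma, applied with $m\to-m$. If a sign convention for spin weight turns out reversed, the same positivity $\ge1$ still holds, since only the absolute values enter.
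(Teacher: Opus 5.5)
Your proposal is correct and follows the paper's proof. You compute the formal adjoint of each factor as $(\mathcal L^{(m)}_n)^*=-\mathcal L^{(-m)}_{1-n}$, reverse the four factors so the signs cancel and the indices become those of $T_{-m}$, and kill the boundary terms using the vanishing factor $\sin\vartheta=\sqrt{s_z}$ together with the boundedness of the intermediate sections, which stay in the pole domains by the mapping property of $\mathcal L^{(m)}_n$. The only differences are cosmetic: you integrate in $z$ rather than in $\vartheta$ on $[\delta,\pi-\delta]$, and you track vanishing orders where the paper uses only boundedness.
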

\begin{proof}
On $[\delta,\pi-\delta]$ with the measure $\sin\vartheta\,d\vartheta=-dz$, integration by parts gives
$\int(\partial_\vartheta f)\bar g\sin\vartheta\,d\vartheta=[f\bar g\sin\vartheta]-\int f\,\overline{(\partial_\vartheta+\cot\vartheta)g}\sin\vartheta\,d\vartheta$,
and the multiplication terms of $\mathcal L_n$ are real. So
$(\mathcal L^{(m)}_n)^*=-(\partial_\vartheta+\cot\vartheta)+\frac m{\sin\vartheta}-\frac m2\sin\vartheta+n\cot\vartheta=-\mathcal L^{(-m)}_{1-n}$.
Reversing the order of the four factors, the four minus signs cancel and the indices $(-1,0,1,2)$ become $(2,1,0,-1)$, i.e.\ we obtain
$T_{-m}$. Each boundary term is $\sin\vartheta$ times a product of two intermediate sections, which are smooth by the preceding paragraph
and so have bounded amplitudes. Hence the boundary terms vanish as $\delta\to0$.
\end{proof}

\subsection{The all-frequency inverse}\label{sec:Pinv}

\begin{theorem}\label{thm:Pinv}
$\Pcal:H^r(\mathrm{Sym}^2_0)\to H^{r-4}(\mathrm{Sym}^2_0)$ is an isomorphism for every $r\ge4$, also on $\mathcal R_\pi$-invariant tensors.
\end{theorem}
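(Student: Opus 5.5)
By Lemma~\ref{lem:index}, $\Pcal$ is Fredholm of index zero $H^r\to H^{r-4}$, both on the full space and on the $\mathcal R_\pi$-invariant subspace. It is therefore enough to prove injectivity on smooth tensors, since elliptic regularity makes every kernel element smooth. The inverse is then bounded by the open mapping theorem. The invariant case follows because $\Pcal$ commutes with $\mathcal R_\pi$, which is an isometry of the horizon data. It maps invariant tensors to invariant tensors, so it restricts to an injective index-zero Fredholm operator there.

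For injectivity, suppose $\Pcal k=0$ with $k$ smooth. Decompose $k$ into azimuthal modes $e^{im\psi}$. This is legitimate because $\Pcal$ commutes with $\partial_\psi$, the data being axisymmetric. In the $m$-sector, write the helicity amplitudes as $a=U\psi_+$ and $b=\overline U\psi_-$. By Proposition~\ref{prop:TS} and its conjugate, the equation becomes the block system \eqref{eq:block}:
\[
 -3im\,\psi_+-\tfrac12T_{-m}\psi_-=0,\qquad -\tfrac12T_m\psi_+-3im\,\psi_-=0 .
\]
Here $\psi_\pm$ lie in the pole domains \eqref{eq:poles}, which are unchanged by the smooth nonvanishing factors $U,V$.

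\emph{Case $m=0$.} The system reads $T_0\psi_+=T_0\psi_-=0$. Since $T_0=s_z^2A_0^4$, this says $A_0^4\psi_\pm=0$ on $(-1,1)$. Integrating the first-order operators $\mathcal L^{(0)}_n$ explicitly, each factor has a one-dimensional kernel spanned by $\sin^{-n}\vartheta$, up to the sign conventions. Working through the four factors, every nonzero solution of $T_0\psi=0$ violates the vanishing orders $|m\pm2|=2$ at one of the poles required by \eqref{eq:poles}. The cleanest route is the adjoint identity: for $\psi$ in the domain, Lemma~\ref{lem:adjoint} with $m=0$ gives $\langle T_0\psi,\psi\rangle=\|\mathcal L_1\mathcal L_2\psi\|^2$ in the appropriate weighted $L^2$, after splitting the product symmetrically using $(\mathcal L^{(0)}_n)^*=-\mathcal L^{(0)}_{1-n}$. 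So $\mathcal L_1\mathcal L_2\psi=0$, and two explicit first-order ODE kernels are excluded by the pole conditions.

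\emph{Case $m\ne0$.} Solve the second equation for $\psi_-=\frac{i}{6m}T_m\psi_+$ and substitute into the first. This gives
\[
 T_{-m}T_m\psi_+=-36m^2\psi_+ .
\]
Pair with $\psi_+$ in $L^2(dz)$ and apply Lemma~\ref{lem:adjoint}. This needs the intermediate section $T_m\psi_+$ to lie in the domain of $T_{-m}$, which holds because $\mathcal L_n$ maps spin-weight domains to spin-weight domains, as shown just before the lemma. The result is $\|T_m\psi_+\|^2=-36m^2\|\psi_+\|^2$. The left side is nonnegative and the right side nonpositive, so $\psi_+=0$, and then $\psi_-=0$.

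\emph{Main obstacle.} The hardest step is the rigorous justification of the adjoint identity on the actual domains. This includes checking that mode truncations of a smooth kernel element are smooth, which holds by Corollary~\ref{A:cor:modes}, and that all boundary terms vanish. The $m=0$ kernel exclusion also needs care: the sign argument gives nothing there, so the explicit kernel of each $\mathcal L_n$ must be checked against \eqref{eq:poles}.
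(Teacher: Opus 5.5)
Your proof is correct and follows the paper's route. Index zero comes from Lemma~\ref{lem:index}, the problem is reduced mode by mode to the block \eqref{eq:block}, and for $m\neq0$ the adjoint identity of Lemma~\ref{lem:adjoint} is played against the purely imaginary diagonal. Your elimination to $T_{-m}T_m\psi_+=-36m^2\psi_+$ is equivalent to the paper's computation of $\Im\langle\mathcal D_m\psi,\psi\rangle$, and injectivity plus index zero makes the paper's separate cokernel check unnecessary.

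The only real variation is at $m=0$. The paper uses $A_0=s_z^{-1}\partial_z s_z$, so $T_0\psi=s_z\partial_z^4(s_z\psi)$, and $s_z^2f$ would be a cubic with double zeros at $z=\pm1$. Your factorisation $T_0=(\mathcal L_1\mathcal L_2)^*(\mathcal L_1\mathcal L_2)$ with the kernels $\sin^{-n}\vartheta$ is equally valid, at the cost of two extra integrations by parts.
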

\begin{proof}
\emph{Kernel.} An element of the kernel is smooth by elliptic regularity, so we may work mode by mode. For $m\ne0$ the off-diagonal
part of \eqref{eq:block} is self-adjoint by the adjoint identity (Lemma~\ref{lem:adjoint}):
$\langle T_m\psi_+,\psi_-\rangle=\langle\psi_+,T_{-m}\psi_-\rangle=\overline{\langle T_{-m}\psi_-,\psi_+\rangle}$. Hence
\[
 \langle\mathcal D_m\psi,\psi\rangle=-3im\big(\|\psi_+\|^2+\|\psi_-\|^2\big)-\tfrac12\big(X+\overline X\big),\qquad X=\langle T_{-m}\psi_-,\psi_+\rangle,
\]
so $\Im\langle\mathcal D_m\psi,\psi\rangle=-3m\big(\|\psi_+\|^2+\|\psi_-\|^2\big)$, and $\mathcal D_m\psi=0$ forces $\psi=0$. The point is
that the zeroth order diagonal term $-3im$, which comes from the same-helicity formula \eqref{eq:same}, is purely imaginary, while the
fourth order part is self-adjoint; no estimate uniform in $m$ is needed.

For $m=0$, $A_0=\partial_z-\frac{2z}{s_z}=s_z^{-1}\partial_zs_z$, so $T_0\psi=s_z\partial_z^4(s_z\psi)$. By \eqref{eq:poles} with $m=0$
(both exponents equal to $2$), a smooth axisymmetric spin-two section has $\psi=s_zf$ with $f$ smooth on $[-1,1]$. So $T_0\psi=0$ makes
$s_z^2f$ a polynomial of degree at most three with double zeros at $z=\pm1$; such a polynomial vanishes. Hence $f=0$.

\emph{Cokernel.} The adjoint block is $\mathcal D_m^*=\begin{pmatrix}3im&-\frac12T_{-m}\\-\frac12T_m&3im\end{pmatrix}$, and the same
argument applies. Since $\mathcal D_m$ is obtained from $\Pcal$ by invertible smooth bundle maps, $\Pcal$ is injective with trivial
cokernel. As it is Fredholm of index zero (Lemma~\ref{lem:index}), it is bijective with bounded inverse. By uniqueness and equivariance
the inverse preserves the $\mathcal R_\pi$-sector.
\end{proof}

\subsection{The cut, the boost and the single scalar obstruction}\label{sec:obstruction}

\begin{corollary}[Range of $E+\Ccal$]\label{cor:range}
Let $\Lambda(N):=\int_{S^2}w\,\tr_{\gd_H}N\,d\mu_H$. The operator $(k,F)\mapsto Ek+\Ccal F$,
$H^r(\mathrm{Sym}^2_0)\times H^r\to H^{r-2}(\mathrm{Sym}^2)$, has kernel $\{(0,\text{const})\}$ and range exactly $\ker\Lambda$. On
$\{\int F\,d\mu_H=0\}$ it is an isomorphism onto $\ker\Lambda$, with a gain of two derivatives.
\end{corollary}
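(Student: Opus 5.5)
The plan has three parts: show that the range is contained in $\ker\Lambda$, identify the kernel, and then count indices. Lemma~\ref{lem:index} says that $(k,F)\mapsto Ek+\Ccal F$ is Fredholm of index zero, so once the kernel and the range are pinned down, surjectivity onto $\ker\Lambda$ and the isomorphism statement follow by dimension counting.

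\emph{Range inside $\ker\Lambda$.} Lemma~\ref{lem:targettrace} already gives $\Lambda(\Ccal F)=0$. For $E$ I would use the criticality of $\Fcal$ (Lemma~\ref{lem:crit}). By Definition~\ref{def:F}, $\Fcal=\frac1{8\pi}\int w\tr\Ecal\,d\mu$. Differentiating along $(\gd_H+\epsilon k,\etab_H-\frac\epsilon2\Div k)$ gives
\[
 \dot\Fcal=\frac1{8\pi}\int w\,\tr(Ek)\,d\mu_H+\text{(terms multiplying }\Ecal(\gd_H,\etab_H)=0\text{)}.
\]
The terms in parentheses come from varying the trace and the area form, and they vanish because $\Ecal(\gd_H,\etab_H)=0$ by Proposition~\ref{prop:NH}. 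Since $k$ is tracefree, $\dot\gd$ is tracefree. The variation $\dot\etab=-\frac12\Div k$ pairs with $\partial_\psi$ to give zero, because $\partial_\psi$ is Killing for $\gd_H$ and so $\int(\Div k)(\partial_\psi)=-\frac12\int(k,\mathcal L_{\partial_\psi}\gd_H)=0$. By Lemma~\ref{lem:crit}(a),(c) the functional is critical along both variations, so $\dot\Fcal=0$, hence $\Lambda(Ek)=0$.

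\emph{Kernel.} I would first check that the constants $F$ lie in the kernel, and that they are the only elements with $k=0$. Suppose $Ek+\Ccal F=0$. Applying $\Bcal$ and using Lemma~\ref{lem:Bstat} gives $\Pcal k=\Bcal Ek=-\Bcal\Ccal F=0$. Theorem~\ref{thm:Pinv} then forces $k=0$, and we are left with $\Ccal F=0$. I would treat this with the weighted trace and a maximum-principle argument. We have $\tr\Ccal F=2\Delta F+4\etab_H\cdot dF=0$, which is a second-order elliptic equation with no zeroth-order term. By the strong maximum principle on the compact $S^2$, $F$ must be constant. This step needs only the trace, and it is the cleanest part of the argument.

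\emph{Conclusion.} The operator has index $0$ and a one-dimensional kernel, so its cokernel is one-dimensional. The range is closed, has codimension one, and is contained in $\ker\Lambda$. Moreover $\Lambda\neq0$: by Lemma~\ref{lem:targettrace}, $\Lambda(\chib_H)=-8\pi\ne0$. Since $\ker\Lambda$ also has codimension one, the range equals $\ker\Lambda$. Restricting to $\{\int F\,d\mu_H=0\}$ removes the kernel. The inverse is bounded by the open mapping theorem, and the gain of two derivatives follows from ellipticity (the symbol computation of Section~\ref{sec:symbols}) together with the Fredholm estimate.

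The step I expect to need the most care is the claim $\Lambda\circ E=0$. The boundary and area-form terms must really drop out, and Lemma~\ref{lem:crit} must apply to the specific coupled variation in which $\dot\etab=-\frac12\Div k$ is tied to $k$. Criticality is stated separately for metric variations and for torsion variations, but $\Fcal$ is differentiable, so linearity of the first variation lets the two contributions be added. The $\mathcal R_\pi$-invariant version follows from the same argument, restricted to the invariant subspaces.
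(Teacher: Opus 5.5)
Your proposal is correct and follows the paper's proof essentially step by step. The kernel comes from $\Bcal\Ccal F=0$, the injectivity of $\Pcal$ and the maximum principle for $\tr\Ccal F$; the inclusion of the range in $\ker\Lambda$ comes from Lemma~\ref{lem:targettrace} and the criticality of $\Fcal$, using $\int(\Div k)(\partial_\psi)\,d\mu_H=0$ since $\partial_\psi$ is Killing; and index zero concludes. The only slip is the constant: $\Lambda(\chib_H)=\int_{S^2}(-4)\,d\mu_H=-32\pi$, not $-8\pi$, which is harmless since only $\Lambda\neq0$ is used.
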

\begin{proof}
\emph{Kernel.} If $Ek+\Ccal F=0$, apply $\Bcal$: since $\Bcal\Ccal F=0$, we get $\Pcal k=0$, so $k=0$ by Theorem~\ref{thm:Pinv}. Then
$\Ccal F=0$, and taking the trace, $\Delta F+2\etab_H\cdot dF=0$. By the maximum principle (an elliptic operator without zeroth order
term on a closed manifold) $F$ is constant.

\emph{$\Lambda$ annihilates the range.} $\Lambda(\Ccal F)=0$ by Lemma~\ref{lem:targettrace}. For $Ek$: since $\Ecal(\gd_H,\etab_H)=0$,
$\tr(Ek)$ is the first variation of $\tr\Ecal=2(\Div\etab+|\etab|^2-K)$ along $(k,-\frac12\Div k)$. As $d\mu$ does not vary
($k$ is tracefree), $\Lambda(Ek)=2\cdot4\pi\,\dot\Fcal$. By Lemma~\ref{lem:crit}, $\dot\Fcal=0$: the metric variation is tracefree, and the
torsion variation has axial pairing $-\frac12\int(\Div k)(\partial_\psi)\,d\mu=\frac12\int k^{AB}\Nd_A(\partial_\psi)_B\,d\mu=0$, since
$\partial_\psi$ is Killing for $\gd_H$.

\emph{Range.} The operator is elliptic of index zero (Lemma~\ref{lem:index}), with one-dimensional kernel. So its cokernel is
one-dimensional, and the nonzero functional $\Lambda$ spans the annihilator of the range.
\end{proof}

The obstruction $\Lambda$ is precisely what the boost absorbs, because $\Lambda(\chib_H)\ne0$ (Lemma~\ref{lem:targettrace}): the one
scalar condition that the seed cannot achieve is exactly the one that measures $\lambda$. This is why the sign of $\lambda$ is ultimately
controlled by the scalar functional $\Fcal$ (Section~\ref{ch:sign}), in accordance with the heuristic of Section~\ref{sec:toy}.

\begin{theorem}[Linear isomorphism at the limiting corner]\label{thm:iso}
Let $x=(H_1,H_2,G,\lambda)\in H^r(\mathrm{Sym}^2_0)^2\times H^r_0(S^2)\times\R$ ($G$ of mean zero). The map
\[
 \mathcal Ax:=\Big(-EH_1+e^{2\omega}(\lambda\chib_H+\Ccal G),\ \frac1{2\omega}\Pcal(H_2-H_1)\Big)\in H^{r-2}(\mathrm{Sym}^2)\times H^{r-4}(\mathrm{Sym}^2_0)
\]
is a Banach space isomorphism, also on $\mathcal R_\pi$-invariant elements.
\end{theorem}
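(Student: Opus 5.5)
The map $\mathcal A$ is block triangular, so I will invert it by solving for the unknowns in a fixed order, using Theorem~\ref{thm:Pinv} and Corollary~\ref{cor:range}. Take a target $(N,A)\in H^{r-2}(\mathrm{Sym}^2)\times H^{r-4}(\mathrm{Sym}^2_0)$. The second component involves only $H_2-H_1$. By Theorem~\ref{thm:Pinv}, it is solved uniquely by
\[
 H_2-H_1=2\omega\,\Pcal^{-1}A\in H^r(\mathrm{Sym}^2_0).
\]
For any $H_1$ this determines $H_2$ uniquely. The remaining equation is
\[
 -EH_1+e^{2\omega}\lambda\chib_H+e^{2\omega}\Ccal G=N .
\]

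I first fix $\lambda$ with the functional $\Lambda$ of Corollary~\ref{cor:range}. By that corollary $\Lambda(EH_1)=0$, and by Lemma~\ref{lem:targettrace} $\Lambda(\Ccal G)=0$. Lemma~\ref{lem:targettrace} also gives $\Lambda(\chib_H)=4\pi\av{w\tr\chib_H}\cdot 2=-32\pi\neq0$; the exact constant is irrelevant. Applying $\Lambda$ therefore forces
\[
 \lambda=e^{-2\omega}\Lambda(N)/\Lambda(\chib_H).
\]
With this choice, $N-e^{2\omega}\lambda\chib_H\in\ker\Lambda$.

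Next I apply Corollary~\ref{cor:range} to the pair $(-H_1,e^{2\omega}G)$. On $\{\int G\,d\mu_H=0\}$ the map $(k,F)\mapsto Ek+\Ccal F$ is an isomorphism onto $\ker\Lambda$ with a gain of two derivatives. Hence $N-e^{2\omega}\lambda\chib_H$ has a unique preimage with $H_1\in H^r$ and $G\in H^r_0$. The corollary normalises by $\int F\,d\mu_H=0$, while the statement uses mean zero in the round measure. Since $d\mu_H=2\,d\mu_{\gac}$ after the identification $(z,\varphi)\leftrightarrow(z,\psi)$, the two normalisations coincide. The kernel of the corollary consists of constants in $F$, and it is exactly removed by this mean-zero condition.

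This gives injectivity, because each unknown was forced. It also gives surjectivity, and the inverse is bounded: it is composed of $\Pcal^{-1}$, the bounded inverse of Corollary~\ref{cor:range}, and the bounded functional $\Lambda$. The open mapping theorem is then not even needed.

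For the $\mathcal R_\pi$-invariant statement, every ingredient commutes with $\mathcal R_\pi$, which is an isometry of $\gd_H$ preserving $w$, $\chib_H$ and $\etab_H$. So the uniquely determined preimage of an invariant datum is invariant, because its rotate is also a preimage. The only step needing care is this bookkeeping of the constant-$F$ kernel and the mean normalisation; everything else is immediate from the two cited results. The substantive work already lies in Theorem~\ref{thm:Pinv} and Corollary~\ref{cor:range}.
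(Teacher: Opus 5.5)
Your proof is correct and follows the paper's argument. You fix $\lambda$ with the weighted trace $\Lambda$, invert $(k,F)\mapsto Ek+\Ccal F$ on $\ker\Lambda$ via Corollary~\ref{cor:range}, and recover $H_2$ from Theorem~\ref{thm:Pinv}; solving for $H_2-H_1$ first rather than last is only a reordering. Your additional checks — the value $\Lambda(\chib_H)=-32\pi$, the agreement of the two mean-zero normalisations, and $\mathcal R_\pi$-invariance by uniqueness plus equivariance — are all accurate.
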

\begin{proof}
Given $(N_0,A_0)$, apply $\Lambda$ to the first component; $\Lambda(EH_1)=\Lambda(\Ccal G)=0$ determines
$\lambda=\Lambda(N_0)/(e^{2\omega}\Lambda(\chib_H))$. Then $N_0-e^{2\omega}\lambda\chib_H\in\ker\Lambda$, and Corollary~\ref{cor:range}
determines $(-H_1,e^{2\omega}G)$, hence $(H_1,G)$, uniquely. Finally Theorem~\ref{thm:Pinv} gives $H_2=H_1+2\omega\Pcal^{-1}A_0$. The
estimates follow from the gains of two and four derivatives.
\end{proof}

\paragraph{Moments are realised by seeds.} Fix two smooth radial profiles $\varrho_1,\varrho_2$ supported in $(0,1)$ whose moment
matrix $\big(\int_0^1e^{2j\omega v}\varrho_i\,dv\big)_{ij}$ is invertible (e.g.\ two bumps with disjoint supports; the matrix is then a
Vandermonde-type matrix of distinct exponentials, invertible for generic positions). For $H_1,H_2$ let $\mathcal R(H_1,H_2)(v)$ be the
linear combination of $\varrho_1(v),\varrho_2(v)$ with tensor coefficients that has moments $(H_1,H_2)$. It is bounded
$H^r\to C^\infty_vH^r_x$.

\section{Solving the nonlinear endpoint equation}\label{ch:fixedpoint}

We solve the endpoint equation \eqref{eq:endpoint}, $\Jcal(k,\lambda,G)-\Kcal_T(k)=0$, by a contraction argument whose constants do not
depend on the neck length $T$. The unknowns are the two seed moments, the cut function and the boost. In addition, the seed contains a small
multiple $\varepsilon$ of a fixed direction $k_*$ which the linearisation does not see; this direction makes $\lambda$ positive in
Section~\ref{ch:sign}.

\subsection{The ansatz}

Fix a seed direction $k_*$, smooth, supported in $(0,1)$, $\mathcal R_\pi$-invariant, with vanishing moments
\[
 H_1(k_*)=\int_0^1e^{2\omega v}k_*(v)\,dv=0,\qquad H_2(k_*)=\int_0^1e^{4\omega v}k_*(v)\,dv=0 ;
\]
it is chosen in Section~\ref{ch:sign}. We look for solutions of \eqref{eq:endpoint} of the form
\[
 k=\varepsilon k_*+\mathcal R(H_1,H_2),\qquad x=(H_1,H_2,G,\lambda),
\]
where $\mathcal R$ is the moment realisation of Section~\ref{sec:obstruction}, so that the moments of $k$ are exactly $(H_1,H_2)$. Then
\eqref{eq:endpoint} reads
\[
 \Theta_T(x,\varepsilon):=\Jcal\big(\varepsilon k_*+\mathcal R(H_1,H_2),\lambda,G\big)-\Kcal_T\big(\varepsilon k_*+\mathcal R(H_1,H_2)\big)=0 .
\]
Let $X:=H^r(\mathrm{Sym}^2_0)^2\times H^r_0(S^2)\times\R$ and $Y:=H^{r-2}(\mathrm{Sym}^2)\times H^{r-4}(\mathrm{Sym}^2_0)$, restricted to
$\mathcal R_\pi$-invariant elements.

\subsection{Four properties of $\Theta_T$}

\begin{lemma}\label{lem:fourfacts}
There are $\rho_0>0$, $\varepsilon_1>0$, $T_1$ and $C$ such that for all $T\ge T_1$, on $B_X(\rho_0)\times(-\varepsilon_1,\varepsilon_1)$:
\begin{enumerate}[label=(F\arabic*)]
\item $\|\Theta_T(0,0)\|_Y\le Ce^{-2\omega T}$;
\item $D_x\Theta_T(0,0)=\mathcal A$, the isomorphism of Theorem~\ref{thm:iso}; in particular it is independent of $T$;
\item $D_\varepsilon\Theta_T(0,0)=0$;
\item $\Theta_T$ is $C^3$, with $\|D^j\Theta_T\|\le C$ for $j\le3$.
\end{enumerate}
\end{lemma}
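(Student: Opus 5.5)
The plan is to split $\Theta_T=\Jcal(k,\lambda,G)-\Kcal_T(k)$, with $k=\varepsilon k_*+\mathcal R(H_1,H_2)$, and treat the two terms separately: $\Kcal_T$ is controlled by Lemma~\ref{lem:neck}, and $\Jcal$ is a fixed map, independent of $T$, defined by backward integration over the unit collar. The four properties then come from the corresponding properties of the two pieces composed with the bounded linear map $(x,\varepsilon)\mapsto k$. This map sends $X\times\R$ into the seed ball in $C^1_vH^s_x$ for $\rho_0,\varepsilon_1$ small, because $\mathcal R$ is bounded $H^r\to C^\infty_vH^r_x$ and $k_*$ is fixed.

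For (F1) we note that at $(x,\varepsilon)=(0,0)$ we have $k=0$ and $(\lambda,G)=(0,0)$. The collar background is then the intrinsic horizon with the targets $\mathcal N(0,0)=\mathcal A(0,0)=0$. The homogeneous linear transport \eqref{eq:H4}--\eqref{eq:H5} with $\Ecal(\gd_H,\etab_H)=0$ (Proposition~\ref{prop:NH}) therefore gives $\Jcal(0,0,0)=0$. Lemma~\ref{lem:neck}(a) gives $\|\Kcal_T(0)\|\le Ce^{-2\omega T}$.

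For (F2) and (F3) we differentiate at the base point. By Lemma~\ref{lem:neck}(c), $D\Kcal_T(0)=0$ for every $T$, so only $\Jcal$ contributes. Theorem~\ref{thm:moments} computes $D\Jcal$ in $k$ as $k\mapsto(-EH_1(k),\frac1{2\omega}\Pcal(H_2(k)-H_1(k)))$, and in $(\lambda,G)$ as $e^{2\omega}(\dot\lambda\chib_H+\Ccal\dot G,0)$. We compose with $\partial_x k=\mathcal R$. Since $\mathcal R$ reproduces the moments exactly, this gives $\mathcal A$ of Theorem~\ref{thm:iso}, which does not involve $T$. In the direction $\varepsilon$ we have $\partial_\varepsilon k=k_*$, whose moments vanish, so $D_\varepsilon\Theta_T(0,0)=0$. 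A small bookkeeping point is the bundle identification remark of Section~\ref{sec:endpointeq}: it contributes nothing at first order, since $\phi(0;k)-\Phi=O(k^2)$.

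For (F4), the $\Kcal_T$ part is Lemma~\ref{lem:neck}(b), which gives $C^3$ bounds uniform in $T$. For $\Jcal$, we show it is $C^3$ (indeed smooth) on a fixed neighbourhood. The collar optics come from the Volterra formulas of Lemma~\ref{lem:Volterra}, which depend smoothly on $k$ in the algebra $H^s$. The torsion comes from the linear equation \eqref{eq:H3}, integrated backwards from $\etab_H$. The incoming fields solve the linear triangular system \eqref{eq:H4}--\eqref{eq:H5} on $[0,1]$, with coefficients smooth in $k$ and target data polynomial in $(\lambda,dG)$ (Definition~\ref{def:target}). Differentiating under the integrals on a fixed unit interval yields bounds with no $T$-dependence.

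The main obstacle is the regularity bookkeeping in (F4) for $\Jcal$. The coefficients of \eqref{eq:H5} contain $\Nd\ohat\Div$ of $\Omega^{-1}\chib$ and two derivatives of $(\gd,\etab)$. One must check that the loss matches the spaces $X\to Y$: seeds in $H^r$, $\etab$ in $H^{r-1}$, $\Omega^{-1}\chib$ in $H^{r-2}$, $\Omega^{-2}\ab$ in $H^{r-4}$, as in Remark~\ref{rem:triangular}. One must also check that the multiplication estimates hold for the $r$ in use (e.g.\ $r\ge10$). I would handle this level by level, following the triangular structure. The constants $\rho_0,\varepsilon_1,C$ are chosen first, depending only on the fixed early bridge and the collar, and $T_1$ is taken as large as Lemma~\ref{lem:neck} requires.
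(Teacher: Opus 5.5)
Your proposal is correct and follows essentially the same route as the paper: (F1) from $\Jcal(0,0,0)=0$ and Lemma~\ref{lem:neck}(a); (F2)--(F3) from the chain rule through $\mathcal R$, Theorem~\ref{thm:moments} and $D\Kcal_T(0)=0$; and (F4) from Lemma~\ref{lem:neck}(b) together with the smooth dependence of the lossless triangular collar transport on $k$ and on the polynomial target data. Your two extra remarks make explicit points that the paper leaves to the surrounding text: that $\Jcal(0,0,0)=0$ uses $\Ecal(\gd_H,\etab_H)=0$, and that the bundle identification is invisible at first order.
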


\begin{proof}
(F1) We have $\Jcal(0,0,0)=0$: at the limiting corner the target is $(\mathcal N,\mathcal A)(0,0)=(0,0)$, and backward transport of zero
incoming data over an unperturbed collar gives zero. Hence $\Theta_T(0,0)=-\Kcal_T(0)$, which is $O(e^{-2\omega T})$ by
Lemma~\ref{lem:neck}(a).

(F2) By the chain rule, $D_x\Jcal$ at the corner in the direction $(\dot H_1,\dot H_2,\dot G,\dot\lambda)$ equals
$D_k\Jcal[\mathcal R(\dot H_1,\dot H_2)]+D_{(\lambda,G)}\Jcal[\dot\lambda,\dot G]$. Since the moments of $\mathcal R(\dot H_1,\dot H_2)$ are
$(\dot H_1,\dot H_2)$, Theorem~\ref{thm:moments} shows that the first term is $(-E\dot H_1,\frac1{2\omega}\Pcal(\dot H_2-\dot H_1))$; the
second is $(e^{2\omega}(\dot\lambda\chib_H+\Ccal\dot G),0)$. Moreover $D\Kcal_T(0)=0$ by Lemma~\ref{lem:neck}(c). Thus
$D_x\Theta_T(0,0)=\mathcal A$.

(F3) $D_\varepsilon\Theta_T(0,0)=D_k\Jcal(0)[k_*]-D\Kcal_T(0)[k_*]$. The first term vanishes by Theorem~\ref{thm:moments}, because $k_*$ has
zero moments, and the second by Lemma~\ref{lem:neck}(c).

(F4) For $\Kcal_T$ this is Lemma~\ref{lem:neck}(b). For $\Jcal$ it holds because the collar is a fixed unit interval and the transport is
triangular and lossless (Remark~\ref{rem:triangular}): the backward solution of \eqref{eq:H4}--\eqref{eq:H5} depends smoothly on the
coefficients (the seed and its optics and torsion, which are smooth in $k$ by Lemma~\ref{lem:Volterra} and \eqref{eq:H3}) and on the final
values, which are polynomial in $(\lambda,dG)$ by Definition~\ref{def:target}. Finally, $(H_1,H_2)\mapsto\mathcal R(H_1,H_2)$ is linear and
bounded.
\end{proof}

\subsection{The contraction}

\begin{proposition}[Uniform contraction]\label{prop:contraction}
There are $C_*$, $\varepsilon_0$ and $T_0$ such that for $|\varepsilon|<\varepsilon_0$ and $T>T_0$ the equation $\Theta_T(x,\varepsilon)=0$
has a unique solution $x_T(\varepsilon)$ in a fixed small ball of $X$. It depends smoothly on $\varepsilon$ and satisfies
\begin{equation}\label{eq:xbound}
 \|x_T(\varepsilon)\|_X\le C_*\big(e^{-2\omega T}+\varepsilon^2\big).
\end{equation}
\end{proposition}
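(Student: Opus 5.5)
We rewrite $\Theta_T(x,\varepsilon)=0$ as a fixed point problem for the $T$-independent isomorphism $\mathcal A$ of (F2). Set
\[
 \Phi_{T,\varepsilon}(x):=x-\mathcal A^{-1}\Theta_T(x,\varepsilon)=-\mathcal A^{-1}\big(\Theta_T(0,0)+\mathcal N_T(x,\varepsilon)\big),
\]
with $\mathcal N_T(x,\varepsilon):=\Theta_T(x,\varepsilon)-\Theta_T(0,0)-\mathcal Ax$. Zeros of $\Theta_T(\cdot,\varepsilon)$ are exactly fixed points of $\Phi_{T,\varepsilon}$. The constants $\|\mathcal A^{-1}\|$ and $C$ in (F1)--(F4) do not depend on $T$, so every choice below is made once, before $T$. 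This is what gives estimates uniform in the neck length.

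\emph{Step 1: remainder estimates.} By (F2) and (F3), $D\mathcal N_T(0,0)=0$ in both variables $x$ and $\varepsilon$. Taylor's formula with the uniform $C^2$ bound of (F4) gives
\[
 \|\mathcal N_T(x,\varepsilon)\|_Y\le C(\|x\|_X+|\varepsilon|)^2,\qquad
 \|D_x\mathcal N_T(x,\varepsilon)\|\le C(\|x\|_X+|\varepsilon|)
\]
on $B_X(\rho_0)\times(-\varepsilon_1,\varepsilon_1)$. The first bound uses that $\mathcal N_T(0,0)=0$, that $D\mathcal N_T(0,0)=0$, and that the second derivatives are bounded. We need one refinement: the pure $\varepsilon$ term $\mathcal N_T(0,\varepsilon)$ is $O(\varepsilon^2)$, by (F3) and the bound on $D^2_\varepsilon\Theta_T$.

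\emph{Step 2: contraction on a small ball.} Fix a radius $\delta\le\rho_0$ such that $\|\mathcal A^{-1}\|C(2\delta)\le\frac12$. Then restrict to $|\varepsilon|<\varepsilon_0\le\delta$ and $T>T_0$, chosen so that
\[
 \|\mathcal A^{-1}\|\big(Ce^{-2\omega T}+C(\delta+\varepsilon_0)^2\big)\le\delta .
\]
On $\bar B_X(\delta)$ the map $\Phi_{T,\varepsilon}$ has Lipschitz constant at most $\frac12$, because $D\Phi=-\mathcal A^{-1}D_x\mathcal N_T$. It also maps the ball into itself, using (F1) and the quadratic bound, after shrinking $\delta$, $\varepsilon_0$ and $e^{-2\omega T_0}$ suitably. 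The condition on $\delta$ should be stated in the right order: first $\delta$ with $\|\mathcal A^{-1}\|C\delta\le\frac18$, then $\varepsilon_0\le\delta$, then $T_0$. The Banach fixed point theorem then gives a unique solution $x_T(\varepsilon)\in\bar B_X(\delta)$. Smooth dependence on $\varepsilon$ follows from the implicit function theorem, since $D_x\Theta_T=\mathcal A+D_x\mathcal N_T$ is invertible on the ball by a Neumann series. This gives $C^2$ regularity from (F4), which is what is needed later. $\mathcal R_\pi$-invariance is automatic, because $X$ and $Y$ are the invariant subspaces.

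\emph{Step 3: the bound \eqref{eq:xbound}.} From the fixed point identity,
\[
 \|x\|\le\|\mathcal A^{-1}\|\big(\|\Theta_T(0,0)\|+\|\mathcal N_T(x,\varepsilon)-\mathcal N_T(0,\varepsilon)\|+\|\mathcal N_T(0,\varepsilon)\|\big).
\]
We bound the three terms separately:
\begin{itemize}
\item the first is at most $\|\mathcal A^{-1}\|Ce^{-2\omega T}$ by (F1);
\item the second is at most $\|\mathcal A^{-1}\|C(\delta+\varepsilon_0)\|x\|\le\frac12\|x\|$, by the Lipschitz bound;
\item the third is at most $\|\mathcal A^{-1}\|C\varepsilon^2$ by Step 1.
\end{itemize}
Absorbing the middle term into the left side gives $\|x\|\le C_*(e^{-2\omega T}+\varepsilon^2)$.

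The main obstacle is making sure that no hidden $T$-dependence enters through the $C^2$ bounds. This rests entirely on Lemma~\ref{lem:neck}(b) and (F4), which we take as given. A second point is that the $\varepsilon^2$ bound needs second derivatives in $\varepsilon$ to be bounded uniformly in $T$; this is also part of (F4).
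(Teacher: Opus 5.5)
Your proposal is correct and is essentially the paper's argument. You use the same fixed-point map $x\mapsto\mathcal A^{-1}\bigl(\Kcal_T(0)-\mathcal N_T(x,\varepsilon)\bigr)$ with the $T$-independent isomorphism $\mathcal A$, the same Taylor bounds on $\mathcal N_T$ from (F1)--(F4), and the implicit function theorem via a Neumann series; that step actually gives $C^3$ dependence on $\varepsilon$, not just $C^2$, since $\Theta_T$ is $C^3$.

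The only difference is bookkeeping. You contract on a fixed ball of radius $\delta$ and obtain \eqref{eq:xbound} afterwards by absorbing the Lipschitz term. The paper instead contracts on the ball of radius $\rho=2\|\mathcal A^{-1}\|\bigl(Ce^{-2\omega T}+\tfrac12C_2\varepsilon^2\bigr)$, reads the bound off the radius, and checks uniqueness in the fixed ball separately.
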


At this stage $\lambda$ may have either sign.

\begin{proof}
Write $\Theta_T(x,\varepsilon)=-\Kcal_T(0)+\mathcal Ax+\mathcal N_T(x,\varepsilon)$; this defines the nonlinearity $\mathcal N_T$. By
(F1)--(F3),
\[
 \mathcal N_T(0,0)=0,\qquad D_x\mathcal N_T(0,0)=0,\qquad D_\varepsilon\mathcal N_T(0,0)=0,
\]
and by (F4) the second derivatives of $\mathcal N_T$ are bounded by $C_2:=C$ on $B_X(\rho_0)\times(-\varepsilon_1,\varepsilon_1)$, uniformly
in $T$. The mean value inequality, applied to $D\mathcal N_T$, gives for $\|x\|,\|y\|\le\rho\le\rho_0$ and $|\varepsilon|\le\varepsilon_1$
\begin{align}
 \|D_x\mathcal N_T(x,\varepsilon)\|&\le C_2\big(\|x\|+|\varepsilon|\big),\label{eq:DN}\\
 \|\mathcal N_T(x,\varepsilon)-\mathcal N_T(y,\varepsilon)\|&\le C_2\big(\rho+|\varepsilon|\big)\|x-y\|,\label{eq:Nlip}\\
 \|\mathcal N_T(0,\varepsilon)\|&\le\tfrac12C_2\,\varepsilon^2 ;\label{eq:N0}
\end{align}
for \eqref{eq:N0}, note that $\varepsilon\mapsto\mathcal N_T(0,\varepsilon)$ vanishes together with its first derivative at $0$ and has second
derivative bounded by $C_2$.

The equation $\Theta_T=0$ is equivalent to the fixed point equation
\[
 x=\Phi(x):=\mathcal A^{-1}\big(\Kcal_T(0)-\mathcal N_T(x,\varepsilon)\big).
\]
Let $a:=\|\mathcal A^{-1}\|$ and
\[
 \rho:=2a\big(Ce^{-2\omega T}+\tfrac12C_2\varepsilon^2\big).
\]
Choose $T_0$ large and $\varepsilon_0$ small such that $\rho\le\rho_0$ and $aC_2(\rho+\varepsilon_0)\le\frac12$; this is possible since
$\rho\to0$ as $T\to\infty$, $\varepsilon\to0$. Then for $\|x\|\le\rho$, by (F1), \eqref{eq:N0} and \eqref{eq:Nlip},
\[
 \|\Phi(x)\|\le a\big(\|\Kcal_T(0)\|+\|\mathcal N_T(0,\varepsilon)\|+\|\mathcal N_T(x,\varepsilon)-\mathcal N_T(0,\varepsilon)\|\big)
 \le a\big(Ce^{-2\omega T}+\tfrac12C_2\varepsilon^2\big)+\tfrac12\rho=\rho,
\]
so $\Phi$ maps the closed ball $\bar B(\rho)$ into itself, and by \eqref{eq:Nlip} $\|\Phi(x)-\Phi(y)\|\le\frac12\|x-y\|$. The Banach fixed
point theorem gives a unique fixed point $x_T(\varepsilon)\in\bar B(\rho)$, and \eqref{eq:xbound} holds with $C_*=2a\max(C,\frac12C_2)$.
Uniqueness in the fixed ball $B(\rho_0)$, where $aC_2(\rho_0+\varepsilon_0)\le\frac12$, follows from the same contraction estimate.

Smooth dependence on $\varepsilon$: at the solution, $D_x\Theta_T=\mathcal A+D_x\mathcal N_T$ with $\|\mathcal A^{-1}D_x\mathcal N_T\|\le\frac12$
by \eqref{eq:DN}, so $D_x\Theta_T$ is invertible by a Neumann series. The implicit function theorem then shows that $\varepsilon\mapsto
x_T(\varepsilon)$ is $C^3$, i.e.\ as smooth as $\Theta_T$.
\end{proof}

All constants are independent of $T$: the linearisation $\mathcal A$ does not involve the neck, the error $\Kcal_T(0)$ decays like
$e^{-2\omega T}$, and the nonlinearity is bounded uniformly in $T$. This allows us to choose first $\varepsilon$ and then $T$ in
Section~\ref{ch:sign}.

\subsection{Regularity}

With the seed in $C^\infty_vH^r_x$ the hierarchy produces $\gd\in H^r$, $\etab\in H^{r-1}$, $\Omega^{-1}\chib\in H^{r-2}$ and
$\Omega^{-2}\ab\in H^{r-4}$ (Remark~\ref{rem:triangular}). The maps $\Jcal$ and $\Kcal_T$ take values exactly in the target space $Y$ of
$\mathcal A$, which gains back two and four derivatives (Theorem~\ref{thm:iso}). No loss of derivatives occurs, so the ordinary contraction
principle suffices. We fix $r=s$ large enough that all Sobolev products used are bounded; $s\ge10$ suffices here, and we take $s\ge14$,
which leaves room for the arguments of Section~\ref{ch:realisation}.

\section{The sign of the boost}\label{ch:sign}

By Section~\ref{ch:fixedpoint} the endpoint equation has a solution $x_T(\varepsilon)=(H_1,H_2,G,\lambda)$ for every small $\varepsilon$ and
large $T$, but $\lambda$ may have either sign. We show that, for a suitable seed direction $k_*$,
\[
 \lambda_T(\varepsilon)=c_*\varepsilon^2+O\big(\varepsilon^3+e^{-2\omega T}\big),\qquad c_*>0 .
\]
The mechanism is that of the heuristic of Section~\ref{sec:toy}: applying the weighted trace that detects the boost to the equations gives an
exact formula for $\lambda$ as a time integral of the scalar functional $\Fcal$. Since $\Fcal$ is critical at the horizon, $\lambda$ is
governed by a second variation, which we compute explicitly and make negative.

\subsection{An exact trace identity}

\begin{proposition}\label{prop:trace}
For any solution of the full problem, i.e.\ once \eqref{eq:endpoint} holds,
\begin{equation}\label{eq:lambdaexact}
 \lambda=-\frac1{4\Phi^2}\Big[\,2\int_{-T}^1e^{2\omega(v-1)}\Fcal\big(\gd(v),\etab(v)\big)\,dv
 +e^{-2\omega(T+1)}\av{w\,\phi^2\,\Omega^{-1}\tr\chib}\big|_{v=-T}\Big].
\end{equation}
\end{proposition}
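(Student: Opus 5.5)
The identity should follow by integrating the trace equation \eqref{eq:H4tr}, weighted by $w\phi^2$, along $[-T,1]$. I would use the scalar $Q(v):=\av{w\,\phi^2\,\Omega^{-1}\tr\chib}(v)$, with $\av{\cdot}$ the round average of Lemma~\ref{lem:targettrace}. On the neck and the collar, $\omega$ is a constant and $w$ is independent of $v$. Since $D(\phi^2)=\phi^2\,\Omega\tr\chi$ by \eqref{eq:opt} and $d\mu_{\gd}=\phi^2\,d\mu_{\gac}$, \eqref{eq:H4tr} gives
\[
 D\big(\phi^2\,\Omega^{-1}\tr\chib\big)=-2\omega\,\phi^2\,\Omega^{-1}\tr\chib+2\phi^2\big(\Div\etab+|\etab|^2-K\big).
\]
The expansion term cancels exactly. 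This is the step to check first, because the coefficient $-(2\omega+\Omega\tr\chi)$ in \eqref{eq:H4tr} must match $D\log\phi^2=\Omega\tr\chi$ exactly.

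Multiplying by $w$ and averaging, and recognising $\frac1{4\pi}\int w(\Div\etab+|\etab|^2-K)\,d\mu_{\gd}=\Fcal(\gd,\etab)$ from Definition~\ref{def:F}, I obtain the scalar ODE
\[
 DQ=-2\omega Q+2\Fcal(\gd(v),\etab(v)).
\]
This is the toy equation of Section~\ref{sec:toy}. No angular derivatives of $\Omega$ enter, because $\omega$ is angularly constant on $[-T,1]$. Integrating from $-T$ to $1$ gives
\[
 Q(1)=e^{-2\omega(T+1)}Q(-T)+2\int_{-T}^1e^{2\omega(v-1)}\Fcal\,dv .
\]

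Next I evaluate $Q(1)$ from the target. At $v=1$ we have $\phi=\Phi$ and, once \eqref{eq:endpoint} holds, $\Omega^{-1}\chib(1)=\mathcal N(\lambda,G)$. By \eqref{eq:targetmean}, $Q(1)=\Phi^2\av{w\tr\mathcal N}=-4\Phi^2\lambda$. Solving for $\lambda$ gives \eqref{eq:lambdaexact}.

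The points needing care are the following. First, $\tr$ here is taken with respect to $\gd(v)$. At $v=1$ this is $\gd_H$, since $k$ is supported in $(0,1)$, so Lemma~\ref{lem:targettrace} applies. Second, the concatenated solution must be continuous at $v=0$, so that the integration passes across the neck--collar junction. This is exactly what \eqref{eq:endpoint} guarantees, together with the bundle identification remark; the identification is trivial on the trace part. Third, I need to confirm the normalisation of $\av{\cdot}$ against $d\mu_H=2\,d\mu_{\gac}$, so that the factor in front of $\Fcal$ is exactly $2$ and not $2\Phi^{-2}$ or similar. I expect this constant bookkeeping, rather than any analytic issue, to be the main obstacle.
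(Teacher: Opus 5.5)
Your proposal is correct and is essentially the paper's proof. The paper multiplies \eqref{eq:H4tr} by $e^{2\omega(v-1)}\phi^2$, cancels the expansion term using $D\phi^2=\Omega\tr\chi\,\phi^2$, and applies $\av{w\,\cdot\,}$ with $\phi^2d\mu_{\gac}=d\mu_{\gd}$ to obtain $2e^{2\omega(v-1)}\Fcal$. It then integrates over $[-T,1]$ and evaluates at $v=1$ by \eqref{eq:targetmean}; this is your scalar ODE for $Q$ with the integrating factor applied first, and the normalisation you worried about is automatic because only $\phi^2\,d\mu_{\gac}=d\mu_{\gd}$ is used.
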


\begin{proof}
On $[-T,1]$ the non-affinity $\omega$ is constant. Multiply \eqref{eq:H4tr} by $e^{2\omega(v-1)}\phi^2$ and use $D\phi^2=\Omega\tr\chi\,\phi^2$:
\begin{align*}
 D\big(e^{2\omega(v-1)}\phi^2\,\Omega^{-1}\tr\chib\big)
 &=e^{2\omega(v-1)}\phi^2\big(2\omega+\Omega\tr\chi\big)\Omega^{-1}\tr\chib+e^{2\omega(v-1)}\phi^2D(\Omega^{-1}\tr\chib)\\
 &=2e^{2\omega(v-1)}\phi^2\big(\Div\etab+|\etab|^2-K\big).
\end{align*}
Apply $\av{w\,\cdot\,}=\frac1{4\pi}\int w\,\cdot\,d\mu_{\gac}$ and use $\phi^2d\mu_{\gac}=d\mu_{\gd}$; by \eqref{eq:F} the right-hand side
becomes $2e^{2\omega(v-1)}\Fcal(\gd,\etab)$. Integrate from $-T$ to $1$. At $v=1$ we have $\phi=\Phi$ and $\Omega^{-1}\chib=\mathcal N(\lambda,G)$,
so $\av{w\Phi^2\tr\mathcal N}=-4\Phi^2\lambda$ by \eqref{eq:targetmean}. Hence
$-4\Phi^2\lambda-e^{-2\omega(T+1)}\av{w\phi^2\Omega^{-1}\tr\chib}(-T)=2\int_{-T}^1e^{2\omega(v-1)}\Fcal\,dv$.
\end{proof}

Two comments are in order. First, the identity uses the transport equation on all of $[-T,1]$; it therefore needs $\Omega^{-1}\tr\chib$ to be
continuous at $v=0$, which is exactly what \eqref{eq:endpoint} provides. Second, the last term in \eqref{eq:lambdaexact} is
$O(e^{-2\omega T})$, with uniformly bounded seed derivatives, since the incoming data at $v=-T$ are bounded (Lemma~\ref{lem:neck}). Moreover,
$\Fcal$ on $[-T,1]$ depends only on the late seed: the optics and torsion on $[-T,1]$ are obtained by backward integration from the fixed
terminal values, and the early controller only has to match them.

\subsection{First variation}

At $k=0$ we have $\Fcal\equiv\Fcal(\gd_H,\etab_H)=0$, since the integrand is $\frac w2\tr\Ecal(\gd_H,\etab_H)=0$ by \eqref{eq:NH}. For a
first variation along a supported tracefree $k$ we have $\dot\phi=0$, the metric variation is tracefree, and the torsion variation
$-\frac12\Div k$ has zero axial pairing, since $\partial_\psi$ is Killing. Hence $\dot\Fcal(v)=0$ for every $v$, by Lemma~\ref{lem:crit}.
Consequently, along $k=\varepsilon k_*+\mathcal R(x_T(\varepsilon))$,
\[
 \int_{-T}^1e^{2\omega(v-1)}\Fcal\,dv=\varepsilon^2\,\mathcal E_T[k_*]+O\big(\varepsilon^3+\varepsilon\,\|x_T\|+\|x_T\|^2\big),
\]
where $\mathcal E_T[k_*]$, the $\varepsilon^2$-coefficient, is the second variation along $k_*$. By \eqref{eq:xbound} the error is
$O(\varepsilon^3+e^{-2\omega T})$. Thus the correction $\mathcal R(H_1,H_2)$ cannot change the leading coefficient: it is quadratically small and
enters $\Fcal$ only at second order. Inserting this into \eqref{eq:lambdaexact} gives
\begin{equation}\label{eq:lambdaE}
 \lambda_T(\varepsilon)=-\frac{\varepsilon^2}{2\Phi^2}\mathcal E_T[k_*]+O\big(\varepsilon^3+e^{-2\omega T}\big).
\end{equation}

\subsection{The second variation}\label{sec:secondvar}

In the $\gd_H$-orthonormal frame $f_1=\sqrt{s_z/u_z}\,\partial_z$, $f_2=\frac12\sqrt{u_z/s_z}\,\partial_\psi$ we write the seed as
\[
 \gdc=\gdcH^{1/2}\exp(\varepsilon S_c)\gdcH^{1/2},\qquad
 S_c=a(z)\begin{pmatrix}\Re(c(v)e^{im\psi})&-\Im(c(v)e^{im\psi})\\-\Im(c(v)e^{im\psi})&-\Re(c(v)e^{im\psi})\end{pmatrix},
\]
with $c\in C^\infty_c((0,1);\C)$ and a fixed smooth real profile $a$. If $P$ is the coframe matrix of $\gd_H$, i.e.\ $\gd_H=P^tP$, then at
$\phi=\Phi$ the physical metric is $P^te^{\varepsilon S_c}P$, so the first order metric variation is
\[
 k=P^tS_cP ;
\]
the components of $k$ in the frame $(f_1,f_2)$ are exactly $S_c$. In helicity components (Definition~\ref{def:helicity}), $k$ is the sum of
the positive helicity mode $\frac12a\,c\,e^{im\psi}$ and its complex conjugate. Note that $|k|^2=2a^2|c|^2$ is independent of $\psi$. Let
$N:=\av{a^2}=\frac12\int_{-1}^1a^2\,dz$.

The $\varepsilon^2$-coefficient of $\Fcal(v)$ has three contributions: (i) the static one, from the first order metric $k(v)$, its
exponential second order part, and the first order torsion $-\frac12\Div k(v)$; (ii) the axial memory, from the second order torsion; (iii)
the conformal factor, from the second order area radius. There are no others: the first order variations of $\phi$ and of the lapse vanish,
and by Lemma~\ref{lem:crit} the second order torsion enters only through its axial pairing.

\subsubsection{(i) The static contribution}

\begin{proposition}[Static second variation]\label{prop:static}
Let $\gd(\varepsilon)=\gd_H\exp(\varepsilon\gd_H^{-1}k)$ and $\etab(\varepsilon)=\etab_H-\varepsilon\gamma$, $\gamma:=\frac12\Div k$, with $k$
tracefree. The $\varepsilon^2$-coefficient of $\Fcal(\gd(\varepsilon),\etab(\varepsilon))$ is
\begin{equation}\label{eq:Qcov}
 Q=\frac1{4\pi}\int_{S^2}\Big[w|\gamma|^2-2w\,k(\eta_H,\gamma)-\tfrac32\,k(dw,\gamma)+|k|^2\Big(\frac{1-3z^2}{2u_z^2}-\frac{\Delta w}{16}\Big)\Big]d\mu_H ,
\end{equation}
where all quantities are taken with respect to $\gd_H$.
\end{proposition}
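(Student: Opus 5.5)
The plan is to expand $\Fcal$ in its second form \eqref{eq:F}, $\Fcal=\frac1{4\pi}\int(w|\etab|^2-dw\cdot\etab-wK)\,d\mu_{\gd}$, to second order in $\varepsilon$. We collect three groups of terms: the torsion terms, the curvature term and the measure. Two simplifications hold throughout. First, $\tr k=0$ and $\det\exp(\varepsilon\gd_H^{-1}k)=1$, so $d\mu_{\gd(\varepsilon)}=d\mu_H$ exactly and the measure contributes nothing at any order. Second, the inverse metric is $\gd(\varepsilon)^{-1}=\exp(-\varepsilon\gd_H^{-1}k)\gd_H^{-1}$, so $\gd^{AB}=\gd_H^{AB}-\varepsilon k^{AB}+\frac{\varepsilon^2}2(k\cdot k)^{AB}+O(\varepsilon^3)$. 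In two dimensions $k\cdot k=\frac12|k|^2\gd_H$ for tracefree $k$, so the second order term is $\frac{\varepsilon^2}4|k|^2\gd_H^{AB}$.

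\emph{Torsion terms.} Insert $\etab=\etab_H-\varepsilon\gamma$ and the expansion of $\gd^{AB}$ into $w\gd^{AB}\etab_A\etab_B-\gd^{AB}w_A\etab_B$ and read off the $\varepsilon^2$-coefficient. This gives $w|\gamma|^2$ and $2w\,k(\etab_H,\gamma)=-2w\,k(\eta_H,\gamma)$ from the cross terms. From $-dw\cdot\etab$ it gives $-k(dw,\gamma)$. It also gives the $|k|^2$ terms $\frac14|k|^2(w|\eta_H|^2+dw\cdot\eta_H)$, up to signs, using $\etab_H=-\eta_H$.

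\emph{Curvature term.} We need $\int w\ddot K\,d\mu_H$. We will compute the second variation of $K$ along $\gd_H\exp(\varepsilon\gd_H^{-1}k)$. This is equivalent to the first variation formula of Lemma~\ref{lem:dotK} applied twice, with the varied metric $\gd_H+\varepsilon k+\frac{\varepsilon^2}{2}k\cdot k$. The $\frac12 k\cdot k=\frac14|k|^2\gd_H$ piece is a pure trace variation and contributes $\frac12(-\Delta-K_H)(\frac12|k|^2)$. The genuinely quadratic part is $\frac12\frac{d}{d\varepsilon}\big(\frac12\Div_{\gd}\Div_{\gd}k\big)$ at fixed $k$. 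Its variation comes from $\dot\Gamma$ and from raising indices, and it is of the schematic form $-\frac12 k^{AB}\Nd_A\Div k_B+\dots$. After integrating against $w$ and integrating by parts twice, it produces $-\frac12 k(dw,\gamma)$, a term $\int w|\gamma|^2$-type with the right coefficient, and $|k|^2$-terms containing $\Delta w$ and $K_H w$.

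The $|k|^2$ terms are the main obstacle. We must show that all of them, from the inverse metric in the torsion terms and from the trace and quadratic parts of $\ddot K$, combine to the single coefficient $\frac{1-3z^2}{2u_z^2}-\frac{\Delta w}{16}$. To do this, use the weight identity \eqref{eq:W2}, traced: $w|\eta_H|^2+dw\cdot\eta_H+\frac12\Delta w=\frac{2(1-3z^2)}{u_z^2}$. Combined with $K_H=\frac{2(1-3z^2)}{u_z^3}$ and $wK_H=\frac{2(1-3z^2)}{u_z^2}$, this reduces everything to multiples of $\frac{1-3z^2}{u_z^2}$ and $\Delta w$.

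The first-derivative cross terms $k(dw,\gamma)$ from the two sources ($-1$ and $-\frac12$) add to $-\frac32$. The terms $k^{AB}\Nd_A|k|^2$, if any, are integrated by parts onto $w$ and become $\Delta w|k|^2$ contributions. A cross-check is to verify the result on a round background with $w$ replaced by a constant, where $Q$ must reduce to the known second variation of the total curvature, which is zero by Gauss--Bonnet.
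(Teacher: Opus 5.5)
Your treatment of the measure, of $\gd^{-1}$ and of the torsion terms is correct and follows the paper. The curvature term, however, is the only nontrivial step, and it is set up incorrectly: the pieces you describe do not add up to \eqref{eq:Qcov}. There are three concrete problems.

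First, the pure-trace increment $h=\frac14|k|^2\gd_H$ contributes $\frac12(\Div\Div h-\Delta\tr h-K_H\tr h)=-\frac18\Delta|k|^2-\frac14K_H|k|^2$. It does not contribute $\frac12(-\Delta-K_H)(\frac12|k|^2)$; you dropped $\Div\Div(f\gd_H)=\Delta f$.

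Second, your ``genuinely quadratic part'' differentiates only $\frac12\Div\Div k$. That uses the tracefree case of Lemma~\ref{lem:dotK} away from $\varepsilon=0$. But $\tr_{\gd_H+\varepsilon k}k=-\varepsilon|k|^2+O(\varepsilon^2)$, so the terms $-\Delta\tr k-K\tr k$ add $+\frac14(\Delta|k|^2+K_H|k|^2)$ to the $\varepsilon^2$-coefficient of $K$. This exactly cancels the $K_H$ term from the first point. So no $wK_H|k|^2$ term survives, and the appeal to $wK_H=\frac{2(1-3z^2)}{u_z^2}$ is a red herring. Even with the quadratic part computed correctly, your pieces as written leave the $|k|^2$-coefficient of $Q$ off by $\frac38\Delta w+\frac{1-3z^2}{2u_z^2}$, which is not zero.

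Third, the curvature term has no net $w|\gamma|^2$ contribution. The coefficient $1$ of $w|\gamma|^2$ in \eqref{eq:Qcov} comes entirely from $w|\etab|^2$. A ``$w|\gamma|^2$-type term with the right coefficient'' coming from $K$ would therefore spoil the result.

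The missing idea is to differentiate along the exponential curve itself. With $H=\gd_H^{-1}k$, the velocity $\gd(\varepsilon)H$ is tracefree for every $\varepsilon$. Hence Lemma~\ref{lem:dotK} gives $K'(\varepsilon)=\frac12\Div_\varepsilon\Div_\varepsilon(\gd(\varepsilon)H)$ exactly. Since $\frac{d}{d\varepsilon}(\gd H)|_0=\gd_HH^2=\frac12|k|^2\gd_H$, this yields
\[
 K''(0)=\tfrac12\,\delta_k(\Div\Div)(k)+\tfrac14\,\Delta|k|^2 .
\]
Next write $\int w\,\Div\Div T\,d\mu=\int T^{AB}\Nd^2_{AB}w\,d\mu$ for fixed $T$. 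Then only the raised indices and $\Nd^2w$ vary, with $\delta\Nd^2_{CD}w=-\dot\Gamma^E_{CD}w_E$. Differentiating $k\cdot k=\frac12|k|^2\gd_H$ gives $k^{CD}\Nd_Ck_{DE}=\frac12\Nd_E|k|^2-k_E{}^D(\Div k)_D$. Using this,
\[
 \int w\,\delta_k(\Div\Div)(k)\,d\mu_H=-\tfrac34\int|k|^2\Delta w+2\int k(dw,\gamma),
\]
which also shows directly that no $|\Nd k|^2$- or $K_H$-terms arise. The $\varepsilon^2$-coefficient of $-\int wK\,d\mu$ is therefore $\frac1{16}\int|k|^2\Delta w-\frac12\int k(dw,\gamma)$. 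Now add your torsion terms, using $\frac14(w|\eta_H|^2+dw\cdot\eta_H)=\frac{1-3z^2}{2u_z^2}-\frac18\Delta w$ from the trace of \eqref{eq:W2}; this gives \eqref{eq:Qcov}.

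Your Gauss--Bonnet check constrains only the curvature part, since the torsion terms of $Q$ do not vanish for constant $w$. It would detect the stray $K_H|k|^2$ term, but not the errors in the $\Delta w$ coefficients.
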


\begin{proof}
Write $H=\gd_H^{-1}k$, a tracefree endomorphism. Since $\exp(\varepsilon H)$ is unimodular, $d\mu_{\gd(\varepsilon)}=d\mu_H$ exactly. Also
$\gd(\varepsilon)^{-1}=\exp(-\varepsilon H)\gd_H^{-1}$, and in two dimensions $H^2=\frac12\tr(H^2)I=\frac12|k|^2I$; hence, to second order,
\[
 \gd^{-1}=\gd_H^{-1}-\varepsilon\,k^\sharp+\tfrac{\varepsilon^2}4|k|^2\gd_H^{-1},\qquad k^\sharp:=\gd_H^{-1}k\gd_H^{-1}.
\]
We use the second form of \eqref{eq:F}.

\emph{Torsion terms.} With $\etab=-(\eta_H+\varepsilon\gamma)$, the $\varepsilon^2$-coefficients are
\[
 |\etab|^2:\ \tfrac14|k|^2|\eta_H|^2-2k(\eta_H,\gamma)+|\gamma|^2,\qquad -dw\cdot\etab:\ \tfrac14|k|^2\,dw\cdot\eta_H-k(dw,\gamma).
\]

\emph{Curvature term.} Along $\gd(\varepsilon)$ the derivative $\gd'=\gd(\varepsilon)H$ is tracefree for every $\varepsilon$, so by
Lemma~\ref{lem:dotK} $K'(\varepsilon)=\frac12\Div_\varepsilon\Div_\varepsilon(\gd(\varepsilon)H)$. Differentiating once more at $\varepsilon=0$
gives two pieces: the variation of the operator $\Div\Div$ (through the metric) applied to $k$, and $\Div\Div$ applied to
$\frac{d}{d\varepsilon}(\gd H)=\gd_HH^2=\frac12|k|^2\gd_H$. So $K''(0)=\frac12\delta_k(\Div\Div)(k)+\frac14\Delta|k|^2$, and the
$\varepsilon^2$-coefficient of $\int wK\,d\mu$ is $\frac12\int wK''(0)\,d\mu_H$.

For the first piece, since $d\mu$ does not vary, we have for fixed $T$
\[
 \int w\,\delta_k(\Div\Div)(T)\,d\mu=\delta_k\Big[\int T_{AB}\gd^{AC}\gd^{BD}\Nd^2_{CD}w\,d\mu\Big],
\]
and $\delta(\Nd^2_{CD}w)=-\dot\Gamma^E_{CD}w_E$. With $T=k$,
\[
 \int w\,\delta_k(\Div\Div)(k)=-2\int(k\cdot k)^{CB}\Nd^2_{CB}w-\int k^{CD}\dot\Gamma^E_{CD}w_E
 =-\int|k|^2\Delta w-\int k^{CD}\dot\Gamma^E_{CD}w_E .
\]
Now $k^{CD}\dot\Gamma^E_{CD}=k^{CD}\Nd_Ck_D{}^E-\frac14\Nd^E|k|^2$, and for tracefree symmetric $k$ in two dimensions
$k^{CD}\Nd_Ck_{DE}=\frac12\Nd_E|k|^2-k_E{}^D(\Div k)_D$, as follows by differentiating $k\cdot k=\frac12|k|^2\gd$. Integrating by parts,
$\int k^{CD}\dot\Gamma^E_{CD}w_E=-\frac14\int|k|^2\Delta w-2\int k(dw,\gamma)$. Hence
\begin{align*}
 \int wK''(0)\,d\mu_H&=\tfrac12\Big(-\tfrac34\int|k|^2\Delta w+2\int k(dw,\gamma)\Big)+\tfrac14\int|k|^2\Delta w\\
 &=-\tfrac18\int|k|^2\Delta w+\int k(dw,\gamma),
\end{align*}
and the $\varepsilon^2$-coefficient of $-\int wK\,d\mu$ is $\frac1{16}\int|k|^2\Delta w-\frac12\int k(dw,\gamma)$.

\emph{Sum.} Collecting terms, the coefficient of $|k|^2$ is $\frac14w|\eta_H|^2+\frac14dw\cdot\eta_H+\frac1{16}\Delta w$. By the trace of
\eqref{eq:W2}, $w|\eta_H|^2+dw\cdot\eta_H+\frac12\Delta w=\frac{2(1-3z^2)}{u_z^2}$, so this equals
$\frac{1-3z^2}{2u_z^2}-\frac18\Delta w+\frac1{16}\Delta w$. The coefficient of $k(dw,\gamma)$ is $-1-\frac12=-\frac32$.
\end{proof}

\begin{proposition}[One-dimensional form]\label{prop:Q1d}
For the seed above, the static contribution to the $\varepsilon^2$-coefficient of $\Fcal(v)$ is $Q_m[a]\,|c(v)|^2$, where
\begin{equation}\label{eq:Q1d}
 Q_m[a]:=\Big\langle\frac{s_z}2D_a^2-\frac{s_zz}{u_z}aD_a+\frac{\frac32-4z^2+\frac12z^4}{u_z^2}a^2\Big\rangle,
 \qquad D_a=a'-\Big(\frac{mu_z}{2s_z}+\frac{4z}{s_zu_z}\Big)a .
\end{equation}
\end{proposition}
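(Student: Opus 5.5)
The plan is to substitute the single-mode seed into the covariant formula \eqref{eq:Qcov} of Proposition~\ref{prop:static} and evaluate each term with the helicity calculus of Lemma~\ref{lem:blocks}. The first order metric is $k=k^{(+)}+\overline{k^{(+)}}$, where $k^{(+)}$ is the positive helicity tensor with component $W_+=\frac12 a\,c\,e^{im\psi}$ and $W_-=0$. Its conjugate has negative helicity and mode $-m$. Since $\gamma=\frac12\Div k$ and $\Div$ preserves the helicity splitting, $\gamma$ is likewise determined: by Lemma~\ref{lem:blocks}, $\gamma_+=\frac12\cdot\frac12 c\,e^{im\psi}D_a$ from the positive part, with $D_a=a'-\mu a$ as in Section~\ref{sec:Ehel}, and the negative part is its conjugate. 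Every term in \eqref{eq:Qcov} is a real quadratic form. Averaging over $\psi$ (the $\psi$-integral in $d\mu_H=2\,dz\,d\psi$) kills the cross terms of modes $\pm2m$. What survives is pairings of the $(+)$ part with the conjugate $(-)$ part, each proportional to $|c(v)|^2$. This immediately gives the factorised form $Q_m[a]|c|^2$.

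Next I would evaluate the four terms in turn, using the frame identities $\gamma\cdot\gamma'=\frac{2s_z}{u_z}(\gamma_+\gamma'_-+\gamma_-\gamma'_+)$, the helicity components $(\eta_H)_\pm=-\frac12\bar c,-\frac12 c$, and $(dw)_\pm=\frac12w'=z$ (since $w$ is axisymmetric, $M_m$ does not act on it). Writing $c_\pm$ here for the horizon constants $1/(z\pm i)$ to avoid a clash with the profile $c(v)$, the terms come out as follows.
\begin{itemize}
\item The term $w|\gamma|^2$ gives a multiple of $\frac{s_z}{u_z}w\,D_a^2$, which is $s_zD_a^2$ up to the normalising factor.
\item For $k(\eta_H,\gamma)$ and $k(dw,\gamma)$, $k$ contracts two one-forms by pairing helicity $+$ of $k$ with the $-$ components of both forms. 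This yields combinations of $\bar c_\pm a D_a$ with the factor $s_z/u_z$; their real parts are needed.
\item The term $|k|^2=2a^2|c|^2$ is multiplied by $\frac{1-3z^2}{2u_z^2}-\frac{\Delta w}{16}$. Here $\Delta_{\gd_H}w=\frac12\Delta_{h_H}w$, and \eqref{eq:W3} gives this Laplacian explicitly as $\frac{2(1-4z^2-z^4)}{u_z^2}$.
\end{itemize}
Finally I would convert $\frac1{4\pi}\int\cdot\,d\mu_H$ into $\av{\cdot}=\frac12\int_{-1}^1\cdot\,dz$, which absorbs the factor $2$ from $d\mu_H$ and the $2\pi$ from $\psi$.

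The main obstacle is the bookkeeping of real parts in the two mixed terms. Using $2\Re(\bar c_+)$-type identities, the combination $-2w\,\Re(\text{stuff}\cdot\bar c_\pm)-\frac32\cdot 2z\cdot(\ldots)$ should collapse to $-\frac{s_z z}{u_z}aD_a$. The imaginary parts must drop out after taking real parts, since $Q$ is real. Any leftover imaginary piece would signal a sign error in the helicity conventions, so I would check it against $m=0$, axisymmetric $a$. Collecting the $a^2$ coefficients, $\frac{2(1-3z^2)}{2u_z^2}-\frac{1-4z^2-z^4}{4u_z^2}$ (up to the common normalisation), should give exactly $\frac{\frac32-4z^2+\frac12z^4}{u_z^2}$. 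This is a consistency check I would verify last.
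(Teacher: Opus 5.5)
Your proposal is correct and follows the paper's proof: substitute the single-mode seed into \eqref{eq:Qcov}, evaluate $|\gamma|^2$, $k(\eta_H,\gamma)$, $k(dw,\gamma)$ and $|k|^2$ with Lemma~\ref{lem:blocks} and \eqref{eq:W3}, and average over $\psi$. Two bookkeeping corrections. First, $\frac1{4\pi}\int f\,d\mu_H=2\av{\bar f}$, not $\av{\bar f}$, and this factor $2$ is what turns your $\frac{3-8z^2+z^4}{4u_z^2}$ into the stated coefficient. Second, no $e^{\pm2im\psi}$ terms arise at all, because every contraction pairs a $(+)$ component with a $(-)$ component; the factorisation therefore does not rest on the $\psi$-average, which would not remove such terms when $m=0$.
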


\begin{proof}
We evaluate each term of \eqref{eq:Qcov} with the helicity calculus and average over $\psi$. Note that
$\frac1{4\pi}\int f\,d\mu_H=\frac1{4\pi}\int_0^{2\pi}\!\int_{-1}^1f\,2\,dz\,d\psi=2\av{\bar f}$, where $\bar f$ is the $\psi$-average. By
Lemma~\ref{lem:blocks}, $\gamma=\frac12\Div k$ has $\gamma_+=\frac14c\,D_a\,e^{im\psi}$ and $\gamma_-=\overline{\gamma_+}$ (the conjugate mode has
$M_m\mapsto-M_m$, which gives the same operator), and $D_a$ is the operator of the same name in Section~\ref{sec:Ehel}.
\begin{itemize}
\item $|\gamma|^2=\frac{2s_z}{u_z}(\gamma_+\gamma_-+\gamma_-\gamma_+)=\frac{s_z}{4u_z}|c|^2D_a^2$, so $2\av{w|\gamma|^2}=\av{\frac{s_z}2D_a^2}|c|^2$.
\item In the complex frame $\mathfrak f=f_1-if_2$ of the proof of Lemma~\ref{lem:blocks}, $\sum_af_a\otimes f_a=\frac12(\mathfrak f\otimes\bar{\mathfrak f}+\bar{\mathfrak f}\otimes\mathfrak f)$
and $k(\mathfrak f,\bar{\mathfrak f})=0$, so for one-forms $X,Y$
\[
 k(X^\sharp,Y^\sharp)=\tfrac14\big[k(\mathfrak f,\mathfrak f)X(\bar{\mathfrak f})Y(\bar{\mathfrak f})+k(\bar{\mathfrak f},\bar{\mathfrak f})X(\mathfrak f)Y(\mathfrak f)\big]
 =\frac{4s_z}{u_z}\big(k_+X_-Y_-+k_-X_+Y_+\big).
\]
Here $k_+=\frac12ac\,e^{im\psi}$, $k_-=\overline{k_+}$, $\gamma_-=\overline{\gamma_+}$, and by Lemma~\ref{lem:blocks} $(\eta_H)_\pm=-\frac1{2(z\mp i)}$ and
$(dw)_\pm=\frac12w'=z$. The products $k_+\gamma_-$ and $k_-\gamma_+$ do not depend on $\psi$, and
\begin{gather*}
 k(\eta_H,\gamma)=\frac{4s_z}{u_z}\cdot2\Re\Big[\frac12ac\cdot\Big(-\frac1{2(z+i)}\Big)\cdot\frac14\bar cD_a\Big]=-\frac{s_zz}{2u_z^2}|c|^2aD_a,\\
 k(dw,\gamma)=\frac{4s_z}{u_z}\cdot2\Re\Big[\frac12ac\cdot z\cdot\frac14\bar cD_a\Big]=\frac{zs_z}{u_z}|c|^2aD_a .
\end{gather*}
Hence
$2\av{-2w\,k(\eta_H,\gamma)-\frac32k(dw,\gamma)}=2\av{\frac{s_zz}{u_z}aD_a-\frac32\frac{s_zz}{u_z}aD_a}|c|^2=-\av{\frac{s_zz}{u_z}aD_a}|c|^2$.
\item $|k|^2=2a^2|c|^2$ and $\Delta_{\gd_H}w=\frac12\Delta_{h_H}w=\frac{2(1-4z^2-z^4)}{u_z^2}$ by \eqref{eq:W3}, so
$\frac{1-3z^2}{2u_z^2}-\frac{\Delta w}{16}=\frac{4(1-3z^2)-(1-4z^2-z^4)}{8u_z^2}=\frac{3-8z^2+z^4}{8u_z^2}$, and
$2\av{|k|^2(\ldots)}=\av{\frac{3-8z^2+z^4}{2u_z^2}a^2}|c|^2$.\qedhere
\end{itemize}
\end{proof}

\paragraph{The value for the chosen profile.} Take $m=-2$ and $a=(1+z)^2$. Then $M_m=\frac{mu_z}{2s_z}=-\frac{u_z}{s_z}$ and
\begin{align*}
 D_a&=2(1+z)+(1+z)^2\frac{u_z^2-4z}{s_zu_z}=\frac{(1+z)\big(2(1-z)u_z+u_z^2-4z\big)}{(1-z)u_z}\\
 &=\frac{(1+z)(1-z)^2(z^2+3)}{(1-z)u_z}=\frac{s_z(z^2+3)}{u_z},
\end{align*}
using $2(1-z)u_z+u_z^2-4z=z^4-2z^3+4z^2-6z+3=(z-1)^2(z^2+3)$. Inserting $a$ and $D_a$, the integrand of \eqref{eq:Q1d} is
$P(z)/(2u_z^2)$ with
\begin{align*}
 P&=s_z^3(z^2+3)^2-2s_z^2z(1+z)^2(z^2+3)+(3-8z^2+z^4)(1+z)^4\\
 &=-z^{10}-2z^9-6z^8-20z^5-14z^4-16z^3-23z^2+6z+12 .
\end{align*}
Dividing by $2u_z^2$ gives the partial fraction decomposition
\[
 -\tfrac12z^6-z^5-2z^4+2z^3+\tfrac92z^2-13z-14+\frac{16z+12}{u_z}+\frac8{u_z^2}.
\]
Odd terms integrate to zero, and $\int_{-1}^1\frac{dz}{u_z}=\frac\pi2$, $\int_{-1}^1\frac{dz}{u_z^2}=\frac12+\frac\pi4$; therefore
\[
 \int_{-1}^1(\ldots)\,dz=-\tfrac17-\tfrac45+3-28+6\pi+4+2\pi=8\pi-\tfrac{768}{35},\qquad Q_{-2}[a]=4\pi-\tfrac{384}{35}.
\]
Also $N=\frac12\int_{-1}^1(1+z)^4dz=\frac{16}5$. Hence
\[
 q_a:=\frac{Q_{-2}[a]}N=\frac{5\pi}4-\frac{24}7=0.49842\ldots<\frac12 .
\]

\subsubsection{(ii) The axial memory}

Write $\ell=\varepsilon^2\ell_2+O(\varepsilon^3)$ (see (iii)) and $\etab=\etab_H+\varepsilon\etab_1+\varepsilon^2\etab_2+O(\varepsilon^3)$. The
coefficient of $\varepsilon^2$ in $\phi^2\etab$ is $\Phi^2(\etab_2+2\ell_2\etab_H)$. Define
\[
 \etab_{2,\mathrm{mem}}:=\etab_2+2\ell_2\etab_H-d\ell_2 ;
\]
the subtracted exact form has zero axial integral. We compute this memory contribution here and the conformal contribution in (iii). By
Lemma~\ref{lem:crit}, $\etab_{2,\mathrm{mem}}$ enters $\Fcal$ only through $-\frac1{4\pi}\int\etab_{2,\mathrm{mem}}(\partial_\psi)\,d\mu_H$.
Pairing \eqref{eq:Zb} with $\partial_\psi$ and integrating over $S^2$ (as in Proposition~\ref{prop:flux}, with the opposite sign because
$\underline Z$ enters with $-\Div$) gives
\[
 D\int\phi^2\etab(\partial_\psi)\,d\mu_{\gac}=\frac14\int\phi^2\tr\big(\gdc^{-1}D\gdc\,\gdc^{-1}\mathcal L_{\partial_\psi}\gdc\big)d\mu_{\gac}
 +\int\big(\phi\,\partial_\psi D\phi-D\phi\,\partial_\psi\phi\big)d\mu_{\gac}.
\]
The last integral vanishes at second order because the second order area radius is axisymmetric (see (iii)). The frame $(f_1,f_2)$ is
$\psi$-invariant, so at second order the trace is $\varepsilon^2\tr(S_c'\,\partial_\psi S_c)$, with $'=D$. For matrices of the form
$S_X=a\begin{pmatrix}\Re X&-\Im X\\-\Im X&-\Re X\end{pmatrix}$ one has $\tr(S_XS_Y)=2a^2\Re(X\bar Y)$; with $X=c'e^{im\psi}$ and
$Y=imce^{im\psi}$,
\[
 \tr(S_c'\,\partial_\psi S_c)=2a^2\Re\big(c'\,\overline{imc}\big)=2a^2m\Im(\bar cc').
\]
Thus $D\int\Phi^2\etab_{2,\mathrm{mem}}(\partial_\psi)\,d\mu_{\gac}=\frac{\Phi^2}4\cdot2m\Im(\bar cc')\int a^2d\mu_{\gac}
=\frac{\Phi^2}2\,m\Im(\bar cc')\cdot4\pi N$. Integrating back from $v=1$, where $\etab=\etab_H$ exactly,
\[
 \int\Phi^2\etab_{2,\mathrm{mem}}(\partial_\psi)\,d\mu_{\gac}(v)=-\frac{\Phi^2}2\,4\pi N\,m\int_v^1\Im(\bar cc')\,dv' .
\]
For this coefficient we use the fixed factor $\Phi^2$. Since $d\mu_H=2\,d\mu_{\gac}$, we obtain
\[
 \int\etab_{2,\mathrm{mem}}(\partial_\psi)\,d\mu_H=\frac2{\Phi^2}\int\Phi^2\etab_{2,\mathrm{mem}}(\partial_\psi)\,d\mu_{\gac}
 =-4\pi Nm\int_v^1\Im(\bar cc')\,dv',
\]
and the contribution to the $\varepsilon^2$-coefficient of $\Fcal(v)$ is
\[
 -\frac1{4\pi}\int\etab_{2,\mathrm{mem}}(\partial_\psi)\,d\mu_H=mN\int_v^1\Im(\bar cc')(v')\,dv' .
\]

\subsubsection{(iii) The conformal factor and the torsion dilution}

Write $\phi=\Phi e^\ell$, where $\ell$ is of second order. The Raychaudhuri equation gives $D^2\ell+(D\ell)^2=2\omega D\ell-e$, and at second
order $e=\frac18\tr\big((\varepsilon S_c')^2\big)=\frac{\varepsilon^2}4a^2|c'|^2$; so, per $\varepsilon^2$,
$D^2\ell=2\omega D\ell-\frac14a^2|c'|^2$. With $\ell(1)=D\ell(1)=0$, as in Lemma~\ref{lem:Volterra},
\[
 \ell(v)=-\frac{a^2}{8\omega}\int_v^1\big(1-e^{-2\omega(v'-v)}\big)|c'(v')|^2dv' .
\]
In particular $\ell$ is axisymmetric. It enters $\Fcal$ in two ways.
\begin{itemize}
\item Through the metric. Under $\gd\mapsto e^{2\ell}\gd$ in two dimensions, $|\etab|^2d\mu$ and $dw\cdot\etab\,d\mu$ are invariant and
$K\,d\mu\mapsto(K-\Delta\ell)\,d\mu$. So the contribution is $\frac1{4\pi}\int w\Delta\ell\,d\mu_H=\frac1{4\pi}\int\ell\,\Delta_{\gd_H}w\,d\mu_H$.
\item Through the torsion (dilution). By \eqref{eq:Zb}, at second order $D\underline Z\supset\Phi^2\Nd D\ell$, so
$\underline Z(v)=\Phi^2\etab_H+\Phi^2d\ell(v)+\ldots$ and $\etab=\underline Z/\phi^2=\etab_H(1-2\ell)+d\ell+\ldots$. The exact part $d\ell$
does not contribute (Lemma~\ref{lem:crit}); the part $-2\ell\etab_H$ contributes
$-\frac1{4\pi}\int(-2\ell\etab_H)(\partial_\psi)\,d\mu_H=-\frac1{4\pi}\int2\ell\,\eta_H(\partial_\psi)\,d\mu_H=-\frac1{4\pi}\int\ell\frac{4s_z}{u_z^2}\,d\mu_H$.
\end{itemize}
Together, with $\Delta_{\gd_H}=\frac12\Delta_{h_H}$ and \eqref{eq:W3},
\[
 \frac1{4\pi}\int\ell\Big(\tfrac12\Delta_{h_H}w-\frac{4s_z}{u_z^2}\Big)d\mu_H=\frac1{4\pi}\int\ell\cdot\tfrac12\Big(\Delta_{h_H}w-\frac{8s_z}{u_z^2}\Big)d\mu_H
 =-\frac2{4\pi}\int\ell\,d\mu_H=-4\av{\ell},
\]
i.e.\ the contribution is $\frac N{2\omega}\int_v^1\big(1-e^{-2\omega(v'-v)}\big)|c'(v')|^2dv'$. Thus the weight identity \eqref{eq:W3} combines
the two optical effects into a single multiple of $\av\ell$, with the favourable sign.

\subsection{The radial quadratic form}

\begin{lemma}\label{lem:radial}
Let $\mathcal E_T[c]$ be the $\varepsilon^2$-coefficient of $\int_{-T}^1e^{2\omega(v-1)}\Fcal\,dv$, with $c$ extended by zero to the neck.
Then, exactly for every $T\ge0$,
\begin{equation}\label{eq:ET}
 \frac{\mathcal E_T[c]}N=\int_0^1e^{2\omega(t-1)}\Big(q_a|c|^2+\frac m{2\omega}\big(1-e^{-2\omega(t+T)}\big)\Im(\bar cc')
 +\frac1{8\omega^2}\big(1-e^{-2\omega(t+T)}\big)^2|c'|^2\Big)dt .
\end{equation}
In particular $\mathcal E_T\to\mathcal E_\infty$ with error $O(e^{-2\omega T})$, uniformly on the fixed seed set, where
\begin{equation}\label{eq:Einf}
 \frac{\mathcal E_\infty[c]}N=\int_0^1e^{2\omega(v-1)}\Big(q_a|c|^2+\frac m{2\omega}\Im(\bar cc')+\frac{|c'|^2}{8\omega^2}\Big)dv .
\end{equation}
\end{lemma}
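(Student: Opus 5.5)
The plan is to assemble the three contributions (i)--(iii) of Section~\ref{sec:secondvar} into the $\varepsilon^2$-coefficient of $\Fcal(v)$, and then integrate against $e^{2\omega(v-1)}$ over $[-T,1]$. By Propositions~\ref{prop:static} and~\ref{prop:Q1d}, the static part at time $v$ is $Q_m[a]|c(v)|^2=Nq_a|c(v)|^2$. It vanishes on the neck, where $c=0$. By (ii), the axial memory gives $mN\int_v^1\Im(\bar cc')\,dv'$. By (iii), the conformal factor and the dilution give $\frac N{2\omega}\int_v^1(1-e^{-2\omega(v'-v)})|c'(v')|^2\,dv'$. These memory and conformal terms do not vanish on the neck: they are constant in $v$ there, resp.\ decaying. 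This is why the integral really extends over $[-T,1]$. The first step is to confirm that there are no further $\varepsilon^2$ terms, as asserted before (i). The first variations of $\phi$ and of the lapse vanish, and the cross term between $\ell_2$ and the first order torsion is absent because $\ell$ is already of second order. By Lemma~\ref{lem:crit}, the second order torsion enters only through its axial pairing, which is split exactly into the memory part $\etab_{2,\mathrm{mem}}$ and the dilution part $-2\ell\etab_H$ plus exact forms.

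Next, I will evaluate $\int_{-T}^1e^{2\omega(v-1)}(\cdot)\,dv$ term by term, using Fubini in the form $\int_{-T}^1\int_v^1 g(v,v')\,dv'\,dv=\int_{-T}^1\int_{-T}^{t}g(v,t)\,dv\,dt$. Since $c$ is supported in $(0,1)$, the outer variable $t$ effectively runs over $(0,1)$. For the memory term,
\[
\int_{-T}^{t}e^{2\omega(v-1)}\,dv=\frac{e^{2\omega(t-1)}}{2\omega}\big(1-e^{-2\omega(t+T)}\big),
\]
which produces the middle term of \eqref{eq:ET}. For the conformal term,
\[
\int_{-T}^{t}e^{2\omega(v-1)}\big(1-e^{-2\omega(t-v)}\big)\,dv=e^{2\omega(t-1)}\Big[\frac{1-e^{-2\omega(t+T)}}{2\omega}-(t+T)e^{-2\omega(t+T)}\Big].
\]
I must check this against the claimed factor $\frac{1}{4\omega}(1-e^{-2\omega(t+T)})^2$, which after multiplication by $\frac{1}{2\omega}$ gives the stated $\frac1{8\omega^2}(1-e^{-2\omega(t+T)})^2$. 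The direct computation gives $\int_{-T}^t e^{2\omega(v-1)}\,dv-e^{2\omega(t-1)}e^{-2\omega t}\int_{-T}^t e^{4\omega v}\,dv\cdot$(appropriate factor). I expect the kernel in (iii) to combine with the weight $e^{2\omega(v-1)}$ so that the $(t+T)$ term cancels, leaving the perfect square. Redoing the exponents carefully is the main obstacle, and I will verify it first. If it fails, I will recheck the sign convention of the exponent in the formula for $\ell$ in (iii). The static term integrates trivially to $e^{2\omega(t-1)}q_a|c|^2$.

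Finally, the limit statement follows directly from \eqref{eq:ET}. The coefficients differ from those of \eqref{eq:Einf} by terms bounded by $Ce^{-2\omega(t+T)}\le Ce^{-2\omega T}$ for $t\ge0$. The integrals of $|c|^2$, $|\bar cc'|$ and $|c'|^2$ are bounded on the fixed seed set, so $|\mathcal E_T-\mathcal E_\infty|\le CNe^{-2\omega T}$ uniformly there.
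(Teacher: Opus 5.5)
Your route is the paper's: add the three contributions (i)--(iii), exchange the order of integration over $\{-T\le v\le t\le1\}$, evaluate two kernel integrals, and let $e^{-2\omega(t+T)}\to0$. However, the one step with real content is computed incorrectly and then left open.

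Your displayed formula for $\int_{-T}^te^{2\omega(v-1)}\big(1-e^{-2\omega(t-v)}\big)\,dv$ is false. The integrand $e^{2\omega(v-1)}e^{-2\omega(t-v)}=e^{-2\omega(t+1)}e^{4\omega v}$ depends on $v$. With $X:=e^{-2\omega(t+T)}$,
\[
 \int_{-T}^te^{2\omega(v-1)}e^{-2\omega(t-v)}\,dv=e^{-2\omega(t+1)}\,\frac{e^{4\omega t}-e^{-4\omega T}}{4\omega}=\frac{e^{2\omega(t-1)}}{4\omega}\big(1-X^2\big).
\]
Subtract this from $\int_{-T}^te^{2\omega(v-1)}dv=\frac{e^{2\omega(t-1)}}{2\omega}(1-X)$. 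The result is $\frac{e^{2\omega(t-1)}}{4\omega}\big(2(1-X)-(1-X^2)\big)=\frac{e^{2\omega(t-1)}}{4\omega}(1-X)^2$. After the prefactor $\frac N{2\omega}$ this is exactly the last term of \eqref{eq:ET}.

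No term linear in $t+T$ can appear, since the integrand is a pure exponential in $v$ with nonzero rate. So nothing has to cancel. Your fallback of rechecking the sign in the formula for $\ell$ is also misdirected. That formula is correct: it is the Volterra representation of Lemma~\ref{lem:Volterra} for $D^2\ell=2\omega D\ell-\frac14a^2|c'|^2$ with $\ell(1)=D\ell(1)=0$.

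With this correction, the rest of your argument goes through as written, including the uniform $O(e^{-2\omega T})$ bound for $\mathcal E_T-\mathcal E_\infty$. One minor point: on the neck the conformal term does not decay. As $v\to-\infty$ it increases to the constant $\frac N{2\omega}\int_0^1|c'|^2$.
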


\begin{proof}
By (i)--(iii), the $\varepsilon^2$-coefficient of $\Fcal(v)$ is
$N\big[q_a|c(v)|^2+m\int_v^1\Im(\bar cc')+\frac1{2\omega}\int_v^1(1-e^{-2\omega(t-v)})|c'(t)|^2dt\big]$. On the neck ($v<0$) the static term
vanishes and the other two are given by the same formulas (with $c=0$ there), so the neck is included exactly. Integrate against
$e^{2\omega(v-1)}$ over $[-T,1]$ and exchange the order of integration ($v\le t$), using
\begin{align*}
 \int_{-T}^te^{2\omega(v-1)}dv&=\frac{e^{2\omega(t-1)}}{2\omega}\big(1-e^{-2\omega(t+T)}\big),\\
 \int_{-T}^te^{2\omega(v-1)}\big(1-e^{-2\omega(t-v)}\big)dv&=\frac{e^{2\omega(t-1)}}{4\omega}\big(1-e^{-2\omega(t+T)}\big)^2 .
\end{align*}
For the second identity, the two pieces are $\frac{e^{2\omega(t-1)}}{2\omega}(1-X)$ and $\frac{e^{2\omega(t-1)}}{4\omega}(1-X^2)$ with
$X=e^{-2\omega(t+T)}$, and $2(1-X)-(1-X^2)=(1-X)^2$. Letting $T\to\infty$ replaces $1-e^{-2\omega(t+T)}$ by $1$, with error
$O(e^{-2\omega T})$ since $c$ is supported in $(0,1)$.
\end{proof}

\subsection{Completing the square and choosing the seed}

Substitute $y=2\omega v$ and $c(v)=e^{(-\frac12-im)y}b_0(y)$. Then
\begin{equation}\label{eq:energy}
 \frac{\mathcal E_\infty[c]}N=\frac{e^{-2\omega}}{2\omega}\int_0^{2\omega}\Big(\tfrac12|b_0'|^2+\big(q_a-\tfrac{m^2}2+\tfrac18\big)|b_0|^2\Big)dy .
\end{equation}
Indeed, with $'=d/dy$ we have $\frac{dc}{dv}=2\omega\frac{dc}{dy}=2\omega e^{(-\frac12-im)y}\big(b_0'-(\tfrac12+im)b_0\big)$ and
$|c|^2=e^{-y}|b_0|^2$, hence
\begin{align*}
 \Im\big(\bar c\tfrac{dc}{dv}\big)&=2\omega e^{-y}\big(\Im(\bar b_0b_0')-m|b_0|^2\big),\\
 \big|\tfrac{dc}{dv}\big|^2&=4\omega^2e^{-y}\big(|b_0'|^2-\Re(\bar b_0'b_0)-2m\Im(\bar b_0b_0')+(\tfrac14+m^2)|b_0|^2\big).
\end{align*}
Since $e^{2\omega(v-1)}=e^{y-2\omega}$, the integrand of \eqref{eq:Einf} becomes
\[
 e^{-2\omega}\Big[q_a|b_0|^2+m\Im(\bar b_0b_0')-m^2|b_0|^2+\tfrac12|b_0'|^2-\tfrac12\Re(\bar b_0'b_0)-m\Im(\bar b_0b_0')+(\tfrac18+\tfrac{m^2}2)|b_0|^2\Big];
\]
the $\Im$ terms cancel, $\Re(\bar b_0'b_0)=\frac12\frac d{dy}|b_0|^2$ integrates to zero, and $dv=dy/(2\omega)$.

Now take $m=-2$ and $a=(1+z)^2$. Since $\pi<\frac{22}7$,
\[
 q_a-\tfrac{m^2}2+\tfrac18=\tfrac{5\pi}4-\tfrac{24}7-\tfrac{15}8<\tfrac{55}{14}-\tfrac{48}{14}-\tfrac{15}8=-\tfrac{11}8 .
\]
Take $2\omega=16$ and three smooth nonnegative bumps $b^{(1)},b^{(2)},b^{(3)}$ with disjoint supports of length $4$ in $(0,16)$, each with
$\int|b'|^2<2\int|b|^2$. Such bumps exist: $\cos^4(\pi y/4)$ on an interval of length $4$ has quotient
\[
 \frac{\int|b'|^2}{\int|b|^2}=\frac{\pi^2\cdot\frac4\pi\int_{-\pi/2}^{\pi/2}\cos^6x\sin^2x\,dx}{\frac4\pi\int_{-\pi/2}^{\pi/2}\cos^8x\,dx}
 =\pi^2\,\frac{\frac{5\pi}{16}-\frac{35\pi}{128}}{\frac{35\pi}{128}}=\frac{\pi^2}7\approx1.41,
\]
and smoothing its $C^3$ endpoints preserves the strict inequality. On the complex span of the bumps, by disjointness of the supports,
$\frac12\int|b_0'|^2<\int|b_0|^2$ for $b_0\ne0$, so \eqref{eq:energy} is $<\frac{e^{-2\omega}}{2\omega}(1-\frac{11}8)\int|b_0|^2<0$: the form is
negative definite on a three-dimensional complex space.

The moment conditions $H_1=H_2=0$ for $k=P^tS_cP$ are two complex linear conditions on $c$,
\[
 \int_0^1e^{2\omega v}c\,dv=\int_0^1e^{4\omega v}c\,dv=0,
\]
since the angular factor ($a(z)$ times the helicity tensor) is common. Hence some nonzero $c_*$ in the span satisfies them. Let $k_*$ be the
corresponding seed. It is smooth (for $m=-2$ the factor $(1+z)^2$ gives smoothness at the poles by \eqref{eq:poles}, with exponents
$|m+2|=0$ and $|m-2|=4$), $\mathcal R_\pi$-invariant ($m$ is even), moment-null in every angular component, and
\[
 \mathcal E_\infty[k_*]<0 .
\]

\subsection{Conclusion}

Inserting this into \eqref{eq:lambdaE}, with $\mathcal E_T[k_*]=\mathcal E_\infty[k_*]+O(e^{-2\omega T})$, we obtain
\begin{equation}\label{eq:lambdasign}
 \lambda_T(\varepsilon)=c_*\varepsilon^2+O\big(\varepsilon^3+e^{-2\omega T}\big),\qquad c_*=-\frac{\mathcal E_\infty[k_*]}{2\Phi^2}>0 .
\end{equation}
Choose $\varepsilon\ne0$ so small that the $O(\varepsilon^3)$ term is below $\frac14c_*\varepsilon^2$, and then $T$ so large that the
$O(e^{-2\omega T})$ term is below $\frac14c_*\varepsilon^2$. Then $\lambda_T(\varepsilon)>0$.

\begin{remark}[Order of choices; the constant term]
The order is: first $\varepsilon$, then $T$; the exponentially small error from the early bridge must be beaten by the quadratic gain. As for
the constant term, $\Fcal(\gd_H,\etab_H)=0$, because its integrand is $w\cdot\frac12\tr\Ecal(\gd_H,\etab_H)$, which vanishes pointwise by
\eqref{eq:NH}. So the expansion of $\int e^{2\omega(v-1)}\Fcal$ starts at order $\varepsilon^2$, and $\lambda_T(0)=O(e^{-2\omega T})$.
\end{remark}

\section{From matched data to a spacetime}\label{ch:realisation}

By Section~\ref{ch:sign} we have, for $\lambda>0$, a solution of the null constraint hierarchy on $[v_1,v_2]$ (with $v_2=1$ in the
normalisation of Sections~\ref{ch:neck}--\ref{ch:sign}) whose data at $v_2$ are those of an actual extremal Kerr horizon cut
$S_2=\{v_K=F\}$, $F=G/\lambda$, in the frame $L=\xi/\lambda$, $\Lb'=\lambda n_F$. No smallness of $F$ is needed, since the cut formulas of
Proposition~\ref{prop:cut} are exact; in fact $F$ is large, because $\lambda$ is small. In this section we list what has been matched and
complete the remaining gauge entries of the sphere data (Section~\ref{sec:gauge}), continue the solution along the extremal Kerr horizon from
the cut $S_2$ to a horizon sphere $S_3$ (Section~\ref{sec:horizonsphere}), show that matched sphere data give $C^2$ pasting
(Section~\ref{sec:twojet}), and realise the data in a vacuum spacetime (Section~\ref{sec:luk}). This completes the proofs of
Theorem~\ref{thm:A} and Theorem~\ref{thm:cutversion} (Section~\ref{sec:proofA}).

\subsection{What has been matched, and the gauge completion}\label{sec:gauge}

We write $v_K$ for the ingoing Kerr coordinate and $v$ for the parameter along the bridge.

\paragraph{What has been matched at $S_2$.}
\begin{itemize}
\item $\gd$, $\Omega\chi$ ($=0$), $-\etab$ ($=\eta_H$) and $\omega$ ($=\omega_0$) match by construction of the collar. More is true: on a whole
neighbourhood of $S_2$ the seed is constant and $\phi\equiv\Phi$, so all outgoing quantities and all their $D$-derivatives coincide with those
of the Kerr horizon, parametrised with the same constant non-affinity (see \eqref{eq:collarparam} below). In particular $\alpha=0$, and the
higher-order outgoing data $x^{\rm high}$ of \cite[Definition~2.2]{KU} match. This does not say that all higher transversal derivatives
match, and nothing below needs that.
\item $\Omega^{-1}\chib$ and $\Omega^{-2}\ab$ match by the endpoint equation \eqref{eq:endpoint}. Indeed, on the collar the incoming fields of
the bridge are, by definition of $\Jcal$, the backward transport of the target data $(\mathcal N(\lambda,G),\mathcal A(\lambda,G))$ of
Definition~\ref{def:target} from $v=1$, and near $v=1$ the seed vanishes. Hence on a neighbourhood of $S_2$ the incoming fields of the bridge
and of Kerr solve the same linear transport equations with the same data at $S_2$, and agree.
\item $\rho,\sigma,\beta,\bb$ match by the Gauss, curl and Codazzi equations.
\end{itemize}

\paragraph{The parametrisation of the collar.} On the collar near $v=1$ the generator field $L=\partial_v$ of the bridge is a positive
multiple of the Killing field, $L=h\xi$. Its non-affinity is $2\omega_0$, and $\nabla_LL=h\xi(h)\xi+h^2\nabla_\xi\xi=L(\log h)\,L$, so
$\partial_v\log h=2\omega_0$. With $h(1)=\lambda^{-1}$ (the frame at $S_2$ is $\xi/\lambda$), and since the cuts $\{v=\text{const}\}$ are the
translates $\{v_K=F+\text{const}\}$ (they all have torsion coefficient $\etab_H$, and the foliation of the bridge is determined by $L$ and
$S_2$), we obtain
\begin{equation}\label{eq:collarparam}
 L=h(v)\,\xi,\quad h(v)=\lambda^{-1}e^{2\omega_0(v-1)},\qquad v_K=F(x)+g(v),\quad g(v)=\frac{e^{2\omega_0(v-1)}-1}{2\omega_0\lambda}.
\end{equation}
Since $h$ does not depend on the angles, the coefficient $\etab$ of every cut of the collar is $\etab_H$ (Proposition~\ref{prop:cut}(b)), and
$\Omega^{-1}\chib=h^{-1}(\chib_H+\Ccal F)$, which indeed solves $D(\Omega^{-1}\chib)=-2\omega_0\Omega^{-1}\chib$.

\paragraph{The remaining entries of the sphere data.} In \eqref{eq:x} the geometric quantities above determine everything except the
lapse $\Omega$ itself at the sphere and the transversal lapse jets $\omb=\Db\log\Omega$ and $\Db\omb$; since $\eta=-\etab+2\Nd\log\Omega$, also
$\eta$ is determined once $\Omega$ is matched. On the bridge the lapse is part of the construction along $\Hcal$, and $\omb$, $\Db\omb$ are determined
along $\Hcal$ by transport from the Kerr germ at $S_1$, by the transport equations for the lapse jets in double null gauge
\cite[Appendix~A]{KU}; we write $\Omega_b$, $\omb_b$, $(\Db\omb)_b$ for their
values at $S_2$. On the Kerr side they are gauge. The ingoing null hypersurface $\underline{\mathcal C}$ of Kerr through $S_2$, generated by the
null geodesics with initial velocity $\Lb'=\lambda n_F$, is fixed, and so is the geodesic extension of $\Lb'$ along it; but the parameter $u$
along its generators is free, subject to $u=0$ on $S_2$ and $\partial_u$ future directed. Given such a parameter, the lapse is defined by
$\Lb=\partial_u=\Omega^2\Lb'$.

Let $u$ be one such parameter, with lapse $\Omega$, and let $\tilde u$ be another, $u=F(\tilde u,x)$ with $F(0,x)=0$ and $\partial_{\tilde u}F>0$.
Along the generators $\partial_{\tilde u}=\partial_{\tilde u}F\,\partial_u$, while $\Lb'$ is unchanged, so $\tilde\Omega^2=\partial_{\tilde u}F\cdot\Omega^2\circ F$,
i.e.\ $\log\tilde\Omega=\frac12\log\partial_{\tilde u}F+\log\Omega\circ F$. Differentiating once and twice in $\tilde u$ and evaluating at $\tilde u=0$ gives,
with $F_j(x):=\partial_{\tilde u}^jF(0,x)$,
\begin{equation}\label{eq:lapsejets}
 \tilde\Omega^2=F_1\Omega^2,\qquad \tilde\omb=\frac{F_2}{2F_1}+F_1\omb,\qquad
 \Db\tilde\omb=\frac12\Big(\frac{F_3}{F_1}-\frac{F_2^2}{F_1^2}\Big)+F_2\,\omb+F_1^2\,\Db\omb .
\end{equation}
These equations can be solved successively for the jets of $F$ that give the values of the bridge:
\[
 F_1=\frac{\Omega_b^2}{\Omega^2}>0,\qquad F_2=2F_1\big(\omb_b-F_1\omb\big),\qquad
 F_3=\frac{F_2^2}{F_1}+2F_1\big((\Db\omb)_b-F_2\,\omb-F_1^2\,\Db\omb\big).
\]
We take $F(\tilde u,x):=F_1\tilde u+\frac12F_2\tilde u^2+\frac16F_3\tilde u^3$ on a short segment $0\le\tilde u\le\delta$ of $\underline{\mathcal C}$, with
$\delta>0$ so small that $\partial_{\tilde u}F>0$ there.

\paragraph{Which Kerr sphere data are used.}
The normalised construction of \cite[Lemma~A.1]{KU} sets the lapse derivatives to zero; this is not the collar gauge, whose outgoing
coefficient is $\omega_0>0$. We therefore define $x_{S_2}$ by applying the inverse constant boost to the Kerr data in the collar gauge
\eqref{eq:collarparam}, with the parameter $\tilde u$ just chosen along $\underline{\mathcal C}$. At the sphere its outgoing normal is $\xi$, and its extension has
$\omega=\lambda\omega_0$. Consequently
\[
 x(v_2)=\mathfrak b_\lambda(x_{S_2})
\]
with exactly the boost convention of \cite[Definition~2.3]{KU}. (A constant boost of the normalised data with $\omega=0$ could not have this
property.)

\paragraph{Agreement on the collar.} Near $S_2$ we use, for Kerr, the double null gauge determined by the parameter $v$ of
\eqref{eq:collarparam} along $\Hcal$ and the parameter $\tilde u$ just constructed along $\underline{\mathcal C}$. In this gauge the Kerr data on the
collar satisfy the same transport equations along $\Hcal$ as the data of the bridge, namely the hierarchy of Proposition~\ref{prop:hier}
together with the transport equations for the lapse jets \cite[Appendix~A]{KU}. The outgoing quantities agree on the collar (first item
above), and all remaining entries agree at $S_2$ by construction. Uniqueness for these linear transport equations gives agreement of all
entries of \eqref{eq:x} on a neighbourhood of $S_2$ in $\Hcal$. This proves Theorem~\ref{thm:cutversion} at the level of null data.

\subsection{From the cut to a horizon sphere}\label{sec:horizonsphere}

\begin{proposition}[Continuation along the horizon]\label{prop:horizonsphere}
Let the bridge on $[v_1,1]$ be as above, with $\lambda>0$. For every sufficiently large constant $\kappa$ there are $V_3>1$ and an extension
of the solution of the null constraint hierarchy to $[v_1,V_3]$ such that:
\begin{enumerate}[label=(\alph*)]
\item on $[1,V_3]$ the null hypersurface is the extremal Kerr horizon $\mathcal H_f$, and all data are induced by the Kerr spacetime;
\item the extension is smooth across $v=1$ (it coincides with the bridge on a neighbourhood of $v=1$);
\item near $V_3$ the cuts are the horizon spheres $\{v_K=V+\kappa\}$ and the generator field is $L=\xi$; in particular the final sphere is
$S_3=\{v_K=V_3+\kappa,\ r=M_f\}$.
\end{enumerate}
\end{proposition}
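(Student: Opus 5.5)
The plan is to build the continuation inside the Kerr spacetime. On the horizon $\mathcal H_f$ we choose a new generator parametrisation and foliation that agree with the collar gauge \eqref{eq:collarparam} near $v=1$ and with the Killing parametrisation and coordinate spheres near $V_3$. The hierarchy is then automatically satisfied, since the data are induced by a vacuum spacetime. So the whole content of the proposition is the construction of a smooth family of cuts and a positive generator field on $\mathcal H_f$ that interpolate between the two gauges. We work at unit mass, write $v_K=F(x)+g(v)$ as in \eqref{eq:collarparam}, and look for the foliation in the form
\[
 v_K=\Psi(V,x),\qquad L=\partial_V\Psi\,\xi .
\]
Since $\xi$ is Killing and affine, every such choice gives horizon data. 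We need $\partial_V\Psi>0$, so that the cuts are spacelike sections, ordered and transverse to the generators. The generator field is then $L=\partial_V\Psi\,\xi$, with non-affinity $2\omega=\partial_V\log\partial_V\Psi$, and the angular coordinates stay constant along the generators because $\xi$ preserves $(z,\psi)$.

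The first step is to define $\Psi$ by interpolating the slope rather than $\Psi$ itself, exactly as in Lemma~\ref{lem:slope}. Near $V=1$ we set $\Psi(V,x)=F(x)+g(V)$, so that $\partial_V\Psi=h(V)=\lambda^{-1}e^{2\omega_0(V-1)}>0$. Near $V_3$ we want $\Psi=V+\kappa$, i.e.\ slope $1$. On $[1,V_3]$ we take cut-offs $\chi_a+\chi_m+\chi_b=1$ and set
\[
 \partial_V\Psi:=\chi_a h+\chi_m\varpi(x)+\chi_b .
\]
Here $\varpi(x)$ is fixed by the requirement $\Psi(V_3,x)=V_3+\kappa$. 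This is one linear condition per generator:
\[
 \varpi(x)\int\chi_m=V_3+\kappa-F(x)-g(1)-I_a-I_b ,
\]
where $I_a,I_b$ are the collar integrals of the slope. The point where ``sufficiently large $\kappa$'' enters is positivity of $\varpi$. Since $F=G/\lambda$ is smooth but possibly large, we take $\kappa>\max F+|g|+1$. Then the right side is positive uniformly in $x$, so $\varpi>0$ and $\partial_V\Psi>0$ as a convex combination of positive numbers. The resulting $\Psi$ is smooth and $\mathcal R_\pi$-invariant, because $F$ is.

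The second step is to check (a)--(c). Near $v=1$ the new gauge coincides with \eqref{eq:collarparam}, hence with the bridge, and the data there were already shown in Section~\ref{sec:gauge} to agree with Kerr's in this gauge; this gives (b). Property (c) holds by construction, with $L=\xi$ and cuts $\{v_K=V+\kappa\}$ near $V_3$. For (a), every entry of the sphere data along $[1,V_3]$ is induced by Kerr in the double null gauge determined by $\Psi$ along $\mathcal H_f$ and a transversal gauge along the incoming cones. The incoming parameters are chosen as in Section~\ref{sec:gauge}, smoothly in $V$, and equal to the bridge's choice near $V=1$.

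I expect the only real obstacle to be the transversal gauge, not the outgoing interpolation. One must check that the lapse jets $\Omega,\omb,\Db\omb$ defined on the bridge near $v=1$ extend along the horizon consistently with a local double null foliation of Kerr near $\mathcal H_f$. I would handle this by noting that, near each cut, a double null foliation is determined by $\Psi$ and a choice of incoming parametrisation, which may be taken $V$-dependent by using \eqref{eq:lapsejets} with jets $F_j(V,x)$ interpolated by cut-offs. The transport equations along $\mathcal H_f$ then hold because the data are induced by a spacetime, and agreement with the bridge near $v=1$ follows by uniqueness as in Section~\ref{sec:gauge}.
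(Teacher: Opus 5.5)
Your proof is correct and follows the paper's argument. You realise the continuation inside Kerr by cuts $\{v_K=\Psi(V,x),\ r=M_f\}$ with generator field $L=\partial_V\Psi\,\xi$. These equal the collar gauge $F+g$ of \eqref{eq:collarparam} near $V=1$ and $V+\kappa$ near $V_3$, with $\kappa$ large to force $\partial_V\Psi>0$. The data are then Kerr data and solve the hierarchy automatically.

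The only difference is the interpolation device. The paper interpolates $\Psi$ itself, $\Psi=(1-\chi)(F+g)+\chi(V+\kappa)$, with one step $\chi$ satisfying $\chi'\ge0$. Then $\partial_V\Psi=(1-\chi)g'+\chi+\chi'(V+\kappa-F-g)$ is positive as soon as $\kappa\ge\max F+\max_{[1,V_3]}g$. You instead interpolate the slope as in Lemma~\ref{lem:slope} and solve for $\varpi(x)$, using large $\kappa$ to make $\varpi>0$. For this $\kappa$ must dominate $\max F$ and the collar integral $I_a\le\max_{[1,V_3]}g$, which can be large since $h=\lambda^{-1}e^{2\omega_0(V-1)}$. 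Both work; the paper's version simply avoids the endpoint equation for $\varpi$.

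Your last paragraph is not needed, and the device it proposes is not available. By Lemma~\ref{lem:dict}, the unknowns of Proposition~\ref{prop:hier} are frame quantities of $(L,\Lb')$ together with $\omega$. They are therefore fixed by $\Psi$ alone, and (a)--(c) are proved once the cuts are defined. The paper completes $\Omega,\omb,\Db\omb$ only afterwards, at $S_3$, as it did at $S_2$.

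Moreover, these jets cannot be interpolated in $V$ by cut-offs. In a double null gauge the level sets of $u$ must be outgoing null hypersurfaces. So the parameter on a single incoming cone (e.g.\ the $\tilde u$ of Section~\ref{sec:gauge} on the cone through $S_2$) determines it on all the others. The values of $\Omega,\omb,\Db\omb$ along $\mathcal H_f$ are then fixed by $\omega=D\log\Omega$ and the lapse-jet transport equations. If you want these entries along the whole segment, extend the Kerr double null gauge already fixed at $S_2$. Agreement with the bridge near $v=1$ then follows from the uniqueness argument of Section~\ref{sec:gauge}, with nothing to interpolate.
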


\begin{proof}
\emph{The parametrisation.} Extend $g$ by the same formula to $V\ge1$, choose $\delta_1>0$ and $V_3>1+2\delta_1$, and a smooth step $\chi(V)$
with $\chi=0$ for $V\le1+\delta_1$, $\chi=1$ for $V\ge V_3-\delta_1$, and $\chi'\ge0$. Put
\[
 \Psi(V,x):=(1-\chi(V))\big(F(x)+g(V)\big)+\chi(V)\,(V+\kappa),\qquad V\in[1,V_3].
\]
Then
\[
 \partial_V\Psi=(1-\chi)g'(V)+\chi+\chi'(V)\big(V+\kappa-F(x)-g(V)\big).
\]
The first two terms form a convex combination of the positive numbers $g'(V)=\lambda^{-1}e^{2\omega_0(V-1)}$ and $1$, and the last term is
nonnegative as soon as $\kappa\ge\max_xF+\max_{[1,V_3]}g$. For such $\kappa$ we have $\partial_V\Psi>0$ everywhere.

\emph{The data.} For each $V$ let $S'_V=\{v_K=\Psi(V,x),\ r=M_f\}$. These are cuts of $\mathcal H_f$ (graphs over the sphere of generators),
hence spacelike: their tangent vectors $E_A=\partial_A+\partial_A\Psi\,\xi$ satisfy $\bg(E_A,E_B)=\bg(\partial_A,\partial_B)$, which is
positive definite. Since $\partial_V\Psi>0$, they foliate the segment of $\mathcal H_f$ between $S'_1=S_2$ and $S'_{V_3}$. The generator field of
this foliation, with the generators labelled by $x$, is $L=\partial_V\Psi\,\xi$, which is future directed. Define the data on $[1,V_3]$ as the
data induced by the Kerr metric on the cuts $S'_V$ in the frame $(L,\Lb')$, $\bg(L,\Lb')=-2$. Since Kerr is a vacuum solution and the
angular coordinates are constant along the generators, these data solve the hierarchy of Proposition~\ref{prop:hier}, whose derivation used
nothing else. This proves (a).

\emph{(b).} For $V\le1+\delta_1$ we have $\Psi=F+g$, so the cuts and the generator field are exactly those of \eqref{eq:collarparam}: the
extension describes the same piece of the Kerr horizon with the same foliation and parametrisation as the collar of the bridge near $v=1$.
On a neighbourhood of $v=1$ the data of the bridge coincide with those of Kerr (Section~\ref{sec:gauge}). Hence the two coincide on an open
interval, and the concatenation is smooth.

\emph{(c).} For $V\ge V_3-\delta_1$ we have $\Psi=V+\kappa$: the cuts are $\{v_K=\text{const}\}$ and $L=\xi$.
\end{proof}

Along the continuation the lapse is determined, up to a fixed function of the angles, by the non-affinity, $2\omega=\partial_V\log\partial_V\Psi$.
At $S_3$ the remaining gauge entries $\Omega$, $\omb$, $\Db\omb$ are completed as in Section~\ref{sec:gauge}, by choosing the parametrisation
of the ingoing Kerr null hypersurface through $S_3$. The boost $\lambda$ and the shape $F$ of the cut no longer appear in the final sphere:
they have been absorbed into the parametrisation $\Psi$. This is \cite[Remark~1.1]{KU}: any horizon section can be
connected to a Kerr coordinate sphere along the horizon.

The positivity $\lambda>0$ is used here in an essential way: it makes $h=\lambda^{-1}e^{2\omega_0(v-1)}$ positive, so that the generator field
of the bridge at $S_2$ is a future directed multiple of the Killing field and $F=G/\lambda$ is a genuine function. For $\lambda\le0$ there is
no Kerr horizon along which to continue.

\subsection{Matched sphere data and \texorpdfstring{$C^2$}{C2} pasting}\label{sec:twojet}

In double null gauge, sphere data determine all derivatives of the metric up to order two on the sphere: the corresponding tuple of
Ricci coefficients, curvature components and their $D$- and $\Db$-derivatives reduces to \eqref{eq:x} by the null structure equations
\cite[Section~1.5 and Remark~(2) after Definition~2.4]{ACR}; see also \cite[Section~2.1]{KU}. For the convenience of the reader we indicate
the argument along a segment of $\Hcal=\{u=u_0\}$, in the double null gauge \eqref{eq:metric} with $b=0$ on $\Hcal$.

The metric is determined by $(\Omega,\gd,b)$: $\bg_{uu}=\bg_{uA}=0$, $\bg_{uv}=-2\Omega^2$, $\bg_{AB}=\gd_{AB}$, $\bg_{vA}=-\gd_{AB}b^B$,
$\bg_{vv}=\gd_{AB}b^Ab^B$. Since the $\theta^A$ are transported along $\Lb=\partial_u$, we have $[\partial_u,\partial_A]=0$, and $\Db$ acts as
$\partial_u$ on components. On $\Hcal$,
\[
 \partial_u\Omega=\Omega\,\omb,\qquad \partial_u\gd_{AB}=2\Omega\chib_{AB},\qquad \partial_ub^A=4\Omega^2\zeta^A,\quad \zeta=\eta-\Nd\log\Omega .
\]
The first two identities are the definitions of $\omb$ and $\chib$. For the third, $[\partial_u,L]=(\partial_ub^A)\partial_A$, and
$\bg(\nabla_{\Lb}L,\partial_B)=-\bg(L,\nabla_{\partial_B}\Lb)=2\Omega^2\eta_B$, $\bg(\nabla_L\Lb,\partial_B)=-\bg(\Lb,\nabla_{\partial_B}L)=2\Omega^2\etab_B$,
with $\eta-\etab=2\zeta$. Differentiating once more, $\partial_u^2\Omega=\Omega(\omb^2+\Db\omb)$, while $\partial_u^2\gd_{AB}$ and $\partial_u^2b^A$
involve $\Db\chib$ and $\Db\zeta=\Db\eta-\Nd\omb$. By the incoming null structure equations \cite[Section~2.2]{ACR}, $\Db\chib$ is an
algebraic expression in $\Omega$, $\omb$, $\gd$, $\chib$ and $\ab$, and $\Db\eta$ one in $\Omega$, $\chib$, $\eta$, $\etab$ and $\bb$, which is
given by the Codazzi equation \eqref{eq:Cod}. Tangential and mixed derivatives are $\partial_v$- and $\partial_A$-derivatives of quantities
given on the whole segment.

Hence two solutions in double null gauge with $\Hcal=\{u=u_0\}$, the same parameter $v$ and generator labels $\theta^A$ on $\Hcal$, and the
same sphere data on an open segment of $\Hcal$, have the same metric two-jet there in the coordinates $(u,v,\theta^A)$; agreement of
higher transversal jets is not needed. The gauge entries $\Omega$, $\omb$, $\Db\omb$ of \eqref{eq:x} enter through the lapse; this is why
they are matched in Section~\ref{sec:gauge}.

\subsection{Realisation in a spacetime}\label{sec:luk}

The constructed null data have positive definite $\gd$ on a compact cylinder and agree near $S_1$ with the actual Kerr data. We take a short
Kerr germ as the transversal null hypersurface at $S_1$. Luk's local existence theorem \cite{Luk} gives a vacuum solution in a neighbourhood of
the whole cylinder (of possibly very small width). It induces the constructed data on the cylinder, in particular $x_{S_3}$ at $S_3$ and
$\mathfrak b_\lambda(x_{S_2})$ at $S_2$. Nothing here identifies a global event horizon; this is done in Section~\ref{ch:thirdlaw}.

\subsubsection{The precise regularity input}\label{sec:regularityinput}
We use characteristic local existence in the following form. Two compact intersecting null segments with positive definite section
metrics, constraint-satisfying data and compatible corner values admit a unique double null vacuum development on a sufficiently thin
neighbourhood of both segments. For data with a fixed large finite number of derivatives, the development retains the corresponding
finite-order regularity on each side; in particular, sufficiently high finite Sobolev regularity gives a one-sided $C^4$ metric up to either
initial segment. The width is positive for each fixed data set; no bound uniform in the final mass, the neck length or the inverse boost is
required.

For smooth data this is \cite[Theorem~1]{Luk}. As Luk remarks after the statement of that theorem, the width of the neighbourhood depends
only on an $H^5$ norm of the data, and $H^5$ data give an $H^4$ solution by approximation. The higher one-sided regularity for data with
finitely many derivatives follows from the a priori estimates of \cite[Section~5]{Luk} by the standard persistence of regularity argument,
which involves only finitely many commuted derivatives. Characteristic local existence is used in the same way in the vacuum construction
of Kehle and Unger \cite[proof of Corollary~1]{KU}, whose characteristic data likewise have only finite regularity \cite[Remark~1.4]{KU}.

\subsubsection{Allocation and corner compatibility}\label{sec:allocation}
For a requested index $s\ge14$ we run the null construction at $S=\max(s,30)$, retaining thirty radial derivatives of the prescribed seed
profiles. All controls have smooth fixed radial profiles, so this is a finite choice of Banach norms, not a demand for uniform bounds at all
orders. The hierarchy gives
\[
 \gdc,\ \phi,\ D\phi\in H^S,\qquad \etab\in H^{S-1},\qquad
 \Omega^{-1}\chib\in H^{S-2},\qquad \Omega^{-2}\ab\in H^{S-4}.
\]
The mixed derivatives needed in the characteristic estimates follow by differentiating the same transport equations; the regularity reserve is
much larger than what is needed for a $C^4$ metric. 

\emph{Corner compatibility.} In the characteristic initial value problem the data on the two intersecting null segments must be compatible
at the corner sphere: they must induce the same metric, connection coefficients and curvature components there, including the transversal
derivatives that can be computed from either side. At $S_1$ this holds automatically. The controls vanish on a neighbourhood of $S_1$, so
there the outgoing data are those of the outgoing Kerr cone through $S_1$, and the transversal segment is a short segment of the ingoing
Kerr cone through $S_1$. Both data sets are therefore restrictions of one and the same vacuum spacetime, in one and the same double null gauge,
and every compatibility condition, of any order, is satisfied. The same argument applies at the other end of the bridge, where the data near
$S_2$ (and near $S_3$) are restrictions of the Kerr spacetime in the gauge of Section~\ref{sec:gauge}.

 At the other end $F=G/\lambda\in H^S$ for the fixed $\lambda>0$, so the target Kerr branch and its chosen
gauge also have the required finite regularity. Large norms may shrink the strip, but do not prevent its existence. Taking the mass threshold
at this $S$ proves the claimed result at index $s$ as well.

Each individual side is thus $C^4$ up to the gluing segment. Across it we claim only $C^2$, from the matching two-jet of
Section~\ref{sec:twojet}. In particular, no matching of third transversal derivatives, and no globally $C^4$ glued spacetime, is assumed in
Section~\ref{ch:thirdlaw}.

\subsection{Proof of Theorem~\ref{thm:A} and Theorem~\ref{thm:cutversion}}\label{sec:proofA}

\begin{proof}[Proof of Theorem~\ref{thm:A} and Theorem~\ref{thm:cutversion}]
We collect the steps.
\begin{enumerate}[label=(\arabic*)]
\item The hierarchy (Proposition~\ref{prop:hier}) gives triangular, lossless transport along the null hypersurface.
\item The partial bridge (Proposition~\ref{prop:partial}) and the controller (Theorem~\ref{thm:ctrl}) attain every nearby triple
$(\phi,D\phi,\etab)$ at the start of the neck, keeping $S_1$.
\item Along the neck (Lemma~\ref{lem:neck}) the incoming errors are $O(e^{-2\omega T})$, $D\Kcal_T(0)=0$, and the $C^3$ bounds are uniform.
\item The collar (Theorem~\ref{thm:moments}), the Teukolsky--Starobinsky factorisation (Theorem~\ref{thm:Pinv}) and the cut and boost
(Corollary~\ref{cor:range}) give an isomorphism $\mathcal A$ independent of $T$ (Theorem~\ref{thm:iso}).
\item The contraction (Proposition~\ref{prop:contraction}) gives exact matching in the extended chart $(\lambda,G)$.
\item The trace identity (Proposition~\ref{prop:trace}), criticality and the negative second variation (Section~\ref{ch:sign}) give $\lambda>0$,
so $S_2$ is an actual horizon cut.
\item The gauge completion (Section~\ref{sec:gauge}) gives $x(v_2)=\mathfrak b_\lambda(x_{S_2})$, which is Theorem~\ref{thm:cutversion} at the
level of null data; the continuation along the horizon (Proposition~\ref{prop:horizonsphere}) leads to the horizon sphere $S_3$ with Killing
generator. After the translation $v_K\mapsto v_K-\kappa$, an isometry of Kerr preserving $\mathcal H_f$ and $\xi$,
Proposition~\ref{prop:horizonsphere}(c) gives $v_K=v$ near $v_3=V_3$, so $S_3=\{v_K=v_3,\ r=M_f\}$.
\item The two-jet argument of Section~\ref{sec:twojet} and Luk's theorem with persistence of regularity (Section~\ref{sec:luk}) give the $C^2$ characteristic
gluing of the Boyer--Lindquist sphere $S_1$ to the extremal Kerr horizon sphere $S_3$ (to the cut $S_2$, respectively), with one-sided $C^4$
regularity.
\end{enumerate}
The regularity statements, including the choice of the index $S=\max(s,30)$, are those of Section~\ref{sec:allocation}.
\end{proof}

\begin{remark}[The output used in Section~\ref{ch:thirdlaw}]\label{rem:outputforKU}
For $a_1=0$, the solution of Theorem~\ref{thm:A} has the following three properties, which are those used in Section~\ref{ch:thirdlaw}.
\begin{enumerate}[label=(\alph*)]
\item \emph{Exact ends.} Near $S_1$ the data agree with those of the outgoing Schwarzschild cone through $S_1$ (Theorem~\ref{thm:A}(i)), and
the transversal branch at $S_1$ is a short branch of the Schwarzschild spacetime itself (Section~\ref{sec:luk} and
Section~\ref{sec:allocation}), so the transversal data agree as well. Near the terminal section $S_3$ the data are exact extremal Kerr horizon
data with $L=\xi$ and cuts $\{v_K=\text{const}\}$ (Proposition~\ref{prop:horizonsphere}(a),(c)); in particular they extend beyond $S_3$ by the
exact extremal Kerr horizon data.
\item \emph{Full $C^2$ sphere-data matching.} At the interfaces $S_1$ and $S_3$ (and $S_2$) all entries of \eqref{eq:x} match, in the sense of
\cite[Section~2.1]{KU}, including the gauge entries $\Omega$, $\omb$, $\Db\omb$ (Section~\ref{sec:gauge} and the paragraph following
Proposition~\ref{prop:horizonsphere}). By Section~\ref{sec:twojet} the metric two-jets of the two spacetimes agree there.
\item \emph{Regularity reserve.} On every compact segment the development is a one-sided $C^4$ spacetime metric up to the gluing hypersurface,
on each side (Section~\ref{sec:regularityinput} and Section~\ref{sec:allocation}); across the interfaces only $C^2$ is claimed.
\end{enumerate}
\end{remark}

\section{The third law of black hole thermodynamics in vacuum}\label{ch:thirdlaw}

In this section we deduce Theorem~\ref{thm:B} from Theorem~\ref{thm:A}. The logical structure is a single implication: Theorem~\ref{thm:A},
in the form of Proposition~\ref{tl:prop:input}, implies Theorem~\ref{thm:thirdlaw}, the precise form of Theorem~\ref{thm:B}. The argument is the
two-stage assembly of Kehle and Unger, which they proposed for exactly this purpose \cite[Section~1.2]{KU}. We use their construction as a
template and indicate precisely what has to be adjusted because the endpoint is extremal and because the second Cauchy extension takes place
around a non-flat background. General results (characteristic local existence, Cauchy evolution, maximal developments, causal smoothing,
low-regularity focusing) are used by reference.

The argument is an adaptation of published proofs, not a citation of a published theorem. The vacuum collapse result \cite[Corollary~1]{KU}
assumes small rotation $|a|/M\ll1$, and we do not apply it at $a=M$. Instead we rerun its proof, and that of \cite[Theorem~1]{KU22}, with the
second gluing step replaced by Proposition~\ref{tl:prop:input}, and we check every place where small rotation or spherical symmetry was used
(Section~\ref{tl:sec:adjust}). The two-stage arrangement is \cite[Section~5.3 and Figure~12]{KU22}; its vacuum realisation is
\cite[Section~4 and Figure~5]{KU}. Table~\ref{tl:tab:map} shows, for each step of the proof in Section~\ref{tl:sec:proof}, what is taken from
\cite{KU,KU22} and what is added here.

\begin{table}[ht]
\centering
\small
\begin{tabular}{@{}>{\raggedright\arraybackslash}p{0.27\textwidth}>{\raggedright\arraybackslash}p{0.31\textwidth}>{\raggedright\arraybackslash}p{0.33\textwidth}@{}}
\toprule
step & taken from \cite{KU,KU22} & added or adjusted here\\\midrule
1. first collapse, rectangle & \cite[Corollary~1]{KU} with $a=0$; \cite[Section~5.3]{KU22} & keep its open central vacuum margin\\
2. spin-up and Kerr exterior & arrangement of \cite[Section~5.3]{KU22} & Proposition~\ref{tl:prop:input}; a genuinely null Kerr attachment
 (Section~\ref{tl:sec:kerr})\\
3. vacuum strips, $C^2$ matching & \cite[Section~4]{KU}, \cite{Luk} & full sphere-data matching (Section~\ref{tl:sec:regularity})\\
4. closing the bottom & \cite[Lemma~5.1]{KU22}, \cite[Lemma~4.1]{KU} & non-flat background with a rate (Appendix~\ref{app:cutoff})\\
5. initial slice $\Sigma$ & \cite[Section~4]{KU} & explicit graph and Kerr end\\
6. global properties & \cite[Section~5.3]{KU22} & the differences listed in Section~\ref{tl:sec:global}\\
\bottomrule
\end{tabular}
\caption{Map of the proof of Theorem~\ref{thm:thirdlaw}.}
\label{tl:tab:map}
\end{table}

\subsection{Conventions}\label{tl:sec:conv}

The signature is $(-,+,+,+)$. In a double null chart the metric has the form \eqref{eq:metric}, with $\Omega>0$ and with $u$ and $v$ increasing
to the future. For an outgoing hypersurface $\{u=u_B\}$, the \emph{exterior} is $\{u<u_B\}$ and the \emph{interior} is $\{u>u_B\}$. We write
$H^{s-}:=\bigcap_{\delta>0}H^{s-\delta}$. The black hole region is $\BH=\mathcal M\setminus J^-(\scri^+)$, relative to the single asymptotically
flat end, and the event horizon is $\mathcal H^+=\partial J^-(\scri^+)$. The following elementary fact is used repeatedly.

\begin{lemma}[Monotonicity]\label{tl:lem:mono}
Along every future causal curve in the domain of a double null chart, $u$ and $v$ are non-decreasing. Hence a future causal curve crosses
$\{u=u_B\}$ only from the exterior to the interior, and a past causal curve only from the interior to the exterior.
\end{lemma}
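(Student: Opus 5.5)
The statement concerns the double null chart \eqref{eq:metric}, so the plan is to compute $du$ and $dv$ on a causal vector and read off their signs from the metric. Let $\gamma$ be a future causal curve and write its tangent as $X=X^u\partial_u+X^v(\partial_v+b)+Y^A\partial_A$ in the frame dual to the coframe $(du,dv,d\theta^A-b^Adv)$ of Section~\ref{sec:dn}. Then $du(X)=X^u$ and $dv(X)=X^v$, and by \eqref{eq:metric}
\[
 \bg(X,X)=-4\Omega^2X^uX^v+\gd_{AB}Y^AY^B .
\]
Since $X$ is causal and $\gd$ is positive definite, this forces $X^uX^v\ge\frac1{4\Omega^2}|Y|^2_{\gd}\ge0$. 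So $X^u$ and $X^v$ have the same weak sign, and both vanish only if $Y=0$, that is, only if $X=0$.

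It then remains to fix the common sign by time orientation. By \eqref{eq:frames}, $L'=-2\nabla u$ and $\Lb'=-2\nabla v$ are future directed, since $u$ and $v$ increase to the future (Section~\ref{tl:sec:conv}). Pairing a future causal $X\ne0$ with the future null vector $L'$ gives $\bg(X,L')\le0$, while $\bg(X,L')=-2\,du(X)$. Hence $du(X)=X^u\ge0$, and likewise $dv(X)=X^v\ge0$ from $\Lb'$. Integrating along $\gamma$, both $u\circ\gamma$ and $v\circ\gamma$ are non-decreasing. For merely Lipschitz causal curves the same argument applies almost everywhere, and monotonicity follows because the functions are absolutely continuous.

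For the crossing statement, suppose a future causal curve passes from $\{u\ge u_B\}$ back into $\{u<u_B\}$. Then $u$ would strictly decrease somewhere along it, which contradicts the first part. So the curve can only cross from the exterior $\{u<u_B\}$ to the interior $\{u>u_B\}$. A past causal curve is a future causal curve traversed backwards, so $u$ is non-increasing along it, and the reverse crossing statement follows.

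The only real obstacle is the sign bookkeeping in the second paragraph. The point to check is that with the paper's convention $\bg(\partial_u,\partial_v+b)=-2\Omega^2$, the gradients $-\nabla u$ and $-\nabla v$ are indeed the future directed ones. Everything else is immediate.
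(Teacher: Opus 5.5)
Your proof is correct and is essentially the paper's argument: the paper observes that $\bg^{uu}=\bg^{vv}=0$ makes $\nabla u$ and $\nabla v$ past-directed null vectors and pairs them with a future causal $X$, which is exactly your second paragraph with $L'=-2\nabla u$ and $\Lb'=-2\nabla v$. Your first paragraph, computing $\bg(X,X)$ and showing that $X^u$ and $X^v$ have the same sign, is correct but not needed, since the pairing with $L'$ and $\Lb'$ already fixes both signs.
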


\begin{proof}
By \eqref{eq:metric}, $\bg^{uu}=\bg^{vv}=0$, so $\nabla u$ and $\nabla v$ are past-directed null vectors. For future causal $X$ therefore
$du(X)=\bg(\nabla u,X)\ge0$ and $dv(X)\ge0$.
\end{proof}

\subsection{The input and the result}\label{tl:sec:input}

\begin{proposition}[Input from Theorem~\ref{thm:A}]\label{tl:prop:input}
Fix a Schwarzschild exterior symmetry sphere of mass $M_1>0$ and radius $R_1>2M_1$. For every sufficiently large $M_f$ there are vacuum
characteristic data along an outgoing null segment, joining that sphere to a section of the future event horizon of the extremal Kerr
solution with $a_f=M_f$, such that:
\begin{enumerate}[label=\textup{(\alph*)}]
\item \emph{Exact ends.} Near the initial sphere the data are the prescribed Schwarzschild germ; near the terminal section they extend by
the exact Kerr horizon data.
\item \emph{Full matching.} The data match in the full $C^2$ sphere-data sense of \cite[Section~2.1]{KU}, after the permitted boosts and
sphere diffeomorphisms, with compatible gauge data. Equivalently, local spacetime realisations have matching metric two-jets at the
interfaces.
\item \emph{Regularity reserve.} The data have enough finite differentiability for characteristic local existence, leaving one-sided
$C^4$ spacetime metrics in the matching charts on every compact segment.
\end{enumerate}
\end{proposition}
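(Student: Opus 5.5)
The plan is to specialise Theorem~\ref{thm:A} to $a_1=0$ and read off the three items from Remark~\ref{rem:outputforKU}. A Schwarzschild exterior symmetry sphere of mass $M_1$ and radius $R_1>2M_1$ is the Boyer--Lindquist sphere $S_1=\{t=t_1,\ r=R_1\}$ of the Kerr solution with parameters $(M_1,0)$. Here $r_+=2M_1$, so the hypothesis $r_1>r_+$ of Theorem~\ref{thm:A} holds. We take the axisymmetric (indeed spherically symmetric) outgoing null normal and the normalisation $\Omega^2=1$ of \cite[Lemma~A.1]{KU}, and we fix $s\ge14$ large enough for the regularity reserve below. Theorem~\ref{thm:A} then gives $M_0(S_1,s)$. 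For every $M_f\ge M_0$ it also gives a solution of the null constraint hierarchy on $[v_1,v_3]$ joining $x_1$ to the horizon sphere $S_3\subset\mathcal H_f$, realised by a vacuum spacetime near the hypersurface.

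Item (a) is Theorem~\ref{thm:A}(i) together with Remark~\ref{rem:outputforKU}(a). Near $v_1$ the data are those of the outgoing Schwarzschild cone. Near $v_3$ they are exact extremal Kerr horizon data with $L=\xi$ and cuts $\{v_K=\text{const}\}$ (Proposition~\ref{prop:horizonsphere}(a),(c)). These extend past $S_3$ simply by continuing along $\mathcal H_f$ with the Killing parametrisation.

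For item (b) we invoke Section~\ref{sec:gauge}, where all entries of \eqref{eq:x} are matched at $S_2$, including the gauge jets $\Omega,\omb,\Db\omb$. The matching uses the reparametrisation $F$ of the ingoing Kerr cone given by \eqref{eq:lapsejets}. The boost enters through $x(v_2)=\mathfrak b_\lambda(x_{S_2})$ with $\lambda>0$ from \eqref{eq:lambdasign}. The sphere diffeomorphism is the identification $(z,\varphi)\leftrightarrow(z,\psi)$ of Section~\ref{sec:hconf}, together with the translation $v_K\mapsto v_K-\kappa$. At $S_3$ the same gauge completion is performed (the paragraph after Proposition~\ref{prop:horizonsphere}). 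At $S_1$ matching is automatic, since both sides are restrictions of Schwarzschild in one double null gauge. The equivalent two-jet formulation is then Section~\ref{sec:twojet}.

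Item (c) is Section~\ref{sec:regularityinput} and Section~\ref{sec:allocation}. We run the construction at index $S=\max(s,30)$. Luk's theorem with persistence of regularity then gives one-sided $C^4$ metrics on each compact segment. Corner compatibility holds because both ends are restrictions of Schwarzschild, respectively Kerr, in a common gauge.

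The only step needing care is (b): we must check that the combination of boost, sphere diffeomorphism and gauge jets used in Section~\ref{sec:gauge} is exactly the equivalence allowed in \cite[Section~2.1]{KU}, i.e.\ that the convention of \cite[Definition~2.3]{KU} applies with nonzero $\omega=\lambda\omega_0$. I would handle this by re-deriving the boost formulas directly in the collar gauge \eqref{eq:collarparam}.
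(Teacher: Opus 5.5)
Your proposal is correct and is essentially the paper's proof: specialise Theorem~\ref{thm:A} to $a_1=0$ (using $R_1>2M_1=r_+(M_1,0)$), then read (a)--(c) off Section~\ref{ch:realisation} as summarised in Remark~\ref{rem:outputforKU}. The boost-convention check you flag for (b) needs no re-derivation: Section~\ref{sec:gauge} defines $x_{S_2}$ as the inverse constant boost of the Kerr data in the collar gauge \eqref{eq:collarparam}, so $x(v_2)=\mathfrak b_\lambda(x_{S_2})$ holds with the convention of \cite[Definition~2.3]{KU} by construction.
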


\begin{proof}
A Schwarzschild symmetry sphere $\{t=t_1,\ r=R_1\}$ is a Boyer--Lindquist sphere of the Kerr solution with parameters $(M_1,0)$, and
$R_1>2M_1=r_+(M_1,0)$. Hence Theorem~\ref{thm:A} applies with $a_1=0$
(and any fixed $s\ge14$), and gives, for every $M_f\ge M_0$, data along an outgoing null hypersurface from this sphere to a horizon sphere
$S_3\subset\mathcal H_f$. By Section~\ref{ch:realisation}, in the form summarised in Remark~\ref{rem:outputforKU}, these data have exact ends
(Theorem~\ref{thm:A}(i), (ii)), full $C^2$ sphere-data matching, equivalently matching metric two-jets, and a one-sided $C^4$ regularity
reserve. These are (a), (b) and (c).
\end{proof}

Each clause is needed. The proposition concerns the \emph{realised} characteristic theorem: an endpoint equation alone does not give
(a)--(c), which are supplied by Section~\ref{ch:realisation}. Clause~(b) is strong on purpose: matching only the area, mass, angular momentum or
intrinsic two-metric is not enough, since the transversal curvature and gauge entries determine the second transversal derivatives of the
metric, and without them the pasted metric is not $C^2$. Clause~(a) is obtained by choosing the free data to be exactly Schwarzschild,
resp.\ Kerr, near the endpoints and using uniqueness for the null constraint hierarchy; it is part of the verified output. The regularity
in~(c) is one-sided: across the gluing hypersurface only $C^2$ is needed, and no $C^4$ matching is required.

\begin{theorem}[Precise form of Theorem~\ref{thm:B}]\label{thm:thirdlaw}
For every sufficiently large $M_f$ there are one-ended asymptotically flat vacuum initial data
\[
 \Sigma\cong\R^3,\qquad (g_0,k_0)\in H^{7/2-}_{\loc}\times H^{5/2-}_{\loc},
\]
whose maximal globally hyperbolic development $(\mathcal M,\bg)$ has the following properties.
\begin{enumerate}[label=\textup{(\roman*)}]
\item \emph{Collapse.} $\scri^+$ is complete and $\BH\neq\varnothing$. Every point of $\Sigma$ can reach $\scri^+$; hence $\Sigma$ is disjoint from
the event horizon and contains no closed future-trapped surface.
\item \emph{Early exactly Schwarzschild apparent horizon.} $\BH$ contains an exact Schwarzschild exterior double null rectangle $\mathfrak R$ of
mass $M_1$ with an edge $\mathcal A$ on $\{r=2M_1\}$. The sections of $\mathcal A$ are marginally trapped, and there are trapped spheres
arbitrarily close to $\mathcal A$ on its inner side.
\item \emph{Late exactly extremal Kerr event horizon.} $\mathcal M$ contains a full extremal Kerr exterior and its future horizon beyond a
finite ingoing optical level. After a sufficiently late Cauchy level, the whole escaping region and the whole event horizon are the
corresponding Kerr regions.
\item \emph{Regularity.} The metric is $C^2$ across the characteristic interfaces, and has the stated Cauchy regularity near the central cap.
\item \emph{No symmetry.} The spacetime is not axisymmetric.
\end{enumerate}
\end{theorem}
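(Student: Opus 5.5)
The plan is to rerun the two-stage assembly of Kehle and Unger \cite[Section~5.3]{KU22}, \cite[Section~4]{KU}, with the second gluing step supplied by Proposition~\ref{tl:prop:input}.

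\emph{First stage.} Apply \cite[Corollary~1]{KU} with $a=0$ to obtain a vacuum collapse spacetime from one-ended asymptotically flat data. Its exterior contains an exact Schwarzschild region of mass $M_1$ with an exterior symmetry sphere $\{t=t_1,\ r=R_1\}$, $R_1>2M_1$, together with an exact Schwarzschild double null rectangle $\mathfrak R$ having an edge $\mathcal A$ on $\{r=2M_1\}$. I keep an open central vacuum margin around the crease, as in \cite[Section~5.3]{KU22}.

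\emph{Second stage.} Along the outgoing cone from that sphere, insert the data of Proposition~\ref{tl:prop:input}. These reach an extremal Kerr horizon section, and beyond it I attach the exact extremal Kerr exterior. The attachment must be genuinely null: the outgoing hypersurface continues as $\mathcal H_f$, and the transversal hypersurface is the ingoing Kerr cone. Luk's theorem then produces thin vacuum strips on both sides. Clause~(b) of Proposition~\ref{tl:prop:input} together with Section~\ref{sec:twojet} makes the pasting $C^2$, and clause~(c) gives one-sided $C^4$ regularity, which is what the later Cauchy steps need.

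\emph{Closing the bottom.} This step replaces \cite[Lemma~5.1]{KU22}, \cite[Lemma~4.1]{KU}. I extend the characteristic data into the past to a Cauchy slice $\Sigma\cong\R^3$ by the cutoff lemma of Appendix~\ref{app:cutoff}. That lemma works around the non-flat Schwarzschild/Kerr background with a rate, so the constraint solution keeps $(g_0,k_0)\in H^{7/2-}_{\loc}\times H^{5/2-}_{\loc}$ near the central cap. Far out, $\Sigma$ is chosen as an explicit graph ending in the exact Kerr exterior.

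\emph{Global properties.} I would prove these in order.
\begin{itemize}
\item Completeness of $\scri^+$ and the inclusion $\Sigma\subset J^-(\scri^+)$ come from the late exact Kerr exterior: Kerr future null infinity is complete. Each point of $\Sigma$ reaches it along outgoing null curves, using Lemma~\ref{tl:lem:mono} and the fact that every gluing cone lies in the domain of outer communication until it reaches the horizon.
\item The marginally trapped spheres on $\mathcal A$ and the nearby trapped spheres are Schwarzschild facts inside $\mathfrak R$.
\item $\mathfrak R\subset\BH$ follows by monotonicity. A future causal curve from $\mathfrak R$ must cross the interior of the late hypersurface $\{u=u_B\}$, which carries the Kerr horizon, and cannot come back out.
\item The identification of the late event horizon and escaping region with the Kerr ones uses the same crossing argument together with the fact that every point exterior to $\mathcal H_f$ is Kerr.
\item Failure of axisymmetry follows from Corollary~\ref{cor:komar}: the total angular momentum changes from $0$ to $M_f^2$ along a single cone, which is impossible in axisymmetry.
\end{itemize}

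\emph{Main obstacle.} I expect the hardest step to be the global causal identification of the event horizon in item (iii). The reason is that \cite{KU} used small rotation to control the Kerr region and its horizon location. I would first try to argue purely from the exact Kerr structure beyond a late ingoing level, combined with monotonicity in the gluing double null charts. This route avoids any perturbative statement about near-extremal Kerr.
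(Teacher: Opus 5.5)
Your outline follows the paper's two-stage skeleton. The gap is in the second stage: it is set up so that (i) is not established, and the reason you give for $\Sigma\subset J^-(\scri^+)$ is false.

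\textbf{The main gap.} The second gluing hypersurface is not in the domain of outer communication. It is a portion of the new event horizon along its whole length, bridge included. Points just outside it escape through the exterior strip into Kerr, while by Lemma~\ref{tl:lem:mono} nothing on or inside it gets out.

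Hence (i) requires $\Sigma$ to avoid that hypersurface and its inner side altogether. This dictates Step~1 of Section~\ref{tl:sec:proof}:
\begin{enumerate}
\item The hypersurface is $C_B=\{u=u_B\}$ with $u_B<0$ small, slightly outside $C_0$. This is also what places $\mathfrak R=\{u_B\le u\le 0,\ v_{N_1}\le v\le v_R\}$ on its inner side.
\item $C_B$ is continued \emph{backwards} through the fixed early part of the first collapse, to a bottom cut $S_B$ bounding a small ball $B$ whose closed past domain lies in the retained central vacuum margin (Appendix~\ref{tl:app:place}).
\item Only then is $S_1$ chosen, as a later Schwarzschild sphere on $C_B$; it is not the starting sphere of a new cone.
\end{enumerate}
The slice is then threaded below and outside $C_B$: a cap just below the past boundary of $D^-(B)$, a graph $u=h(v)$ with $h'<0$ inside the thin exterior strip, and the Kerr end.

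Accordingly, the background in Lemma~\ref{tl:lem:cap} is the first collapse spacetime near its centre, not Schwarzschild or Kerr. Their symmetry spheres bound no balls, so no one-ended cap can be closed around them. The lemma's job is to supply a vacuum region containing $\overline{D^-(B)}$, which needs the $o(\epsilon)$ rate for the displacement of light cones. The cap is then placed below that region.

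\textbf{Further steps that need more than you indicate.}
\begin{enumerate}
\item \emph{Trapped spheres.} The trapped spheres on the inner side of $\mathcal A$ are not in $\mathfrak R$. They lie in the retained old interior collar, which is not exactly Schwarzschild. They follow from the matching two-jet across $C_0$ together with $\Db(\Omega\tr\chi)=2\Omega^2\rho=-4M_1\Omega^2/r^3<0$ on $\mathcal A$.
\item \emph{Non-axisymmetry.} Corollary~\ref{cor:komar} concerns one null hypersurface in a gauge adapted to a transported field. A hypothetical spacetime Killing field need not be tangent to your cone, nor preserve its foliation and lapse. The clean argument uses that the vacuum Komar form $\star dZ^\flat$ is closed. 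Its integrals over homologous Schwarzschild and Kerr spheres would then coincide, whereas they are $0$ and a nonzero multiple of $M_f^2$.
\item \emph{Kerr attachment.} The transversal hypersurface at the Kerr end must be a caustic-free ingoing null hypersurface from the degenerate horizon to infinity at $a_f=M_f$. The paper uses a level set of the Pretorius--Israel optical function, which is regular across $\Delta=(r-M_f)^2=0$. It uses the associated time function both for the Kerr end of $\Sigma$ and for the late-Cauchy-level statement in (iii). Your claim that every point exterior to $\mathcal H_f$ is Kerr holds only beyond $N_2$.
\item \emph{Low regularity.} The metric is only known to be $C^1$ near the cap. Excluding trapped surfaces from $\Sigma$ therefore requires low-regularity focusing results, not the classical $C^2$ argument.
\end{enumerate}
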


Two clarifications. ``Apparent horizon'' refers to the marginally trapped tube $\mathcal A$ together with the trapped spheres just inside it; we
do not assert that it is outermost with respect to some unspecified global foliation. ``After a sufficiently late Cauchy level'' replaces
``for late advanced time'', because no global advanced time is available on the unknown part of a non-symmetric spacetime.

\subsection{The construction}\label{tl:sec:picture}

\begin{figure}[ht]
\centering
\begin{tikzpicture}[scale=0.56,>=Latex,font=\small]
 \def\P(#1,#2){({(#2)-(#1)},{(#2)+(#1)})}
 \fill[blue!7] \P(-0.8,8.5) -- \P(-5,8.5) -- \P(-5,11) -- \P(-0.8,11) -- cycle;
 \node[blue!60!black] at \P(-2.6,10.4) {exact extremal Kerr};
 \fill[blue!20] \P(-0.8,-0.5) -- \P(-1.3,-0.5) -- \P(-1.3,8.5) -- \P(-0.8,8.5) -- cycle;
 \fill[gray!40] \P(-0.8,3) -- \P(0,3) -- \P(0,4.5) -- \P(-0.8,4.5) -- cycle;
 \node at \P(-0.4,3.75) {\scriptsize$\mathfrak R$};
 \draw[thick] \P(0,-0.3) -- \P(0,9.2) node[above] {$C_0$};
 \draw[line width=2.2pt] \P(0,3) -- \P(0,4.5);
 \node[left] at \P(0.05,3.9) {$\mathcal A$};
 \fill \P(0,-0.3) circle (2pt) node[left] {\scriptsize$S_0$};
 \draw[line width=4pt,blue!35] \P(-0.8,5.2) -- \P(-0.8,7.5);
 \draw[very thick,blue] \P(-0.8,-0.5) -- \P(-0.8,11) node[above] {$C_B$};
 \fill[blue] \P(-0.8,-0.5) circle (2pt) node[left=1pt] {\scriptsize$S_B$};
 \fill[blue] \P(-0.8,5.2) circle (2pt) node[left=1pt] {\scriptsize$S_1$};
 \fill[blue] \P(-0.8,8.5) circle (2pt) node[left=1pt] {\scriptsize$S_4$};
 \draw[dashed,blue!70!black] \P(-0.8,8.5) -- \P(-5,8.5) node[right] {$N_2$};
 \draw[dashed,blue] \P(-0.8,-0.5) -- \P(-0.8,-3.0);
 \draw[red,thick] \P(-0.95,-3.2) .. controls \P(-0.98,-1.6) .. \P(-1.0,-0.1)
   .. controls \P(-1.05,3) .. \P(-1.1,8.4) .. controls \P(-1.6,8.9) and \P(-3,9.3) .. \P(-5,9.3) node[right] {$\Sigma$};
 \node[red!70!black,right] at \P(-1.0,-2.4) {cap};
 \node[blue!60!black,right] (ta) at \P(-3.2,4.4) {bridge (Proposition~\ref{tl:prop:input})};
 \draw[->,blue!60!black] (ta.west) -- \P(-0.9,6.3);
 \node[blue!60!black,right] (e2) at \P(-3.0,1.2) {exterior strip};
 \draw[->,blue!60!black] (e2.west) -- \P(-1.15,2.4);
 \node[left] at \P(2.4,6.2) {retained old interior};
\end{tikzpicture}
\caption{Schematic double null picture (not a Penrose diagram): $u$ increases to the upper left, $v$ to the upper right. Black: the
first collapse, with event horizon $C_0$ (exactly Schwarzschild after the ingoing sheet $N_1$). Blue: the new event horizon $C_B$, slightly
outside $C_0$; it runs through the bridge from $S_1$ and then along the exact extremal Kerr horizon. Grey: the exact Schwarzschild rectangle
$\mathfrak R$, whose edge $\mathcal A$ on $C_0$ is the apparent-horizon segment. Dashed below $S_B$: the past boundary of the bottom domain;
the cap lies just below it. Red: the initial slice $\Sigma$.}
\label{tl:fig:assembly}
\end{figure}

The construction is shown in Figure~\ref{tl:fig:assembly}. We build a black hole by a first collapse; discard its exterior beyond a slightly
outer cone $C_B$; along $C_B$ spin the horizon up to extremality by Proposition~\ref{tl:prop:input}; attach the exact extremal Kerr exterior;
solve for thin vacuum strips on both sides of $C_B$; close the bottom by a Cauchy stability argument; and finally verify the global causal
properties.

The order of choices is essential: first the first collapse and its central vacuum margin; then the outer cone $C_B$, the rectangle and the
sphere $S_1$; then the final mass $M_f$ and the bridge; then the strip widths; then the second cutoff width; and last the cap, the strip graph
and the Kerr end. Every small quantity is chosen after the objects relative to which it must be small, so no estimate has to be uniform in
$M_f$, in the bridge length, or in the strip widths.

\subsection{Proof of Theorem~\ref{thm:thirdlaw}}\label{tl:sec:proof}

\paragraph{Step 1: the first collapse and the retained rectangle.}
We first produce a black hole with an exactly Schwarzschild horizon segment and room for a new horizon just outside it. Apply the vacuum
construction of \cite[Corollary~1 and Section~4]{KU} with final spin $a=0$. Call its outgoing gluing hypersurface $C_0$ (it becomes the
Schwarzschild horizon $\{r=2M_1\}$) and its stationary ingoing attachment hypersurface $N_1$; beyond $N_1$, on the exterior side, the spacetime
is exactly Schwarzschild. We use the actual construction, not only the statement of the corollary; in particular we retain the open central
vacuum margin produced by its first Cauchy cutoff.

In the Schwarzschild part choose an outgoing hypersurface $C_B=\{u=u_B\}$, $u_B<0$ small, slightly outside $C_0=\{u=0\}$, and continue it
backwards through the fixed compact characteristic strip. Continuous dependence of the (regular) null geodesic flow keeps it embedded and close
to $C_0$. Its bottom cut $S_B$ bounds a ball $B$ whose closed past domain lies in the central vacuum margin (Appendix~\ref{tl:app:place}). We keep
the Schwarzschild rectangle and a small actual interior collar of its old horizon edge:
\[
 \mathfrak R=\{u_B\le u\le0,\ v_{N_1}\le v\le v_R\},\qquad \mathcal A=\mathfrak R\cap\{u=0\}.
\]

\paragraph{Step 2: the spin-up and the Kerr exterior.}
Along $C_B$ we now prescribe data that are first Schwarzschild and eventually exactly extremal Kerr. Choose a later Schwarzschild symmetry
sphere $S_1\subset C_B$ (beyond $\mathfrak R$) and apply Proposition~\ref{tl:prop:input}. Continue the terminal data along the exact extremal
horizon for all future times. Attach the exact Kerr exterior beyond a later ingoing \emph{null} hypersurface $N_2$ through a horizon cut $S_4$
(Section~\ref{tl:sec:kerr} explains why this needs care). The final mass is chosen after the first spacetime and $S_1$; everything chosen later
may depend on $M_f$ and on the bridge.

\paragraph{Step 3: thin vacuum strips and $C^2$ matching.}
We construct a vacuum spacetime on both sides of the whole future part of $C_B$.

\emph{Exterior.} As in the proof of \cite[Corollary~1]{KU}, solve the characteristic problem backwards with data on $N_2$ and on the entire finite
segment of $C_B$ from its bottom cut up to $S_4$. Luk's theorem \cite{Luk} gives a strip of positive transverse width on this fixed compact
segment (possibly very small, depending on everything chosen before).

\emph{Interior.} Solve forwards from an ingoing segment in the retained old region (at $v=v_R$, inside $\mathfrak R$) and the new data on $C_B$.
Do this on increasing finite segments and patch by uniqueness; this gives an interior neighbourhood of every future horizon point. The widths
may shrink to zero at late times (we choose them non-increasing in $v$). The incoming data of this interior problem are the actual old data;
they are not replaced by Kerr interior data.

\emph{Matching.} By Proposition~\ref{tl:prop:input}(b), propagated along the null constraint hierarchy, the two sides induce the same full sphere
data on $C_B$, which gives the $C^2$ pasting used in \cite{KU}. Concretely, the transversal data of the exterior solution, transported backwards
from the final Kerr cut, coincide with the forward bridge data and, before $S_1$, with the data of the first spacetime. This is uniqueness for
one and the same linear transport hierarchy.

\paragraph{Step 4: closing the bottom.}
We need a vacuum region containing the whole closed past domain of the bottom ball $B$. On the bottom slice take the data of the first
spacetime inside $B$ and the newly constructed exterior-strip data outside. They match to second order at $S_B$. Apply the non-flat version of
the Kehle--Unger cutoff argument, Lemma~\ref{tl:lem:cap} in Appendix~\ref{app:cutoff}, with the first spacetime as background. This supplies the
central vacuum region needed in the second application of their assembly.

\paragraph{Step 5: the initial slice $\Sigma$.}
We construct a one-ended asymptotically flat spacelike slice through all the pieces.

\emph{Cap.} Choose a smooth spacelike cap slightly below the past boundary of the closed bottom domain, from which every point can escape to the
exterior strip (Section~\ref{tl:sec:global}(1)).

\emph{Strip graph.} Continue the cap through the thin exterior strip by a graph $u=h(v)$ with $h'<0$ and arbitrarily small total variation, so
that it stays inside the strip, however thin. It is spacelike: with $du=h'\,dv$, \eqref{eq:metric} induces in the coordinates $(v,\theta^A)$
the metric
\[
 \begin{pmatrix}-4\Omega^2h'+|b|^2_{\gd} & -b_B\\ -b_A & \gd_{AB}\end{pmatrix},\qquad b_A=\gd_{AB}b^B.
\]
The angular block $\gd$ is positive definite and its Schur complement is
\[
 \big(-4\Omega^2h'+|b|^2_{\gd}\big)-b_A\gd^{AB}b_B=-4\Omega^2h'>0,
\]
so the induced metric is positive definite, whatever the shift $b$.

\emph{Kerr end.} Continue the graph into the exact Kerr exterior to an asymptotically flat slice which coincides with a constant
Boyer--Lindquist time slice far out (Section~\ref{tl:sec:kerr}).

The cap (a ball), the strip annulus and the end (the complement of a ball) form $\Sigma\cong\R^3$. The constraint equations hold because every
piece lies in a vacuum region. Regularity and the domain argument are verified in Sections~\ref{tl:sec:regularity} and~\ref{tl:sec:global}.

\paragraph{Step 6: global properties.}
We verify (i)--(v) of Theorem~\ref{thm:thirdlaw}.

\emph{Maximal development.} The domain of dependence of $\Sigma$ in the assembled spacetime embeds into the maximal development by geometric
uniqueness \cite{MGHD}. As in \cite[proof of Corollary~1]{KU}, with the modifications of Section~\ref{tl:sec:global}, this domain contains the whole
future cylinder $C_B$, the old rectangle and a complete Kerr asymptotic end, and the late escaping region is exactly that end. The escape curves from the
cap and the strip give $\Sigma\subset J^-(\scri^+)$.

\emph{No trapped surface in $\Sigma$.} This is the usual exclusion in a predictable exterior with complete $\scri^+$. Since the metric is only
known to be $C^1$ near the cap, we use the maximising-geodesic approximation and focusing estimates of \cite[Corollary~2.16,
Lemmas~5.5--5.6]{Graf}.

\emph{Trapped spheres.} We use the old Schwarzschild horizon, as in the rectangle step of \cite[Section~5.3]{KU22}, not the final Kerr horizon.
On $\mathcal A$ we have $\tr\chi=0$, $\chih=\eta=0$, $\tr\chib<0$ and $\rho=-2M_1/r^3$. In the transport equation \cite[(A.16)]{KU}
\[
 \Db(\Omega\tr\chi)=2\Omega^2\Div\eta+2\Omega^2|\eta|^2-\Omega^2(\chih,\chibh)-\tfrac12\Omega^2\tr\chi\,\tr\chib+2\Omega^2\rho
\]
all terms but the last vanish on $\mathcal A$, so
\[
 \Db(\Omega\tr\chi)=2\Omega^2\rho=-\frac{4M_1\Omega^2}{r^3}<0\qquad\text{on }\mathcal A.
\]
The actual inner metric has the same two-jet on $\mathcal A$ (it is $C^2$ there), so by Taylor's formula spheres slightly inside $\mathcal A$ have
$\tr\chi<0$, and still $\tr\chib<0$: they are trapped. The rectangle $\mathfrak R$ lies in the closed interior side $\{u\ge u_B\}$ of $C_B$; its edge on $C_B$ lies on the new
event horizon (Section~\ref{tl:sec:global}(2)), which is disjoint from the open set $J^-(\scri^+)$, and its remaining points lie on
the inner side of the new event horizon. Hence $\mathfrak R\subset\BH$: the old apparent horizon lies inside the new black hole.

\emph{No axial symmetry.} For a global axial Killing field $Z$, the vacuum Komar integral $(16\pi)^{-1}\int_S\star dZ^\flat$ would take the same
value on the homologous Schwarzschild and final Kerr spheres. It vanishes on the Schwarzschild spheres and is proportional to
$M_fa_f=M_f^2\neq0$ on the Kerr spheres. This is the obstruction noted in \cite[Remark~1.8]{KU}.

This completes the proof of Theorem~\ref{thm:thirdlaw}, modulo the verifications in Section~\ref{tl:sec:adjust}. \qed

\subsection{The adjustments}\label{tl:sec:adjust}

\subsubsection{Where small rotation was used}\label{tl:sec:smallrot}
Table~\ref{tl:tab:smallrot} lists the uses of $|a|/M\ll1$ in \cite{KU} and their replacements. Thus the claim is an adaptation of the proof,
not a direct invocation of the small-spin corollary at $a=M$.

\begin{table}[ht]
\centering
\small
\begin{tabular}{@{}>{\raggedright\arraybackslash}p{0.44\textwidth}>{\raggedright\arraybackslash}p{0.48\textwidth}@{}}
\toprule
use of $|a|/M\ll1$ in \cite{KU} & replacement here\\\midrule
the gluing theorem in \cite[Corollary~1]{KU} & first collapse: applied with $a=0$; second stage: Proposition~\ref{tl:prop:input}\\
final trapped-surface check (perturbatively from Schwarzschild) & the Schwarzschild calculation on $\mathcal A$ (Step~6)\\
characteristic existence, Cauchy stability & need only finite regularity and non-degenerate Lorentzian charts, not small $a/M$\\
exact Kerr exterior attachment & checked directly at $a=M$ (Section~\ref{tl:sec:kerr})\\
\bottomrule
\end{tabular}
\caption{Uses of small rotation in \cite{KU} and their replacements.}
\label{tl:tab:smallrot}
\end{table}

\subsubsection{Regularity and matching}\label{tl:sec:regularity}
The regularity of the initial data comes from the fractional regularity and cutoff argument of \cite[Lemma~4.1]{KU}, with the one-sided
derivative reserve of Proposition~\ref{tl:prop:input}(c). The mechanism is as follows. A $C^2$ induced metric with one-sided $C^4$ bounds has
third derivatives that are bounded and Lipschitz on each side of the interface, with no delta term across it (the second derivatives are
continuous). Likewise a $C^1$ second fundamental form with one-sided $C^3$ bounds has piecewise Lipschitz second derivatives. Such a piecewise
Lipschitz function $w$ satisfies $\|w(\cdot+z)-w\|_{L^2}^2\le C|z|$: the translates differ by $O(1)$ only on a slab of volume $O(|z|)$ around the
interface, and by $O(|z|)$ elsewhere. By the fractional difference-quotient characterisation,
\[
 \int\|w(\cdot+z)-w\|^2_{L^2}\,|z|^{-3-2\sigma}\,dz<\infty\qquad\text{for }\sigma<\tfrac12,
\]
that is, $w\in H^\sigma$. Hence $(g_0,k_0)\in H^{7/2-}\times H^{5/2-}$. Hyperbolic persistence gives the same indices on the cap. The maximal
development theorem is applied at the integer index $H^3\times H^2$ \cite{MGHD}, and the fractional regularity is recovered by persistence.

Two remarks. Only finitely many derivatives are needed on each compact characteristic segment, and they need not agree to that order across
the gluing hypersurface. And the full $C^2$ matching includes the transversal curvature and gauge information; solving only the tangential null
equations is not a substitute for it. As in \cite[Remark~4.1]{KU}, pointwise $C^2$ regularity may fail near the central crease.

\subsubsection{The exact extremal Kerr attachment}\label{tl:sec:kerr}
The attachment hypersurface $N_2$ must be a genuine ingoing null hypersurface from the horizon to infinity. The surfaces $\{v_K=\text{const}\}$
of ordinary ingoing Kerr coordinates are not null when $a\neq0$, so they are not admissible.

We use instead the ingoing quasi-spherical light cones of Pretorius and Israel \cite{PI}, which have no caustics for $r>0$, at arbitrary
rotation. For the final parameters ($a_f=M_f$) write
\[
 \Delta=(r-M_f)^2,\qquad B_K=r^2+a_f^2,\qquad \frac{dr_*}{dr}=\frac{B_K}{\Delta},\qquad \mathsf v=t+r_*^{\rm opt}=v_K+q(r,\vartheta).
\]
The optical function $\mathsf v$ has radial coefficient $Q/\Delta$ with $Q^2=B_K^2-a_f^2\nu\Delta$, $0\le\nu\le1$ \cite[(7), (10)]{PI}. Hence
\[
 \partial_rq=\frac{Q-B_K}{\Delta}=\frac{Q^2-B_K^2}{\Delta\,(Q+B_K)}=-\frac{a_f^2\nu}{Q+B_K}.
\]
The last expression is regular where $\Delta=(r-M_f)^2$ vanishes (there $Q=B_K>0$), and the angular construction is regular in smooth polar
charts. So $\mathsf v$ is a regular optical function up to and across the extremal horizon, and a level set $N_2=\{\mathsf v=v_4\}$ is a
legitimate horizon-to-infinity ingoing attachment. It meets the horizon in a shifted cut $S_4$ rather than in the coordinate sphere at which the
gluing data end; one simply continues the exact horizon up to that cut.

\emph{The spacelike end, without small spin.} Put $\Sigma_K=r^2+a_f^2\cos^2\vartheta$. In Boyer--Lindquist coordinates
$\bg^{-1}(d\mathsf v,d\mathsf v)=0$, $\bg^{-1}(d\mathsf v,dr)=Q/\Sigma_K$ and $\bg^{-1}(dr,dr)=\Delta/\Sigma_K$. Hence
\[
 \bg^{-1}\big(d(\mathsf v-r_*),d(\mathsf v-r_*)\big)=-2\frac{B_K}\Delta\frac Q{\Sigma_K}+\frac{B_K^2}{\Delta^2}\frac\Delta{\Sigma_K}
 =\frac{B_K^2-2B_KQ}{\Sigma_K\Delta}<0,
\]
because for $r\ge M_f=a_f$ we have $\Delta\le r^2$ and $4a_f^2r^2\le B_K^2$, so $a_f^2\nu\Delta\le B_K^2/4$ and $Q\ge\frac{\sqrt3}2B_K>\frac12B_K$.
So $T=\mathsf v-r_*=t+q$ is a time function on the exterior. We cut off the bounded function $q$ at large radius: for a cutoff $\chi$ at a
large radius $R$, $\widetilde T=t+(1-\chi(r))q$ is still temporal and equals Boyer--Lindquist time near infinity. Its spacelike level set through
the joining sphere is the Kerr end of $\Sigma$; it is joined to the strip graph inside the exact Kerr neighbourhood of the corner, where both are
spacelike graphs and can be smoothed.

\subsubsection{Differences from the proof of \texorpdfstring{\cite[Corollary~1]{KU}}{[KU, Corollary 1]}}\label{tl:sec:global}
The Cauchy extension, the initial slice and the causal structure of the assembled spacetime are verified as in the proof of
\cite[Corollary~1]{KU} (see \cite[Figure~5]{KU}), which does not use spherical symmetry. Above $\Sigma$ the spacetime again consists of thin
vacuum strips along the gluing hypersurface, an exact Kerr exterior beyond an ingoing null hypersurface, and a region near the bottom obtained
by Cauchy stability. The domain of dependence of $\Sigma$, the escape of $\Sigma$ to $\scri^+$ and the identification of the late exterior
with Kerr follow from this arrangement as there. The two-stage arrangement is that of \cite[Section~5.3]{KU22}. We list the points where the
present construction differs, and how each of them is handled.

\paragraph{(1) The interior is not Minkowski.} In \cite{KU} the region inside the bottom sphere of the gluing hypersurface is a piece of
Minkowski space; here it is the central vacuum margin of the first collapse spacetime.
\begin{itemize}
\item Closing the bottom requires the cutoff argument with a non-flat background, Lemma~\ref{tl:lem:cap} in Appendix~\ref{app:cutoff}.
\item The cap still escapes. The new metric agrees with the background on the closed past domain of the bottom ball $B$, so the null
generators of its past boundary lead to $\partial B$. A point slightly to the past of this boundary therefore has a timelike curve to a point
just outside $B$, in the exterior strip, and from there it escapes through the strip into the Kerr exterior. A smooth spacelike cap with a
prescribed collar in the strip is provided by causal smoothing for locally Lipschitz cones \cite[Theorems~2.17 and~2.21]{Minguzzi}.
\item The retained part of the first spacetime, i.e.\ the rectangle $\mathfrak R$ and the interior strip, lies in the domain of dependence of
$\Sigma$. By Lemma~\ref{tl:lem:mono}, a past causal curve starting there cannot leave through an inner outgoing or an upper ingoing boundary,
so it reaches the exterior strip or the bottom domain, where the data agree on open collars and Cauchy uniqueness applies.
\end{itemize}

\paragraph{(2) The exterior is extremal Kerr.}
\begin{itemize}
\item The attachment hypersurface $N_2$, the time function on the exterior and the Kerr end of $\Sigma$ are constructed at $a=M$ in
Section~\ref{tl:sec:kerr}; no smallness of the rotation is used. The strip graph is joined to the Kerr end with a positive buffer inside the
strip width, so that above $\Sigma$ the hypersurface $N_2$ is met only inside the strip.
\item $C_B$ is a portion of the event horizon. Points just outside $C_B$ escape through the strip into Kerr, and by Lemma~\ref{tl:lem:mono} a
future causal curve cannot cross $C_B$ from the interior to the exterior side. The extremal case needs one remark: beyond $S_4$ the generators
of $C_B$ are integral curves of the Killing field $\xi$, which is affinely parametrised on the extremal horizon (Lemma~\ref{lem:affine}), with
parameter running to $+\infty$. So the generators are future complete, and $C_B$ has no future endpoint in the maximal development.
\item The late exterior is Kerr, as in \cite{KU}; the Kerr-specific input is the time function $\widetilde T$ and the optical function
$\mathsf v$ of Section~\ref{tl:sec:kerr}. Far out $\Sigma=\{t=t_0\}$, and a point with $t\le t_0$ and $\mathsf v>t_0+\max_\vartheta r_*^{\rm opt}(R,\vartheta)$
cannot reach $\{r=R\}$ before $\{t=t_0\}$ along a future causal curve; this gives the full late Kerr ingoing region in $D(\Sigma)$ and the
complete Kerr $\scri^+$. At bounded $\mathsf v$, a lower bound on $\widetilde T$ bounds the radius, so on a Cauchy level later than the compact
part of $N_2$ above $\Sigma$ and than the non-stationary part of the horizon, the escaping region is the retained Kerr exterior. We state this
at late Cauchy levels rather than at late advanced times, because no global advanced time is available on the unknown part of the
spacetime.
\end{itemize}

\paragraph{(3) The third-law statements.} The early apparent horizon, and its position inside the new black hole, are not part of
\cite[Corollary~1]{KU}. They come from the two-stage arrangement of \cite[Section~5.3]{KU22}, and Step~6 verifies them without spherical
symmetry: the marginally trapped tube $\mathcal A$ and the nearby trapped spheres are obtained in the exact Schwarzschild rectangle, and
$\mathfrak R$ lies in the black hole region because it lies on the interior side of $C_B$, which is a portion of the event horizon by (2).

\subsubsection{What is imported from \cite{KU22}}\label{tl:sec:imported}
Only the two-stage assembly and the preservation of the early Schwarzschild rectangle are imported from \cite[Section~5.3]{KU22}. Its further
conclusions about the outermost apparent horizon, its disconnectedness, or trapped surfaces at arbitrarily late times rely on additional
arguments in spherical symmetry and are not part of Theorem~\ref{thm:thirdlaw}. No exact Kerr interior is prescribed anywhere in the
construction.

\appendix
\section{Tensors on the sphere and smoothness at the poles}\label{app:sphere}

This appendix decides which frame matrices in the round polar frame describe tensor fields that are smooth at the poles; this is
used in Sections~\ref{ch:optical}, \ref{ch:tail}, \ref{ch:spin}, \ref{ch:controller} and~\ref{ch:linear}. Throughout,
$\gac=d\vartheta^2+\sin^2\!\vartheta\,d\varphi^2$ is the unit round metric, $z=\cos\vartheta$, and the polar frame and coframe are
\begin{equation}\label{A:eq:polarframe}
 e_1:=\partial_\vartheta,\quad e_2:=\frac1{\sin\vartheta}\,\partial_\varphi;\qquad \theta^1:=d\vartheta,\quad \theta^2:=\sin\vartheta\,d\varphi ,
\end{equation}
as in \eqref{eq:roundframe}. For one-forms $\alpha,\beta$ we write $\alpha\beta:=\frac12(\alpha\otimes\beta+\beta\otimes\alpha)$ and $\alpha^2:=\alpha\alpha$.
We use the matrices $S(\xi)$, $R_\alpha$, $K_0$ of Definition~\ref{def:SR} and Lemma~\ref{lem:param}.

\subsection{Tensor fields on the sphere}\label{A:sec:sphere}

Let $S^2\subset\R^3$ be the unit sphere, $N=(0,0,1)$, $S=(0,0,-1)$, with spherical coordinates
$(x,y,z)=(\sin\vartheta\cos\varphi,\,\sin\vartheta\sin\varphi,\,\cos\vartheta)$ on $S^2\setminus\{N,S\}$. Near the poles we use the graph charts $(x,y)$
on $U_N=\{z>0\}$ and $U_S=\{z<0\}$, in which the poles are the origin, and $w:=x+iy=re^{i\varphi}$, $r=\sin\vartheta$. Smoothness near a pole
means smoothness in $(x,y)$ at the origin; for instance $a(z)$ is smooth on $S^2$ for every $a$ smooth on an open interval containing
$[-1,1]$, since $z=\pm\sqrt{1-x^2-y^2}$. From $dw=e^{i\varphi}(dr+i\,r\,d\varphi)$,
\begin{equation}\label{A:eq:drdphi}
 dr=\tfrac12\big(e^{-i\varphi}dw+e^{i\varphi}d\overline w\big),\qquad i\,r\,d\varphi=\tfrac12\big(e^{-i\varphi}dw-e^{i\varphi}d\overline w\big).
\end{equation}

\begin{lemma}\label{A:lem:round}
In the charts on $U_N$ and $U_S$ the round metric is $\gac=dx^2+dy^2+(x\,dx+y\,dy)^2/(1-x^2-y^2)$. In particular $\gac$ and $\gac^{-1}$ are
smooth at the poles, and $\gac=dx^2+dy^2$ there.
\end{lemma}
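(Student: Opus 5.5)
The plan is to compute $\gac$ directly in the graph chart. On $U_N$ the sphere is the graph $z=\sqrt{1-x^2-y^2}$ over the open unit disc, and the round metric is the pull-back of the Euclidean metric $dx^2+dy^2+dz^2$ of $\R^3$ under $(x,y)\mapsto(x,y,z(x,y))$. I will check that this agrees with $d\vartheta^2+\sin^2\!\vartheta\,d\varphi^2$ on $U_N\setminus\{N\}$. This is immediate, since both are the metric induced from $\R^3$ on the same surface and spherical coordinates parametrise it. So it suffices to compute the induced metric in $(x,y)$.

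The key step is the differential of the height function. From $z^2=1-x^2-y^2$ we get $z\,dz=-(x\,dx+y\,dy)$. Hence
\[
dz^2=\frac{(x\,dx+y\,dy)^2}{z^2}=\frac{(x\,dx+y\,dy)^2}{1-x^2-y^2}.
\]
This gives the stated formula. On $U_S$ the same computation applies with $z=-\sqrt{1-x^2-y^2}$, and $z^2$ is unchanged.

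For smoothness, note that on a neighbourhood of the origin, say $x^2+y^2<\frac12$, the denominator $1-x^2-y^2$ is smooth and bounded below. The coefficients $g_{xx}=1+x^2/(1-r^2)$, $g_{xy}=xy/(1-r^2)$ and $g_{yy}=1+y^2/(1-r^2)$ are therefore smooth. At the origin the correction term vanishes, so $\gac=dx^2+dy^2$ there.

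For the inverse, I will compute $\det\gac=1+r^2/(1-r^2)=1/(1-r^2)$. This follows because the matrix is $I+vv^t/(1-r^2)$ with $v=(x,y)$, and its determinant is $1+|v|^2/(1-r^2)$. The determinant is smooth and nonvanishing, so $\gac^{-1}$ is smooth by Cramer's rule. Explicitly, $\gac^{-1}=I-vv^t$ by the Sherman--Morrison formula, and it equals the identity at the pole.

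The main obstacle is minor: one has to justify identifying the graph-chart expression with the polar expression away from the poles. This comes from the common embedding into $\R^3$. No other step needs care.
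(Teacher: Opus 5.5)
Your proof is correct and is the same as the paper's: you pull back $dx^2+dy^2+dz^2$ along the graph chart and use $dz=-(x\,dx+y\,dy)/z$ with $z^2=1-x^2-y^2$. Your explicit inverse $\gac^{-1}=I-vv^t$ via Sherman--Morrison is a correct, slightly more explicit justification of the smoothness of $\gac^{-1}$, which the paper leaves implicit.
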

\begin{proof}
$\gac=dx^2+dy^2+dz^2$ and $dz=-(x\,dx+y\,dy)/z$ with $z^2=1-x^2-y^2$.
\end{proof}

The polar frame \eqref{A:eq:polarframe} is a smooth $\gac$-orthonormal frame on $S^2\setminus\{N,S\}$; it does not extend continuously to the
poles (near $N$ it is the Cartesian frame rotated by the angle $\varphi$), and by the hairy ball theorem no frame on $S^2$ does.

\paragraph{Complex components and spin weight.} For a one-form $\alpha$ and a symmetric bilinear form $T$ on $S^2\setminus\{N,S\}$ write
$\alpha_a=\alpha(e_a)$ and $T_{ab}=T(e_a,e_b)$ (the \emph{frame matrix}); $T$ is $\gac$-tracefree if $T_{11}+T_{22}=0$. With the complex
one-form $\zeta:=\theta^1+i\,\theta^2$ and complex-linear products,
\begin{equation}\label{A:eq:zetasquare}
 \zeta\overline\zeta=\gac,\qquad \zeta^2=(\theta^1)^2-(\theta^2)^2+2i\,\theta^1\theta^2 .
\end{equation}

\begin{definition}\label{A:def:complexcomp}
The \emph{complex component} of a real one-form $\alpha$ is $\alpha_\C:=\alpha_1+i\alpha_2$, that of a real $\gac$-tracefree symmetric form $T$ is
$T_\C:=T_{11}+iT_{12}$, and that of a function $f$ is $f_\C:=f$.
\end{definition}

\begin{lemma}\label{A:lem:complexcomp}
$\alpha=\Re\big(\overline{\alpha_\C}\,\zeta\big)$ and $T=\Re\big(\overline{T_\C}\,\zeta^2\big)$; conversely, for $c\in\C$, $\Re(\overline c\,\zeta)$ and
$\Re(\overline c\,\zeta^2)$ are the one-form, resp.\ tracefree symmetric form, with complex component $c$.
\end{lemma}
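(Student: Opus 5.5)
The plan is to verify both identities pointwise on $S^2\setminus\{N,S\}$, working in the orthonormal coframe $(\theta^1,\theta^2)$. This is pure linear algebra at each point, so we expand the complex-linear products and compare coefficients.

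For the one-form we write $\alpha=\alpha_1\theta^1+\alpha_2\theta^2$. Then
\[
 \overline{\alpha_\C}\,\zeta=(\alpha_1-i\alpha_2)(\theta^1+i\theta^2)=\alpha_1\theta^1+\alpha_2\theta^2+i(\alpha_1\theta^2-\alpha_2\theta^1),
\]
whose real part is $\alpha$. For the converse, take any $c=c_1+ic_2$. The same expansion shows that $\Re(\overline c\,\zeta)=c_1\theta^1+c_2\theta^2$, so its components are $(c_1,c_2)$ and its complex component is $c$.

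For the tracefree form we write $T=T_{11}\big((\theta^1)^2-(\theta^2)^2\big)+2T_{12}\,\theta^1\theta^2$. This uses $T_{22}=-T_{11}$ and the convention $\alpha\beta=\frac12(\alpha\otimes\beta+\beta\otimes\alpha)$, under which the off-diagonal entries contribute $2T_{12}\theta^1\theta^2$. By \eqref{A:eq:zetasquare},
\[
 \overline{T_\C}\,\zeta^2=(T_{11}-iT_{12})\big((\theta^1)^2-(\theta^2)^2+2i\,\theta^1\theta^2\big).
\]
Its real part is $T_{11}\big((\theta^1)^2-(\theta^2)^2\big)+2T_{12}\theta^1\theta^2=T$. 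For the converse, with $c=c_1+ic_2$ we get $\Re(\overline c\,\zeta^2)=c_1\big((\theta^1)^2-(\theta^2)^2\big)+2c_2\theta^1\theta^2$. This form is symmetric and tracefree, since its $11$ and $22$ entries are $\pm c_1$, and its $12$ entry is $c_2$. Hence its complex component is $c_1+ic_2=c$.

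There is no real obstacle here. The only point needing care is the factor bookkeeping for the symmetric product, namely that $(\theta^1\theta^2)(e_1,e_2)=\frac12$. This is exactly what makes the coefficient $2i$ in $\zeta^2$ produce the off-diagonal entry $T_{12}$ rather than $2T_{12}$.
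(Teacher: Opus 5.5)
Your proposal is correct and is the paper's own argument: the paper's proof consists only of the instruction to expand using \eqref{A:eq:zetasquare} and $T_{22}=-T_{11}$, which is what you carry out. Your handling of the factor $\frac12$ in the symmetric product, which makes the coefficient $2i$ in $\zeta^2$ produce the entry $T_{12}$, is the only bookkeeping point, and you get it right.
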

\begin{proof}
Expand, using \eqref{A:eq:zetasquare} and $T_{22}=-T_{11}$.
\end{proof}

A tracefree $T$ with frame matrix $\rho S(\xi)$ has $T_\C=\rho e^{i\xi}$ (Lemma~\ref{lem:param}(a)). Under a rotation of the frame by an angle
$\beta$ one has $\zeta'=e^{-i\beta}\zeta$, hence $\alpha'_\C=e^{-i\beta}\alpha_\C$ and $T'_\C=e^{-2i\beta}T_\C$: functions, one-forms and tracefree
symmetric forms have \emph{spin weight} $s=0,1,2$.

\paragraph{Metrics and their logarithms.} Let $\gdc$ be a metric on an open set of $S^2$ with $\det(\gac^{-1}\gdc)=1$. The endomorphism
$H:=\gac^{-1}\gdc$ is $\gac$-self-adjoint and positive with determinant one, so by Lemma~\ref{lem:param}(c) there is a unique $\gac$-self-adjoint
tracefree endomorphism field $A$ with $\exp(A)=H$; we write $\gdc=\gac\exp(A)$. Off the poles the frame matrix of $A$ is $\log G$,
$G_{ab}=\gdc(e_a,e_b)$.

\begin{theorem}\label{A:thm:metriclog}
On any open set of $S^2$, including neighbourhoods of the poles, $\gdc$ is smooth if and only if $A$ is smooth, if and only if the
tracefree symmetric form $\gac A:=\gac(\cdot,A\,\cdot)$ is smooth.
\end{theorem}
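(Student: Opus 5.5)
The plan is to prove the two equivalences separately, working in a chart (arbitrary chart away from the poles, graph chart $(x,y)$ near a pole). By Lemma~\ref{A:lem:round}, $\gac$ and $\gac^{-1}$ are smooth in the graph charts at the poles, and away from the poles this is clear. Hence the map $A\mapsto\gac A=\gac(\cdot,A\,\cdot)$ is a smooth bundle isomorphism between endomorphisms and bilinear forms, with inverse $T\mapsto\gac^{-1}T$. It carries $\gac$-self-adjoint tracefree endomorphisms onto tracefree symmetric forms. So $A$ is smooth if and only if $\gac A$ is smooth, and this settles the second equivalence immediately.

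For the first equivalence, note that $\gdc$ smooth is equivalent to $H=\gac^{-1}\gdc$ smooth, by the same smoothness of $\gac^{\pm1}$. If $A$ is smooth, then $H=\exp(A)$ is smooth. In a chart, $A$ is a smooth matrix-valued function, and the matrix exponential is an entire, hence smooth, map of matrices; therefore $\gdc=\gac\exp(A)$ is smooth.

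Conversely, suppose $H$ is smooth. Then I will invoke Lemma~\ref{lem:log}(a), which shows via the frame-free integral formula \eqref{eq:logformula} that $H\mapsto\log H$ is smooth on positive $\gac$-self-adjoint endomorphisms. Thus $A=\log H$ is as regular as $\gdc$ at every point, including the poles. To make this chart-based, I will observe two facts. In the graph chart, $\gac$-self-adjointness of $H$ becomes a statement about the matrix $H$ relative to the smooth positive matrix $\gac$. The resolvent $(sI+H)^{-1}$ is smooth in $H$, with all $H$-derivatives bounded by $C(1+s)^{-2}$ on compact sets of positive $H$. Differentiation under the integral sign is then justified, and the composition of the smooth map $\log$ with the smooth field $H$ is smooth.

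The only subtle point is making sure no frame enters. The polar frame is singular at the poles, and the frame matrix $\log G$ is only a description off the poles. Uniqueness in Lemma~\ref{lem:log} guarantees that the chart-computed $\log H$ agrees with the $A$ defined via the frame off the poles. Smoothness of $A$ across the poles is then a genuinely chart-level statement, and that is where I expect the (mild) care to be needed.
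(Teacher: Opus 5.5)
Your proposal is correct and follows the paper's proof: $\gac^{\pm1}$ are smooth at the poles, so $A\leftrightarrow\gac A$ and $\gdc\leftrightarrow H$ preserve smoothness. The exponential series gives one direction and the frame-free formula \eqref{eq:logformula} of Lemma~\ref{lem:log}(a) gives the other. Your extra remarks on differentiating under the integral sign and on uniqueness of $\log H$ are already contained in Lemma~\ref{lem:log}, so you may simply cite it.
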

\begin{proof}
By Lemma~\ref{lem:log}, $A=\log H$ is given by the frame-free formula \eqref{eq:logformula}, which is smooth in $H$, and $H$ is smooth
wherever $\gdc$ is (Lemma~\ref{A:lem:round}). Conversely, $\exp(A)=\sum_{n\ge0}A^n/n!$ is smooth in $A$. Lowering and raising with $\gac$
preserve smoothness.
\end{proof}

\subsection{Smoothness at the poles}\label{A:sec:poles}

\paragraph{Fourier modes in polar coordinates.} We use two classical facts from one-variable calculus: by Taylor's formula with
integral remainder, a smooth function $u$ on an interval around $0$ with $u(x)=O(|x|^k)$ can be written as $u(x)=x^k\tilde u(x)$ with $\tilde u$
smooth; and by Whitney's theorem \cite{Whitney} (see also \cite{Hormander} for Borel's lemma, on which it rests), every smooth even function
$h$ on $(-R,R)$ is of the form $h(x)=g(x^2)$ with $g$ smooth on $(-\infty,R^2)$.

\begin{theorem}[Fourier modes]\label{A:thm:fourier}
Let $k\in\Z$, $R>0$ and $f:(0,R)\to\C$, and put $F(re^{i\varphi}):=f(r)e^{ik\varphi}$ on $0<|w|<R$. Then $F$ extends to a smooth function of $(x,y)$ on
the disc $|w|<R$ if and only if
\[
 f(r)=r^{|k|}g(r^2)\qquad(0<r<R)
\]
for some smooth $g$ on an open interval containing $[0,R^2)$.
\end{theorem}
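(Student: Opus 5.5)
The plan is to prove the two directions separately, with the converse (necessity) reduced to the sufficiency direction by a Taylor-type peeling of the radial profile.

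\emph{Sufficiency.} Suppose $f(r)=r^{|k|}g(r^2)$. For $k\ge0$ we have $r^{k}e^{ik\varphi}=w^k$, and for $k<0$ we have $r^{|k|}e^{ik\varphi}=\overline w^{|k|}$. Hence
\[
 F=w^k\,g(|w|^2)\quad\text{resp.}\quad F=\overline w^{|k|}\,g(|w|^2).
\]
Both are products of a polynomial in $(x,y)$ with $g(x^2+y^2)$, which is smooth on the disc because $x^2+y^2\in[0,R^2)$. This direction is immediate.

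\emph{Necessity.} Assume $F$ extends smoothly to the disc. Restricting to the $x$-axis gives a smooth function
\[
 u(x):=F(x,0)\quad\text{on }(-R,R),
\]
with $u(r)=f(r)$ and $u(-r)=f(r)e^{ik\pi}=(-1)^kf(r)$ for $r>0$. So $u$ has the parity of $k$.

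The key step is to show $u(x)=O(|x|^{|k|})$. For this, Taylor-expand $F$ at the origin to order $|k|-1$:
\[
 F=\sum_{j<|k|}P_j(w,\overline w)+O(|w|^{|k|}),
\]
where each $P_j$ is homogeneous of degree $j$. Averaging $F\,e^{-ik\varphi}$ over circles $|w|=r$ returns $f(r)$ exactly, since $F$ is a pure mode. On the other hand, a monomial $w^a\overline w^b$ with $a+b=j$ has angular frequency $a-b$. Frequency $k$ with $a+b=j<|k|$ is impossible, because $|a-b|\le a+b$. So the average of each $P_j$ term vanishes, and $f(r)=O(r^{|k|})$.

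By the one-variable Taylor fact quoted before the theorem, $u(x)=x^{|k|}\tilde u(x)$ with $\tilde u$ smooth. Since $x^{|k|}$ has the parity of $k$ as well, $\tilde u$ is smooth and even on $(-R,R)$; this uses continuity at $0$ together with $\tilde u(-x)=\tilde u(x)$ for $x\ne0$. Whitney's theorem then gives $\tilde u(x)=g(x^2)$ with $g$ smooth on $(-\infty,R^2)$. Hence $f(r)=r^{|k|}g(r^2)$ for $0<r<R$.

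\emph{Main obstacle.} The only delicate point is the vanishing order $f=O(r^{|k|})$, which comes from the frequency count on the homogeneous Taylor polynomials. Everything else is one-variable calculus plus Whitney's even-function theorem, both taken as given.
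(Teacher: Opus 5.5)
Your proposal is correct and follows the paper's proof essentially step by step: you restrict to the $x$-axis with parity $(-1)^k$, you use the frequency count on the Taylor polynomial to get $f=O(r^{|k|})$, you factor $u=x^{|k|}\tilde u$, and you apply Whitney's theorem to the even function $\tilde u$. The only difference is cosmetic: you treat $k<0$ directly via $\overline w^{|k|}$, whereas the paper first reduces to $k\ge0$ by replacing $F$ with $\overline F$.
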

\begin{proof}
Replacing $F$ by $\overline F$ if necessary, we may assume $k\ge0$. If $f=r^kg(r^2)$, then $F=w^kg(x^2+y^2)$ is smooth. Conversely, let $F$ be
smooth and $u(x):=F(x,0)$; then $u(x)=f(x)$ for $x>0$ and $u(-x)=(-1)^ku(x)$. By Taylor's theorem $F=P+O(r^k)$ with
$P=\sum_{p+q\le k-1}c_{pq}w^p\overline w^q$ and $w^p\overline w^q=r^{p+q}e^{i(p-q)\varphi}$; averaging against $e^{-ik\varphi}$ over the circle of radius $r$ gives
$f(r)=O(r^k)$, since $p-q=k$ forces $p+q\ge k$. Hence $u(x)=x^k\tilde u(x)$ with $\tilde u$ smooth and, by the parity of $u$, even, so
$\tilde u(x)=g(x^2)$ by Whitney's theorem, and $f(r)=r^kg(r^2)$.
\end{proof}

\paragraph{A smooth coframe at each pole.}

\begin{lemma}\label{A:lem:omega}
Let $\nu:=(1-r^2)^{-1/2}$ and $\mu:=\big(\sqrt{1-r^2}\,(1+\sqrt{1-r^2})\big)^{-1}$; these are smooth functions of $r^2=x^2+y^2$ for $r<1$, and
$\nu-1=\mu r^2$.
\begin{enumerate}[label=(\alph*)]
\item On $U_N\setminus\{N\}$, $\zeta=e^{-i\varphi}\omega_N$, where $\omega_N:=\frac12(1+\nu)\,dw+\frac12\mu\,w^2\,d\overline w$ is a smooth complex one-form on
$U_N$ with $\omega_N=dx+i\,dy$ at $N$.
\item On $U_S\setminus\{S\}$, $\zeta=-e^{i\varphi}\omega_S$, where $\omega_S:=\frac12(1+\nu)\,d\overline w+\frac12\mu\,\overline w^2\,dw$ is a smooth complex one-form
on $U_S$ with $\omega_S=dx-i\,dy$ at $S$.
\end{enumerate}
\end{lemma}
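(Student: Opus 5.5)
The proof is a direct computation in the polar coordinates of the graph chart, using the expression \eqref{A:eq:drdphi} of $dr$ and $r\,d\varphi$ through $dw$ and $d\overline w$. I treat (a) first and then obtain (b) by the reflection $z\mapsto-z$.

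\emph{The coefficient functions.} Since $r=\sin\vartheta$, we have $\sqrt{1-r^2}=|\cos\vartheta|=|z|$. So $\nu$ and $\mu$ are compositions of smooth functions of $r^2\in[0,1)$, and they are smooth in $(x,y)$. The identity $\nu-1=\mu r^2$ follows from
\[
 \frac{1}{\sqrt{1-r^2}}-1=\frac{1-\sqrt{1-r^2}}{\sqrt{1-r^2}}=\frac{r^2}{\sqrt{1-r^2}\,(1+\sqrt{1-r^2})}.
\]

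\emph{Case (a).} On $U_N$ we have $\vartheta\in(0,\pi/2)$ and $d\vartheta=dr/\cos\vartheta=\nu\,dr$. Also $\theta^2=\sin\vartheta\,d\varphi=r\,d\varphi$. Hence
\[
 \zeta=\nu\,dr+i\,r\,d\varphi .
\]
Inserting \eqref{A:eq:drdphi} gives
\[
 \zeta=\tfrac12(\nu+1)e^{-i\varphi}dw+\tfrac12(\nu-1)e^{i\varphi}d\overline w .
\]
Now use $\nu-1=\mu r^2$ and $r^2e^{i\varphi}=e^{-i\varphi}r^2e^{2i\varphi}=e^{-i\varphi}w^2$. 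The second term becomes $\tfrac12\mu e^{-i\varphi}w^2\,d\overline w$. Therefore $\zeta=e^{-i\varphi}\omega_N$ with $\omega_N$ exactly as in the statement.

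The form $\omega_N$ is smooth on $U_N$, because its coefficients $\tfrac12(1+\nu)$ and $\tfrac12\mu w^2$ are smooth in $(x,y)$. At $N$ we have $r=0$, so $\nu=1$ and $w=0$. Hence $\omega_N=dw=dx+i\,dy$ there.

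\emph{Case (b).} On $U_S$ we have $\vartheta\in(\pi/2,\pi)$ and $\cos\vartheta=-\sqrt{1-r^2}$. Hence $d\vartheta=-\nu\,dr$, and
\[
 \zeta=-\nu\,dr+i\,r\,d\varphi=-\big(\nu\,dr-i\,r\,d\varphi\big).
\]
The bracket is the complex conjugate of the expression in case (a). Its conjugate is $e^{-i\varphi}\omega_N$, so the bracket equals $e^{i\varphi}\overline{\omega_N}$. Conjugating $\omega_N$ swaps $dw$ with $d\overline w$ and $w^2$ with $\overline w^2$, so $\overline{\omega_N}=\omega_S$. This gives $\zeta=-e^{i\varphi}\omega_S$. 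At $S$ we get $\omega_S=d\overline w=dx-i\,dy$.

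The only point needing care is the sign of $\cos\vartheta$ in the southern chart. That sign is what produces the overall minus and the swap $w\leftrightarrow\overline w$. Everything else is routine algebra.
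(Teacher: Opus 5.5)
Your proof is correct and matches the paper's. Both write $\zeta=\pm\nu\,dr+i\,r\,d\varphi$ according to the sign of $\cos\vartheta$, substitute \eqref{A:eq:drdphi}, and use $\nu-1=\mu r^2$. The only cosmetic difference is that you obtain (b) by complex-conjugating the polar-coordinate identity of (a), which is legitimate since it is a pure identity in the $(x,y)$-plane, whereas the paper substitutes directly.
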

\begin{proof}
With $c:=\sqrt{1-r^2}$, $\nu-1=(1-c)/c=(1-c^2)/(c(1+c))=\mu r^2$. (a) On $U_N$, $\cos\vartheta=c$, so $\theta^1=d\vartheta=\nu\,dr$ and $\theta^2=r\,d\varphi$.
By \eqref{A:eq:drdphi},
\[
 \zeta=\nu\,dr+i\,r\,d\varphi=e^{-i\varphi}\Big[\tfrac12(\nu+1)\,dw+\tfrac12(\nu-1)e^{2i\varphi}\,d\overline w\Big],
\]
and $(\nu-1)e^{2i\varphi}=\mu w^2$. (b) On $U_S$, $\cos\vartheta=-c$, so $\theta^1=-\nu\,dr$ and
$\zeta=-(\nu\,dr-i\,r\,d\varphi)=-e^{i\varphi}\big[\frac12(\nu+1)\,d\overline w+\frac12(\nu-1)e^{-2i\varphi}\,dw\big]$, with $(\nu-1)e^{-2i\varphi}=\mu\overline w^2$.
\end{proof}

The factor $e^{-i\varphi}$ in (a) is the rotation of the polar frame relative to the Cartesian one: in the notation of the spin weight,
$\zeta'=e^{-i\beta}\zeta$ with $\beta=\varphi$.

\paragraph{The smoothness criterion.}

\begin{theorem}[Smoothness of spin-weighted fields]\label{A:thm:spins}
Let $s\in\{0,1,2\}$, and let $T$ be a real smooth field on $S^2\setminus\{N,S\}$ which is a function ($s=0$), a one-form ($s=1$), or a
$\gac$-tracefree symmetric bilinear form ($s=2$), with complex component $\tau:=T_\C$ in the polar frame. Then $T$ extends smoothly across $N$
if and only if $\tau e^{is\varphi}$ extends to a smooth function near $N$, and $T$ extends smoothly across $S$ if and only if $\tau e^{-is\varphi}$
extends to a smooth function near $S$.
\end{theorem}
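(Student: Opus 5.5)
The plan is to reduce everything to Lemma~\ref{A:lem:omega} and Lemma~\ref{A:lem:complexcomp}. By Lemma~\ref{A:lem:complexcomp}, $T=\Re(\overline\tau\,\zeta^s)$ with $\zeta^0:=1$. Near $N$, Lemma~\ref{A:lem:omega}(a) gives $\zeta=e^{-i\varphi}\omega_N$, hence
\[
 T=\Re\big(\overline{\tau e^{is\varphi}}\;\omega_N^s\big),
\]
and $\omega_N$ is a smooth complex one-form on $U_N$. Near $S$, Lemma~\ref{A:lem:omega}(b) gives $\zeta^s=(-1)^se^{is\varphi}\omega_S^s$, so $T=\Re\big((-1)^s\overline{\tau e^{-is\varphi}}\,\omega_S^s\big)$. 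It therefore suffices to prove the statement at $N$ for the smooth quantity $\sigma:=\tau e^{is\varphi}$; the south pole is the same argument with $\omega_S$ and a harmless sign.

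\emph{Sufficiency.} If $\sigma$ extends smoothly, then $\overline\sigma\,\omega_N^s$ is a smooth complex tensor on $U_N$. Its real part is $T$ off the pole, and is smooth across it. For $s=2$ the extension is still tracefree by continuity, since tracefreeness is a closed condition and $\gac$ is smooth (Lemma~\ref{A:lem:round}).

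\emph{Necessity.} I will recover $\sigma$ from $T$ by a smooth algebraic operation at the pole. Since $\omega_N=dx+i\,dy$ at $N$, the pair $(\omega_N,\overline\omega_N)$ is a smooth complex coframe on a neighbourhood of $N$. Take the dual smooth complex vector field $X$ with $\omega_N(X)=1$ and $\overline\omega_N(X)=0$, so $X=\tfrac12(\partial_x-i\partial_y)$ at $N$. Off the pole, $\zeta(X)=e^{-i\varphi}$ and $\overline\zeta(X)=0$.

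For $s=1$ this gives $T(X)=\tfrac12\overline\sigma\,\omega_N(X)+\tfrac12\sigma\,\overline\omega_N(X)=\tfrac12\overline\sigma$, so $\sigma=2\,\overline{T(X)}$ is smooth. For $s=2$ the same computation gives $T(X,X)=\tfrac12\overline\sigma$, using $\omega_N^2(X,X)=1$ and $\overline\omega_N^2(X,X)=0$, so again $\sigma=2\overline{T(X,X)}$. For $s=0$ there is nothing to prove. These are contractions of smooth tensors with a smooth complex vector field, so $\sigma$ is smooth near $N$.

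The main obstacle is not the algebra but checking that the identities hold on the punctured neighbourhood and that $(\omega_N,\overline\omega_N)$ is nondegenerate near $N$. This holds because $\omega_N\wedge\overline\omega_N=-2i\,dx\wedge dy\ne0$ at $N$, so nondegeneracy persists on a small disc by continuity. Both inputs come directly from Lemma~\ref{A:lem:omega}, so no analysis beyond that lemma is needed; Theorem~\ref{A:thm:fourier} is not required here but will be combined with this criterion afterwards to obtain mode-by-mode conditions.
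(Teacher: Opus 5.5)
Your proof is correct and follows the same route as the paper. Both write $T=\Re(\overline{F}\,\omega_N^s)$ with $F=\tau e^{is\varphi}$ via Lemmas~\ref{A:lem:complexcomp} and~\ref{A:lem:omega}, obtain sufficiency directly, and obtain necessity by recovering $F$ from the smooth extension through a pointwise linear inversion that remains valid at the pole because $\omega_N=dx+i\,dy$ there. The only difference is cosmetic: you extract $F$ by contracting with the dual complex vector field $X$, whereas the paper expands in the smooth basis $\Re(\omega_N^s),\Im(\omega_N^s)$ and inverts its Gram matrix.
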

\begin{proof}
For $s=0$ there is nothing to prove; let $s\in\{1,2\}$. Near $N$, by Lemmas~\ref{A:lem:complexcomp} and~\ref{A:lem:omega}(a),
\[
 T=\Re\big(\overline\tau\,\zeta^s\big)=\Upsilon(F),\qquad F:=\tau e^{is\varphi},\quad \Upsilon(F):=\Re\big(\overline F\,\omega_N^s\big)\qquad\text{on }U_N\setminus\{N\}.
\]
Let $\mathcal V_p$ be the two-dimensional real space of one-forms ($s=1$), resp.\ $\gac$-tracefree symmetric forms ($s=2$), at $p\in U_N$. For every
$p\in U_N$, $F\mapsto\Upsilon(F)|_p$ is a real-linear bijection $\C\to\mathcal V_p$: off $N$ because $\Upsilon(F)=\Re\big(\overline{Fe^{-is\varphi}}\zeta^s\big)$ and by
Lemma~\ref{A:lem:complexcomp}; at $N$ because $\omega_N=dx+i\,dy$, $(dx,dy)$ is a $\gac$-orthonormal coframe there (Lemma~\ref{A:lem:round}), and
Lemma~\ref{A:lem:complexcomp} applies with $\zeta$ replaced by $dx+i\,dy$. Hence $E_1:=\Re(\omega_N^s)$ and $E_2:=\Im(\omega_N^s)$ are smooth fields
forming a basis of $\mathcal V_p$ at every $p\in U_N$, and $\Upsilon(F)=(\Re F)E_1+(\Im F)E_2$. The coefficients of a smooth $\mathcal V$-valued field
$T'=c_1E_1+c_2E_2$ on $U_N$ are smooth: they solve $\Gamma c=(\langle T',E_1\rangle,\langle T',E_2\rangle)$, where $\langle\cdot,\cdot\rangle$ is the inner product
induced by $\gac$ and the Gram matrix $\Gamma_{ij}=\langle E_i,E_j\rangle$ is smooth and invertible.

Now, if $F$ extends smoothly to $U_N$, then $\Upsilon(F)$ is a smooth extension of $T$. Conversely, if $T$ extends smoothly to $U_N$, its coefficients
$c_1,c_2$ are smooth, and $\Upsilon(c_1+ic_2)=T=\Upsilon(F)$ off $N$, so $F=c_1+ic_2$ off $N$ by injectivity, and $F$ extends smoothly. At $S$,
Lemma~\ref{A:lem:omega}(b) gives $\zeta^s=(-1)^se^{is\varphi}\omega_S^s$, hence $T=\Re\big(\overline{(-1)^s\tau e^{-is\varphi}}\,\omega_S^s\big)$, and the same argument
applies with $\omega_S$ (at $S$, $\omega_S=dx-i\,dy$ with the orthonormal coframe $(dx,-dy)$) and $F=(-1)^s\tau e^{-is\varphi}$.
\end{proof}

In words: near $N$ the polar frame is the Cartesian frame rotated by $\varphi$, so a component of spin weight $s$ acquires the factor $e^{is\varphi}$
when converted to the Cartesian frame; near $S$ the orientation is reversed and the factor is $e^{-is\varphi}$.

The case used in the main text is the following.

\begin{lemma}[Smoothness at the poles]\label{lem:poles}
Let $a$ be smooth on $[-1,1]$, $\sigma\in\{-1,0,1\}$, and let $T$ be the tracefree symmetric tensor field with frame
matrix $a(z)S(2\sigma\varphi)$. Then $T$ extends smoothly across the north pole $z=1$ if and only if
$a=(1-z)^{|\sigma+1|}b$, and across the south pole $z=-1$ if and only if $a=(1+z)^{|\sigma-1|}b$, with $b$ smooth:
\begin{center}
\begin{tabular}{@{}lccc@{}}
\toprule
 & $\sigma=-1$ & $\sigma=0$ & $\sigma=+1$\\\midrule
north pole $z=1$ & none & $1-z$ & $(1-z)^2$\\
south pole $z=-1$ & $(1+z)^2$ & $1+z$ & none\\\bottomrule
\end{tabular}
\end{center}
The same holds for $\gdc=\gac\exp(T)$, since $\exp$ is an equivariant diffeomorphism from tracefree symmetric
matrices onto unimodular positive ones.
\end{lemma}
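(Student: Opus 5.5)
The plan is to reduce Lemma~\ref{lem:poles} to Theorem~\ref{A:thm:spins} with $s=2$, followed by Theorem~\ref{A:thm:fourier}. By Lemma~\ref{lem:param}(a), a tracefree field with frame matrix $a(z)S(2\sigma\varphi)$ has complex component $T_\C=a(z)e^{2i\sigma\varphi}$. So $T$ is smooth across $N$ if and only if $F_N:=a(z)e^{i(2\sigma+2)\varphi}$ is smooth near $N$. It is smooth across $S$ if and only if $F_S:=a(z)e^{i(2\sigma-2)\varphi}$ is smooth near $S$.

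Next I apply Theorem~\ref{A:thm:fourier} in the chart $w=re^{i\varphi}$ with $r=\sin\vartheta$. Near $N$ we have $z=\sqrt{1-r^2}$, so $f(r)=a(\sqrt{1-r^2})$ is already a smooth function $\tilde a(r^2)$ of $r^2$. The criterion then reads $\tilde a(r^2)=r^{|2\sigma+2|}g(r^2)$. For $k=2\sigma+2$ even, with $|k|=2|\sigma+1|$, this says $\tilde a(t)=t^{|\sigma+1|}g(t)$ in the variable $t=r^2=1-z^2$. It is equivalent to $a$ vanishing to order $|\sigma+1|$ at $z=1$.

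To translate this back, I note that $1-z^2=(1-z)(1+z)$ with $1+z$ nonvanishing near $z=1$. The map $z\mapsto t$ is a local diffeomorphism from a neighbourhood of $z=1$ in $(-1,1]$ onto a neighbourhood of $t=0$ in $[0,\infty)$, since $dt/dz=-2z\neq0$ there. By Taylor's formula with integral remainder, vanishing to order $j$ in $t$ is the same as vanishing to order $j$ in $1-z$. Hence $a=(1-z)^{|\sigma+1|}b$ with $b$ smooth. The small subtlety is that $a$ is only given on $[-1,1]$, so I take a smooth extension slightly past $z=1$. That makes $g$ smooth on an open interval containing $t=0$, as Theorem~\ref{A:thm:fourier} requires, and the conditions involve only the Taylor coefficients at $z=1$, so the extension does not matter.

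The south pole is identical. There $z=-\sqrt{1-r^2}$, the exponent is $|2\sigma-2|=2|\sigma-1|$, and the factor becomes $(1+z)^{|\sigma-1|}$. The table is read off from these two exponents. For the final claim about $\gdc=\gac\exp(T)$, I would invoke Theorem~\ref{A:thm:metriclog}: $\gdc$ is smooth exactly when $\gac A=T$ is smooth, which is the field just analysed. I expect no step to be a real obstacle. The only points needing care are the change of variable $t=1-z^2$ versus $1-z$ and the parity bookkeeping for $k$, which is automatic because $k$ is even.
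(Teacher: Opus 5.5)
Your proposal is correct and follows the paper's proof: Theorem~\ref{A:thm:spins} with $s=2$ gives the exponents $2\sigma\pm2$, Theorem~\ref{A:thm:fourier} applied to $a(\sqrt{1-r^2})$ in the variable $u=r^2=(1-z)(1+z)$ gives the vanishing orders, and Theorem~\ref{A:thm:metriclog} handles $\gdc$. The one step you skip is in the direction ``$\gdc$ extends smoothly $\Rightarrow$ $T$ extends smoothly'': before Theorem~\ref{A:thm:metriclog} applies, you must check that the extension of $\gdc$ across the pole is a unimodular metric, which the paper does by noting that by continuity it is positive semidefinite with $\det(\gac^{-1}\gdc)=1$, hence positive definite.
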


\begin{proof}
As usual, smooth on $[-1,1]$ means the restriction of a smooth function on an open interval containing $[-1,1]$. By Lemma~\ref{lem:param}(a), $T$
has the complex component $T_\C=a(z)e^{2i\sigma\varphi}$ and spin weight $2$, so by Theorem~\ref{A:thm:spins} it is smooth at $N$ if and only if
$a(z)e^{ik\varphi}$ is smooth near $N$, where $k=2\sigma+2$, and at $S$ if and only if $a(z)e^{ik'\varphi}$ is smooth near $S$, where $k'=2\sigma-2$. Off the
poles $T$ is smooth. Consider $N$ and put $m:=|k|/2=|\sigma+1|$. Near $N$, $z=\sqrt{1-r^2}$, so by Theorem~\ref{A:thm:fourier} the function is smooth
if and only if $a(\sqrt{1-u})=u^mg(u)$ for small $u=r^2\ge0$, with $g$ smooth near $0$. Since $u=(1-z)(1+z)$ and $1-\sqrt{1-u}=u/(1+\sqrt{1-u})$: if
$a=(1-z)^mb$, then $a(\sqrt{1-u})=u^m(1+\sqrt{1-u})^{-m}b(\sqrt{1-u})$; conversely, if $a(\sqrt{1-u})=u^mg(u)$, then $a(z)=(1-z)^m\,b(z)$ with
$b(z):=(1+z)^mg(1-z^2)$ smooth near $z=1$. At $S$, $z=-\sqrt{1-r^2}$, and the same argument with $1+z$ in place of $1-z$ and $m=|\sigma-1|$ gives
the second condition.

For the metric, $\gac\exp(T)$ stands for $\gac\exp(A)$ with $A:=\gac^{-1}T$, so that $\gac A=T$. It remains to show that, for a neighbourhood $U$ of a pole, $\gdc$ extends smoothly to $U$ if and only if $T$
does. If $T$ extends smoothly to $U$, then so does $A$, and the implication (ii)$\Rightarrow$(i) of
Theorem~\ref{A:thm:metriclog}, which only uses the smoothness of $A$, shows that $\gdc$ extends smoothly to $U$. Conversely, if $\gdc$ extends
smoothly to $U$, then by continuity the extension is positive semidefinite with $\det(\gac^{-1}\gdc)=1$, hence positive definite and unimodular,
and by Theorem~\ref{A:thm:metriclog}, (i)$\Rightarrow$(iii), its logarithm is a smooth tracefree symmetric form on $U$ which coincides with $T$ off the
pole. Hence $\gdc$ is smooth at a pole if and only if $T$ is.
\end{proof}

The phase in Lemma~\ref{lem:poles} is the spin weight at work: at $N$ only $\sigma=-1$ (a constant Cartesian quadrupole, whose polar frame
matrix is $R_\varphi^{-1}\mathrm{diag}(1,-1)R_\varphi=S(-2\varphi)$) needs no vanishing factor, and at $S$ only $\sigma=+1$.

For Section~\ref{ch:linear} we also need the criterion for general Fourier modes with amplitudes that are smooth functions of $\vartheta$
rather than of $z$. A complex function $\tau$ on $S^2\setminus\{N,S\}$ is said to define a smooth field of spin weight $s\in\{0,1,2\}$ if $\tau=T_\C$
for a field $T$ as in Theorem~\ref{A:thm:spins} that extends smoothly to $S^2$, and to define a smooth field of spin weight $-s$ if
$\overline\tau$ defines a smooth field of spin weight $s$.

\begin{corollary}[Pole conditions for Fourier modes]\label{A:cor:modes}
Let $s\in\{-2,-1,0,1,2\}$, $m\in\Z$, and $\tau=a(\vartheta)e^{im\varphi}$ with $a$ smooth on $(0,\pi)$. Then $\tau$ defines a smooth field of spin
weight $s$ on $S^2$ if and only if
\[
 a(\vartheta)=\vartheta^{|m+s|}f_N(\vartheta^2)\ \text{near }\vartheta=0\qquad\text{and}\qquad a(\pi-t)=t^{|m-s|}f_S(t^2)\ \text{near }t=0,
\]
with $f_N,f_S$ smooth on an open interval containing $0$.
\end{corollary}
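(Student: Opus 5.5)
The plan is to reduce the statement to Theorem~\ref{A:thm:spins} and Theorem~\ref{A:thm:fourier}, treating positive and negative spin weight separately and then translating between the variable $r=\sin\vartheta$ and the variable $\vartheta$.

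For $s\ge0$, the field with complex component $\tau=a(\vartheta)e^{im\varphi}$ is smooth at $N$ if and only if $\tau e^{is\varphi}=a(\vartheta)e^{i(m+s)\varphi}$ extends smoothly near $N$; this is Theorem~\ref{A:thm:spins}. Near $N$ we have $r=\sin\vartheta$, and $\vartheta=\arcsin r$ is a smooth odd function of $r$ for small $r$. Theorem~\ref{A:thm:fourier} with $k=m+s$ gives the condition $a(\arcsin r)=r^{|m+s|}g(r^2)$ with $g$ smooth.

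I then convert this condition to $\vartheta$. Since $\sin\vartheta=\vartheta h(\vartheta^2)$ with $h$ smooth and positive, the condition $r^{|k|}g(r^2)$ becomes $\vartheta^{|k|}h(\vartheta^2)^{|k|}g\big(\sin^2\vartheta\big)$. Here $\sin^2\vartheta$ is a smooth function of $\vartheta^2$, so this equals $\vartheta^{|k|}f_N(\vartheta^2)$. The converse uses the same argument with $\vartheta=r\,\tilde h(r^2)$, which holds because $\arcsin$ is odd and smooth.

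At $S$, put $t=\pi-\vartheta$, so that $r=\sin t$. The criterion becomes smoothness of $\tau e^{-is\varphi}=a e^{i(m-s)\varphi}$, which yields the exponent $|m-s|$. The same conversion between $r$ and $t$ then applies.

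For $s<0$, the field is defined through $\overline\tau=\overline a\,e^{-im\varphi}$, which has spin weight $|s|=-s$. The criterion just proved gives the exponents $|-m-s|=|m+s|$ at $N$ and $|-m+s|=|m-s|$ at $S$. Conjugating $\overline a$ preserves the form $\vartheta^{k}f(\vartheta^2)$, with $f$ replaced by $\overline f$.

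The main technical point is the passage between the variables $r$ and $\vartheta$. What must be checked is that the even/odd Whitney structure is preserved under the smooth odd diffeomorphism $r=\sin\vartheta$ near $0$. This follows from Whitney's theorem, since a function of the form $\vartheta^{k}\times(\text{even})$ is carried to one of the form $r^{k}\times(\text{even})$.
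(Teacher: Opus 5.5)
Your proposal is correct and follows essentially the same route as the paper. You reduce via Theorem~\ref{A:thm:spins}, using conjugation for $s<0$, to smoothness of $a(\vartheta)e^{i(m\pm s)\varphi}$ at $N$ and $S$, then apply Theorem~\ref{A:thm:fourier}. You then convert between $r=\sin\vartheta$ (resp.\ $r=\sin t$) and $\vartheta$ (resp.\ $t$) using $\sin\vartheta=\vartheta\,h(\vartheta^2)$ and $\arcsin r=r\,\tilde h(r^2)$, which is exactly the paper's substitution with $f_N(u)=c(u)^k g(u\,c(u)^2)$ and its inverse.
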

\begin{proof}
Off the poles the field is smooth, since $\tau$ is smooth in $(\vartheta,\varphi)$. For $s\ge0$, Theorem~\ref{A:thm:spins} says that the field is smooth
at $N$ if and only if $a(\vartheta)e^{i(m+s)\varphi}$ is smooth near $N$, and at $S$ if and only if $a(\vartheta)e^{i(m-s)\varphi}$ is smooth near $S$. For
$s<0$ we apply Theorem~\ref{A:thm:spins} to $\overline\tau=\overline a\,e^{-im\varphi}$ with spin weight $|s|=-s$: the functions are
$\overline a\,e^{-i(m+s)\varphi}$ at $N$ and $\overline a\,e^{-i(m-s)\varphi}$ at $S$, which are smooth if and only if their complex conjugates are. So in all
cases the conditions are the smoothness of $a(\vartheta)e^{ik_N\varphi}$ near $N$ and of $a(\vartheta)e^{ik_S\varphi}$ near $S$, with $k_N=m+s$, $k_S=m-s$.

Consider $N$ and put $k:=|k_N|$. In the chart at $N$, $r=\sin\vartheta$ and $\vartheta=\arcsin r$ for $\vartheta<\frac\pi2$, so the function is $f(r)e^{ik_N\varphi}$
with $f(r)=a(\arcsin r)$, and by Theorem~\ref{A:thm:fourier} it is smooth near $N$ if and only if $f(r)=r^kg(r^2)$ for small $r>0$ with $g$ smooth
near $0$. Now $\sin\vartheta=\vartheta\,c(\vartheta^2)$ and $\arcsin r=r\,d(r^2)$, where $c(u)=\sum_{n\ge0}(-u)^n/(2n+1)!$ and $d$ (the power series of
$\arcsin r/r$ in $r^2$, convergent for $|u|<1$) are smooth and positive near $u=0$. Hence, for small $\vartheta>0$ and $r=\sin\vartheta$: if
$f(r)=r^kg(r^2)$, then $a(\vartheta)=\vartheta^kf_N(\vartheta^2)$ with $f_N(u):=c(u)^k\,g\big(u\,c(u)^2\big)$; conversely, if $a(\vartheta)=\vartheta^kf_N(\vartheta^2)$,
then $f(r)=r^kg(r^2)$ with $g(u):=d(u)^k\,f_N\big(u\,d(u)^2\big)$. In both cases the new function is smooth near $0$. At $S$ the same argument
applies with $t=\pi-\vartheta$: in the chart at $S$, $r=\sin\vartheta=\sin t$ and $t=\arcsin r$ for $t<\frac\pi2$, while the polar angle of $w$ is
again $\varphi$; Theorem~\ref{A:thm:fourier} with $k=|k_S|$ gives the second condition.
\end{proof}

\section{The non-flat second cutoff}\label{app:cutoff}

This appendix proves the cutoff lemma used in Step~4 of Section~\ref{tl:sec:proof}. It is the local modification of \cite[Lemma~5.1]{KU22}
and of the cutoff step in \cite[Section~4]{KU}. Its quantitative non-flat form is proved here; it is not attributed to a stated result in
those papers.

\begin{lemma}\label{tl:lem:cap}
Let $\bg_b$ solve the reduced Einstein equations on a fixed wave-coordinate slab with spacelike time slices, with regularity
$H^s\times H^{s-1}$ for all $s<7/2$. Let $B$ be a smooth compact ball in $\{t=0\}$ such that $K:=\overline{D_b^-(B^\circ)}$ is compact in the
slab and lies in an open vacuum region, and let the background data be smooth in a collar of $\partial B$. Suppose that vacuum Cauchy data on
$B$ and an exterior collar
\begin{enumerate}[label=\textup{(\alph*)}]
\item equal the background data on $B$,
\item match them to orders two (metric) and one (second fundamental form) at $\partial B$, and
\item have the one-sided derivative reserve of Proposition~\ref{tl:prop:input}\textup{(c)}.
\end{enumerate}
Then they have a vacuum development on an open neighbourhood of the whole closed set $K$, which agrees with $\bg_b$ on $K$ and with the
prescribed data near $B$, in the same fractional Cauchy class.
\end{lemma}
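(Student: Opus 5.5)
The plan is to follow the cutoff argument of \cite[Lemma~5.1]{KU22}, with Minkowski space replaced by the background $\bg_b$, in wave coordinates. First I will pass to data for the reduced Einstein equations. On $\{t=0\}$ I choose the lapse and shift of the new data so that the wave-coordinate condition holds initially and agrees with the background choice on $B$. Then (a) makes the reduced data equal to those of $\bg_b$ on $B$, and (b) makes their difference $\delta:=(g-g_b,\,\partial_tg-\partial_tg_b)$ vanish to second, resp.\ first, order at $\partial B$. By (c) and the fractional argument of Section~\ref{tl:sec:regularity}, $\delta\in H^{7/2-}\times H^{5/2-}$ on the slab slice, with support in the exterior collar $\{t=0\}\setminus B^\circ$.

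The key quantitative step is a \emph{rate}. Let $\chi_\epsilon$ be a cutoff equal to $1$ on the $\epsilon$-collar outside $\partial B$ and supported in the $2\epsilon$-collar. Because $\delta$ vanishes to order two (metric), resp.\ one, at $\partial B$ and is one-sided $C^4$, Taylor's formula gives the bound
\[
\|\chi_\epsilon\delta\|_{H^{3+\sigma}\times H^{2+\sigma}}\le C\epsilon^{\frac12-\sigma}
\]
for $\sigma<\frac12$. I will take $s=3+\sigma$ with $s>5/2$, so that this is a positive power of $\epsilon$. The cut-off data $g_b+\chi_\epsilon\delta$ then agree with the prescribed data in the $\epsilon$-collar, with the background on $B$, and are $O(\epsilon^{1/2-\sigma})$-close to the background data in $H^s\times H^{s-1}$ on the relevant slice. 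They need not satisfy the constraints, but the reduced equations can be solved regardless.

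Next comes Cauchy stability. Since $K$ is compact in the slab and $\bg_b$ solves the reduced equations there in $H^s$, $s>5/2$, the standard well-posedness and continuous-dependence theory for quasilinear wave equations gives the following: for $\epsilon$ small, the reduced solution $\bg_\epsilon$ exists on a fixed open neighbourhood $W$ of $K$ and stays close to $\bg_b$ in $C^1$. I choose $W$ with compact closure in the open vacuum region, fixed before $\epsilon$. By finite speed of propagation, and since $\bg_\epsilon$ has cones close to those of $\bg_b$, we get $\bg_\epsilon=\bg_b$ on $D^-_b$-type regions determined by $B^\circ$. The delicate point is that $K=\overline{D^-_b(B^\circ)}$ is a closed set whose boundary is generated by null geodesics of $\bg_b$. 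To obtain agreement on all of $K$, I will use that the data agree on $B$ itself, not merely on $B^\circ$, and argue via domains of dependence for the $C^1$ metric $\bg_\epsilon$, using $\mathrm{supp}\,\delta\cap B=\varnothing$. Uniqueness then gives $\bg_\epsilon=\bg_b$ on $\overline{D^-(B^\circ)}$.

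Near $B$ and in the $\epsilon$-collar, the data are the prescribed vacuum data. Propagation of the gauge condition is the standard argument: the constraints hold there, so $\bg_\epsilon$ is vacuum on the domain of dependence of $B\cup\{\epsilon\text{-collar}\}$, which contains a neighbourhood of $K$. Persistence of regularity returns the fractional class.

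The main obstacle is twofold. The first part is the quantitative choice of the index: it must be high enough for the $H^s$ stability theory, which needs $s>5/2$, yet low enough for the $\epsilon^{1/2-\sigma}$ gain. This is the place where only $H^{7/2-}$ is available. The second part is ensuring that the existence neighbourhood is uniform in $\epsilon$ near the null boundary of $K$. I would handle this by first fixing $W$ using the compactness of $K$ inside the slab, and then shrinking $\epsilon$.
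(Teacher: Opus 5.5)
There is a genuine gap where you say that $\bg_\epsilon$ is vacuum on the domain of dependence of $B\cup\{\epsilon\text{-collar}\}$, ``which contains a neighbourhood of $K$''. The cut-off data satisfy the constraints only on $B_\epsilon:=B\cup\{x<\epsilon\}$. On the annulus $\{\epsilon<x<2\epsilon\}$ they do not. Moreover, $\partial K$ is ruled by $\bg_b$-null generators whose future endpoints lie on $\partial B$, at distance only $\epsilon$ from that annulus. So you must show that, between $K$ and $\{t=0\}$, the cones of $\bg_\epsilon$ differ from those of $\bg_b$ by \emph{less than $\epsilon$}.

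Your stability step does not give this. It controls $\bg_\epsilon-\bg_b$ through $\|\chi_\epsilon\delta\|_{H^{3+\sigma}\times H^{2+\sigma}}\lesssim\epsilon^{1/2-\sigma}$, which is much larger than $\epsilon$; in general continuous dependence only gives a modulus of continuity. A closeness that is not $o(\epsilon)$ cannot beat a constraint region that shrinks with the same parameter. The $\bg_\epsilon$-causal curves from points of $K$ near $\partial K$ may then reach $\{t=0\}$ inside the constraint-violating annulus, and constraint propagation says nothing about those points. Fixing $W$ before $\epsilon$ settles existence, but that is not where the difficulty lies.

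The missing ingredient is a rate, obtained one derivative lower. The same Taylor argument gives $\|\chi_\epsilon\delta\|_{H^{r-1}\times H^{r-2}}\le C\epsilon^{9/2-r}$; for $r=3+\sigma$ this is $\epsilon^{3/2-\sigma}$, so lowering the norm by one derivative gains a factor $\epsilon$. Now subtract the reduced equations. Take $\bg_\epsilon^{\alpha\beta}\partial_\alpha\partial_\beta$ as principal part; it is uniformly Lipschitz by your $H^r$ stability bound. The source is linear in $\bg_\epsilon-\bg_b$ and its first derivatives, with coefficient $\partial^2\bg_b\in H^{r-2}$. A lower-order energy estimate and Gronwall then give $\sup_t\|\bg_\epsilon-\bg_b\|_{H^{r-1}}\le C\epsilon^{9/2-r}$. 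Hence $\|\bg_\epsilon-\bg_b\|_{C^0}=O(\epsilon^{9/2-r})=o(\epsilon)$, because $r<7/2$. This, not the condition $s>5/2$, is where $H^{7/2-}$ is really used.

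Next, write causal curves in lapse--shift form, $\dot x=-\beta+N\gamma^{-1/2}a(t)$ with $|a|\le1$, and compare solutions with the same control by Gronwall. The endpoints on $\{t=0\}$ of $\bg_\epsilon$-causal curves from $K$ then lie within $o(\epsilon)$ of $B$, hence in $B_\epsilon$. This gives $K\subset\operatorname{Int}\big(D^-_{\bg_\epsilon}(B_\epsilon)\cup D^+_{\bg_\epsilon}(B_\epsilon)\big)$, and with it the vacuum property.

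A smaller point concerns equality on $K$. First, $\operatorname{supp}\delta\cap B=\partial B$, not $\varnothing$. Second, arguing with $\bg_\epsilon$-cones gives agreement only on $D^-_{\bg_\epsilon}(B^\circ)$, which need not contain $D^-_b(B^\circ)$. Put the \emph{background} operator $\bg_b^{\alpha\beta}\partial_\alpha\partial_\beta$ on the left of the difference equation instead. Its energy estimate gives $\bg_\epsilon=\bg_b$ on $D^-_b(B^\circ)$, and continuity extends this to $K$.
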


The idea is to cut off the difference between the new and the background data in a collar of width $\epsilon$ and to evolve. The cut-off data
satisfy the constraints only on $B_\epsilon:=B\cup\{\text{collar of width }\epsilon\}$, so the solution is vacuum only on the domain of dependence
of $B_\epsilon$. To keep the closed past domain $K$ of $B$ inside it, the new light cones must move by much less than $\epsilon$. A mere modulus
of continuous dependence does not give this; a rate $o(\epsilon)$ does.

\begin{proof}
\emph{Step 1: cutoff rates.} Let $f=g_*-g_b$ and $h=k_*-k_b$, extended by zero into $B$. In terms of the signed distance $x\ge0$ outside $B$, the
vanishing jets give $|\partial^jf|\le Cx^{3-j}$ for $j\le3$ and $|\partial^jh|\le Cx^{2-j}$ for $j\le2$. Let $\theta_\epsilon$ be equal to one on the
exterior collar of width $\epsilon$ and to zero beyond width $2\epsilon$, with $|\partial^i\theta_\epsilon|\le C\epsilon^{-i}$. On the support
($x\le2\epsilon$, volume $O(\epsilon)$) every term of $\partial^j(\theta_\epsilon f)$ is $O(\epsilon^{3-j})$, so
\[
 \|\theta_\epsilon f\|_{H^j}\le C\epsilon^{7/2-j}\quad(0\le j\le3),\qquad \|\theta_\epsilon h\|_{H^j}\le C\epsilon^{5/2-j}\quad(0\le j\le2),
\]
and the same exponents hold at fractional orders by interpolation.

\emph{Step 2: a common existence slab.} We use the perturbed data for the reduced equations, keeping the background initial lapse and shift
and solving the wave-gauge conditions for the remaining initial velocities. For $5/2<r<3$ the perturbation tends to zero in
$H^r\times H^{r-1}$ (with exponent $7/2-r>0$), so Cauchy stability about the fixed background gives solutions $\bg_\epsilon$ on a common slab
containing $K$ \cite{HKM}.

\emph{Step 3: the rate.} One derivative lower, the initial perturbation is smaller by a factor $\epsilon$: it is $O(\epsilon^{9/2-r})$ in
$H^{r-1}\times H^{r-2}$. Subtract the two reduced wave equations, with $\bg_\epsilon^{\alpha\beta}\partial_\alpha\partial_\beta$ as principal part.
The source is a metric difference times $\partial^2\bg_b$ plus terms linear in the difference of first derivatives. Since $H^{r-1}$ is an algebra
and a multiplier on $H^{r-2}$, and the principal coefficients are uniformly Lipschitz, the lower-order wave energy estimate and Gronwall's
inequality give
\begin{equation}\label{tl:eq:rate}
\begin{gathered}
 \sup_t\Big(\|\bg_\epsilon-\bg_b\|_{H^{r-1}}+\|\partial_t(\bg_\epsilon-\bg_b)\|_{H^{r-2}}\Big)\le C\epsilon^{9/2-r},\\
 \text{hence}\qquad \|\bg_\epsilon-\bg_b\|_{C^0}\le C\epsilon^{9/2-r}=o(\epsilon),
\end{gathered}
\end{equation}
using $H^{r-1}\subset C^0$ ($r-1>3/2$) and $9/2-r>1$. For example, $r=11/4$ gives $\epsilon^{7/4}$.

\emph{Step 4: the light cones move by $o(\epsilon)$.} In lapse--shift form, the causal curves parametrised by $t$ are exactly the solutions of
\[
 \dot x=-\beta+N\gamma^{-1/2}a(t),\qquad |a(t)|\le1,
\]
for measurable controls $a$. The coefficients are uniformly Lipschitz in $x$ and, by \eqref{tl:eq:rate}, differ from the background ones by
$o(\epsilon)$ in $C^0$. Comparing, by Gronwall's inequality, the two solutions with the same control and the same initial point, we find that all
future endpoints at $t=0$ of causal curves from $K$ lie within $o(\epsilon)$ of $B$, hence strictly inside $B_\epsilon$, where the new data
satisfy the constraints. Continuity in the starting point gives a margin on the past side, and a uniform speed bound puts a future collar of
the top ball into $D^+(B_\epsilon)$. Hence
\[
 K\subset\operatorname{Int}\big(D^-_{\bg_\epsilon}(B_\epsilon)\cup D^+_{\bg_\epsilon}(B_\epsilon)\big),
\]
and constraint propagation makes this open neighbourhood of $K$ vacuum.

\emph{Step 5: equality on $K$, overlap at the rim, regularity.} The data are unchanged on $B^\circ$, so finite-propagation uniqueness gives
$\bg_\epsilon=\bg_b$ on the background past domain of $B^\circ$, and by continuity on its closure $K$. (In a common gauge, put the background
wave operator on the left of the difference equation; its potential coefficient is $\partial^2\bg_\epsilon\in H^{r-2}_{\loc}\subset L^3_{\loc}$,
so local $H^1$ uniqueness follows from $H^1\subset L^6$.) Near $\partial B$ the prescribed collar data and the cut-off data agree on an open
annulus, which by Cauchy uniqueness gives an actual open overlap with the characteristic collar. Finally the fractional regularity
(Section~\ref{tl:sec:regularity}) propagates. The higher norms of the cut-off data need not be uniform in $\epsilon$; they are never used.
\end{proof}

\subsection{Why the hypotheses hold in the two-stage construction}\label{tl:app:place}

\emph{The margin.} The first, Minkowski-based use of the same argument supplies a fixed open vacuum margin around its closed past ball domain.
Its reduced solution exists on a larger fixed slab, even where the auxiliary data are not constrained.

\emph{The new ball.} Choose $C_B$ close enough to $C_0$ on the fixed early segment. Its bottom ball $B$ is a small perturbation of the original
one, and its closed past domain stays inside the margin. Indeed, the first reduced solution may be taken close to Minkowski on the fixed slab,
so uniform upper and lower bounds on its causal velocities put all these past domains in a common compact set. If they did not stay in the
margin, a limit point would lie outside the old closed domain; it then has a future causal curve to a point strictly outside the old ball;
perturbing that curve to a timelike one, the same escape persists for nearby balls, contradicting membership in their past domains.

\emph{The order.} After fixing this ball we choose the second bridge and its exterior strip, and only then the second cutoff width. The
background is smooth near the new rim (which lies outside the old gluing hypersurface), and has the hyperbolic regularity of
Lemma~\ref{tl:lem:cap} in the centre. The $C^2$ pasting supplies exactly the vanishing jets used in Step~1 of the proof of
Lemma~\ref{tl:lem:cap}.

\emph{No uniformity.} Nothing has to be uniform in $M_f$, in the bridge length, or in either strip width. Equality with the first spacetime
holds on the closed past domain $K$, not on its whole new neighbourhood.

\end{document}